\documentclass[10pt,letterpaper]{amsart}
\usepackage{dslihead}

\title{On close fields and the local Langlands correspondence}

\author{Siyan Daniel Li-Huerta}
\address{Department of Mathematics\\Massachusetts Institute of Technology\\77 Massachusetts Avenue\\Cambridge, MA 02139}
\email{sdlh@mit.edu}

%\classification{}
%\keywords{}

\usepackage{amsmath}

\setcounter{tocdepth}{1}

\swapnumbers
\theoremstyle{plain}

\newtheorem*{thm*}{Theorem}
\newtheorem{lem}[subsection]{Lemma}
\newtheorem*{lem*}{Lemma}
\newtheorem{prop}[subsection]{Proposition}
\newtheorem*{prop*}{Proposition}

\newtheorem*{conj*}{Conjecture}
\newtheorem{cor}[subsection]{Corollary}
\newtheorem*{cor*}{Corollary}

\newtheorem*{thmA}{Theorem A}
\newtheorem*{thmB}{Theorem B}

\theoremstyle{definition}

\newtheorem*{defn*}{Definition}

\theoremstyle{remark}

\newtheorem*{rem*}{Remark}
\newtheorem*{rems*}{Remarks}

\newcommand{\sHom}{\sH\!\mathrm{om}}
\DeclareMathOperator{\s}{s}
\newcommand{\cHck}{\cH\mathrm{ck}}
\DeclareMathOperator{\bd}{bd}
\DeclareMathOperator{\ULA}{ULA}
\DeclareMathOperator{\Sat}{Sat}
\DeclareMathOperator{\sw}{sw}
\DeclareMathOperator{\LocSys}{LocSys}

\DeclareMathOperator{\finproj}{fp}
\DeclareMathOperator{\bigboxtimes}{{\mathlarger{\mathlarger{\boxtimes}}}}
\DeclareMathOperator{\bigast}{{\mathlarger{\mathlarger{\ast}}}}

\begin{document}

\begin{abstract}
We prove that Fargues--Scholze's semisimplified local Langlands correspondence (for quasisplit groups) with $\ov\bF_\ell$-coefficients is compatible with Deligne and Kazhdan's philosophy of close fields. From this, we deduce that the same holds with $\ov\bQ_\ell$-coefficients after restricting to wild inertia, addressing questions of Gan--Harris--Sawin and Scholze. The proof involves constructing a moduli space of nonarchimedean local fields and then extending Fargues--Scholze's work to this context.
\end{abstract}

\maketitle
\vspace{-.5cm}
\tableofcontents

\section*{Introduction}
\subsection*{Close fields}
In number theory, there is a heuristic that
\begin{align*}\label{eqn:slogan}
  \mbox{``as absolute ramification tends to }\infty\mbox{, }p\mbox{-adic fields tend to function fields.''}\tag{$\star$}
\end{align*}
For example, let $E$ be a $p$-adic field with residue field $\bF_q$ and absolute ramification index $e$, and write $E'$ for the function field $\bF_q\lp{t}$. Write $\Ga$ and $\Ga'$ for the absolute Galois groups of $E$ and $E'$, respectively. Building on work of Krasner \cite{Kra50}, Deligne \cite{Del84} showed that there exists a canonical\footnote{Everything depends on a choice of uniformizer (up to multiplication by $1+\fp^e$, where $\fp$ denotes the maximal ideal of the ring of integers) of $E$. However, we ignore this in the introduction.} isomorphism (up to conjugation)
\begin{align*}
\Ga/I^e\cong\Ga'/I'^e,
\end{align*}
where $I^e$ and $I'^e$ denote the $e$-th ramification subgroup of $\Ga$ and $\Ga'$, respectively. Consequently, we get a canonical bijection between representations of $\Ga$ that are trivial on $I^e$ and representations of $\Ga'$ that are trivial on $I'^e$.

This bijection also applies to Galois representations with extra structure. More precisely, Deligne's isomorphism induces a correspondence $G/E\leftrightarrow G'/E'$ between isomorphism classes of quasisplit\footnote{One can even extend this to \emph{all} connected reductive groups that split over an $e$-ramified extension \cite[Lemma 5.1]{Gan19}, though we do not work in this generality.} connected reductive groups that split over an $e$-ramified extension. Under this correspondence, Deligne's isomorphism also induces a canonical bijection
\begin{align*}
\left\{{\begin{tabular}{c}
  $L$-parameters $\rho$\\
  for $G$ with $\rho|_{I^e}=1$
  \end{tabular}}\right\}\longleftrightarrow\left\{{\begin{tabular}{c}
  $L$-parameters $\rho'$\\
  for $G'$ with $\rho'|_{I'^e}=1$
  \end{tabular}}\right\}.
\end{align*}

Something similar happens for representations of $p$-adic groups. Fix a positive integer $n$. One can compatibly define $n$-th congruence subgroups $K^n$ and $K'^n$ of $G(E)$ and $G'(E')$, respectively, by using the corresponding Chevalley group over $\bZ$. Let $\ell\neq p$ be a prime, and let $\La$ be either $\ov\bF_\ell$ or $\ov\bQ_\ell$. Extending results of Kazhdan \cite{Kaz86b}, Ganapathy \cite{Gan24} showed that there exists a canonical isomorphism\footnote{While Kazhdan and Ganapathy work over $\bC\approx\ov\bQ_\ell$, one can deduce the result over $\ov\bF_\ell$ using their work. Also, strictly speaking, we use an isomorphism whose construction differs from that of \cite{Gan24}, and we do not know whether these isomorphisms are equal. See Appendix \ref{s:congruence}.} between the bi-invariant Hecke algebras
\begin{align*}
\cH(G(E),K^n)_\La\cong\cH(G'(E'),K'^n)_\La
\end{align*}
over $\La$ \emph{whenever $e$ is large enough}. Consequently, we get a canonical bijection
\begin{align*}
\left\{{\begin{tabular}{c}
  smooth irreps $\pi$ of $G(E)$\\
  over $\La$ with $(\pi)^{K^n}\neq0$
  \end{tabular}}\right\}\longleftrightarrow\left\{{\begin{tabular}{c}
  smooth irreps $\pi'$ of $G'(E')$\\
  over $\La$ with $(\pi')^{K'^n}\neq0$
  \end{tabular}}\right\}.
\end{align*}

\subsection*{Local Langlands}
The Langlands program predicts a relationship between automorphic forms and Galois representations. For example, in the context of connected reductive groups $G$ over nonarchimedean local fields $E$, recent groundbreaking work of Fargues--Scholze \cite{FS21} yields a natural map
  \begin{align*}
\LLC^{\semis}_G:\left\{{\begin{tabular}{c}
    smooth irreps\\
    of $G(E)$ over $\La$
  \end{tabular}}\right\}\lra\left\{{\begin{tabular}{c}
  semisimple $L$-parameters\\
  for $G$ over $\La$
  \end{tabular}}\right\}.
  \end{align*}

  The goal of this paper is to prove that $\LLC^{\semis}_G$ is compatible with Heuristic \eqref{eqn:slogan}. Our first main theorem concerns the case of $\ov\bF_\ell$-coefficients.
\begin{thmA}
  Let $G'$ be a quasisplit connected reductive group over $E'$ that splits over an $n$-ramified extension. There exists an integer $d\geq n$ such that, for all $p$-adic fields $E$ with residue field $\bF_q$ and absolute ramification index $e\geq d$,
  \begin{enumerate}[i)]
  \item the map $\LLC^{\semis}_G$ restricts to a map
  \begin{align*}
\LLC^{\semis}_G:\left\{{\begin{tabular}{c}
    smooth irreps $\pi$ of $G(E)$\\
    over $\ov\bF_\ell$ with $(\pi)^{K^n}\neq0$
  \end{tabular}}\right\}\lra\left\{{\begin{tabular}{c}
  semisimple $L$-parameters $\rho$\\
  for $G$ over $\ov\bF_\ell$ with $\rho|_{I^d}=1$
  \end{tabular}}\right\}.
  \end{align*}
\item the square
    \begin{align*}
    \xymatrix{\left\{{\begin{tabular}{c}
    smooth irreps $\pi$ of $G(E)$\\
    over $\ov\bF_\ell$ with $(\pi)^{K^n}\neq0$
  \end{tabular}}\right\}\ar[r]^-{\LLC_G^{\semis}}\ar@{<->}[d] & \left\{{\begin{tabular}{c}
  semisimple $L$-parameters $\rho$\\
  for $G$ over $\ov\bF_\ell$ with $\rho|_{I^d}=1$
  \end{tabular}}\right\}\ar@{<->}[d] \\
  \left\{{\begin{tabular}{c}
    smooth irreps $\pi'$ of $G'(E')$\\
     over $\ov\bF_\ell$ with $(\pi')^{K'^n}\neq0$
  \end{tabular}}\right\}\ar[r]^-{\LLC_{G'}^{\semis}} & \left\{{\begin{tabular}{c}
  semisimple $L$-parameters $\rho'$\\
  for $G'$ over $\ov\bF_\ell$ with $\rho'|_{I'^d}=1$
  \end{tabular}}\right\}}
    \end{align*}
    commutes.
  \end{enumerate}
\end{thmA}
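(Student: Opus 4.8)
The plan is to realize both sides of the square as specializations of a single family over a moduli space of local fields, as the abstract suggests. Recall that $\LLC^{\semis}_G(\pi)$ is characterized by the scalars through which Fargues--Scholze's excursion operators act on $\pi$, and that these operators are assembled from the Hecke operators on $D(\mathrm{Bun}_G,\ov\bF_\ell)$ together with the $W_E$-equivariance of geometric Satake; moreover they are governed by the cohomology of moduli spaces of local shtukas for $G$ with bounded legs, whose construction involves only a bounded truncation $\mathcal{O}_E/\fp^N$ of $E$. The first step is to construct a moduli space $\mathcal{X}$ of nonarchimedean local fields --- a small v-stack carrying a universal ``local field'', built from a ring such as $\bZ_q[[t]]$ (or a variant parametrizing Eisenstein data), and equipped with truncations $\mathcal{X}_j$ parametrizing $j$-truncated local fields --- such that (a) the function field $E'$ defines a point of $\mathcal{X}$; (b) every $p$-adic field $E$ with residue field $\bF_q$ and absolute ramification $e\geq d$ defines a point of $\mathcal{X}$; and (c) the points $[E]$ and $[E']$ have the same image under the truncation $\mathcal{X}\to\mathcal{X}_d$, in a way that recovers Deligne's isomorphism $\Ga/I^d\cong\Ga'/I'^d$ from the \'etale homotopy type of $\mathcal{X}_d$. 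Property (c) should reflect the elementary fact that $\mathcal{O}_E/\fp^d$ and $\mathcal{O}_{E'}/\fp'^d$ agree as filtered rings once $e\geq d$.

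The second step is to spread Fargues--Scholze's apparatus over $\mathcal{X}$: one builds the relative Fargues--Fontaine curve, $\mathrm{Bun}_G$, the Hecke stacks $\cHck$, the Satake category with its Weil-group action, the Hecke operators, and the map to the Bernstein center, all in families over $\mathcal{X}$, and checks that the formalism carries over. The crucial point --- and the main obstacle --- is a local-constancy statement: the sheaves computing the bounded-leg Hecke operators are $\ULA$ over $\mathcal{X}$, so their formation commutes with the specializations at $[E]$ and $[E']$, and the resulting Weil-group representations, with all of their structure maps, are matched under the identification supplied by (c). Granting this, the excursion operators built from bounded data for $G/E$ and for $G'/E'$ are intertwined by the isomorphism of Bernstein centers induced by $\mathcal{X}$. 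Part (i) then follows from the uniform boundedness of ramification: the Hecke operators relevant to $K^n$-fixed representations involve only a bounded truncation of $E$, so the excursion-operator scalars, and hence the attached parameters, factor through $W_E/I^d$ for some $d=d(n,G')$ independent of $E$; and the same computation, carried out compatibly over $\mathcal{X}$, yields the equality of the two compositions $\pi\mapsto\rho$ and $\pi\mapsto\pi'\mapsto\rho'$ up to the isomorphism $\Ga/I^d\cong\Ga'/I'^d$.

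The third step is to match the automorphic sides. Here I would show that the Kazhdan--Ganapathy isomorphism $\cH(G(E),K^n)_{\ov\bF_\ell}\cong\cH(G'(E'),K'^n)_{\ov\bF_\ell}$, in the incarnation of Appendix \ref{s:congruence}, is itself induced by $\mathcal{X}$: the congruence subgroups $K^n$, $K'^n$ and the convolution products all deform over $\mathcal{X}$ from the common data already present on $\mathcal{X}_n$, which reduces to working with Chevalley group schemes over the universal truncated ring (this is where the hypothesis $d\geq n$, and the enlargement of $d$ past the Kazhdan--Ganapathy bound, come in). Since the excursion operators act on these Hecke algebras and both actions are computed geometrically over $\mathcal{X}$, the action on $\cH(G(E),K^n)$ is carried to the action on $\cH(G'(E'),K'^n)$; combined with the second step this yields the commutative square in (ii). Throughout one must carefully track the choice of uniformizer --- the ambiguity up to $1+\fp^e$ noted in the introduction --- so that a single geometric input induces both the Hecke-algebra isomorphism and the isomorphism $\Ga/I^d\cong\Ga'/I'^d$; establishing the $\ULA$ property over $\mathcal{X}$, and the precise translation between ``bounded legs'' and the ramification bound $I^d$ uniformly in $E$, are where the real work lies.
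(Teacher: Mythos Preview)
Your broad strategy---spread out the entire Fargues--Scholze apparatus over a moduli of local fields and specialize---matches the paper's, but your implementation diverges from the paper's in two places that matter.

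First, the base. You envision a v-stack $\mathcal{X}$ built from something like $\bZ_q[[t]]$ with truncations $\mathcal{X}_j$, recovering Deligne's isomorphism from an \'etale homotopy type. The paper does something more concrete and more discrete: it enumerates all $p$-adic fields with residue field $\bF_q$ as a sequence $\{E_i\}_{i\in\bN}$ (Krasner's lemma makes this countable), places the function field $E'$ at $\infty$, and takes the one-point compactification $\bN\cup\{\infty\}$ as the base. Over this profinite set it builds a ring topological space $\bE$, an explicit group topological space $\Ga$ over $\bN\cup\{\infty\}$ interpolating the $\Ga_i/I_i^n$, and a family of $p$-adic groups $G(\bE)$. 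No deformation-theoretic universal ring, no truncation tower, no \'etale homotopy---Deligne's isomorphism is hard-wired into the transition maps of an inverse system of finite sets. This choice sidesteps the difficulties you flag in your last paragraph about tracking uniformizers and translating ``bounded legs'' into ramification bounds uniformly: everything is explicit and fiberwise.

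Second, and more substantively, your mechanism for the ramification bound in part~(i) is off. You locate the crux in a ULA property of sheaves over $\mathcal{X}$. The paper's mechanism is different: the object $A=\j_!\cInd_{\bK^n}^{G(\bE)}\ov\bF_\ell$ is \emph{compact} in $D_{\et}(\Bun_G,\ov\bF_\ell)$, and Hecke operators $T_V$ preserve compactness (because $T_{V^\vee}$ is both left and right adjoint). For a compact object, the automorphism sheaf over the profinite base $\bN\cup\{\infty\}$ is \'etale; since the wild inertia groups $\bP_i$ are pro-$p$ while the coefficients are $\ell$-torsion, the $\bW$-action on $T_V(A)$ must factor through $\bW/\bP$ for some open $\bP$ as in \ref{ss:propsubgroup}. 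This is the content of Proposition~\ref{prop:compactnessBunG}, and it is what produces $d$. The ULA machinery is used in building the Satake equivalence, not in the boundedness step.

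Your third step is essentially what the paper does: Theorem~\ref{ss:Heckealgebrasinfamilies} shows $\ul\End_{G(\bE)}(\cInd_{\bK^n}^{G(\bE)}\ul\La)\cong\ul{\cH(G'(E'),K'^n)_\La}$ as constant sheaves on $\bN\cup\{\infty\}$, and Theorem~\ref{ss:spreadoutexcursion} spreads out excursion operators and reads off the commutative square. So the architecture of your argument is right; the two points above are where the paper's execution differs from what you sketched.
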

We actually prove a refinement of Theorem A on the level of \emph{excursion algebras}; see Theorem \ref{ss:spreadoutexcursion}. Using this refinement, we prove our second main theorem in the case of $\ov\bQ_\ell$-coefficients.
\begin{thmB}
  Let $G'$ be a quasisplit connected reductive group over $E'$ that splits over an $n$-ramified extension. There exists an integer $d\geq n$ such that, for all $p$-adic fields $E$ with residue field $\bF_q$ and absolute ramification index $e\geq d$,
  \begin{enumerate}[i)]
  \item the map $\LLC^{\semis}_G$ restricts to a map
  \begin{align*}
\LLC^{\semis}_G:\left\{{\begin{tabular}{c}
    smooth irreps $\pi$ of $G(E)$\\
    over $\ov\bQ_\ell$ with $(\pi)^{K^n}\neq0$
  \end{tabular}}\right\}\lra\left\{{\begin{tabular}{c}
  semisimple $L$-parameters $\rho$\\
  for $G$ over $\ov\bQ_\ell$ with $\rho|_{I^d}=1$
  \end{tabular}}\right\}.
  \end{align*}
\item the square
    \begin{align*}
    \xymatrix{\left\{{\begin{tabular}{c}
    smooth irreps $\pi$ of $G(E)$\\
    over $\ov\bQ_\ell$ with $(\pi)^{K^n}\neq0$
  \end{tabular}}\right\}\ar[r]^-{\LLC_G^{\semis}}\ar@{<->}[d] & \left\{{\begin{tabular}{c}
  semisimple $L$-parameters $\rho$\\
  for $G$ over $\ov\bQ_\ell$ with $\rho|_{I^d}=1$
  \end{tabular}}\right\}\ar@{<->}[d] \\
  \left\{{\begin{tabular}{c}
    smooth irreps $\pi'$ of $G'(E')$\\
     over $\ov\bQ_\ell$ with $(\pi')^{K'^n}\neq0$
  \end{tabular}}\right\}\ar[r]^-{\LLC_{G'}^{\semis}} & \left\{{\begin{tabular}{c}
  semisimple $L$-parameters $\rho'$\\
  for $G'$ over $\ov\bQ_\ell$ with $\rho'|_{I'^d}=1$
  \end{tabular}}\right\}}
    \end{align*}
    commutes \emph{after restricting to the wild inertia subgroup}.
  \end{enumerate}
\end{thmB}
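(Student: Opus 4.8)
The plan is to deduce Theorem B from Theorem A --- more precisely, from its excursion-algebra refinement, Theorem \ref{ss:spreadoutexcursion} --- by reducing modulo $\ell$. Fix $G'$ and $n$, take $d\geq n$ as large as needed for Theorem A (the same $d$ will work here), and let $E$ be a $p$-adic field with residue field $\bF_q$ and absolute ramification index $e\geq d$. Write $P\subseteq W_E$ and $P'\subseteq W_{E'}$ for the wild inertia subgroups. Since $P$ and $P'$ are pro-$p$ and $\ell\neq p$, any continuous homomorphism from $P$ (or $P'$) into $\widehat{G}(\ov\bQ_\ell)$ or $\widehat{G}(\ov\bF_\ell)$ has finite image, which is then a $p$-group, hence of order prime to $\ell$. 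I would first record the elementary fact that reduction modulo $\ell$ induces a bijection between $\widehat{G}(\ov\bQ_\ell)$-conjugacy classes of such homomorphisms into $\widehat{G}(\ov\bQ_\ell)$ and $\widehat{G}(\ov\bF_\ell)$-conjugacy classes of such homomorphisms into $\widehat{G}(\ov\bF_\ell)$: after conjugating into $\widehat{G}(\ov\bZ_\ell)$, the kernel of $\widehat{G}(\ov\bZ_\ell)\to\widehat{G}(\ov\bF_\ell)$ is pro-$\ell$, so it meets a finite $p$-subgroup trivially, and the vanishing of the cohomology of a finite $p$-group with pro-$\ell$ coefficients gives both lifting and rigidity. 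No analogous statement holds for the full inertia subgroup or for Frobenius (the tame and Frobenius parameters can have $\ell$-adically varying eigenvalues), which is exactly why Theorem B is only stated after restricting to wild inertia.

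Next I would set up the reduction modulo $\ell$ on the automorphic side. Given a smooth irrep $\pi$ of $G(E)$ over $\ov\bQ_\ell$ with $\pi^{K^n}\neq 0$, the finite-dimensional space $\pi^{K^n}$ is a simple $\cH(G(E),K^n)_{\ov\bQ_\ell}$-module; choose a model over a finite extension of $\bQ_\ell$ with ring of integers $\mathcal{O}$ and residue field $k$, and an $\cH(G(E),K^n)_{\mathcal{O}}$-stable lattice $M\subseteq\pi^{K^n}$. The constituents of the $\cH(G(E),K^n)_k$-module $\ov{M}=M\otimes_{\mathcal{O}}k$ are of the form $\ov\pi_i^{K^n}$ for finitely many smooth irreps $\ov\pi_i$ of $G(E)$ over $\ov\bF_\ell$ with $\ov\pi_i^{K^n}\neq 0$. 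Because Fargues--Scholze's excursion operators are defined over $\ov\bZ_\ell$ and act through the integral Bernstein center, they act on $M$ through an $\mathcal{O}$-valued character --- namely the character cutting out $\LLC^{\semis}_G(\pi)$ --- hence on $\ov{M}$ through its reduction, so each $\ov\pi_i$ has the same excursion character, equal to the reduction of that of $\pi$. Passing to the excursion algebra of the profinite group $P$, where the Fargues--Scholze dictionary between characters and conjugacy classes of parameters applies because every representation of the finite $p$-group in question is semisimple, this says exactly that $\LLC^{\semis}_G(\ov\pi_i)|_P$ is $\widehat{G}(\ov\bF_\ell)$-conjugate to the reduction of $\LLC^{\semis}_G(\pi)|_P$, independently of $i$. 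Since $\LLC^{\semis}_G(\ov\pi_i)$ is trivial on $I^d$ by Theorem A i), so is the reduction of $\LLC^{\semis}_G(\pi)|_P$, and then so is $\LLC^{\semis}_G(\pi)|_P$ itself by the rigidity of the first paragraph; this gives Theorem B i).

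Now I would transport this to $G'$. Crucially, the Hecke-algebra isomorphism we use is constructed integrally (Appendix \ref{s:congruence}), so it carries $M$ to an $\cH(G'(E'),K'^n)_{\mathcal{O}}$-stable lattice $M'$ inside $\pi'^{K'^n}$, where $\pi'$ is the close-fields transfer of $\pi$, and it matches the constituents of $\ov{M'}$ with the close-fields transfers $\ov\pi_i'$ of the $\ov\pi_i$. Writing $(-)^{\mathrm{Del}}$ for transfer of parameters along Deligne's isomorphism $W_E/I^d\cong W_{E'}/I'^d$, which restricts to $P/I^d\cong P'/I'^d$, Theorem A ii) applied to each pair $(\ov\pi_i,\ov\pi_i')$ gives $\LLC^{\semis}_{G'}(\ov\pi_i')=\bigl(\LLC^{\semis}_G(\ov\pi_i)\bigr)^{\mathrm{Del}}$. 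Combining this with the previous paragraph on both sides and using that Deligne's isomorphism commutes with reduction modulo $\ell$,
\begin{align*}
\ov{\LLC^{\semis}_{G'}(\pi')|_{P'}}\;\cong\;\LLC^{\semis}_{G'}(\ov\pi_i')|_{P'}\;\cong\;\bigl(\LLC^{\semis}_G(\ov\pi_i)|_P\bigr)^{\mathrm{Del}}\;\cong\;\bigl(\ov{\LLC^{\semis}_G(\pi)|_P}\bigr)^{\mathrm{Del}}\;\cong\;\ov{\bigl(\LLC^{\semis}_G(\pi)|_P\bigr)^{\mathrm{Del}}}.
\end{align*}
Both $\LLC^{\semis}_{G'}(\pi')|_{P'}$ and $\bigl(\LLC^{\semis}_G(\pi)|_P\bigr)^{\mathrm{Del}}$ are homomorphisms from $P'$ with finite image of order prime to $\ell$, so the rigidity of the first paragraph upgrades the above to a $\widehat{G}(\ov\bQ_\ell)$-conjugacy $\LLC^{\semis}_{G'}(\pi')|_{P'}\cong\bigl(\LLC^{\semis}_G(\pi)|_P\bigr)^{\mathrm{Del}}$, which is the desired commutativity after restriction to wild inertia.

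I expect the main obstacle to be making the reduction-modulo-$\ell$ compatibility of the middle two paragraphs genuinely precise: one must carry integral structures simultaneously on the automorphic side (the integral Hecke-algebra isomorphism of Appendix \ref{s:congruence}, together with the fact that the excursion algebra acts integrally on the entire category and not merely after inverting $\ell$) and on the spectral side, and then verify that restriction to $P$ really does convert the reduced excursion character into the reduction of an honest $\widehat{G}(\ov\bZ_\ell)$-valued homomorphism whose conjugacy class is detected modulo $\ell$. The conceptual source of this difficulty --- and the reason the statement is confined to wild inertia --- is that there is no canonical $\widehat{G}(\ov\bZ_\ell)$-valued model of the full parameter $\LLC^{\semis}_G(\pi)$, only of its restriction to wild inertia.
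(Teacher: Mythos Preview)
Your proposal is essentially correct and uses the same core mechanism as the paper: pass to integral coefficients, apply the excursion-algebra refinement of Theorem A (Theorem~\ref{ss:spreadoutexcursion}) modulo $\ell$, and then exploit that wild inertia is pro-$p$ while the kernel of $\wh{G}(\ov\bZ_\ell)\to\wh{G}(\ov\bF_\ell)$ is pro-$\ell$ to lift the conjugacy back to characteristic zero.

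The one genuine organizational difference is how integrality is obtained. The paper first invokes Bernstein's finiteness \cite[2.13]{Ber84} to reduce to finitely many cuspidal $\pi'$, twists each by an unramified character so that it is defined over $\cO_L$ as a $G'(E')$-representation, and then deduces that the \emph{full} parameter $\rho'$ admits an $\cO_L$-valued representative; the comparison mod $\la$ is then made for the whole parameter and only restricted to wild inertia at the very last step. You instead take a lattice directly in the finite-dimensional Hecke module $\pi^{K^n}$, restrict to wild inertia from the outset (where the parameter automatically has finite $p$-group image and hence an $\ov\bZ_\ell$-model), and avoid Bernstein's theorem altogether. Your route is a bit more streamlined and makes clear that the $d$ from Theorem A already suffices; the paper's route buys an integral model of the full parameter, which makes the final cohomological lifting a single clean application of the long exact sequence for $1\to\wh{K}\to\wh{G}(\cO_L)\to\wh{G}(\bF_\la)\to1$ rather than the combination of restriction-to-$P$ plus your rigidity lemma. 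Both require (implicitly in the paper, explicitly flagged by you) that for a homomorphism from a finite $p$-group, passing to the semisimplification of the ambient $\ov\bF_\ell$-parameter does not change the conjugacy class of its restriction---which follows from $H^1(p\text{-group},U(\ov\bF_\ell))=0$ for unipotent $U$.
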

This addresses (a generalization to quasisplit connected reductive groups of) a question of Gan--Harris--Sawin \cite[Conjecture 11.7]{GHSBP21}\footnote{Gan--Harris--Sawin work with the Genestier--Lafforgue correspondence \cite{GL17} for $G'$, but this agrees with the Fargues--Scholze correspondence by \cite[Theorem C]{LH23}.} and Scholze \cite[p.~470]{Sch14}.
\begin{rems*}\hfill
  \begin{enumerate}[1)]
  \item Since $\LLC^{\semis}_G$ equals the expected local Langlands correspondence when $G$ is a torus \cite[p.~331]{FS21}, in this case Theorem A and Theorem B follow from work of Aubert--Varma \cite[Theorem 1.2.1(iv)]{AV24}.

  \item One can ask whether our main theorems hold for $d=n$. The answer for part i) is already negative when $G$ is a wildly ramified induced torus \cite[Sec. 7.1]{MP19}. When $G$ is split but not a torus, the answer for part i) is also negative because our congruence subgroups arise from hyperspecial subgroups and hence are ``too big.'' However, if one instead uses congruence subgroups arising from Iwahori subgroups, then one expects the answer to be positive when $G$ is split \cite[Conjecture 11.7]{GHSBP21}.
  \end{enumerate}
\end{rems*}
\subsection*{Idea of proof}
We begin by introducing a \emph{moduli space of nonarchimedean local fields}, which allows us to conceptualize Heuristic \eqref{eqn:slogan}. This starts with the observation that, by Krasner's lemma, there exists an enumeration $\{E_i\}_{i\in\bN}$ of all $p$-adic fields with residue field $\bF_q$ (up to isomorphism), and as $i$ tends to infinity, the absolute ramification index of $E_i$ also tends to infinity. We use this to construct a topological ring $E$ such that
\begin{enumerate}[$\bullet$]
\item the topological space $\abs{\Spa{E}}$ is naturally homeomorphic to the one-point compactification $\bN\cup\{\infty\}$ of the discrete space $\bN$,
\item for all $i$ in $\bN$, the residue field of $\Spa{E}$ at $i$ is $E_i$, and the residue field of $\Spa{E}$ at $\infty$ is $E'\coloneqq\bF_q\lp{t}$.
\end{enumerate}
Therefore $\Spa{E}$ can be regarded as the moduli space of nonarchimedean local fields with residue field $\bF_q$. We prove that finite \'etale $E$-algebras are spread out from finite \'etale $E'$-algebras in a way that is compatible with Deligne's isomorphism, so the finite \'etale site of $\Spa{E}$ geometrizes the Galois side in our main theorems.

What about the automorphic side? First, using the above results, we spread out quasisplit connected reductive groups $G'$ over $E'$ to reductive group schemes $G$ over $E$ in a way that is compatible with Deligne's isomorphism. Next, we refine our construction of $E$ to produce a \emph{ring topological space} $\bE$ over $\bN\cup\{\infty\}$ that recovers $E$ after taking global sections. We use this to construct a group topological space $G(\bE)$ over $\bN\cup\{\infty\}$ that interpolates between $p$-adic groups for different nonarchimedean local fields, as well as to define a compact open group subspace $\bK^n\subseteq G(\bE)$ over $\bN\cup\{\infty\}$ that interpolates between $n$-th congruence subgroups. Finally, we introduce and study a notion of \emph{smooth representations} of $G(\bE)$ with $\La$-coefficients, which are sheaves of $\La$-modules on $\bN\cup\{\infty\}$ equipped with certain continuous actions of $G(\bE)$. An important example is the compactly supported induction $\cInd^{G(\bE)}_{\bK^n}\La$; crucially, we prove that its endomorphism sheaf
\begin{align*}
\ul\End_{G(\bE)}(\cInd_{\bK^n}^{G(\bE)}\La)
\end{align*}
is isomorphic to the constant sheaf $\ul{\cH(G'(E'),K'^n)_\La}$ on $\bN\cup\{\infty\}$ in a way that is compatible with Ganapathy's isomorphism.

From here, we geometrize the above theory as in work of Fargues--Scholze \cite{FS21}. Write $\cO\subseteq E$ for the subring of powerbounded elements. We begin by constructing a generalization of Witt vectors for $\cO$-algebras that interpolates between $\cO_i$-typical Witt vectors for all $i$ in $\bN$, where $\cO_i$ denotes the ring of integers of $E_i$. Next, we use this to define and study a generalization $X_S\ra\Spa{E}$ of the relative Fargues--Fontaine curve for any perfectoid space $S$ over $\ul{\bN\cup\{\infty\}}_{\bF_q}$. Restricting to perfectoid test objects $S$ puts us in the world of \emph{diamonds} \cite{Sch17},\footnote{Via the tilting equivalence and almost purity, perfectoid spaces provide \emph{another} example of Heuristic \eqref{eqn:slogan}. Hence their appearance here should be unsurprising. See \cite[p.~470]{Sch14}.} and here we define a space 
\begin{align*}
\Div^1_X\ra\ul{\bN\cup\{\infty\}}_{\bF_q}
\end{align*}
of degree $1$ relative effective Cartier divisors on $X_S$. By reducing to the situation considered in Fargues--Scholze \cite{FS21}, we prove a version of Drinfeld's lemma for $\Div^1_X$.

Continuing as in work of Fargues--Scholze \cite{FS21}, we introduce the moduli stack
\begin{align*}
  \Bun_G\ra\ul{\bN\cup\{\infty\}}_{\bF_q}
\end{align*}
of $G$-bundles on $X_S$. To study $\Bun_G$, we reduce many statements to the case of vector bundles by proving a Tannakian description of $G$-torsors, which is subtle because $E$ is non-noetherian. We then handle the case of vector bundles by using our generalization of Witt vectors to prove an \emph{explicit} version of Lubin--Tate theory over $E$, generalizing arguments of Fontaine \cite{Fon77} and Fargues--Fontaine \cite{FF18}.

At this point, we prove a version of the geometric Satake equivalence for $G/E$ with $\bZ_\ell$-coefficients. This requires working over non-noetherian rings like
\begin{align*}
\Cont(\bN\cup\{\infty\},\bZ_\ell),
\end{align*}
which seem inaccessible to Tannakian identification results, so we instead use an elaborate argument to reduce to the situation considered in Fargues--Scholze \cite{FS21}. Our geometric Satake equivalence lets us spread out geometric Hecke operators $T_V'$ from the stack $\Bun_{G'}$ studied in Fargues--Scholze \cite{FS21} to our stack $\Bun_G$.

We can now put everything together to prove Theorem A. Recall that $\LLC^{\semis}_{G'}$ is defined using \emph{excursion algebras}, which are certain $\ov\bZ_\ell$-algebras with canonical generators indexed by tuples $(J,V,x,\xi,\ga'_\bullet)$, where $J$ is a finite set, $V$ is a representation of the $J$-th power $(\prescript{L}{}G')^J$ of the $L$-group, $x$ and $\xi$ are elements of $V$ and $V^\vee$, respectively, that are fixed by the image of the diagonal map $\wh{G}'\hra(\prescript{L}{}G')^J$, and $\ga_\bullet'$ is a $J$-tuple of elements of the absolute Weil group of $E'$. Any such tuple induces an endomorphism of $A'\coloneqq\cInd_{K'^n}^{G'(E')}\ov\bF_\ell$ via the composition
\begin{align*}\label{eqn:excursion}
A'=T_{\mathbf{1}}'(A')\lra^x T_V'(A')\lra^{\ga_\bullet'}T_V'(A')\lra^\xi T_{\mathbf{1}}'(A')=A',\tag{$\times$}
\end{align*}
where we view $A'$ as an \'etale $\ov\bF_\ell$-sheaf on the open locus of trivial $G'$-bundles $\Bun^1_{G'}\subseteq\Bun_{G'}$. Using the geometric Hecke operators $T_V$ on our stack $\Bun_G$, we can spread out \eqref{eqn:excursion} to an endomorphism of $A\coloneqq\cInd_{\bK^n}^{G(\bE)}\ov\bF_\ell$ via the composition
\begin{align*}
A=T_{\mathbf{1}}(A)\lra^x T_V(A)\lra^{\ga_\bullet}T_V(A)\lra^\xi T_{\mathbf{1}}(A)=A,
\end{align*}
where we similarly view $A$ as an \'etale $\ov\bF_\ell$-sheaf on the open locus of trivial $G$-bundles $\Bun_G^1\subseteq\Bun_G$. Then part i) follows from compactness properties of $T_V$, and part ii) follows from our isomorphism
\begin{align*}
\ul{\cH(G'(E'),K'^n)_{\ov\bF_\ell}}\ra^\sim\ul\End_{G(\bE)}(A).
\end{align*}

Finally, let us sketch the proof of Theorem B. Work of Bernstein \cite{Ber84} reduces us to proving part ii) for individual irreducible cuspidal representations $\pi'$ of $G'(E')$ over $\ov\bQ_\ell$. After twisting $\pi'$ by an unramified character, we can assume that $\pi'$ is defined over $\ov\bZ_\ell$, which implies that the corresponding irreducible smooth representation $\pi$ of $G(E)$ is also defined over $\ov\bZ_\ell$. By applying (the excursion algebra version of) Theorem A, the desired result then follows from comparing $\LLC^{\semis}_{G'}(\pi')$ and $\LLC^{\semis}_G(\pi)$ mod $\ell$ and using the fact that the kernel of $\wh{G}(\ov\bZ_\ell)\ra\wh{G}(\ov\bF_\ell)$ is (ind-)pro-$\ell$.

\subsection*{Outline}
In \S\ref{s:smoothrepresentations}, we develop a theory of smooth representations over profinite sets. In \S\ref{s:localfields}, we introduce our moduli space of nonarchimedean local fields and study its Galois theory. In \S\ref{s:FFcurves}, we construct a generalization of Witt vectors over $\cO$, which we use to define a version of relative Fargues--Fontaine curves. In \S\ref{s:LT}, we prove an explicit version of Lubin--Tate theory over $E$, and we apply this in \S\ref{s:vectorbundles} to study vector bundles on Fargues--Fontaine curves in our context.

In \S\ref{s:reductivegroups}, we introduce the connected reductive group $G$ over $E$, and we prove that Hecke algebras for the resulting family of $p$-adic groups over $\bN\cup\{\infty\}$ geometrize Ganapathy's isomorphism. (Actually, we use an isomorphism whose construction differs from that of Ganapathy; see Appendix \ref{s:congruence}.) We also define the moduli stack $\Bun_G$. In \S\ref{s:Grassmannians}, we introduce Beilinson--Drinfeld affine Grassmannians in our context, and in \S\ref{s:Satake} we prove a version of the geometric Satake equivalence for $G/E$. Finally, we put everything together in \S\ref{s:final} to prove Theorem A and Theorem B.

\subsection*{Notation}
We endow all finite free $\bZ$-modules with the discrete topology. For all rings $R$ and affine groups $G$ over $R$, write $\Rep{G}$ for the category of representations of $G$ on finite free $R$-modules. We view all functors between derived categories as derived functors, and starting in \S\ref{s:Satake}, we view derived categories as $\infty$-categories.

Starting in \S\ref{s:FFcurves}, we freely use definitions from perfectoid geometry as in \cite{Sch17} and \cite{FS21}. We work over $\Spd\bF_q$ (unless otherwise specified) until \ref{ss:Drinfeldapplied}, after which we work over $\Spd\ov\bF_q$. For all small v-stacks $Z$ on $\Perf_{\bF_q}$ and rings $\La$ that are $\ell$-power torsion, write $\Shv_{\et}(Z,\La)\subseteq D_{\et}(Z,\La)$ for the heart of the standard $t$-structure. Write $D_{\lc}(Z,\La)$ for the full subcategory of $D_{\et}(Z,\La)$ given by objects that are \'etale-locally constant with perfect fibers, and write $\LocSys(Z,\La)$ for the full subcategory of $D_{\et}(Z,\La)$ given by objects that are \'etale-locally constant with finite projective fibers.

\subsection*{Acknowledgements}
The author is indebted to Peter Scholze for suggesting that \cite{FS21} should extend to families of local fields over $\bN\cup\{\infty\}$. The author would also like to thank Radhika Ganapathy for answering many questions about her work, as well as for sharing a revised version of \cite{Gan24}. Finally, the author would like to thank Michael Harris for his encouragement, and to thank Tasho Kaletha and Zhiwei Yun for helpful discussions.

During the completion of this work, the author was partially supported by NSF Grant \#2303195. The author also benefitted greatly from visiting the Hausdorff Research Institute for Mathematics, supported by the DFG under Germany's Excellence Strategy -- EXC-2047/1 -- 390685813.

\section{Smooth representations over profinite sets}\label{s:smoothrepresentations}
In this section, we develop a theory of \emph{smooth representations} for certain group topological spaces over a profinite set. The group topological spaces that we consider are a relative version of locally compact totally disconnected (or \emph{lctd}) topological groups. Examples will include the analogues of absolute Galois groups in \S\ref{s:localfields}, absolute Weil groups in \S\ref{s:FFcurves}, and $p$-adic groups in \S\ref{s:reductivegroups}.

This section is elementary and self-contained; although nobody would want to, it can be read independently of the other sections.

\subsection{}\label{ss:profinitesetsheaf}
We start with some recollections about topological spaces $S$. Recall that the \'etal\'e space construction yields an equivalence from the category of sheaves on $S$ to the category of \'etale topological spaces over $S$, with a quasi-inverse given by the sheaf of continuous sections \cite[II.6, Corollary 3]{MM94}. For any topological space $X$ over $S$ and open subset $U$ of $S$, write $X(U)$ for the set of continuous sections of $X_U\ra U$.

For the rest of this section, assume that $S$ is profinite. Then compact open subsets $U$ of $S$ form a basis for the topology of $S$, and because $U$ is also profinite, every open cover of $U$ splits. Hence when evaluating on this basis, it suffices to check the sheaf condition for $S$ on pairs of disjoint compact open subsets of $S$.

\subsection{}\label{ss:grouphypotheses}
We study the following generalization of lctd groups in the sense of \cite[2.2]{Kot05}.
\begin{defn*}
  Let $G$ be a group topological space over $S$. We say that $G$ is \emph{lctd over $S$} if $G$ is Hausdorff and
  \begin{enumerate}[a)]
\item there exists a family $\{K^\al\}_\al$ of compact open group subspaces of $G$ over $S$ that is cofinal among neighborhoods of the identity section in $G$,
\item for all $s$ in $S$ and $g_s$ in the fiber $G_s$, there exists a neighborhood $U$ of $s$ and $g$ in $G(U)$ such that $g(s)=g_s$.
\end{enumerate}
\end{defn*}
For the rest of this section, let $G$ be a group topological space over $S$ that is lctd over $S$, and let $\{K^\al\}_\al$ be a family of compact open group subspaces of $G$ over $S$ satisfying Definition \ref{ss:grouphypotheses}.a).

For all $s$ in $S$, evaluation at $s$ yields a homomorphism $\ev_s:\varinjlim_UG(U)\ra G_s$, where $U$ runs over compact neighborhoods of $s$. Note that Definition \ref{ss:grouphypotheses}.b) is equivalent to $\ev_s$ being surjective.

\subsection{}\label{ss:evaluationkernel}
While $\ev_s$ need not be bijective, it does satisfy the following relationship with open group subspaces $H$ of $G$. Note that any such $H$ is Hausdorff and satisfies Definition \ref{ss:grouphypotheses}.b).
\begin{lem*}
  For all $s$ in $S$, the kernel of $\ev_s$ lies in $\varinjlim_UH(U)$, where $U$ runs over compact neighborhoods of $s$. Consequently, $\ev_s$ induces a bijection
  \begin{align*}
   \varinjlim_UG(U)/\varinjlim_UH(U)\ra^\sim G_s/H_s.
  \end{align*}
\end{lem*}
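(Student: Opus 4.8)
The plan is to reduce everything to one elementary observation about the \emph{open} subspace $H$: if $X$ is any topological space over $S$, if $U$ is a compact neighborhood of $s$, if $x\in X(U)$, and if $W\subseteq X$ is open with $x(s)\in W$, then there is a compact open neighborhood $U'\subseteq U$ of $s$ with $x(U')\subseteq W$. Indeed $x^{-1}(W)$ is an open neighborhood of $s$ in $U$, and $U$ is profinite (being a compact open subspace of the profinite set $S$), so its compact open subsets form a neighborhood basis of $s$; shrink $U$ to such a $U'$. Since $W\hookrightarrow X$ is a subspace inclusion, $x|_{U'}$ is then a continuous section of $W_{U'}\to U'$.

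For the kernel assertion, let $[g]\in\varinjlim_UG(U)$ lie in $\ker\ev_s$, represented by some $g\in G(U)$ over a compact neighborhood $U$ of $s$; thus $g(s)$ is the identity of $G_s$. As $H$ is a group subspace it contains the identity section of $G$, so $g(s)\in H$. Applying the observation with $X=G$ and $W=H$ yields a compact open $U'\subseteq U$ containing $s$ with $g|_{U'}\in H(U')$, so $[g]$ is the image of $[g|_{U'}]\in\varinjlim_UH(U)$. (Here $\varinjlim_UH(U)$ is genuinely a subgroup of $\varinjlim_UG(U)$: each $H(U)\hookrightarrow G(U)$ is injective because $H\hookrightarrow G$ is, and two sections into $H$ that become equal in some $G(U')$ are already equal in $H(U')$.) The same argument re-proves the assertion used above that $H$ satisfies Definition \ref{ss:grouphypotheses}.b): lift a point of $H_s$ to a section of $G$ near $s$ via Definition \ref{ss:grouphypotheses}.b) for $G$, then shrink so the section lands in $H$.

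For the displayed bijection, note first that $\ev_s$ is surjective --- this is exactly Definition \ref{ss:grouphypotheses}.b) --- and that it carries $\varinjlim_UH(U)$ into $H_s$, since a section $h\in H(U)$ has $h(s)\in H_s$. Hence $\ev_s$ descends to a surjection $\overline{\ev}_s\colon\varinjlim_UG(U)/\varinjlim_UH(U)\to G_s/H_s$. It remains to check injectivity. Suppose $[g_1],[g_2]\in\varinjlim_UG(U)$ satisfy $\ev_s([g_1])H_s=\ev_s([g_2])H_s$, so $\ev_s([g_2]^{-1}[g_1])\in H_s$. Using that $H$ satisfies Definition \ref{ss:grouphypotheses}.b), choose $[h]\in\varinjlim_UH(U)$ with $\ev_s([h])=\ev_s([g_2]^{-1}[g_1])$ inside $H_s\subseteq G_s$. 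Then $[g_2]^{-1}[g_1][h]^{-1}\in\ker\ev_s$, hence lies in $\varinjlim_UH(U)$ by the kernel assertion, so $[g_2]^{-1}[g_1]\in\varinjlim_UH(U)$ and $[g_1],[g_2]$ have the same image in the quotient.

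There is no serious obstacle here: the whole statement rests on the observation that a continuous section passing through the \emph{open} set $H$ at $s$ must stay inside $H$ on a neighborhood of $s$, which by profiniteness of $S$ can be taken compact open. Everything else is formal manipulation of filtered colimits and cosets, the only mild points of care being that the transition maps keep $\varinjlim_UH(U)$ embedded in $\varinjlim_UG(U)$ and that $\overline{\ev}_s$ is well defined, both immediate from the definitions.
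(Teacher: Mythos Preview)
Your proof is correct and follows essentially the same approach as the paper. The paper's proof is terser: for the kernel it argues exactly as you do (take $g^{-1}(H)$, shrink to a compact neighborhood), and for the bijection it simply says ``follows from Definition~\ref{ss:grouphypotheses}.b) applied to $G$ and $H$,'' leaving the injectivity argument you spell out implicit; your version is a faithful unpacking of that sentence.
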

\begin{proof}
Let $g$ be in $\ker\ev_s$, and view $g$ as a continuous map $g:U\ra G$ for some compact neighborhood $U$ of $s$. Then $g^{-1}(H)$ is a neighborhood of $s$, so it contains a compact neighborhood $U'$ of $s$. Hence $g|_{U'}$ lies in $H(U')$. Finally, the last statement follows from Definition \ref{ss:grouphypotheses}.b) applied to $G$ and $H$.
\end{proof}

\subsection{}\label{ss:openquotient}
Since $H$ is open in $G$, we expect the ``quotient'' of $G$ by $H$ to be ``discrete.'' We make this precise as follows. Write $G$ and $H$ for their associated sheaves on $S$, and consider the quotient $G/H$ of sheaves on $S$.
\begin{prop*}
  For all compact open subsets $U$ of $S$, we have
  \begin{align*}
    G(U)/H(U)=(G/H)(U),
  \end{align*}
  and for all $s$ in $S$, we have $G_s/H_s=(G/H)_s$. Moreover, the map $q:G\ra G/H$ over $S$ whose fiber at $s$ equals the quotient map $G_s\ra G_s/H_s$ is continuous.
\end{prop*}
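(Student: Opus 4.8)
The plan is to establish the three assertions in turn: the first two are essentially formal, given \ref{ss:profinitesetsheaf} and Lemma \ref{ss:evaluationkernel}, while the continuity statement carries what little content there is.

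For $G(U)/H(U) = (G/H)(U)$ on compact opens $U$, by \ref{ss:profinitesetsheaf} it suffices to check that the presheaf $U \mapsto G(U)/H(U)$ satisfies the sheaf condition on pairs of disjoint compact opens. This is immediate: $G(-)$ and $H(-)$ are sheaves, hence send $U_1 \sqcup U_2$ to $G(U_1) \times G(U_2)$ and $H(U_1) \times H(U_2)$, and forming quotients commutes with finite products. Thus $U \mapsto G(U)/H(U)$ is the restriction to compact opens of a sheaf, necessarily $G/H$, since $G/H$ is the sheafification of $U \mapsto G(U)/H(U)$ and sheafification does not alter the sections of a presheaf that is already a sheaf on a basis. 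Taking the filtered colimit over the cofinal system of compact open neighborhoods of a point $s$, and using that filtered colimits commute with finite products and with quotients, gives $(G/H)_s = \varinjlim_U G(U)/H(U) = (\varinjlim_U G(U))/(\varinjlim_U H(U))$, which Lemma \ref{ss:evaluationkernel} identifies with $G_s/H_s$; this is the stalk statement. In particular, via the \'etal\'e-space equivalence of \ref{ss:profinitesetsheaf}, the space $G/H$ over $S$ has fiber $G_s/H_s$ over each $s$, so the fiberwise quotient maps assemble into a well-defined map $q \colon G \to G/H$ over $S$ with fiber $q_s$ at each $s$.

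For the continuity of $q$, I would first record a basis for the \'etal\'e topology on $G/H$. A basic open of $\mathrm{Et}(G/H)$ is the image of a local section; by the first part, over a compact open $U$ every section of $G/H$ is the class of a continuous section $g \in G(U)$, and the image of this section in $\mathrm{Et}(G/H)$ is exactly $q(g(U)) = \{\,q_s(g(s)) : s \in U\,\}$, where $g(U) \subseteq G$ denotes the image of $g$. Write $W_{U,g}$ for this set; as $U$ ranges over compact opens and $g$ over $G(U)$, these form a basis. A direct unwinding of definitions gives $q^{-1}(W_{U,g}) = \bigcup_{s \in U} g(s) H_s$, and the key observation is that the right-hand side equals $r_g(H_U)$, where $\pi \colon G \to S$ is the structure map and $r_g \colon G_U \to G_U$, $h \mapsto g(\pi(h)) \cdot h$, is left translation by the local section $g$. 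Since $G$ is a group object over $S$, the map $r_g$ is continuous --- it is assembled from the continuous multiplication $G \times_S G \to G$ together with $g$ --- and is a homeomorphism, with inverse $h \mapsto g(\pi(h))^{-1} \cdot h$ built from the continuous inversion. As $H$ is open in $G$ and $U$ is open in $S$, the set $H_U \subseteq G$ is open, so $q^{-1}(W_{U,g}) = r_g(H_U)$ is open, whence $q$ is continuous.

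The only step demanding genuine care is this last one, and within it the identification of the \'etal\'e topology: one needs the first assertion --- that over a compact open a section of $G/H$ genuinely lifts to a continuous section of $G$ --- to know that the basic opens of $G/H$ are honest images $q(g(U))$, after which the ``translate by a local section'' maneuver turns each preimage into an open set. Everything else is routine bookkeeping with filtered colimits, finite products, and the cited results.
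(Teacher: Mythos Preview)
Your proof is correct and follows essentially the same approach as the paper. Both arguments verify the sheaf condition on disjoint compact opens via \ref{ss:profinitesetsheaf}, deduce the stalk statement from Lemma \ref{ss:evaluationkernel}, and prove continuity of $q$ by showing that the preimage of a basic open $\sigma(U)$ with $\sigma = gH(U)$ is the image of $H_U$ under left translation by the section $g$; your map $r_g$ is exactly the paper's composite $G_U = U\times_S G \xrightarrow{g\times\id} G_U\times_S G \xrightarrow{m} G_U$.
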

\begin{proof}
  Let $U$ and $U'$ be disjoint compact open subsets of $S$. Then
  \begin{align*}
    G(U\cup U')/H(U\cup U') &= (G(U)\times G(U'))/(H(U)\times H(U')) \\
    &= \big[G(U)/H(U)\big] \times \big[G(U')/H(U')\big],
  \end{align*}
so \ref{ss:profinitesetsheaf} shows that, when evaluating $G/H$ on compact open subsets of $S$, we do not need to sheafify. Lemma \ref{ss:evaluationkernel} immediately implies that $G_s/H_s=(G/H)_s$.

  To see that $q$ is continuous, it suffices to show that, for all compact open subsets $U$ of $S$ and $\sigma$ in $(G/H)(U)$, the preimage under $q$ of $\sigma(U)$ is open. Now $\sigma=gH(U)$ for some $g$ in $G(U)$, so this preimage equals the union of $g(u)H_u$ for all $u$ in $U$. But this is precisely the image of the open subset $H_U\subseteq G_U$ under the homeomorphism
  \begin{gather*}
    \xymatrix{G_U=U\times_SG\ar[r]^-{g\times\id} & G_U\times_SG\ar[r]^-m & G_U.}\qedhere
  \end{gather*}
\end{proof}

\subsection{}\label{ss:smoothreps}
The notion of discrete spaces with continuous actions (and consequently of smooth representations) generalizes to our setting as follows. Let $\La$ be a ring.
\begin{defn*}\hfill
  \begin{enumerate}[a)]
  \item An \emph{\'etale $G$-space} is an \'etale topological space $X$ over $S$ along with a continuous action $a:G\times_SX\ra X$ over $S$.
  \item A \emph{smooth representation of $G$ over $\La$} is a $\La$-module \'etale topological space $V$ over $S$ along with a continuous $\La$-linear action $a:G\times_SV\ra V$ over $S$.
  \end{enumerate}
  Write $\Rep(G,\La)$ for the category of smooth representations of $G$ over $\La$, and write $\ul{\La}$ for the constant topological space over $S$ along with the trivial action of $G$.
\end{defn*}
For all smooth representations $V$ and $V'$ of $G$ over $\La$, write $\ul{\Hom}_G(V,V')$ for the sheaf of $\La$-modules on $S$ that sends compact open subsets $U$ of $S$ to the set of morphisms $V_U\ra V_U'$ of smooth representations of $G_U$ over $\La$.

\subsection{}\label{ss:equivariantsheaves}
We now translate the various ways of interpreting discrete spaces with continuous actions (and consequently of smooth representations) into our more general setting. Let $X$ be an \'etale $G$-space, and consider the continuous map
\begin{align*}
\xymatrix{\rho:G\times_SX\ar[r]^-{(\pr_1,a)} & G\times_SX}
\end{align*}
 of \'etale topological spaces over $G$.
\begin{lem*}
  This association induces an equivalence of categories between
  \begin{enumerate}[a)]
  \item \'etale $G$-spaces $X$,
  \item $G$-equivariant sheaves $(X,\rho:\pr^*X\ra^\sim\pr^*X)$ on $S$.
  \end{enumerate}
\end{lem*}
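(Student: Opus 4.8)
The plan is to exhibit mutually quasi-inverse functors; essentially everything reduces to unwinding the group action axioms, and both functors are the identity on underlying sheaves. In one direction, from an \'etale $G$-space $(X,a)$ I send $X$ --- viewed as a sheaf on $S$ via the \'etal\'e-space/sheaf equivalence of \ref{ss:profinitesetsheaf}, which holds for any topological space --- to the pair $(X,\rho)$ with $\rho=(\pr_1,a)$. This $\rho$ is continuous by hypothesis, and it is an isomorphism of \'etale spaces over $G$: its inverse is $(\pr_1,a\circ(\iota\times_S\id_X))$, where $\iota:G\ra G$ denotes inversion, which is again continuous since $\iota$, $a$, and $\pr_1$ are. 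Passing through the \'etal\'e-space/sheaf equivalence (applied over $G$, and over $G\times_SG$), $\rho$ becomes an isomorphism $\pr^*X\ra^\sim\pr^*X$ of sheaves on $G$, and I then check that it satisfies the two conditions defining a $G$-equivariant sheaf: restriction along the unit section $e:S\ra G$ recovers the unit axiom $a\circ(e\times_S\id_X)=\id_X$, while pulling back along the two projections and the multiplication $G\times_SG\ra G$ turns the cocycle identity for $\rho$ into the associativity identity $a\circ(m\times_S\id_X)=a\circ(\id_G\times_Sa)$.

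Conversely, from a $G$-equivariant sheaf $(X,\rho)$ I recover the continuous map $a\coloneqq\pr_2\circ\rho:G\times_SX\ra X$, and reading the unit and cocycle conditions backwards yields the unit and associativity axioms for the action $a$. The two assignments are inverse on objects: since $\rho$ lies over $G$ one has $\rho=(\pr_1,\pr_2\circ\rho)$ automatically, and conversely $\pr_2\circ(\pr_1,a)=a$. For morphisms, a map $f:X\ra X'$ of the underlying sheaves --- equivalently, a continuous map over $S$ --- is $G$-equivariant for $(a,a')$ if and only if it intertwines $\rho$ with $\rho'$: composing with $\pr_1$ and $\pr_2$ shows that both conditions say precisely that $f$ is equivariant on every fiber. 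Hence the two functors are fully faithful and essentially surjective, which is the asserted equivalence.

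I do not expect a genuine obstacle here: the lemma is a bookkeeping statement. The only points that warrant a moment's care are (i) checking that $\rho$ is a homeomorphism rather than merely a continuous bijection, so that it genuinely defines an isomorphism of sheaves --- this is exactly where continuity of inversion on $G$ enters --- and (ii) fixing the convention that a $G$-equivariant sheaf carries the unit condition over $S$ and the cocycle condition over $G\times_SG$, so that the target category parametrizes group actions rather than arbitrary automorphisms of $\pr^*X$.
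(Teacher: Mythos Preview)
Your proof is correct and follows essentially the same route as the paper: both construct the same pair of mutually inverse functors $(X,a)\mapsto(X,(\pr_1,a))$ and $(X,\rho)\mapsto(X,\pr_2\circ\rho)$, and both verify the action/cocycle axioms by direct translation.

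The one point of genuine contrast is your care point (i). You establish that $\rho$ is a homeomorphism by writing down an explicit continuous inverse using the inversion on $G$; the paper instead observes that $\rho$ is a continuous bijection (checked on fibers) between spaces \'etale over $G$, and any such map is automatically a homeomorphism. Your argument is slightly more hands-on and uses the group structure of $G$ a second time; the paper's argument is a one-line appeal to a general fact about \'etale spaces and would go through for any monoid object. Either way the step is immediate.
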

\begin{proof}
  By checking on fibers, we see that $\rho:G\times_SX\ra G\times_SX$ satisfies the cocycle condition and is bijective. Since $G\times_SX$ is \'etale over $G$, the latter implies that $\rho$ is a homeomorphism, as desired.

  In the other direction, let $X$ be a sheaf on $S$ along with an isomorphism
  \begin{align*}
    \rho:\pr^*X\ra^\sim\pr^*X
  \end{align*}
  of sheaves on $G$ satisfying the cocycle condition. Then the composition
  \begin{align*}
    \xymatrix{a:G\times_SX\ar[r]^-\rho &G\times_SX\ar[r]^-{\pr_2} &X}
  \end{align*}
is continuous, and checking on fibers shows that $a$ is an action over $S$.
\end{proof}

\subsection{}\label{ss:continuitycriterion}
Let $X$ be an \'etale $G$-space, and consider the action $G\times X\ra X$ of sheaves on $S$ induced by the continuous map $a:G\times_SX\ra X$.
\begin{prop*}
  This association induces an equivalence of categories between
  \begin{enumerate}[a)]
  \item \'etale $G$-spaces $X$,
  \item sheaves $X$ on $S$ along with an action $G\times X\ra X$ of sheaves on $S$ such that, for all compact open subsets $U$ of $S$ and $\tau$ in $X(U)$, there exists an $\al$ with $K^\al(U)$ stabilizing $\tau$.
  \end{enumerate}
\end{prop*}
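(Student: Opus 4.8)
\emph{Plan.} I would build a functor from (b) back to (a) and check it is quasi-inverse, using the \'etal\'e space equivalence recalled in \ref{ss:profinitesetsheaf} and Lemma \ref{ss:evaluationkernel}. Throughout, identify an \'etale $G$-space with its underlying sheaf on $S$, and write $m\colon G\times X\to X$ for the action of sheaves on $S$ induced by a continuous action $a\colon G\times_SX\to X$, so $m(g,\tau)(u)=a(g(u),\tau(u))$. First I check that $m$ satisfies the stabilizer condition. Fix a compact open $U\subseteq S$ and $\tau\in X(U)$; since $U$ is compact in the Hausdorff space $S$ it is clopen, so $G_U=\pi^{-1}(U)$ is clopen in $G$. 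As $X\to S$ is \'etale, $\tau(U)$ is open in $X$ and $\tau(U)\to U$ is a homeomorphism, so for the continuous map $a_\tau\colon G_U\to X$, $g\mapsto a(g,\tau(\pi(g)))$, we have
\begin{align*}
\mathcal S_\tau\coloneqq a_\tau^{-1}(\tau(U))=\{g\in G_U:a(g,\tau(\pi(g)))=\tau(\pi(g))\},
\end{align*}
an open subset of $G_U$ containing the identity section $e(U)$. Then $\mathcal S_\tau\cup(G\setminus G_U)$ is an open neighborhood of the identity section in $G$, so cofinality of $\{K^\al\}_\al$ gives an $\al$ with $K^\al\cap G_U\subseteq\mathcal S_\tau$; for this $\al$, every section in $K^\al(U)$ stabilizes $\tau$.

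\emph{Reconstruction.} The crucial observation is that for any group subspace $H$ of $G$ over $S$ and compact opens $V\subseteq U\subseteq S$, restriction $H(U)\to H(V)$ is surjective: $V$ is clopen in $U$, so extend a section over $V$ by the identity section over $U\setminus V$. Hence if $K^\al(U)$ stabilizes $\tau\in X(U)$, then $K^\al(V)$ stabilizes $\tau|_V$ for all compact open $V\subseteq U$. Now let $(X,m)$ be as in (b). The opens $\tau(U)$ (for $U$ compact open, $\tau\in X(U)$) form a basis of $X$ and $G\times_S\tau(U)\cong G_U$; for each $\tau$ choose $\al$ with $K^\al(U)$ stabilizing $\tau$. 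For $g\in G_U$ with $u=\pi(g)$, use Definition \ref{ss:grouphypotheses}.b) to pick a compact open $V\ni u$ inside $U$ and $\tilde g\in G(V)$ with $\tilde g(u)=g$, and set $a_\tau(g)\coloneqq m(\tilde g,\tau|_V)(u)$; this is independent of $(V,\tilde g)$ since two choices differ by an element of $\varinjlim_WK^\al(W)$ (Lemma \ref{ss:evaluationkernel}), which fixes $\tau$ near $u$ by the previous sentence. The map $a_\tau\colon G_U\to X$ is continuous: near $g_0\in G_U$, choosing $\tilde g\in G(V)$ with $\tilde g(\pi(g_0))=g_0$, the open set $\{g'\in G_V:\tilde g(\pi(g'))^{-1}g'\in K^\al\}$ contains $g_0$, and there $a_\tau$ equals $m(\tilde g,\tau|_V)\circ\pi$ (again because $K^\al$ fixes $\tau$ locally, plus Definition \ref{ss:grouphypotheses}.b)). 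Since $a_\tau$ and $a_{\tau'}$ agree on $G\times_S(\tau(U)\cap\tau'(U'))$, where $\tau,\tau'$ locally coincide, they glue to a continuous $a\colon G\times_SX\to X$ over $S$, whose fibers are the stalk actions $\varinjlim_UG(U)\times X_s\to X_s$ descended along $\ev_s$ (valid since $\ker\ev_s$ acts trivially, by the same well-definedness); in particular $a$ is an action.

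\emph{Wrap-up and main obstacle.} Taking $\tilde g=g$ in the definition of $a_\tau$ shows the $m$ induced by the reconstructed $a$ is the original $m$, and on basic opens $\tau(U)$ the construction reproduces a given \'etale $G$-space, so the two functors are mutually inverse; functoriality and full faithfulness follow from the \'etal\'e space equivalence of \ref{ss:profinitesetsheaf}, since on either side a morphism is just a morphism over $S$ intertwining the actions, which is checked on fibers as in \ref{ss:equivariantsheaves}. The genuinely delicate step is the reconstruction: promoting the fiberwise/sheaf-level action to a continuous map uses in an essential way that each $K^\al$ is \emph{open} (so that $a$ is locally constant modulo $K^\al$), together with the surjectivity of restriction for section sheaves of group subspaces, which is precisely what lets the stabilizer condition in (b)---stated only for sections over $U$---be invoked over arbitrarily small compact opens. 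Everything else reduces to bookkeeping on fibers and sections via \ref{ss:profinitesetsheaf} and Lemma \ref{ss:evaluationkernel} (and one could alternatively package the continuity through the quotient $G/K^\al$ of Proposition \ref{ss:openquotient}).
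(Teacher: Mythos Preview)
Your argument is correct and follows essentially the same route as the paper: both directions hinge on Lemma~\ref{ss:evaluationkernel} to show that the sheaf action descends to fibers, and both establish continuity by showing that on $G\times_S\tau(U)$ the action is locally given by $u\mapsto m(\tilde g,\tau)(u)$ for a fixed section $\tilde g$. The paper organizes this last step by factoring through the \'etale quotient $G/K^\al$ of Proposition~\ref{ss:openquotient} (exactly the alternative you mention at the end), whereas you work directly with the open set $\{g'\in G_V:\tilde g(\pi(g'))^{-1}g'\in K^\al\}$; these are the same idea. Your explicit ``surjectivity of restriction'' observation for group subspaces is used implicitly in the paper's proof as well (in passing from $K^\al(U)$ stabilizing $\tau$ to $K^\al(U')$ stabilizing $\tau|_{U'}$).
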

Proposition \ref{ss:continuitycriterion} implies that $\Rep(G,\La)$ is naturally an abelian category, so we can form its derived category $D(G,\La)$.
\begin{proof}
  Since $a$ is continuous and $\tau$ is an open embedding, the cartesian diagram
  \begin{align*}
    \xymatrix{\stab_G\tau\ar[r]\ar[d] & U\ar[dd]^-\tau \\
    G\times_SU\ar[d]^-{\id\times\tau} & \\
    G\times_SX\ar[r]^-a & X}
  \end{align*}
  shows that $\stab_G\tau$ is a neighborhood of the identity section in $G_U$. By Definition \ref{ss:grouphypotheses}.a), there exists an $\al$ with $K^\al_U$ lying in $\stab_G\tau$, so our action $G\times X\ra X$ of sheaves on $S$ satisfies the property in part b).

  In the other direction, let $X$ be a sheaf on $S$ along with an action $G\times X\ra X$ of sheaves on $S$ satisfying the property in part b). For all $s$ in $S$, taking stalks yields an action of $\varinjlim_UG(U)$ on $X_s$, where $U$ runs over compact neighborhoods of $s$. We claim that this action factors through $\ev_s:\varinjlim_UG(U)\ra G_s$. To see this, let $g$ be in $\ker\ev_s$, let $\tau$ be in $X_s$, and view $\tau$ as an element of $X(U)$ for some compact neighborhood $U$ of $s$. Lemma \ref{ss:evaluationkernel} represents $g$ as an element of $K^\al(U')$ for some compact neighborhood $U'\subseteq U$ of $s$, so $g$ stabilizes $\tau|_{U'}$, as desired.

  The claim induces an action $G_s\times X_s\ra X_s$, and taking the disjoint union over $s$ yields an action $a:G\times_SX\ra X$ over $S$. As $U$ and $\tau$ vary, the open subsets $G\times_S\tau(U)$ of $G\times_SX$ form a cover, so it suffices to check the continuity of $a$ by restricting to $G\times_S\tau(U)$. Then Proposition \ref{ss:openquotient} and checking on fibers show that $a:G\times_S\tau(U)\ra X$ equals the composition
  \begin{align*}
    \xymatrix{G\times_S\tau(U)\ar[r]^-{q\times\id} & G/K^\al\times_S\tau(U)\ar[r]& X,}
  \end{align*}
  so it suffices to show that the right map is continuous. Because $G/K^\al$ is \'etale over $S$, as $\sg$ varies through $(G/K^\al)(U)$, the open subsets $\sg(U)\times_S\tau(U)$ of $G/K^\al\times_S\tau(U)$ form a cover. Therefore it suffices to restrict to $\sg(U)\times_S\tau(U)$. Finally, Proposition \ref{ss:openquotient} identifies $\sg=gK^\al(U)$ for some $g$ in $G(U)$, and we see that $a:\sg(U)\times_S\tau(U)\ra X$ equals the map $g\cdot\tau:U\ra X$. This is indeed continuous.
\end{proof}

\subsection{}\label{ss:etaleGspaces}
Endow the category of \'etale $G$-spaces with the Grothendieck topology whose coverings are given by jointly surjective collections of \'etale maps.
\begin{cor*}
The site $\{\mbox{\'etale }G\mbox{-spaces}\}$ is subcanonical, and the Yoneda embedding
  \begin{align*}
    \{\mbox{\'etale }G\mbox{-spaces}\}\hra\big\{\mbox{sheaves on }\{\mbox{\'etale }G\mbox{-spaces}\}\big\}
  \end{align*}
is an equivalence of categories.
\end{cor*}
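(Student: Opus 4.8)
The plan is to recognize the category of \'etale $G$-spaces as a Grothendieck topos and then invoke the standard fact that a Grothendieck topos is equivalent to the category of sheaves on itself for its canonical topology. Concretely, Proposition \ref{ss:continuitycriterion} identifies \'etale $G$-spaces with the category $\mathcal{C}$ of sheaves on $S$ carrying a smooth $G$-action. I would check that $\mathcal{C}$ is a Grothendieck topos with the small generating family $\mathcal{G}$ given by the objects $(G/K^\alpha)_U$, where $\alpha$ ranges over the compact open group subspaces of $G$ and $U$ over compact open subsets of $S$; each $(G/K^\alpha)_U$ is a legitimate \'etale $G$-space by Propositions \ref{ss:openquotient} and \ref{ss:continuitycriterion}. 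Generation rests on the identity $\Hom_{\mathcal{C}}((G/K^\alpha)_U, X) = \{\tau \in X(U) : K^\alpha(U) \text{ stabilizes } \tau\}$ (the morphism sending the identity coset to $\tau$, which is well posed by Propositions \ref{ss:openquotient} and \ref{ss:continuitycriterion}), so that as $\alpha$ varies one recovers $X(U)$ and hence $X$ itself. Granting this, the corollary reduces to two points: the topology of jointly surjective \'etale maps is subcanonical, and it coincides with the canonical topology $J_{\mathrm{can}}$ of $\mathcal{C}$.

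For subcanonicity I would argue directly: given a jointly surjective family of \'etale maps $\{X_i \to X\}$ of \'etale $G$-spaces and compatible $G$-morphisms $f_i : X_i \to Y$, the $f_i$ glue to a continuous map $f : X \to Y$ over $S$ (local homeomorphisms are open, so continuous maps descend along jointly surjective \'etale families), and $f$ is $G$-equivariant because the two maps $G \times_S X \to Y$ sending $(g,x)$ to $f(g\cdot x)$ and to $g\cdot f(x)$ agree after pullback along the jointly surjective \'etale family $\{G \times_S X_i \to G \times_S X\}$; uniqueness of $f$ is clear. To see that the topology is in fact $J_{\mathrm{can}}$, the key observation is that any morphism between \'etale topological spaces over $S$ is automatically \'etale, so a family of $G$-morphisms of \'etale $G$-spaces is jointly surjective \'etale if and only if it is jointly surjective, which is exactly being an epimorphic family in $\mathcal{C}$ (epimorphisms in the topos $\mathcal{C}$ are the fiberwise surjections), i.e. a $J_{\mathrm{can}}$-covering family. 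Finally $\mathcal{G}$ is dense: any point of a fiber $X_s$ lies on a section $\tau$ of $X$ over some compact open $U \ni s$ (as $X \to S$ is a local homeomorphism), and after shrinking $U$ so that $K^\alpha(U)$ stabilizes $\tau$, the associated morphism $(G/K^\alpha)_U \to X$ hits that point. Using $\mathcal{G}$ as a small site of definition, the standard fact then gives $\Shv(\{\text{\'etale }G\text{-spaces}\}) = \Shv(\mathcal{C}, J_{\mathrm{can}}) \simeq \mathcal{C}$, and one checks the resulting equivalence is the Yoneda embedding.

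The main difficulty I anticipate is organizational rather than computational: assembling the verification that $\mathcal{C}$ is a Grothendieck topos with the stated small generating family, and confirming that the topology in the statement is the canonical one. The two genuinely load-bearing inputs are Proposition \ref{ss:openquotient} (so that each $(G/K^\alpha)_U$ is an \'etale $G$-space and $G/K^\alpha$ is \'etale over $S$) and the elementary observation that any $G$-morphism of \'etale $G$-spaces is automatically \'etale. One must also be careful with the set-theoretic point that the compact open group subspaces of $G$ form a set, so that $\mathcal{G}$ is genuinely small and $\mathcal{C}$ is a \emph{Grothendieck} topos rather than merely an elementary one.
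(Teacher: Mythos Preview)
Your proposal is correct and uses essentially the same ingredients as the paper, but packaged differently. The paper proceeds by direct construction: given a sheaf $\cF$ on the site, it defines the presheaf $X$ on $S$ by $U\mapsto\varinjlim_\al\cF(G/K^\al|_U)$, checks (using the disjoint-union criterion from \ref{ss:profinitesetsheaf}) that this is already a sheaf on compact opens, builds the $G$-action by right translation using Definition \ref{ss:grouphypotheses}.a), verifies the continuity criterion of Proposition \ref{ss:continuitycriterion}, and observes that the Yoneda image of $X$ recovers $\cF$. In effect the paper carries out by hand the ``comparison lemma'' step for the generating family $\{(G/K^\al)_U\}$ you identify, without ever invoking the words ``Grothendieck topos'' or ``canonical topology.''

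Your route is more conceptual and buys you a cleaner narrative, but it shifts the work into verifying that $\mathcal{C}$ is a Grothendieck topos (Giraud's exactness axioms, not just the generating set), which you flag but do not carry out; one clean way is to note via Lemma \ref{ss:equivariantsheaves} that $\mathcal{C}$ is the category of $G$-equivariant sheaves on $S$, hence a topos. The paper's approach avoids this detour and stays entirely within the elementary framework of \S\ref{s:smoothrepresentations}. Both rely on the same two load-bearing facts you name: Proposition \ref{ss:openquotient} (so $G/K^\al$ is \'etale over $S$) and the observation that morphisms between \'etale spaces over $S$ are automatically \'etale.
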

\begin{proof}
Note that surjective \'etale maps are quotient maps, which implies the first statement. For the second statement, let $\cF$ be a sheaf on $\{\mbox{\'etale }G\mbox{-spaces}\}$, and write $X$ for the presheaf on $S$ given by $U\mapsto\varinjlim_\al\cF(G/K^\al|_U)$ for all compact open subsets $U$ of $S$. For all disjoint compact open subsets $U$ and $U'$ of $S$, we have
  \begin{align*}
&\quad \varinjlim_\al\cF(G/K^\al|_{U\cup U'}) = \varinjlim_\al\cF\big((G/K^\al|_U)\cup(G/K^\al|_{U'})\big)\\
    &= \varinjlim_\al\big(\cF(G/K^\al|_U)\times\cF(G/K^\al|_{U'})\big) = \big[\varinjlim_\al\cF(G/K^\al|_U)\big]\times\big[\varinjlim_\al\cF(G/K^\al|_{U'})\big],
  \end{align*}
  so \ref{ss:profinitesetsheaf} shows that, when evaluating $X$ on compact open subsets of $S$, it is already a sheaf.

  Let $g$ be in $G(U)$, and write $g^{-1}K^\al_Ug$ for the image of the open group subspace $K^\al_U\subseteq G_U$ under the homeomorphism
  \begin{align*}
    \xymatrixcolsep{4pc}\xymatrix{K^\al_U=U\times_SK^\al\times_SU\ar[r]^-{g^{-1}\times\id\times g} & K^\al_U\times_SK^\al\times_S K^\al_U\ar[r]^-m & K^\al_U.}
  \end{align*}
  Because $g^{-1}K^\al_Ug$ is a neighborhood of the identity section in $G_U$, Definition \ref{ss:grouphypotheses}.a) yields an $\al'$ such that $K^{\al'}_U$ lies in $g^{-1}K^\al g$. Therefore right translation by $g$ induces a map $G/K^\al|_U\ra G/K^{\al'}|_U$ of \'etale $G_U$-spaces. Taking $\varinjlim_\al\cF(-)$ yields a map $X(U)\ra X(U)$, and as $U$ varies, we obtain an action $G\times X\ra X$ of sheaves on $S$. By construction, this action satisfies the property in Proposition \ref{ss:continuitycriterion}.b), so Proposition \ref{ss:continuitycriterion} endows $X$ with the structure of an \'etale $G$-space. Finally, the image of $X$ under the Yoneda embedding is naturally isomorphic to $\cF$, as desired.
\end{proof}

\subsection{}\label{ss:cInd}
Next, we introduce our generalization of compactly supported induction. For the rest of this section, assume that $H$ is closed in $G$. By taking intersections, this implies that $H$ also satisfies Definition \ref{ss:grouphypotheses}.a), so $H$ is also lctd over $S$.

Let $W$ be a smooth representation of $H$ over $\La$.
\begin{defn*}
  Write $\cInd_H^GW$ for the sheaf of $\La$-modules on $S$ that sends compact open subsets $U$ of $S$ to the set of continuous functions $f:G_U\ra W$ over $S$ such that
  \begin{enumerate}[a)]
  \item  the square
  \begin{align*}
    \xymatrix{H\times_SG_U\ar[r]^-m\ar[d]^-{\id\times f} & G_U\ar[d]^-f \\
    H\times_SW\ar[r]^-a & W}
  \end{align*}
  commutes,
  \item there exist $g_1,\dotsc,g_r$ in $G(U)$ such that, for all $u$ in $U$, the function $f_u:G_u\ra W_u$ is supported on $\bigcup_{j=1}^rH_ug_j(u)$.
  \end{enumerate}
\end{defn*}

\begin{lem}\label{ss:cIndfibers}
  For all $s$ in $S$, restricting to $G_s$ yields a bijection
  \begin{align*}
    (\cInd_H^GW)_s\ra^\sim\cInd_{H_s}^{G_s}(W_s).
  \end{align*}
\end{lem}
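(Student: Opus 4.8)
The plan is to take the asserted map to be restriction to the fiber $G_s$. A class in $(\cInd_H^GW)_s$ is represented by a continuous $f\colon G_U\to W$ over $S$ satisfying Definition \ref{ss:cInd}.a) and b) for some compact open neighborhood $U$ of $s$, and I send it to the fiber $f_s\colon G_s\to W_s$. This lies in $\cInd_{H_s}^{G_s}(W_s)$: condition a) is the fiber over $s$ of the square in Definition \ref{ss:cInd}, and if $g_1,\dotsc,g_r\in G(U)$ witness condition b) for $f$ then $g_1(s),\dotsc,g_r(s)$ witness it for $f_s$; continuity of $f_s$ is inherited from that of $f$. The construction clearly factors through the colimit over $U$, so the map is well defined, and it remains to prove it bijective. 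It is harmless here to assume $H$ is open: if $H_s$ fails to be open in $G_s$ then both sides vanish, because a continuous equivariant function supported on finitely many left $H_s$-cosets has clopen support contained in $\bigcup_jH_sx_j$, which is closed with empty interior, hence is zero; the source side then also vanishes by the injectivity below.

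\emph{Injectivity.} Suppose $f_s=0$. By Definition \ref{ss:cInd}.a), for each $u\in U$ the restriction of $f_u$ to $H_ug_j(u)$ is $hg_j(u)\mapsto h\cdot f(g_j(u))$, so it vanishes identically iff $f(g_j(u))=0$; together with Definition \ref{ss:cInd}.b), which forces $f_u$ to vanish off $\bigcup_jH_ug_j(u)$, this gives $f_u=0$ precisely when $f(g_j(u))=0$ for all $j$. Now each $u\mapsto f(g_j(u))$ is a continuous section of $W$ over $U$, and since $W$ is \'etale over $S$ its zero section is open; hence $\{u : f(g_j(u))=0\}$ is open and contains $s$. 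Shrinking $U$ to a compact open neighborhood of $s$ inside the intersection of these loci over $j=1,\dotsc,r$ gives $f|_{G_U}=0$, so the original class was zero.

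\emph{Surjectivity.} Let $\phi\in\cInd_{H_s}^{G_s}(W_s)$ be supported on finitely many left cosets $H_sx_1,\dotsc,H_sx_r$. By Definition \ref{ss:grouphypotheses}.b) write $x_i=g_i(s)$ with $g_i\in G(U)$, and since $W$ is \'etale over $S$ lift $\phi(x_i)\in W_s$ to $w_i\in W(U)$, shrinking $U$ so that all $g_i$ and $w_i$ are defined on a compact open neighborhood $U$ of $s$. I then shrink $U$ repeatedly by the following device: for a compact open group subspace $K\subseteq G$ and a closed subspace $Z\subseteq K_U$, properness of $K_U\to U$ (from compactness of $K$ over $S$) makes the image of $Z$ in $U$ closed, so if it omits $s$ we may delete it by passing to a smaller compact open neighborhood. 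Using this with $K=K^\al$ for a suitable $\al$ (chosen via cofinality of $\{K^\al_s\}_\al$ among compact open subgroups of $G_s$, and local constancy of $\phi$ together with smoothness of $W_s$), with the compact open subgroups $g_iK^\al g_i^{-1}\cap H$ of $H$ (compact open because $H$ is closed), with the loci where distinct cosets $H_ug_i(u)K^\al_u$ collide, I arrange over $U$: the cosets among the $H_ug_i(u)K^\al_u$ meeting $\mathrm{supp}(\phi)$ stay disjoint; each $w_i$ is fixed by $(g_iK^\al g_i^{-1}\cap H)|_U$; and a fixed finite family of $G(U)$-sections, built from the $g_i$ and from sections of $K^\al$ trivializing the finite \'etale quotients $(g_i^{-1}Hg_i\cap K^\al)\backslash K^\al|_U$, exhausts these cosets. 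I then define $f_u\colon G_u\to W_u$ fiberwise, supported on this finite union of cosets with $f_u(hg_i(u)k)=h\cdot w_i(u)$; the arrangements make $f_u$ well defined and yield Definition \ref{ss:cInd}.b), while Definition \ref{ss:cInd}.a) holds by construction. Continuity of $f\colon G_U\to W$ then follows by covering $\mathrm{supp}(f)$ --- an open subset of $G_U$ since $H$ is open --- by opens of the form $\tilde hg_i\cdot K^\al|_{U'}$ coming from local sections $\tilde h$ of $H$ (Definition \ref{ss:grouphypotheses}.b)), on which $f$ is visibly continuous. Finally $f_s=\phi$ by construction.

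\emph{Main obstacle.} Well-definedness and injectivity are essentially formal; the content lies in surjectivity, where one must promote the finite coset combinatorics available only on the single fiber $G_s$ to an entire compact open neighborhood of $s$, and then verify that the hand-built $f$ is genuinely continuous on $G_U$. The two recurring engines are that sections of \'etale spaces over $S$ spread out (with the zero section open), and that compact-over-$S$ subgroups convert containment and stabilizer conditions into closed conditions on $S$ that properness lets us clear near $s$; closedness (and, for the continuity check, openness) of $H$ in $G$ enters the point-set part.
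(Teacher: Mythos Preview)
Your injectivity argument is essentially the paper's: compose $f$ with the $g_j$ to get sections of $W$, use that the zero section is open (since $W$ is \'etale), and shrink $U$.

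Two comments on the rest.

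\textbf{The remark about $H$ not open is off.} In the paper's setup $H$ is an open group subspace (introduced as such in \ref{ss:evaluationkernel}) that is additionally assumed closed from \ref{ss:cInd} onward. The paper's proof uses clopenness of $H_Ug$ explicitly. So there is nothing to check here.

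\textbf{Surjectivity: you missed additivity.} The restriction map $(\cInd_H^GW)_s\to\cInd_{H_s}^{G_s}(W_s)$ is $\La$-linear, so it suffices to hit functions $\phi_s$ supported on a \emph{single} coset $H_sg_s$, namely $\phi_s(x_s)=(x_sg_s^{-1})w_s$ for $x_s\in H_sg_s$ and $0$ otherwise. Then one simply lifts $g_s$ to $g\in G(U)$ via Definition \ref{ss:grouphypotheses}.b), lifts $w_s$ to $w\in W(U)$ by \'etaleness, and defines $f$ to be $h\mapsto h\cdot w$ on the clopen subset $H_Ug\cong H_U$ and $0$ on $G_U-H_Ug$. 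Continuity and conditions a), b) are then immediate. This is the paper's argument in full.

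Your approach instead attempts to lift all cosets at once, introducing an auxiliary $K^\al$ to control right-invariance and then arranging disjointness of double cosets $H_ug_i(u)K^\al_u$ via a properness device. This is unnecessarily elaborate and several steps are under-justified: you assert that $(g_i^{-1}Hg_i\cap K^\al)\backslash K^\al$ is finite \'etale over $U$ without proof; the ``collision locus'' for distinct double cosets is not obviously the image of a closed subset of a compact-over-$U$ space (the condition $g_j(u)^{-1}H_ug_i(u)\cap K^\al_u\neq\emptyset$ is the projection of an \emph{open} subset of $H_U$, not a closed one); and even at $u=s$ two of your double cosets $H_sx_iK^\al_s$ may coincide, so ``stay disjoint'' needs more care. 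None of this is needed once you reduce to a single coset.
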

\begin{proof}
  For injectivity, let $U$ be a compact neighborhood of $s$, let $f$ be in
  \begin{align*}
    (\cInd_H^GW)(U),
  \end{align*}
  and let $g_1,\dotsc,g_r$ be the elements of $G(U)$ provided by Definition \ref{ss:cInd}.b). Suppose that $f|_{G_s}=0$. Then, for all $1\leq j\leq r$, the preimage under $f\circ g_j$ of $0(U)\subseteq(\cInd_H^GW)_U$ is a neighborhood of $s$, so there exists a compact neighborhood $U'$ of $s$ contained in all of them. Definition \ref{ss:cInd}.b) indicates that $f|_{G_{U'}}=0$.

For surjectivity, let $f_s:G_s\ra W_s$ be in $\cInd_{H_s}^{G_s}(W_s)$. Since restricting to $G_s$ is additive, it suffices to consider $f_s$ of the form
  \begin{align*}
    x_s\mapsto
    \begin{cases}
      (x_sg_s^{-1})w_s & \mbox{if }x_s\mbox{ lies in }H_sg_s, \\
      0 & \mbox{otherwise,}
    \end{cases}
  \end{align*}
for some $w_s$ in $W_s$ and $g_s$ in $G_s$. By Definition \ref{ss:grouphypotheses}.b), there exists a compact neighborhood $U$ of $s$ and $g$ in $G(U)$ such that $g(s)=g_s$, and after shrinking $U$, there exists $w$ in $W(U)$ such that $w(s)=w_s$.

  Write $H_Ug$ for the image of $H_U\subseteq G_U$ under the homeomorphism
  \begin{align*}
    \xymatrix{G_U=G\times_SU\ar[r]^-{\id\times g} & G\times_SG_U\ar[r]^-m & G_U.}
  \end{align*}
  Write $f:G_U\ra W$ for the continuous function over $S$ whose value on $H_Ug$ equals
  \begin{align*}
    \xymatrix{H_Ug\ar[r]^-\sim & H_U = H\times_SU\ar[r]^-{\id\times w} & H\times_SW\ar[r]^-a & W}
  \end{align*}
  and whose value on $G_U-H_Ug$ equals $0\circ\pr$, which is well-defined because $H_Ug$ is clopen in $G_U$. By checking on fibers, we see that $f$ lies in $(\cInd_H^GW)(U)$, and its restriction to $G_s$ equals $f_s$, as desired.
\end{proof}

\subsection{}\label{ss:cIndaction}
We get an action $G\times\cInd^G_HW\ra\cInd^G_HW$ of sheaves on $S$ as follows. For all compact open subsets $U$ of $S$, $g$ in $G(U)$, and $f$ in $(\cInd_H^GW)(U)$, define $g\cdot f$ to be the composition
\begin{align*}
\xymatrix{G_U=G\times_SU\ar[r]^-{\id\times g} & G\times_SG_U\ar[r]^-m & G_U\ar[r]^-f & W.}
\end{align*}
Note that this action is $\Lambda$-linear.
\begin{lem*}
For all $f$ in $(\cInd^G_HW)(U)$, there exists an $\al$ with $K^\al(U)$ stabilizing $f$.
\end{lem*}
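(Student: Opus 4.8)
The plan is to relativize over $U$ the classical proof that compact induction from a closed subgroup of an lctd group is smooth. Fix $g_1,\dots,g_r$ in $G(U)$ as in Definition \ref{ss:cInd}.b), and write $\pi\colon G_U\ra U$ for the projection. The crucial first step is to promote the fiberwise local constancy of $f$ to a statement uniform over $U$: since $W\ra S$ is \'etale, hence unramified, the diagonal $W\ra W\times_SW$ is an open immersion, so the preimage
\[
V_j\coloneqq\bigl\{\,x\in G_U\ :\ f(x)=f(g_j(\pi(x)))\,\bigr\}
\]
of this diagonal under $(f,\ f\circ g_j\circ\pi)\colon G_U\ra W\times_SW$ is open in $G_U$. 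It contains the image of the section $g_j$, so applying fiberwise left translation by $g_j^{-1}$ turns $V_j$ into an open neighborhood of the identity section of $G_U$; as $U$ is clopen in $S$, adjoining $G_{S\setminus U}$ extends this to an open neighborhood of the identity section of $G$. Definition \ref{ss:grouphypotheses}.a) then yields an $\al_j$ such that left translation by $g_j$ carries $K^{\al_j}_U$ into $V_j$, and choosing $\al$ with $K^\al\subseteq\bigcap_{j=1}^rK^{\al_j}$, which is again a neighborhood of the identity section, makes this hold for every $j$ simultaneously.

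Granting this, I would finish by checking directly that $K^\al(U)$ stabilizes $f$, i.e.\ that $f(xk(u))=f(x)$ for every $k$ in $K^\al(U)$, every $u$ in $U$, and every $x$ in $G_u$. If $x=hg_j(u)$ for some $h$ in $H_u$, then $g_j(u)k(u)$ lies in $V_j$ (as $k(u)$ lies in $K^\al_u$) and maps to $u$ under $\pi$, so Definition \ref{ss:cInd}.a) and the defining property of $V_j$ give
\[
f(xk(u))=h\cdot f\bigl(g_j(u)k(u)\bigr)=h\cdot f\bigl(g_j(u)\bigr)=f(x).
\]
Running the same computation with $k^{-1}$ in place of $k$ and with $xk(u)$ in place of $x$ disposes of the case in which $xk(u)$ lies in some $H_ug_j(u)$; and if neither $x$ nor $xk(u)$ lies in $\bigcup_{j=1}^rH_ug_j(u)$, then Definition \ref{ss:cInd}.b) forces $f(x)=f(xk(u))=0$. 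In every case $f(xk(u))=f(x)$, so $k\cdot f=f$.

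The only step I expect to require real care is the first one: producing a single open set $V_j\supseteq g_j(U)$ on which $f$ agrees with $f\circ g_j\circ\pi$, rather than a merely pointwise-in-$u$ local constancy that would not assemble into one stabilizing $K^\al$. This is precisely where the \'etale (hence unramified) structure of $W$ over $S$ enters; everything afterwards is the standard coset bookkeeping, with the support condition Definition \ref{ss:cInd}.b) accounting for the cosets on which $f$ vanishes.
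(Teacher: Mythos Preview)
Your proof is correct. The paper's argument is close in spirit but constructs the relevant open neighborhood of the identity differently: rather than using the level set $V_j=\{x\in G_U:f(x)=f(g_j(\pi(x)))\}$, it takes the conjugated stabilizer $g_j^{-1}\stab_H(f\circ g_j)\,g_j$, observing that $\stab_H(f\circ g_j)\subseteq H_U$ is a neighborhood of the identity because $W$ is a smooth $H$-representation (via the argument in the proof of Proposition~\ref{ss:continuitycriterion}). In particular, the paper's choice of $K^\alpha$ carries the extra information $g_jK^\alpha_Ug_j^{-1}\subseteq H_U$, which it then combines with Definition~\ref{ss:cInd}.a) to run the fiberwise check; your approach appeals only to the \'etale structure on $W\to S$ (open diagonal) and does not need the conjugate to land in $H$. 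The paper leaves the fiberwise verification terse (``by using Lemma~\ref{ss:cIndfibers} to check on fibers''), whereas your three-case analysis spells it out, including the symmetric case handled by replacing $k$ with $k^{-1}$. Both routes are valid; yours is marginally more elementary in that it does not invoke the smoothness of the $H$-action on $W$ at this step.
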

Combined with Proposition \ref{ss:continuitycriterion}, Lemma \ref{ss:cIndaction} yields a continuous $\La$-linear action $G\times_S\cInd^G_HW\ra\cInd^G_HW$ over $S$, so we obtain a functor
\begin{align*}
  \cInd_H^G:\Rep(H,\La)\ra\Rep(G,\La).
\end{align*}
\begin{proof}
  Let $g_1,\dotsc,g_r$ be the elements of $G(U)$ provided by Definition \ref{ss:cInd}.b). For all $1\leq j\leq r$, the subspace $g_j^{-1}\stab_H(f\circ g_j)g_j\subseteq G_U$ is a neighborhood of the identity section in $G_U$, so Definition \ref{ss:grouphypotheses}.a) yields an $\al$ such that $K^\al_U$ lies in
  \begin{align*}
    g_j^{-1}\stab_H(f\circ g_j)g_j
  \end{align*}
  for all $1\leq j\leq r$. By using Lemma \ref{ss:cIndfibers} to check on fibers, Definition \ref{ss:cInd}.a) implies that every $g$ in $K^\al(U)$ stabilizes $f$, as desired.
\end{proof}

\subsection{}\label{ss:separateHcosets}
We need the following separatedness property of $G/H$.
\begin{lem*}
Let $U$ be a compact open subset of $S$, let $\sg_1$ and $\sg_2$ be in $(G/H)(U)$, and let $u$ be in $U$. If $\sg_{1u}\neq\sg_{2u}$, then there exists a compact neighborhood $U'\subseteq U$ of $u$ such that $\sg_{1u'}\neq\sg_{2u'}$ for all $u'$ in $U'$.
\end{lem*}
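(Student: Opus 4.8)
The plan is to transport the statement into a question about a single continuous section of $G$, where the hypothesis that $H$ is \emph{closed} in $G$ can be applied directly. First, since $H$ is open in $G$, Proposition \ref{ss:openquotient} gives $(G/H)(U)=G(U)/H(U)$, so I can write $\sg_1=g_1H(U)$ and $\sg_2=g_2H(U)$ for some $g_1,g_2$ in $G(U)$. Then, using the multiplication and inversion of the group topological space $G$ (which are continuous over $S$), I would form the section $h\coloneqq g_2^{-1}g_1$ in $G(U)$, characterized by $h(u')=g_2(u')^{-1}g_1(u')$ in $G_{u'}$ for all $u'$ in $U$. The point of this reduction is that, for each $u'$ in $U$, one has $\sg_{1u'}=\sg_{2u'}$ precisely when $g_2(u')^{-1}g_1(u')$ lies in $H_{u'}$, equivalently (since $h(u')$ automatically lies in $G_{u'}$) precisely when $h(u')$ lies in $H$; so the hypothesis $\sg_{1u}\neq\sg_{2u}$ becomes $h(u)\notin H$.

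Next I would invoke the closedness of $H$ in $G$: the complement $G\setminus H$ is open, so $h^{-1}(G\setminus H)$ is an open subset of $U$ containing $u$. Since $S$ is profinite, compact open subsets of $S$ form a basis for its topology (\ref{ss:profinitesetsheaf}), so I can choose a compact open $U'$ with $u\in U'\subseteq h^{-1}(G\setminus H)\subseteq U$; being open and compact in $S$, it is a compact neighborhood of $u$. For every $u'$ in $U'$ we then have $h(u')\notin H$, hence $h(u')\notin H_{u'}$, hence $\sg_{1u'}\neq\sg_{2u'}$, which is exactly the claim.

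I do not expect a real obstacle here; the only subtlety worth flagging is that an \'etale topological space over $S$ need not be relatively Hausdorff, so the argument genuinely needs $H$ to be closed in $G$ and not merely open — this is the single place the hypothesis is used — while profiniteness of $S$ enters only to arrange that the resulting neighborhood is compact open rather than merely open.
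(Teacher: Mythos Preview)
Your proposal is correct and follows essentially the same route as the paper: lift $\sg_1,\sg_2$ to $g_1,g_2\in G(U)$ via Proposition~\ref{ss:openquotient}, form the section $g_1^{-1}g_2$ (you use $g_2^{-1}g_1$, which is immaterial), and take the preimage of the open set $G\setminus H$ to find the desired compact neighborhood. Your closing remark that closedness of $H$ is the genuine input here is exactly right.
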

\begin{proof}
By Proposition \ref{ss:openquotient}, there exist $g_1$ and $g_2$ in $G(U)$ such that $\sg_1=g_1H(U)$ and $\sg_2=g_2H(U)$. Proposition \ref{ss:openquotient} also indicates that $g_1(u)H_u\neq g_2(u)H_u$, so $(g_1^{-1}g_2)(u)$ does not lie in $H_u$. Since $g_1^{-1}g_2:U\ra G$ is continuous, the preimage under $g_1^{-1}g_2$ of $G-H$ is a neighborhood of $u$, so it contains a compact neighborhood $U'$ of $u$. Hence $g_1(u')H_{u'}\neq g_2(u')H_{u'}$ for all $u'$ in $U'$, as desired.
\end{proof}

\subsection{}\label{ss:counit}
We want to prove that $\cInd^G_H$ is left adjoint to restriction, so we start by constructing the counit. Let $V$ be a smooth representation of $G$ over $\La$, and consider the map
\begin{align*}
\ve:\cInd_H^G(V|_H)\ra V
\end{align*}
over $S$ whose fiber at $s$ equals
\begin{align*}
f_s\mapsto\sum_{\sg_s\in H_s\bs G_s}\sg^{-1}_sf_s(\sg_s),
\end{align*}
where the sum is finite by Definition \ref{ss:cInd}.b), and its terms are well-defined by Definition \ref{ss:cInd}.a).
\begin{prop*}
The map $\ve$ is a morphism of smooth representations of $G$ over $\La$.
\end{prop*}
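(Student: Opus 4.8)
The plan is to verify the three conditions making $\ve$ a morphism of smooth representations of $G$ over $\La$: that it is $\La$-linear, that it is $G$-equivariant, and that it is a continuous map of \'etale topological spaces over $S$. The first two are fiberwise. For $\La$-linearity, on each fiber the map $f_s\mapsto\sum_{\sg_s\in H_s\bs G_s}\sg_s^{-1}f_s(\sg_s)$ is a finite sum (Definition \ref{ss:cInd}.b)) of well-defined terms (Definition \ref{ss:cInd}.a)) and is additive and commutes with scaling by $\La$ because the $G_s$-action on $V_s$ is $\La$-linear. For $G$-equivariance, I would check on fibers: for $s$ in $S$, $g_s$ in $G_s$, and $f_s$ in $(\cInd_H^G(V|_H))_s$, right translation by $g_s$ permutes $H_s\bs G_s$, so recalling that $(g_s\cdot f_s)(x)=f_s(xg_s)$ (see \ref{ss:cIndaction}) and substituting $\tau_s=\sg_sg_s$ gives
\[\ve_s(g_s\cdot f_s)=\sum_{\sg_s}\sg_s^{-1}f_s(\sg_sg_s)=\sum_{\tau_s}g_s\tau_s^{-1}f_s(\tau_s)=g_s\cdot\ve_s(f_s),\]
the last step using $\La$-linearity to pull $g_s$ out of the sum; since equivariance is the equality of two maps $G\times_S\cInd_H^G(V|_H)\ra V$ over $S$, this fiberwise check suffices.

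It remains to prove that $\ve$ is continuous. Since $\cInd_H^G(V|_H)$ is \'etale over $S$, the subsets $f(U)$ — with $U$ ranging over compact open subsets of $S$ and $f$ over $(\cInd_H^G(V|_H))(U)$ — cover its total space (using the surjectivity in Lemma \ref{ss:cIndfibers}), so it is enough to show that the section $u\mapsto\ve_u(f_u)$ of $V$ over $U$ is continuous for each such $f$. This is local on $U$, so I would fix $u_0$ in $U$, pick $g_1,\dotsc,g_r$ in $G(U)$ as in Definition \ref{ss:cInd}.b), and shrink $U$ to a compact neighborhood of $u_0$ on which the coincidence pattern among the cosets $H_ug_j(u)$ is independent of $u$: for $j\neq j'$ with $H_{u_0}g_j(u_0)\neq H_{u_0}g_{j'}(u_0)$, the analogue of Lemma \ref{ss:separateHcosets} for the right cosets $H\bs G$ (deduced from the stated one by applying inversion on $G$, a homeomorphism over $S$) keeps the two cosets distinct near $u_0$; for $j\neq j'$ with $H_{u_0}g_j(u_0)=H_{u_0}g_{j'}(u_0)$, equivalently $(g_jg_{j'}^{-1})(u_0)\in H_{u_0}$, continuity of $g_jg_{j'}^{-1}$ and openness of $H$ keep them equal near $u_0$.

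After this shrinking, fix a set $J\subseteq\{1,\dotsc,r\}$ of indices representing the distinct cosets at $u_0$; then for every $u$ in $U$ the cosets $H_ug_j(u)$, $j\in J$, are pairwise distinct and $f_u$ is supported on their union. Taking $g_j(u)$ as representative of $H_ug_j(u)$ (legitimate by Definition \ref{ss:cInd}.a)) then yields $\ve_u(f_u)=\sum_{j\in J}g_j(u)^{-1}f_u(g_j(u))$ for all $u$ in $U$. Each map $u\mapsto f_u(g_j(u))$ is the continuous section $f\circ g_j$ of $V$ over $U$, and composing with the continuous action $a\colon G\times_SV\ra V$ along $g_j^{-1}\in G(U)$ produces a continuous section of $V$ over $U$; hence $\ve_u(f_u)$ is a finite sum of continuous sections of $V$, so $u\mapsto\ve_u(f_u)$ is continuous near $u_0$. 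As $u_0$ was arbitrary, it is continuous on $U$, so $\ve$ restricted to each $f(U)$ is continuous and therefore $\ve$ is continuous.

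I expect the continuity argument to be the main obstacle: the sum defining $\ve$ is organized according to how the support of $f_u$ breaks into $H_u$-cosets, and the real content is that this decomposition varies locally constantly in $u$. The separatedness of $G/H$ recorded in Lemma \ref{ss:separateHcosets}, together with $H$ being clopen in $G$, is exactly what delivers this.
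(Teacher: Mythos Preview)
Your proof is correct and follows essentially the same route as the paper. Both reduce continuity to showing that $u\mapsto\ve_u(f_u)$ is continuous for each section $f$ over a compact open $U$, and both shrink $U$ around a given point so that the finitely many right cosets $H_ug_j(u)$ have a constant coincidence pattern, after which $\ve\circ f$ becomes a finite sum of visibly continuous sections $u\mapsto g_j(u)^{-1}f(g_j(u))$. The only cosmetic differences are that the paper phrases the separation step in terms of sections of the \'etale quotient $H\bs G$ (invoking Proposition \ref{ss:openquotient} together with Lemma \ref{ss:separateHcosets}) and then selects new representatives $g_j'$ with $g'_{j_1}=g'_{j_2}$ whenever $\sg_{j_1}|_{U'}=\sg_{j_2}|_{U'}$, whereas you keep the original $g_j$ and restrict the sum to a set $J$ of indices; and the paper defers the fiberwise $\La$-linearity and $G$-equivariance to a one-line appeal to Lemma \ref{ss:cIndfibers}, while you spell them out.
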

\begin{proof}
  Let $U$ be a compact open subset of $S$, let $f$ be in $(\cInd^G_H(V|_H))(U)$, and let $g_1,\dotsc,g_r$ be the elements of $G(U)$ provided by Definition \ref{ss:cInd}.b). Note that Definition \ref{ss:cInd}.b) depends only on the images $\sg_1,\dotsc,\sg_r$ in $(H\bs G)(U)$ of the $g_1,\dotsc,g_r$.

  Let $u$ be in $U$. We claim that there exists a compact neighborhood $U'\subseteq U$ of $u$ and representatives $g'_1,\dotsc,g'_r$ in $G(U')$ of the $\sg_1|_{U'},\dotsc,\sg_r|_{U'}$ such that, for all $1\leq j_1,j_2\leq r$, if $H_{u'}g'_{j_1}(u')=H_{u'}g'_{j_2}(u')$ for some $u'$ in $U'$, then $g'_{j_1}=g'_{j_2}$. To see this, note that Proposition \ref{ss:openquotient} and Lemma \ref{ss:separateHcosets} yield a compact neighborhood $U'\subseteq U$ of $u$ such that, if $\sg_{j_1u'}=\sg_{j_2u'}$ for some $u'$ in $U'$, then $\sg_{j_1}|_{U'}=\sg_{j_2}|_{U'}$. Then take $g_1',\dotsc,g_r'$ in $G(U')$ to be representatives of the $\sg_1|_{U'},\dotsc,\sg_r|_{U'}$ such that, if $\sg_{j_1}|_{U'}=\sg_{j_2}|_{U'}$, then $g'_{j_1}=g'_{j_2}$.

  By construction, the restriction of $\ve\circ f$ to $U'$ equals the sum of the compositions 
  \begin{align*}
    \xymatrixcolsep{3pc}\xymatrix{U'\ar[r]^-{(g_j'^{-1},g_j')} & G\times_SG_U\ar[r]^-{\id\times f}& G\times_SV\ar[r]^-a & V}
  \end{align*}
  as $g_j'$ runs over distinct elements in $g_1',\dotsc,g_r'$. Therefore the restriction of $\ve\circ f$ to $U'$ is continuous, and as $u$ varies, this implies that $\ve\circ f$ is continuous. As $U$ and $f$ vary, this implies that $\ve$ is continuous. Finally, using Lemma \ref{ss:cIndfibers} to check on fibers shows that $\ve$ is a morphism of smooth representations of $G$ over $\La$.
\end{proof}

\subsection{}\label{ss:rescIndadjunction}
We now turn to the unit. Let $W$ be a smooth representation of $H$ over $\La$. For all compact open subsets $U$ of $S$ and $w$ in $W(U)$, write $\eta(w):G_U\ra W$ for the continuous map over $S$ whose value on $H_U$ equals the composition
\begin{align*}
  \xymatrix{H\times_SU\ar[r]^-{\id\times w}& H\times_SW\ar[r]^-a & W,}
\end{align*}
and whose value on $G_U-H_U$ equals $0\circ\pr$, which is well-defined because $H_U$ is clopen in $G_U$.
\begin{prop*}
The map $\eta$ yields a morphism $W\ra(\cInd^G_HW)|_H$ of smooth representations of $H$ over $\La$. Moreover, $\cInd^G_H$ is left adjoint to the restriction functor $(-)|_H:\Rep(G,\La)\ra\Rep(G,\La)$, with counit given by $\ve$ and unit given by $\eta$, and $\cInd^G_H$ is exact.
\end{prop*}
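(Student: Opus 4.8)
The plan is to reduce every assertion to classical facts about compact induction for locally compact totally disconnected groups, checking things on fibers via Lemma \ref{ss:cIndfibers}. First I would treat $\eta$. For a compact open $U\subseteq S$ and $w$ in $W(U)$, I claim $\eta(w)$ lies in $(\cInd_H^GW)(U)$: Definition \ref{ss:cInd}.a) is checked on fibers, where on $H_u$ it is the associativity of the action $a$ and off $H_u$ both sides vanish, while Definition \ref{ss:cInd}.b) holds with $r=1$ and $g_1$ the identity section of $G$ over $U$, which is continuous and hence lies in $G(U)$, since $\eta(w)_u$ is supported on $H_u$. The assignment $w\mapsto\eta(w)$ is visibly $\La$-linear and compatible with restriction to smaller compact opens, so it defines a morphism of sheaves of $\La$-modules on $S$, hence a morphism of \'etale topological spaces over $S$; its $H$-equivariance is again checked on fibers, where it follows from the associativity of $a$. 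Thus $\eta$ is a morphism in $\Rep(H,\La)$.

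Next, the adjunction. I would establish that $(\cInd_H^G,(-)|_H,\eta,\ve)$ is an adjunction by verifying the naturality of $\eta$ and of $\ve$ together with the two triangle identities. Each of these is an equality of two morphisms of \'etale spaces over $S$, hence can be checked on fibers. Using that Lemma \ref{ss:cIndfibers} is natural in $W$ and that taking stalks commutes with $(-)|_H$, the fiber at $s\in S$ of each such equality becomes the corresponding statement for the lctd group $G_s$, its closed subgroup $H_s$, and the smooth $H_s$-representation $W_s$, in which $\eta$ and $\ve$ become the usual Frobenius-reciprocity unit and counit of $\cInd_{H_s}^{G_s}\dashv(-)|_{H_s}$. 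These are standard for lctd groups, so we conclude, and the adjunction isomorphism $\Hom_{\Rep(G,\La)}(\cInd_H^GW,V)\cong\Hom_{\Rep(H,\La)}(W,V|_H)$ follows formally.

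For exactness: as a left adjoint, $\cInd_H^G$ is right exact, so it remains to see it preserves monomorphisms, or more precisely short exact sequences. Exactness of a sequence in $\Rep(G,\La)$ or $\Rep(H,\La)$ is tested on the underlying sheaves of $\La$-modules on $S$, hence on stalks; by the naturality of Lemma \ref{ss:cIndfibers}, the stalk at $s$ of $\cInd_H^G(-)$ is the classical $\cInd_{H_s}^{G_s}$ applied to the stalk at $s$. Since the latter is exact --- a surjection $B\twoheadrightarrow C$ of smooth $H_s$-representations induces a surjection on compact inductions by lifting coset-by-coset over the finitely many relevant cosets, and its kernel is $\cInd_{H_s}^{G_s}(\ker(B\to C))$ by the pointwise description of these functions --- we conclude that $\cInd_H^G$ is exact.

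The only genuine effort here is bookkeeping: confirming that $\eta(w)$ satisfies Definition \ref{ss:cInd}.b), which is where the continuous identity section enters, and checking that $\eta$ and $\ve$ restrict on fibers to the classical unit and counit under Lemma \ref{ss:cIndfibers}. Once that is in place, every remaining assertion is an equality of maps of \'etale spaces over $S$, hence a fiberwise statement, and reduces to the classical smooth representation theory of lctd groups.
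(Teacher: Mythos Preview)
Your proposal is correct and follows essentially the same approach as the paper: verify that $\eta(w)$ satisfies Definition~\ref{ss:cInd}.a)--b), then use Lemma~\ref{ss:cIndfibers} to reduce $H$-equivariance of $\eta$, the triangle identities, and exactness of $\cInd_H^G$ to the corresponding classical statements for the lctd groups $G_s\supseteq H_s$ on each fiber. The paper's proof is terser but structurally identical.
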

Now $(-)|_H$ is also exact, so $(-)|_H$ and $\cInd^G_H$ agree with their derived functors.
\begin{proof}
Checking on fibers indicates that $\eta(w)$ lies in $(\cInd_H^GW)(U)$. Then Lemma \ref{ss:cIndfibers} lets us check on stalks to see that the induced map $\eta:W\ra\cInd^G_HW$ is a morphism of smooth representations of $H$ over $\La$, that $\ve$ and $\eta$ satisfy the counit-unit equations for $\cInd^G_H$ and $(-)|_H$, and that $\cInd^G_H$ is exact.
\end{proof}

\subsection{}\label{ss:finitegeneration}
We define finite generation in terms of $\cInd$ as follows.
\begin{defn*}
Let $V$ be a smooth representation of $G$ over $\La$. We say that $V$ is \emph{finitely generated} if there exists a surjective morphism $\bigoplus_{j=1}^r\cInd_{K^{\al_j}}^G\ul\La\ra V$ of smooth representations of $G$ over $\La$ for some $\al_1,\dotsc,\al_r$.
\end{defn*}
If $W$ is a finitely generated smooth representation of $H$ over $\La$, then Proposition \ref{ss:rescIndadjunction} implies that $\cInd_H^GW$ is a finitely generated smooth representation of $G$ over $\La$.

\subsection{}\label{ss:Homstalks}
We will need the following lemma.
\begin{lem*}
Let $V$ and $V'$ be smooth representations of $G$ over $\La$, and assume that $V$ is finitely generated. For all $s$ in $S$, restricting to $V_s$ yields an injection
  \begin{align*}
    \ul{\Hom}_G(V,V')_s\hra\Hom_{G_s}(V_s,V_s').
  \end{align*}
\end{lem*}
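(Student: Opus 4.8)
The plan is to first reduce to the case where $V$ is a finite direct sum $\bigoplus_{j=1}^r\cInd_{K^{\al_j}}^G\ul\La$, and then to handle that case directly using Lemma \ref{ss:cIndfibers}. Throughout I will regard an element of $\ul\Hom_G(V,V')_s$ as the germ at $s$ of a morphism $\tilde\phi\colon V_U\ra V'_U$ of smooth representations of $G_U$ over $\La$, for $U$ ranging over compact neighborhoods of $s$; with this identification the map in the statement sends such a germ to the fiber $\tilde\phi_s\colon V_s\ra V'_s$. So what must be shown is that if $\tilde\phi_s=0$, then $\tilde\phi|_{U'}=0$ for some compact neighborhood $U'\subseteq U$ of $s$.

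First I would set up the reduction. By Definition \ref{ss:finitegeneration}, choose a surjection $p\colon F\ra V$ with $F=\bigoplus_{j=1}^r\cInd_{K^{\al_j}}^G\ul\La$. Precomposition with $p$ gives a morphism of sheaves $p^*\colon\ul\Hom_G(V,V')\ra\ul\Hom_G(F,V')$; since for every compact open $U$ the restriction $p_U\colon F_U\ra V_U$ is again an epimorphism of smooth representations of $G_U$ (its fibers are unchanged), $p^*$ is injective on sections over every such $U$, hence on stalks. For the same reason $(p_s)^*\colon\Hom_{G_s}(V_s,V'_s)\ra\Hom_{G_s}(F_s,V'_s)$ is injective. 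These two maps and the two restriction maps form a square that commutes by construction, and chasing it reduces the injectivity for $V$ to the injectivity for $F$.

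It then remains to treat $V=F=\bigoplus_{j=1}^r\cInd_{K^{\al_j}}^G\ul\La$. For each $j$ I would take $\mathbf 1_j\in(\cInd_{K^{\al_j}}^G\ul\La)(U)$ to be the characteristic function of $K^{\al_j}_U$, which is clopen in $G_U$ since it is compact open and $G_U$ is Hausdorff; one checks Definition \ref{ss:cInd}.a)--b) for $\mathbf 1_j$ directly, taking the identity section in part b). Let $s_j\in F(U)$ be the image of $\mathbf 1_j$ under the $j$-th structure map. Now suppose $\tilde\phi\colon F_U\ra V'_U$ satisfies $\tilde\phi_s=0$. Then each $\tilde\phi(s_j)\in V'(U)$ has vanishing germ at $s$, so it vanishes on some compact neighborhood of $s$; let $U'\subseteq U$ be the intersection of these finitely many neighborhoods. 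For every $u'$ in $U'$, the fiber $\tilde\phi_{u'}\colon F_{u'}\ra V'_{u'}$ then kills each $(s_j)_{u'}$. But by Lemma \ref{ss:cIndfibers} we have $F_{u'}=\bigoplus_{j=1}^r\cInd_{K^{\al_j}_{u'}}^{G_{u'}}\La$, and this is generated as a $\La[G_{u'}]$-module by the $(s_j)_{u'}$, by the standard fact that $\cInd$ of the trivial representation of an open compact subgroup is cyclic. Since $\tilde\phi_{u'}$ is $\La[G_{u'}]$-linear, $\tilde\phi_{u'}=0$ for all $u'$ in $U'$, so checking on fibers via Lemma \ref{ss:cIndfibers} gives $\tilde\phi|_{U'}=0$, as desired.

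I expect the only mild subtlety to be in the reduction step — observing that restricting the surjection $p$ to a compact open keeps it an epimorphism, so that precomposition by $p$ stays injective on the $\ul\Hom_G$ sheaves; phrasing things through fibers makes this transparent. Once reduced to a single induced module, the argument is simply the cyclicity of $\cInd$ spread out over $S$, with Lemma \ref{ss:cIndfibers} doing the bookkeeping between fibers and sections.
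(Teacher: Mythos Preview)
Your proof is correct, and the reduction step via the surjection $p$ is exactly the paper's. The handling of the special case $V=\bigoplus_j\cInd_{K^{\al_j}}^G\ul\La$ differs: the paper computes both sides using the Frobenius reciprocity adjunction of Proposition~\ref{ss:rescIndadjunction}, obtaining
\[
\ul\Hom_G(\cInd_{K^{\al_j}}^G\ul\La,V')_s = \ul\Hom_{K^{\al_j}}(\ul\La,V'|_{K^{\al_j}})_s = (V'_s)^{K^{\al_j}_s} = \Hom_{G_s}(\cInd_{K^{\al_j}_s}^{G_s}\La,V'_s),
\]
which in fact shows the map is a \emph{bijection} in the induced case, not just an injection. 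Your argument instead writes down the cyclic generator $\mathbf 1_j$, observes that its image under $\tilde\phi$ has vanishing germ at $s$, and then uses cyclicity of $\cInd$ fiberwise to conclude. Your route is more elementary and self-contained (it avoids appealing to the adjunction and the identification of the stalk with $(V'_s)^{K^{\al_j}_s}$), while the paper's route is shorter once that machinery is in hand and yields a slightly stronger conclusion along the way.
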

\begin{proof}
  Let $\bigoplus_{j=1}^r\cInd^G_{K^{\al_j}}\ul\La\ra V$ be the surjective morphism provided by Definition \ref{ss:finitegeneration}. Because Lemma \ref{ss:cIndfibers} induces a commutative square
  \begin{align*}
    \xymatrix{\ul\Hom_G(V,V')_s\ar[r]\ar[d] & \ul\Hom_G(\bigoplus_{j=1}^r\cInd^G_{K^{\al_j}}\ul\La,V')_s\ar[d]\\
    \Hom_{G_s}(V_s,V'_s)\ar[r] & \Hom_{G_s}(\bigoplus_{j=1}^r\cInd^{G_s}_{K^{\al_j}_s}\La,V'_s) &}
  \end{align*}
  with injective rows, it suffices to consider $V=\bigoplus_{j=1}^r\cInd^G_{K^{\al_j}}\ul\La$. Then we have
  \begin{align*}
    \ul\Hom_G(\textstyle\bigoplus_{j=1}^r\cInd^G_{K^{\al_j}}\ul\La,V')_s  &=\textstyle\bigoplus_{j=1}^r\ul\Hom_G(\cInd^G_{K^{\al_j}}\ul\La,V')_s & \mbox{by finitude}\\
                                                                       &= \textstyle\bigoplus_{j=1}^r\ul\Hom_{K^{\al_j}}(\ul\La,V'|_{K^{\al_j}})_s & \mbox{by Proposition \ref{ss:rescIndadjunction}}\\
                                                                          &=\textstyle\bigoplus_{j=1}^r(V_s')^{K^{\al_j}_s} \\
                                                                          &=\textstyle\bigoplus_{j=1}^r\Hom_{K^{\al_j}_s}(\La,V_s'|_{K^{\al_j}_s})\\
                                                                          &=\textstyle\bigoplus_{j=1}^r\Hom_{G_s}(\cInd^{G_s}_{K^{\al_j}_s}\La,V_s') & \mbox{by Proposition \ref{ss:rescIndadjunction}}\\
    &= \Hom_{G_s}(\textstyle\bigoplus_{j=1}^r\cInd^{G_s}_{K^{\al_j}_s}\La,V'_s), & \mbox{by finitude}
  \end{align*}
as desired.
\end{proof}

\subsection{}\label{ss:Heckealgebra}
To compute the endomorphisms of $\cInd^G_HW$, we will use the following generalization of Hecke algebras. Let $W$ be a smooth representation of $H$ over $\La$, and write $\ul{\End}_\La(W)$ for the sheaf of $\La$-module endomorphisms of $W$.
\begin{defn*}
  Write $\cH(G,W)$ for the sheaf of $\La$-modules on $S$ that sends compact open subsets $U$ of $S$ to the set of continuous functions $\phi:G_U\ra\ul{\End}_\La(W)$ over $S$ such that
  \begin{enumerate}[a)]
  \item the square
    \begin{align*}
      \xymatrix{H\times_SG_U\times_SH\ar[r]^-m\ar[d]^-{\id\times\phi\times\id} & G_U\ar[d]^-\phi \\
      H\times_S\ul{\End}_\La(W)\times_SH\ar[r] & \ul{\End}_\La(W)}
    \end{align*}
    commutes, where the bottom arrow is the map over $S$ whose fiber at $s$ equals
    \begin{align*}
      (h_s,\nu_s,h'_s)\mapsto h_s\circ\nu_s\circ h'_s,
    \end{align*}
  \item there exist $g_1,\dotsc,g_r$ in $G(U)$ such that, for all $u$ in $U$, the function $\phi_u:G_u\ra\End_\La(W_u)$ is supported on $\bigcup_{j=1}^rH_ug_j(u)H_u$.
  \end{enumerate}
When $W=\ul\La$, write $\cH(G,H)_\La$ for $\cH(G,W)$.
\end{defn*}

\subsection{}\label{prop:HeckealgebrainjectsintoEnd}
In the generality of this section, we can only prove the following injectivity result. However, in our case of interest, we actually upgrade it into a bijectivity result: see Theorem \ref{ss:Heckealgebrasinfamilies}.
\begin{prop*}
We have a natural morphism of sheaves on $S$
  \begin{align*}
    \vsg:\cH(G,W)\ra\ul{\End}_G(\cInd_H^GW).
  \end{align*}
When $W$ is finitely generated, $\vsg$ is injective.
\end{prop*}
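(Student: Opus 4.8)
The plan is to mimic the classical construction of the map from a Hecke algebra to endomorphisms of a compactly supported induction, but carried out at the level of sheaves on $S$, and then to check injectivity fiberwise using Lemma \ref{ss:Homstalks}. First I would construct $\vsg$: for a compact open $U\subseteq S$ and $\phi$ in $\cH(G,W)(U)$, I need to produce an endomorphism of $(\cInd_H^GW)|_U$ as a smooth representation of $G_U$. Fiberwise over $s$ this should be the usual convolution formula
\begin{align*}
(\vsg(\phi)_s(f_s))(g_s)=\sum_{\sg_s\in H_s\bs G_s}\phi_s(g_s\sg_s^{-1})\bigl(f_s(\sg_s)\bigr),
\end{align*}
where the sum is finite by Definition \ref{ss:cInd}.b) and Definition \ref{ss:Heckealgebra}.b), and each term is well-defined by the bi-equivariance conditions Definition \ref{ss:cInd}.a) and Definition \ref{ss:Heckealgebra}.a). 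The content is to see that this assembles into a map of sheaves over $U$, i.e.\ that $\vsg(\phi)(f)$ actually lies in $(\cInd_H^GW)(U')$ for $U'$ in a neighborhood basis of any point, with continuity and the support condition. I would verify this by the same local argument used in \ref{ss:counit}: given $f$ with supporting sections $g_1,\dots,g_r$ in $G(U)$ and $\phi$ with supporting sections $h_1,\dots,h_t$ in $G(U)$, around any $u\in U$ choose (via Proposition \ref{ss:openquotient} and Lemma \ref{ss:separateHcosets}) a compact neighborhood $U'$ on which the relevant $H$-cosets of the $g_j$ and of products $h_ig_j^{-1}$ either agree identically or stay distinct, pick coherent representatives in $G(U')$, and observe that on $U'$ the function $\vsg(\phi)(f)$ is a finite sum of compositions of continuous maps built from $a$, $m$, $\phi$, $f$, and $w$-type sections; this gives continuity, the double-coset support condition, and $H_{U'}$-equivariance on the left. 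Linearity and $G_U$-equivariance are immediate from the formula, and compatibility as $U$ shrinks shows $\vsg$ is a morphism of sheaves.

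For injectivity when $W$ is finitely generated, I would reduce to a fiberwise statement. Note $\ul\End_G(\cInd_H^GW)=\ul\Hom_G(\cInd_H^GW,\cInd_H^GW)$, and since $\cInd_H^GW$ is finitely generated (by \ref{ss:finitegeneration}, as $W$ is), Lemma \ref{ss:Homstalks} gives an injection $\ul\End_G(\cInd_H^GW)_s\hra\End_{G_s}((\cInd_H^GW)_s)$. Combining with Lemma \ref{ss:cIndfibers}, which identifies $(\cInd_H^GW)_s\cong\cInd_{H_s}^{G_s}(W_s)$, it suffices to prove that the stalk map $\cH(G,W)_s\to\End_{G_s}(\cInd_{H_s}^{G_s}(W_s))$, which one checks (again via \ref{ss:cIndfibers} and the compatibility of all the structure maps with taking stalks) is exactly the classical Hecke-algebra-to-endomorphisms map for the lctd group $G_s$, is injective. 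But $\cH(G,W)_s$ is a quotient of $\varinjlim_U\cH(G,W)(U)$, so I only need: the classical map $\cH(G_s,W_s)\to\End_{G_s}(\cInd_{H_s}^{G_s}(W_s))$ is injective, together with the fact that the natural map $\cH(G,W)_s\to\cH(G_s,W_s)$ is injective. The latter is the $\cH$-analogue of the injectivity half of Lemma \ref{ss:cIndfibers} and is proved identically: if $\phi\in\cH(G,W)(U)$ has $\phi|_{G_s}=0$, then using the supporting sections $g_1,\dots,g_r\in G(U)$ and continuity of each $\phi\circ g_j$, one finds a compact neighborhood $U'$ of $s$ on which $\phi$ vanishes. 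The classical injectivity is standard: if $\phi\in\cH(G_s,W_s)$ has $\vsg(\phi)=0$, apply $\vsg(\phi)$ to the function $f_s$ supported on $H_s$ with $f_s(h_s)=h_sw_s$ (the image of $w_s$ under $\eta$ as in \ref{ss:rescIndadjunction}), and evaluate at $g_s=1$ to recover $\phi_s(1)(w_s)$, hence $\phi_s(1)=0$; translating by $g_s^{-1}$ on the source then kills $\phi_s(g_s)$ for every $g_s$, so $\phi=0$.

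The main obstacle I anticipate is not injectivity, which is formal once Lemmas \ref{ss:Homstalks} and \ref{ss:cIndfibers} are in hand, but the construction of $\vsg$ itself: specifically verifying the support condition Definition \ref{ss:cInd}.b) for $\vsg(\phi)(f)$ with sections defined over a common neighborhood, since the naive guess that the products $g_jh_i^{-1}$ give such sections requires the coset-separation bookkeeping of Lemma \ref{ss:separateHcosets} to make the finitely many relevant $H$-cosets into honest sections over a small enough $U'$ — exactly the delicate point already navigated in \ref{ss:counit}. I would present this step carefully and treat the fiberwise continuity and equivariance verifications as routine ``check on fibers'' arguments in the style of the rest of the section.
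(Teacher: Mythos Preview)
Your proposal is correct and follows the same overall strategy as the paper: construct $\vsg$ via a fiberwise convolution formula, verify continuity by the coset-separation technique of Proposition~\ref{ss:counit}, and deduce injectivity on stalks via the commutative square involving Lemmas~\ref{ss:Homstalks} and~\ref{ss:cIndfibers}, the analogue of the injectivity in Lemma~\ref{ss:cIndfibers} for $\cH(G,W)$, and the classical bijection $\cH(G_s,W_s)\ra^\sim\End_{G_s}(\cInd_{H_s}^{G_s}W_s)$.

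The one notable difference is in the construction step. You define $\vsg(\phi)$ directly as an endomorphism of $\cInd_H^GW$, which forces you to handle an arbitrary $f\in(\cInd_H^GW)(U)$ with its own support data $g_1,\dotsc,g_r$ in addition to the support data $h_1,\dotsc,h_t$ for $\phi$; the continuity check then involves separating cosets of products $h_ig_j$. The paper instead invokes the Frobenius reciprocity adjunction (Proposition~\ref{ss:rescIndadjunction}) to identify $\ul\End_G(\cInd_H^GW)$ with $\ul\Hom_H(W,(\cInd_H^GW)|_H)$, so it only needs to produce $\vsg(\phi)(w)$ for $w\in W(U)$; this removes one layer of support bookkeeping and makes the continuity verification depend solely on the $g_j$ coming from $\phi$. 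Your route is equally valid but carries slightly more to track; the paper's use of the adjunction is a clean simplification.
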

\begin{proof}
  Proposition \ref{ss:rescIndadjunction} identifies $\ul{\End}_G(\cInd_H^GW)$ with $\ul{\Hom}_H(W,(\cInd_H^GW)|_H)$, so it suffices to construct a morphism from $\cH(G,W)$ to the latter. Let $U$ be a compact open subset of $S$, and let $\phi$ be in $\cH(G,W)(U)$. For all $w$ in $W(U)$, consider the map $\vsg(\phi)(w):G_U\ra W$ over $S$ whose fiber at $s$ equals
  \begin{align*}
    x_s\mapsto\sum_{\sg_s\in G_s/H_s}\phi_s(\sg_s)
    \begin{cases}
      (\sg_s^{-1}x_s)w_s &\mbox{if }x_s\mbox{ lies in }\sg_sH_s,\\
      0 & \mbox{otherwise,}
    \end{cases}
  \end{align*}
  where the sum is finite by Definition \ref{ss:Heckealgebra}.b), and its terms are well-defined by Definition \ref{ss:Heckealgebra}.a). Definition \ref{ss:Heckealgebra}.a) and Definition \ref{ss:Heckealgebra}.b) also imply that $\vsg(\phi)(w)$ satisfies Definition \ref{ss:cInd}.a) and Definition \ref{ss:cInd}.b), respectively.

  We turn to the continuity of $\vsg(\phi)(w)$. Let $g_1,\dotsc,g_r$ be the elements of $G(U)$ provided by Definition \ref{ss:Heckealgebra}.b), and write $\sg_1,\dotsc,\sg_r$ for their images in $(G/H)(U)$. Let $u$ be in $U$. The proof of Proposition \ref{ss:counit} yields a compact neighborhood $U'\subseteq U$ of $u$ and representatives $g'_1,\dotsc,g'_r$ in $G(U')$ of the $\sg_1|_{U'},\dotsc,\sg_r|_{U'}$ such that, for all $1\leq j_1,j_2\leq r$, if $g_{j_1}'(u')H_{u'}=g_{j_2}'(u')H_{u'}$ for some $u'$ in $U'$, then $g_{j_1}'=g_{j_2}'$. Write $f_j:G_{U'}\ra W$ for the continuous function over $S$ whose value on $g'_jH_{U'}$ equals
  \begin{align*}
    \xymatrixcolsep{4pc}
    \xymatrix{g'_jH_{U'}\ar[r]^-{(\phi,\cong,w\circ\pr)} & \ul{\End}_\La(W)\times_SH_{U'}\times_SW\ar[r]^-a & W}
  \end{align*}
  and whose value on $G_{U'}-g'_jH_{U'}$ equals $0\circ\pr$, which is well-defined because $g'_jH_{U'}$ is clopen in $G_{U'}$. By construction, the restriction of $\vsg(\phi)(w)$ to $U'$ equals the sum of the $f_j$ as $g'_j$ runs over distinct elements in $g'_1,\dotsc,g'_r$. Therefore the restriction of $\vsg(\phi)(w)$ to $U'$ is continuous, and as $u$ varies, this implies that $\vsg(\phi)(w)$ is continuous.

  Note that $\vsg(\phi)$ yields a morphism $W_U\ra(\cInd_{H_U}^{G_U}W_U)|_{H_U}$ of smooth representations of $H_U$ over $\La$. To see that the resulting morphism $\vsg$ of sheaves on $S$ is injective when $W$ is finitely generated, it suffices to check on stalks. For all $s$ in $S$, restriction to the fiber at $s$ yields a commutative square
  \begin{align*}
    \xymatrix{\cH(G,W)_s\ar[r]\ar[d] & \ul{\End}_G(\cInd_H^GW)_s\ar[d]\\
    \cH(G_s,W_s)\ar[r] & \End_{G_s}(\cInd_{H_s}^{G_s}W_s).}
  \end{align*}
The bottom arrow is a bijection, and Lemma \ref{ss:Homstalks} and Lemma \ref{ss:cIndfibers} show that the right arrow is injective. By using Definition \ref{ss:Heckealgebra}.b) instead of Definition \ref{ss:cInd}.b), the proof of Lemma \ref{ss:cIndfibers} implies that the left arrow is injective, so the top arrow is injective, as desired.
\end{proof}

\subsection{}\label{ss:doublecosetopen}
Later, we will need the following fact. Let $g$ be in $G(S)$, and write $HgH$ for the image of the composition
\begin{align*}
\xymatrixcolsep{3pc}
\xymatrix{H\times_SS_S\times H\ar[r]^-{\id\times g\times\id} & G\times_SG\times_SG\ar[r]^-m & G}.
\end{align*}
\begin{lem*}
The subset $HgH\subseteq G$ is open.
\end{lem*}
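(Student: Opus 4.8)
The plan is to show that $HgH$ is a neighborhood of each of its points, which suffices. So fix $x\in HgH$ lying over a point $s\in S$. By the definition of $HgH$ we may write $x=h_1\,g(s)\,h_2$ for some $h_1,h_2\in H_s$. Since $H$ is an open group subspace of $G$, it satisfies Definition \ref{ss:grouphypotheses}.b) (as recorded in \ref{ss:evaluationkernel}), so there exist a compact open neighborhood $U$ of $s$ and a section $\widetilde h_1\in H(U)$ with $\widetilde h_1(s)=h_1$. Because $g\in G(S)$ we may form the section $\ga\coloneqq\widetilde h_1\cdot(g|_U)\in G(U)$, regarding $H(U)\subseteq G(U)$.

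Next I would show that the left translate $\ga H_U$ --- that is, the image of $H_U$ under left translation by $\ga$ --- is an open subset of $G$ that contains $x$ and is contained in $HgH$. Left translation by $\ga$ is a homeomorphism of $G_U$, being the composition
\[
G_U=U\times_S G\ \xrightarrow{\ \ga\times\id\ }\ G_U\times_S G\ \xrightarrow{\ m\ }\ G_U
\]
with continuous inverse given by left translation by $\ga^{-1}$, exactly as in the proof of Proposition \ref{ss:openquotient}. Since $H\subseteq G$ is open, $H_U\subseteq G_U$ is open, hence so is $\ga H_U$; and $G_U$ is open in $G$ (being the preimage of the open subset $U$ under $G\to S$), so $\ga H_U$ is open in $G$. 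It contains $x=\widetilde h_1(s)\,g(s)\,h_2=\ga(s)h_2\in\ga(s)H_s\subseteq\ga H_U$, using $h_2\in H_s$. And it is contained in $HgH$: any point of $\ga H_U$ has the form $\ga(u)h=\widetilde h_1(u)\,g(u)\,h$ for some $u\in U$ and $h\in H_u$, which is the image of $(\widetilde h_1(u),u,h)\in H\times_SS_S\times H$ under the map defining $HgH$. As $x$ was arbitrary, this proves $HgH$ is open.

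I expect no real obstacle here beyond bookkeeping. The one step worth flagging is the passage from the fiberwise factorization $x=h_1g(s)h_2$ to a genuine section $\widetilde h_1$ over a neighborhood of $s$: this is exactly what Definition \ref{ss:grouphypotheses}.b) provides, and it applies because open group subspaces satisfy that axiom. Everything else reduces to the fact --- used repeatedly throughout this section --- that left translation by a section is a homeomorphism of $G_U$, together with the trivial observations that $H_U$ is open in $G_U$ and $G_U$ is open in $G$.
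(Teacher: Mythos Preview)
Your proof is correct and follows essentially the same approach as the paper: pick a point $h_1g(s)h_2$ in $HgH$, lift $h_1$ to a section $\widetilde h_1\in H(U)$ via Definition~\ref{ss:grouphypotheses}.b), and observe that $(\widetilde h_1\cdot g|_U)H_U$ is an open neighborhood of the point contained in $HgH$. The paper's argument is identical but more terse, omitting the verification that left translation by a section is a homeomorphism and that the resulting set is contained in $HgH$.
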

\begin{proof}
Let $s$ be in $S$, and let $h_sg(s)h_s'$ be in the fiber of $HgH$ at $s$ for some $h_s$ and $h_s'$ in $H_s$. By Definition \ref{ss:grouphypotheses}.b), there exists a compact neighborhood $U$ of $s$ and $h$ in $H(U)$ such that $h(s)=h_s$. Then $(hg)H_U$ is a neighborhood of $h_s(g)h'_s$ that lies in $HgH$, as desired.
\end{proof}

\subsection{}\label{ss:relativepoints}
We conclude this section by providing a way to construct group topological spaces over $S$. Let $R$ be a ring topological space over $S$. For all sets $I$, write $R^I$ for the $I$-fold fiber power of $R$ over $S$.
\begin{prop*}
  There exists a unique functor
  \begin{align*}
    (-)(R):\{\mbox{affine schemes over }R(S)\}\ra\{\mbox{topological spaces over }S\}
  \end{align*}
  that preserves fiber products and, for all sets $I$, sends the affine space $\bA^I_{R(S)}$ to $R^I$. Moreover, for all affine schemes $X$ over $R(S)$, we have a natural identification $X(R)(S)=X(R(S))$.
\end{prop*}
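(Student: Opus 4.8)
The plan is to realize $(-)(R)$ as a functor of points, representable in topological spaces over $S$. The input is purely formal once one records the following. For any topological space $T$ over $S$, the set $\cO_R(T)\coloneqq\Hom_{/S}(T,R)$ of continuous maps $T\to R$ over $S$ is a commutative ring under pointwise operations---using the multiplication and addition maps $R\times_SR\to R$ and the sections $0,1\colon S\to R$---and is an $R(S)$-algebra via pullback along $T\to S$; the assignment $T\mapsto\cO_R(T)$ is contravariantly functorial, with $\cO_R(S)=R(S)$. Similarly, each polynomial $f\in R(S)[x_i : i\in I]$ in finitely many of the variables yields, through the same operations, a continuous map $f\colon R^I\to R$ over $S$, compatibly with composition of polynomials; here, for infinite $I$, we read $R^I$ as the cofiltered limit $\varprojlim_{F\subseteq I\text{ finite}}R^F$.

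First I would check representability for affine spaces: $\Hom_{R(S)\text{-alg}}(R(S)[x_i : i\in I],\cO_R(T))=\cO_R(T)^I=\Hom_{/S}(T,R^I)$, where for infinite $I$ the last equality is reduced to the finite case via the limit presentation of $R^I$; thus the contravariant functor $T\mapsto\Hom_{R(S)\text{-alg}}(R(S)[x_i],\cO_R(T))$ on topological spaces over $S$ is represented by $R^I$. Now let $X$ be an affine scheme over $R(S)$ with coordinate ring $A$, and choose a presentation $A=R(S)[x_i : i\in I]/(f_j : j\in J)$. Then in affine schemes over $R(S)$ one has $X=\bA^I_{R(S)}\times_{\bA^J_{R(S)}}\bA^0_{R(S)}$, the structure maps $\bA^I\to\bA^J\leftarrow\bA^0$ being $(f_j)_j$ and the origin. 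Since $\mathrm{Top}/S$ has fiber products and a fiber product of representable functors is represented by the fiber product of the representing objects, the functor $T\mapsto\Hom_{R(S)\text{-alg}}(A,\cO_R(T))$ is represented by
\begin{align*}
X(R)\coloneqq R^I\times_{R^J}R^0=\{\xi\in R^I : f_j(\xi)=0\text{ for all }j\}\subseteq R^I
\end{align*}
with the subspace topology. In particular $X(R)$ is independent of the presentation up to a canonical homeomorphism, and $X\mapsto X(R)$ is a functor by Yoneda. It preserves fiber products, since $\Hom_{R(S)\text{-alg}}(-,\cO_R(T))$ carries pushouts of $R(S)$-algebras to fiber products of sets; and it sends $\bA^I_{R(S)}$ to $R^I$ by the computation just made.

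For uniqueness, let $F$ be any functor preserving fiber products with the prescribed values on affine spaces. Because $\bA^0_{R(S)}$ is terminal among affine schemes over $R(S)$ and $F(\bA^0_{R(S)})=R^0=S$ is terminal in $\mathrm{Top}/S$, the functor $F$ preserves finite products; hence on the full subcategory of affine spaces $F$ is the standard $R$-points functor---it sends $\bA^J=\prod_J\bA^1$ to $\prod_JR=R^J$ and a $J$-tuple of polynomials to the induced continuous map $R^I\to R^J$---and, every affine scheme over $R(S)$ being a fiber product of affine spaces along morphisms of affine spaces as above, $F$ is thereby forced to agree with $(-)(R)$. Lastly, taking $T=S$ in the representability statement identifies
\begin{align*}
X(R)(S)=\Hom_{/S}(S,X(R))&=\Hom_{R(S)\text{-alg}}(A,\cO_R(S))\\
&=\Hom_{R(S)\text{-alg}}(A,R(S))=X(R(S)),
\end{align*}
using $\cO_R(S)=R(S)$, and this identification is manifestly natural in $X$.

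The only nonformal ingredients are the ones isolated at the start: that $\cO_R(-)$ is a ring-valued functor and the maps $f\colon R^I\to R$ are continuous and behave well under composition, and that $\Hom_{/S}(T,R^I)=\cO_R(T)^I$ even for infinite $I$, so that the cofiltered limit $R^I$ really represents the polynomial-ring functor. Granting these, the rest is Yoneda together with manipulation of presentations of $R(S)$-algebras; the one thing to keep honest is the presence of arbitrary (possibly infinite) index sets $I$ and $J$ throughout, including in the non-finite-type case.
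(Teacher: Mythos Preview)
Your proof is correct and follows essentially the same approach as the paper: both construct $X(R)$ by choosing a presentation of the coordinate ring and forming the corresponding fiber product $R^I\times_{R^J}S$ (the paper uses $R^{\mathfrak{a}}$ indexed by the full ideal, you use a generating set $J$, which is immaterial). The only difference is packaging: you frame $X(R)$ as representing the functor $T\mapsto\Hom_{R(S)\text{-alg}}(A,\cO_R(T))$, so that functoriality, independence of the presentation, and preservation of fiber products all follow from Yoneda, whereas the paper verifies each of these by hand via explicit diagram chases; your organization is slightly cleaner but the underlying construction is identical.
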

In particular, for all affine group schemes $X$ over $R(S)$, we naturally obtain a group topological space $X(R)$ over $S$. Write $R^\times$ for $\bG_m(R)$.
\begin{proof}
  Let $X=\Spec{A}$ be an affine scheme over $R(S)$, and fix a presentation $A\cong R(S)[T_i]_{i\in I}/\fa$. Because $R$ is a ring topological space over $S$, elements of $R(S)[T_i]_{i\in I}$ induce continuous maps $R^I\ra R$ over $S$. Applying this to every element of $\fa$ yields a continuous map $R^I\ra R^\fa$ over $S$; define $X(R)$ to be the fiber product
  \begin{align*}
    \xymatrix{X(R)\ar[r]\ar[d] & R^I\ar[d] \\
    S\ar[r]^-0 & R^\fa.}
  \end{align*}

  Now let $Y=\Spec{B}$ be another affine scheme over $R(S)$. Fix a presentation $B\cong R(S)[U_j]_{j\in J}/\fb$, and let $f:Y\ra X$ be a morphism over $R(S)$. For all $i$ in $I$, choose a lift to $R(S)[U_j]_{j\in J}$ of $f^*T_i$. As $i$ varies, this yields a continuous map $R^J\ra R^I$ over $S$. Checking on fibers indicates that the diagram
  \begin{align*}
    \xymatrix{Y(R)\ar@{.>}[rd]\ar[r]\ar@/_0.75cm/[rdd] & R^J\ar[rd] \\
                   & X(R)\ar[r]\ar[d] & R^I\ar[d] \\
    & S\ar[r]^-0 & R^\fa}
  \end{align*}
  commutes, which induces a continuous map $Y(R)\ra X(R)$ over $S$. This shows that our construction is functorial, as well as independent of the presentation.

  Our construction evidently sends $\bA^I_{R(S)}$ to $R^I$. Let $Z=\Spec{C}$ be yet another affine scheme over $R(S)$ along with a morphism $Z\ra X$ over $R(S)$. By choosing presentations of $B$ and $C$ over $A$, we see that our construction preserves fiber products. The cartesian square
  \begin{align*}
    \xymatrix{X\ar[r]\ar[d] & \bA^I_{R(S)}\ar[d] \\
    \Spec{R(S)}\ar[r]^-0 & \bA^\fa_{R(S)}}
  \end{align*}
indicates that these properties uniquely determine the value of $X(R)$. Finally, the fiber product definition of $X(R)$ immediately yields the desired description of $X(R)(S)$.
\end{proof}

\section{Local fields in families}\label{s:localfields}
In this section, we conceptualize Heuristic \eqref{eqn:slogan} by converting sequences $\{E_i\}_{i\in\bN}$ of $p$-adic local fields into nontrivial families over $\bN\cup\{\infty\}$. We begin by constructing a ring topological space $\bE$ over $\bN\cup\{\infty\}$ whose fibers recover the $E_i$. Next, we define the topological ring $E$ to be the global sections of $\bE$, and we study finite \'etale $E$-algebras. We prove that $E$ enjoys a version of Galois theory over $\bN\cup\{\infty\}$ that is compatible with Deligne's isomorphism. Finally, we conclude by extending various constructions (e.g. maximal unramified extensions, adjoining $q$-th power roots of a uniformizer, separable closures, and Lubin--Tate extensions) to the context of $E$.

\subsection{}\label{ss:closefieldssetup}
When forming families of local fields, it will be convenient to work in the following generality. Let $\ka$ be a perfect field. Let $\{E_i\}_{i\in\bN}$ be a sequence of complete discretely valued fields of characteristic $0$ with residue field $\ka$, and write $E_\infty$ for $\ka\lp{t}$. For all $i$ in $\bN\cup\{\infty\}$, write $v_i:E_i\ra\bZ\cup\{\infty\}$ for the (normalized) valuation, write $\cO_i$ for the ring of integers of $E_i$, and write $\fp_i$ for the maximal ideal of $\cO_i$. For all positive integers $n$, write $\Tr_n(E_i)$ for the triple as in \cite[(1.2)]{Del84}.

When $i$ lies in $\bN$, write $e_i$ for the absolute ramification index $v_i(p)$, assume that $e_i\to\infty$ as $i\to\infty$, and fix an isomorphism $\Tr_{e_i}(E_i)\cong\Tr_{e_i}(E_\infty)$. Note that this includes an isomorphism $\cO_i/\fp_i^{e_i}\cong\cO_\infty/\fp_\infty^{e_i}$. Write $\pi_\infty$ for $t$, and choose a uniformizer $\pi_i$ of $E_i$ whose image in $\cO_i/\fp_i^{e_i}\cong\cO_\infty/\fp_\infty^{e_i}$ equals the image of $\pi_\infty$.

\subsection{}\label{defn:rings}
View $\bN$ as a discrete topological space, and write $\bN\cup\{\infty\}$ for its one-point compactification. Note that $\bN\cup\{\infty\}$ is profinite; in fact, it is homeomorphic to
\begin{align*}
  \varprojlim_d\{i\in\bN\mid i\leq d\}\cup\{\infty\},
\end{align*}
where $d$ runs over natural numbers, and the transition map
\begin{align*}
\{i\in\bN\mid i\leq d+1\}\cup\{\infty\}\ra\{i\in\bN\mid i\leq d\}\cup\{\infty\}
\end{align*}
sends $d+1$ to $\infty$ and equals the identity otherwise.

First, we construct our family as a ring topological space over $\bN\cup\{\infty\}$.
\begin{defn*}\hfill
  \begin{enumerate}[a)]
  \item Let $n$ be a positive integer. Write $\bO_n$ for the ring topological space over $\bN\cup\{\infty\}$ given by $\varprojlim_d$ of the discrete ring topological spaces 
    \begin{align*}
      \Big(\coprod_{i\leq d}\cO_i/\fp_i^n\Big)\textstyle\coprod\cO_\infty/\fp_\infty^n
    \end{align*}
    over $\{i\in\bN\mid i\leq d\}\cup\{\infty\}$, where, for large enough $d$, the transition map
    \begin{align*}
      \Big(\coprod_{i\leq d+1}\cO_i/\fp_i^n\Big)\textstyle\coprod\cO_\infty/\fp_\infty^n\ra\Big(\coprod_{i\leq d}\cO_i/\fp_i^n\Big)\textstyle\coprod\cO_\infty/\fp_\infty^n
    \end{align*}
    sends $\cO_{d+1}/\fp_{d+1}^n$ to $\cO_\infty/\fp_\infty^n$ via the isomorphism $\cO_{d+1}/\fp_{d+1}^{e_{d+1}}\cong\cO_\infty/\fp_\infty^{e_{d+1}}$ (since $d$ is large enough) and equals the identity otherwise.
  \item Write $\bO$ for the ring topological space over $\bN\cup\{\infty\}$ given by $\varprojlim_n\bO_n$, where $n$ runs over positive integers, and the transition map $\bO_{n+1}\ra\bO_n$ is induced from reduction mod $\pi_i^n$.
  \item Write $\pi$ for the continuous section of $\bO\ra\bN\cup\{\infty\}$ whose value on $i$ in $\bN\cup\{\infty\}$ equals $\pi_i$. Write $\bE$ for the ring topological space over $\bN\cup\{\infty\}$ given by $\varinjlim_r\bO$, where $r$ runs over non-negative integers, and the transition maps are given by
    \begin{align*}
      \xymatrix{\bO=(\bN\cup\{\infty\})\times_{\bN\cup\{\infty\}}\bO\ar[r]^-{\pi\times\id} & \bO\times_{\bN\times\{\infty\}}\bO\ar[r]^-m & \bO.}
    \end{align*}
  \end{enumerate}
\end{defn*}
Note that $\bO_n$ is locally constant over $\bN\cup\{\infty\}$, so $\bO$ is Hausdorff. Because $\bE$ is an increasing union of open subspaces homeomorphic to $\bO$, it is also Hausdorff. Finally, we see that $\bE$ is independent of the choice of $\pi_i$.

\subsection{}
We have the following version of the valuation map for $\bE^\times$ over $\bN\cup\{\infty\}$. Write $v:\bE^\times\ra\bZ\times(\bN\cup\{\infty\})$ for the map over $\bN\cup\{\infty\}$ whose fiber at $i$ equals $v_i$. Note that $\bO^\times$ is an open group topological subspace of $\bE^\times$ over $\bN\cup\{\infty\}$. 
\begin{prop*}
The map $v$ is a morphism of group topological spaces over $\bN\cup\{\infty\}$, and $\ker{v}$ equals $\bO^\times$.
\end{prop*}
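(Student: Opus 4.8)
The plan is to reduce the whole statement to fiberwise assertions together with one explicit open cover of $\bE^\times$. The homomorphism property needs no work: a map over $\bN\cup\{\infty\}$ is a group homomorphism exactly when all of its fibers are, and the fiber of $v$ at $i$ is the normalized valuation $v_i:E_i^\times\ra\bZ$, which is a homomorphism by construction. So the content is the continuity of $v$ together with the computation of $\ker v$.

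For continuity, I would cover $\bE^\times$ by the open subspaces $\pi^r\bO^\times$ as $r$ ranges over $\bZ$. Here $\bO^\times$ is open in $\bE^\times$ as noted just before the proposition, and $\pi$ becomes invertible in $\bE$ by construction of the colimit $\bE=\varinjlim_r\bO$ in Definition \ref{defn:rings}.c), so multiplication by $\pi^r$ is a homeomorphism of $\bE^\times$ over $\bN\cup\{\infty\}$ and each $\pi^r\bO^\times$ is open. That these cover $\bE^\times$ is fiberwise: a point $x$ of $\bE^\times$ lies in some $E_i^\times$, and writing $r=v_i(x)$ gives $x\pi_i^{-r}\in\cO_i^\times$, so $x$ lies in $\pi_i^r\cO_i^\times=(\pi^r\bO^\times)_i$. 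On $\pi^r\bO^\times$ the map $v$ sends every point over $i$ to $(r,i)$, so $v|_{\pi^r\bO^\times}$ is the structure map $\pi^r\bO^\times\ra\bN\cup\{\infty\}$ followed by $i\mapsto(r,i)$; since $\bZ$ is discrete this is continuous. As continuity is local on the source, $v$ is continuous.

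For the kernel, I would first record the set-theoretic equality: fiberwise $\ker v_i=\cO_i^\times$, which is exactly the fiber at $i$ of the open embedding $\bO^\times\hra\bE^\times$, and the $r=0$ piece of the cover above shows that $v$ vanishes on $\bO^\times$; hence $v^{-1}(0\times(\bN\cup\{\infty\}))$ and $\bO^\times$ coincide as subsets of $\bE^\times$. To upgrade this to an equality of topological spaces over $\bN\cup\{\infty\}$ it suffices to know that $\bO^\times$ carries the subspace topology from $\bE^\times$, which holds because $\bO^\times\hra\bE^\times$ is an open embedding. Thus $\ker v=\bO^\times$.

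I do not anticipate a genuine obstacle: the only step requiring slight care is checking that the translates $\pi^r\bO^\times$ actually exhaust $\bE^\times$, which combines the fact that inverting $\pi$ turns $\bO$ into $\bE$ with the elementary decomposition $E_i^\times=\coprod_{r\in\bZ}\pi_i^r\cO_i^\times$ in each fiber; everything else — the homomorphism property, the constancy of $v$ on each piece, and the identification $\ker v_i=\cO_i^\times$ — is immediate on fibers.
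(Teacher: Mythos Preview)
Your proof is correct and is essentially the same argument as the paper's, phrased more concretely. The paper constructs the morphism $\ul\bZ\ra\bE^\times/\bO^\times$ of abelian sheaves induced by $\pi$, checks on stalks (via Proposition~\ref{ss:openquotient}) that it is an isomorphism, and then identifies $v$ with the composition $\bE^\times\ra\bE^\times/\bO^\times\ra^\sim\bZ\times(\bN\cup\{\infty\})$; continuity of $v$ then comes from the continuity of the quotient map in Proposition~\ref{ss:openquotient}. Your explicit open cover $\bE^\times=\coprod_r\pi^r\bO^\times$ is exactly the statement that this sheaf map is an isomorphism, and your check that $v$ is constant on each piece is the continuity of the quotient map in this special case. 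The only stylistic difference is that the paper invokes the general quotient machinery of \S\ref{s:smoothrepresentations}, whereas you unwind it by hand; either way the content is the decomposition $E_i^\times=\coprod_r\pi_i^r\cO_i^\times$ in each fiber.
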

\begin{proof}
Now $\pi$ induces a morphism $\ul\bZ\ra\bE^\times/\bO^\times$ of abelian sheaves on $\bN\cup\{\infty\}$. By using Proposition \ref{ss:openquotient} to check on stalks, we see that this is an isomorphism. Its inverse yields an isomorphism $\bE^\times/\bO^\times\ra^\sim\bZ\times(\bN\cup\{\infty\})$ of group topological spaces over $\bN\cup\{\infty\}$, and $v$ equals the composition
  \begin{align*}
    \bE^\times\ra \bE^\times/\bO^\times\ra^\sim \bZ\times(\bN\cup\{\infty\}).
  \end{align*}
Proposition \ref{ss:openquotient} shows that $\bE^\times\ra\bE^\times/\bO^\times$ is continuous, and checking on fibers shows that it is a morphism of group topological spaces over $\bN\cup\{\infty\}$. So the same holds for $v$. Finally, the above work also shows that $\ker v$ equals $\bO^\times$.
\end{proof}

\subsection{}\label{ss:E}
Next, we study the topological ring associated with $\bE$. Write $E$ for $\bE(\bN\cup\{\infty\})$ equipped with the compact-open topology, which is a topological ring. Similarly, write $\cO$ for the topological ring $\bO(\bN\cup\{\infty\})$. Note that $\cO$ is an open subring of $E$, equals the subset $E^\circ$ of powerbounded elements, and contains $\pi$.

\begin{lem*}
  As topological rings, $\cO$ is naturally isomorphic to
  \begin{align*}
    \varprojlim_n\varinjlim_d\big(\prod_{i\leq d}\cO_i/\fp_i^n\big)\times\cO_\infty/\fp_\infty^n,
  \end{align*}
and $E$ is isomorphic to $\cO[\frac1\pi]$. Consequently, $\cO$ carries the $\pi$-adic topology and is complete, and we have $\cO/\pi^n=\varinjlim_d\big(\prod_{i\leq d}\cO_i/\fp_i^n\big)\times\cO_\infty/\fp_\infty^n=\bO_n(\bN\cup\{\infty\})$.
\end{lem*}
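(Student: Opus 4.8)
The plan is to unwind the (co)limits defining $\bE$ and commute them past the global sections functor $\Gamma\coloneqq(-)(\bN\cup\{\infty\})$. Since $\bN\cup\{\infty\}$ is compact Hausdorff, the exponential law identifies $\Gamma$ (with the compact-open topology) as a right adjoint, hence it commutes with limits at the level of topological spaces; in particular $\bO=\varprojlim_n\bO_n$ gives $\cO=\varprojlim_n\bO_n(\bN\cup\{\infty\})$ as topological rings (all the maps in sight being ring maps, the ring structures cause no trouble). So the main point is to identify $\bO_n(\bN\cup\{\infty\})$ with $\varinjlim_d\big(\prod_{i\leq d}\cO_i/\fp_i^n\big)\times\cO_\infty/\fp_\infty^n$, and I expect this to be the only nontrivial step.

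For this identification, recall from \ref{defn:rings} that $\bO_n$ is locally constant with discrete fibers $\cO_i/\fp_i^n$, hence \'etale over $\bN\cup\{\infty\}$. The isomorphisms $\Tr_{e_i}(E_i)\cong\Tr_{e_i}(E_\infty)$ of \ref{ss:closefieldssetup} identify $\cO_i/\fp_i^n\cong\cO_\infty/\fp_\infty^n$ whenever $e_i\geq n$, hence for all but finitely many $i$ (since $e_i\to\infty$); so for $d$ large, $\bO_n$ restricted to the clopen neighborhood $\{i>d\}\cup\{\infty\}$ of $\infty$ is the constant sheaf $\ul{\cO_\infty/\fp_\infty^n}$. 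Given a continuous section $s$ of $\bO_n$ over $\bN\cup\{\infty\}$, its restriction to such a neighborhood is a continuous map into a discrete set, hence locally constant, hence equal to the single value $s(\infty)$ on a (possibly smaller) clopen neighborhood $\{i>d'\}\cup\{\infty\}$; and on the remaining finite set $\{i\leq d'\}$ the section $s$ is an arbitrary tuple in $\prod_{i\leq d'}\cO_i/\fp_i^n$. Conversely every such datum patches to a continuous section. This exhibits $\bO_n(\bN\cup\{\infty\})$ as the asserted filtered colimit over $d'$: the colimit, rather than the full product over all $i\in\bN$, is precisely what encodes continuity of $s$ at $\infty$. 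The same analysis shows $\bO_n(\bN\cup\{\infty\})$ is discrete, since a section of an \'etale space has open image.

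The remaining assertions are then formal. The maps $\cdot\pi\colon\bO\to\bO$ defining $\bE=\varinjlim_r\bO$ are open embeddings onto the clopen sub-bundle with fibers $\fp_i\subseteq\cO_i$ (multiplication by $\pi_i$ is injective on the domain $\cO_i$), so $\bE$ is the increasing union of the open subspaces $\pi^{-r}\bO$, with fiber $\cO_i[\frac1{\pi_i}]=E_i$; by quasicompactness of $\bN\cup\{\infty\}$ every section of $\bE$ factors through some $\pi^{-r}\bO$, so $E=\bigcup_r\pi^{-r}\cO=\cO[\frac1\pi]$, the inclusion $\cO\hookrightarrow E$ being an open embedding and $\pi$ a non-zero-divisor (a section it kills is fiberwise zero). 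Finally, the transition maps $\bO_{n+1}\to\bO_n$ are fiberwise surjective, and over the profinite base $\bN\cup\{\infty\}$ one may refine a common trivializing cover to a clopen partition, lift a given section of $\bO_n$ piecewise along a set-theoretic splitting, and glue; hence $\cO\to\bO_n(\bN\cup\{\infty\})$ is surjective, with kernel exactly $\pi^n\cO$ (if $s$ dies in $\bO_n$ then each $s(i)\in\fp_i^n$, so $\pi^{-n}s\in E$ has all fibers in $\bO$ and therefore is a section of the open subspace $\bO\subseteq\bE$). Thus $\cO/\pi^n=\bO_n(\bN\cup\{\infty\})=\varinjlim_d\big(\prod_{i\leq d}\cO_i/\fp_i^n\big)\times\cO_\infty/\fp_\infty^n$, and by the first paragraph $\cO=\varprojlim_n\cO/\pi^n$ as a topological ring is an inverse limit of discrete rings; this limit topology is the $\pi$-adic one (the cosets of $\pi^n\cO=\ker(\cO\to\cO/\pi^n)$ form a basis of neighborhoods) and is complete.
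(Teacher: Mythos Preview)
Your proof is correct and follows essentially the same approach as the paper: commute global sections past the inverse limit using that $\Gamma$ preserves limits, identify $\bO_n(\bN\cup\{\infty\})$ via local constancy, and use compactness of $\bN\cup\{\infty\}$ to pass the colimit defining $\bE$ through $\Gamma$. You supply considerably more detail than the paper (which dispatches everything in two displayed equations), particularly for the ``Consequently'' clause; your surjectivity argument for $\cO\to\bO_n(\bN\cup\{\infty\})$ would read more cleanly if phrased as ``countable inverse limit of surjections surjects onto each term'' rather than via piecewise lifting, but it is not wrong.
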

\begin{proof}
Since $\bO=\varprojlim_n\bO_n$ and $\bO_n$ is locally constant, we have
  \begin{align*}
    \cO=\Cont_{\bN\cup\{\infty\}}(\bN\cup\{\infty\},\bO) &= \varprojlim_n\Cont_{\bN\cup\{\infty\}}(\bN\cup\{\infty\},\bO_n)\\
    &=\varprojlim_n\varinjlim_d\Big(\prod_{i\leq d}\cO_i/\fp_i^n\Big)\times\cO_\infty/\fp_\infty^n
  \end{align*}
  as topological rings. Because $\bN\cup\{\infty\}$ is compact, we also have
  \begin{flalign*}
    E=\Cont_{\bN\cup\{\infty\}}(\bN\cup\{\infty\},\bE)=\varinjlim_r\Cont_{\bN\cup\{\infty\}}(\bN\cup\{\infty\},\bO)=\varinjlim_r\cO=\cO[\textstyle\frac1\pi].\qedhere
  \end{flalign*}
\end{proof}

\subsection{}
For all $i$ in $\bN\cup\{\infty\}$, evaluation at $i$ yields a continuous ring homomorphism $\ev_i:E\ra E_i$. Note that $\ev_i(\cO)=\cO_i$, so $\ev_i$ is surjective and $\ker\ev_i$ is maximal.
\begin{prop*}
Every closed prime ideal of $E$ equals $\ker\ev_i$ for some $i$ in $\bN\cup\{\infty\}$. Thus $\abs{\Spa{E}}$ is naturally homeomorphic to $\bN\cup\{\infty\}$, and $\Spa{E}$ is an adic space.
\end{prop*}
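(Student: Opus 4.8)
The plan is to first classify the closed prime ideals of $E$, from which both topological claims will follow. Note each $\ker\ev_i$ is closed, being $\ev_i^{-1}(0)$, and maximal, since $E_i$ is a field; also $\pi$ is a unit of $E$, so no closed prime contains $\pi$, and every closed prime $\mathfrak q$ satisfies $\mathfrak q=(\mathfrak q\cap\cO)E$. Since $\cO=E^\circ$ is a ring of definition by Lemma \ref{ss:E}, $\Spa{E}=\Spa(E,\cO)$ is a spectral space, and for any of its points $x$ the continuity of $\abs{\cdot}_x$ forces $\abs{\pi}_x<1$ (else $\abs{\pi^n}_x\not\to0$); hence for $f\notin\operatorname{supp}(x)$ and $n\gg0$ the coset $f+\pi^n\cO$ avoids $\operatorname{supp}(x)$, so the support of $x$ is a closed prime. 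Thus, granting the classification ``closed primes $=\{\ker\ev_i\}$'', the support map gives a bijection $\abs{\Spa{E}}\to\bN\cup\{\infty\}$: over $\ker\ev_i$ there is a unique point, namely $v_i\circ\ev_i$, because $\ev_i$ is open (so $E_i$ carries the quotient topology), $E_i$ is complete discretely valued, and the trivial valuation on $E_i$ is not continuous, $\ker\ev_i$ not being open.

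So let $\mathfrak q\subsetneq E$ be a closed prime and set $Z(\mathfrak q)\coloneqq\{i\mid\mathfrak q\subseteq\ker\ev_i\}$. To see $Z(\mathfrak q)\neq\emptyset$, assume not: then for each $i$ there is $g_i\in\mathfrak q\cap\cO$ with $\ev_i(g_i)\neq0$, and setting $Z_n(h)\coloneqq\{i\mid h\equiv0\text{ in }\bO_n\}$ (clopen, since $\bO_n$ is locally constant), the clopen set $\{i'\mid v_{i'}(\ev_{i'}g_i)\leq v_i(\ev_i g_i)\}$ contains $i$. By compactness finitely many of these cover $\bN\cup\{\infty\}$; disjointifying into a finite clopen partition $\{V_j\}$ with $v_{i'}(\ev_{i'}g_j)\leq m_j$ on $V_j$, the element $g\coloneqq\sum_j 1_{V_j}g_j\in\mathfrak q$ (using the idempotents $1_{V_j}\in\cO$) is nonzero in every fiber with $v_{i'}(\ev_{i'}g)\leq M\coloneqq\max_j m_j$; partitioning $\bN\cup\{\infty\}$ by the clopen value of $v_{i'}(\ev_{i'}g)\in\{0,\dots,M\}$ and inverting the unit of $\cO$ obtained on each piece shows $g^{-1}\in E$, contradicting $\mathfrak q\neq E$. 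If $i\neq i'$ both lay in $Z(\mathfrak q)$, a clopen $W$ with $i\in W\not\ni i'$ satisfies $1_W(1-1_W)=0\in\mathfrak q$, so primeness puts one of $1_W,1-1_W$ in $\mathfrak q$, contradicting $\mathfrak q\subseteq\ker\ev_i$ (as $\ev_i 1_W=1$) or $\mathfrak q\subseteq\ker\ev_{i'}$ (as $\ev_{i'}(1-1_W)=1$). Hence $Z(\mathfrak q)=\{i\}$ is a single point.

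It remains to prove $\ker\ev_i\subseteq\mathfrak q$, which then yields $\mathfrak q=\ker\ev_i$ by maximality. Since $\ev_i$ lands in the discretely valued field $E_i$, we have $\ker\ev_i\cap\cO=\bigcap_n J_n$ with $J_n\coloneqq\ker(\cO\to\cO_i/\fp_i^n)$; as $\mathfrak q$ is closed in $E$ and $\{\pi^n\cO\}$ is a neighbourhood basis of $0$, we have $\bigcap_n(\pi^n\cO+\mathfrak q)=\mathfrak q$; so it suffices that $J_n\subseteq\pi^n\cO+\mathfrak q$. Given $g\in J_n$, the clopen $W\coloneqq Z_n(g)$ contains $i$, so $1-1_W\in\mathfrak q$ (because $\mathfrak q$ is prime and $1_W\notin\ker\ev_i\supseteq\mathfrak q$); and $1_Wg\in\pi^n\cO$ because $g$ has valuation $\geq n$ throughout $W$, so $\pi^{-n}1_Wg$ is again a section of $\cO$. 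Then $g=1_Wg+(1-1_W)g\in\pi^n\cO+\mathfrak q$, completing the classification; the resulting bijection $\abs{\Spa{E}}\to\bN\cup\{\infty\}$ is natural via $E/\ker\ev_i=E_i$.

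For the homeomorphism I would verify continuity both ways. Each clopen $W\subseteq\bN\cup\{\infty\}$ is the preimage of the rational subset $\{x\mid\abs{1_W}_x\geq1\}$ (as $1_W$ is idempotent), so $\abs{\Spa{E}}\to\bN\cup\{\infty\}$ is continuous; conversely, a rational subset $\{x\mid\abs{f_1}_x,\dots,\abs{f_m}_x\leq\abs{g}_x\neq0\}$ (all $f_j,g\in\cO$, $(f_1,\dots,f_m,g)\supseteq\pi^N\cO$) satisfies $v_i(\ev_i g)\leq N$ on its preimage, so partitioning by the clopen loci $\{v_i(\ev_i g)=k\}$, $k\leq N$, turns the defining inequalities into the clopen conditions $i\in\bigcap_j Z_k(f_j)$, making the preimage clopen; hence the bijection is a homeomorphism. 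In particular every rational subset of $\Spa{E}$ equals $\{x\mid\abs{1_W}_x\geq1\}$ for a clopen $W$, where the structure presheaf takes value $E\cdot1_W=\bE(W)$, a complete Tate ring as in Lemma \ref{ss:E}; since any rational cover of such a subset refines to a finite disjoint clopen partition $W=\bigsqcup_\alpha W_\alpha$ with $\bE(W)=\prod_\alpha\bE(W_\alpha)$, the structure presheaf is a sheaf, so $\Spa{E}$ is an affinoid adic space. The main obstacle should be $Z(\mathfrak q)\neq\emptyset$, where compactness of $\bN\cup\{\infty\}$ must be converted into a unit of $E$ lying in $\mathfrak q$; the inclusion $\ker\ev_i\subseteq\mathfrak q$ is the other point where closedness of $\mathfrak q$ and $\pi$-adic completeness of $\cO$ are genuinely needed.
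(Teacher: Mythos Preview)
Your proof is correct and takes a genuinely different route from the paper's. The paper exploits the specific shape of $\bN\cup\{\infty\}$: each $i\in\bN$ is isolated, giving a ``point idempotent'' $b_i\in E$ with $b_i\cdot\ker\ev_i=0$. If a closed prime $\fa$ omits some $b_i$, primeness immediately forces $\ker\ev_i\subseteq\fa$ and hence equality; if $\fa$ contains every $b_i$, closedness and $\pi$-adic completeness give $\fa\supseteq\sum_i f_ib_i$ for all sequences with $v_i(f_i)\to\infty$, which is exactly $\ker\ev_\infty$. This two-case split is very short but leans entirely on the isolated points of $\bN$. Your argument via the zero-locus $Z(\fq)$, compactness, and idempotents attached to \emph{arbitrary} clopens is more structural and would transfer with no change to a family indexed by any profinite set; the price is the longer unit-construction needed to show $Z(\fq)\neq\emptyset$ and the separate closure argument for $\ker\ev_i\subseteq\fq$. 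Your explicit verification that rational subsets correspond bijectively to clopens, and that the rational localization at $\{|1_W|\geq1\}$ is exactly $\bE(W)$, fleshes out what the paper asserts in two sentences; both proofs end at the same identification, but yours makes the homeomorphism and sheaf condition visibly independent of the particular combinatorics of $\bN\cup\{\infty\}$.
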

\begin{proof}
  For all $i$ in $\bN$, write $b_i$ for the continuous section of $\bE\ra\bN\cup\{\infty\}$ whose value on $i'$ in $\bN\cup\{\infty\}$ equals $1$ if $i'=i$ and $0$ otherwise. Let $\fa$ be a closed prime ideal of $E$, and first assume that $\fa$ does not contain $b_i$ for some $i$ in $\bN$. Since $b_ix=0$ for all $x$ in $\ker\ev_i$, $\fa$ being prime implies that it equals $\ker\ev_i$.

  Next, assume that $\fa$ contains $b_i$ for all $i$ in $\bN$. Because $\fa$ is closed and $\cO$ is $\pi$-adically complete, this implies that $\fa$ contains $\sum_{i\in\bN}f_ib_i$ for all sequences $\{f_i\}_{i\in\bN}$ with $f_i$ in $E_i$ such that $v_i(f_i)\to\infty$ as $i\to\infty$. The set of such $\sum_{i\in\bN}f_ib_i$ is precisely $\ker\ev_\infty$, so $\fa$ equals $\ker\ev_\infty$.

Since $\abs{\Spa{E_i}}$ is a singleton, the above discussion identifies $\abs{\Spa{E}}$ with $\bN\cup\{\infty\}$ as sets. Note that rational open subspaces of $\Spa{E}$ correspond to compact open subsets of $\bN\cup\{\infty\}$, so this identification is a homeomorphism. In fact, for any compact open subset $U$ of $\bN\cup\{\infty\}$, the corresponding rational localization of $E$ equals $\bE(U)$, and this implies that $\sO_{\Spa{E}}$ is a sheaf.
\end{proof}

\subsection{}\label{ss:Galoisgroups}
Let us recall the setup of Deligne's isomorphism. For all $i$ in $\bN\cup\{\infty\}$, fix a separable closure $\ov{E}_i$ of $E_i$, and write $\Ga_i$ for $\Gal(\ov{E}_i/E_i)$. Write $C_i$ for the completion of $\ov{E}_i$, and recall that $C_i$ is algebraically closed.

For all positive integers $n$, write $I^n_i$ for the $n$-th ramification subgroup of $\Ga_i$ in the upper numbering. Using our isomorphism $\Tr_{e_i}(E_i)\cong\Tr_{e_i}(E_\infty)$, we obtain a canonical isomorphism $\Ga_i/I_i^{e_i}\cong\Ga_\infty/I_\infty^{e_i}$ of topological groups (up to conjugation) \cite[(3.5.1)]{Del84}. Fix an isomorphism in this conjugacy class.

We now introduce the analogue of the absolute Galois group for $E$.
\begin{defn*}\hfill
  \begin{enumerate}[a)]
  \item Write $\Ga^n$ for the group topological space over $\bN\cup\{\infty\}$ given by $\varprojlim_d$ of the group topological spaces
    \begin{align*}
      \Big(\coprod_{i\leq d}\Ga_i/I_i^n\Big)\textstyle\coprod\Ga_\infty/I_\infty^n
    \end{align*}
    over $\{i\in\bN\mid i\leq d\}\cup\{\infty\}$, where, for large enough $d$, the transition map
    \begin{align*}
      \Big(\coprod_{i\leq d+1}\Ga_i/I_i^n\Big)\textstyle\coprod\Ga_\infty/I_\infty^n\ra\Big(\coprod_{i\leq d}\Ga_i/I_i^n\Big)\textstyle\coprod\Ga_\infty/I_\infty^n
    \end{align*}
    sends $\Ga_{d+1}/I^n_{d+1}$ to $\Ga_\infty/I^n_\infty$ via the isomorphism $\Ga_{d+1}/I^{e_{d+1}}_{d+1}\cong\Ga_\infty/I^{e_{d+1}}_\infty$ (since $d$ is large enough) and equals the identity otherwise.
  \item Write $\Ga$ for the group topological space over $\bN\cup\{\infty\}$ given by $\varprojlim_n\Ga^n$, where $n$ runs over positive integers, and the transition map $\Ga^{n+1}\ra\Ga^n$ is induced from quotienting by $I_i^n$.
  \end{enumerate}
\end{defn*}
Because $\Ga^n$ is profinite, so is $\Ga$. Moreover, note that $\Ga$ is lctd over $\bN\cup\{\infty\}$.

\subsection{}\label{lem:etalepointwise}
We will use the following fiberwise criterion to construct \'etale maps over $E$.
\begin{lem*}
  Let $(A,A^+)$ be a sheafy complete Huber pair over $(E,\cO)$, and let $(A',A'^+)$ be a finite free Huber pair over $(A,A^+)$. Then $A'$ is \'etale over $A$ if and only if, for all $i$ in $\bN\cup\{\infty\}$, the base change $A'\otimes_EE_i$ is \'etale over $A\otimes_EE_i$.
\end{lem*}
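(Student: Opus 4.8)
The plan is to reduce étaleness to the invertibility of a single element of $A$ --- the discriminant of $A'/A$ --- and then to verify that invertibility pointwise on the adic space $\Spa(A,A^+)$, using that it maps to $\Spa{E}$, whose underlying space is $\bN\cup\{\infty\}$.

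The ``only if'' direction is immediate, since étale morphisms are stable under base change. For the converse, recall that because $A'$ is finite free over $A$, it is étale over $A$ if and only if the discriminant $\delta\coloneqq\operatorname{disc}(A'/A)\in A$, computed from any $A$-basis of $A'$ and well-defined up to $A^\times$, is a unit. The same criterion applies over each $A\otimes_EE_i$ (which is finite free over $A$ after base change), and the formation of the discriminant commutes with base change, so the hypothesis says precisely that the image $\ev_i(\delta)$ is a unit in $A\otimes_EE_i$ for every $i$ in $\bN\cup\{\infty\}$.

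Next I would show that an element $\delta$ of $A$ is a unit as soon as $\abs{\delta(x)}\neq0$ for all $x$ in $X\coloneqq\Spa(A,A^+)$. The rational subsets $U_n\coloneqq\{x\mid\abs{\delta(x)}\geq\abs{\pi(x)}^n\}$ form an increasing open cover of $X$, as $\pi$ is a topologically nilpotent unit of $A$; by quasicompactness of $X$ we get $X=U_n$ for some $n$, and since $A$ is sheafy this forces $A\langle\pi^n/\delta\rangle=A$, hence $\pi^n\in\delta A$, hence $\delta\in A^\times$ as $\pi$ is a unit. It thus remains to check $\abs{\delta(x)}\neq0$ for every $x$ in $X$. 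Any such $x$ lies over some $i$ in $\abs{\Spa{E}}=\bN\cup\{\infty\}$, and I claim the valuation $\abs{\cdot}_x$ factors through $A\to A\otimes_EE_i$; this finishes the argument, since $\ev_i(\delta)$ is a unit in $A\otimes_EE_i$ and therefore has nonzero image under any valuation. When $i$ lies in $\bN$ this is clear: $\{i\}$ is clopen in $\bN\cup\{\infty\}$, so $A\otimes_EE_i$ is the direct factor of $A$ cut out by the idempotent $b_i\in E\subseteq A$ that is $1$ at $i$ and $0$ elsewhere, and the preimage of $\{i\}$ in $X$ is the corresponding clopen, on which $\abs{\cdot}_x$ visibly factors. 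When $i=\infty$, using the description of $E$ from \ref{ss:E} one checks that every $f$ in $\ker(\ev_\infty)$ is a $\pi$-adically convergent sum $\sum_jf_jb_j$ in $E$; since $\abs{b_j(x)}=0$ for $x$ lying over $\infty$, and $f_jb_j=(f_jb_j)b_j$ gives $\abs{f_jb_j(x)}=0$, continuity of $\abs{\cdot}_x$ yields $\abs{f(x)}=0$, so $\ker(\ev_\infty)\,A$ lies in the kernel of $\abs{\cdot}_x$, i.e.\ $\abs{\cdot}_x$ factors through $A\to A\otimes_EE_\infty$.

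Finally, I would record that a finite free étale extension of Huber pairs is automatically finite étale as a morphism of adic spaces, which gives the statement as phrased. The step I expect to be the main obstacle is the $i=\infty$ case of the pointwise reduction: unlike the finite points, $\infty$ is a closed point of $\Spa{E}$ and $E_\infty$ is a quotient rather than a localization of $E$, so one cannot invoke rational localization and must instead use the explicit ``$\pi$-adic sum of the $b_j$'s'' structure of $\ker(\ev_\infty)$ coming from the description of $\cO$ and $E$ in \ref{ss:E}.
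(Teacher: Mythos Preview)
Your argument is correct and takes a genuinely different route from the paper's. The paper argues geometrically: it invokes Huber's criterion \cite[(1.7.5)]{Hub96} that a finite morphism of adic spaces is \'etale if and only if its fiber over every rank-$1$ point is, groups the rank-$1$ points of $\Spa(A,A^+)$ by their image $i$ in $\abs{\Spa E}\cong\bN\cup\{\infty\}$, and then uses \cite[(3.7)]{Hub94} (the map $(E,\cO)\to(A,A^+)$ being adic since $E$ is Tate) to identify the fiber products over $\Spa E_i$ as $\Spa(A\otimes_EE_i)$ and $\Spa(A'\otimes_EE_i)$; applying \cite[(1.7.5)]{Hub96} once more to each of these finishes the proof. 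You instead reduce everything to the invertibility of the discriminant $\delta\in A$ and check this pointwise by hand, including the explicit continuity argument at $i=\infty$ using the structure of $\ker\ev_\infty$. The paper's approach is shorter and never touches $\ker\ev_\infty$ directly, at the price of two black-box citations to Huber; yours is more self-contained and makes the use of sheafiness and completeness explicit, but has to do real work at the non-isolated point $\infty$ --- exactly the obstacle you flagged.
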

\begin{proof}
  Write $Z$ for $\Spa(A,A^+)$, and write $Z'$ for $\Spa(A',A'^+)$. By \cite[(1.7.5)]{Hub96}, the morphism $f:Z'\ra Z$ is \'etale if and only if, for every rank-$1$ point $z$ of $Z$, the fiber $f^{-1}(z)$ is \'etale over $z$. Gathering rank-$1$ points $z$ according to their images in $\abs{\Spa{E}}\cong\bN\cup\{\infty\}$ and applying \cite[(1.7.5)]{Hub96} again shows that this holds if and only if $Z'\times_{\Spa{E}}\Spa{E_i}\ra Z\times_{\Spa{E}}\Spa{E_i}$ is \'etale for all $i$ in $\bN\cup\{\infty\}$.

  Because $E$ is Tate, the morphism $(E,\cO)\ra(A,A^+)$ is adic. Therefore \cite[(3.7)]{Hub94} shows that $Z'\times_{\Spa{E}}\Spa{E_i}$ and $Z\times_{\Spa{E}}\Spa{E_i}$ are affinoid, with global sections equal to $A'\otimes_EE_i$ and $A\otimes_EE_i$. Combined with the above criterion, we get the desired result.  
\end{proof}

\subsection{}\label{ss:explicithenselian}
Via Deligne's isomorphism, finite \'etale $E$-algebras spread out from finite \'etale $E_\infty$-algebras as follows.
\begin{prop*}\hfill
  \begin{enumerate}[i)]
  \item   Base change induces an equivalence of categories
  \begin{align*}
    \textstyle\varinjlim_U\{\mbox{finite \'etale }\bE(U)\mbox{-algebras}\}\ra^\sim\{\mbox{finite \'etale }E_\infty\mbox{-algebras}\},
  \end{align*}
  where $U$ runs over compact neighborhoods of $\infty$.
  \item Let $F_\infty$ be a finite \'etale $E_\infty$-algebra such that $I^n_\infty$ acts trivially on its associated $\Ga_\infty$-set. For any compact neighborhood $U$ of $\infty$ and finite \'etale $\bE(U)$-algebra $F$ with $F\otimes_{\bE(U)}E_\infty\cong F_\infty$, there exists a compact neighborhood $U'\subseteq U$ of $\infty$ such that,  for all $i$ in $U'$, we have $e_i\geq n$, and the finite \'etale extension $(F\otimes_{\bE(U)}E_i)/E_i$ corresponds to $F_\infty/E_\infty$ via the isomorphism $\Ga_i/I_i^n\cong\Ga_\infty/I^n_\infty$.
  \end{enumerate}
\end{prop*}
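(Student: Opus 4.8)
The plan is to deduce part ii) from part i) together with an explicit ``in families'' version of Deligne's correspondence. First I would make a reduction: since $e_i\to\infty$, the set $\{i\in\bN\mid e_i\geq n\}\cup\{\infty\}$ is open, so after shrinking $U$ I may assume $e_i\geq n$ for all $i$ in $U$. For each such $i$, the comparison $\Tr_{e_i}(E_i)\cong\Tr_{e_i}(E_\infty)$ then induces $\Tr_n(E_i)\cong\Tr_n(E_\infty)$, so the isomorphism $\Ga_i/I_i^{e_i}\cong\Ga_\infty/I_\infty^{e_i}$ fixed in \ref{ss:Galoisgroups} descends, compatibly with the upper-numbering filtration \cite[(3.5.1)]{Del84}, to the isomorphism $\Ga_i/I_i^n\cong\Ga_\infty/I_\infty^n$ appearing in the statement. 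Let $X_\infty$ be the finite $\Ga_\infty$-set attached to $F_\infty$; by hypothesis $I_\infty^n$ acts trivially on it, so it is a $\Ga_\infty/I_\infty^n$-set.

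Next I would spread Deligne's correspondence out to a family. Transporting $X_\infty$ along the isomorphisms above produces a finite $\Ga_i/I_i^n$-set $X_i$ for every $i$ in $U$, with associated finite \'etale $E_i$-algebra $\tilde F_i$, split by $I_i^n$. I claim that, after shrinking $U$ once more, the $\tilde F_i$ are the fibers of a single finite \'etale $\bE(U)$-algebra $\tilde F$ with $\tilde F\otimes_{\bE(U)}E_i\cong\tilde F_i$ for all $i$ in $U$ and $\tilde F\otimes_{\bE(U)}E_\infty\cong F_\infty$. To construct it, one checks that near $\infty$ the family $\Ga^n$ of \ref{ss:Galoisgroups} is, via the fixed Deligne isomorphisms, the constant family with fiber $\Ga_\infty/I_\infty^n$, so that $X_\infty$ pulls back to a locally constant family of finite $\Ga^n$-sets over $U$; then running the $\varprojlim_d$ construction of Definition \ref{defn:rings}, with each finite ring $\cO_i/\fp_i^\bullet$ replaced by the finite \'etale $\cO_i/\fp_i^\bullet$-algebra cut out by $X_i$, yields a module-finite and finitely presented $\bE(U)$-algebra $\tilde F$ with the stated fibers. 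That $\tilde F$ is finite \'etale over $\bE(U)$ then follows from Lemma \ref{lem:etalepointwise}, since every fiber $\tilde F\otimes_{\bE(U)}E_i=\tilde F_i$ is finite \'etale over $E_i$. (This is essentially the content of part i), so one could also extract $\tilde F$ from its proof.)

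Finally I would compare $F$ with $\tilde F$. Both base change to $F_\infty$ over $E_\infty$, so by part i) (an equivalence, hence fully faithful) the isomorphism $F\otimes_{\bE(U)}E_\infty\cong\tilde F\otimes_{\bE(U)}E_\infty$ is the base change of a morphism $F|_{\bE(U')}\to\tilde F|_{\bE(U')}$ over some compact neighborhood $U'\subseteq U$ of $\infty$, necessarily an isomorphism after shrinking $U'$. Then for every $i$ in $U'$ we obtain $F\otimes_{\bE(U)}E_i\cong\tilde F_i$, which corresponds to $F_\infty$ under $\Ga_i/I_i^n\cong\Ga_\infty/I_\infty^n$ by construction, while $e_i\geq n$ since $U'\subseteq U$. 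This is exactly part ii).

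The hard part will be the spreading-out step --- producing an honest finite \'etale $\bE(U)$-algebra rather than merely a compatible collection of fibers. Since $\bE(U)$ is non-noetherian and assembled as a $\varprojlim_d$ of locally constant families, one must verify both that the Deligne identifications vary ``locally constantly'' near $\infty$ in the precise sense needed to pull $X_\infty$ back to a genuine family over $U$ (this is where fixing the isomorphisms in \ref{ss:Galoisgroups} and the explicit transition maps of Definition \ref{defn:rings} enter) and that the resulting $\bE(U)$-algebra is finitely presented, so that the fiberwise \'etaleness criterion \ref{lem:etalepointwise} applies. Granting $\tilde F$, the rest is formal from part i).
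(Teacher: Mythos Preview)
Your overall strategy matches the paper's: both reduce, via part i), to exhibiting \emph{one} lift $\tilde F$ of $F_\infty$ whose fibers $\tilde F\otimes_{\bE(U)}E_i$ are the Deligne-transports of $F_\infty$, and then compare the given $F$ with $\tilde F$ using full faithfulness.

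You correctly flag the spreading-out step as the hard part, and as written it has a gap. The phrase ``the finite \'etale $\cO_i/\fp_i^\bullet$-algebra cut out by $X_i$'' is not well-defined: a $\Ga_i/I_i^n$-set yields a finite \'etale $E_i$-algebra $\tilde F_i$, not an $\cO_i/\fp_i^\bullet$-algebra. To glue integrally via the $\varprojlim_d$ machinery you would need compatible identifications of truncations $\cO_{\tilde F_i}/\fp_{\tilde F_i}^m \cong \cO_{F_\infty}/\fp_{F_\infty}^m$, which is Deligne's \cite[(3.4.1)]{Del84} after reducing to the field case; this is the construction carried out later in \ref{ss:realizingextensions}, but that section relies on Corollary \ref{cor:Efet} and hence on the present proposition, so invoking it here would be circular.

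The paper sidesteps the gluing problem by constructing $\tilde F$ as a quotient of a polynomial ring. After reducing to $F_\infty$ a field and stripping the maximal unramified subextension (handled by repeating the construction of $E$ over a larger residue field), one may assume $F_\infty/E_\infty$ is totally ramified, hence generated by a root of an Eisenstein polynomial $f_\infty = T^r + \pi_\infty(a_{r-1,\infty}T^{r-1} + \dotsb + a_{0,\infty})$. Lift the $a_{j,\infty}$ to $a_j\in\cO$, set $F=\bE(U)[T]/(f)$ with $f = T^r + \pi(a_{r-1}T^{r-1}+\dotsb+a_0)$, and shrink $U$ so that $\ev_i(a_j)\equiv a_{j,\infty}\pmod{\fp_i^n}$ for all $i\in U$. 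Then \cite[(1.3)]{Del84} gives directly that $(F\otimes E_i)/E_i$ corresponds to $F_\infty/E_\infty$ under $\Ga_i/I_i^n\cong\Ga_\infty/I_\infty^n$, and Lemma \ref{lem:etalepointwise} verifies \'etaleness fiberwise. Working with defining polynomials rather than integral models avoids the need to glue anything.
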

\begin{proof}
  Because $\sO_{\Spa{E},\infty}=\varinjlim_U\bE(U)$, base change induces an equivalence
  \begin{align*}
    \textstyle\varinjlim_U\{\mbox{finite \'etale }\bE(U)\mbox{-algebras}\}\ra^\sim\{\mbox{finite \'etale }\sO_{\Spa{E},\infty}\mbox{-algebras}\}.
  \end{align*}
Note that the residue field of $\sO_{\Spa{E},\infty}$ is $E_\infty$. Since $\sO_{\Spa{E},\infty}$ is henselian \cite[Lemma 2.4.17 (a)]{KL15}, part i) follows from \cite[Tag 09ZL]{stacks-project}.

We turn to part ii). Part i) indicates that it suffices to find one $U$ and $F$ satisfying the desired property for $U=U'$. By taking finite products, we can assume that $F_\infty$ is a field. The maximal unramified subextension of $F_\infty$ is handled by repeating the construction of $E$ (except that in \ref{ss:closefieldssetup} we replace $\ka$ with a finite extension, and we replace $E_i$ with its corresponding unramified extension).

Thus we can assume that $F_\infty/E_\infty$ is totally ramified. Then $F_\infty$ is generated by an element with minimal polynomial
  \begin{align*}
    f_\infty = T^r+\pi_\infty(a_{r-1,\infty}T^{r-1}+\dotsb+a_{0,\infty}),
  \end{align*}
  where the $a_{r-1,\infty},\dotsc,a_{0,\infty}$ lie in $\cO_\infty$. Choose $a_{r-1},\dotsc,a_0$ in $\cO$ such that $\ev_\infty(a_j)=a_{j,\infty}$ for all $0\leq j\leq r-1$. Let $U$ be a compact neighborhood of $\infty$ such that, for all $i$ in $U$, we have $e_i\geq n$, and for all $0\leq j\leq r-1$, the image of $\ev_i(a_j)$ in $\cO_i/\fp_i^n$ equals the image of $a_{j,\infty}$ in $\cO_\infty/\fp_\infty^n$ under the isomorphism $\cO_i/\fp_i^n\cong\cO_\infty/\fp_\infty^n$.

Let $F$ be the finite free $\bE(U)$-algebra obtained via quotienting $\bE(U)[T]$ by
  \begin{align*}
    f\coloneqq T^r+\pi(a_{r-1}T^{r-1}+\dotsb+a_0).
  \end{align*}
  For all $i$ in $U$, \cite[(1.3)]{Del84} shows that $(F\otimes_{\bE(U)}E_i)/E_i$ is the finite separable extension corresponding to $F_\infty/E_\infty$ via the isomorphism $\Ga_i/I_i^n\cong\Ga_\infty/I_\infty^n$. Therefore Lemma \ref{lem:etalepointwise} implies that $F\otimes_E\bE(U)$ is \'etale over $\bE(U)$, as desired.
\end{proof}

\subsection{}\label{cor:Efet}
Proposition \ref{ss:explicithenselian} implies that $E$ enjoys the following version of Galois theory.
\begin{cor*}
The category of finite \'etale $E$-algebras is naturally anti-equivalent to the category of finite locally constant topological spaces $X$ over $\bN\cup\{\infty\}$ along with a continuous action $\Ga\times_{\bN\cup\{\infty\}}X\ra X$ over $\bN\cup\{\infty\}$.
\end{cor*}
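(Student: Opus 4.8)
The plan is to build the anti-equivalence out of the ordinary Galois theory of the individual fields $E_i$, with Proposition \ref{ss:explicithenselian} controlling what happens in a neighborhood of $\infty$. Given a finite \'etale $E$-algebra $A$, base change along $\ev_i:E\ra E_i$ produces by ordinary Galois theory a finite $\Ga_i$-set $X_i$ for each $i$ in $\bN\cup\{\infty\}$; set $X(A)\coloneqq\coprod_iX_i$ as a set over $\bN\cup\{\infty\}$. Since base change is functorial, $A\mapsto X(A)$ is functorial; the real content is to topologize $X(A)$ so that it is finite locally constant over $\bN\cup\{\infty\}$ and carries a continuous $\Ga$-action, and then to check that the resulting functor is an anti-equivalence.

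For the topology, note first that \'etaleness and finite projectivity are preserved by base change, so $A\otimes_E\bE(U)$ is finite \'etale over $\bE(U)$ for every compact open $U$ (using $E\cong\bE(U)\times\bE((\bN\cup\{\infty\})\setminus U)$); in particular $A_\infty\coloneqq A\otimes_EE_\infty$ is finite \'etale over $E_\infty$. Choose $n$ so that $I^n_\infty$ acts trivially on $X_\infty$, and apply Proposition \ref{ss:explicithenselian}.ii) with $F\coloneqq A\otimes_E\bE(U_0)$ for an arbitrary compact neighborhood $U_0$ of $\infty$: this gives a compact neighborhood $U\subseteq U_0$ of $\infty$ such that, for all $i$ in $U$, we have $e_i\geq n$ and $A\otimes_EE_i$ corresponds to $A_\infty$ under $\Ga_i/I^n_i\cong\Ga_\infty/I^n_\infty$. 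Consequently $\coprod_{i\in U}X_i\cong U\times X_\infty$, which is locally constant over $U$, and the $\Ga|_U$-action on it factors through $\Ga^n|_U\cong U\times\Ga_\infty/I^n_\infty$ (constant because $e_i\geq n$ on $U$, by the construction of $\Ga^n$), hence is continuous. The complement $(\bN\cup\{\infty\})\setminus U$ is a finite discrete subset of $\bN$, over each point of which $X_i$ is a finite discrete $\Ga_i$-set; gluing over the decomposition $\bN\cup\{\infty\}=U\sqcup\bigsqcup_{i\notin U}\{i\}$ equips $X(A)$ with the required structure, and one checks on fibers that this is independent of the choices.

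For a quasi-inverse, let $X$ be finite locally constant over $\bN\cup\{\infty\}$ with continuous $\Ga$-action; restrict to a compact neighborhood of $\infty$ over which $X$ is constant, $X|_U\cong U\times X_\infty$. The action map $\Ga|_U\times_UX_\infty\ra X_\infty$ is continuous with finite discrete target, so its fibers are clopen; since $\Ga|_U\times_UX_\infty=\varprojlim_n(\Ga^n|_U\times_UX_\infty)$ is an inverse limit of compact spaces, these fibers are pulled back from $\Ga^n|_U\times_UX_\infty$ for some $n$, and hence the action factors through $\Ga^n|_U$. Shrinking $U$ so that $e_i\geq n$ on $U$, we get $\Ga^n|_U\cong U\times\Ga_\infty/I^n_\infty$, and $X|_U$ becomes the constant finite $\Ga_\infty/I^n_\infty$-set $X_\infty$, on which $I^n_\infty$ acts trivially. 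Proposition \ref{ss:explicithenselian}.ii), applied to the finite \'etale $E_\infty$-algebra attached to $X_\infty$, then yields (after a further shrink to $U'\subseteq U$) a finite \'etale $\bE(U')$-algebra $F$ with $X(F)|_{U'}\cong X|_{U'}$; over the finitely many remaining points of $\bN$ one takes the finite \'etale $E_i$-algebras furnished by ordinary Galois theory. Via $E\cong\bE(U')\times\prod_{i\notin U'}E_i$ these assemble to a finite \'etale $E$-algebra $A$ with $X(A)\cong X$. Full faithfulness is handled the same way: using $E\cong\bE(U)\times\prod_{i\notin U}E_i$ for a suitable $U$, one reduces to comparing morphisms of finite \'etale $\bE(U)$-algebras with morphisms of the associated $\Ga|_U$-spaces over $U$, which follows from Proposition \ref{ss:explicithenselian}.i) (together with ordinary Galois theory at $E_\infty$) and from ordinary Galois theory at the isolated points.

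The substantive input---uniform control of finite \'etale algebras near $\infty$, and its compatibility with Deligne's isomorphism---is exactly Proposition \ref{ss:explicithenselian}, so I do not expect a genuinely new difficulty. The step requiring the most care is the faithful transport of continuity across the anti-equivalence, and the one real subtlety there is that a single integer $n$ and a single compact neighborhood $U$ of $\infty$ must work simultaneously for all the nearby fields $E_i$; this is precisely the form in which Proposition \ref{ss:explicithenselian}.ii) is stated, and it is also why $\Ga$ is lctd over $\bN\cup\{\infty\}$.
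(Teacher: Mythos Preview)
Your proposal is correct and follows essentially the same route as the paper: build the functor fiberwise, use Proposition \ref{ss:explicithenselian}.ii) to trivialize near $\infty$, and handle the remaining finitely many points by ordinary Galois theory. The paper is terser in two places. First, for essential surjectivity it simply cites Proposition \ref{ss:continuitycriterion}, which packages your inverse-limit/compactness argument that the $\Ga$-action on a finite locally constant $X$ factors through some $\Ga^n|_U$; your direct argument is the unwinding of that proposition in this special case. Second, a small citation slip: when you produce the finite \'etale $\bE(U')$-algebra lifting the $E_\infty$-algebra attached to $X_\infty$, the existence comes from Proposition \ref{ss:explicithenselian}.i), while part ii) is what guarantees that after shrinking the fibers match $X|_{U'}$ under Deligne's isomorphism; you should invoke both.
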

\begin{proof}
  Let $F$ be a finite \'etale $E$-algebra. Let $n$ be a positive integer such that $I^n_\infty$ acts trivially on the $\Ga_\infty$-set associated with $F\otimes_EE_\infty$, and let $U'$ be a compact neighborhood of $\infty$ as in Proposition \ref{ss:explicithenselian}.ii). Write $X$ for the finite locally constant topological space over $\bN\cup\{\infty\}$ along with a continuous action $\Ga\times_{\bN\cup\{\infty\}}X\ra X$ over $\bN\cup\{\infty\}$ such that
  \begin{enumerate}[a)]
  \item for all $i$ not in $U'$, its fiber $X_i$ at $i$ is the $\Ga_i$-set associated with $F\otimes_EE_i$,
  \item $X_{U'}$ is the pullback to $U'$ of the $\Ga_\infty/I_\infty^n$-set associated with $F\otimes_EE_\infty$ (since $\Ga^n_{U'}\cong\Ga_\infty/I^n_\infty\times U'$ as group topological spaces over $U'$).
  \end{enumerate}

By Proposition \ref{ss:explicithenselian}.i), the assignment $F\mapsto X$ is functorial. This functor is fully faithful by Proposition \ref{ss:explicithenselian}.i) and essentially surjective by Proposition \ref{ss:continuitycriterion}.
\end{proof}

\subsection{}\label{ss:realizingextensions}
Later, we will need to spread out finite Galois extensions from $E_\infty$ to $E$ as follows. Let $F_\infty/E_\infty$ be a finite Galois extension, and let $n$ be a positive integer such that the image of $I^n_\infty$ in $\Gal(F_\infty/E_\infty)$ is trivial. Write $\psi:[-1,\infty)\ra[-1,\infty)$ for the Hasse--Herbrand function associated with $F_\infty/E_\infty$ as in \cite[Chap. IV, \S3]{Ser79}.

In this subsection, assume that $e_i\geq n$ for all $i$ in $\bN$. Write $F_i/E_i$ for the finite Galois extension corresponding to $F_\infty/E_\infty$ via the isomorphism $\Ga_i/I_i^n\cong\Ga_\infty/I^n_\infty$, and recall that our isomorphism $\Tr_{e_i}(E_i)\cong\Tr_{e_i}(E_\infty)$ induces an isomorphism $\Tr_{\psi(e_i)}(F_i)\cong\Tr_{\psi(e_i)}(F_\infty)$ \cite[(3.4.1)]{Del84}. Because $\psi(x)\to\infty$ as $x\to\infty$, repeating Definition \ref{defn:rings} (except that in \ref{ss:closefieldssetup} we replace $\ka$ with the residue field of $F_\infty$, and we replace $E_i$ with $F_i$) yields ring topological spaces $\bO_{\bF,n}$, $\bO_\bF$, and $\bF$ over $\bN\cup\{\infty\}$.

 Corollary \ref{cor:Efet} yields a natural finite \'etale $E$-algebra $F$ such that, for all $i$ in $\bN\cup\{\infty\}$, the $E_i$-algebra $F\otimes_EE_i$ is isomorphic to $F_i$. Note that $\bO_\bF(\bN\cup\{\infty\})$ is naturally isomorphic to $F^\circ$, and $\bF(\bN\cup\{\infty\})$ is naturally isomorphic to $F$.

 Finally, Corollary \ref{cor:Efet} implies that the above construction is functorial in $F_\infty$. Therefore we obtain actions of $\Gal(F_\infty/E_\infty)$ on $\bO_{\bF,n}$, $\bO_\bF$, and $\bF$ over $\bN\cup\{\infty\}$. By checking on fibers, we see that the natural closed embedding $\bE\hra\bF$ of ring topological spaces over $\bN\cup\{\infty\}$ identifies $\bE$ with the fixed point locus of $\Gal(F_\infty/E_\infty)$ in $\bF$.

 \subsection{}\label{ss:realizingEbreve}
\textbf{For the rest of this paper, assume that $\ka=\bF_q$ is a finite field.} Then $\bO_n$ is finite locally constant over $\bN\cup\{\infty\}$, so $\bO$ is profinite and $\bE$ is locally profinite.

Let us describe the analogue of (the completion of) the maximal unramified extension of $E$. For all $i$ in $\bN\cup\{\infty\}$, write $\breve{E}_i$ for the completion of the maximal unramified extension of $E_i$, and write $\breve\cO_i$ for its ring of integers. Applying $-\otimes_{\bF_q}\ov\bF_q$ to $\Tr_{e_i}(E_i)\cong\Tr_{e_i}(E_\infty)$ induces an isomorphism $\Tr_{e_i}(\breve{E}_i)\cong\Tr_{e_i}(\breve{E}_\infty)$, so repeating Definition \ref{defn:rings} (except that in \ref{ss:closefieldssetup} we replace $\ka$ with $\ov\bF_q$, and we replace $E_i$ with $\breve{E}_i$) yields ring topological spaces $\breve\bO$ and $\breve\bE$ over $\bN\cup\{\infty\}$. Write $\breve{E}$ for $\breve\bE(\bN\cup\{\infty\})$, and write $\breve\cO$ for $\breve\bO(\bN\cup\{\infty\})$.

Finally, write $\vp:\breve\bE\ra\breve\bE$ for the map over $\bN\cup\{\infty\}$ whose fiber at $i$ equals the lift $\vp_i:\breve{E}_i\ra\breve{E}_i$ of absolute $q$-Frobenius. Note that $\vp$ is a morphism of ring topological spaces over $\bN\cup\{\infty\}$ and preserves $\breve\bO$. By checking on fibers, we see that the natural closed embedding $\bE\hra\breve\bE$ of ring topological spaces over $\bN\cup\{\infty\}$ identifies $\bE$ with the fixed point locus of $\vp$ in $\breve\bE$.

\subsection{}\label{ss:fieldperf}
We now consider the analogue of (the completion of) adjoining all $q$-th power roots of $\pi$ to $E$. For all $i$ in $\bN\cup\{\infty\}$, write $E_i^{\perf}$ for the completion of $\bigcup_{r=0}^\infty E_i(\pi_i^{1/q^r})$. Our choice of uniformizer $\pi_i$ induces an identification $(E_i^{\perf})^\flat=E_\infty^{\perf}$ such that $\pi_i=\pi_\infty^\sharp$. Choose a system of $q$-th power roots of $\pi_i$ in $C_i$, which induces a map $E^{\perf}_i\ra C_i$ over $E$. Since $E^{\perf}_i$ is perfectoid and this realizes $C_i$ as the completion of its separable closure, the tilting equivalence induces an identification $C_i^\flat=C_\infty$.

Write $\cO^{\perf}$ for the $\pi$-adic completion of $\bigcup_{r=0}^\infty\cO[\pi^{1/q^r}]$, and write $E^{\perf}$ for $\cO^{\perf}[\textstyle\frac1\pi]$. Note that $\cO$ is a direct summand of $\cO^{\perf}$ as topological $\cO$-modules, and $E^{\perf}\otimes_EE_i$ is naturally isomorphic to $E^{\perf}_i$.

\subsection{}\label{ss:Cperfectoid}
The following is the analogue of (the completion of) the separable closure of $E$. Let $\{K^\al\}_\al$ be a directed family of compact open group subspaces of $\Ga$ over $\bN\cup\{\infty\}$ that is cofinal among neighborhoods of the identity section in $\Ga$. Then $\Ga/K^\al$ is finite locally constant over $\bN\cup\{\infty\}$. Write $F^\al$ for the finite \'etale $E$-algebra corresponding to $\Ga/K^\al$ via Corollary \ref{cor:Efet}, write $\cO_C$ for the $\pi$-adic completion of $\bigcup_\al(F^{\al})^\circ$, and write $C$ for $\cO_C[\frac1\pi]$. By cofinality, $\cO_C$ is independent of the $\{K^\al\}_\al$.
\begin{prop*}
The ring $C$ is perfectoid, and our identifications $C^\flat_i=C_\infty$ induce an identification $C^\flat=\Cont(\bN\cup\{\infty\},C_\infty)$ such that $\pi_\infty^\sharp=\pi$.
\end{prop*}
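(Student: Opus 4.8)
The plan is to reduce everything to the fibers over $\bN\cup\{\infty\}$: first realize $\cO_C$ as the global sections of a ring topological space $\bO_C$ with $(\bO_C)_i=\cO_{C_i}$, then exhibit inside $\cO_C$ a pseudo-uniformizer together with a compatible system of $q$-power roots of $\pi$, and finally read off perfectoidness and the tilt fiberwise from standard perfectoid field theory, the one essentially new input being that the fiberwise tilting isomorphisms glue.

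\emph{An explicit model for $\cO_C$.} As in \ref{ss:realizingextensions}, each finite Galois subextension of $\bigcup_\al F^\al$ (obtained from $\Ga/K^\al$ via Corollary \ref{cor:Efet}) spreads out to a ring topological space over $\bN\cup\{\infty\}$, so $\bigcup_\al(F^\al)^\circ$ is the ring of global sections of a ring topological space whose fiber at $i$ is $\bigcup_\al(F^\al\otimes_EE_i)^\circ$. Because $\{K^\al\}$ is directed and cofinal among neighborhoods of the identity section, $\bigcap_\al K^\al$ is the identity section, so $\bigcap_\al(K^\al)_i=\{1\}$ in the profinite group $\Ga_i$; hence the $(K^\al)_i$ are cofinal among open subgroups of $\Ga_i$, the ring $\bigcup_\al(F^\al\otimes_EE_i)$ is the separable closure of $E_i$, and $\bigcup_\al(F^\al\otimes_EE_i)^\circ$ is $\pi$-adically dense in $\cO_{C_i}$. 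Completing $\pi$-adically fiberwise gives a ring topological space $\bO_C$ over $\bN\cup\{\infty\}$ with $(\bO_C)_i=\cO_{C_i}$, and arguing as in Lemma \ref{ss:E} (using compactness of $\bN\cup\{\infty\}$) yields $\cO_C=\bO_C(\bN\cup\{\infty\})$ and $\cO_C/\pi^m=(\bO_C/\pi^m)(\bN\cup\{\infty\})$ for all $m$. Thus $\cO_C$ is $\pi$-torsion-free (the fibers are domains) and $\pi$-adically complete, $C=\cO_C[\tfrac1\pi]$ is a complete Tate ring, and as in \ref{ss:E} the ring $\cO_C$ is the powerbounded subring of $C$, so $C$ is uniform with pseudo-uniformizer $\pi$.

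\emph{Roots of $\pi$ and perfectoidness.} By \ref{ss:fieldperf} the identifications $(E_i^{\perf})^\flat=E_\infty^{\perf}$ with $\pi_i=\pi_\infty^\sharp$ are induced by the chosen uniformizers, hence are compatible with the transition maps defining $\bO_C$; unwinding this, the assignments $i\mapsto(\pi_\infty^{1/q^r})^\sharp$ are continuous, so they define elements $\pi^{1/q^r}\in\cO_C$ forming a compatible system of $q$-power roots of $\pi$, each invertible in $C$ and topologically nilpotent in $\cO_C$, hence a pseudo-uniformizer of $C$. In particular $\varpi:=\pi^{1/p}$ satisfies $\varpi^p=\pi\mid p$ in $\cO_C$, because on fibers $v_{C_i}(p)=e_i\,v_{C_i}(\pi_i)\geq v_{C_i}(\pi_i)$. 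Now set $R_0:=\Cont(\bN\cup\{\infty\},C_\infty)$; since $C_\infty$ is perfect (it is the tilt of $C_i$) and $\Cont(\bN\cup\{\infty\},-)$ preserves perfectness, $R_0$ is a perfectoid Tate ring of characteristic $p$ with $R_0^\circ=\Cont(\bN\cup\{\infty\},\cO_{C_\infty})$ and pseudo-uniformizer the constant function $\pi_\infty$. The fiberwise tilting isomorphisms $\cO_{C_i}/\pi_i^{1/p}\ra^\sim\cO_{C_i^\flat}/\pi_\infty^{1/p}=\cO_{C_\infty}/\pi_\infty^{1/p}$ and $\cO_{C_i}/\pi_i\ra^\sim\cO_{C_\infty}/\pi_\infty$ (standard perfectoid field theory \cite{Sch17}, using $\pi_i^{1/p}=(\pi_\infty^{1/p})^\sharp$) are compatible with the transition maps of $\bO_C$, so the model above gives ring isomorphisms $\cO_C/\varpi\ra^\sim R_0^\circ/\pi_\infty^{1/p}$ and $\cO_C/\pi\ra^\sim R_0^\circ/\pi_\infty$ intertwining the $p$-power maps. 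Since $R_0$ is perfect, $R_0^\circ/\pi_\infty^{1/p}\to R_0^\circ/\pi_\infty$, $x\mapsto x^p$, is bijective, hence so is $\cO_C/\varpi\to\cO_C/\varpi^p=\cO_C/\pi$; therefore $C$ is perfectoid. Finally $\cO_{C^\flat}=\varprojlim_{x\mapsto x^q}\cO_C/\pi\ra^\sim\varprojlim_{x\mapsto x^q}R_0^\circ/\pi_\infty=R_0^\circ$ (last equality as $R_0$ is perfect and $\pi_\infty$-adically complete), under which the constant function $\pi_\infty$ corresponds to the system $(\pi^{1/q^r})_r$, so $\pi_\infty^\sharp=\pi$; inverting $\pi_\infty$ gives $C^\flat\ra^\sim R_0=\Cont(\bN\cup\{\infty\},C_\infty)$, which by construction restricts on the fiber at $i$ to the identification $C_i^\flat=C_\infty$ of \ref{ss:fieldperf}.

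\emph{The main obstacle.} This is the compatibility invoked twice above: that the transition maps of $\bO_C$ — which by construction come from Deligne's isomorphisms $\Tr_m(F_i)\cong\Tr_m(F_\infty)$ of truncated ramified structures — reduce modulo $\varpi$ (and modulo $\pi$) to the tilting isomorphisms of the corresponding perfectoid fields, equivalently that the fiberwise identifications $C_i^\flat=C_\infty$ trivialize the tilt of the family $\bO_C$. I expect to reduce this to finite extensions and to a direct comparison of the $\Tr$-isomorphisms of \cite{Del84} with the truncated tilting isomorphisms $\cO_{F_i}/\varpi_{F_i}^{\psi(e_i)}\cong\cO_{F_\infty}/\varpi_{F_\infty}^{\psi(e_i)}$ attached to the same finite extension $F/E$; everything else is bookkeeping over profinite sets of the kind already carried out in \S\ref{s:localfields}.
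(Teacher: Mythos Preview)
Your approach is correct but takes a longer route than the paper. The paper computes $\cO_C/\pi$ directly: it writes $\cO_C/\pi=\varinjlim_\al(F^\al)^\circ/\pi$, applies the analogue of Lemma~\ref{ss:E} at each finite level $F^\al$ to express $(F^\al)^\circ/\pi$ as $\varinjlim_d\big[\prod_{i\leq d}\cO_{F^\al_i}/\pi_i\times\cO_{F^\al_\infty}/\pi_\infty\big]$, then swaps the two direct limits and substitutes via the tilting identifications $\cO_{C_i}/\pi_i=\cO_{C_\infty}/\pi_\infty$ to obtain $\Cont(\bN\cup\{\infty\},\cO_{C_\infty}/\pi_\infty)$. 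This bypasses your construction of $\bO_C$ (for which your appeal to Lemma~\ref{ss:E} would need extra care, since that lemma uses that $\bO_n$ is locally constant with finite fibers, while $\cO_{C_i}/\pi_i^n$ is infinite), the production of explicit $q$-power roots of $\pi$, and the check $\varpi^p\mid p$; perfectoidness is immediate because $\cO_{C_\infty}/\pi_\infty$ is perfect, and the description of $\cO_C^\flat$ then drops out by taking the inverse-limit perfection of the same formula.

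Your ``main obstacle''---compatibility of the Deligne-type transition maps with the fiberwise tilting isomorphisms---is also implicitly present in the paper's argument (it is what makes the final identification with $\Cont(\bN\cup\{\infty\},\cO_{C_\infty}/\pi_\infty)$ an honest equality rather than merely an abstract ring isomorphism), but the limit-swap presentation reduces it to a single substitution step rather than something to be checked at each stage of your construction. In any case it is not a deep point: for the uses made of this proposition later in the paper, an abstract isomorphism $C^\flat\cong\Cont(\bN\cup\{\infty\},C_\infty)$ with $\pi_\infty^\sharp=\pi$ already suffices, and that much follows once one knows each $\cO_{C_i}/\pi_i$ is isomorphic to the perfect ring $\cO_{C_\infty}/\pi_\infty$.
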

\begin{proof}
For all $i$ in $\bN\cup\{\infty\}$, write $F^\al_i$ for the finite \'etale $E_i$-algebra $F^\al\otimes_EE_i$, and note that $F^\al_i$ corresponds to the $\Ga_i$-set $\Ga_i/K^\al_i$. Now $C^\circ=\cO_C$, and we have
  \begin{align*}
    \cO_C/\pi = \varinjlim_\al(F^{\al})^\circ/\pi = \varinjlim_\al\varinjlim_d\prod_{i\leq d}\cO_{F^\al_i}/\pi_i\times\cO_{F^\al_\infty}/\pi_\infty,
  \end{align*}
  where $d$ is large enough with respect to $\al$. Since $\{K^\al_i\}_\al$ is a cofinal family of open compact subgroups of $\Ga_i$, we see that $\varinjlim_\al\cO_{F^\al_i}/\pi_i=\cO_{\ov{E}_i}/\pi_i=\cO_{C_i}/\pi_i$. Our identification $C_i^\flat=C_\infty$ induces an identification $\cO_{C_i}/\pi_i=\cO_{C_\infty}/\pi_\infty$, so switching the order of the direct limits yields 
  \begin{align*}
    \varinjlim_d\varinjlim_\al\prod_{i\leq d}\cO_{F^\al_i}/\pi_i\times\cO_{F^\al_\infty}/\pi_\infty &= \varinjlim_d\prod_{i\leq d}\cO_{C_\infty}/\pi_\infty\times\cO_{C_\infty}/\pi_\infty \\
    &= \Cont(\bN\cup\{\infty\},\cO_{C_\infty}/\pi_\infty).
  \end{align*}
This description implies that the absolute $q$-Frobenius map $\cO_C/p\ra\cO_C/p$ is surjective, so $C$ is perfectoid \cite[Remark 3.2]{Sch17}. Moreover, the above work identifies $\cO_C^\flat$ with $\Cont(\bN\cup\{\infty\},\cO_{C_\infty})$, so $C^\flat$ is naturally isomorphic to $\Cont(\bN\cup\{\infty\},C_\infty)$. Finally, the fact that $\pi_\infty^\sharp=\pi_i$ under the identification $C^\flat_i=C_\infty$ implies the desired description of $\pi_\infty^\sharp$ in $C$.
\end{proof}

\subsection{}\label{ss:LTextensions}
Finally, we define the analogues of Lubin--Tate extensions over $E$ as follows. (In \S\ref{s:LT}, we will explain their relationship with Lubin--Tate theory over $E$.) For all $i$ in $\bN\cup\{\infty\}$, write $\wh{E}_i^\times$ for the profinite completion of $E_i^\times$, and write $\Art_i:\wh{E}_i^\times\ra^\sim\Ga_i^{\ab}$ for the local Artin isomorphism normalized by sending uniformizers to geometric $q$-Frobenii. For all positive integers $n$, note that $\Art_i$ and our choice of uniformizer $\pi_i$ induce a surjective continuous homomorphism $\Ga_i/I_i^n\ra(\cO_i/\fp^n_i)^\times$. Write $E_i^{\LT,n}$ for the corresponding finite abelian extension of $E_i$. Write $E^{\LT}_i$ for the completion of $\bigcup_{n=1}^\infty E^{\LT,n}_i$, and recall that $E^{\LT}_i$ is perfectoid.

Note that $\bO_n^\times$ is finite locally constant over $\bN\cup\{\infty\}$.
\begin{lem*}
The map $\Ga^n\ra\bO_n^\times$ over $\bN\cup\{\infty\}$ whose fiber at $i$ equals $\Ga_i/I^n_i\ra(\cO_i/\fp^n_i)^\times$ is continuous. Consequently, there exists a natural finite \'etale $E$-algebra $E^{\LT,n}$ such that, for all $i$ in $\bN\cup\{\infty\}$, the $E_i$-algebra $E^{\LT,n}\otimes_EE_i$ is isomorphic to $E_i^{\LT,n}$.
\end{lem*}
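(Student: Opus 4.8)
The plan is to reduce the continuity assertion to a single compatibility between Deligne's isomorphism and local class field theory, and then to obtain $E^{\LT,n}$ from Corollary \ref{cor:Efet}. Since $\bN$ is discrete and both $\Ga^n$ and $\bO_n^\times$ are finite locally constant over $\bN\cup\{\infty\}$, the map in question is automatically continuous over the open subset $\bN$, so it suffices to prove continuity on some compact neighborhood of $\infty$; shrinking it, I may take this to be $U=\{i\in\bN\mid i\geq N\}\cup\{\infty\}$ with $e_i\geq n$ for all $i$ in $U$. Then the transition maps in Definition \ref{defn:rings} and \ref{ss:Galoisgroups} identify $\Ga^n_U$ with the constant group topological space $(\Ga_\infty/I^n_\infty)\times U$, via the isomorphisms $\Ga_i/I_i^n\cong\Ga_\infty/I_\infty^n$ coming from Deligne, and they identify $\bO_n^\times|_U$ with the constant group topological space $(\cO_\infty/\fp_\infty^n)^\times\times U$, via the isomorphisms $\cO_i/\fp_i^n\cong\cO_\infty/\fp_\infty^n$ coming from $\Tr_{e_i}(E_i)\cong\Tr_{e_i}(E_\infty)$. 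Under these trivializations, for each $i$ in $U$ our map on the fiber over $i$ becomes the composite
\begin{align*}
\Ga_\infty/I_\infty^n\ra^\sim\Ga_i/I_i^n\ra(\cO_i/\fp_i^n)^\times\ra^\sim(\cO_\infty/\fp_\infty^n)^\times,
\end{align*}
so $\Ga^n_U\ra\bO_n^\times|_U$ is continuous (equivalently, locally constant over $U$) if and only if this composite is independent of $i$, i.e.\ equals the map $\Ga_\infty/I_\infty^n\ra(\cO_\infty/\fp_\infty^n)^\times$ attached to $E_\infty$.

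This last identity says precisely that Deligne's isomorphism $\Ga_i/I_i^{e_i}\cong\Ga_\infty/I_\infty^{e_i}$ is compatible with the local reciprocity homomorphisms, after passing to the quotients by $I^n$ (note $I_i^{e_i}\subseteq I_i^n$ since $n\leq e_i$) and using the uniformizers $\pi_i$ and $\pi_\infty$; this is where the compatible choice of uniformizer $\pi_i$ from \ref{ss:closefieldssetup} enters, through the fact that the map $\Ga_i/I_i^n\ra(\cO_i/\fp_i^n)^\times$ is built from $\Art_i$ and the class of $\pi_i$ modulo $1+\fp_i^{e_i}$, hence depends only on $\Tr_{e_i}(E_i)$ and that class. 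I expect establishing this compatibility to be the crux; it is essentially classical and follows from Deligne \cite{Del84}, where the isomorphism is constructed so as to respect all data ``of level $\leq e_i$'', including the reciprocity map (e.g.\ through the $\fp_i^n$-torsion of a Lubin--Tate formal group for $\pi_i$). One could alternatively deduce it from the explicit Lubin--Tate theory over $E$ developed in \S\ref{s:LT}.

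Granting this, $\Ga^n\ra\bO_n^\times$ is a continuous homomorphism of group topological spaces over $\bN\cup\{\infty\}$. Pulling back along the projection $\Ga\ra\Ga^n$ and composing with the multiplication of $\bO_n^\times$ gives a continuous action $\Ga\times_{\bN\cup\{\infty\}}\bO_n^\times\ra\bO_n^\times$ by translation, so $\bO_n^\times$ becomes a finite locally constant topological space over $\bN\cup\{\infty\}$ with continuous $\Ga$-action. Corollary \ref{cor:Efet} then yields a finite \'etale $E$-algebra $E^{\LT,n}$ corresponding to $\bO_n^\times$. By construction, the $\Ga_i$-set attached to $E^{\LT,n}\otimes_EE_i$ is the fiber $(\cO_i/\fp_i^n)^\times$ with $\Ga_i$ acting via $\Ga_i/I_i^n\ra(\cO_i/\fp_i^n)^\times$ by translation, which is exactly the $\Ga_i$-set attached to the finite abelian extension $E_i^{\LT,n}/E_i$; hence $E^{\LT,n}\otimes_EE_i\cong E_i^{\LT,n}$. (Alternatively, one could produce $E^{\LT,n}$ by applying the spreading-out construction of \ref{ss:realizingextensions} to $F_\infty=E_\infty^{\LT,n}$, but identifying its fibers with the $E_i^{\LT,n}$ would still require the same compatibility of Deligne's isomorphism with reciprocity.)
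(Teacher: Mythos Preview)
Your argument is correct and follows essentially the same route as the paper: both reduce continuity to the compatibility of Deligne's isomorphism $\Ga_i/I_i^{e_i}\cong\Ga_\infty/I_\infty^{e_i}$ with local reciprocity, which the paper invokes as \cite[(3.6.1)]{Del84} while you describe it in words; the paper packages the check via the $\varprojlim_d$ presentation of $\Ga^n$ and $\bO_n^\times$, whereas you trivialize over a neighborhood of $\infty$, but the content is identical, and both finish via Corollary \ref{cor:Efet}. Two small corrections: $\Ga^n$ is locally constant with \emph{profinite} (not finite) fibers, though your trivialization argument goes through unchanged; and your proposed alternative of deducing the compatibility from \S\ref{s:LT} would be circular, since Lubin--Tate theory over $E$ is developed there using the $E^{\LT,n}$ constructed in this very lemma.
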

Write $\cO^{\LT}$ for the $\pi$-adic completion of $\bigcup_{n=1}^\infty(E^{\LT,n})^\circ$, and write $E^{\LT}$ for $\cO^{\LT}[\frac1\pi]$.
\begin{proof}
Because Proposition \ref{ss:relativepoints} is compatible with inverse limits in $S$, we have
    \begin{align*}
      \bO^\times_n = \varprojlim_d\Big[\Big(\coprod_{i\leq d}(\cO_i/\fp_i^n)^\times\Big)\textstyle\coprod(\cO_\infty/\fp_\infty^n)^\times\Big].
    \end{align*}
as group topological spaces over $\bN\cup\{\infty\}$. Next, consider the continuous map 
    \begin{align*}
      \Big(\coprod_{i\leq d}\Ga_i/I_i^n\Big)\textstyle\coprod\Ga_\infty/I_\infty^n\ra\Big(\coprod_{i\leq d}(\cO_i/\fp_i^n)^\times\Big)\textstyle\coprod(\cO_\infty/\fp_\infty^n)^\times
    \end{align*}
over $\{i\in\bN\mid i\leq d\}\cup\{\infty\}$ given by the disjoint union of the $\Ga_i/I^n_i\ra(\cO_i/\fp_i^n)^\times$. Now \cite[(3.6.1)]{Del84} shows that this is compatible with the $\varprojlim_d$ in Definition \ref{ss:Galoisgroups}.a), so taking $\varprojlim_d$ yields a continuous map $\Ga^n\ra\bO_n^\times$ with the desired property.

    By checking on fibers, we see that $\Ga^n\ra\bO_n^\times$ is a morphism of group topological spaces over $\bN\cup\{\infty\}$. Therefore precomposing with $\Ga\ra\Ga^n$ yields a continuous action $\Ga\times_{\bN\cup\{\infty\}}\bO_n^\times\ra\bO_n^\times$ over $\bN\cup\{\infty\}$. Finally, the corresponding finite \'etale $E$-algebra $E^{\LT,n}$ from Corollary \ref{cor:Efet} has the desired property.
  \end{proof}

  \section{Fargues--Fontaine curves}\label{s:FFcurves}
  To carry out Fargues--Scholze's program \cite{FS21} in our context, we need to define a version of relative Fargues--Fontaine curves over $E$. This is the goal of this section. First, we construct a generalization of Witt vectors for $\cO$-algebras, which specializes to $\cO_i$-typical Witt vectors when applied to $\cO_i$-algebras. We prove that various properties of $\cO_i$-typical Witt vectors extend to our generalization, and we also construct a version of Witt bivectors over $\cO$ (which will be needed for computations in \S\ref{s:LT}).

  Next, we use this to define a relative Fargues--Fontaine curve $X_S$ over $\Spa{E}$ for any affinoid perfectoid space $S$ over $\ul{\bN\cup\{\infty\}}$. After proving basic facts about $X_S$, we show that $S$-points of $\Div_X^1\coloneqq(\Spd{E})/\phi^\bZ$ induce closed Cartier divisors on $X_S$, where $\phi$ denotes the relative $q$-Frobenius morphism. Finally, we prove a version of Drinfeld's lemma for $(\Div_X^1)_{\ov\bF_q}$ by reducing to the situation considered in Fargues--Scholze \cite{FS21}.
  
\subsection{}\label{ss:Witt}
We start by constructing the analogue of Witt vectors over $\cO$. For all non-negative integers $j$, write $w_j$ for the polynomial in $\cO[T_0,\dotsc,T_j]$ given by
\begin{align*}
T_0^{q^j}+\pi T_1^{q^{j-1}}+\dotsb+\pi^jT_j.
\end{align*}
For all $\cO$-algebras $A$, view $w_j$ as a function
\begin{align*}
w_j:\{\mbox{sequences }(a_0,a_1,\dotsc)\mbox{ in }A\}\ra A
\end{align*}
by sending $(a_0,a_1,\dotsc)\mapsto w_j(a_0,\dotsc,a_j)$.
\begin{prop*}\hfill
  \begin{enumerate}[i)]
  \item Let $A$ be an $\cO$-algebra that is $\pi$-torsionfree and has an $\cO$-algebra endomorphism $\vp$ whose reduction mod $\pi$ equals absolute $q$-Frobenius. Then the map $(w_0,w_1,\dotsc)$ is injective with image equal to
  \begin{align*}
    \{\mbox{sequences }(b_0,b_1,\dotsc)\mbox{ in }A\mid\mbox{for all }j\geq0\mbox{, }b_{j+1}\equiv\vp(b_j)\pmod{\pi^{j+1}}\}.
  \end{align*}
  \item The functor $\{\cO\mbox{-algebras}\}\ra\{\mbox{sets}\}$ given by
  \begin{align*}
    A\mapsto\{\mbox{sequences }(a_0,a_1,\dotsc)\mbox{ in }A\}
  \end{align*}
factors uniquely through a functor $W:\{\cO\mbox{-algebras}\}\ra\{\cO\mbox{-algebras}\}$ such that, for all non-negative integers $j$, the map $w_j:W(A)\ra A$ is an $\cO$-algebra homomorphism.
  \end{enumerate}
\end{prop*}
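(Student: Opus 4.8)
The plan is to carry out the classical construction of ($\pi$-typical, ramified) Witt vectors over $\cO$; the only new ingredients are two elementary properties of $\cO$ that replace the usual hypothesis that the base be a complete discrete valuation ring. First, $\cO$ is $\pi$-torsionfree: it embeds into $\prod_{i\in\bN\cup\{\infty\}}\cO_i$, a product of domains, in which $\pi=(\pi_i)_i$ is a non-zero-divisor. Second, the identity map is a lift of the absolute $q$-Frobenius on $\cO/\pi$: by Lemma \ref{ss:E} one has $\cO/\pi=\varinjlim_d\big(\prod_{i\leq d}\bF_q\big)\times\bF_q$, on which $x\mapsto x^q$ is the identity since $\bF_q$ has $q$ elements, and the same description shows $q\cdot1$ vanishes in $\cO/\pi$ (as $p=\mathrm{char}\,\bF_q$ divides $q$), i.e. $q\in\pi\cO$.

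For part i), I would prove Dwork's lemma. Injectivity of $(w_0,w_1,\dotsc)$ is an induction: if its value is zero and $a_0=\dotsb=a_{j-1}=0$, then $w_j=\pi^ja_j=0$ forces $a_j=0$ by $\pi$-torsionfreeness. For the description of the image, the crux is the sub-lemma that, because $q\in\pi\cO$, in any $\cO$-algebra $x\equiv y\pmod\pi$ implies $x^{q^m}\equiv y^{q^m}\pmod{\pi^{m+1}}$ — proved by induction on $m$, where in the step from $m-1$ to $m$ one raises the congruence $x^{q^{m-1}}\equiv y^{q^{m-1}}\pmod{\pi^m}$ to the $q$-th power and uses $q\in\pi\cO$ to absorb the linear term. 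Granting this, both inclusions follow from a term-by-term comparison of $w_{j+1}(a_\bullet)$ with $\vp(w_j(a_\bullet))=\sum_k\pi^k\vp(a_k)^{q^{j-k}}$, using $\vp(a_k)\equiv a_k^q\pmod\pi$; for the inclusion $\supseteq$ one solves recursively for $a_j$ from $\pi^ja_j=b_j-\sum_{k<j}\pi^ka_k^{q^{j-k}}$, whose right-hand side is divisible by $\pi^j$ precisely by the congruence hypothesis on $(b_j)$.

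For part ii), I would apply part i) to the polynomial $\cO$-algebra $A_0=\cO[X_0,X_1,\dotsc;Y_0,Y_1,\dotsc]$ equipped with the $\cO$-algebra endomorphism $\vp$ fixing $\cO$ and sending $X_j\mapsto X_j^q$, $Y_j\mapsto Y_j^q$. This is a legitimate Frobenius lift because $A_0$ is $\pi$-torsionfree and, by the first paragraph, $\vp$ reduces mod $\pi$ to the $q$-Frobenius of $A_0/\pi=(\cO/\pi)[X_\bullet;Y_\bullet]$. Since sums and products of sequences satisfying the Dwork congruences again satisfy them, part i) produces polynomials $S_j,P_j\in\cO[X_0,\dotsc,X_j;Y_0,\dotsc,Y_j]$ with $w_j(S_\bullet)=w_j(X_\bullet)+w_j(Y_\bullet)$ and $w_j(P_\bullet)=w_j(X_\bullet)w_j(Y_\bullet)$, as well as a ring map $\cO\to W(\cO)$ realizing the constant ghost sequences (which lie in the image of the ghost map since $\vp=\id$ fixes scalars); the latter supplies the $\cO$-algebra structure. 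Interpreting $S_j$ and $P_j$ over an arbitrary $\cO$-algebra $A$ then equips $A^\bN$ with an $\cO$-algebra structure, functorially in $A$, for which each $w_j$ is an $\cO$-algebra homomorphism by construction. Uniqueness holds since every $\cO$-algebra is a quotient of a $\pi$-torsionfree polynomial $\cO$-algebra, on which the ghost map is injective by part i), so the ring operations are forced. The verification that $S_j$ and $P_j$ are genuinely polynomial with coefficients in $\cO$ and satisfy the ring axioms is then identical to the classical ramified case.

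I expect the main obstacle to be conceptual rather than computational: because $\cO$ is far from a discrete valuation ring, the first task is to isolate the precise properties of $\cO$ — $\pi$-torsionfreeness, the existence of a Frobenius lift, and $q\in\pi\cO$ — that make the standard machinery go through. Each of these follows from a short computation with the explicit description of $\cO/\pi^n$ supplied by Lemma \ref{ss:E}, after which the rest is a routine adaptation of the classical theory of ramified Witt vectors.
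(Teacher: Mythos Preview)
Your proposal is correct and follows essentially the same route as the paper's proof: Dwork's lemma for part i), and reduction to the universal polynomial $\cO$-algebra for part ii). You are more explicit than the paper about the preliminary facts needed --- that $\cO$ is $\pi$-torsionfree, that $q\in\pi\cO$ (so that the congruence-lifting sub-lemma $x\equiv y\pmod\pi\Rightarrow x^{q^m}\equiv y^{q^m}\pmod{\pi^{m+1}}$ holds), and that the identity on $\cO$ lifts $q$-Frobenius (so that $\cO[X_\bullet;Y_\bullet]$ with $X_j\mapsto X_j^q$, $Y_j\mapsto Y_j^q$ satisfies the hypotheses of part i)) --- all of which the paper uses without comment.
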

\begin{proof}
  For part i), $A$ being $\pi$-torsionfree implies that $(w_0,w_1,\dotsc)$ is injective. Next, $a_{j'}^q\equiv\vp(a_{j'})\pmod{\pi}$ implies that $a_{j'}^{q^{j-j'+1}}\equiv\vp(a_{j'})^{q^{j-j'}}\pmod{\pi^{j-j'+1}}$ for all $0\leq j'\leq j$, so the image of $(w_0,w_1,\dotsc)$ lies in the above subset.

  In the other direction, let $(b_0,b_1,\dotsc)$ be a sequence in $A$ satisfying $b_{j+1}\equiv\vp(b_j)\pmod{\pi^{j+1}}$ for all $j\geq0$. Define $a_0\coloneqq b_0$, and inductively apply
  \begin{align*}
    b_{j+1}&\equiv \vp(b_j) = \vp(a_0)^{q^j}+\pi\vp(a_1)^{q^{j-1}}+\dotsb+\pi^j\vp(a_j)\pmod{\pi^{j+1}}\\
           &\equiv a_0^{q^{j+1}}+\pi a_1^{q^j}+\dotsb+\pi^ja_j^q\pmod{\pi^{j+1}},
  \end{align*}
which implies that $\pi^{j+1}$ divides $b_{j+1}-(a_0^{q^{j+1}}+\pi a_1^{q^j}+\dotsb+\pi^ja_j^q)$, to define
  \begin{align*}
    a_{j+1}\coloneqq\frac1{\pi^{j+1}}\big(b_{j+1}-(a_0^{q^{j+1}}+\pi a_1^{q^j}+\dotsb+\pi^ja_j^q)\big).
  \end{align*}
Then the image of $(a_0,a_1,\dotsc)$ under $(w_0,w_1,\dotsc)$ equals $(b_0,b_1,\dotsc)$, which yields the desired result.

For part ii), note that part i) uniquely determines the value of $W$ when $A$ is a polynomial $\cO$-algebra. Finally, reducing to this universal case uniquely determines the value of $W$ on any $\cO$-algebra.
\end{proof}

\begin{prop}\label{prop:Wittprop}
Let $A$ be an $\cO$-algebra.
\begin{enumerate}[i)]
\item The map $[-]:A\ra W(A)$ given by $a\mapsto(a,0,\dotsc)$ is multiplicative,
\item There exists unique maps $\vp:W(A)\ra W(A)$ and $V:W(A)\ra W(A)$ such that, for all non-negative integers $j$, we have $w_j\circ\vp=w_{j+1}$ and
  \begin{align*}
    w_j\circ V=
    \begin{cases}
      0 & \mbox{if }j=0,\\
      \pi\cdot w_{j-1} & \mbox{otherwise.}
    \end{cases}
  \end{align*}
  Consequently, $\vp$ is an $\cO$-algebra homomorphism, $V$ is $\cO$-linear, and $\vp\circ V=\pi$.
\item Let $n$ be a positive integer. Then $\im V^n$ is an ideal of $W(A)$, and the map $W(A)\ra A^n$ given by $(a_0,a_1,\dotsc)\mapsto(a_0,\dotsc,a_{n-1})$ induces an identification from $W^n(A)\coloneqq W(A)/\im V^n$ to $A^n$.
\item Assume that $A$ is an $\cO/\pi$-algebra. Then $V\circ\vp=\pi$, and $\vp$ equals the $\cO$-algebra homomorphism $W(A)\ra W(A)$ induced by absolute $q$-Frobenius $A\ra A$.
\end{enumerate}
\end{prop}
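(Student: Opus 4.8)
The plan is to deduce every assertion from the universal case of a polynomial $\cO$-algebra $P=\cO[x_0,x_1,\dotsc]$. Such a $P$ is $\pi$-torsionfree and admits the Frobenius lift fixing $\cO$ and sending each $x_j$ to $x_j^q$; this lift reduces mod $\pi$ to absolute $q$-Frobenius because $\cO/\pi=\Cont(\bN\cup\{\infty\},\bF_q)$ by \ref{ss:E}, on which raising to the $q$-th power is the identity. Hence Proposition \ref{ss:Witt}.i) identifies $W(P)$ with an explicit subset of $P^\bN$ via the injective ghost map $(w_0,w_1,\dotsc)$. Since $W(A)$ is by definition the set of sequences in $A$, the functor $W$ is corepresented (as a functor to sets) by $P$, so by Yoneda every natural transformation $W\to W$ of set-valued functors is determined by its value on the generic Witt vector $(x_0,x_1,\dotsc)\in W(P)$, and hence, by injectivity of the ghost map over $P$, by the ghost components of that value. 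Thus all the maps in the statement, and all identities between them, can be constructed and checked on ghost components over $P$, then transported to an arbitrary $\cO$-algebra by functoriality of $W$; and the analogous statements for $\cO/\pi$-algebras reduce to $P/\pi P$, i.e.\ to reducing computations over $P$ modulo $\pi$.

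Granting this framework, parts i) and ii) are brief ghost-component computations. We have $w_j([a])=a^{q^j}$, so $[-]$ is multiplicative. Defining $\vp$ and $V$ on ghost components by $(b_0,b_1,\dotsc)\mapsto(b_1,b_2,\dotsc)$ and $(b_0,b_1,\dotsc)\mapsto(0,\pi b_0,\pi b_1,\dotsc)$, the congruence description of the image of the ghost map from Proposition \ref{ss:Witt}.i) shows both land in $W(P)$, they are manifestly an $\cO$-algebra homomorphism and an $\cO$-linear map on ghost components, uniqueness is immediate, and $w_j\circ\vp\circ V=\pi\,w_j$ yields $\vp\circ V=\pi$. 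The same computations show that on Witt coordinates $V$ is the shift $(a_0,a_1,\dotsc)\mapsto(0,a_0,a_1,\dotsc)$, and that the projection formula $x\cdot V(y)=V(\vp(x)\cdot y)$ holds: on ghosts both sides have $w_0=0$, while for $j\geq1$ both equal $\pi\,w_j(x)\,w_{j-1}(y)$, using $w_{j-1}\circ\vp=w_j$.

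For iii), the shift description of $V$ gives $\im V^n=\{(b_j)_j : b_0=\dotsb=b_{n-1}=0\}$, which is exactly the kernel of the surjective truncation map $W(A)\to A^n$; it only remains to see $\im V^n$ is an ideal, and it is an $\cO$-submodule because $V$ is $\cO$-linear, and it absorbs multiplication because iterating the projection formula gives $x\cdot V^n(y)=V^n(\vp^n(x)\cdot y)$.

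Part iv) is where I expect the real work to lie, since $\cO/\pi$-algebras are not $\pi$-torsionfree and so the ghost map must be applied over the lift $P$ instead of directly. Writing the universal polynomials for $\vp$ as $(x_0,x_1,\dotsc)\mapsto(\phi_0,\phi_1,\dotsc)$ with $\phi_j\in P$, the crux is to prove $\phi_j\equiv x_j^q\pmod{\pi}$, which I would do by induction on $j$ from the ghost recursion $\sum_{i=0}^{j}\pi^i\phi_i^{q^{j-i}}=\sum_{i=0}^{j+1}\pi^ix_i^{q^{j+1-i}}$, using that $p\equiv0\pmod{\pi}$ in $\cO$ (since $v_i(p)=e_i\geq1$) and that $\binom{q}{k}\equiv0\pmod{p}$ for $0<k<q$. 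As the $\cO$-algebra homomorphism $W(A)\to W(A)$ induced by a ring map acts coordinatewise on Witt vectors, this congruence shows $\vp=W(\mathrm{Frob}_A)$ for every $\cO/\pi$-algebra $A$. Finally, the projection formula with $x=1$ gives $V\circ\vp=V(1)\cdot\id$, so $V\circ\vp=\pi$ over $\cO/\pi$-algebras amounts to the single identity $V(1)=\pi\cdot1$ in $W(A)$; by naturality this need only be checked for the initial $\cO/\pi$-algebra $\cO/\pi$, and lifting to $W(\cO)$ the element $\pi\cdot1-V(1)$ has ghost components $(\pi,0,0,\dotsc)$, whence the Witt-coordinate recursion together with $v_i(\pi^{q^m})=q^m\geq m+1$ forces all of its Witt coordinates into $\pi\cO$, so it dies in $W(\cO/\pi)$, as desired.
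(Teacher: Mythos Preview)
Your proof is correct and follows the same universal-case-via-ghost-map strategy as the paper, just with more detail spelled out (notably the projection formula for part iii) and the explicit congruence $\phi_j\equiv x_j^q\pmod{\pi}$ for part iv), both of which the paper leaves implicit). One small slip: to extract $V\circ\vp=V(1)\cdot\id$ from your stated projection formula $x\cdot V(y)=V(\vp(x)\cdot y)$ you should set $y=1$, not $x=1$.
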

\begin{proof}
  First, assume that $A$ is $\pi$-torsionfree and has an $\cO$-algebra endomorphism whose reduction mod $\pi$ equals absolute $q$-Frobenius. Then Proposition \ref{ss:Witt}.i) identifies $W(A)$ with its image under $(w_0,w_1,\dotsc)$. This uniquely determines $\vp$ and $V$, as well as verifies part i), part ii), and the first statement in part iii). Note that $V$ sends $(a_0,a_1,\dotsc)$ to $(0,a_0,a_1,\dotsc)$, which implies the second statement in part iii).

  The above includes the case where $A$ is a polynomial $\cO$-algebra. Reducing to this universal case uniquely determines $\vp$ and $V$, as well as verifies parts i)--iii), for any $\cO$-algebra. Similarly, reducing to the universal case and taking reduction mod $\pi$ verifies part iv).
\end{proof}

\subsection{}\label{ss:Witttiltadjunction}
When restricted to perfect $\cO/\pi$-algebras $A$, our Witt vectors satisfy the following usual properties. Proposition \ref{prop:Wittprop}.ii) and Proposition \ref{prop:Wittprop}.iv) imply that $\vp^n:W^n(A)\ra W(A)/\pi^n$ is an isomorphism for all positive integers $n$. Proposition \ref{prop:Wittprop}.iii) identifies $W(A)$ with $\varprojlim_n W^n(A)$, so this shows that $W(A)$ is $\pi$-adically complete and $\pi$-torsionfree. Note that $(a_0,a_1,\dotsc)$ in $W(A)$ equals $\sum_{n=0}^\infty[a_n^{1/q^n}]\pi^n$.

For all $\pi$-adically complete $\cO$-algebras $B$, recall that the tilt $B^\flat$ is naturally isomorphic to the inverse limit perfection of $B/\pi$ \cite[Lemma 3.2(i)]{BMS18}.
\begin{prop*}
Consider the functor
  \begin{align*}
    W:\{\mbox{perfect }\cO/\pi\mbox{-algebras}\}\ra\{\pi\mbox{-adically complete }\cO\mbox{-algebras}\}.
  \end{align*}
  \begin{enumerate}[i)]
  \item It is left adjoint to the tilt functor $(-)^\flat$. The unit $A\ra W(A)^\flat$ is an isomorphism, and the counit $\te:W(B^\flat)\ra B$ sends $\sum_{n=0}^\infty[r_n]\pi^n$ to $\sum_{n=0}^\infty r_n^\sharp\pi^n$.
  \item It is fully faithful, and its essential image consists of $\pi$-adically complete, $\pi$-torsionfree $B$ such that $B/\pi$ is perfect.
  \end{enumerate}
\end{prop*}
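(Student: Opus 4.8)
The plan is to follow the classical construction of $A_{\mathrm{inf}}$ and its adjunction with tilting, as in Fontaine \cite{Fon77}, Fargues--Fontaine \cite{FF18}, and \cite{BMS18}, checking that the non-noetherian base $\cO$ causes no trouble. First I would note that $\cO/\pi$ is perfect --- it is $\Cont(\bN\cup\{\infty\},\bF_q)$ --- so by the cited description $B^\flat\cong\varprojlim_{x\mapsto x^q}B/\pi$, the tilt of a $\pi$-adically complete $\cO$-algebra is again a perfect $\cO/\pi$-algebra; hence $(-)^\flat$ does land in the source of $W$, and the statement of (i) makes sense.

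\emph{The unit.} For perfect $A$, the paragraph preceding the proposition gives that $W(A)$ is $\pi$-adically complete and $\pi$-torsionfree, with a ring isomorphism $W(A)/\pi\cong A$. Therefore $W(A)^\flat\cong\varprojlim_{x\mapsto x^q}W(A)/\pi\cong\varprojlim_{x\mapsto x^q}A=A$, the last equality because $A$ is perfect; I would take the resulting isomorphism $\eta_A\colon A\ra^\sim W(A)^\flat$ as the unit. Since the unit of an adjunction is unique once the adjunction is established, it is then automatically this isomorphism, as claimed.

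\emph{The counit.} For any $\pi$-adically complete $\cO$-algebra $B$ I would first build the multiplicative map $(-)^\sharp\colon B^\flat\ra B$ sending $(b_0,b_1,\dotsc)\mapsto\lim_k\widetilde b_k^{\,q^k}$, where $\widetilde b_k\in B$ is any lift of $b_k$; the limit exists $\pi$-adically and is independent of the lifts because $q\in\pi\cO$ (equivalently $p/\pi\in\cO$, which follows from $v_i(p)=e_i\to\infty$), so $q$-th powers contract modulo $\pi$. Then I set
\begin{align*}
\theta_B\colon W(B^\flat)\lra B,\qquad\textstyle\sum_n[r_n]\pi^n\mapsto\sum_n r_n^\sharp\pi^n,
\end{align*}
which is well-defined by $\pi$-adic completeness and manifestly satisfies the formula in (i). \textbf{The main obstacle is showing that $\theta_B$ is an $\cO$-algebra homomorphism.} Multiplicativity on Teichm\"uller representatives is immediate, but additivity --- and hence multiplicativity in general --- must be reduced to the universal case: using the naturality of $(-)^\sharp$ and the Witt addition polynomials, one reduces to $B^\flat=R\coloneqq(\cO/\pi)[r_n^{1/q^\infty},s_n^{1/q^\infty}]_{n\geq0}$ and then to $B=W(R)$, where $\theta_{W(R)}$ is identified with the canonical isomorphism $W(R^\flat)\ra^\sim W(R)$ coming from $R^\flat\cong R$ --- exactly as in \cite{Fon77,FF18,BMS18}. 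Granting this, the bijection $\Hom_\cO(W(A),B)\cong\Hom_{\cO/\pi}(A,B^\flat)$ sends $\phi$ to $\theta_B\circ W(\phi)$, with inverse $f\mapsto f^\flat\circ\eta_A$; the triangle identities $\theta_{W(A)}\circ W(\eta_A)=\id$ and $\theta_B^\flat\circ\eta_{B^\flat}=\id$ are checked by reducing modulo $\pi$ and using that $A$ and $B^\flat$ are perfect and that $W(A)$ is $\pi$-adically complete and $\pi$-torsionfree.

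\emph{Part (ii).} Full faithfulness of $W$ is then formal, since a left adjoint is fully faithful precisely when its unit is an isomorphism. For the essential image, $B$ lies in it iff the counit $\theta_B$ is an isomorphism. If $B$ is $\pi$-adically complete and $\pi$-torsionfree with $B/\pi$ perfect, then $B^\flat=B/\pi$ and $\theta_B\colon W(B/\pi)\ra B$ is a ring homomorphism reducing to $\id_{B/\pi}$ modulo $\pi$; as both $W(B/\pi)$ and $B$ are $\pi$-torsionfree and $\pi$-adically complete, multiplication by $\pi^n$ identifies the $n$-th graded pieces of their $\pi$-adic filtrations with $W(B/\pi)/\pi$ and $B/\pi$, so an easy induction promotes ``isomorphism mod $\pi$'' to ``isomorphism mod $\pi^n$'' for all $n$, hence to an isomorphism. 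Conversely, the text preceding the proposition shows that $W$ of a perfect $\cO/\pi$-algebra is $\pi$-adically complete, $\pi$-torsionfree, with perfect reduction modulo $\pi$, so these three conditions carve out exactly the essential image of $W$.
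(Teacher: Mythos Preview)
Your approach is sound and classical, but the construction of the counit differs from the paper's in a way worth noting. You define $\theta_B$ by the explicit formula and then face what you correctly flag as the main obstacle: proving it is a ring homomorphism. Your proposed reduction to $B=W(R)$ is delicate as stated: to exploit naturality of $\theta$ under ring maps $B_1\to B_2$, you would need a ring map $W(R)\to B$ inducing the given $R\to B^\flat$ on tilts, and that is precisely the map you are trying to construct. The argument can be completed---one writes the identity modulo $\pi^N$ as a universal polynomial identity over $\cO$ by using $r^\sharp\equiv\tilde r^{\,q^M}\pmod{\pi^{M+1}}$ for a single lift $\tilde r\in B$ of the $M$-th component of $r$, and then checks that identity via the ghost relation $w_{N-1}(S_0,\ldots,S_{N-1})=X^{q^{N-1}}+Y^{q^{N-1}}$---but this takes more care than the one sentence you give it.

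The paper sidesteps the obstacle entirely by reversing the order. Rather than defining $\theta$ by formula, it starts from the ghost component $w_{n-1}\colon W^n(B)\to B$, which is already an $\cO$-algebra homomorphism by the construction of $W$, observes that its composite with $B\to B/\pi^n$ factors through $W^n(B/\pi)$ (since $b\equiv b'\pmod\pi$ forces $b^{q^j}\equiv b'^{q^j}\pmod{\pi^{j+1}}$), and then passes to the inverse limit over the resulting maps $\theta_n\colon W^n(B/\pi)\to B/\pi^n$, inserting a Frobenius twist in the transition maps so that $\varprojlim_n W^n(B/\pi)$ is identified with $W(B^\flat)$. The ring-homomorphism property therefore comes for free, and the explicit formula $\sum[r_n]\pi^n\mapsto\sum r_n^\sharp\pi^n$ is deduced afterward by unwinding. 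Part~(ii) and the description of the unit are handled the same way in both arguments.
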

\begin{proof}
For part i), our candidate for the unit is the isomorphism
  \begin{align*}
    A=\varprojlim_rA=\varprojlim_rW(A)/\pi = W(A)^\flat,
  \end{align*}
  where $r$ runs over non-negative integers, and the transition maps are given by absolute $q$-Frobenius.

  We turn to our candidate for the counit. Let $n$ be a positive integer, and recall that the $\cO$-algebra homomorphism $w_{n-1}:W^n(B)\ra B$ is given by
  \begin{align*}
    (b_0,\dotsc,b_{n-1})\mapsto b_0^{q^{n-1}}+\pi b_1^{q^{n-2}}+\dotsb+\pi^{n-1}b_{n-1}.
  \end{align*}
For all $0\leq j\leq n-1$, we see that $b\equiv b'\pmod{\pi}$ implies that $b^{q^j}\equiv b'^{q^j}\pmod{\pi^{j+1}}$, so the composition $W^n(B)\ra B\ra B/\pi^n$ factors through an $\cO$-algebra homomorphism $\te_n:W^n(B/\pi)\ra B/\pi^n$. Proposition \ref{prop:Wittprop}.ii) implies that the square
  \begin{align*}
    \xymatrix{W^{n+1}(B/\pi)\ar[r]^-\vp\ar[d]^-{\te_{n+1}} & W^{n+1}(B/\pi)\ar[r] & W^n(B/\pi)\ar[d]^-{\te_n} \\
    B/\pi^{n+1}\ar[rr] & & B/\pi^n}
  \end{align*}
  commutes, so taking $\varprojlim_n$ yields an $\cO$-algebra homomorphism
  \begin{align*}
    \te:W(B^\flat)=\varprojlim_nW^n(B/\pi)\ra\varprojlim_n B/\pi^n=B.
  \end{align*}
  Tracing through our identifications shows that our candidates satisfy the counit-unit equations for $W$ and $(-)^\flat$, as well as the desired description of $\te$.

For part ii), full faithfulness follows from part i), so we focus on the essential image. When $B/\pi$ is perfect, note that $(-)^\sharp$ yields a section $B/\pi\cong B^\flat\ra B$ of $B\ra B/\pi$. Therefore, when $B$ is also $\pi$-torsionfree, every element of $B$ equals $\sum_{n=0}^\infty r_n^\sharp\pi^n$ for uniquely determined $r_n$ in $B/\pi$. This shows that $\te:W(B^\flat)\ra B$ is an isomorphism, so $B$ lies in the essential image of $W$.
\end{proof}

\subsection{}\label{ss:Wittfet}
In this subsection, assume that $A$ is a perfect $\cO/\pi$-algebra.
\begin{cor*}
  Reduction mod $\pi$ induces an equivalence of categories
  \begin{align*}
    \{\mbox{finite \'etale }W(A)\mbox{-algebras}\}\ra^\sim\{\mbox{finite \'etale }A\mbox{-algebras}\}.
  \end{align*}
Moreover, it has a quasi-inverse given by $W$.
\end{cor*}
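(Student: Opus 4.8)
The plan is to reduce both halves of the statement to the henselian-pair rigidity of finite \'etale algebras, together with the description of the essential image of $W$ from Proposition \ref{ss:Witttiltadjunction}.ii).

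First I would note that $(W(A),\pi W(A))$ is a henselian pair: by the discussion in \ref{ss:Witttiltadjunction} the ring $W(A)$ is $\pi$-adically complete, and complete pairs are henselian \cite[Tag 0ALJ]{stacks-project}. Hence \cite[Tag 09ZL]{stacks-project} shows that reduction mod $\pi$ induces an equivalence from finite \'etale $W(A)$-algebras to finite \'etale $W(A)/\pi W(A)$-algebras. Since $A$ is perfect, Proposition \ref{prop:Wittprop}.iv) shows $\vp$ is an automorphism of $W(A)$, so $\pi W(A) = \im(V\circ\vp) = \im V$, and Proposition \ref{prop:Wittprop}.iii) gives $W(A)/\pi W(A) = W^1(A) = A$. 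This establishes the asserted equivalence, so it remains to identify $W$ as a quasi-inverse.

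Let $A'$ be a finite \'etale $A$-algebra. Since $A$ is a perfect $\cO/\pi$-algebra and finite \'etale algebras over perfect $\bF_p$-algebras are again perfect, $W(A')$ is defined; by Proposition \ref{ss:Witttiltadjunction} it is $\pi$-adically complete and $\pi$-torsionfree, and by Proposition \ref{prop:Wittprop}.iii) one has $W(A')/\pi W(A')\cong A'$. Now let $B$ be the finite \'etale $W(A)$-algebra with $B/\pi B\cong A'$ furnished by the equivalence above; I claim $B\cong W(A')$. First, $B$ is $\pi$-torsionfree, being flat over the $\pi$-torsionfree ring $W(A)$; and $B$ is finite projective over $W(A)$, hence $\pi$-adically complete, since $\pi$-adic completion commutes with finite direct sums and is the identity on $W(A)$, so it is the identity on any direct summand of a finite free $W(A)$-module. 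Since in addition $B/\pi B\cong A'$ is perfect, Proposition \ref{ss:Witttiltadjunction}.ii) shows $B$ lies in the essential image of $W$, so the counit $\te\colon W(B^\flat)\ra B$ is an isomorphism; moreover $B^\flat\cong A'$ because $B/\pi B$ is already perfect. By naturality of $\te$ and the triangle identities (the unit $A\ra W(A)^\flat$ being an isomorphism), the resulting isomorphism $W(A')\ra^\sim B$ is one of $W(A)$-algebras, and it is functorial in $A'$.

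It follows that $W(A')$ is finite \'etale over $W(A)$, so $A'\mapsto W(A')$ is a functor into finite \'etale $W(A)$-algebras, and the natural isomorphisms $W(A')/\pi W(A')\cong A'$ and $W(B/\pi B) = W(B^\flat)\cong B$ established above exhibit it as a quasi-inverse to reduction mod $\pi$. I expect the main obstacle to be the identification $B\cong W(A')$, which relies on the essential-image description of Proposition \ref{ss:Witttiltadjunction}.ii); verifying its hypotheses requires some care because $W(A)$ is non-noetherian -- specifically, one must check that the finite projective $W(A)$-module $B$ is $\pi$-adically complete and that $B/\pi B$, being finite \'etale over the perfect ring $A$, is itself perfect.
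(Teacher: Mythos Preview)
Your proof is correct and follows essentially the same approach as the paper. The paper's argument is terser---it simply invokes $\pi$-adic completeness for the equivalence and then verifies the hypotheses of Proposition~\ref{ss:Witttiltadjunction}.ii) (citing \cite[Lemma 3.1.5]{KL15} for perfectness of $A'$)---but the content is the same: henselian lifting for the equivalence, and the essential-image description of $W$ for the quasi-inverse.
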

\begin{proof}
  The first statement follows from the $\pi$-adic completeness of $W(A)$, since $W(A)/\pi=A$. For the second statement, let $A'$ be a finite \'etale $A$-algebra, and $B'$ be a finite \'etale $W(A)$-algebra satisfying $B'/\pi\cong A'$. Since $W(A)$ is $\pi$-adically complete and $\pi$-torsionfree, so is the finite projective $W(A)$-module $B'$. Moreover, \cite[Lemma 3.1.5]{KL15} shows that $A'$ is perfect. Hence Proposition \ref{ss:Witttiltadjunction}.ii) implies that $B'\cong W(A')$, as desired.
\end{proof}

\subsection{}
Later, we will need the analogue of Witt bivectors over $\cO$, so first let us introduce the analogue of Witt covectors over $\cO$. Write $CW^u(A)$ for the $\cO$-module $\varinjlim_nW^n(A)$, where the transition maps are given by $V:W^n(A)\ra W^{n+1}(A)$, and write $V:CW^u(A)\ra CW^u(A)$ for the map given by $\varinjlim_n$ of the composition
\begin{align*}
\xymatrix{W^n(A)\ar[r]^-V & W^{n+1}(A)\ar[r] & W^n(A).}
\end{align*}
Identify $CW^u(A)$ with the set of sequences $(\dotsc,a_{-1},a_0)$ in $A$ satisfying the following property: there exists an $M\geq0$ such that $a_{-m}=0$ for all $m\geq M$. Under this identification, $V$ sends $(\dotsc,a_{-1},a_0)$ to $(\dotsc,a_{-2},a_{-1})$.

\subsection{}
Write $CW(A)$ for the set of sequences $(\dotsc,a_{-1},a_0)$ in $A$ satisfying the following weaker property:
\begin{align*}\label{eq:nilpotenttail}
  \mbox{there exists an }M\geq0\mbox{ such that the ideal }(a_{-m})_{m\geq M}\mbox{ of }A\mbox{ is nilpotent.}\tag{$\triangleleft$}
\end{align*}
\begin{lem*}
The $\cO$-module structure on $CW^u(A)$ naturally extends to $CW(A)$, and the natural extension of $V$ to $CW(A)$ is $\cO$-linear.
\end{lem*}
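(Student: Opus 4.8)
The plan is to extend the operations to $CW(A)$ by Fontaine's universal covector formulas, to observe that condition \eqref{eq:nilpotenttail} is exactly what makes those (a priori infinite) formulas converge, and then to deduce the module axioms and the $\cO$-linearity of $V$ from the already-established facts on $CW^u(A)$ by a reduction to universal cases.

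First I would write $CW(A)$ as a filtered union. For each integer $M\ge 0$, each $k\ge 1$, and each ideal $\mathfrak n\subseteq A$ with $\mathfrak n^k=0$, let $CW_{M,\mathfrak n}(A)\subseteq CW(A)$ be the set of sequences $(\dotsc,a_{-1},a_0)$ with $a_{-m}\in\mathfrak n$ for all $m\ge M$; then $CW(A)=\bigcup CW_{M,\mathfrak n}(A)$, and $CW^u(A)$ is the piece with $\mathfrak n=0$. It therefore suffices to put a functorial $\cO$-module structure on each $CW_{M,\mathfrak n}(-)$, compatible with the inclusions and restricting on $CW^u$ to the given one, and as always this reduces to the universal example: the polynomial $\cO$-algebra on variables $X_{-m},Y_{-m}$ ($m\ge 0$) (resp. a single family, for scalar multiplication by a fixed $\lambda\in\cO$) modulo the ideal $J$ generated by all $k$-fold products of variables of index $\le -M$, with $\mathfrak n$ the image of $(X_{-m},Y_{-m}:m\ge M)$.

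The heart of the matter is to check that the covector operations make sense in this setting. On $CW^u$ the sum $\mathbf a+\mathbf b$ is computed componentwise as a limit of the Witt-vector additions of the truncations of $\mathbf a$ and $\mathbf b$ (and similarly for $\lambda\cdot\mathbf a$); I would check that this limit continues to exist on each $CW_{M,\mathfrak n}(A)$ because the successive truncation errors lie in increasing powers of $\mathfrak n$, which is nilpotent, so the limit stabilizes. This is the $\cO$-analogue of Fontaine's construction for $p$-typical Witt covectors \cite{Fon77}, and the argument carries over verbatim once $p$ is replaced by $\pi$ and the classical Witt polynomials by the $w_j$ of \ref{ss:Witt}; in the universal example the ``limit'' is literally a finite polynomial expression, since $\mathfrak n^k=0$ there. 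The resulting sum again satisfies \eqref{eq:nilpotenttail} (for $m\ge M$ every contributing term lies in $\mathfrak n$), the construction is functorial, and it visibly restricts to the known operations on $CW^u(A)$.

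With well-defined functorial operations in hand, the $\cO$-module axioms are identities of functorial maps that already hold on the sub-$\cO$-module $CW^u$; I would verify each on the universal example above, where it follows from the corresponding classical identity of formal covector expressions (specialized to the ring in question, where the expressions become finite). Likewise, the shift $V\colon(\dotsc,a_{-1},a_0)\mapsto(\dotsc,a_{-2},a_{-1})$ clearly preserves \eqref{eq:nilpotenttail} and is $\cO$-linear on $CW^u(A)$, being a filtered colimit of $\cO$-linear maps by Proposition \ref{prop:Wittprop}.ii); its $\cO$-linearity on $CW(A)$ follows from the same reduction. The main obstacle is the convergence statement above—establishing that the formal covector formulas truncate modulo a fixed power of the nilpotent tail ideal—while the surrounding reductions to universal cases are routine.
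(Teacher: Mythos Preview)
Your proposal is correct and takes essentially the same approach as the paper: both argue that Fontaine's covector construction in \cite[n$^\circ$ II.1.5]{Fon77} carries over to the present $\cO$-typical setting, with the key convergence input being that the addition polynomials $S_j$ stabilize modulo growing powers of the nilpotent tail ideal. The paper's proof is terser---it simply cites Fontaine's lemmes 1.2--1.3 and proposition 1.1 and notes they hold after replacing $p$ by $q$ (the bound in lemme 1.3 depending only on the Frobenius exponent $q$, not on $\pi$)---while you have unpacked the filtration $CW(A)=\bigcup CW_{M,\mathfrak n}(A)$ and the universal reduction explicitly, but the substance is the same.
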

Write $BW(A)$ for the $\cO$-module $\varprojlim_rCW(A)$, where $r$ runs over non-negative integers, and the transition maps are given by $V:CW(A)\ra CW(A)$. We identify $BW(A)$ with the set of double sequences $(\dotsc,a_{-1},a_0,a_1,\dotsc)$ in $A$ satisfying \eqref{eq:nilpotenttail}.
\begin{proof}
Note that \cite[n$^\circ$ II.1.5, lemme 1.2]{Fon77} holds after replacing $p$ with $q$. Therefore, for the polynomials $S_j$ that compute the $j$-th coordinate of addition on $W$, the analogue of \cite[n$^\circ$ II.1.5, lemme 1.3]{Fon77} also holds, and the bound depends only on $q$. Thus the desired result follows from the proof of \cite[n$^\circ$ II.1.5, proposition 1.1]{Fon77}.
\end{proof}

\subsection{}\label{ss:Wittspecialization}
Crucially, our Witt vectors specialize to the usual Witt vectors as follows. For all $i$ in $\bN\cup\{\infty\}$, write $W_i$ for the ring of $\cO_i$-typical Witt vectors as in \cite[Proposition 1.1]{Dri76}, and view $\cO_i$ as an $\cO$-algebra via the surjective ring homomorphism $\ev_i:\cO\ra\cO_i$.

In this subsection, assume that $A$ is an $\cO_i$-algebra.
\begin{prop*}
The $\cO$-algebra $W(A)$ is naturally isomorphic to $W_i(A)$.
\end{prop*}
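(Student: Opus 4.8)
The plan is to exhibit a natural transformation between the two functors $W$ and $W_i$ on $\cO_i$-algebras and check it is an isomorphism by reduction to a universal case, just as in the proof of Proposition \ref{ss:Witt}.ii). The key observation is that both $W(A)$ and $W_i(A)$ are built from ghost coordinates governed by $\pi$, and since $\ev_i(\pi)=\pi_i$ is a uniformizer of $\cO_i$, the defining polynomials $w_j\in\cO[T_0,\dotsc,T_j]$ push forward under $\ev_i$ to the ghost polynomials $T_0^{q^{j}}+\pi_iT_1^{q^{j-1}}+\dotsb+\pi_i^jT_j$ for $\cO_i$-typical Witt vectors (here $q$ is the size of the residue field $\bF_q$, which for $i\in\bN$ is the residue field of $E_i$ as well). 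Thus on the level of underlying sets both functors send $A$ to the set of sequences $(a_0,a_1,\dotsc)$ in $A$, and it suffices to check that the $\cO$-algebra structures — equivalently, the addition and multiplication polynomials $S_j,P_j$ — agree.

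First I would treat the universal case: let $A$ be a polynomial $\cO_i$-algebra, say $A=\cO_i[T_a]_{a}$, which is $\pi_i$-torsionfree and admits an $\cO_i$-algebra endomorphism $\vp$ reducing to absolute $q$-Frobenius mod $\pi_i$ (send each $T_a\mapsto T_a^q$). By Proposition \ref{ss:Witt}.i), $W(A)$ is identified via $(w_0,w_1,\dotsc)$ with the set of sequences $(b_0,b_1,\dotsc)$ satisfying $b_{j+1}\equiv\vp(b_j)\pmod{\pi_i^{j+1}}$; the analogous statement for $W_i(A)$ is \cite[Proposition 1.1]{Dri76}, and the two descriptions coincide because the ghost maps coincide. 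Hence the identity map on sequences is an $\cO$-algebra isomorphism $W(A)\ra^\sim W_i(A)$ in this case, and by naturality of the ghost components it is functorial among polynomial $\cO_i$-algebras with their Frobenius lifts. Then, exactly as in the proofs of Proposition \ref{ss:Witt}.ii) and Proposition \ref{prop:Wittprop}, an arbitrary $\cO_i$-algebra $A$ is a quotient of a polynomial $\cO_i$-algebra, and both $W$ and $W_i$ are determined as functors by their values on such polynomial algebras (the addition and multiplication maps being defined by universal polynomials with $\cO$-, resp. $\cO_i$-, coefficients, which we have just shown to match after applying $\ev_i$). Therefore the isomorphism descends to a natural isomorphism $W(A)\cong W_i(A)$ of $\cO$-algebras for every $\cO_i$-algebra $A$.

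The one point requiring care — and the main obstacle — is bookkeeping the two roles of $\pi$: on $W(A)$ the structure polynomials have coefficients in $\cO$, while on $W_i(A)$ they have coefficients in $\cO_i$, and the comparison only makes sense after applying $\ev_i$. Concretely, one must check that the universal addition polynomials $S_j$ for $W$ (which a priori live in $\cO[X_0,\dotsc,X_j,Y_0,\dotsc,Y_j]$) map under $\ev_i$ to the corresponding polynomials for $W_i$; this is immediate once one knows both are characterized by the same ghost-coordinate identities $w_j(S_\bullet)=w_j(X_\bullet)+w_j(Y_\bullet)$, since $\ev_i(w_j)$ is precisely the $\cO_i$-ghost polynomial and the ghost map is injective on the relevant torsionfree universal ring. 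Everything else is a routine descent-from-the-universal-case argument, so I would keep the write-up short: identify underlying sets, match ghost polynomials via $\ev_i$, invoke \cite[Proposition 1.1]{Dri76} together with Proposition \ref{ss:Witt}.i) in the torsionfree case, and then reduce the general case to the polynomial case.
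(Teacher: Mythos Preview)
Your proposal is correct and follows essentially the same approach as the paper: identify $W(A)$ with $W_i(A)$ in the $\pi$-torsionfree case with a Frobenius lift by matching both to the same ghost-coordinate image (using Proposition \ref{ss:Witt}.i) on one side and the characterizing property in \cite[Proposition 1.1]{Dri76} on the other), then reduce the general case to polynomial $\cO_i$-algebras. Your additional remarks on the bookkeeping of $\pi$ versus $\pi_i$ are accurate and make explicit what the paper leaves implicit.
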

\begin{proof}
  First, assume that $A$ is $\pi$-torsionfree and has an $\cO$-algebra endomorphism whose reduction mod $\pi$ equals absolute $q$-Frobenius. Then Proposition \ref{ss:Witt}.i) identifies $W(A)$ with its image under $(w_0,w_1,\dotsc)$, and this satisfies the characterizing property in \cite[Proposition 1.1]{Dri76}. Therefore uniqueness in \cite[Proposition 1.1]{Dri76} implies that $W(A)$ is naturally isomorphic to $W_i(A)$.

  The above includes the case where $A$ is a polynomial $\cO_i$-algebra. Finally, reducing to this universal case shows that $W(A)$ is naturally isomorphic to $W_i(A)$ for any $\cO_i$-algebra $A$.
\end{proof}

\subsection{}\label{ss:Ycurve}
Note that $\cO/\pi\cong\Cont(\bN\cup\{\infty\},\bF_q)$ and that $\Spd\Cont(\bN\cup\{\infty\},\bF_q)$ equals the v-sheaf $\ul{\bN\cup\{\infty\}}$ on $\Perf_{\bF_q}$. Hence $\Spa$ yields an anti-equivalence from the category of perfectoid Huber pairs over $\cO/\pi$ to the category of affinoid perfectoid spaces over $\bF_q$ equipped with a morphism to $\ul{\bN\cup\{\infty\}}$.

We can now define the analogue of relative Fargues--Fontaine curves over $\cO$. Let $S=\Spa(R,R^+)$ be an affinoid perfectoid space over $\bF_q$ equipped with a morphism $S\ra\ul{\bN\cup\{\infty\}}$. Choose a pseudouniformizer $\vpi$ of $R$, and write $\norm{-}$ for the associated spectral norm on $R$ normalized by $\norm{\vpi}=\frac1q$. Endow $W(R^+)$ with the $(\pi,[\vpi])$-adic topology, write $\cY_S$ for the complement of the vanishing locus of $(\pi,[\vpi])$ in $\Spa{W(R^+)}$, and note that $\cY_S$ is the analytic locus of $\Spa{W(R^+)}$.

We have a continuous map $\rad:\abs{\cY_S}\ra[0,\infty]$ given by
\begin{align*}
  x\mapsto\frac{\log\abs{[\vpi](\wt{x})}}{\log\abs{\pi(\wt{x})}},
\end{align*}
where $\wt{x}$ denotes the unique rank-$1$ generalization of $x$ in $\cY_S$. For any closed interval $\cI$ in $[0,\infty]$ with rational endpoints, write $\cY_{S,\cI}=\Spa(B_{S,\cI},B_{S,\cI}^+)$ for the associated rational open subspace of $\Spa{W(R^+)}$, which lies in $\cY_S$. More generally, for any subset $\cI$ of $[0,\infty]$, write $\cY_{S,\cI}$ for the open subspace $\bigcup_{\cI'}\cY_{S,\cI'}$ of $\cY_S$, where $\cI'$ runs over closed intervals in $\cI$ with rational endpoints. Write $Y_S$ for $\cY_{S,(0,\infty)}$, and note that $\cY_{S,[0,\infty)}$ is the complement of the vanishing locus of $[\vpi]$ in $\Spa{W(R^+)}$.

Write $\vp:S\ra S$ for the absolute $q$-Frobenius automorphism, as well as the induced automorphisms of $W(R^+)$ and $\cY_S$. Note that $\rad\circ\vp=q\cdot\rad$, so $\vp$ acts freely and totally discontinuously on $Y_S$ over $\Spa{E}$. Write $X_S$ for the quotient $Y_S/\vp^\bZ$.

\subsection{}\label{ss:Yinftysousperfectoid}
Write $W'(R^+)$ for the ring $W(R^+)$ endowed with the $\pi$-adic topology.
\begin{lem*}\hfill
  \begin{enumerate}[i)]
  \item The ring $W'(R^+)[\frac1\pi]$ is sousperfectoid,
  \item The pre-adic space $\cY_{S,[1,\infty]}$ is sousperfectoid and hence adic.
  \end{enumerate}
\end{lem*}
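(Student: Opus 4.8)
The plan is to follow the template of Fargues--Scholze's proof \cite{FS21} that the corresponding rings over a single nonarchimedean local field are sousperfectoid, with their completed perfection of $E$ replaced by our $E^{\perf}$ from \ref{ss:fieldperf}. Two facts recorded earlier make this go through in families: the inclusion $\cO\hookrightarrow\cO^{\perf}$ (hence $E\hookrightarrow E^{\perf}$) splits as topological $\cO$-modules (resp.\ $E$-modules), again by \ref{ss:fieldperf}, and $\cO^{\perf}/\pi$ is semiperfect (its Frobenius is surjective, since it sends $\pi^{1/q^{r+1}}$ to $\pi^{1/q^r}$ and $\cO/\pi\cong\Cont(\bN\cup\{\infty\},\bF_q)$ is perfect).

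For part i), I would take as perfectoid cover the ring $\wt A\coloneqq W'(R^+)[\tfrac{1}{\pi}]\,\widehat\otimes_E E^{\perf}$, whose natural ring of definition $\wt A^+$ is the $\pi$-adic completion of $W(R^+)\otimes_\cO\cO^{\perf}$. Since $W(R^+)/\pi=R^+$ by Proposition \ref{prop:Wittprop}.iii) and $R^+$ is perfect, one computes $\wt A^+/\pi=R^+\otimes_{\cO/\pi}\cO^{\perf}/\pi$, which is semiperfect as a tensor product of a perfect ring with a semiperfect one. Fix $r_0$ minimal with $q^{r_0}\ge p$ and put $\varpi_0\coloneqq\pi^{1/q^{r_0}}\in\cO^{\perf}\subseteq\wt A^+$; then $\varpi_0$ is a pseudouniformizer of $\wt A$ with $\varpi_0^p\mid p$ (as $p\in\pi\cO\subseteq\varpi_0^p\cO^{\perf}$, because $p/q^{r_0}\le 1$), and Frobenius on $\wt A^+/\varpi_0\cong R^+\otimes_{\cO/\pi}(\cO^{\perf}/\varpi_0)$ is surjective, so $\wt A$ is perfectoid. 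Finally, since $E\hookrightarrow E^{\perf}$ splits as topological $E$-modules and the functor $W'(R^+)[\tfrac{1}{\pi}]\,\widehat\otimes_E(-)$ is additive, $W'(R^+)[\tfrac{1}{\pi}]\hookrightarrow\wt A$ splits as topological $W'(R^+)[\tfrac{1}{\pi}]$-modules, which is exactly what it means for $W'(R^+)[\tfrac{1}{\pi}]$ to be sousperfectoid.

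For part ii), recall from \ref{ss:Ycurve} that $\cY_{S,[1,\infty]}=\Spa(B_{S,[1,\infty]},B^+_{S,[1,\infty]})$ is the rational open subspace $\{\,\abs{[\vpi]}\le\abs{\pi}\ne 0\,\}$ of $\Spa W(R^+)$, so $B_{S,[1,\infty]}=W(R^+)\langle\tfrac{[\vpi]}{\pi}\rangle$ with $\pi$ a unit. I would again show that $B_{S,[1,\infty]}$ is sousperfectoid via the cover $B_{S,[1,\infty]}\,\widehat\otimes_E E^{\perf}$: this is perfectoid because, crucially, the coordinate $\tfrac{[\vpi]}{\pi}$ automatically acquires a compatible system of $q$-power roots $\tfrac{[\vpi^{1/q^r}]}{\pi^{1/q^r}}$ (using multiplicativity of $[-]$ together with $\vpi^{1/q^r}\in R^+$ and $\pi^{1/q^r}\in\cO^{\perf}$), so its reduction modulo $\varpi_0$ is again semiperfect; and the cover splits topologically since $E\hookrightarrow E^{\perf}$ does. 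Then $\cY_{S,[1,\infty]}$ is adic because sousperfectoid Huber pairs are sheafy \cite{FS21}. (One could alternatively deduce this by covering $\cY_{S,[1,\infty]}$ by the finite-radius rational pieces $\cY_{S,[a,b]}$ with $1\le a\le b<\infty$ --- sousperfectoid by the same argument --- together with the locus near $\infty$, which is governed by part i).)

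The main obstacle is not the construction of the perfectoid cover --- which mirrors \cite{FS21} --- but the bookkeeping forced by the non-noetherianity of $\cO$: one cannot freely commute $\pi$-adic completion with $\otimes_\cO$, nor appeal to Tor-vanishing or finite presentation, so the claims that $\wt A^+$ is $\pi$-torsionfree (or at least that $\wt A$ is uniform), that $\wt A^+/\varpi_0$ is as computed, and that $\wt A^+$ exhausts the powerbounded elements of $\wt A$ all need direct arguments. I expect the cleanest approach is to argue fiberwise over $\bN\cup\{\infty\}$: reduce modulo $\pi$ and use the product-and-colimit presentations of $\cO$, $\cO^{\perf}$, and $W(-)$ from \ref{ss:E} and \ref{ss:Witttiltadjunction}, under which each fiber is precisely the situation handled in \cite{FS21}.
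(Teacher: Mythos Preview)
Your approach to part~i) is essentially the paper's: tensor with $\cO^{\perf}$ (equivalently $E^{\perf}$) and use the topological splitting of $\cO\hookrightarrow\cO^{\perf}$. The paper sidesteps your bookkeeping worries by computing $(W'(R^+)\,\widehat\otimes_\cO\,\cO^{\perf})/\pi\cong\bigcup_r R^+[t^{1/q^r}]/t$ and the analogous expression mod~$\pi^{1/q}$ explicitly, observing that the $q$-Frobenius between them is an \emph{isomorphism}, and then invoking \cite[Lemma~3.10(ii) and Lemma~3.21]{BMS18}. This criterion yields integral perfectoidness of the completed tensor product directly, without separately verifying $\pi$-torsion-freeness or identifying the powerbounded subring; your proposed fiberwise reduction is therefore unnecessary.

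For part~ii) you miss the paper's one-line reduction. On $W(R^+)[[\vpi]/\pi]$ one has $[\vpi]=\pi\cdot([\vpi]/\pi)$, so the $(\pi,[\vpi])$-adic topology coincides with the $\pi$-adic topology; hence $B_{S,[1,\infty]}$ is literally the rational localization $\{|[\vpi]|\le|\pi|\ne 0\}$ of $W'(R^+)[\tfrac1\pi]$. Since sousperfectoidness is stable under rational localization, part~ii) follows immediately from part~i). Your direct computation with the $q$-power roots $[\vpi^{1/q^r}]/\pi^{1/q^r}$ is not wrong, and your parenthetical alternative points in the right direction, but both do more than needed: the whole of $\cY_{S,[1,\infty]}$ already sits inside $\Spa W'(R^+)[\tfrac1\pi]$ as a single rational open, so no further cover or explicit perfectoid calculation is required.
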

\begin{proof}
For part i), we claim that $W'(R^+)\wh\otimes_\cO\cO^{\perf}$ is integral perfectoid. Note that
  \begin{align*}
\cO^{\perf}/\pi = \bigcup_{r=0}^\infty\cO[\pi^{1/q^r}]/\pi = \bigcup_{r=0}^\infty(\cO/\pi)[t^{1/q^r}]/t,
  \end{align*}
  where $t^{1/q^r}$ equals the image of $\pi^{1/q^r}$, so we get
  \begin{align*}
    (W'(R^+)\wh\otimes_\cO\cO^{\perf})/\pi = R^+\otimes_{\cO/\pi}(\cO^{\perf}/\pi) = \bigcup_{r=0}^\infty R^+[t^{1/q^r}]/t.
  \end{align*}
  Replacing $\pi$ with $\pi^{1/q}$ yields $(W'(R^+)\wh\otimes_\cO\cO^{\perf})/\pi^{1/q}=\bigcup_{r=0}^\infty R^+[t^{1/q^r}]/t^{1/q}$. Since $R^+$ is integral perfectoid, these descriptions imply that the absolute $q$-Frobenius
  \begin{align*}
    (W'(R^+)\wh\otimes_\cO\cO^{\perf})/\pi^{1/q}\ra(W'(R^+)\wh\otimes_\cO\cO^{\perf})/\pi
  \end{align*}
  is an isomorphism, so \cite[Lemma 3.10(ii)]{BMS18} yields the claim. From here, \cite[Lemma 3.21]{BMS18} indicates that $(W'(R^+)\wh\otimes_\cO\cO^{\perf})[\textstyle\frac1\pi]=W'(R^+)[\frac1\pi]\wh\otimes_EE^{\perf}$ is perfectoid, which shows that $W'(R^+)[\frac1\pi]$ is sousperfectoid.

  For part ii), note that the $(\pi,[\vpi])$-adic topology on $W(R^+)[[\vpi]/\pi]$ equals the $\pi$-adic topology. Therefore $B_{S,[1,\infty]}$ equals the global sections of the rational open subspace $\{\abs{[\vpi]}\leq\abs{\pi}\neq0\}$ of $\Spa W'(R^+)[\frac1\pi]$, which is sousperfectoid by part i).
\end{proof}

\begin{prop}\label{ss:Ysousperfectoid}
  For all non-negative integers $r$, the pre-adic space
  \begin{align*}
    \cY_{S,[0,q^r]}\times_{\Spa\cO}\Spa\cO^{\perf}
  \end{align*}
  is affinoid perfectoid, and our choices of $\pi$ and $\vpi$ induce an identification between $(\cY_{S,[0,q^r]}\times_{\Spa\cO}\Spa\cO^{\perf})^\flat$ and the closed perfectoid unit disk over $S$. Consequently, $\cY_S$, $Y_S$, and $X_S$ are sousperfectoid adic spaces.
\end{prop}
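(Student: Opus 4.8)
The plan is to prove the displayed assertion by a direct computation modelled on the proof of Lemma~\ref{ss:Yinftysousperfectoid}, and then to deduce the ``consequently'' clause formally.

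\textbf{Setup.} First I would record that $\cY_{S,[0,q^r]}$ is the rational open subspace $\{\abs{\pi^{q^r}}\leq\abs{[\vpi]}\neq0\}$ of $\Spa W(R^+)$: indeed $\rad\leq q^r$ unwinds to $\abs{\pi^{q^r}}\leq\abs{[\vpi]}$, and together with $(\pi,[\vpi])$ not vanishing this forces $[\vpi]\neq0$. On this locus $[\vpi]$ is a topologically nilpotent unit, $\pi$ is topologically nilpotent, and $\cY_{S,[0,q^r]}=\Spa(B,B^+)$, where $B=M[\tfrac1{[\vpi]}]$ with $M$ the $[\vpi]$-adic completion of $W(R^+)[u]/(u[\vpi]-\pi^{q^r})$, $u$ the function $\pi^{q^r}/[\vpi]$, and $B^+$ the integral closure of $M$ in $B$. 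Then I would observe that base changing along $\cO\to\cO^{\perf}$ adjoins compatible $q$-power roots $\pi^{1/q^n}$, while $W(R^+)$ already contains compatible $q$-power roots $[\vpi^{1/q^n}]=[\vpi]^{1/q^n}$ because $R^+$ is perfect; hence $u$ acquires compatible $q$-power roots $u^{1/q^n}=\pi^{q^r/q^n}/[\vpi^{1/q^n}]$ in $M\wh\otimes_\cO\cO^{\perf}$, and in fact $\pi=u^{1/q^r}[\vpi^{1/q^r}]$ there, since $\cO$ contains no nontrivial $q$-power root of unity. This is the analogue here of adjoining $q$-power roots of a pseudouniformizer.

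\textbf{Key step.} Next I would compute, exactly as in the proof of Lemma~\ref{ss:Yinftysousperfectoid}, the reductions of $M\wh\otimes_\cO\cO^{\perf}$ modulo $[\vpi]$ and modulo $[\vpi^{1/q}]$, and check that replacing $[\vpi]$ by $[\vpi^{1/q}]$ amounts to replacing the pseudouniformizer $\vpi$ of $R^+$ by $\vpi^{1/q}$. Since $R^+$ is integral perfectoid, this shows that the absolute $q$-Frobenius
\begin{align*}
(M\wh\otimes_\cO\cO^{\perf})/[\vpi^{1/q}]\lra(M\wh\otimes_\cO\cO^{\perf})/[\vpi]
\end{align*}
is an isomorphism, so \cite[Lemma 3.10(ii)]{BMS18} shows $M\wh\otimes_\cO\cO^{\perf}$ is integral perfectoid, and \cite[Lemma 3.21]{BMS18} then shows $B\wh\otimes_\cO\cO^{\perf}=(M\wh\otimes_\cO\cO^{\perf})[\tfrac1{[\vpi]}]$ is perfectoid; comparing integral closures, $\cY_{S,[0,q^r]}\times_{\Spa\cO}\Spa\cO^{\perf}$ is affinoid perfectoid. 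The computation of the reduction modulo $[\vpi]$ will also identify its tilt with $\Spa(R\langle T^{1/q^\infty}\rangle,R^+\langle T^{1/q^\infty}\rangle)$, the closed perfectoid unit disk over $S$, via the coordinate $T$ corresponding to $u=\pi^{q^r}/[\vpi]$ (equivalently $T^{1/q^r}$ corresponding to $\pi/[\vpi^{1/q^r}]$). I expect \emph{this} to be the main obstacle: one must keep careful track of the fact that $B^+$ carries the $[\vpi]$-adic (not the $\pi$-adic) topology and of the passage to integral closures, in order to run the Frobenius computation of Lemma~\ref{ss:Yinftysousperfectoid} in the presence of the extra variable $u$ subject to $u[\vpi]=\pi^{q^r}$.

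\textbf{The ``consequently'' clause.} Because $\cO$ is a direct summand of $\cO^{\perf}$ as topological $\cO$-modules (\ref{ss:fieldperf}), $B$ is a topological direct summand of the perfectoid ring $B\wh\otimes_\cO\cO^{\perf}$, hence $B$ is sousperfectoid, so $\cY_{S,[0,q^r]}$ is a sousperfectoid affinoid adic space. Now $\cY_{S,[0,\infty)}=\bigcup_{r\geq0}\cY_{S,[0,q^r]}$ and $\cY_S=\cY_{S,[0,\infty)}\cup\cY_{S,[1,\infty]}$, the latter term being sousperfectoid by Lemma~\ref{ss:Yinftysousperfectoid}.ii); since being a sousperfectoid adic space is local, $\cY_S$ is a sousperfectoid adic space, and then so is its open subspace $Y_S=\cY_{S,(0,\infty)}$. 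Finally, $\vp$ acts freely and totally discontinuously on $Y_S$ (\ref{ss:Ycurve}), so $Y_S\to X_S$ is a covering map and $X_S$ is an adic space locally isomorphic to $Y_S$, hence also a sousperfectoid adic space.
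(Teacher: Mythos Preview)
Your proposal is correct and follows essentially the same approach as the paper. The only cosmetic difference is that the paper first treats the case $r=0$ (with coordinate $X=\pi/[\vpi]$) and then invokes the isomorphism $\vp^r:\cY_{S,[0,1]}\ra^\sim\cY_{S,[0,q^r]}$ to obtain the general case, whereas you work with general $r$ directly via $u=\pi^{q^r}/[\vpi]$; the key Frobenius computation modulo $[\vpi]$ and $[\vpi^{1/q}]$, the appeals to \cite[Lemma 3.10(ii)]{BMS18} and \cite[Lemma 3.21]{BMS18}, and the deduction of sousperfectoidness from the direct summand $\cO\hra\cO^{\perf}$ are identical.
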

\begin{proof}
  We claim that $B_{S,[0,1]}\wh\otimes_\cO\cO^{\perf}$ is perfectoid. To see this, write $A$ for the $[\vpi]$-adic completion of $\bigcup_{m=0}^\infty(W(R^+)\wh\otimes_\cO\cO^{\perf})[(\pi/[\vpi])^{1/q^m}]$. Because $B_{S,[0,1]}$ equals $W(R^+)\ang{\pi/[\vpi]}[\textstyle\frac1{[\vpi]}]$, we see that $B_{S,[0,1]}\wh\otimes_\cO\cO^{\perf}$ equals $A[\textstyle\frac1{[\vpi]}]$. Now
  \begin{align*}
    A/[\vpi] &= \bigcup_{m=0}^\infty\big(W(R^+)\wh\otimes_\cO\cO^{\perf}\big)[(\pi/[\vpi])^{1/q^m}]/[\vpi] \\
    &= \bigcup_{m=0}^\infty\big((R^+/\vpi)\otimes_{\cO/\pi}(\cO^{\perf}/\pi)\big)[X^{1/q^m}]/(\pi^{1/p^m}-\vpi^{1/q^m}X^{1/q^m}),
  \end{align*}
  where $X^{1/q^m}$ equals the image of $(\pi/[\vpi])^{1/q^m}$. The proof of Lemma \ref{ss:Yinftysousperfectoid}.i) identifies $\cO^{\perf}/\pi$ with $\bigcup_{r=0}^\infty(\cO/\pi)[t^{1/q^r}]/t$, so the above expression for $A/[\vpi]$ becomes
  \begin{align*}
\bigcup_{m=0}^\infty\Big(\bigcup_{r=0}^\infty(R^+/\vpi)[t^{1/q^r}]/t\Big)[X^{1/q^m}]/(t^{1/q^m}-\vpi^{1/q^m}X^{1/q^m}) = \bigcup_{m=0}^\infty(R^+/\vpi)[X^{1/q^m}].
  \end{align*}
  Replacing $\vpi$ with $\vpi^{1/q}$ yields $A/[\vpi]^{1/q}=\bigcup_{m=0}^\infty(R^+/\vpi^{1/q})[X^{1/q^m}]$. Since $R^+$ is integral perfectoid, these descriptions imply that the absolute $q$-Frobenius morphism $A/[\vpi]^{1/q}\ra A/[\vpi]$ is an isomorphism, so \cite[Lemma 3.10(ii)]{BMS18} shows that $A$ is integral perfectoid. From here, \cite[Lemma 3.21]{BMS18} indicates that $A[\textstyle\frac1{[\vpi]}]=B_{S,[0,1]}\wh\otimes_\cO\cO^{\perf}$ is perfectoid, as desired.

  The claim shows that $\cY_{S,[0,1]}$ is sousperfectoid. Because $\cY_S$ is covered by $\cY_{S,[0,1]}$ and $\cY_{S,[1,\infty]}$, combining this with Lemma \ref{ss:Yinftysousperfectoid}.ii) indicates that $\cY_S$ is sousperfectoid. Moreover, the above work identifies $A^\flat$ with $R^+\ang{X^{1/p^\infty}}$ and hence identifies
  \begin{align*}
    (\cY_{S,[0,1]}\times_{\Spa\cO}\Spa\cO^{\perf})^\flat
  \end{align*}
with the closed perfectoid unit disk over $S$. Since $\vp^r$ induces an isomorphism $\cY_{S,[0,1]}\ra^\sim\cY_{S,[0,q^r]}$, the same holds for $(\cY_{S,[0,q^r]}\times_{\Spa\cO}\Spa\cO^{\perf})^\flat$.
\end{proof}

\subsection{}\label{ss:diamondformula}
As usual, relative Fargues--Fontaine curves enjoy the following ``diamond formula.'' Because $\cO^\flat\cong\cO/\pi$, we get a natural morphism $\Spd{\cO}\ra\ul{\bN\cup\{\infty\}}$. 
\begin{prop*}
The v-sheaves $\cY_{S,[0,\infty)}^\Diamond$ and $\Spd{\cO}\times_{\ul{\bN\cup\{\infty\}}}S$ are naturally isomorphic over $\Spd\cO$. Consequently, we get a natural continuous map $\abs{\cY_{S,[0,\infty)}}\ra\abs{S}$.
\end{prop*}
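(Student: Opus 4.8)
The plan is to construct the isomorphism on $T$-valued points for $T=\Spa(R_T,R_T^+)$ an affinoid perfectoid space over $\bF_q$; since such $T$ form a basis of $\Perf_{\bF_q}$ and both sides are v-sheaves --- the left-hand side because $\cY_{S,[0,\infty)}=\bigcup_r\cY_{S,[0,q^r]}$ is a sousperfectoid analytic adic space by Proposition \ref{ss:Ysousperfectoid}, so $\cY_{S,[0,\infty)}^\Diamond$ is defined with $\abs{\cY_{S,[0,\infty)}^\Diamond}=\abs{\cY_{S,[0,\infty)}}$ --- this will suffice, and tracking the maps to $\Spd\cO$ throughout will upgrade it to an isomorphism over $\Spd\cO$. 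Concretely, a $T$-point of $\cY_{S,[0,\infty)}^\Diamond$ is a pair $(T^\sharp,g)$ consisting of an untilt $T^\sharp=\Spa(R_T^\sharp,R_T^{\sharp+})$ of $T$ and a morphism $g\colon T^\sharp\to\cY_{S,[0,\infty)}$ of adic spaces; composing $g$ with the structure morphism $\cY_{S,[0,\infty)}\to\Spa\cO$ (which exists because $W(R^+)$ is an $\cO$-algebra) records the corresponding point of $\Spd\cO$. A $T$-point of $\Spd\cO\times_{\ul{\bN\cup\{\infty\}}}S$ is a pair $((T^\sharp,\iota),h)$, where $\iota$ is an $\cO$-algebra structure on $R_T^{\sharp+}$ making $(T^\sharp,\iota)$ a point of $\Spd\cO$, and $h\colon T\to S$ is a morphism such that the $\cO/\pi$-algebra structure on $R_T^+=(R_T^{\sharp+})^\flat$ coming from $\iota$ agrees with the one coming from $h$ and $S\to\ul{\bN\cup\{\infty\}}$; equivalently, $h$ is a continuous $\cO/\pi$-algebra homomorphism $R^+\to R_T^+$ sending the pseudouniformizer $\vpi$ to a pseudouniformizer.

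The bridge between the two descriptions is the Witt-vector adjunction. Given $((T^\sharp,\iota),h)$, I would use $\iota$ to regard $R_T^{\sharp+}$ as a $\pi$-adically complete $\cO$-algebra and, since $R^+$ is a perfect $\cO/\pi$-algebra, apply Proposition \ref{ss:Witttiltadjunction}.i) to turn $h$ into an $\cO$-algebra homomorphism $\phi\colon W(R^+)\to R_T^{\sharp+}$, which factors as $W(R^+)\to W(R_T^+)\xrightarrow{\te}R_T^{\sharp+}$; in particular $\phi([\vpi])=h(\vpi)^\sharp$ is a topologically nilpotent unit of $R_T^\sharp$, while $\phi(\pi)$ is topologically nilpotent. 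Since $T^\sharp$ is quasicompact, $\sup_x\abs{\phi(\pi)(x)}<1$ and $\inf_x\abs{\phi([\vpi])(x)}>0$, so for $r$ large enough $\abs{\phi(\pi)^{q^r}(x)}\leq\abs{\phi([\vpi])(x)}$ for all $x\in\abs{T^\sharp}$; by the universal property of the rational subset $\cY_{S,[0,q^r]}=\{\abs{\pi^{q^r}}\leq\abs{[\vpi]}\neq0\}$ of $\Spa W(R^+)$, this is exactly the condition for $\phi$ to extend to a morphism $T^\sharp\to\cY_{S,[0,q^r]}\subseteq\cY_{S,[0,\infty)}$. Conversely, given $(T^\sharp,g)$, quasicompactness of $T^\sharp$ forces $g$ to factor through some $\cY_{S,[0,q^r]}$; restricting the associated homomorphism $B_{S,[0,q^r]}\to R_T^\sharp$ along $W(R^+)\hookrightarrow B_{S,[0,q^r]}$ gives $\phi\colon W(R^+)\to R_T^{\sharp+}$ with $\phi([\vpi])$ a pseudouniformizer, hence by Proposition \ref{ss:Witttiltadjunction}.i) an $\cO/\pi$-homomorphism $R^+\to R_T^+$ sending $\vpi$ to a pseudouniformizer, i.e.\ a morphism $h\colon T\to S$; together with the $\Spd\cO$-point coming from $g$, this recovers a triple as above. (Alternatively, one could base change along the faithfully-flat-mod-$\pi$ map $\cO\to\cO^{\perf}$ and descend from Fargues--Scholze's computation via Proposition \ref{ss:Ysousperfectoid}, but the above is more direct.)

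I expect the main obstacle to be precisely this matching: verifying that $\phi$ is continuous for the $(\pi,[\vpi])$-adic topology, lands in $R_T^{\sharp+}$, avoids the vanishing locus of $[\vpi]$, and factors through one of the exhausting affinoids $\cY_{S,[0,q^r]}$, and that the two assignments are mutually inverse and natural in $T$; the only non-formal inputs are Proposition \ref{ss:Witttiltadjunction} (to pass between $h$ and $\phi$) and Proposition \ref{ss:Ysousperfectoid} (for the sousperfectoid structure and the explicit form of $\cY_{S,[0,q^r]}$). Compatibility with the projections to $\Spd\cO$ is then immediate, since on both sides the $\Spd\cO$-component is induced by the composite $\cO\to W(R^+)\xrightarrow{\phi}R_T^{\sharp+}$. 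Finally, the resulting isomorphism provides a second projection $\cY_{S,[0,\infty)}^\Diamond\to S$, and passing to underlying topological spaces --- using $\abs{\cY_{S,[0,\infty)}^\Diamond}=\abs{\cY_{S,[0,\infty)}}$ --- yields the asserted natural continuous map $\abs{\cY_{S,[0,\infty)}}\to\abs{S}$.
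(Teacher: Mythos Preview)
Your proposal is correct and follows essentially the same approach as the paper: both identify $T$-points of each side and use the Witt-vector adjunction of Proposition \ref{ss:Witttiltadjunction}.i) to pass between continuous $\cO$-algebra maps $W(R^+)\to R_T^{\sharp+}$ with $[\vpi]$ inverted and continuous $\cO/\pi$-algebra maps $R^+\to R_T^+$ with $\vpi$ inverted. The paper's version is slightly more streamlined in that it uses directly that $\cY_{S,[0,\infty)}$ is the locus $\{[\vpi]\neq0\}$ in $\Spa W(R^+)$ rather than exhausting by the affinoids $\cY_{S,[0,q^r]}$, and it cites \cite[Lemma 15.6]{Sch17} for the identification $\abs{\cY_{S,[0,\infty)}^\Diamond}=\abs{\cY_{S,[0,\infty)}}$, but these are cosmetic differences.
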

\begin{proof}
  For the first statement, let $T=\Spa(A,A^+)$ be an affinoid perfectoid space along with an untilt $T^\sharp=\Spa(A^\sharp,A^{\sharp+})$ of $T$ over $\Spa{\cO}$. A morphism $T^\sharp\ra\cY_{S,[0,\infty)}$ over $\Spa\cO$ is equivalent to an $\cO$-algebra homomorphism $W(R^+)\ra A^{\sharp+}$ such that the image of $[\vpi]$ is invertible in $A^\sharp$. By Proposition \ref{ss:Witttiltadjunction}.i), this is equivalent to an $\cO/\pi$-algebra homomorphism $R^+\ra A^+$ such that the image of $\vpi$ is invertible in $A$, i.e. a morphism of Huber pairs $(R,R^+)\ra(A,A^+)$ over $\cO/\pi$. This yields the desired result. By \cite[Lemma 15.6]{Sch17}, the second statement follows from applying $\abs{-}$ to the composition $\cY^\Diamond_{S,[0,\infty)}\cong\Spd\cO\times_{\ul{\bN\cup\{\infty\}}}S\lra^{\pr_2}S$.
\end{proof}

\subsection{}\label{ss:Yrationalopensubspace}
Relative Fargues--Fontaine curves are open-local in the following sense. Let $U$ be a rational open subspace of $S$.
\begin{prop*}
  The morphism $\cY_{U,[0,\infty)}\ra\cY_{S,[0,\infty)}$ is an open embedding, and its image is the subset $|\cY_{S,[0,\infty)}|\times_{\abs{S}}\abs{U}$ of $|\cY_{S,[0,\infty)}|$.
\end{prop*}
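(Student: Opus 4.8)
The plan is to deduce this from the ``diamond formula'' of Proposition \ref{ss:diamondformula}, together with the fact that open immersions of analytic adic spaces can be detected on the level of diamonds. First I would record that, since $U \hra S$ is an open immersion of analytic adic spaces, the induced map $U^\Diamond \to S^\Diamond$ is an open immersion of v-sheaves with image the open subsheaf of $S^\Diamond$ attached to the open subset $\abs{U} \subseteq \abs{S}$: by \cite[Lemma 15.6]{Sch17}, a morphism $T^\sharp \to S$ from an untilt factors through $U$ precisely when $\abs{T} = \abs{T^\sharp} \to \abs{S} = \abs{S^\Diamond}$ lands in $\abs{U}$.

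Next I would apply Proposition \ref{ss:diamondformula} to both $S$ and $U$ to obtain natural isomorphisms $\cY_{S,[0,\infty)}^\Diamond \cong \Spd\cO \times_{\ul{\bN\cup\{\infty\}}} S^\Diamond$ and $\cY_{U,[0,\infty)}^\Diamond \cong \Spd\cO \times_{\ul{\bN\cup\{\infty\}}} U^\Diamond$ over $\Spd\cO$, compatibly with the maps induced by $U \hra S$ (here one checks that $\cY_{U,[0,\infty)} \to \cY_{S,[0,\infty)}$ is even defined, which follows because the pseudouniformizer $\vpi$ of $R$ maps to a pseudouniformizer of the rational localization). Base changing the open immersion $U^\Diamond \hra S^\Diamond$ along $\pr_2 \colon \Spd\cO \times_{\ul{\bN\cup\{\infty\}}} S^\Diamond \to S^\Diamond$ then shows that $\cY_{U,[0,\infty)}^\Diamond \to \cY_{S,[0,\infty)}^\Diamond$ is an open immersion whose image is the preimage of $\abs{U}$ under the continuous map $\abs{\cY_{S,[0,\infty)}} = \abs{\cY_{S,[0,\infty)}^\Diamond} \to \abs{S^\Diamond} = \abs{S}$ of Proposition \ref{ss:diamondformula}; that is, the image is $\abs{\cY_{S,[0,\infty)}} \times_{\abs{S}} \abs{U}$, which is open.

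Finally I would transport this back to adic spaces. Let $V \subseteq \cY_{S,[0,\infty)}$ be the open subspace with $\abs{V} = \abs{\cY_{S,[0,\infty)}} \times_{\abs{S}} \abs{U}$, which exists since this subset is open. By the identification of underlying spaces and \'etale sites in \cite[Lemma 15.6]{Sch17}, the morphism $\cY_{U,[0,\infty)} \to \cY_{S,[0,\infty)}$ has topological image $\abs{V}$, hence factors through an adic morphism $j \colon \cY_{U,[0,\infty)} \to V$; moreover $V^\Diamond$ is the open subsheaf of $\cY_{S,[0,\infty)}^\Diamond$ found above, and chasing identifications shows that $j^\Diamond$ is an isomorphism. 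Since $\cY_{U,[0,\infty)}$ and $V$ are sousperfectoid adic spaces---by Proposition \ref{ss:Ysousperfectoid} applied to $U$ and to $S$, together with the fact that open subspaces of sousperfectoid spaces are sousperfectoid---and the diamond functor is fully faithful on sousperfectoid adic spaces, it follows that $j$ is an isomorphism, which gives the proposition.

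The step I expect to require the most care is this last one: passing from ``$j^\Diamond$ is an isomorphism'' to ``$j$ is an isomorphism.'' For this I would use that $\cO$ is a direct summand of $\cO^{\perf}$ (\ref{ss:fieldperf}) together with the affinoid perfectoid presentations of $\cY_{S,[0,q^r]} \times_{\Spa\cO}\Spa\cO^{\perf}$ from Proposition \ref{ss:Ysousperfectoid}, which reduce the claim to the full faithfulness of the tilting equivalence on affinoid perfectoid spaces; alternatively one can invoke the general full faithfulness of the diamond functor on sousperfectoid adic spaces from \cite{FS21}. The remaining verifications---the commutativity of the various squares of v-sheaves and the compatibility of the identifications with the $\vp$-actions---are routine fiberwise checks.
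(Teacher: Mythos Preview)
Your argument is correct and shares the paper's overall strategy: use the diamond formula (Proposition \ref{ss:diamondformula}) to identify the diamonds, then descend to adic spaces. The execution differs slightly. Rather than defining $V$ abstractly and invoking full faithfulness of the diamond functor on sousperfectoid spaces, the paper writes $U = \{|f_j| \le |g| \ne 0\}$ and sets $\cY(U) \coloneqq \{|[f_j]| \le |[g]| \ne 0\}$ explicitly as a rational open of $\cY_{S,[0,\infty)}$ via Teichm\"uller lifts; the proof of Proposition \ref{ss:diamondformula} then gives $\cY(U)^\Diamond \cong \Spd\cO \times_{\ul{\bN\cup\{\infty\}}} U$, hence $|\cY(U)| = |\cY_{S,[0,\infty)}| \times_{|S|} |U|$. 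The isomorphism $\cY_{U,[0,q^r]} \ra \cY(U) \cap \cY_{S,[0,q^r]}$ is then checked on global sections using the direct summand $\cO \hra \cO^{\perf}$ and tilting via Proposition \ref{ss:Ysousperfectoid} --- precisely the alternative you sketch in your last paragraph. So the two proofs coincide at heart; the paper's explicit $\cY(U)$ makes the affinoid pieces manifest and keeps the argument self-contained, while your version is cleaner if one is willing to cite the black-box full faithfulness.
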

\begin{proof}
  Write $U=\{\abs{f_j}\leq\abs{g}\neq0\mbox{ for all }1\leq j\leq m\}$ for some $g,f_1,\dotsc,f_m$ in $R$ such that the ideal generated by $f_1,\dotsc,f_m$ is open. Write $\cY(U)$ for the open subspace $\{\abs{[f_j]}\leq\abs{[g]}\neq0\mbox{ for all }1\leq j\leq m\}$ of $\cY_{S,[0,\infty)}$, and note that the image of $\cY_{U,[0,\infty)}\ra\cY_{S,[0,\infty)}$ lies in $\cY(U)$. The proof of Proposition \ref{ss:diamondformula} shows that $\cY(U)^\Diamond$ is naturally isomorphic to $\Spd\cO\times_{\ul{\bN\cup\{\infty\}}}U$ over $\Spd\cO$, so \cite[Lemma 15.6]{Sch17} implies that $\abs{\cY(U)}=|\cY_{S,[0,\infty)}|\times_{\abs{S}}\abs{U}$.

  Now $\cY_{U,[0,\infty)}\ra\cY(U)$ equals the increasing union of its restrictions
  \begin{align*}
    \cY_{U,[0,q^r]}\ra \cY(U)\cap\cY_{S,[0,q^r]},
  \end{align*}
where $r$ runs over non-negative integers. Thus it suffices to prove that these restrictions are isomorphisms. Both sides are affinoid, so it suffices to check on global sections, and because $\cO$ is a direct summand of $\cO^{\perf}$ as topological $\cO$-modules, it suffices to show that
  \begin{align*}
    \cY_{U,[0,q^r]}\times_{\Spa\cO}\Spa\cO^{\perf}\ra(\cY(U)\cap\cY_{S,[0,q^r]})\times_{\Spa\cO}\Spa\cO^{\perf}
  \end{align*}
  is an isomorphism. Proposition \ref{ss:Ysousperfectoid} indicates that both sides are perfectoid, so it suffices to check after applying $(-)^\flat$. Finally, this follows from Proposition \ref{ss:Ysousperfectoid}.
\end{proof}

\subsection{}\label{ss:kertheta}
Next, we turn to divisors on relative Fargues--Fontaine curves over $\cO$. Let $S\ra\Spd{\cO}$ be a morphism over $\ul{\bN\cup\{\infty\}}$, and write $S^\sharp=\Spa(R^\sharp,R^{\sharp+})$ for the associated untilt of $S$ over $\Spa{\cO}$.
\begin{prop*}
  The $\cO$-algebra homomorphism $\te:W(R^+)\ra R^{\sharp+}$ is surjective. Its kernel is generated by an element of the form $\pi+[\vpi]\al$ for some pseudouniformizer $\vpi$ of $R$ and $\al$ in $W(R^+)$, and $\ker\te$ induces a closed Cartier divisor $S^\sharp\hra\cY_{S,[0,\infty)}$.
\end{prop*}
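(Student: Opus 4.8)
The plan is to transport the standard analysis of $\te$ on relative Fargues--Fontaine curves \cite{FF18,KL15,FS21} to the present relative setting over $\bN\cup\{\infty\}$, using the $\cO$-Witt vectors of \ref{ss:Witt}--\ref{ss:Witttiltadjunction} together with their specialization \ref{ss:Wittspecialization}.

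\emph{Surjectivity of $\te$.} By Proposition \ref{ss:Witttiltadjunction}.i, the map $\te$ sends $\sum_{n\geq0}[r_n]\pi^n$ to $\sum_{n\geq0}r_n^\sharp\pi^n$; since $W(R^+)$ is $\pi$-adically complete (Proposition \ref{ss:Witttiltadjunction}) and $R^{\sharp+}$ is $\pi$-adically complete, it suffices to show that $\te(W(R^+))+\pi R^{\sharp+}=R^{\sharp+}$. Using Proposition \ref{prop:Wittprop}.iii) to identify $W(R^+)/\pi=R^+$, this amounts to surjectivity of $R^+\ra R^{\sharp+}/\pi$, $r\mapsto\ov{r^\sharp}$, which follows from the theory of perfectoid rings (tilting presents $R^{\sharp+}/\pi$ as a quotient of $R^+$ via Teichmüller lifts, using that any two pseudouniformizers of $R^\sharp$ define the same topology and that $\pi\mid p$ in $\cO$). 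In the degenerate case $\pi=0$ in $R^\sharp$ one instead has $R^{\sharp+}=R^+$ and $\te=w_0$, which is visibly surjective.

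\emph{The generator and the divisor.} I would first choose a pseudouniformizer $\vpi$ of $R$ — which automatically admits $q$-th power roots, as $R^+$ is perfect — with $\vpi^\sharp\mid\pi$ in $R^{\sharp+}$; this is possible by perfectoidness and compactness of $\abs{S}$, with any $\vpi$ working over the locus of $\abs{S}$ where $\pi=0$. Writing $\pi=\vpi^\sharp u$ and lifting $u=\te(\beta)$ via surjectivity, the element $\xi\coloneqq\pi-[\vpi]\beta$ lies in $\ker\te$ and has the asserted shape. From its Teichmüller expansion one checks that $\xi$ is a degree-$1$ primitive element in the fiberwise sense over $\bN\cup\{\infty\}$: its constant Witt coordinate $\vpi\beta_0$ is topologically nilpotent and the next coordinate is $\equiv1\pmod\vpi$, hence a unit of the $\vpi$-adically complete ring $R^+$. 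The inclusion $(\xi)\subseteq\ker\te$ is immediate. For the reverse inclusion I would run the usual successive-approximation argument: reduce modulo $\pi$, where $\ker(\te\bmod\pi)$ is the principal ideal of $R^+$ generated by the constant coordinate of $\xi$ (again by perfectoidness), subtract a suitable multiple of $\xi$, and iterate $\pi$-adically, organized fiberwise over $\bN\cup\{\infty\}$ and using the $\pi$-torsionfreeness and $\pi$-adic completeness of $W(R^+)$ from Proposition \ref{ss:Witttiltadjunction}. That $\xi$ is moreover a non-zero-divisor on $\cY_{S,[0,\infty)}$, so that $(\xi)$ cuts out a closed Cartier divisor $S^\sharp\hra\cY_{S,[0,\infty)}$ with $\te$ inducing $W(R^+)/\xi\cong R^{\sharp+}$, is the standard property of degree-$1$ primitive elements \cite{KL15}; alternatively, one base changes along the topological direct summand $\cO\ra\cO^{\perf}$ and uses Proposition \ref{ss:Ysousperfectoid} to reduce to the perfectoid situation of \cite{FS21}. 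Compatibility with rational localization on $S$, via Proposition \ref{ss:Yrationalopensubspace}, then globalizes the divisor.

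\emph{Expected main obstacle.} The delicate points are the inclusion $\ker\te\subseteq(\xi)$ and the non-zero-divisor claim: $W(R^+)$ is badly non-noetherian, and $R^{\sharp+}$ may carry $\pi$-torsion over the part of $\abs{S}$ where $\pi=0$ in $R^\sharp$, so the iteration must be carried out fiberwise over $\bN\cup\{\infty\}$ rather than as a single computation on $W(R^+)$, and the Cartier statement is cleanest via the perfectoid reduction above.
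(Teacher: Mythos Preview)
Your overall strategy matches the paper's: construct $\xi$ in $\ker\te$ of the required shape, show it generates the kernel and is a non-zero-divisor, then invoke the proof of \cite[Proposition~II.1.4]{FS21} (with Proposition~\ref{ss:Ysousperfectoid} replacing \cite[Proposition~II.1.1]{FS21}) for closedness of the Cartier divisor. The key divergence is that you run everything $\pi$-adically, whereas the paper runs everything $[\vpi]$-adically, and this creates a genuine gap in your kernel argument.

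For the inclusion $\ker\te\subseteq(\xi)$, your successive $\pi$-adic approximation needs, at each step, that $\te(\pi x_1)=0$ forces $\te(x_1)=0$, i.e.\ that $R^{\sharp+}$ is $\pi$-torsionfree. You flag this yourself, but the proposed fiberwise workaround does not resolve it: the equality $\ker\te=(\xi)$ is a statement about the single ring $W(R^+)$, and there is no evident mechanism for reassembling the fiberwise statements at $i=\infty$. You also assert that $\ker(\te\bmod\pi)$ is principal, generated by the constant Witt coordinate of $\xi$; this is not the standard perfectoid isomorphism (which is $R^+/\vpi\cong R^{\sharp+}/\vpi^\sharp$, not mod $\pi$) and requires its own argument when $\pi$ fails to be a pseudouniformizer of $R^\sharp$. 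A similar issue lurks in the surjectivity step: ``$R^+\to R^{\sharp+}/\pi$ surjective'' is true but is not the textbook statement, and your appeal to ``tilting presents $R^{\sharp+}/\pi$ as a quotient of $R^+$'' skips the point that this needs Frobenius to be surjective on $R^{\sharp+}/\pi$ rather than on $R^{\sharp+}/\vpi^\sharp$.

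The paper sidesteps all of this. Since $\xi\equiv\pi\pmod{[\vpi]}$ and $\xi$ is not divisible by $[\vpi]$, the quotient $W(R^+)/\xi$ remains $[\vpi]$-adically complete and $[\vpi]$-torsionfree; as $R^{\sharp+}$ is $\vpi^\sharp$-adically complete and $\vpi^\sharp$-torsionfree, it suffices to check that $W(R^+)/\xi\to R^{\sharp+}$ is an isomorphism mod $[\vpi]$. There it becomes $W(R^+)/(\pi,[\vpi])=R^+/\vpi\to R^{\sharp+}/\vpi^\sharp$, which is the standard perfectoid isomorphism. This single reduction replaces your iteration and needs no torsion hypothesis on $R^{\sharp+}$. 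The non-zero-divisor property of $\xi$ is then a two-line $[\vpi]$-adic computation (if $\xi\sum[s_n]\pi^n=0$ then each $s_n\in\vpi R^+$, and one iterates) rather than an appeal to \cite{KL15}.
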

\begin{proof}
  Choose a pseudouniformizer $\vpi$ of $R$ such that $\vpi^\sharp$ divides $\pi$ in $R^{\sharp+}$. Since $W(R^+)$ and $R^{\sharp+}$ are $[\vpi]$-adically complete, it suffices to check the surjectivity of $\te$ mod $[\vpi]$. There, $\te$ becomes $R^+\ra R^{\sharp+}/\vpi^\sharp$, which is surjective because $R^{\sharp+}$ is integral perfectoid.

  The fact that $R^{\sharp+}$ is integral perfectoid also shows that there exists $\be$ in $R^+$ such that $\be^\sharp\equiv\pi/\vpi^\sharp\pmod{\pi}$, so $\vpi^\sharp\be^\sharp\equiv\pi\pmod{\vpi^\sharp\pi}$. The surjectivity of $\te$ then yields $\sum_{n=0}^\infty[r_n]\pi^n$ in $W(R^+)$ such that $\vpi^\sharp\be^\sharp=\pi+\vpi^\sharp\pi\sum_{n=0}^\infty r_n^\sharp\pi^n$. By setting $\al\coloneqq[\be]-\sum_{n=0}^\infty[r_n]\pi^{n+1}$, we see that $\xi\coloneqq\pi+[\vpi]\al$ lies in $\ker\te$.

  Note that $\xi$ is not divisible by $[\vpi]$, so $W(R^+)/\xi$ remains $[\vpi]$-adically complete and $[\vpi]$-torsionfree. Therefore, to show that the induced map $W(R^+)/\xi\ra R^{\sharp+}$ is an isomorphism, it suffices to check mod $[\vpi]$. There, it becomes
  \begin{align*}
    W(R^+)/(\xi,[\vpi]) = W(R^+)/(\pi,[\vpi])=R^+/\vpi\ra R^{\sharp+}/\vpi^\sharp,
  \end{align*}
which is indeed an isomorphism.

Finally, suppose that $\xi\sum_{n=0}^\infty[s_n]\pi^n$ for some $\sum_{n=0}^\infty[s_n]\pi^n$ in $W(R^+)$. Then
\begin{align*}
  \sum_{n=0}^\infty[s_n]\pi^{n+1}=\pi\sum_{n=0}^\infty[s_n]\pi^n\equiv\xi\sum_{n=0}^\infty[s_n]\pi^n=0\pmod{[\vpi]},
\end{align*}
so the $s_n$ are divisible by $\vpi$. By repeatedly replacing the $s_n$ with $s_n/\vpi$, this shows that the $s_n=0$. Hence $\xi$ is a non-zerodivisor. Finally, by using Proposition \ref{ss:Ysousperfectoid} instead of \cite[Proposition II.1.1]{FS21} and the fact that the tilt of $S^\sharp\ra\Spa\cO$ equals $S\ra\ul{\bN\cup\{\infty\}}$, the proof of \cite[Proposition II.1.4]{FS21} shows that the induced Cartier divisor $S^\sharp\ra\cY_{S,[0,\infty)}$ is closed.
\end{proof}

\subsection{}\label{ss:Div1}
Write $\Div^1_Y$ for the v-sheaf $\Spd{E}$ over $\ul{\bN\cup\{\infty\}}$, and write $\phi:\Div^1_Y\ra\Div^1_Y$ for the absolute $q$-Frobenius automorphism. By Proposition \ref{ss:kertheta}, the $S$-points of $\Div^1_Y$ naturally induce closed Cartier divisors $S^\sharp\hra Y_S$. Note that $\phi$ sends $S^\sharp\hra Y_S$ to its image under $\vp:Y_S\ra Y_S$, so $\phi$ acts freely on $\Div^1_Y$.

Write $\Div^1_X$ for the quotient $\Div^1_Y\!/\phi^\bZ$. Since $\vp$ acts totally discontinuously on $Y_S$, the $S$-points of $\Div_X^1$ naturally induce closed Cartier divisors $S^\sharp\hra X_S$.
\begin{lem*}
Our $\Div^1_X\ra\ul{\bN\cup\{\infty\}}$ is representable in spatial diamonds and proper.
\end{lem*}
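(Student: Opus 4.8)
The plan is to reduce the claim to the corresponding facts for Fargues--Scholze's $\mathrm{Div}^1$ \cite{FS21} by a fibrewise analysis over $\bN\cup\{\infty\}$. Since being representable in spatial diamonds and being proper are both v-local on the target \cite{Sch17}, it is enough to treat the base change $\Div^1_X\times_{\ul{\bN\cup\{\infty\}}}S$ along an arbitrary strictly totally disconnected affinoid perfectoid $S=\Spa(R,R^+)$ equipped with a structure map to $\ul{\bN\cup\{\infty\}}$, i.e.\ a continuous map $|S|\to\bN\cup\{\infty\}$. Because $\bN$ is open in $\bN\cup\{\infty\}$, this decomposes $|S|$ as $\bigl(\bigsqcup_{i\in\bN}|S_i|\bigr)\sqcup|S_\infty|$, with each $S_i$ a clopen (hence again strictly totally disconnected) subspace and $S_\infty$ closed.

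I would then identify the restrictions of $\Div^1_X\times_{\ul{\bN\cup\{\infty\}}}S$ to these pieces. By Proposition \ref{ss:kertheta}, an $S$-point is a surjection $\te\colon W(R^+)\to R^{\sharp+}$ with primitive degree-$1$ kernel, taken modulo $\phi^\bZ$. Over a clopen piece $S_i$ with $i\in\bN$, the idempotent $b_i\in\cO$ forces the $\cO$-algebra structure on $R^{\sharp+}$ to factor through $\ev_i\colon\cO\to\cO_i$, so Proposition \ref{ss:Wittspecialization} gives $W(R^+)|_{S_i}\cong W_i(R^+|_{S_i})$ and the datum becomes an $S_i$-point of Fargues--Scholze's $\mathrm{Div}^1$ attached to the local field $E_i$; over the closed set $S_\infty$ the structure map factors through $\ev_\infty$ and one similarly gets an $S_\infty$-point of Fargues--Scholze's $\mathrm{Div}^1$ attached to $E_\infty$. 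Thus $\Div^1_X\times_{\ul{\bN\cup\{\infty\}}}S$ restricts over $S_i$ to the pullback of $\mathrm{Div}^1_{E_i}$ and over $S_\infty$ to the pullback of $\mathrm{Div}^1_{E_\infty}$, and by Fargues--Scholze \cite[\S II.1]{FS21} each of these $\mathrm{Div}^1$'s is a spatial diamond, proper over $\Spd\bF_q$.

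Finally I would reassemble these fibres over the profinite set $\bN\cup\{\infty\}=\varprojlim_d\bigl(\{i\le d\}\cup\{\infty\}\bigr)$. Setting $U_{>d}=\{i>d\}\cup\{\infty\}$, the space $\Div^1_X\times_{\ul{\bN\cup\{\infty\}}}S$ is the cofiltered inverse limit over $d$ of the v-sheaves $\bigl(\bigsqcup_{i\le d}\mathrm{Div}^1_{E_i}\times_{\Spd\bF_q}S_i\bigr)\sqcup\bigl(\Div^1_X|_{U_{>d}}\times_{\ul{U_{>d}}}S_{>d}\bigr)$, and since ``spatial'' and ``proper over the base'' are stable under such limits with qcqs transition maps \cite{Sch17}, everything comes down to the tail terms $\Div^1_X|_{U_{>d}}$. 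This is the main obstacle: $\Div^1_X|_{U_{>d}}=(\Spd\bE(U_{>d}))/\phi^\bZ$ is itself a $\mathrm{Div}^1$ for the non-noetherian ring $\bE(U_{>d})$, so there is no direct reduction to a single local field. Here I would exploit the close-fields structure built into $\bO$ in \ref{defn:rings}: the isomorphisms $\cO_i/\fp_i^{e_i}\cong\cO_\infty/\fp_\infty^{e_i}$ with $e_i\to\infty$ make $\bE(U_{>d})$ agree with $E_\infty$ modulo $\pi^n$ for all $n$ up to $\min_{i>d}e_i\to\infty$, and one must check that $X_{S_{>d}}$ and its functor of degree-$1$ divisors depend on $\bO|_{U_{>d}}$ only through the tower $(\bO_n|_{U_{>d}})_n$, compatibly with $\bO=\varprojlim_n\bO_n$. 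Granting this, the tail terms are uniformly approximated by finite disjoint unions of pullbacks of proper spatial diamonds, the transition maps are qcqs, and the inverse limit is a spatial diamond proper over $S$; as $S$ was an arbitrary strictly totally disconnected test object, this gives the assertion for $\Div^1_X\to\ul{\bN\cup\{\infty\}}$.
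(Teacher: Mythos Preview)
Your fibrewise strategy has a genuine gap at the step you yourself flag as ``the main obstacle.'' For each $d$ the decomposition
\[
\Div^1_X\times_{\ul{\bN\cup\{\infty\}}}S \;=\; \Big(\coprod_{i\le d}\Div^1_{E_i}\times_{\Spd\bF_q}S_i\Big)\;\sqcup\;\Big(\Div^1_X|_{U_{>d}}\times_{\ul{U_{>d}}}S_{>d}\Big)
\]
is an \emph{equality}, not an inverse-limit approximation: the left side does not change with $d$, so there is no cofiltered limit to take and no ``transition maps'' to invoke. The entire content of the lemma is already the tail term $\Div^1_X|_{U_{>d}}$, which is the same kind of object as $\Div^1_X$ itself (just over a smaller compact neighborhood of $\infty$). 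Your proposed fix---that $\bE(U_{>d})$ agrees with $E_\infty$ modulo high powers of $\pi$---does not help: the functor $S\mapsto X_S$ and its degree-$1$ divisors depend on the full $\pi$-adic tower $\varprojlim_n\bO_n$, not on any finite truncation, so no amount of mod-$\pi^n$ agreement produces a comparison of $\Div^1_X|_{U_{>d}}$ with a finite disjoint union of classical $\Div^1$'s. The phrase ``uniformly approximated by finite disjoint unions of pullbacks of proper spatial diamonds'' is not backed by any concrete map or limit presentation, and I do not see how to make it precise.

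The paper's argument bypasses fibrewise reduction entirely. It uses the diamond formula $\Div^1_Y\times_{\ul{\bN\cup\{\infty\}}}S\cong Y_S^\Diamond$ (Proposition~\ref{ss:diamondformula}) together with the Frobenius trick $(\phi\times\id)^\bZ=(\id\times\vp)^\bZ$ on underlying spaces to see that $\Div^1_X\times_{\ul{\bN\cup\{\infty\}}}S$ is covered by the diamonds $\cY_{S,\cI}^\Diamond$ for small intervals $\cI\subseteq[1,q]$. These are already known to be sousperfectoid affinoid adic spaces (Proposition~\ref{ss:Ysousperfectoid}), hence spatial diamonds; the surjection from $|\cY_{S,[1,q]}|$ then gives qcqs, and \cite[Proposition 11.19(iii), Proposition 18.3]{Sch17} finish. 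In other words, the point is that the paper has built the curve $Y_S$ \emph{uniformly} over $\bN\cup\{\infty\}$ via the generalized Witt vectors, so the usual ``$X_S$ is obtained from an annulus by gluing via $\vp$'' argument goes through without ever splitting into fibres. You should use that machinery rather than trying to reassemble from the individual $E_i$.
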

\begin{proof}
  Because absolute $q$-Frobenius is compatible with products and acts trivially on underlying topological spaces, we have
  \begin{align*}
    \abs{\Div^1_X\times_{\ul{\bN\cup\{\infty\}}}S} = \abs{\Div_Y^1\times_{\ul{\bN\cup\{\infty\}}}S}\big/\big(\phi\times\id\big)^\bZ = \abs{\Div_Y^1\times_{\ul{\bN\cup\{\infty\}}}S}\big/\big(\id\times\vp\big)^\bZ.
  \end{align*}
  Proposition \ref{ss:diamondformula} indicates that $\Div^1_Y\times_{\ul{\bN\cup\{\infty\}}}S\cong Y^{\Diamond}_S$. Therefore the above implies that $\Div_X^1\times_{\ul{\bN\cup\{\infty\}}}S$ has an open cover by v-sheaves of the form $\cY_{S,\cI}^\Diamond$ for small enough closed intervals $\cI$ in $[1,q]$, so \cite[Lemma 15.6]{Sch17} shows that $\Div^1_X\times_{\bN\cup\{\infty\}}S$ is a locally spatial diamond. Similarly, we get a surjective continuous map $|\cY_{S,[1,q]}|\twoheadrightarrow|\Div^1_X\times_{\ul{\bN\cup\{\infty\}}}S|$, so the latter is qcqs. Hence \cite[Proposition 11.19 (iii)]{Sch17} shows that $\Div^1_X\times_{\ul{\bN\cup\{\infty\}}}S$ is spatial, and properness follows from \cite[Proposition 18.3]{Sch17}.
\end{proof}

\subsection{}\label{ss:Ebreve}
Write $(-)_{\ov\bF_q}$ for the fiber product $-\times_{\Spd\bF_q}\Spd\ov\bF_q$.
\begin{lem*}
We have a natural isomorphism $\Spd\breve\cO\cong(\Spd\cO)_{\ov\bF_q}$ over $\Spd\cO$.
\end{lem*}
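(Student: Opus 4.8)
The plan is to obtain the isomorphism as a cofiltered limit over $m$ of base-change isomorphisms relating the $\bF_{q^m}$-analogue of $\cO$ to $\Spd\cO\times_{\Spd\bF_q}\Spd\bF_{q^m}$. For each positive integer $m$, running the construction of \ref{ss:closefieldssetup}--\ref{defn:rings} with $\bF_q$ replaced by $\bF_{q^m}$ and each $E_i$ by its degree-$m$ unramified extension $E_i^{(m)}$ --- the isomorphism $\Tr_{e_i}(E_i^{(m)})\cong\Tr_{e_i}(E_\infty^{(m)})$ coming from $\Tr_{e_i}(E_i)\cong\Tr_{e_i}(E_\infty)$ by $-\otimes_{\bF_q}\bF_{q^m}$, exactly as in \ref{ss:realizingEbreve} --- yields a $\pi$-adically complete ring $\cO^{(m)}$, finite \'etale over $\cO$ (as one checks on fibers, $E_i^{(m)}/E_i$ being unramified of degree $m$), with $\cO^{(m)}/\pi = \Cont(\bN\cup\{\infty\},\bF_{q^m}) = (\cO/\pi)\otimes_{\bF_q}\bF_{q^m}$. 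Inspecting the $\varprojlim_n\varinjlim_d$ presentations shows that $\breve\cO$ is the $\pi$-adic completion of $\varinjlim_m\cO^{(m)}$, the colimit being filtered along divisibility. First I would check on $T$-points, for $T=\Spa(A,A^+)$ in $\Perf_{\bF_q}$, that $\Spd$ is insensitive to $\pi$-adic completion and carries this filtered colimit of rings into the corresponding cofiltered limit of v-sheaves (this is where one uses the compactness of $\bN\cup\{\infty\}$, as in Lemma \ref{ss:E}), so that $\Spd\breve\cO\cong\varprojlim_m\Spd\cO^{(m)}$ over $\Spd\cO$; dually, $\ov\bF_q=\varinjlim_m\bF_{q^m}$ gives $\Spd\ov\bF_q\cong\varprojlim_m\Spd\bF_{q^m}$ over $\Spd\bF_q$. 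It then suffices to construct isomorphisms $\Spd\cO^{(m)}\cong\Spd\cO\times_{\Spd\bF_q}\Spd\bF_{q^m}$ over $\Spd\cO$, natural in $m$.

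I would first observe that both sides of the last display are finite \'etale covers of $\Spd\cO$ of degree $m$: the right-hand one because finite \'etaleness is preserved by pullback and $\Spd\bF_{q^m}\to\Spd\bF_q$ is finite \'etale of degree $m$; the left-hand one from $\cO^{(m)}/\cO$. To identify them, I would pass to the special fiber: since $\cO$ is $\pi$-adically complete, the pair $(\cO,(\pi))$ is henselian, so reduction mod $\pi$ gives an equivalence between finite \'etale $\cO$-algebras and finite \'etale $\cO/\pi$-algebras, compatibly with the associated finite \'etale covers of $\Spd\cO$ and of $\Spd(\cO/\pi)=\ul{\bN\cup\{\infty\}}$ (so that each such cover of $\Spd\cO$ is the pullback along $\Spd\cO\to\ul{\bN\cup\{\infty\}}$ of the corresponding cover of $\ul{\bN\cup\{\infty\}}$). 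Under this dictionary $\cO^{(m)}$ corresponds to $\Cont(\bN\cup\{\infty\},\bF_{q^m})=\Cont(\bN\cup\{\infty\},\bF_q)\otimes_{\bF_q}\bF_{q^m}$, whose associated cover of $\ul{\bN\cup\{\infty\}}$ is $\ul{\bN\cup\{\infty\}}\times_{\Spd\bF_q}\Spd\bF_{q^m}$; pulling back along $\Spd\cO\to\ul{\bN\cup\{\infty\}}$ yields exactly $\Spd\cO\times_{\Spd\bF_q}\Spd\bF_{q^m}$, as desired. Naturality in $m$ is clear from the construction, and the resulting isomorphism is over $\Spd\cO$ by inspection of the maps.

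The step I expect to be the main obstacle is making precise the comparison between finite \'etale covers of the v-sheaf $\Spd\cO$ and finite \'etale $\cO$-algebras (and likewise for $\cO/\pi$ and $\ul{\bN\cup\{\infty\}}$), uniformly enough to see that deformation along the henselian pair $(\cO,(\pi))$ matches pullback along $\Spd\cO\to\ul{\bN\cup\{\infty\}}$; concretely this amounts to bookkeeping with untilts $T^\sharp=\Spa(A^\sharp,A^{\sharp+})$ of $T$ together with the identification $\Hom_{\bF_q}(\bF_{q^m},A)\cong\Hom_{\bF_q}(\bF_{q^m},A^{\sharp+})$, the delicate point being to keep this uniform over all untilts --- including the non-analytic ones, where $\pi$ maps to a non-unit or to $0$ --- which is cleanest to do through the tilting equivalence for finite \'etale covers (insensitive to the analytic locus) and the henselianity of $A^{\sharp+}$ along the image of $\pi$, rather than through explicit lifting of idempotents. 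A secondary point, already flagged above, is the interchange of $\Spd$ with the filtered colimit $\varinjlim_m\cO^{(m)}$ and with $\pi$-adic completion.
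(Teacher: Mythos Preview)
Your approach is correct but takes a genuinely different route from the paper's. The paper argues directly at the level of $\breve\cO$ itself: it first observes that $\breve\cO$ is $\pi$-adically complete, $\pi$-torsionfree, with $\breve\cO/\pi\cong\Cont(\bN\cup\{\infty\},\ov\bF_q)$ perfect, so Proposition~\ref{ss:Witttiltadjunction}.ii) gives $\breve\cO\cong W(\Cont(\bN\cup\{\infty\},\ov\bF_q))$. Then the Witt--tilt adjunction of Proposition~\ref{ss:Witttiltadjunction}.i), applied twice, converts an $\cO$-map $\breve\cO\to A^{\sharp+}$ into an $\cO/\pi$-map $(\cO/\pi)\otimes_{\bF_q}\ov\bF_q\to A^+$, and thence into a pair consisting of an $\cO$-map $W(\cO/\pi)\cong\cO\to A^{\sharp+}$ and an $\bF_q$-map $\ov\bF_q\to A^+$. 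This is a four-line computation once the adjunction is in hand.

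Your argument instead filters by finite unramified levels $\cO^{(m)}$, reduces to $\Spd\cO^{(m)}\cong\Spd\cO\times_{\Spd\bF_q}\Spd\bF_{q^m}$, and proves the latter by comparing finite \'etale covers via henselianity along $(\cO,(\pi))$. This is sound --- the key compatibility you flag (that $\Spd$ of a finite \'etale $\cO$-algebra is pulled back from $\ul{\bN\cup\{\infty\}}$) is exactly the statement $\cO^{(m)}\cong W(\cO^{(m)}/\pi)$ combined with the Witt--tilt adjunction, so you are implicitly using the same machinery --- but it introduces extra bookkeeping with limits and the finite-\'etale-cover dictionary that the paper avoids. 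The paper's route buys brevity and makes the role of the Witt vector adjunction transparent; yours has the virtue of isolating the finite-level statement, which is closer to classical Lubin--Tate-style arguments, at the cost of the limit interchange you correctly identify as a secondary obstacle.
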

\begin{proof}
  Note that $\breve\cO$ is $\pi$-adically complete, $\pi$-torsionfree, and satisfies
  \begin{align*}
    \breve\cO/\pi\cong\Cont(\bN\cup\{\infty\},\ov\bF_q).
  \end{align*}
  Hence Proposition \ref{ss:Witttiltadjunction}.ii) implies that $\breve\cO\cong W(\Cont(\bN\cup\{\infty\},\ov\bF_q))$, so a morphism $S^\sharp\ra\Spa\breve\cO$ over $\Spa\cO$ is equivalent to an $\cO$-algebra homomorphism
  \begin{align*}
    W(\Cont(\bN\cup\{\infty\},\ov\bF_q))\ra R^{\sharp+}.
  \end{align*}
  By Proposition \ref{ss:Witttiltadjunction}.i), this is equivalent to an $\cO/\pi$-algebra homomorphism
  \begin{align*}
    (\cO/\pi)\otimes_{\bF_q}\ov\bF_q\cong\Cont(\bN\cup\{\infty\},\ov\bF_q)\ra R^+.
  \end{align*}
  Applying Proposition \ref{ss:Witttiltadjunction}.i) again shows that this is equivalent to an $\cO$-algebra homomorphism $W(\cO/\pi)\ra R^{\sharp+}$ along with an $\bF_q$-algebra homomorphism $\ov\bF_q\ra R^+$. Finally, Proposition \ref{ss:Witttiltadjunction}.ii) implies that $W(\cO/\pi)\cong\cO$, and this yields the desired result.
\end{proof}

\subsection{}\label{ss:Waction}
To state the analogue of Drinfeld's lemma in our context, we need the analogue of the absolute Weil group for $E$. For all $i$ in $\bN\cup\{\infty\}$, write $W_i$ for the absolute Weil group of $\ov{E}_i/E_i$, write $I_i$ for its inertia subgroup, and write $v_i:W_i/I_i\ra\bZ$ for the isomorphism that sends geometric $q$-Frobenius to $1$. By repeating Definition \ref{ss:Galoisgroups} (except that we replace $\Ga_i/I_i^n$ with $W_i/I_i^n$ or $I_i/I_i^n$) we obtain natural group topological spaces $\bW$ and $\bI$ over $\bN\cup\{\infty\}$ whose fibers at $i$ are isomorphic to $W_i$ and $I_i$, respectively. Note that $\bW$ and $\bI$ are lctd over $\bN\cup\{\infty\}$ as in Definition \ref{ss:grouphypotheses}.

View $\breve{E}$ as an $E$-subalgebra of $C$. Because $\breve\cO^\flat\cong\Cont(\bN\cup\{\infty\},\ov\bF_q)$, we get natural morphisms $\Spd{C}\ra\Spd\breve{E}\ra\ul{\bN\cup\{\infty\}}_{\ov\bF_q}$.
\begin{prop*}
We have a natural action of $\ul{\bW}$ on $\Spd{C}$ over $\ul{\bN\cup\{\infty\}}_{\ov\bF_q}$ such that $\Spd{C}\ra\Spd\breve{E}$ induces an isomorphism $(\Spd{C})/\ul{\bW}\ra^\sim(\Div^1_X)_{\ov\bF_q}$ over $\ul{\bN\cup\{\infty\}}_{\ov\bF_q}$.
\end{prop*}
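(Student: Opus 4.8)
The plan is to reduce, as elsewhere in the paper, to the case of a single local field treated by Fargues--Scholze \cite{FS21} — where the completed separable closure of $E_i$ carries a natural $W_i$-action with quotient the $\ov\bF_q$-base change of the corresponding $\Div^1_X$-space — working fiberwise over $\bN\cup\{\infty\}$, with the Galois theory of \S\ref{s:localfields} supplying the gluing at $\infty$. To construct the action, recall from \ref{ss:Cperfectoid} that $\cO_C$ is the $\pi$-adic completion of $\varinjlim_\al(F^\al)^\circ$, where $F^\al$ is the finite \'etale $E$-algebra attached to $\Ga/K^\al$ by Corollary \ref{cor:Efet}. The residual translation action of $\Ga$ on $\{\Ga/K^\al\}_\al$ induces, by functoriality of Corollary \ref{cor:Efet}, a continuous action of $\Ga$ on $\varinjlim_\al F^\al$, hence on $\cO_C$ and on $C$, over $E$; restricting along the natural continuous homomorphism $\bW\ra\Ga$ and passing to diamonds yields an action of $\ul\bW$ on $\Spd C$.

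For the map, the inclusion $\breve E\hra C$ of \ref{ss:realizingEbreve}--\ref{ss:Cperfectoid} gives $\Spd C\ra\Spd\breve E$, and the argument of Lemma \ref{ss:Ebreve}, applied to the analytic locus, identifies $\Spd\breve E\cong(\Spd E)_{\ov\bF_q}=(\Div^1_Y)_{\ov\bF_q}$ over $\ul{\bN\cup\{\infty\}}_{\ov\bF_q}$; composing with $(\Div^1_Y)_{\ov\bF_q}\ra(\Div^1_X)_{\ov\bF_q}$ produces a map $\Spd C\ra(\Div^1_X)_{\ov\bF_q}$. This map is $\ul\bW$-invariant: the subgroup $\bI\subseteq\bW$ fixes the maximal unramified subalgebra, hence fixes $\breve E$, so $\bI$ acts over $\Spd\breve E$; and a lift of Frobenius acts on $\breve E$ as a power of the automorphism $\vp$ of \ref{ss:realizingEbreve}, which becomes trivial on $(\Div^1_X)_{\ov\bF_q}=(\Spd\breve E)/\vp^\bZ$. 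Thus the $\ul\bW$-action is over $(\Div^1_X)_{\ov\bF_q}$, in particular over $\ul{\bN\cup\{\infty\}}_{\ov\bF_q}$, and we obtain the asserted morphism $(\Spd C)/\ul\bW\ra(\Div^1_X)_{\ov\bF_q}$.

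It remains to show this is an isomorphism, which I would do by factoring it through $\Spd\breve E$ and proving: (a) $\Spd C\ra\Spd\breve E$ exhibits $\Spd\breve E$ as the free quotient $(\Spd C)/\ul\bI$ — equivalently, it is a pro-(finite \'etale) $\ul\bI$-torsor; and (b) $\Spd\breve E\ra(\Div^1_X)_{\ov\bF_q}$ is the quotient by the free $\bZ$-action of $\vp$ (free exactly as $\vp$ acts freely on $Y_S$ in \ref{ss:Ycurve}). Granting these, since $\bI$ is normal in $\bW$ with $\bW/\bI\cong\ul\bZ\times\ul{\bN\cup\{\infty\}}$ acting on $\Spd\breve E$ through $\vp$, the v-sheaf quotient may be formed in stages, giving $(\Spd C)/\ul\bW=\bigl((\Spd C)/\ul\bI\bigr)/\vp^\bZ=(\Div^1_X)_{\ov\bF_q}$. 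Statement (a) is a statement about v-sheaves over $\ul{\bN\cup\{\infty\}}$, so it suffices to verify it after base change along each residue point $i\in\bN\cup\{\infty\}$; by Propositions \ref{ss:Wittspecialization} and \ref{ss:Cperfectoid}, Lemma \ref{ss:Ebreve}, and the constructions of \ref{ss:realizingEbreve} and \ref{ss:Cperfectoid}, the fibers at $i$ of $\Spd C$, $\Spd\breve E$, and $\ul\bI$ are $\Spd C_i$, $\Spd\breve E_i$, and $\ul{I_i}$, and the fiberwise statement is that $\Spd C_i\ra\Spd\breve E_i$ is the pro-\'etale $\ul{I_i}$-torsor classifying $\Gal(\ov{E}_i/\breve E_i)=I_i$, which is part of \cite{FS21} (together with \cite{Sch17}) for $i\in\bN$ and the analogous statement for $E_\infty$ when $i=\infty$; freeness of the $\ul\bW$-action reduces fiberwise in the same way.

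The main obstacle is step (a): passing from the fiberwise identifications to an honest isomorphism of v-sheaves over $\bN\cup\{\infty\}$, which is delicate near $i=\infty$ since there $\bW$, $\bI$, $C$, and $\breve E$ are glued to the $i<\infty$ data through Deligne's isomorphism rather than as products. Concretely one must show $\Spd C\ra\Spd\breve E$ is quasi-pro-\'etale with structure group exactly $\ul\bI$ — equivalently, that the approximating covers $\Spd(F^\al\otimes_E\breve E)\ra\Spd\breve E$ correspond, uniformly over the family, to the finite $\bI$-sets obtained by restricting $\Ga/K^\al$ along $\bI\hra\Ga$ — which is precisely what Corollary \ref{cor:Efet} and Proposition \ref{ss:Cperfectoid} (the tilt $C^\flat=\Cont(\bN\cup\{\infty\},C_\infty)$) are designed to supply. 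The remaining Frobenius bookkeeping in step (b) — matching $\vp^\bZ$ with the image of $\bW/\bI$ in $\mathrm{Aut}(\Spd\breve E)$, including the arithmetic-versus-geometric normalization and the identification of $(\Div^1_X)_{\ov\bF_q}$ with $(\Spd\breve E)/\vp^\bZ$ — is then routine given the conventions of \ref{ss:realizingEbreve}, \ref{ss:LTextensions}, and \ref{ss:Div1}.
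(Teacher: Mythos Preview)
Your two-step strategy --- quotient by $\ul\bI$ to reach $\Spd\breve E$, then by $\bW/\bI\cong\bZ$ to reach $(\Div^1_X)_{\ov\bF_q}$ --- matches the paper's, and your use of Corollary~\ref{cor:Efet} to see that $\Spd C\ra\Spd E$ is a $\ul\Ga$-torsor is exactly right. But the action you construct is not the one the proposition asserts, and the identification $(\Div^1_X)_{\ov\bF_q}=(\Spd\breve E)/\vp^\bZ$ is false.

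The problem is that the pure Galois action (restriction of $\ul\Ga\curvearrowright\Spd C$ along $\bW\ra\Ga$) is \emph{not} over $\Spd\ov\bF_q$: a geometric Frobenius in $W_i$ acts on $\ov\bF_q\subset C_i$ by $x\mapsto x^{1/q}$, so the induced map on $\Spd C$ does not commute with $\Spd C\ra\Spd\ov\bF_q$. Correspondingly, under the identification $\Spd\breve E\cong\Spd E\times_{\Spd\bF_q}\Spd\ov\bF_q$ of Lemma~\ref{ss:Ebreve}, the arithmetic Frobenius $\vp$ of \ref{ss:realizingEbreve} corresponds to $\id\times\mathrm{Frob}_{\ov\bF_q}$, whereas $(\Div^1_X)_{\ov\bF_q}$ is the quotient by $(\phi\times\id)^\bZ$. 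These two $\bZ$-actions generate \emph{different} subgroups of $\Aut(\Spd\breve E)$: one is trivial on the first factor, the other on the second, and both act freely. Hence $(\Spd\breve E)/\vp^\bZ\cong\Spd E\times(\Spd\ov\bF_q/\mathrm{Frob}^\bZ)$, which is not $\Div^1_X\times\Spd\ov\bF_q$. So your claim that the Galois action is over $(\Div^1_X)_{\ov\bF_q}$, and hence over $\ul{\bN\cup\{\infty\}}_{\ov\bF_q}$, fails.

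The paper fixes this by \emph{twisting}: it multiplies the Galois action $a$ by the action of absolute $q$-Frobenius on $\Spd C$ pulled back along $\bW\ra\bZ\times(\bN\cup\{\infty\})$ (geometric Frobenius $\mapsto1$). On $\Spd\ov\bF_q$ these two contributions are inverse to each other, so the product is genuinely over $\ul{\bN\cup\{\infty\}}_{\ov\bF_q}$. On $\ul\bI$ the twist is trivial, so $(\Spd C)/\ul\bI$ is still $\Spd\breve E$; but now the residual $\bZ$-action on $\Spd\breve E\cong(\Div^1_Y)_{\ov\bF_q}$ is $\phi\times\id_{\ov\bF_q}$, giving exactly $(\Div^1_X)_{\ov\bF_q}$. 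A secondary point: you need not argue fiberwise for step~(a) and then worry about gluing at $\infty$ --- the fact that $\Spd C\ra\Spd E$ is a $\ul\Ga$-torsor holds globally straight from Corollary~\ref{cor:Efet} and \cite[Lemma~15.6]{Sch17}, which is how the paper proceeds.
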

\begin{proof}
  Arguing as in the proof of Lemma \ref{ss:LTextensions} shows that the maps
  \begin{align*}
     \bW\ra\bZ\times(\bN\cup\{\infty\})\mbox{ and }\bW\ra\Ga
  \end{align*}
over $\bN\cup\{\infty\}$ whose fibers at $i$ equal $W_i\lra W_i/I_i\lra^{v_i}\bZ$ and $W_i\hra\Ga_i$, respectively, are continuous. By checking on fibers, we see that they are morphisms of group topological spaces over $\bN\cup\{\infty\}$.

  Corollary \ref{cor:Efet} and \cite[Lemma 15.6]{Sch17} imply that $\Spd{C}\ra\Spd{E}$ is a $\ul{\Ga}$-torsor. Therefore precomposing with $\ul{\bW}\ra\ul{\Ga}$ yields an action $a$ of $\ul{\bW}$ on $\Spd{C}$ over $\ul{\bN\cup\{\infty\}}$. Since absolute $q$-Frobenius $\vp$ is an automorphism of $\Spd{C}$ over $\ul{\bN\cup\{\infty\}}$ that commutes with $a$, it induces an action of $\ul{\bZ\times(\bN\cup\{\infty\})}$ on $\Spd{C}$ over $\ul{\bN\cup\{\infty\}}$ that commutes with $a$. By precomposing with $\ul{\bW}\ra\ul{\bZ\times(\bN\cup\{\infty\})}$ and multiplying by $a$, we get our candidate action of $\ul{\bW}$ on $\Spd{C}$ over $\ul{\bN\cup\{\infty\}}_{\ov\bF_q}$.

  Because the image of $\bI$ under $\bW\ra\bZ\times(\bN\cup\{\infty\})$ is trivial, the restriction to $\ul{\bI}$ of our candidate action equals $a$. Hence $(\Spd{C})/\ul{\bI}$ is naturally isomorphic to $\Spd\breve{E}$. Under the identification $\Spd\breve{E}\cong(\Div^1_Y)_{\ov\bF_q}$ from Lemma \ref{ss:Ebreve}, we see that the resulting action of $\ul{\bW}/\ul{\bI}\ra^\sim\ul{\bZ\times(\bN\cup\{\infty\})}$ on $(\Div^1_Y)_{\ov\bF_q}$ is induced by $\phi\times\id_{\ov\bF_q}$. Thus further quotienting by $\ul{\bW}/\ul{\bI}$ yields an natural isomorphism $(\Spd{C})/\ul{\bW}\ra^\sim(\Div^1_X)_{\ov\bF_q}$, as desired.
\end{proof}

\subsection{}\label{ss:Drinfeldfullyfaithful}
We conclude this section by proving some analogues of Drinfeld's lemma in our context. In this subsection, we work over $\Spd\ov\bF_q$. For any group topological space $G$ over $\bN\cup\{\infty\}$, write $BG$ for the v-stack $\ul{\bN\cup\{\infty\}}\big/\ul{G}$. Proposition \ref{ss:Waction} shows that $\Spd{C}\ra\ul{\bN\cup\{\infty\}}$ induces a morphism $\Div_X^1\ra B\bW$.

Let $\La$ be a ring that is $\ell$-power torsion.
\begin{prop*}
For all small v-stacks $Z$ over $\ul{\bN\cup\{\infty\}}$, the pullback functor
  \begin{align*}
    D_{\et}(Z\times_{\ul{\bN\cup\{\infty\}}}B\bW,\La)\ra D_{\et}(Z\times_{\ul{\bN\cup\{\infty\}}}\Div^1_X,\La)
  \end{align*}
  is fully faithful. Moreover, it is essentially surjective when the pullback functor
  \begin{align*}
    D_{\et}(Z,\La)\ra D_{\et}(Z\times_{\ul{\bN\cup\{\infty\}}}\Spd C,\La)
  \end{align*}
is essentially surjective.
\end{prop*}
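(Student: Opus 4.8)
The plan is to rewrite the pullback functor as pullback along a morphism of quotient v-stacks, and then to reduce — via $\ell$-power-torsion étale descent — to the version of Drinfeld's lemma for the Fargues--Fontaine curve of a single local field proved in \cite{FS21}. Since we work over $\Spd\ov\bF_q$, Proposition \ref{ss:Waction} identifies $\Div^1_X$ with $(\Spd C)/\ul\bW$ and the morphism $\Div^1_X\ra B\bW$ with the map $(\Spd C)/\ul\bW\ra\ul{\bN\cup\{\infty\}}/\ul\bW$ induced by the $\ul\bW$-equivariant structure morphism $\Spd C\ra\ul{\bN\cup\{\infty\}}$ (trivial action on the target). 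Writing $Z_C\coloneqq Z\times_{\ul{\bN\cup\{\infty\}}}\Spd C$, which carries a free $\ul\bW$-action for which $f_Z\colon Z_C\ra Z$ is equivariant, I would first record the natural identifications $Z\times_{\ul{\bN\cup\{\infty\}}}\Div^1_X=[Z_C/\ul\bW]$ and $Z\times_{\ul{\bN\cup\{\infty\}}}B\bW=[Z/\ul\bW]$, under which our functor becomes pullback along $[Z_C/\ul\bW]\ra[Z/\ul\bW]$, and check that the square formed by the $\ul\bW$-torsors $Z\ra[Z/\ul\bW]$ and $Z_C\ra[Z_C/\ul\bW]$, the map $f_Z$, and $[Z_C/\ul\bW]\ra[Z/\ul\bW]$ is cartesian.

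Next I would exploit the description of $\Spd C$ from \S\ref{s:localfields}: by Proposition \ref{ss:Cperfectoid}, $C^\flat\cong\Cont(\bN\cup\{\infty\},C_\infty)$, so $\Spd C\cong\Spd C_\infty\times_{\Spd\ov\bF_q}\ul{\bN\cup\{\infty\}}$ as v-sheaves over $\ul{\bN\cup\{\infty\}}$; hence $Z_C\cong Z\times_{\Spd\ov\bF_q}\Spd C_\infty$ and $f_Z$ is the projection. Since $C_\infty$ is algebraically closed, $\Spd C_\infty$ has trivial étale cohomology relative to $\Spd\ov\bF_q$, so $\id\ra f_{Z*}f_Z^*$ is an equivalence. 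Pulling back along the conservative $Z\ra[Z/\ul\bW]$ and applying base change in the cartesian square should then give full faithfulness (no hypothesis on $Z$ needed, matching the statement). For essential surjectivity, given $\mathcal F\in D_{\et}([Z_C/\ul\bW],\La)$ I would view it as a $\ul\bW$-equivariant object $(\mathcal F_0,\dotsc)$ of $D_{\et}(Z_C,\La)$; the hypothesis gives $\mathcal F_0\cong f_Z^*\mathcal G_0$ for some $\mathcal G_0\in D_{\et}(Z,\La)$, and full faithfulness lets me descend the $\ul\bW$-equivariant structure (with $\ul\bW$ acting trivially on $Z$) to $\mathcal G_0$, the coherences descending by the same full faithfulness.

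What keeps this from being formal is that $\ul\bW$ is not compact: it is an extension of the Frobenius $\ul{\bW/\bI}\cong\ul{\bZ\times(\bN\cup\{\infty\})}$ — which acts on $\Spd C\cong\Spd C_\infty\times\ul{\bN\cup\{\infty\}}$ through the relative $q$-Frobenius $\phi$, compatibly with the identification $(\Spd C)/\ul\bI\cong\Spd\breve E$ of Lemma \ref{ss:Ebreve} and Proposition \ref{ss:Waction} — by the profinite inertia $\ul\bI$. Descent along $\ul\bI$-torsors and the $\ul\bI$-part of the bookkeeping are harmless, but descent along $\ul\bW$-torsors and the base-change step above genuinely involve the $\ul\bZ$-quotient: this is the ``partial Frobenius'' content of Drinfeld's lemma. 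The main obstacle is therefore to legitimize these two steps for the non-compact group $\ul\bW$; I would do so by observing that fiberwise over $\ul{\bN\cup\{\infty\}}$ the morphism $\Div^1_{X_i}\ra BW_i$ is exactly the situation treated in \cite{FS21}, and reducing the family statement to their Drinfeld's lemma (using that $\bN\cup\{\infty\}$ is profinite to propagate it across the family). The remaining ingredients — the constant-family description of $\Spd C$, cohomological triviality of $\Spd C_\infty$, and the equivariant-descent formalism — are then straightforward.
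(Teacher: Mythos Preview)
Your ingredients are exactly the paper's: identify $\Div^1_X\to B\bW$ with $(\Spd C)/\ul\bW\to\ul{\bN\cup\{\infty\}}/\ul\bW$ via Proposition \ref{ss:Waction}, use Proposition \ref{ss:Cperfectoid} to write $\Spd C\cong\ul{\bN\cup\{\infty\}}\times\Spa C_\infty$, and invoke the cohomological invariance of $\Spd C_\infty$ over $\Spd\ov\bF_q$. Your essential surjectivity argument is also essentially the paper's.

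Where you diverge is in the handling of the ``non-compact $\ul\bW$'' issue, and here you overcomplicate. The paper does not use any base-change theorem for $*$-pushforward along the $\ul\bW$-torsor, and it does not reduce to the Drinfeld lemma of \cite{FS21}. Instead it works directly with the simplicial v-hypercovers $\ul\bW^\bullet\to B\bW$ and $\ul\bW^\bullet\times_{\ul{\bN\cup\{\infty\}}}\Spd C\to\Div^1_X$: by \cite[Proposition 17.3]{Sch17}, pullback to the \v{C}ech nerve is \emph{always} fully faithful for $D_{\et}$, regardless of whether the group is compact. One then has a commutative square whose top and bottom arrows are fully faithful by v-descent, and whose right arrow (levelwise pullback along $-\times\Spa C_\infty$) is fully faithful by \cite[Theorem 19.5 (ii)]{Sch17}; hence so is the left arrow. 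No compactness, no base change, no reduction to \cite{FS21}.

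Your proposed fix is also off target: the Drinfeld lemma in \cite[Proposition IV.7.3]{FS21} is a statement about $D_{\lc}$, not $D_{\et}$, and the paper invokes it only in the \emph{next} result (Corollary \ref{ss:Drinfeldslemma}), where one genuinely needs essential surjectivity on locally constant objects. For the present proposition the only external inputs are \cite[Proposition 17.3]{Sch17} and \cite[Theorem 19.5 (ii)]{Sch17}. Rewriting your argument in simplicial language removes the perceived obstacle entirely.
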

\begin{proof}
Note that $\ul{\bN\cup\{\infty\}}\ra B\bW$ and $\Spd{C}\ra\Div^1_X$ induce simplicial v-hypercovers $\ul{\bW}^\bullet\ra B\bW$ and $\ul{\bW}^\bullet\times_{\ul{\bN\cup\{\infty\}}}\Spd{C}\ra\Div^1_X$, respectively, and this yields a commutative square
  \begin{align*}
    \xymatrix{D_{\et}(Z\times_{\ul{\bN\cup\{\infty\}}}B\bW,\La)\ar[r]\ar[d] & D_{\et}(Z\times_{\ul{\bN\cup\{\infty\}}}\ul{\bW}^{\bullet},\La)\ar[d] \\
    D_{\et}(Z\times_{\ul{\bN\cup\{\infty\}}}\Div^1_X,\La)\ar[r] & D_{\et}(Z\times_{\ul{\bN\cup\{\infty\}}}\ul{\bW}^{\bullet}\times_{\ul{\bN\cup\{\infty\}}}\Spd{C},\La).}
  \end{align*}
  The top and bottom functors are fully faithful by \cite[Proposition 17.3]{Sch17}. Proposition \ref{ss:Cperfectoid} identifies $\Spd{C}$ with $\ul{\bN\cup\{\infty\}}\times\Spa C_\infty$, so applying \cite[Theorem 19.5 (ii)]{Sch17} to $\ov\bF_q\hra C_\infty$ shows that the right functor is also fully faithful. Therefore the left functor is fully faithful, which yields the first statement.

  For the second statement, consider an object of $D_{\et}(Z\times_{\ul{\bN\cup\{\infty\}}}\Div_X^1,\La)$, and write $A$ for its image in $D_{\et}(Z\times_{\ul{\bN\cup\{\infty\}}}\ul{\bW}^{\bullet}\times_{\ul{\bN\cup\{\infty\}}}\Spd{C},\La)$. Then $A$ is cartesian, and by assumption its $\bullet=0$ part lies in the image of $D_{\et}(Z,\La)$. Since the right functor is fully faithful, this shows that $A$ arises from a cartesian object of $D_{\et}(Z\times_{\ul{\bN\cup\{\infty\}}}\ul{\bW}^\bullet,\La)$, so \cite[Proposition 17.3]{Sch17} indicates that $A$ lies in the essential image of the top functor, as desired.
\end{proof}

\subsection{}\label{ss:Drinfeldslemma}
In this subsection, we work over $\Spd\ov\bF_q$. For any small v-stack $Z$ over $\ul{\bN\cup\{\infty\}}$ and finite set $J$, write $Z^J$ for the $J$-fold fiber power of $Z$ over $\ul{\bN\cup\{\infty\}}$.
\begin{cor*}
  For all small v-stacks $Z$ over $\ul{\bN\cup\{\infty\}}$ and finite sets $J$, pullback
  \begin{align*}
    D_{\lc}(Z\times_{\ul{\bN\cup\{\infty\}}}B\bW^J,\La)\ra D_{\lc}(Z\times_{\ul{\bN\cup\{\infty\}}}(\Div_X^1)^J,\La)
  \end{align*}
  is an equivalence of categories.
\end{cor*}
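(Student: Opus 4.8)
The plan is to bootstrap from Proposition \ref{ss:Drinfeldfullyfaithful}: I would first upgrade it to $D_{\lc}$-coefficients (removing the hypothesis on $Z$), and then induct on $\#J$. Throughout, all fiber products of v-stacks are over $\ul{\bN\cup\{\infty\}}$ unless decorated otherwise. The key intermediate claim is the case $\#J=1$: \emph{for every small v-stack $Z'$ over $\ul{\bN\cup\{\infty\}}$, pullback $D_{\lc}(Z'\times B\bW,\La)\ra D_{\lc}(Z'\times\Div^1_X,\La)$ is an equivalence.} Full faithfulness is immediate, since $D_{\lc}$ is a full subcategory of $D_{\et}$ on both sides and the corresponding $D_{\et}$-functor is fully faithful by Proposition \ref{ss:Drinfeldfullyfaithful}. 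Pullback visibly preserves $D_{\lc}$, and since $\Div^1_X\ra B\bW$ is a v-cover — pulling back along $\ul{\bN\cup\{\infty\}}\ra B\bW$ gives the v-cover $\Spd{C}\ra\ul{\bN\cup\{\infty\}}$ of Proposition \ref{ss:Cperfectoid} — membership in $D_{\lc}$ is v-local, so the (a priori only $D_{\et}$) preimage of a $D_{\lc}$-object is again $D_{\lc}$. For essential surjectivity onto $D_{\lc}$, I would re-run the proof of Proposition \ref{ss:Drinfeldfullyfaithful} verbatim with $D_{\lc}$ in place of $D_{\et}$ — using v-descent for $D_{\lc}$ along the hypercovers $\ul{\bW}^\bullet\ra B\bW$ and $\ul{\bW}^\bullet\times\Spd{C}\ra\Div^1_X$, and \cite[Theorem 19.5 (ii)]{Sch17} restricted to $D_{\lc}$ for the fully faithfulness of the ``right functor''. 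The only input that changes is the hypothesis used there to descend the $\bullet=0$ part of a cartesian object, which becomes ``$D_{\lc}(Z')\ra D_{\lc}(Z'\times\Spd{C})$ is essentially surjective''. By Proposition \ref{ss:Cperfectoid}, $Z'\times\Spd{C}\cong Z'\times_{\Spd\ov\bF_q}\Spa{C_\infty}$, so this is point-invariance of $D_{\lc}$ along the geometric point $\Spa{C_\infty}\ra\Spd\ov\bF_q$, discussed below.

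Granting the case $\#J=1$, the corollary follows by induction on $\#J$ (the case $\#J=0$ being trivial). Writing $B\bW^J=B\bW^{J-1}\times B\bW$ and $(\Div^1_X)^J=(\Div^1_X)^{J-1}\times\Div^1_X$, the pullback functor factors as
\begin{align*}
D_{\lc}(Z\times B\bW^J,\La)\ra D_{\lc}\big(Z\times(\Div^1_X)^{J-1}\times B\bW,\La\big)\ra D_{\lc}\big(Z\times(\Div^1_X)^J,\La\big).
\end{align*}
The first functor is pullback along $(\Div^1_X)^{J-1}\ra B\bW^{J-1}$ over the base $Z\times B\bW$, hence an equivalence by the inductive hypothesis applied to $Z\times B\bW$; the second is pullback along $\Div^1_X\ra B\bW$ over the base $Z\times(\Div^1_X)^{J-1}$, hence an equivalence by the case $\#J=1$ applied to $Z\times(\Div^1_X)^{J-1}$. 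Composing gives the result.

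The main obstacle is the point-invariance invoked in the first paragraph: for every small v-stack $Z'$ over $\Spd\ov\bF_q$, pullback $D_{\lc}(Z',\La)\ra D_{\lc}(Z'\times_{\Spd\ov\bF_q}\Spa{C_\infty},\La)$ is an equivalence. Since $\Spa{C_\infty}$ is an algebraically closed perfectoid geometric point of characteristic $p$, this is the geometric heart of Drinfeld's lemma, and it is here that one genuinely ``reduces to the situation considered in \cite{FS21}'': by v-descent one reduces $Z'$ to an affinoid perfectoid space and then to a geometric point $\Spa{C'}$, where the statement amounts to the connectedness and simple-connectedness of the product $\Spa{C'}\times_{\Spd\ov\bF_q}\Spa{C_\infty}$ of two geometric points over $\ov\bF_q$, as established in \cite{FS21} (compare \cite[Theorem 19.5]{Sch17}). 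Everything else in the argument is formal bookkeeping with v-descent.
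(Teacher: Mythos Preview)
Your argument is correct and follows the same inductive skeleton as the paper, but the two proofs organize the $\#J=1$ essential surjectivity differently. The paper argues more directly on $Z\times_{\ul{\bN\cup\{\infty\}}}\Div^1_X$: after full faithfulness (Proposition \ref{ss:Drinfeldfullyfaithful}), it v-descends $Z$ to a strictly totally disconnected space, uses Lemma \ref{ss:Div1} to see that $Z\times_{\ul{\bN\cup\{\infty\}}}\Div^1_X$ is then a spatial diamond, applies \cite[Proposition 20.15]{Sch17} to pass to a connected component of $Z$ (a geometric point), and then recognizes the fiber over the resulting $i\in\bN\cup\{\infty\}$ as precisely the single-field Drinfeld lemma \cite[Proposition IV.7.3]{FS21}. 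Your route---re-running the hypercover half of Proposition \ref{ss:Drinfeldfullyfaithful} with $D_{\lc}$ in place of $D_{\et}$ and isolating a separate ``point-invariance of $D_{\lc}$ along $\Spa C_\infty\ra\Spd\ov\bF_q$'' lemma---is more modular but longer. One imprecision: your reduction of that point-invariance lemma from arbitrary $Z'$ to a geometric point is not literally ``v-descent''; it needs the same kind of spatiality-plus-\cite[Proposition 20.15]{Sch17} limit argument that the paper spells out. The trade-off is that the paper leverages the already-packaged single-field result from \cite{FS21} and avoids re-entering the hypercover machinery, whereas your version makes explicit that the only genuinely geometric input is the triviality of $D_{\lc}$ on a product of two algebraically closed perfectoid points over $\ov\bF_q$, which is a strictly weaker statement than \cite[Proposition IV.7.3]{FS21}.
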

\begin{proof}
By induction on $\#J$, we can assume that $\#J=1$. Full faithfulness follows from Proposition \ref{ss:Drinfeldfullyfaithful}, so we focus on essential surjectivity. Now full faithfulness and descent \cite[Proposition 17.3]{Sch17} indicate that, after replacing $Z$ with a v-cover, we can assume that $Z$ is strictly totally disconnected. Then Lemma \ref{ss:Div1} shows that $Z\times_{\ul{\bN\cup\{\infty\}}}\Div^1_X$ is a spatial diamond. Hence \cite[Proposition 20.15]{Sch17} implies that, after replacing $Z$ with a connected component, we can assume that $Z$ is a geometric point. Finally, the image of $\abs{Z}$ in $\bN\cup\{\infty\}$ equals $\{i\}$ for some $i$ in $\bN\cup\{\infty\}$, so the result follows from Proposition \ref{ss:Wittspecialization} and \cite[Proposition IV.7.3]{FS21}.
\end{proof}

\section{Lubin--Tate theory}\label{s:LT}
In this section, we develop a version of Lubin--Tate theory over $E$. We begin by using our generalization of Witt vectors to construct an analogue $\LT$ over $\cO$ of Lubin--Tate formal group laws, as in work of Fargues--Fontaine \cite{FF18}. This yields a natural choice of coordinates for $\LT$, which we use to explicitly define an analogue of the logarithm map. We also show that $\LT$ satisfies the expected relationship with the Lubin--Tate extensions defined in \S\ref{s:localfields}.

This version of Lubin--Tate theory has the following consequences. First, after computing the global sections of $\sO_{X_S}(1)$ on $X_S$ using Witt bivectors as in work of Fontaine \cite{Fon77} and Fargues--Fontaine \cite{FF18},\footnote{See \ref{ss:absoluteBanachColmez} for the definition of the line bundle $\sO_{X_S}(1)$ on $X_S$. In particular, there is no circularity.} it lets us reinterpret sections of $\sO_{X_S}(1)$ in terms of $\Div_X^1$. Second, it lets us prove that the morphism $\Div_X^1\ra\ul{\bN\cup\{\infty\}}$ is cohomologically smooth of dimension $1$, which will be needed in \S\ref{s:Satake}.

\subsection{}\label{ss:LT}
Recall that $\Spec$ yields an anti-equivalence from the category of $\cO$-algebras $A$ where $\pi$ is nilpotent to the category of affine schemes over $\Spf\cO$. Write $\wh{W}$ for the infinite-dimensional formal disk over $\Spf\cO$ whose $A$-points equal
\begin{align*}
\{(a_0,a_1,\dotsc)\in W(A)\mid\mbox{the }a_j\mbox{ are nilpotent, and cofinitely many }a_j\mbox{ are zero}\}.
\end{align*}
Note that $V$ preserves $\wh{W}$. Also, for all non-negative integers $j$, we see that the polynomials that compute the $j$-th coordinate of $\cO$-module operations on $W$ are homogeneous of degree $q^j$, where $T_{j'}$ has degree $q^{j-j'}$ for all $0\leq j'\leq j$, so $\wh{W}$ is naturally a formal $\cO$-module.

Write $\LT$ for the $\cO$-module object $\coker(V-1:\wh{W}\ra\wh{W})$ over $\Spf\cO$. By the discussion on \cite[p.~158--159]{FF18}, we have an isomorphism $\Spf\cO\lb{X_0}\ra^\sim\LT$ of sheaves over $\Spf\cO$ given by sending $a_0\mapsto[a_0]$, so $\LT$ is a $1$-dimensional formal $\cO$-module. Moreover, for all $i$ in $\bN\cup\{\infty\}$, Proposition \ref{ss:Wittspecialization} and the discussion on \cite[p.~156--158]{FF18} show that $\LT\times_{\Spf\cO}\Spf\cO_i$ is a Lubin--Tate formal $\cO_i$-module $\LT_i$ over $\Spf\cO_i$; in particular, the element $f_\pi$ of $\cO\lb{X_0}$ that computes multiplication by $\pi$ on $\LT$ satisfies $f_\pi\equiv X_0^q\pmod{\pi}$.

\subsection{}\label{ss:LTtorsion}
View $\LT$ as the pre-adic space $\Spa\cO\lb{X_0}$. Write $(-)_E$ for the fiber product $-\times_{\Spa\cO}\Spa{E}$, and note that $\LT_E$ is isomorphic to the open unit disk over $\Spa{E}$. Recall from \ref{ss:relativepoints} the group topological space $\bO_n^\times$ over $\bN\cup\{\infty\}$.

Our formal group $\LT_E$ is related to the $E$-algebras $E^{\LT,n}$ from \ref{ss:LTextensions} as follows.
\begin{prop*}
For all positive integers $n$, multiplication by $\pi^n$ yields a finite \'etale cover $\LT_E\ra\LT_E$ of degree $q^n$. Its kernel $\LT_E[\pi^n]$ is the finite \'etale cover of $\Spa{E}$ corresponding to $\bO_n$ via Corollary \ref{cor:Efet}, where $\Ga$ acts on $\bO_n$ through $\Ga\ra\Ga^n\ra\bO_n^\times$.
\end{prop*}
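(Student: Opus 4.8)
The plan is to reduce everything to classical Lubin--Tate theory over each $E_i$, using the compatibilities of the uniformizers $\pi_i$ and of the Artin maps $\Art_i$ fixed in \ref{ss:closefieldssetup} and \ref{ss:LTextensions}, together with the fiberwise criterion of Lemma~\ref{lem:etalepointwise}. First I would compute multiplication by $\pi^n$: by \ref{ss:LT}, multiplication by $\pi$ on $\LT$ is given by the power series $f_\pi\in\cO\lb{X_0}$ with $f_\pi\equiv X_0^q\pmod\pi$ and $f_\pi\equiv\pi X_0\pmod{X_0^2}$, so multiplication by $\pi^n$ is given by the $n$-fold composite $f_\pi^{\circ n}$, which satisfies $f_\pi^{\circ n}\equiv X_0^{q^n}\pmod\pi$ and $f_\pi^{\circ n}\equiv\pi^nX_0\pmod{X_0^2}$. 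Since $\cO$ is $\pi$-adically complete, $\pi$ lies in the Jacobson radical of $\cO$, so Weierstrass preparation applies to the distinguished power series $f_\pi^{\circ n}$ and shows that $\cO\lb{X_0}\ra\cO\lb{X_0}$, $X_0\mapsto f_\pi^{\circ n}$, is finite free of rank $q^n$ with basis $1,X_0,\dotsc,X_0^{q^n-1}$. Passing to adic spaces and base changing to $E$ shows that $[\pi^n]\colon\LT_E\ra\LT_E$ is finite locally free of degree $q^n$.

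Next I would prove étaleness by working affinoid-locally and invoking Lemma~\ref{lem:etalepointwise}. On an affinoid open $\Spa(A,A^+)\subseteq\LT_E$, the preimage under the finite map $[\pi^n]$ is $\Spa(A',A'^+)$ with $A'$ finite free of rank $q^n$ over the sheafy complete Huber ring $A$ over $(E,\cO)$, so étaleness of $A'/A$ follows once $A'\otimes_EE_i$ is étale over $A\otimes_EE_i$ for all $i$ in $\bN\cup\{\infty\}$. Using the identification $\LT\times_{\Spf\cO}\Spf\cO_i\cong\LT_i$ from \ref{ss:LT}, the base change of $[\pi^n]$ along $E\ra E_i$ is multiplication by $\pi_i^n$ on the open unit disk $\LT_{i,E_i}$, which is classically finite étale of degree $q^n$ (the invariant differential $\omega_i$ of $\LT_i$ satisfies $[\pi_i]^*\omega_i=\pi_i\omega_i$, and $\pi_i\in E_i^\times$). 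Hence $[\pi^n]\colon\LT_E\ra\LT_E$ is finite étale of degree $q^n$, and its kernel $\LT_E[\pi^n]$, the fiber over the identity section, is finite étale over $\Spa E$ of degree $q^n$.

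It remains to identify $\LT_E[\pi^n]$ via Corollary~\ref{cor:Efet}, i.e.\ to construct an isomorphism $Y\cong\bO_n$ of $\Ga$-spaces over $\bN\cup\{\infty\}$, where $Y$ is the $\Ga$-space of geometric points of $\LT_E[\pi^n]$ and $\Ga$ acts on $\bO_n$ through $\Ga\ra\Ga^n\ra\bO_n^\times$. By classical Lubin--Tate theory (recall from \ref{ss:LT} that $\LT_i$ is a Lubin--Tate formal $\cO_i$-module for the uniformizer $\pi_i$), the fiber $Y_i=\LT_i[\pi_i^n](\ov{E}_i)$ is a free $\cO_i/\fp_i^n$-module of rank $1$ on which $\Ga_i$ acts through the surjection $\Ga_i/I_i^n\ra(\cO_i/\fp_i^n)^\times$ attached to $\Art_i$ and $\pi_i$ — which is precisely the character used in \ref{ss:LTextensions} to define $E_i^{\LT,n}$; cf.\ \cite[(3.6.1)]{Del84}. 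Thus $Y_i\cong(\bO_n)_i$ as $\Ga_i$-sets, compatibly with the transition maps defining $\Ga$ and $\bO_n$. To globalize, I would argue as in Proposition~\ref{ss:explicithenselian}: since $\Ga_\infty$ acts on $Y_\infty$ through $\Ga_\infty/I_\infty^n$, the restriction of $Y$ to a suitable compact neighborhood $U'$ of $\infty$ is the pullback of the $\Ga_\infty/I_\infty^n$-set $Y_\infty$ and hence is isomorphic over $U'$ to $\bO_n|_{U'}$; on the clopen complement $(\bN\cup\{\infty\})\setminus U'$, which is a finite discrete subset of $\bN$, the fiberwise isomorphisms $Y_i\cong(\bO_n)_i$ glue trivially, and patching along this clopen decomposition yields $Y\cong\bO_n$.

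The main obstacle is this globalization: one must check that the fiberwise identifications of the $\pi^n$-torsion with $\cO_i/\fp_i^n$ respect all the choices threaded through \S\ref{s:localfields} — the uniformizers $\pi_i$, the geometric-Frobenius normalization of $\Art_i$, and the transition isomorphisms $\cO_{d+1}/\fp_{d+1}^{e_{d+1}}\cong\cO_\infty/\fp_\infty^{e_{d+1}}$ — so that they assemble into a morphism over $\bN\cup\{\infty\}$. This is exactly the compatibility already established in Lemma~\ref{ss:LTextensions}, so it can be invoked rather than re-derived.
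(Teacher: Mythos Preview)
Your proof is correct and takes essentially the same approach as the paper: both establish finite-freeness of degree $q^n$ via a Weierstrass argument (you over $\cO\lb{X_0}$ at the integral level, the paper directly on the affinoid cover $\Spa E\ang{X_0,X_0^r/\pi}$ of the open disk), then invoke Lemma~\ref{lem:etalepointwise} together with classical Lubin--Tate over each $E_i$ for \'etaleness, and finally identify the kernel fiberwise. Your account of the kernel identification via Corollary~\ref{cor:Efet} and Lemma~\ref{ss:LTextensions} is more detailed than the paper's one-line appeal to ``Lubin--Tate theory over $E_i$'', but the content is the same.
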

\begin{proof}
Note that $\LT_E$ equals the increasing union of open subspaces
  \begin{align*}
    \LT_E=\bigcup_{r=1}^\infty\Spa E\ang{X_0,X_0^r/\pi},
  \end{align*}
  where each $\Spa E\ang{X_0,X_0^r/\pi}$ is preserved by multiplication by $\pi^n$ on $\LT_E$. Because the element $f_{\pi^n}$ of $\cO\lb{X_0}$ that computes multiplication by $\pi^n$ on $\LT$ satisfies $f_{\pi^n}\equiv X_0^{q^n}\pmod{\pi}$, the analogue of Weierstrass division for $E\ang{X_0,X_0^r/\pi}$ implies that the resulting map $E\ang{X_0,X_0^r/\pi}\ra E\ang{X_0,X_0^r/\pi}$ is finite free of degree $q^n$.

  To show that $E\ang{X_0,X_0^r/\pi}\ra E\ang{X_0,X_0^r/\pi}$ is \'etale, Lemma \ref{lem:etalepointwise} indicates that it suffices to check after applying $-\otimes_EE_i$ for all $i$ in $\bN\cup\{\infty\}$. Now $\LT_E\times_{\Spa E}\Spa E_i$ is isomorphic to $\LT_i\times_{\Spa\cO_i}\Spa E_i$, so this follows from Lubin--Tate theory over $E_i$. Similarly, Lubin--Tate theory over $E_i$ yields the desired description of $\LT_E[\pi^n]$.
\end{proof}

\subsection{}\label{ss:infiniteLTcoordinate}
Next, we consider the ``universal cover'' of $\LT$ in the sense of \cite[p.~170]{SW13}. Write $\wt{\LT}$ for $\varprojlim_n\LT$, where $n$ runs over non-negative integers, and the transition morphisms are given by multiplication by $\pi$ on $\LT$. As in \ref{ss:LT}, identify the $n$-th copy of $\LT$ with $\Spf\cO\lb{X_n}$. Since multiplication by $\pi$ on $\wt{\LT}$ is invertible, $\wt{\LT}$ is a formal $E$-module over $\Spf\cO$.

Write $\cO\lb{\wt{X}^{1/q^\infty}}$ for the $\pi$-adic completion of $\bigcup_{r=0}^\infty\cO\lb{\wt{X}^{1/q^r}}$.
\begin{lem*}
We have an isomorphism $\wt{\LT}\ra^\sim\Spf\cO\lb{\wt{X}^{1/q^\infty}}$ of formal schemes over $\Spf\cO$ given by sending $(a_n)_{n\geq0}\mapsto\big(\lim_{n\to\infty}a_n^{q^{n-r}}\big)_{r\geq0}$.
\end{lem*}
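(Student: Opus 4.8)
The plan is to compare the two formal $\Spf\cO$-schemes on their functors of points, i.e.\ on $\cO$-algebras $A$ in which $\pi$ is nilpotent. On such an $A$, a point of $\wt{\LT}=\varprojlim_n\LT$ is a compatible system $(a_n)_{n\geq 0}$ with each $a_n$ nilpotent and $f_\pi(a_{n+1})=a_n$, where $f_\pi\in\cO\lb{X_0}$ computes multiplication by $\pi$; a point of $\Spf\cO\lb{\wt{X}^{1/q^\infty}}$ is a compatible system $(b_r)_{r\geq 0}$ with each $b_r$ nilpotent and $b_{r+1}^q=b_r$. The elementary input I would record first is that $\pi$ divides $q$ in $\cO$: for finite $i$ one has $v_i(q)=fe_i\geq 1$ with $v_i(q)-1=fe_i-1\to\infty$, so $(q/\pi_i)_i$ converges to $0=q/\pi_\infty$ in $E_\infty=\bF_q\lp t$ and hence defines an element of $\cO$. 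Consequently, in any $\cO$-algebra $A$, if $x\equiv y\pmod{\pi^k A}$ with $k\geq 1$ then $x^q\equiv y^q\pmod{\pi^{k+1}A}$, by expanding $(y+\pi^k c)^q$ binomially and using $\pi\mid q$ for the linear term.

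Next I would construct the morphism $\Phi\colon\wt{\LT}\to\Spf\cO\lb{\wt{X}^{1/q^\infty}}$ over $\Spf\cO$ by $(a_n)_{n\geq 0}\mapsto(b_r)_{r\geq 0}$ with $b_r:=\lim_{n\to\infty}a_n^{q^{n-r}}$, exactly as in \cite{FF18}. Since $f_\pi\equiv X_0^q\pmod\pi$ (see \ref{ss:LT}) we have $a_n\equiv a_{n+1}^q\pmod{\pi}$, and iterating the estimate above $n-r$ times gives $a_n^{q^{n-r}}\equiv a_{n+1}^{q^{n-r+1}}\pmod{\pi^{n-r+1}}$. As $\pi$ is nilpotent in $A$, the sequence $(a_n^{q^{n-r}})_{n\geq r}$ is eventually constant, so $b_r$ is a well-defined nilpotent element of $A$; one checks $b_{r+1}^q=b_r$ by commuting the $q$-th power with the (eventually constant) limit. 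This construction is functorial in $A$, so it defines a morphism of formal schemes.

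Finally I would show $\Phi$ is an isomorphism by reducing modulo $\pi$. The structure rings of both sides are $\pi$-torsionfree, so it suffices to prove that $\Phi$ becomes an isomorphism after $-\otimes_\cO\cO/\pi$. Modulo $\pi$, multiplication by $\pi$ on $\LT$ is the $q$-power Frobenius of $\Spf(\cO/\pi)\lb{X_0}$; hence $\wt{\LT}\times_{\Spf\cO}\Spec(\cO/\pi)$ is the $q$-power perfection of $\Spf(\cO/\pi)\lb{X_0}$, which is by construction $\Spf(\cO/\pi)\lb{\wt{X}^{1/q^\infty}}=\Spf\cO\lb{\wt{X}^{1/q^\infty}}\times_{\Spf\cO}\Spec(\cO/\pi)$, and modulo $\pi$ the relations $a_n=a_{n+1}^q$ force $b_r=a_r$, so $\Phi$ reduces to the identity. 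One then concludes via the general fact that a morphism of $\pi$-torsionfree, $\pi$-adically complete formal $\Spf\cO$-schemes inducing an isomorphism modulo $\pi$ is an isomorphism. The point I expect to require the most care is the $\pi$-adic completeness of the structure ring of $\wt{\LT}=\varprojlim_n\LT$, which is a non-noetherian, "infinite-dimensional" power series ring (completed along the colimit of the ideals of definition rather than along $\pi$); I would settle this by noting that its reduction modulo $\pi$ is the perfect $\cO/\pi$-algebra $(\cO/\pi)\lb{\wt{X}^{1/q^\infty}}$ and applying Proposition \ref{ss:Witttiltadjunction}.ii) to identify both that structure ring and $\cO\lb{\wt{X}^{1/q^\infty}}$ with the Witt vectors of this perfect ring, which is automatically $\pi$-torsionfree and $\pi$-adically complete.
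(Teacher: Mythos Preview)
Your approach is essentially the paper's: construct the map, then check it is an isomorphism by reducing modulo~$\pi$ using that both structure rings are $\pi$-adically complete and $\pi$-torsionfree. Your explicit verification that $\pi\mid q$ in $\cO$ is a welcome addition; the paper uses the implication ``$x\equiv y\pmod{\pi^k}\Rightarrow x^q\equiv y^q\pmod{\pi^{k+1}}$'' tacitly here and in the proof of Proposition~\ref{ss:Witttiltadjunction}.

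The one point that does not go through as written is your final paragraph. Proposition~\ref{ss:Witttiltadjunction}.ii) \emph{requires} $\pi$-adic completeness and $\pi$-torsionfreeness as hypotheses, so invoking it to establish those properties is circular. The paper's resolution is more direct: it simply identifies the structure ring of $\wt{\LT}=\varprojlim_n\Spf\cO\lb{X_n}$ with the $\pi$-adic completion $B$ of $\varinjlim_n\cO\lb{X_n}$, which is $\pi$-adically complete by construction and $\pi$-torsionfree because each $\cO\lb{X_n}$ is. Your worry that one should instead complete along the colimit of the ideals $(\pi,X_n)$ turns out to be unnecessary: for $A$ discrete with $\pi$ nilpotent, any $\cO$-algebra homomorphism $\cO\lb{X_n}\to A$ (needed to evaluate $\sum_k c_k X_n^k$) already forces the image of $X_n$ to be nilpotent, so $\Hom_{\cO\text{-alg}}(B,A)=\varprojlim_n\Hom_{\cO\text{-alg}}(\cO\lb{X_n},A)=\wt{\LT}(A)$ and the $\pi$-adic completion suffices. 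Once $B$ is in hand, the mod-$\pi$ check is exactly as you describe.
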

\begin{proof}
  Note that $\wt{\LT}=\Spf{B}$, where $B$ denotes the $\pi$-adic completion of $\varinjlim_n\cO\lb{X_n}$, and the transition maps send $X_n\mapsto f_\pi(X_{n+1})$. For any non-negative integer $r$, we claim that the sequence $(X_n^{q^{n-r}})_{n\geq r}$ converges in $B$. To see this, observe that $X_n=f_\pi(X_{n+1})\equiv X_{n+1}^q\pmod{\pi}$ implies that $X_n^{q^{n-r}}\equiv X_{n+1}^{q^{n+1-r}}\pmod{\pi^{n+1-r}}$, which yields the claim.

The claim indicates that sending $\wt{X}\mapsto\lim_{n\to\infty}X_n^{q^n}$ induces a map of $\cO$-algebras $\cO\lb{\wt{X}^{1/q^\infty}}\ra B$. Since $\cO\lb{\wt{X}^{1/q^\infty}}$ and $B$ are $\pi$-adically complete and $\pi$-torsionfree, to show that $\cO\lb{\wt{X}^{1/q^\infty}}\ra B$ is an isomorphism, it suffices to check mod $\pi$. There, it becomes the map of $\cO/\pi$-algebras $\bigcup_{r=0}^\infty(\cO/\pi)\lb{\wt{X}^{1/q^r}}\ra\textstyle\bigcup_{n=0}^\infty(\cO/\pi)\lb{X_0^{1/q^n}}$ given by sending $\wt{X}\mapsto X_0$, which is indeed an isomorphism.
\end{proof}

\subsection{}\label{ss:log}
Using our choice of coordinates for $\LT_E$ from \ref{ss:LT}, we can explicitly define its logarithm as follows. By the proof of \cite[Proposition 4.3.1]{FF18}, we have a morphism of $\cO$-module objects $\log:\LT_E\ra\bG^{\an}_a$ over $\Spa{E}$ given by sending
\begin{align*}
a_0\mapsto\sum_{r=0}^\infty a_0^{q^r}\!/\pi^r
\end{align*}
Note that $\log$ sends $\{\abs{X_0}\leq\abs{\pi}\}\subseteq\LT_E$ to $\{\abs{X}\leq\abs{\pi}\}\subseteq\bG^{\an}_a$.
\begin{prop*}
  The induced morphism $\log:\{\abs{X_0}\leq\abs{\pi}\}\ra\{\abs{X}\leq\abs{\pi}\}$ is an isomorphism. Consequently, we get a short exact sequence
  \begin{align*}
    \xymatrix{0\ar[r] & \displaystyle\bigcup_{n=1}^\infty\LT_E[\pi^n]\ar[r] & \LT_E\ar[r]^-{\log} &\bG^{\an}_a\ar[r] &0,}
  \end{align*}
where $\log$ is \'etale-locally surjective.
\end{prop*}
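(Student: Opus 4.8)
The plan is to establish the isomorphism $\log\colon\{\abs{X_0}\leq\abs\pi\}\ra\{\abs X\leq\abs\pi\}$ by running the argument of \cite[Proposition 4.3.1]{FF18} over $\cO$, and then to read off the short exact sequence from it together with the description of $\LT_E[\pi^n]$ in \ref{ss:LTtorsion}.

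\emph{The isomorphism.} Both $\{\abs{X_0}\leq\abs\pi\}$ and $\{\abs X\leq\abs\pi\}$ are affinoid over $\Spa E$, with global sections the Tate algebras $E\ang{X_0/\pi}$ and $E\ang{X/\pi}$, so it suffices to show that the induced map $\theta\colon E\ang{X/\pi}\ra E\ang{X_0/\pi}$ of topological $E$-algebras is an isomorphism. Writing $Y=X_0/\pi$ and $Z=X/\pi$ and plugging in the explicit formula for $\log$, we get $\theta(Z)=\sum_{r\geq0}\pi^{q^r-r-1}Y^{q^r}$, which lies in $\cO\ang Y$; hence $\theta$ restricts to a map $\theta^\circ\colon\cO\ang Z\ra\cO\ang Y$ of $\pi$-adically complete, $\pi$-torsionfree $\cO$-algebras. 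The proof of \cite[Proposition 4.3.1]{FF18}, which constructs the compositional inverse of $\log$ as an explicit power series and controls its coefficients $\pi$-adically, uses only the $\pi$-adic completeness of $\cO$ and the shape of $\log$, so it applies here and shows that $\theta^\circ$, hence $\theta$, is an isomorphism. As a cross-check, base change along $\ev_i\colon E\ra E_i$ identifies $\theta\otimes_EE_i$ with the analogous map for the Lubin--Tate $\cO_i$-module $\LT_i$ via $\LT_E\times_{\Spa E}\Spa E_i\cong\LT_i\times_{\Spa\cO_i}\Spa E_i$ from the proof of \ref{ss:LTtorsion}, which is an isomorphism classically; combined with Lemma \ref{lem:etalepointwise} and a fiberwise degree count, this gives an alternative argument.

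\emph{The exact sequence.} Since $\log$ is a homomorphism of $\cO$-module objects, multiplication by $\pi^n$ on $\LT_E$ satisfies $\log\circ\pi^n=\pi^n\circ\log$; recall moreover that $\pi^n$ is invertible on $\bG^{\an}_a$. For the kernel: any point $x$ of $\LT_E$ has $\abs{X_0(x)}<1$, so $f_\pi\equiv X_0^q\pmod\pi$ gives $\abs{X_0(\pi^n x)}\leq\max(\abs{X_0(x)}^{q^n},\abs\pi)$, and thus $\pi^n x$ factors through $\{\abs{X_0}\leq\abs\pi\}$ once $n\gg0$; since $\log$ is injective there, $\log(x)=0$ iff $\log(\pi^n x)=\pi^n\log(x)=0$ iff $\pi^n x=0$, so $\ker\log\subseteq\bigcup_n\LT_E[\pi^n]$. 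The reverse inclusion $\bigcup_n\LT_E[\pi^n]\subseteq\ker\log$ is immediate from the invertibility of $\pi^n$ on $\bG^{\an}_a$. For surjectivity: $\bG^{\an}_a=\bigcup_n\{\abs X\leq\abs\pi^{-n}\}$, multiplication by $\pi^{n+1}$ carries $\{\abs X\leq\abs\pi^{-n}\}$ isomorphically onto $\{\abs X\leq\abs\pi\}$, the latter lifts through $\log$ to $\{\abs{X_0}\leq\abs\pi\}$ by the isomorphism just proved, and multiplication by $\pi^{n+1}$ on $\LT_E$ is \'etale-locally surjective since it is a finite \'etale cover by \ref{ss:LTtorsion}; chaining these shows every point of $\bG^{\an}_a$ lifts through $\log$ \'etale-locally. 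This yields the displayed short exact sequence, with $\log$ \'etale-locally surjective.

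\emph{Main obstacle.} The delicate step is the isomorphism: ``isomorphism after every $\ev_i$'' does not formally imply ``isomorphism over $E$,'' so one must argue on the integral level with $\theta^\circ$ — either by bounding the coefficients of the inverse series $\pi$-adically and uniformly in the fiber, or by establishing the module-finiteness of $\theta^\circ$ before applying Lemma \ref{lem:etalepointwise}. This is the one point where the explicit form of $\log$ and the precise disk $\{\abs{X_0}\leq\abs\pi\}$ genuinely enter.
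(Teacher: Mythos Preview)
Your proposal is correct and follows essentially the same route as the paper: construct the compositional inverse $\exp$ of the $\log$ series and verify it converges on $\{\abs{X}\leq\abs\pi\}$, identify the kernel with the $\pi$-power torsion, and deduce \'etale-local surjectivity by scaling into the small disk and then inverting multiplication by $\pi^n$ via the finite \'etale cover of Proposition~\ref{ss:LTtorsion}. The only point the paper adds that you leave implicit is checking, via fibers over $\Spa E$, that $\bigcup_n\LT_E[\pi^n]\hookrightarrow\LT_E$ is a closed embedding (namely that its intersection with each bounded disk is a single $\LT_E[\pi^m]$); this is minor and does not affect your argument.
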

\begin{proof}
  Since the power series $\sum_{r=0}^\infty T^{q^r}\!/\pi^r$ lies in $T\cdot E\lb{T}$ and has linear term $T$, it has a compositional inverse $\exp$ in $T\cdot E\lb{T}$. Expanding the coefficients of $\exp$ shows that it lies in $E\ang{T/\pi}$, so the morphism given by sending $a\mapsto\exp(a)$ induces the desired inverse to $\log:\{\abs{X_0}\leq\abs{\pi}\}\ra\{\abs{X}\leq\abs{\pi}\}$.

  We turn to the short exact sequence. By checking on fibers over $\Spa{E}$, we see that, for all positive integers $m$, the intersection of $\{\abs{X_0}^{q^{m-1}(q-1)}\leq\abs{\pi}\}\subseteq\LT_E$ with $\bigcup_{n=1}^\infty\LT_E[\pi^n]$ equals $\LT_E[\pi^m]$. This implies that $\bigcup_{n=1}^\infty\LT_E[\pi^n]\ra\LT_E$ is a closed embedding. Similarly, checking on fibers indicates that $\bigcup_{n=1}^\infty\LT_E[\pi^n]$ is the kernel of $\log$.

  To show that $\log$ is \'etale-locally surjective, let $a$ be an $(A,A^+)$-point of $\bG^{\an}_a$. Because $\Spa(A,A^+)$ is compact, the image of $a$ lies in $\{\abs{X}\leq\abs{\pi}^{-n+1}\}\subseteq\bG^{\an}_a$ for some positive integer $n$, so $\pi^na$ lies in $\{\abs{X}\leq\abs{\pi}\}$. The invertibility of $\log$ on $\{\abs{X}\leq\abs{\pi}\}$ lets us lift $\pi^na$ to an $(A,A^+)$-point $p_0$ of $\LT_E$, and Proposition \ref{ss:LTtorsion} indicates, after replacing $(A,A^+)$ with a finite \'etale cover, that there exists an $(A,A^+)$-point $a_0$ of $\LT_E$ satisfying $f_{\pi^n}(a_0)=p_0$. Since $\log$ is $\cO$-linear, we get
  \begin{align*}
    \pi^n\log(a_0) = \log(f_{\pi^n}(a_0)) = \log(p_0) = \pi^na,
  \end{align*}
and the invertibility of $\pi$ on $\bG^{\an}_a$ indicates that $\log(a_0)=a$, as desired.
\end{proof}

\subsection{}\label{ss:tildelog}
Using the identification from Lemma \ref{ss:infiniteLTcoordinate}, view $\wt{\LT}$ as the pre-adic space $\Spa\cO\lb{\wt{X}^{1/q^\infty}}$. Write $\wt{\log}:\wt{\LT}_E\ra\bG^{\an}_a$ for the composition $\wt{\LT}_E\lra^{\pr_0}\LT_E\lra^{\log}\bG^{\an}_a$.
\begin{cor*}
  We have a short exact sequence
  \begin{align*}
    \xymatrix{0\ar[r]& \displaystyle\varprojlim_n\Big(\bigcup_{m=1}^\infty\LT_E[\pi^m]\Big)\ar[r] & \wt{\LT}_E\ar[r]^-{\wt{\log}} & \bG^{\an}_a\ar[r]& 0,}
  \end{align*}
where $\wt{\log}$ is pro-\'etale-locally surjective.
\end{cor*}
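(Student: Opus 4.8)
The plan is to deduce the sequence by applying $\varprojlim_n$ --- along the transition maps given by multiplication by $\pi$ --- to the short exact sequence of Proposition \ref{ss:log}. Since $\log$ is $\cO$-linear, multiplication by $\pi$ is an endomorphism of that exact sequence, and because $\pi$ is a unit of $E$, multiplication by $\pi$ acts as an \emph{isomorphism} on $\bG^{\an}_a$. Hence $\varprojlim_n\bG^{\an}_a$ is identified with $\bG^{\an}_a$ via the projection $\pr_0$, while $\varprojlim_n\LT_E=\wt{\LT}_E$ by definition, and the morphism induced on limits is precisely $\log\circ\pr_0=\wt{\log}$.

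Next I would read off the kernel. By left exactness of $\varprojlim_n$, the kernel of $\wt{\log}$ is $\varprojlim_n\bigl(\bigcup_{m=1}^\infty\LT_E[\pi^m]\bigr)$ with transition maps multiplication by $\pi$, as desired; one can also see this directly, since a point $(a^{(n)})_n$ of $\wt{\LT}_E$ satisfies $a^{(0)}=\pi^na^{(n)}$, so $\log(a^{(n)})=\pi^{-n}\log(a^{(0)})$, and the invertibility of $\pi$ on $\bG^{\an}_a$ forces $\log(a^{(0)})=0$ if and only if $\log(a^{(n)})=0$ for all $n$.

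It remains to check that $\wt{\log}$ is pro-\'etale-locally surjective. Given an $(A,A^+)$-point $a$ of $\bG^{\an}_a$, Proposition \ref{ss:log} produces, after a finite \'etale cover of $\Spa(A,A^+)$, a point $a^{(0)}$ of $\LT_E$ with $\log(a^{(0)})=a$. Proposition \ref{ss:LTtorsion} says that multiplication by $\pi$ is a finite \'etale cover $\LT_E\ra\LT_E$, so after a further finite \'etale cover there is a point $a^{(1)}$ of $\LT_E$ with $\pi a^{(1)}=a^{(0)}$; iterating yields a compatible system $(a^{(n)})_n$ over a cofiltered tower of finite \'etale covers of $\Spa(A,A^+)$. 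I would then argue that the inverse limit of this tower is a pro-\'etale cover over which $(a^{(n)})_n$ defines a genuine point of $\wt{\LT}_E=\varprojlim_n\LT_E$ mapping to $a$ under $\wt{\log}$; the identification $\wt{\LT}\cong\Spa\cO\lb{\wt{X}^{1/q^\infty}}$ from Lemma \ref{ss:infiniteLTcoordinate} is what lets one see this compatible system of lifts as an honest morphism into $\wt{\LT}_E$. The main obstacle is essentially this last bookkeeping step --- organizing the tower of finite \'etale covers and confirming that it produces a bona fide pro-\'etale-local section --- rather than anything conceptually deep.
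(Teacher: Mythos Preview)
Your approach is correct and matches the paper's: take $\varprojlim_n$ of the short exact sequence in Proposition \ref{ss:log}, using that $\pi$ is invertible on $\bG^{\an}_a$ to identify $\varprojlim_n\bG^{\an}_a$ with $\bG^{\an}_a$ via $\pr_0$. For surjectivity the paper instead notes that the transition maps on the kernel $\bigcup_m\LT_E[\pi^m]$ are surjective and invokes the Mittag--Leffler criterion, which is equivalent to your direct construction of lifts over a tower of finite \'etale covers; your appeal to Lemma \ref{ss:infiniteLTcoordinate} is unnecessary, since the universal property of $\wt{\LT}_E=\varprojlim_n\LT_E$ already turns a compatible system of points into a point of $\wt{\LT}_E$.
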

\begin{proof}
  Proposition \ref{ss:LTtorsion} identifies $\bigcup_{m=1}^\infty\LT_E[\pi^m]$ with the disjoint union
  \begin{align*}
    \Big(\coprod_{j=1}^\infty\Spa E^{\LT,j}\Big)\textstyle\coprod\Spa E.
  \end{align*}
  Under this identification, multiplication by $\pi$ on $\LT$ restricts to the morphism
  \begin{align*}
    \bigcup_{m=1}^\infty\LT_E[\pi^m]\ra\bigcup_{m=1}^\infty\LT_E[\pi^m]
  \end{align*}
  that sends $\Spa{E}$ to $\Spa{E}$ and $\Spa E^{\LT,j}$ to $\Spa E^{\LT,j-1}$ via the natural morphism $\Spa E^{\LT,j}\ra\Spa E^{\LT,j-1}$, where $E^{\LT,0}$ denotes $E$. In particular, this is surjective.

Since multiplication by $\pi$ on $\bG^{\an}_a$ is invertible, we can identify $\varprojlim_n\bG^{\an}_a$ with $\bG^{\an}_a$ via $\pr_0$. From here, taking $\varprojlim_n$ of the short exact sequence from Proposition \ref{ss:log} and applying the Mittag--Leffler criterion yield the desired result.
\end{proof}

\subsection{}\label{ss:O(1)sections}
We now use Witt bivectors to compute $B^{\vp=\pi}_{S,[1,\infty]}$, as in \cite[Proposition 4.2.1]{FF18}. Write $\wh{BW}(R^+)$ for the $\cO$-module $\varprojlim_mBW(R^+/\vpi^m)$, where $m$ runs over positive integers. Note that we can identify $\wh{BW}(R^+)$ with the set of double sequences $(\dotsc,a_{-1},a_0,a_1,\dotsc)$ in $R^+$ satisfying $\limsup_{n\to-\infty}\norm{a_n}<1$. Therefore we have an $\cO$-linear map $\wh{BW}(R^+)\ra B_{S,[1,\infty]}$ given by sending
\begin{align*}
(\dotsc,a_{-1},a_0,a_1,\dotsc) \mapsto \sum_{n\in\bZ}[a_n^{1/q^n}]\pi^n.
\end{align*}

For all $i$ in $\bN\cup\{\infty\}$, write $S_i$ for the fiber of $S\ra\ul{\bN\cup\{\infty\}}$ at $\ul{\{i\}}$. Note that $S_i$ is affinoid perfectoid; write $S_i=\Spa(R_i,R_i^+)$, and for all $r$ in $R$, write $r_i$ for its image in $R_i$.

\begin{prop*}
  The map $\wh{BW}(R^+)\ra B_{S,[1,\infty]}$ is injective, and it induces a bijection
  \begin{align*}
    \wh{BW}(R^+)^{\vp=\pi}\ra^\sim B_{S,[1,\infty]}^{\vp=\pi}.
  \end{align*}
  Consequently, the map $\wt{a}\mapsto\sum_{n\in\bZ}[\wt{a}^{1/q^n}]\pi^n$ yields a bijection $R^{\circ\circ}\ra^\sim B_{S,[1,\infty]}^{\vp=\pi}$.
\end{prop*}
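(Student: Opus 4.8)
The plan is to adapt the proofs of \cite[Proposition 4.2.1]{FF18} and \cite[n$^\circ$ II.2]{Fon77}. These transcribe to our setting because \S\ref{ss:Witt}--\ref{ss:Wittspecialization} developed the $\cO$-typical analogues of Witt vectors and Witt bivectors, and because the combinatorial bounds underlying Fontaine's arguments (coming from \cite[n$^\circ$ II.1.5]{Fon77}) depend only on $q$, as already noted in the construction of $BW$. Concretely, I would first prove injectivity of $\wh{BW}(R^+)\ra B_{S,[1,\infty]}$, then the bijectivity on $\vp=\pi$-eigenspaces, and finally deduce the last sentence formally.

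\textbf{Injectivity.} Suppose $x\coloneqq\sum_{n\in\bZ}[a_n^{1/q^n}]\pi^n=0$ in $B_{S,[1,\infty]}$. Because $\limsup_{n\to-\infty}\norm{a_n}<1$, for $n\ll0$ the element $a_n^{1/q^n}=a_n^{q^{|n|}}$ is divisible by a very high power of $\vpi$ in the perfect ring $R^+$ — high enough to compensate $\pi^n$ — so, after multiplying by a power of the unit $\pi$, one reduces to the case where $x$ lies in the subring $W(R^+)\ang{[\vpi]/\pi}\subseteq B_{S,[1,\infty]}$. Reducing modulo $\pi$, which identifies $W(R^+)/\pi$ with $R^+$ since $R^+$ is a perfect $\cO/\pi$-algebra (see Proposition \ref{prop:Wittprop} and \S\ref{ss:Witttiltadjunction}), turns $x$ into a power series over $R^+/\vpi$ whose coefficients recover the $a_n$ one congruence-degree at a time; Fontaine's successive-approximation argument \cite[n$^\circ$ II.2]{Fon77}, run with $p$ replaced by $q$, then forces all $a_n=0$. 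I expect \emph{this to be the main obstacle}: over the non-noetherian ring $\cO$ and the base $\ul{\bN\cup\{\infty\}}$ one cannot reduce to the classical case of a single nonarchimedean local field, so the argument must be carried out directly over $\cO$ — but it uses only $\pi$-adic completeness and $\pi$-torsionfreeness of $W(R^+)$, perfectness of $R^+/\vpi$, and the $q$-uniform bounds, all of which are in place.

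\textbf{Bijectivity on $\vp=\pi$-parts.} Injectivity being settled, only surjectivity of $\wh{BW}(R^+)^{\vp=\pi}\ra B_{S,[1,\infty]}^{\vp=\pi}$ remains. Given $x\in B_{S,[1,\infty]}$ with $\vp(x)=\pi x$, I would produce its Witt-bivector expansion exactly as in \cite[Proposition 4.2.1]{FF18}: the coefficients $a_n\in R^+$ are determined inductively and controlled by the spectral norm, and the eigenvalue equation together with $\rad\circ\vp=q\cdot\rad$ forces $\limsup_{n\to-\infty}\norm{a_n}<1$, so that $(a_n)_{n\in\bZ}$ defines an element of $\wh{BW}(R^+)$. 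This again transcribes from \emph{loc.\ cit.} using only constructions from earlier in this section.

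\textbf{Deduction of the final statement.} On the image of $\wh{BW}(R^+)$ in $B_{S,[1,\infty]}$ one computes, using $\vp([y])=[y^q]$ (valid as $R^+$ is a perfect $\cO/\pi$-algebra) and $\vp(\pi)=\pi$, that $\vp\bigl(\sum_n[a_n^{1/q^n}]\pi^n\bigr)=\sum_n[a_n^{1/q^{n-1}}]\pi^n=\pi\cdot\sum_n[a_{n+1}^{1/q^n}]\pi^n$. Since $\pi$ is a unit in $B_{S,[1,\infty]}$, the condition $\vp(x)=\pi x$ says $\sum_n[a_n^{1/q^n}]\pi^n=\sum_n[a_{n+1}^{1/q^n}]\pi^n$, and since $(a_{n+1})_{n\in\bZ}$ is again a Witt bivector, injectivity gives $a_{n+1}=a_n$ for all $n$. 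Thus $\wh{BW}(R^+)^{\vp=\pi}$ consists exactly of the constant bivectors $(\dotsc,\wt a,\wt a,\dotsc)$; the defining condition $\limsup_{n\to-\infty}\norm{a_n}<1$ becomes $\norm{\wt a}<1$, i.e.\ $\wt a\in R^{\circ\circ}$, and conversely every such constant bivector is a $\vp=\pi$-eigenvector. Composing the resulting bijection $R^{\circ\circ}\ra^\sim\wh{BW}(R^+)^{\vp=\pi}$, $\wt a\mapsto(\dotsc,\wt a,\wt a,\dotsc)$, with the bijection of the second part yields the desired bijection $R^{\circ\circ}\ra^\sim B_{S,[1,\infty]}^{\vp=\pi}$, $\wt a\mapsto\sum_{n\in\bZ}[\wt a^{1/q^n}]\pi^n$.
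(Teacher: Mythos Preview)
Your deduction of the final statement is correct and matches the paper's (the paper phrases it via $\pi=\vp\circ V$ on $\wh{BW}(R^+)$, which amounts to your shift computation). The first two parts, however, differ from the paper in substance, and your central premise for injectivity is mistaken.

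You write that ``one cannot reduce to the classical case of a single nonarchimedean local field,'' but this is precisely what the paper does for injectivity. For each $i\in\bN\cup\{\infty\}$ the fiber $S_i=\Spa(R_i,R_i^+)$ of $S\to\ul{\bN\cup\{\infty\}}$ is affinoid perfectoid. If $\sum_{n\in\bZ}[a_n^{1/q^n}]\pi^n=0$ in $B_{S,[1,\infty]}$, then its image in $B_{S_i,[1,\infty]}$ vanishes; by Proposition~\ref{ss:Wittspecialization} this image is $\sum_{n\in\bZ}[a_{n,i}^{1/q^n}]\pi_i^n$, so the classical injectivity \cite[p.~95]{FF18} gives $a_{n,i}=0$ for all $n$. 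Since every point of $\abs{S}$ lies in some $\abs{S_i}$, \cite[Theorem~5.2.1]{SW20} yields $a_n=0$. This fiberwise reduction is much cleaner than redoing Fontaine's successive approximation over $\cO$, and your sketch of the latter is not obviously sound: you propose to ``reduce mod $\pi$'' inside $B_{S,[1,\infty]}$, but $\pi$ is a unit there, and it is unclear how the reduction you intend recovers the individual $a_n$.

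For surjectivity of $\wh{BW}(R^+)^{\vp=\pi}\to B_{S,[1,\infty]}^{\vp=\pi}$, the paper does not build the bivector expansion coefficient by coefficient as you suggest. Instead it passes to the quotient by $[\vpi^{1/q^\infty}]$: first, reduction mod $[\vpi^{1/q^\infty}]$ is injective on $B_{S,[1,\infty]}^{\vp=\pi}$ (if $x\in[\vpi^{1/q^r}]B_{S,[1,\infty]}$ then $x=\pi^{-k}\vp^k(x)\in[\vpi^{q^{k-r}}]B_{S,[1,\infty]}$ for all $k$); second, $W(R^+)[\tfrac1\pi]$ surjects onto the quotient. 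Given a lift $\wt{x}\in W(R^+)[\tfrac1\pi]$ with $y\coloneqq\pi^{-1}\vp(\wt{x})-\wt{x}\in[\vpi^{1/q^r}]W(R^+)[\tfrac1\pi]$, the sum $x\coloneqq\wt{x}+\sum_{k\geq0}\pi^{-k}\vp^k(y)=\lim_{k}\pi^{-k}\vp^k(\wt{x})$ converges in $\wh{BW}(R^+)$ (the $[\vpi]$-adic growth of $\vp^k(y)$ is exponential in $k$, dominating the linear $\pi$-adic loss), lies in $\wh{BW}(R^+)^{\vp=\pi}$, and agrees with $\wt{x}$ modulo $[\vpi^{1/q^\infty}]$. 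This limiting construction works uniformly over $\cO$ and avoids any inductive coefficient analysis.
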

\begin{proof}
  First, suppose that $\sum_{n\in\bZ}[a_n^{1/q^n}]\pi^n=0$ for some $(\dotsc,a_{-1},a_0,a_1,\dotsc)$ in $\wh{BW}(R^+)$. Then its image in $B_{S_i,[1,\infty]}$ is $0$, and Proposition \ref{ss:Wittspecialization} shows that this image equals $\sum_{n\in\bZ}[a_{n,i}^{1/q^n}]\pi_i^n$. Hence \cite[p.~95]{FF18} indicates that $a_{n,i}=0$ for all $n$ in $\bZ$. As $i$ runs over $\bN\cup\{\infty\}$, \cite[Theorem 5.2.1]{SW20} implies that $a_n=0$ for all $n$ in $\bZ$.

  Next, write $[\vpi^{1/q^\infty}]$ for the ideal of $B_{S,[1,\infty]}$ generated by $[\vpi^{1/q^r}]$ for all non-negative integers $r$. We claim that reduction mod $[\vpi^{1/q^\infty}]$ induces an injection
  \begin{align*}
    B_{S,[1,\infty]}^{\vp=\pi}\hra(B_{S,[1,\infty]}/[\vpi^{1/q^\infty}])^{\vp=\pi},
  \end{align*}
  and precomposing with $\wh{BW}(R^+)^{\vp=\pi}\hra B_{S,[1,\infty]}^{\vp=\pi}$ yields a bijection. For injectivity, suppose that $x$ in $B_{S,[1,\infty]}^{\vp=\pi}$ lies in $[\vpi^{1/q^r}]B_{S,[1,\infty]}$ for some non-negative integer $r$. Then $x=\pi^{-k}\vp^k(x)$ lies in $[\vpi^{q^{k-r}}]B_{S,[1,\infty]}$ for all non-negative integers $k$, so taking $k\to\infty$ shows that $x=0$.

  For bijectivity, note that $W(R^+)[\frac1\pi]$ surjects onto $B_{S,[1,\infty]}/[\vpi^{1/q^\infty}]$. Therefore it suffices to consider $\wt{x}$ in $W(R^+)[\frac1\pi]$ such that $y\coloneqq\pi^{-1}\vp(\wt{x})-\wt{x}$ lies in $[\vpi^{1/q^r}]B_{S,[1,\infty]}$. Then $y$ also lies in $W(R^+)[\frac1\pi]$, so $y$ even lies in $[\vpi^{1/q^r}]W(R^+)[\frac1\pi]$. Hence $\vp^k(y)$ lies in $[\vpi^{q^{k-r}}]W(R^+)[\frac1\pi]$. Because the $\pi$-adic valuation of $\pi^{-k}\vp^k(y)$ decreases linearly in $k$ and we have
  \begin{align*}
    \pi^{-(k+1)}\vp^{k+1}(\wt{x})-\pi^{-k}\vp^k(\wt{x}) = \pi^{-k}\vp^k(\pi^{-1}\vp(\wt{x})-\wt{x}) = \pi^{-k}\vp^k(y),
  \end{align*}
this shows that $x\coloneqq\lim_{k\to\infty}\pi^{-k}\vp^k(\wt{x})=\wt{x}+\sum_{k=0}^\infty\pi^{-k}\vp^k(y)$ converges in $\wh{BW}(R^+)$. By construction, $x$ lies in $\wh{BW}(R^+)^{\vp=\pi}$ and satisfies $x\equiv\wt{x}\pmod{[\vpi^{1/q^\infty}]}$, which yields the claim.

  The claim yields the first bijection. For the second bijection, note that $\pi=\vp\circ V$ implies that $\wh{BW}(R^+)^{\vp=\pi}$ equals the set of constant double sequences $(\dotsc,\wt{a},\wt{a},\wt{a},\dotsc)$ satisfying $\norm{\wt{a}}<1$. Now $\{\wt{a}\in R^+\mid\norm{\wt{a}}<1\}$ equals $R^{\circ\circ}$, as desired.
\end{proof}

\subsection{}\label{ss:logsandtheta}
See \ref{ss:absoluteBanachColmez} for the definition of the line bundle $\sO_{X_S}(1)$ on $X_S$. Let us interpret Proposition \ref{ss:O(1)sections} in terms of $\sO_{X_S}(1)$ and $\LT$ as follows. Write $(-)_{\cO/\pi}$ for the fiber product $-\times_{\Spa\cO}\Spa(\cO/\pi)$. Lemma \ref{ss:infiniteLTcoordinate} identifies $\wt{\LT}_{\cO/\pi}$ and $\wt{\LT}_E$ with the open perfectoid unit disk over $\Spa(\cO/\pi)$ and $\Spa{E}$, respectively.

\begin{cor*}Under the identification from Lemma \ref{ss:infiniteLTcoordinate},
  \begin{enumerate}[i)]
  \item the map $\wt{a}\mapsto\sum_{r\in\bZ}[\wt{a}^{q^r}]/\pi^r$ induces an $E$-linear bijection
    \begin{align*}
      \wt{\LT}(S)\ra^\sim H^0(X_S,\sO_{X_S}(1)),
    \end{align*}
  \item the map $\wt{\log}:\wt{\LT}_E\ra\bG^{\an}_a$ sends $(\wt{a}^{1/q^r})_{r\geq0}\mapsto\sum_{r\in\bZ}\wt{a}^{q^r}\!/\pi^r$ on $(A,A^+)$-points,
  \item for all untilts $S^\sharp=\Spa(R^\sharp,R^{\sharp+})$ of $S$ over $\Spa{E}$, the square
    \begin{align*}
      \xymatrix{\wt{\LT}(S)\ar[r]^-\sim\ar@{=}[d] & H^0(X_S,\sO_{X_S}(1))\ar[d] \\
      \wt{\LT}(S^\sharp)\ar[r]^-{\wt{\log}} & R^\sharp}
    \end{align*}
    commutes, where the left arrow is $R^{\circ\circ}=\varprojlim_r(R^{\sharp})^{\circ\circ}$, and the right arrow is given by pullback to the closed Cartier divisor $S^\sharp\hra X_S$ from \ref{ss:Div1}.
  \end{enumerate}
\end{cor*}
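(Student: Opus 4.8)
The plan is to deduce all three parts from Proposition~\ref{ss:O(1)sections}, using the fiberwise principle---reduction to the fibers $S_i$ over $\bN\cup\{\infty\}$---that already underlies its proof. For part~i), recall that the definition of $\sO_{X_S}(1)$ from \ref{ss:absoluteBanachColmez} identifies $H^0(X_S,\sO_{X_S}(1))$ with $B_{S,[1,\infty]}^{\vp=\pi}$, while Lemma~\ref{ss:infiniteLTcoordinate} identifies $\wt\LT(S)=\wt\LT_{\cO/\pi}(S)$ with $R^{\circ\circ}$ via the coordinate $\wt X$. Under these identifications the map in question sends $\wt a\mapsto\sum_{r\in\bZ}[\wt a^{q^r}]/\pi^r=\sum_{n\in\bZ}[\wt a^{1/q^n}]\pi^n$, which is exactly the bijection of Proposition~\ref{ss:O(1)sections}; so the only thing left to check is $E$-linearity. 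Since the $E$-action on $B_{S,[1,\infty]}^{\vp=\pi}$ is compatible with restriction to the $S_i$ and the total map is injective (Proposition~\ref{ss:O(1)sections}), it suffices to check $E$-linearity after restricting to each $S_i$, where Proposition~\ref{ss:Wittspecialization} identifies the situation with that of \cite[Proposition~4.2.1]{FF18}, which is $E_i$-linear.

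Part~ii) is a direct computation. By definition $\wt\log=\log\circ\pr_0$, and $\log$ is the $E$-linear map $a_0\mapsto\sum_{r\geq0}a_0^{q^r}/\pi^r$ of \ref{ss:log}. An $(A,A^+)$-point $(\wt a^{1/q^r})_{r\geq0}$ of $\wt\LT_E$ corresponds under Lemma~\ref{ss:infiniteLTcoordinate} to a compatible system $(a_n)_{n\geq0}$ in $\LT_E$ with $a_n=\pi\cdot a_{n+1}$ and $\wt a^{1/q^s}=\lim_{n\to\infty}a_n^{q^{n-s}}$ for all $s$; in particular $\pr_0$ of the point is $a_0=\pi^n\cdot a_n$. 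Using $E$-linearity of $\log$,
\begin{align*}
  \wt\log\big((\wt a^{1/q^r})_{r\geq0}\big)=\log(\pi^n\cdot a_n)=\pi^n\log(a_n)=\sum_{r\geq-n}a_n^{q^{r+n}}/\pi^r,
\end{align*}
and since $\lim_{n\to\infty}a_n^{q^{r+n}}=\wt a^{q^r}$ for every $r\in\bZ$ (with the terms uniformly small as $r\to\infty$), letting $n\to\infty$ yields $\wt\log((\wt a^{1/q^r})_{r\geq0})=\sum_{r\in\bZ}\wt a^{q^r}/\pi^r$, as claimed.

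For part~iii), the left vertical map is the tilting identification $R^{\circ\circ}=\varprojlim_r(R^\sharp)^{\circ\circ}$, so the image of $\wt a$ in $\wt\LT(S^\sharp)=\wt\LT_E(S^\sharp)$ is the system $\big((\wt a^{1/q^r})^\sharp\big)_{r\geq0}$; by part~ii) its image under $\wt\log$ is $\sum_{r\in\bZ}(\wt a^{q^r})^\sharp/\pi^r$. On the other hand, by part~i) the image of $\wt a$ under the top map is $\sum_{n\in\bZ}[\wt a^{1/q^n}]\pi^n$, and pulling this back along the closed Cartier divisor $S^\sharp\hra X_S$ of \ref{ss:Div1}---which, under the identifications of \ref{ss:absoluteBanachColmez}, amounts to applying the homomorphism $\te$ of \ref{ss:kertheta} termwise, using $\te([\wt a^{1/q^n}])=(\wt a^{1/q^n})^\sharp$ and $\te(\pi)=\pi$---gives $\sum_{n\in\bZ}(\wt a^{1/q^n})^\sharp\pi^n=\sum_{r\in\bZ}(\wt a^{q^r})^\sharp/\pi^r$, so the square commutes. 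I expect the main obstacle to be not mathematical content but bookkeeping: pinning down the trivialization of $\sO_{X_S}(1)$ along $S^\sharp$ and checking that it, together with all the $E$-module structures and sharp maps, is compatible with passage to the fibers $S_i$---after which parts~i) and~iii) reduce to the classical computations of \cite{FF18} and \cite{Fon77}, and part~ii) is the explicit formula above.
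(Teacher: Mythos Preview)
Your proposal is correct and follows essentially the same approach as the paper. The only notable difference is in part~i): for $E$-linearity, the paper invokes the proof of \cite[Proposition~4.4.5]{FF18} directly (the relevant identities hold verbatim over $\cO$), whereas you reduce to the fibers $S_i$ via Proposition~\ref{ss:Wittspecialization}. Both work; the paper's route is slightly more direct since the argument in \cite{FF18} is purely formal and requires no fiberwise input. Your part~ii) computation and part~iii) matching via $\te$ are the same as the paper's, with the paper being a bit more explicit about the congruence $a_n^{q^{r+n}}\equiv a_{n+1}^{q^{r+n+1}}\pmod{\pi^{n+1-r}}$ (from the proof of Lemma~\ref{ss:infiniteLTcoordinate}) that justifies interchanging the limit and sum.
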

\begin{proof}
  For part i), we identify $H^0(X_S,\sO_{X_S}(1))$ with $B_{S,[1,\infty]}^{\vp=\pi}$ by Proposition \ref{ss:absoluteBanachColmez}. Bijectivity follows from Proposition \ref{ss:O(1)sections}, and $E$-linearity follows from the proof of \cite[Proposition 4.4.5]{FF18}.

  For part ii), let $(a_n)_{n\geq0}$ be an $(A,A^+)$-point of $\wt{\LT}_E$. As $\log$ is $\cO$-linear, we get
  \begin{align*}
    \log(a_0) = \log(f_{\pi^n}(a_n)) = \pi^n\log(a_n) = \sum_{r=-n}^\infty a_n^{q^{r+n}}\!/\pi^r
  \end{align*}
  for all non-negative integers $n$. The proof of Lemma \ref{ss:infiniteLTcoordinate} indicates that
  \begin{align*}
    a_n^{q^{r+n}}\!/\pi^r\equiv a_{n+1}^{q^{r+n+1}}\!/\pi^r\pmod{\pi^{n+1}},
  \end{align*}
  so taking $\lim_{n\to\infty}$ shows that
  \begin{align*}
    \log(a_0) = \lim_{n\to\infty}\sum_{r=-n}^\infty a_n^{q^{r+n}}\!/\pi^r = \sum_{r\in\bZ}\lim_{n\to\infty}a_n^{q^{r+n}}/\pi^r = \sum_{r\in\bZ}\wt{a}^{q^r}\!/\pi^r.
  \end{align*}
This yield part ii). Finally, because the composition $R^{\circ\circ}=\varprojlim_r(R^\sharp)^{\circ\circ}\lra^{\pr_0}R^\sharp$ equals $(-)^\sharp$, part iii) follows immediately from part ii), part i), and the description of $\te$ in Proposition \ref{ss:Witttiltadjunction}.i).
\end{proof}

\subsection{}\label{lem:LTbasis}
Next, we turn to study $E^{\LT}$. For this, we use the following analogue of bases for the Tate module of Lubin--Tate formal groups.
\begin{lem*}
  There exists a sequence $(t_n)_{n\geq1}$ in $(E^{\LT})^{\circ\circ}$ such that, for all positive integers $n$,
  \begin{enumerate}[$\bullet$]
  \item $t_1$ generates $E^{\LT,1}$ over $E$, and $t_{n+1}$ generates $E^{\LT,n+1}$ over $E^{\LT,n}$,
  \item $t_n$ lies in $(E^{\LT,n})^{\circ\circ}$, we have $f_\pi(t_{n+1})=t_n$, and we have $f_\pi(t_1)=0$.
  \end{enumerate}
\end{lem*}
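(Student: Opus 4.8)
The plan is to build the sequence $(t_n)_{n\geq1}$ by induction on $n$, with $t_n$ lying in $E^{\LT,n}$, viewed inside $E^{\LT}$, by extracting it from the primitive $\pi^n$-torsion of $\LT_E$. Since the transition maps $E^{\LT,n}\hra E^{\LT,n+1}$ are compatible with those into $E^{\LT}$ and $(E^{\LT,n})^{\circ\circ}=E^{\LT,n}\cap(E^{\LT})^{\circ\circ}$, it suffices to produce, for each $n$, an element $t_n$ of $(E^{\LT,n})^{\circ\circ}$ which, as an $E^{\LT,n}$-point of $\LT_E$, has exact order $\pi^n$, generates $E^{\LT,n}$ over $E^{\LT,n-1}$ (with $E^{\LT,0}\coloneqq E$), and satisfies $f_\pi(t_{n+1})=t_n$ and $f_\pi(t_1)=0$. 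Throughout I use from \ref{ss:LTtorsion} that $\LT_E$ is the open unit disk over $\Spa{E}$ with coordinate $X_0$ and that $f_\pi$ computes multiplication by $\pi$ on $\LT$, so in particular $f_\pi(0)=0$.

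First I would isolate the primitive torsion. By Proposition \ref{ss:LTtorsion}, the finite \'etale cover $\LT_E[\pi^n]\ra\Spa{E}$ corresponds via Corollary \ref{cor:Efet} to $\bO_n$ with $\Ga$ acting through $\Ga\ra\Ga^n\ra\bO_n^\times$. The units $\bO_n^\times$ form a clopen, $\Ga$-stable subspace of $\bO_n$, so they cut out a direct factor $A_n$ of the finite \'etale $E$-algebra corresponding to $\LT_E[\pi^n]$; concretely $\Spa{A_n}$ is the locus inside $\LT_E[\pi^n]$ of points of exact order $\pi^n$, and restricting $X_0$ along $\Spa{A_n}\hra\LT_E$ yields a tautological element $\tau_n\in A_n$ of exact order $\pi^n$. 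Multiplication by $\pi$ on $\LT$ carries points of exact order $\pi^{n+1}$ to points of exact order $\pi^n$, hence restricts to a finite \'etale degree-$q$ morphism $\Spa{A_{n+1}}\ra\Spa{A_n}$ along which $\tau_n$ pulls back to $f_\pi(\tau_{n+1})$; and $f_\pi(\tau_1)=0$. On the other hand, $E^{\LT,n}$ is, by construction in \ref{ss:LTextensions}, the finite \'etale $E$-algebra attached to $\bO_n^\times$ with the action of $\Ga$ through $\Ga^n\ra\bO_n^\times$ by translation, which is exactly the $\Ga$-structure carried by $A_n$; hence $A_n\cong E^{\LT,n}$. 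Choosing isomorphisms $\iota_n\colon A_n\cong E^{\LT,n}$ recursively so that $\iota_{n+1}$ extends $\iota_n$ along the embeddings $A_n\hra A_{n+1}$ and $E^{\LT,n}\hra E^{\LT,n+1}$, I would set $t_n\coloneqq\iota_n(\tau_n)\in E^{\LT,n}$. Then $f_\pi(t_{n+1})=t_n$ and $f_\pi(t_1)=0$ follow, since each $\iota_n$ commutes with the $\cO$-polynomial $f_\pi$. Finally, that $t_n$ generates $E^{\LT,n}$ over $E^{\LT,n-1}$ and lies in $(E^{\LT,n})^{\circ\circ}$ can be checked on fibers via Corollary \ref{cor:Efet}: over each $E_i$ this reduces to the classical facts that a point of exact order $\pi_i^n$ generates $E_i^{\LT,n}$ over $E_i^{\LT,n-1}$ and that $\abs{t_{n,i}}$ equals the constant $q^{-1/(q^{n-1}(q-1))}<1$, because $E_i^{\LT,n}/E_i$ is totally ramified of degree $q^{n-1}(q-1)$, independent of $i$.

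The main obstacle is the bookkeeping in the previous paragraph: identifying the $A_n$ with the abstractly-defined $E^{\LT,n}$ \emph{compatibly with the transition maps} used to build $E^{\LT}$, rather than merely fiberwise. If those transition maps are themselves induced by multiplication by $\pi$ on Lubin--Tate torsion this is immediate; otherwise one constructs the $\iota_n$ recursively, using that at each stage the isomorphisms extending a given $\iota_n$ form a torsor under $\mathrm{Aut}(E^{\LT,n+1}/E^{\LT,n})$, a finite group topological space over $\bN\cup\{\infty\}$ which is nonempty over every point, so that a global choice exists by the elementary fact that a finite locally constant surjection onto $\bN\cup\{\infty\}$ admits a section (it is constant over a neighborhood of $\infty$, and the complement of that neighborhood is finite). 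Entwined with this is the uniformity required for $t_n$ to be topologically nilpotent in the family $E^{\LT,n}$ and not merely in each $E_i^{\LT,n}$, which is precisely the constancy in $i$ of the ramification index $q^{n-1}(q-1)$ noted above.
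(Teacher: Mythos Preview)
Your argument is correct and arrives at the same object—the tautological coordinate $X_0$ restricted to the primitive $\pi^n$-torsion locus, identified with $E^{\LT,n}$—but by a genuinely different mechanism than the paper. The paper's proof is constructive via Weierstrass preparation: it factors $f_\pi/X_0=u_1g_1$ over $\cO$ to exhibit $E^{\LT,1}\cong E[X_0]/g_1$ with $t_1$ the image of $X_0$, and then inductively factors $f_\pi-t_n=u_{n+1}g_{n+1}$ over $(E^{\LT,n})^\circ$ to exhibit $E^{\LT,n+1}\cong E^{\LT,n}[X_0]/g_{n+1}$ with $t_{n+1}$ the image of $X_0$. The relations $f_\pi(t_1)=0$ and $f_\pi(t_{n+1})=t_n$ then fall out of the polynomial identities $t_1u_1(t_1)g_1(t_1)=0$ and $u_{n+1}(t_{n+1})g_{n+1}(t_{n+1})=0$. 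The identification with $E^{\LT,n+1}$ at each stage is handled by ``arguing as in the proof of Proposition~\ref{ss:LTtorsion}'', which is the same Galois-theoretic step you invoke, but the paper never needs to separately address compatibility of transition maps: the inductive Weierstrass factorization over $E^{\LT,n}$ builds in the inclusion $E^{\LT,n}\hra E^{\LT,n+1}$ automatically.

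Your route trades this explicitness for a cleaner conceptual picture (primitive torsion as a direct factor via the clopen inclusion $\bO_n^\times\subset\bO_n$), at the cost of having to organize the recursive choice of $\iota_n$. Your handling of that point—noting that extensions of $\iota_n$ form a pseudotorsor under $\Aut(E^{\LT,n+1}/E^{\LT,n})$, nonempty fiberwise and hence globally over $\bN\cup\{\infty\}$—is sound. The paper's approach buys concrete polynomials $g_n$ that could be reused elsewhere and makes $f_\pi(t_{n+1})=t_n$ immediate; yours buys independence from Weierstrass preparation and makes the link to Proposition~\ref{ss:LTtorsion} and Corollary~\ref{cor:Efet} more transparent. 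Both check $t_n\in(E^{\LT,n})^{\circ\circ}$ fiberwise.
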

\begin{proof}
  Note that $f_\pi$ is divisible by $X_0$, and $f_\pi/X_0\equiv X_0^{q-1}\pmod{\pi}$. Therefore \cite[Chapter IV, Theorem 9.2]{Lan02}\footnote{Note that \cite[Chapter IV, Theorem 9.1]{Lan02} and \cite[Chapter IV, Theorem 9.2]{Lan02} hold for any ring $\fo$ and ideal $\fm\subseteq\fo$ such that is $\fm$-adically complete (with the same proofs).} yields a monic polynomial $g_1$ in $\cO[X_0]$ such that $f_\pi/X_0=u_1g_1$ for some unit $u_1$ in $\cO\lb{X_0}$. Since $f_\pi=X_0u_1g_1$, one can use \cite[Chapter IV, Theorem 9.1]{Lan02} to identify $\LT_E[\pi]$ with $\Spa E[X_0]/(X_0g_1)$, which identifies $\LT_E[\pi]-\{0\}$ with $\Spa E[X_0]/g_1$. Proposition \ref{ss:LTtorsion} identifies $\LT_E[\pi]-\{0\}$ with $\Spa E^{\LT,1}$, so the image $t_1$ of $X_0$ in $E^{\LT,1}$ is a generator over $E$. By checking the image of $t_1$ in $E^{\LT,1}_i$ for all $i$ in $\bN\cup\{\infty\}$, we see that $t_1$ lies in $(E^{\LT,1})^{\circ\circ}$, which lets us evaluate $f_\pi(t_1)=t_1u_1(t_1)g_1(t_1)=0$.

  To inductively obtain $t_{n+1}$ from $t_n$, note that $f_\pi-t_n\equiv X^q_0\pmod{t_n}$. Therefore \cite[Chapter IV, Theorem 9.2]{Lan02} yields a monic polynomial $g_{n+1}$ in $(E^{\LT,n})^\circ[X_0]$ such that $f_\pi-t_n=u_{n+1}g_{n+1}$ for some unit $u_{n+1}$ in $(E^{\LT,n})^\circ\lb{X_0}$. Arguing as in the proof of Proposition \ref{ss:LTtorsion} identifies $E^{\LT,n+1}$ with $E^{\LT,n}[X_0]/g_{n+1}$, so the image $t_{n+1}$ of $X_0$ in $E^{\LT,n+1}$ is a generator over $E^{\LT,n}$. By checking the image of $t_{n+1}$ in $E^{\LT,n+1}_i$ for all $i$ in $\bN\cup\{\infty\}$, we see that $t_{n+1}$ lies in $(E^{\LT,n+1})^{\circ\circ}$, which lets us evaluate $f_\pi(t_{n+1}) = u_{n+1}(t_{n+1})g_{n+1}(t_{n+1})+t_n=t_n$.
\end{proof}

\subsection{}\label{ss:ELTperfectoid}
Fix a sequence $(t_n)_{n\geq1}$ as in Lemma \ref{lem:LTbasis}. For all $i$ in $\bN\cup\{\infty\}$, write $t_{n,i}$ for the image of $t_n$ in $E^{\LT,n}_i$. Recall from \ref{ss:fieldperf} the topological field $E^{\perf}_\infty$, and note that $E^{\perf}_\infty$ equals the completion $\bF_q\lp{t^{1/q^\infty}}$ of $\bigcup_{r=0}^\infty\bF_q\lp{t^{1/q^r}}$.
\begin{prop*}
The ring $E^{\LT}$ is perfectoid, and our choice of $(t_n)_{n\geq1}$ induces an identification $(E^{\LT})^\flat = \Cont(\bN\cup\{\infty\},E^{\perf}_\infty)$ such that $\pi_\infty^\sharp=\lim_{n\to\infty}t_n^{q^n}$.
\end{prop*}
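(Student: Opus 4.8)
The plan is to follow the proof of Proposition \ref{ss:Cperfectoid} essentially verbatim, with the Lubin--Tate extensions $E_i^{\LT}$ playing the role of the fields $C_i$ there. By construction $\cO^{\LT}$ is an open bounded subring of $E^{\LT}=\cO^{\LT}[\frac1\pi]$, equal to its subring of powerbounded elements, and $\cO^{\LT}/\pi=\varinjlim_n(E^{\LT,n})^\circ/\pi$. Each $E^{\LT,n}$ is finite \'etale over $E$ (Lemma \ref{ss:LTextensions}), so exactly as for $\cO_C$ in Proposition \ref{ss:Cperfectoid} (via Corollary \ref{cor:Efet}) one has
\[
  (E^{\LT,n})^\circ/\pi=\varinjlim_d\Big(\prod_{i\leq d}\cO_{E_i^{\LT,n}}/\pi_i\Big)\times\cO_{E_\infty^{\LT,n}}/\pi_\infty,
\]
where $d$ runs over natural numbers large with respect to $n$. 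Switching the two filtered colimits and using $\varinjlim_n\cO_{E_i^{\LT,n}}/\pi_i=\cO_{E_i^{\LT}}/\pi_i$ for each $i$ then gives
\[
  \cO^{\LT}/\pi=\varinjlim_d\Big(\prod_{i\leq d}\cO_{E_i^{\LT}}/\pi_i\Big)\times\cO_{E_\infty^{\LT}}/\pi_\infty.
\]

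From this, perfectoidness is immediate. Lubin--Tate theory over $E_i$ (recalled in \ref{ss:LTextensions}) shows that each $E_i^{\LT}$ is perfectoid, so the absolute $q$-Frobenius on $\cO_{E_i^{\LT}}/\pi_i$ is surjective; since finite products and filtered colimits of surjections are surjective, the displayed formula shows that the absolute $q$-Frobenius on $\cO^{\LT}/\pi$, hence on $\cO^{\LT}/p$, is surjective, so $E^{\LT}$ is perfectoid by \cite[Remark 3.2]{Sch17}. For the tilt, I would apply $\varprojlim$ along absolute $q$-Frobenius to the displayed formula; this commutes with the filtered colimit over $d$ and with the finite products, and for each $i$ one has $\varprojlim_{\mathrm{Frob}}\cO_{E_i^{\LT}}/\pi_i=\cO_{(E_i^{\LT})^\flat}$, which by Lubin--Tate theory over $E_i$ is $\cO_{E_\infty^{\perf}}$. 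This gives $(\cO^{\LT})^\flat=\Cont(\bN\cup\{\infty\},\cO_{E_\infty^{\perf}})$, and inverting $\pi$ yields $(E^{\LT})^\flat=\Cont(\bN\cup\{\infty\},E_\infty^{\perf})$.

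It remains to identify the distinguished element. In fiber $i$, the torsion points $(t_{n,i})_{n\geq1}$ of Lemma \ref{lem:LTbasis} satisfy $f_\pi(t_{n+1,i})=t_{n,i}$, so $t_{n+1,i}^q\equiv t_{n,i}\pmod{\pi_i}$ since $f_\pi(X)\equiv X^q\pmod\pi$; hence their reductions assemble to a Frobenius-compatible sequence in $\varprojlim_{\mathrm{Frob}}\cO_{E_i^{\LT}}/\pi_i=\cO_{(E_i^{\LT})^\flat}$, and one should check that Lubin--Tate theory over $E_i$ normalizes the identification $(E_i^{\LT})^\flat\cong E_\infty^{\perf}$ so that this sequence corresponds exactly to $\pi_\infty$, giving $\pi_\infty^\sharp=\lim_{n\to\infty}t_{n,i}^{q^n}$ in $E_i^{\LT}$ (the limit existing by the estimate in the proof of Lemma \ref{ss:infiniteLTcoordinate}). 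Since $\lim_{n\to\infty}t_n^{q^n}$ then converges $\pi$-adically in $\cO^{\LT}$ with fiber $\lim_{n\to\infty}t_{n,i}^{q^n}$ at $i$, this yields $\pi_\infty^\sharp=\lim_{n\to\infty}t_n^{q^n}$ globally.

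The main obstacle is the fiberwise statement just described: not merely that $(E_i^{\LT})^\flat\cong E_\infty^{\perf}$, but that under this identification $\pi_\infty$ corresponds \emph{exactly} to the torsion-point pseudouniformizer built from the $t_{n,i}$, rather than to a unit multiple of some power of it. I expect this is most cleanly handled either by appealing to the classical description of Lubin--Tate tilts, or by re-deriving it globally from the explicit logarithm: the description of $\wt{\log}$ in \ref{ss:tildelog} and Corollary \ref{ss:logsandtheta}, combined with $f_\pi(t_{n+1})=t_n$, should compute the sharp map on $(E^{\LT})^\flat$ directly and bypass the fiberwise normalization. The remaining points are routine: the product-over-fibers description of $(E^{\LT,n})^\circ/\pi$ (the finite \'etale analogue used already for $C$, requiring no ramification hypothesis) and the $\pi$-adic convergence of $\lim_n t_n^{q^n}$.
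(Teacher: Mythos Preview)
Your outline—mirror Proposition~\ref{ss:Cperfectoid} with $E_i^{\LT}$ in place of $C_i$—is natural, but it has a real gap, and the paper takes a different route that avoids it.

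The gap: $\varprojlim$ along Frobenius does \emph{not} commute with the filtered colimit over $d$. Your formula $\cO^{\LT}/\pi=\varinjlim_d A_d$ is the same thing as $\Cont(\bN\cup\{\infty\}, A)$ for the discrete ring $A=\cO_{E_\infty^{\LT}}/\pi_\infty$, and its inverse-limit perfection is $\Cont(\bN\cup\{\infty\}, A^\flat)$ with $A^\flat$ carrying the non-discrete inverse-limit topology. But $\varinjlim_d\prod_i(\cO_{E_i^{\LT}})^\flat$ is only the \emph{locally constant} functions into $A^\flat$—a proper subring, since for any pseudouniformizer $\varpi^\flat\in A^\flat$ the sequence $i\mapsto(\varpi^\flat)^{q^i}$ (with value $0$ at $\infty$) is continuous but not eventually constant, yet lies in the tilt because at each fixed Frobenius level its components eventually vanish in $A$.

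The paper sidesteps this and your acknowledged main obstacle in one stroke, by computing $\cO^{\LT}/\pi$ \emph{globally} from the relations $t_1^{q-1}\equiv 0$ and $t_{n+1}^q\equiv t_n\pmod{\pi}$: one gets
\[
\cO^{\LT}/\pi=\Cont\Big(\bN\cup\{\infty\},\ \bigcup_{n\geq1}\bF_q[t^{1/q^{n-1}(q-1)}]/t\Big),
\]
where $t^{1/q^{n-1}(q-1)}$ denotes the image of $t_n$. The inverse-limit perfection of this constant target ring is $(E_\infty^{\perf})^\circ$, and $\pi_\infty$ is visibly the Frobenius-compatible sequence $(\dots,t_2\bmod\pi,\,t_1\bmod\pi,\,0)$, giving $\pi_\infty^\sharp=\lim_n t_n^{q^n}$ without any fiberwise Lubin--Tate normalization. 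This is the global computation you allude to at the end; the point is that it \emph{replaces} the fiberwise argument rather than supplementing it. For perfectoidness the paper likewise uses the $t_n$: since $t_1\mid\pi$ and $t_2^q\equiv t_1\pmod\pi$, applying the same colimit-switching argument modulo $t_1$ and modulo $t_2$ shows that Frobenius $\cO^{\LT}/t_2\to\cO^{\LT}/t_2^q$ is an isomorphism—the perfectoid criterion with pseudouniformizer $t_2$—sidestepping your ``hence on $\cO^{\LT}/p$'' step.
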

\begin{proof}
  Since $\pi$ is a multiple of $t_1$ and $t_2^q\equiv f_\pi(t_2)=t_1\pmod{\pi}$, we see that $t_2^q$ is a unit multiple of $t_1$. Now $(E^{\LT})^\circ=\cO^{\LT}$, and we have
  \begin{align*}
    \cO^{\LT}\!/t_1 = \varinjlim_n(E^{\LT,n})^\circ/t_1 = \varinjlim_n\varinjlim_d\prod_{i\leq d}\cO_{E^{\LT,n}_i}/t_{1,i}\times\cO_{E^{\LT,n}_\infty}/t_{1,\infty},
  \end{align*}
  where $d$ is large enough with respect to $n$. Because $\varinjlim_n\cO_{E^{\LT,n}_i}/t_{1,i}=\cO_{E^{\LT}_i}/t_{1,i}$, switching the order of the direct limits yields
  \begin{align*}
    \varinjlim_d\varinjlim_n\prod_{i\leq d}\cO_{E^{\LT,n}_i}/t_{1,i}\times\cO_{E^{\LT,n}_\infty}/t_{1,\infty} = \varinjlim_d\prod_{i\leq d}\cO_{E^{\LT}_i}/t_{1,i}\times\cO_{E^{\LT}_\infty}/t_{1,\infty}.
  \end{align*}
  Replacing $t_1$ with $t_2$ yields $\cO^{\LT}\!/t_2=\varinjlim_d\prod_{i\leq d}\cO_{E^{\LT}_i}/t_{2,i}\times\cO_{E^{\LT}_\infty}/t_{2,\infty}$. Since $\cO_{E^{\LT}_i}$ is integral perfectoid, these descriptions imply that the absolute $q$-Frobenius map $\cO^{\LT}\!/t_2\ra\cO^{\LT}\!/t_2^q=\cO^{\LT}\!/t_1$ is an isomorphism, so $E^{\LT}$ is perfectoid.

We have $t^{q-1}_1\equiv f_\pi(t_1)/t_1= 0\pmod{\pi}$ and $t^q_{n+1}\equiv f_\pi(t_{n+1})=t_n\pmod{\pi}$. Therefore our choice of $(t_n)_{n\geq1}$ induces an identification
  \begin{align*}
    \cO^{\LT}\!/\pi = \bigcup_{n=1}^\infty(\cO/\pi)[t^{1/q^{n-1}(q-1)}]/t = \Cont\Big(\bN\cup\{\infty\},\bigcup_{n=1}^\infty\bF_q[t^{1/q^{n-1}(q-1)}]/t\Big),
  \end{align*}
  where $t^{1/q^{n-1}(q-1)}$ equals the image of $t_n$. Taking the inverse limit perfection yields
  \begin{align*}
    (\cO^{\LT})^\flat = \Cont(\bN\cup\{\infty\},(E^{\perf}_\infty)^\circ),
  \end{align*}
  where $\pi_\infty$ equals $(\dotsc,t^{1/q(q-1)},t^{1/(q-1)},0)$ in the inverse limit perfection of $\cO^{\LT}\!/\pi$. This implies the desired description of $(E^{\LT})^\flat$. Finally, because the sequence $(\dotsc,t_2,t_1,0)$ in $\cO^{\LT}$ is a lift of the sequence $(\dotsc,t^{1/q(q-1)},t^{1/(q-1)},0)$ in $\cO^{\LT}/\pi$, we see that $\pi_\infty^\sharp=\lim_{n\to\infty}t_n^{q^n}$, as desired.
\end{proof}

\subsection{}\label{ss:Div1O(1)}
At this point, we can explain the relationship between $\sO_{X_S}(1)$ and divisors on $X_S$. See \ref{ss:absoluteBanachColmez} for the definition of the $E$-module v-sheaf $\cB\cC(\sO(1))$ over $\ul{\bN\cup\{\infty\}}$.
\begin{prop*}
  Our choice of $(t_n)_{n\geq1}$ induces an isomorphism over $\ul{\bN\cup\{\infty\}}$
  \begin{align*}
    \Spd E^{\LT} \ra^\sim \cB\cC(\sO(1))-\{0\}.
  \end{align*}
\end{prop*}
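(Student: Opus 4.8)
The plan is to identify both sides, as v-sheaves over $\ul{\bN\cup\{\infty\}}$, with the punctured open perfectoid unit disk, which I will present as $\Spd\Cont(\bN\cup\{\infty\},E^{\perf}_\infty)$.

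First I would rewrite the target. By \ref{ss:absoluteBanachColmez}, Proposition \ref{ss:O(1)sections}, and Corollary \ref{ss:logsandtheta}.i, the assignment $\wt{a}\mapsto\sum_{r\in\bZ}[\wt{a}^{q^r}]/\pi^r$ defines an isomorphism of v-sheaves over $\ul{\bN\cup\{\infty\}}$ from $\wt{\LT}_{\cO/\pi}$ to $\cB\cC(\sO(1))$, carrying (by $E$-linearity) the origin of $\wt{\LT}_{\cO/\pi}$ to the zero section. By Lemma \ref{ss:infiniteLTcoordinate}, $\wt{\LT}_{\cO/\pi}$ is the open perfectoid unit disk over $\Spa(\cO/\pi)$, with coordinate $\wt{X}$ and origin $\{\wt{X}=0\}$; since $\cO/\pi\cong\Cont(\bN\cup\{\infty\},\bF_q)$ and $\bN\cup\{\infty\}$ is profinite, deleting the origin produces the analytic adic space $\Spa\Cont(\bN\cup\{\infty\},\bF_q\lp{\wt{X}^{1/q^\infty}})=\Spa\Cont(\bN\cup\{\infty\},E^{\perf}_\infty)$. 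Hence $\cB\cC(\sO(1))-\{0\}\cong\Spd\Cont(\bN\cup\{\infty\},E^{\perf}_\infty)$, with the tautological pseudouniformizer of the latter corresponding to $\wt{X}$.

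Next I would rewrite the source. By Proposition \ref{ss:ELTperfectoid}, $E^{\LT}$ is perfectoid with $(E^{\LT})^\flat=\Cont(\bN\cup\{\infty\},E^{\perf}_\infty)$, so $\Spd E^{\LT}=\Spd(E^{\LT})^\flat=\Spd\Cont(\bN\cup\{\infty\},E^{\perf}_\infty)$; on a test object $S=\Spa(R,R^+)$ both $\Spd E^{\LT}$ and $\cB\cC(\sO(1))-\{0\}$ are thereby identified with the set of topologically nilpotent units of $R$. To check that the resulting isomorphism $\Spd E^{\LT}\ra^\sim\cB\cC(\sO(1))-\{0\}$ is the one induced by $(t_n)_{n\geq1}$, I would match the two distinguished pseudouniformizers of $\Cont(\bN\cup\{\infty\},E^{\perf}_\infty)$: the disk coordinate $\wt{X}$, which by Lemma \ref{ss:infiniteLTcoordinate} is the function $(a_n)_{n\geq0}\mapsto\lim_na_n^{q^n}$ on $\wt{\LT}$, and $\pi_\infty=t\in(E^{\LT})^\flat$, which by Proposition \ref{ss:ELTperfectoid} corresponds to $\pi_\infty^\sharp=\lim_nt_n^{q^n}$; since Lemma \ref{lem:LTbasis} exhibits $(0,t_1,t_2,\dotsc)$ as a point of $\wt{\LT}$ (that is, $f_\pi(t_{n+1})=t_n$ and $f_\pi(t_1)=0$), its $\wt{X}$-coordinate being precisely $\lim_nt_n^{q^n}$, the two agree, so the dependence of the whole isomorphism on $(t_n)_{n\geq1}$ is exactly the one encoded in Proposition \ref{ss:ELTperfectoid}.

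The step I expect to be the main obstacle is the claim, used in the first paragraph, that deleting the origin of $\wt{\LT}_{\cO/\pi}$ gives $\Spa\Cont(\bN\cup\{\infty\},\bF_q\lp{\wt{X}^{1/q^\infty}})$ — equivalently, that $\Cont(\bN\cup\{\infty\},\bF_q\lp{\wt{X}^{1/q^\infty}})$ corepresents the functor of topologically nilpotent units with compatible $q$-power roots on affinoid perfectoids over $\ul{\bN\cup\{\infty\}}$, so that its $\Spd$ really is the open complement of the origin. This amounts to showing that $\Cont(\bN\cup\{\infty\},-)$ commutes with the completed localization defining the punctured open disk; I would prove it by writing $\cO/\pi=\varprojlim_d\big[(\prod_{i\leq d}\bF_q)\times\bF_q\big]$ as in \ref{ss:Cperfectoid}, reducing corepresentability to the fibers $E^{\perf}_\infty=\bF_q\lp{t^{1/q^\infty}}$ where it is classical, while keeping track of the interaction between the $\wt{X}$-adic and the product topologies. (Alternatively one can try to verify the isomorphism fiberwise over $\bN\cup\{\infty\}$ via Proposition \ref{ss:Wittspecialization} and the classical Lubin--Tate identity over each $E_i$, but this needs an extra descent argument to globalize, so I would favour the direct route above.)
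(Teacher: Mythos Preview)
Your proof is correct and takes essentially the same approach as the paper: identify both sides with $\Spa\Cont(\bN\cup\{\infty\},E^{\perf}_\infty)$ via Proposition~\ref{ss:ELTperfectoid} and Corollary~\ref{ss:logsandtheta}.i), then check that the map induced by $(t_n)_{n\geq1}$ corresponds to the $\cO/\pi$-algebra map sending $\wt{X}\mapsto\pi_\infty$. The punctured-disk identification you flag as the main obstacle is treated by the paper as immediate from Corollary~\ref{ss:logsandtheta}.i), and indeed follows from compactness of $\bN\cup\{\infty\}$ as you outline.
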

\begin{proof}
  Since $f_\pi(t_1)=0$ and $f_\pi(t_{n+1})=t_n$ for all positive integers $n$, our choice of $(t_n)_{n\geq1}$ induces a morphism $\Spa E^{\LT}\ra\wt{\LT}$. Therefore precomposing with an untilt $S^\sharp=\Spa(R^\sharp,R^{\sharp+})$ of $S$ over $\Spa E^{\LT}$ yields an element of
  \begin{align*}
    \wt{\LT}(S^\sharp)=\wt{\LT}(S)\ra^\sim H^0(X_S,\sO_{X_S}(1)),
  \end{align*}
and this induces a morphism $\Spd E^{\LT}\ra\cB\cC(\sO(1))$ over $\ul{\bN\cup\{\infty\}}$.

  Proposition \ref{ss:ELTperfectoid} shows that $E^{\LT}$ is perfectoid. Moreover, Proposition \ref{ss:ELTperfectoid} identifies
  \begin{align*}
    \Spd E^{\LT} = \Spa(E^{\LT})^\flat = \Spa\Cont(\bN\cup\{\infty\},E^{\perf}_\infty),
  \end{align*}
and under the identification from Corollary \ref{ss:logsandtheta}.i), the morphism $\Spd E^{\LT}\ra\cB\cC(\sO(1))$ sends $S^\sharp\ra\Spa E^{\LT}$ to the image of $\pi_\infty$ in $R$. Hence the image of this morphism lies in $\cB\cC(\sO(1))-\{0\}$. Corollary \ref{ss:logsandtheta}.i) even identifies $\cB\cC(\sO(1))-\{0\}$ with the punctured open perfectoid unit disk $\Spa\Cont(\bN\cup\{\infty\},\bF_q\lp{\wt{X}^{1/q^\infty}})$ over $\ul{\bN\cup\{\infty\}}$, and $\Spd E^{\LT}\ra\cB\cC(\sO(1))-\{0\}$ corresponds to the map of $\cO/\pi$-algebras
  \begin{align*}
    \Cont(\bN\cup\{\infty\},\bF_q\lp{\wt{X}^{1/q^\infty}})\ra\Cont(\bN\cup\{\infty\},E^{\perf}_\infty)
  \end{align*}
  that sends $\wt{X}\mapsto\pi_\infty$. This is indeed an isomorphism.
\end{proof}

\subsection{}\label{ss:O(1)zeroes}
Let $\sg$ be an $S$-point of $\cB\cC(\sO(1))-\{0\}$. By Proposition \ref{ss:Div1O(1)}, $\sg$ corresponds to an $S$-point of $\Spd E^{\LT}$, whose image under $\Spd E^{\LT}\ra\Div_Y^1\ra\Div^1_X$ naturally induces a closed Cartier divisor $S^\sharp\hra X_S$ by \ref{ss:Div1}.
\begin{prop*}
We get a short exact sequence
  \begin{align*}
    \xymatrix{0\ar[r] & \sO_{X_S}\ar[r]^-\sg & \sO_{X_S}(1)\ar[r] & \sO_{S^\sharp}\ar[r]& 0}
  \end{align*}
of sheaves on $X_S$.
\end{prop*}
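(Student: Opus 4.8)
The plan is to make the section $\sigma\in H^0(X_S,\sO_{X_S}(1))$ underlying this map explicit, show that it vanishes exactly along the Cartier divisor $S^\sharp$, and then read off the sequence.

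First, by \ref{ss:absoluteBanachColmez} and Proposition \ref{ss:O(1)sections}, the section $\sigma$ corresponds to $\sum_{n\in\bZ}[\wt a^{1/q^n}]\pi^n\in B^{\vp=\pi}_{S,[1,\infty]}$ for a unique $\wt a\in R^{\circ\circ}$; since $\sigma$ lies in $\cB\cC(\sO(1))-\{0\}$, the identifications in the proof of Proposition \ref{ss:Div1O(1)} (where $\cB\cC(\sO(1))-\{0\}\cong\Spd E^{\LT}$ is the punctured open perfectoid unit disk over $\ul{\bN\cup\{\infty\}}$) show that $\wt a$ is in fact a pseudouniformizer of $R$, hence has nonzero image in the residue field of every point of $S$. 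On the other hand, $\sigma$ is by construction the image of the $S$-point of $\Spd E^{\LT}$ built from $(t_n)_{n\ge1}$, which induces the $\wt\LT$-valued point whose components are $(0,t_1,t_2,\dots)$ with $0=f_\pi(t_1)$; as its $0$-th component vanishes, Corollary \ref{ss:logsandtheta}.iii identifies the restriction of $\sigma$ along the closed Cartier divisor $S^\sharp\hra X_S$ with $\wt{\log}$ of this point, namely $\log 0=0$. Hence $\sigma$ factors as $\sigma'\colon\sO_{X_S}\ra\mathcal I\otimes_{\sO_{X_S}}\sO_{X_S}(1)$ followed by the inclusion $\mathcal I\otimes_{\sO_{X_S}}\sO_{X_S}(1)\hra\sO_{X_S}(1)$, where $\mathcal I\subseteq\sO_{X_S}$ is the invertible ideal sheaf of $S^\sharp$ (invertible by Proposition \ref{ss:kertheta}).

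Next, I claim $\sigma'$ is an isomorphism, which can be checked pointwise. A point $x$ of $X_S$ lies over some $i\in\bN\cup\{\infty\}$; choosing a geometric point $\ov s$ of $S$ over $i$ and a lift $\ov x\in X_{\ov s}$ of $x$, and using that $\sO_{X_S,x}\to\sO_{X_{\ov s},\ov x}$ is local, it suffices to show $\sigma'$ restricts to a nowhere-vanishing section on $X_{\ov s}$. By Proposition \ref{ss:Wittspecialization}, $X_{\ov s}$ is the classical Fargues--Fontaine curve over an untilt of $E_i$, on which $\sO(1)$ has degree $1$ and $\mathcal I$ restricts to $\sO_{X_{\ov s}}(-\ov s^\sharp)$ of degree $-1$, so $\mathcal I\otimes\sO_{X_S}(1)$ restricts to a degree-$0$ line bundle on $X_{\ov s}$. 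Its section $\sigma'|_{X_{\ov s}}$ is nonzero (as $\sigma|_{X_{\ov s}}$ corresponds to $\wt a_{\ov s}\neq0$ via Proposition \ref{ss:O(1)sections}), but an effective divisor of degree $0$ on a Fargues--Fontaine curve is trivial, so $\sigma'|_{X_{\ov s}}$ is nowhere vanishing, proving the claim. Composing the isomorphism $\sigma'$ with $\mathcal I\otimes\sO_{X_S}(1)\hra\sO_{X_S}(1)$ then shows $\sigma\colon\sO_{X_S}\to\sO_{X_S}(1)$ is injective with cokernel $(\sO_{X_S}/\mathcal I)\otimes\sO_{X_S}(1)=\sO_{S^\sharp}\otimes_{\sO_{X_S}}\sO_{X_S}(1)$, which under the trivialization of $\sO_{X_S}(1)|_{S^\sharp}$ used in \ref{ss:absoluteBanachColmez} (and in Corollary \ref{ss:logsandtheta}.iii) becomes $\sO_{S^\sharp}$, with quotient map the morphism in the statement.

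The step I expect to require the most care is the pointwise reduction to geometric fibers: one must verify that $X_S$, the line bundle $\sO_{X_S}(1)$, the divisor $S^\sharp$, and the section $\sigma$ are all compatible with base change along $\ov s\to S$, and that the resulting curve is literally the Fargues--Fontaine curve over $E_i$, so that the degree function, triviality of $\Pic^0$, and divisor theory are available. This relies on Proposition \ref{ss:Wittspecialization} together with the base-change and sousperfectoidness results of \S\ref{s:FFcurves}, such as \ref{ss:Ysousperfectoid} and \ref{ss:Yrationalopensubspace}. Everything else is formal bookkeeping with line bundles and Cartier divisors.
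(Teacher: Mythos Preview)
Your argument is correct and follows essentially the same route as the paper: use Corollary~\ref{ss:logsandtheta}.iii) (via the fact that the $\wt{\LT}$-point built from $(t_n)_{n\ge1}$ has zeroth coordinate $f_\pi(t_1)=0$, hence lies in $\ker\wt{\log}$) to see that $\sigma$ factors through $\sI_{S^\sharp}(1)$, then check that this factorization is an isomorphism after pulling back to geometric points $\ov s$ of $S$ and invoking the classical Fargues--Fontaine curve over $E_i$. The only difference is cosmetic: where you spell out the degree-$0$ argument on $X_{\ov s}$, the paper simply cites \cite[Proposition II.2.3]{FS21}, which encodes exactly that computation.
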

\begin{proof}
  Write $\sI_{S^\sharp}$ for the ideal of $\sO_{X_S}$ corresponding to the closed Cartier divisor $S^\sharp\hra X_S$. Note that the morphism $\Spa E^{\LT}\ra\wt{\LT}_E$ induced by our choice of $(t_n)_{n\geq1}$ lies in the kernel of $\wt{\log}$, so Corollary \ref{ss:logsandtheta}.iii) indicates that the image of $\sg:\sO_{X_S}\ra\sO_{X_S}(1)$ lies in $\sI_{S^\sharp}(1)$.

Since $\sI_{S^\sharp}(1)$ is a line bundle, to show that $\sg:\sO_{X_S}\ra\sI_{S^\sharp}(1)$ is an isomorphism, it suffices to check after pulling back to geometric points $\ov{s}$ of $S$. Now the image of $\ov{s}$ in $\bN\cup\{\infty\}$ equals $\{i\}$ for some $i$ in $\bN\cup\{\infty\}$, so the result follows from Proposition \ref{ss:Wittspecialization} and \cite[Proposition II.2.3]{FS21}.
\end{proof}

\subsection{}\label{ss:torsorsareproetale}
We want to use our study of $\Spd E^{\LT}$ to prove that $\Div_X^1\ra\ul{\bN\cup\{\infty\}}$ is cohomologically smooth of dimension $1$. This requires the following generalization of \cite[Lemma 10.13]{Sch17}, which will also be useful later. Let $G$ be a group topological space over $\bN\cup\{\infty\}$ that is lctd over $\bN\cup\{\infty\}$ as in Definition \ref{ss:grouphypotheses}, and let $\{K^\al\}_\al$ be a family of compact open group subspaces of $G$ over $\bN\cup\{\infty\}$ satisfying Definition \ref{ss:grouphypotheses}.a).
\begin{lem*}
  Let $\wt{S}\ra S$ be a $\ul{G}$-torsor in the category of v-sheaves. For all $\al$, the morphism $\ul{K^\al\bs G}\times^{\ul{G}}\wt{S}\ra S$ is separated \'etale, and the natural morphism
  \begin{align*}
    \wt{S}\ra\textstyle\varprojlim_\al\ul{K^\al\bs G}\times^{\ul{G}}\wt{S}
  \end{align*}
  is an isomorphism. Consequently, $\wt{S}\ra S$ is a pro-\'etale cover and universally open.
\end{lem*}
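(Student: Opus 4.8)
The plan is to follow the proof of \cite[Lemma 10.13]{Sch17}, carried out relative to $\ul{\bN\cup\{\infty\}}$ and using \S\ref{s:smoothrepresentations} to accommodate the fact that $G$ is merely lctd over $\bN\cup\{\infty\}$, so that $K^\al\bs G$ has infinite fibers. All four assertions---``separated \'etale,'' ``isomorphism,'' ``pro-\'etale cover,'' and ``universally open''---may be checked v-locally on $S$ by descent \cite[Proposition 17.3]{Sch17}. Since a $\ul{G}$-torsor becomes trivial after a v-cover of $S$, I may therefore assume $\wt{S}=\ul{G}\times_{\ul{\bN\cup\{\infty\}}}S$, in which case $\ul{K^\al\bs G}\times^{\ul{G}}\wt{S}=\ul{K^\al\bs G}\times_{\ul{\bN\cup\{\infty\}}}S$; it then suffices to treat $\wt{S}=\ul{\bN\cup\{\infty\}}$, i.e. to show that $\ul{K^\al\bs G}\ra\ul{\bN\cup\{\infty\}}$ is separated \'etale and that $\ul{G}\ra\varprojlim_\al\ul{K^\al\bs G}$ is an isomorphism.

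For the first point, the analogue for right cosets of Proposition \ref{ss:openquotient} exhibits $K^\al\bs G$ as a topological space \'etale over $\bN\cup\{\infty\}$, and the analogue of Lemma \ref{ss:separateHcosets} shows that it is Hausdorff over $\bN\cup\{\infty\}$. Hence every point of $K^\al\bs G$ has a clopen neighborhood mapping homeomorphically onto a clopen subset of $\bN\cup\{\infty\}$, and these clopen subspaces cover $K^\al\bs G$. Applying $\ul{(-)}$ yields an \'etale cover of $\ul{K^\al\bs G}$ by v-sheaves each of which is an open sub-v-sheaf of $\ul{\bN\cup\{\infty\}}$, so $\ul{K^\al\bs G}\ra\ul{\bN\cup\{\infty\}}$ is \'etale; separatedness follows from the Hausdorffness above.

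For the second point, $\ul{(-)}$ commutes with cofiltered limits of topological spaces, so it is enough to show that the natural map $G\ra\varprojlim_\al K^\al\bs G$ of topological spaces over $\bN\cup\{\infty\}$---with transition maps the projections for $K^{\al'}\subseteq K^\al$---is a homeomorphism. It is injective because $G$ is Hausdorff and $\{K^\al\}_\al$ is cofinal among neighborhoods of the identity section, and I would check it is bijective on stalks (controlling $\ker\ev_s$ via Lemma \ref{ss:evaluationkernel}) and open, hence a homeomorphism.

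Finally, the ``Consequently'' clause is formal: the torsor $\wt{S}\ra S$ is a v-cover, so each $\ul{K^\al\bs G}\times^{\ul{G}}\wt{S}\ra S$ is surjective and hence, by the first point, a separated \'etale cover; by the second point $\wt{S}$ is the cofiltered limit over $S$ of these covers, so $\wt{S}\ra S$ is a pro-\'etale cover, and universal openness then follows as in \cite[Lemma 10.13]{Sch17}. The main obstacle I expect is the first point: one cannot reduce directly to the finite-\'etale case of \cite[Lemma 10.13]{Sch17}, since $K^\al\bs G$ need not be quasicompact over $\bN\cup\{\infty\}$, and the Hausdorffness supplied by Lemma \ref{ss:separateHcosets} is precisely what replaces the finite decomposition there with a decomposition into clopen pieces mapping isomorphically onto clopen subsets of the base.
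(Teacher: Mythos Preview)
Your proposal is correct and takes essentially the same approach as the paper: reduce to the trivial torsor via v-descent, use Lemma \ref{ss:separateHcosets} to see that $K^\al\bs G$ is Hausdorff \'etale over $\bN\cup\{\infty\}$, invoke Definition \ref{ss:grouphypotheses}.a) for $G\cong\varprojlim_\al K^\al\bs G$, and then deduce the ``Consequently'' clause formally from the inverse-limit presentation. One nitpick: the v-local descent of ``separated \'etale'' is \cite[Proposition 10.11 (iv)]{Sch17}, not \cite[Proposition 17.3]{Sch17} (which concerns $D_{\et}$); also, ``universally open'' is not itself checked v-locally but rather deduced, as you correctly do in your final paragraph, from the fact that every open of $\wt{S}$ descends to some finite level.
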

\begin{proof}
  First, assume that $\wt{S}$ is the trivial $\wt{G}$-torsor $\ul{G}\times_{\ul{\bN\cup\{\infty\}}}S$. Lemma \ref{ss:separateHcosets} implies that the \'etale topological space $K^\al\bs G$ over $\bN\cup\{\infty\}$ is Hausdorff, so the morphism
  \begin{align*}
    \ul{K^\al\bs G}\times^{\ul{G}}\wt{S} = \ul{K^\al\bs G}\times_{\ul{\bN\cup\{\infty\}}}S\ra S
  \end{align*}
is separated \'etale. Moreover, Definition \ref{ss:grouphypotheses}.a) indicates that $G\ra\textstyle\varprojlim_\al K^\al\bs G$ is an isomorphism, so the same holds for $\wt{S}\ra\textstyle\varprojlim_\al\ul{K^\al\bs G}\times^{\ul{G}}\wt{S}=\varprojlim_\al\ul{K^\al\bs G}\times_{\ul{\bN\cup\{\infty\}}}S$.

In general, $\wt{S}$ is trivial after replacing $S$ with a v-cover, so the above and \cite[Proposition 10.11 (iv)]{Sch17} show that $\ul{K^\al\bs G}\times^{\ul{G}}\wt{S}\ra S$ is separated \'etale in general. Because isomorphisms of v-sheaves can be checked after a v-cover, the above also shows that $\wt{S}\ra\textstyle\varprojlim_\al\ul{K^\al\bs G}\times^{\ul{G}}\wt{S}$ is an isomorphism in general.

The presentation $\wt{S}=\textstyle\varprojlim_\al\ul{K^\al\bs G}\times^{\ul{G}}\wt{S}$ implies that $\wt{S}\ra S$ is a pro-\'etale cover. Since the morphisms $\ul{K^\al\bs G}\times^{\ul{G}}\wt{S}\ra S$ are universally open and every open subspace of $\wt{S}$ arises via pullback from $\ul{K^\al\bs G}\times^{\ul{G}}\wt{S}$ for some $\al$, this also indicates that $\wt{S}\ra S$ is universally open, as desired.
\end{proof}

\subsection{}\label{ss:profinitequotient}
We also need the following variant of \cite[Proposition 24.2]{Sch17}. Write $(\bO^\times)^n$ for the kernel of $\bO^\times\ra\bO^\times_n$. Note that $\bO^\times$ is lctd over $\bN\cup\{\infty\}$, and $\{(\bO^\times)^n\}_{n\geq1}$ forms a family of compact open group subspaces of $\bO^\times$ over $\bN\cup\{\infty\}$ satisfying Definition \ref{ss:grouphypotheses}.a).

Let $f:Z'\ra Z$ be a morphism of small v-stacks over $\ul{\bN\cup\{\infty\}}$, and let
\begin{align*}
\ul{(\bO^\times)^1}\times_ZZ'\ra Z'
\end{align*}
be a free action of $\ul{(\bO^\times)^1}$ on $Z'$ over $Z$. For all positive integers $n$, write
\begin{align*}
\xymatrixcolsep{3pc}
\xymatrix{Z'\ar[r]^-{q_n} & Z'/\ul{(\bO^\times)^n}\ar[r]^-{f_{(\bO^\times)^n}} & Z} 
\end{align*}
for the natural morphisms. Let $\La$ be a ring that is $\ell$-power torsion.
\begin{prop*}
If $f$ is representable in locally spatial diamonds, separated, and cohomologically smooth, then the same holds for $f_{(\bO^\times)^1}$. Moreover, there is a natural transformation $q^*_1\ra q^!_1$ of functors
  \begin{align*}
    D_{\et}(Z'/\ul{(\bO^\times)^1},\La)\ra D_{\et}(Z',\La)
  \end{align*}
  such that $q^*_1f_{(\bO^\times)^1}^!\ra q^!_1f_{(\bO^\times)^1}^!=f^!$ is an isomorphism.
\end{prop*}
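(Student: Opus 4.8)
The plan is to follow the proof of \cite[Proposition 24.2]{Sch17}, with the group topological space $(\bO^\times)^1$ over $\bN\cup\{\infty\}$ playing the role of the profinite group there. What makes this possible is that $(\bO^\times)^1$ is \emph{profinite over $\bN\cup\{\infty\}$}: by Definition \ref{ss:grouphypotheses}.a) the map $(\bO^\times)^1\ra\varprojlim_n(\bO^\times)^1/(\bO^\times)^n$ is an isomorphism, each $(\bO^\times)^1/(\bO^\times)^n$ is finite locally constant over $\bN\cup\{\infty\}$, and fiberwise over $i\in\bN\cup\{\infty\}$ the group $(\bO^\times)^1$ equals $1+\fp_i$, which is pro-$p$; since $\ell\neq p$, the (locally constant, $\bZ$-valued) order of each $(\bO^\times)^1/(\bO^\times)^n$ is invertible in $\La$. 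Write $B\ul{(\bO^\times)^1}$ for the quotient v-stack $\ul{\bN\cup\{\infty\}}/\ul{(\bO^\times)^1}$ and $\pi\colon\ul{\bN\cup\{\infty\}}\ra B\ul{(\bO^\times)^1}$ for the tautological atlas. Since the $\ul{(\bO^\times)^1}$-action on $Z'$ is free, $q_1$ is a $\ul{(\bO^\times)^1}$-torsor; it is therefore classified by a morphism $c\colon Z'/\ul{(\bO^\times)^1}\ra B\ul{(\bO^\times)^1}$ over $\ul{\bN\cup\{\infty\}}$, and the square
\begin{align*}
  \xymatrix{Z'\ar[r]\ar[d]_-{q_1} & \ul{\bN\cup\{\infty\}}\ar[d]^-\pi \\ Z'/\ul{(\bO^\times)^1}\ar[r]^-c & B\ul{(\bO^\times)^1}}
\end{align*}
is cartesian; that is, $q_1$ is the base change of $\pi$ along $c$. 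It therefore suffices to establish the analogous properties for $\pi$.

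I would first prove: \emph{$\pi$ is representable in spatial diamonds, proper, and cohomologically smooth of dimension $0$, with the natural map $\La\ra\omega_\pi\coloneqq\pi^!\La$ an isomorphism.} Representability in spatial diamonds and properness follow from the fact that the diagonal of $B\ul{(\bO^\times)^1}$ over $\ul{\bN\cup\{\infty\}}$ is $\ul{(\bO^\times)^1}\ra\ul{\bN\cup\{\infty\}}$, which is a profinite set over a profinite set (hence representable in spatial diamonds and proper), together with Lemma \ref{ss:torsorsareproetale}. Since cohomological smoothness is v-local on the target, to check it (and the triviality of $\omega_\pi$) I would pull back along $\pi$ itself, reducing to the claim that the structure map $\ul{(\bO^\times)^1}\ra\ul{\bN\cup\{\infty\}}$ is cohomologically smooth of dimension $0$ with dualizing complex canonically $\La$; this is in turn v-local on $\ul{\bN\cup\{\infty\}}$, and pulling back along the v-cover by all geometric points reduces it to $\ul{1+\fp_i}\ra\ov{s}$ for the honest profinite group $1+\fp_i$ over a geometric point $\ov{s}$, which is exactly \cite[Proposition 24.2]{Sch17} (cohomologically smooth of dimension $0$ because $1+\fp_i$ is pro-$p$ and $\ell\neq p$). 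Granting the claim about $\pi$, base change gives that $q_1$ is representable in spatial diamonds, proper, surjective (being a torsor), and cohomologically smooth of dimension $0$ with $\omega_{q_1}=c^*\omega_\pi\cong\La$. From $f=f_{(\bO^\times)^1}\circ q_1$ the remaining assertions then follow by the formalism of \cite[\S13,\S23]{Sch17}: representability in locally spatial diamonds, separatedness, and cohomological smoothness descend from $f$ to $f_{(\bO^\times)^1}$ along the proper cohomologically smooth surjection $q_1$; and since $q_1^!\cong q_1^*(-)\otimes\omega_{q_1}\cong q_1^*$, the isomorphism $\La\ra^\sim\omega_{q_1}$ furnishes the desired natural transformation $q_1^*\ra q_1^!$, with $q_1^*f_{(\bO^\times)^1}^!\cong q_1^!f_{(\bO^\times)^1}^!=f^!$ as required.

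The main obstacle is the cofiltered-limit step inside the claim about $\pi$: identifying the dualizing complex of $\ul{(\bO^\times)^1}\ra\ul{\bN\cup\{\infty\}}$ (equivalently of each geometric fiber $\ul{1+\fp_i}\ra\ov{s}$) as a limit of copies of $\La$ along the \emph{trace} maps attached to the finite \'etale transition morphisms $\ul{(\bO^\times)^1/(\bO^\times)^{n+1}}\ra\ul{(\bO^\times)^1/(\bO^\times)^n}$, and concluding that this limit is canonically $\La$. These trace maps are multiplication by the degrees $[(\bO^\times)^n:(\bO^\times)^{n+1}]$, which are powers of $p$; it is exactly their invertibility in $\La$ --- i.e. $\ell\neq p$ --- that makes the limit collapse to $\La$ and hence forces $\omega_\pi\cong\La$. (Without this the statement genuinely fails, already for $\ul{\bZ_\ell}$ over a geometric point.) The remaining descent statements for $f_{(\bO^\times)^1}$ along $q_1$ in the relative setting over $\ul{\bN\cup\{\infty\}}$ are routine given \cite[\S13,\S23]{Sch17} and Lemma \ref{ss:torsorsareproetale}.
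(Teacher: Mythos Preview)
Your core claim---that $\pi\colon\ul{\bN\cup\{\infty\}}\ra B\ul{(\bO^\times)^1}$ (equivalently, $q_1$, or fiberwise $\ul{1+\fp_i}\ra\ov{s}$) is cohomologically smooth with $\omega_\pi\cong\La$---is false, and \cite[Proposition 24.2]{Sch17} does not assert it. For any infinite profinite set $S$ and $f\colon\ul{S}\ra\ov{s}$, the sheaf $f^!\La$ is the sheaf of $\La$-valued measures: its sections over a clopen $U\subseteq S$ are $\Hom_\La(\Cont(U,\La),\La)$. When $\La$ is finite this is uncountable, whereas any invertible sheaf on $\ul{S}$ has countable global sections; hence $f^!\La$ is not invertible and $f$ is not cohomologically smooth. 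The pro-$p$ hypothesis (with $\ell\neq p$) only gives you a \emph{canonical section} $\La\ra f^!\La$ (the normalized Haar measure), not an isomorphism. Indeed \cite[Proposition 24.2]{Sch17} is precisely about the quotient $X\ra X/G$ in the presence of a cohomologically smooth $f\colon X\ra Y$; its conclusion has the same shape as the proposition you are trying to prove, not the shape you cite.

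The paper's proof reflects this. It constructs the natural transformation $q_1^*\ra q_1^!$ via the averaging map $q_{1*}q_1^*\ra\id$ built from normalized traces along the finite \'etale tower $q_{n,1}$, and then proves that $q_1^*f_{(\bO^\times)^1}^!\ra f^!$ is an isomorphism by a stabilization argument: writing $f_!B$ as $\varinjlim_n f_{(\bO^\times)^n!}q_{n,m}^*B_m$, one uses that $f$ is cohomologically smooth so that $f_!B$ lands in $D_{\et}(Z,\La)$, whence the system is eventually constant on strictly totally disconnected $Z$. Only \emph{after} this claim does the paper deduce cohomological smoothness of $f_{(\bO^\times)^1}$ via \cite[Proposition 23.4]{Sch17}. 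Your proposed descent ``cohomological smoothness of $f$ plus $q_1$ cohomologically smooth surjective implies cohomological smoothness of $f_{(\bO^\times)^1}$'' would work if its hypothesis held, but it does not; the cohomological smoothness of $f$ enters in a more delicate way. You also gloss over representability and compactifiability of $f_{(\bO^\times)^1}$, which the paper handles explicitly (the former via openness of the torsor map from Lemma \ref{ss:torsorsareproetale}, the latter via the relative compactification).
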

\begin{proof}
  Let $S\ra Z$ be a morphism over $\ul{\bN\cup\{\infty\}}$. Then $S\times_ZZ'$ is a locally spatial diamond, $S\times_ZZ/\ul{(\bO^\times)^1}$ is a diamond, and $S\times_ZZ'\ra S\times_ZZ/\ul{(\bO^\times)^1}$ is an $\ul{(\bO^\times)^1}$-torsor. By replacing $S\times_ZZ/\ul{(\bO^\times)^1}$ with a v-cover by an affinoid perfectoid space, we can use Lemma \ref{ss:torsorsareproetale} to see that $S\times_ZZ'\ra S\times_ZZ/\ul{(\bO^\times)^1}$ is open, so \cite[Lemma 2.10]{Sch17} and \cite[Proposition 11.15]{Sch17} imply that $S\times_ZZ/\ul{(\bO^\times)^1}$ is locally spatial.

  The separatedness of $f_{(\bO^\times)^1}$ follows from \cite[Proposition 10.9]{Sch17}. Note that the relative compactification $\ov{Z'/\ul{(\bO^\times)^1}}^{/Z}$ is naturally isomorphic to $(\ov{Z'}^{/Z})/\ul{(\bO^{\times})^1}$, and arguing as above shows that
  \begin{align*}
    \abs{\ov{Z'}^{/Z}}\ra\abs{(\ov{Z'}^{/Z})/\ul{(\bO^{\times})^1}}
  \end{align*}
is a quotient map. Because $f$ is compactifiable, this implies that $f_{(\bO^\times)^1}$ is compactifiable.

Next, we turn to our candidate for $q^*_1\ra q^!_1$. Since $q_1$ is proper, it suffices to construct a natural transformation $q_{1*}q^*_1\ra\id$. For all positive integers $n\geq m$, write $q_{n,m}:Z'/\ul{(\bO^\times)^n}\ra Z'/\ul{(\bO^\times)^m}$ for the quotient morphism, which is finite \'etale of degree $q^{n-m}$. Lemma \ref{ss:torsorsareproetale} identifies $q_1$ with $\varprojlim_nq_{n,1}$, so $q_{1*}q^*_1$ is naturally isomorphic to $\varinjlim_nq_{n,1*}q_{n,1}^*$. Because $q_{n+1,n}$ is finite \'etale of degree $q$, the trace natural transformation $\tr_n:q_{n,1*}q^*_{n,1}\ra\id$ satisfies the commutative square
  \begin{align*}
    \xymatrix{q_{n+1,1*}q_{n+1,1}^*\ar[r]\ar[d]^-{\tr_{n+1}} & q_{n,1*}q_{n,1}^*\ar[d]^-{\tr_n}\\
    \id\ar[r]^-{\times q} & \id.}
  \end{align*}
Hence taking $\varinjlim_n$ of $q^{-n}\tr_n$ yields a natural transformation $q_{1*}q^*_1\ra\id$.

  We claim that the natural transformation $q^*_1f^!_{(\bO^\times)^1}\ra f^!$ is an isomorphism. To see this, after replacing $Z$ with a v-cover, we can assume that $Z$ is strictly totally disconnected. Then $Z'$ is a locally spatial diamond, and since the claim is open-local on $Z'$, we can assume that $Z'$ is spatial. Because $Z$ is strictly totally disconnected, \cite[Proposition 20.17]{Sch17} indicates that it suffices to show that
  \begin{align*}\label{eqn:profinitequotient}
    \Hom(B,q^*_1f_{(\bO^\times)^1}^!A)\ra\Hom(B,f^!A)\tag{$\dagger$}
  \end{align*}
  is an isomorphism for all $B$ in $D_{\et,pc}(Z',\La)$ and $A$ in $D_{\et}(Z,\La)$. Lemma \ref{ss:torsorsareproetale} identifies $Z'$ with $\varprojlim_nZ'/\ul{(\bO^\times)^n}$, so \cite[Proposition 20.15]{Sch17} implies that $B$ is isomorphic to $q_m^*B_m$ for some positive integer $m$ and $B_m$ in $D_{\et,pc}(Z'/\ul{(\bO^\times)^m},\La)$. Therefore the left-hand side of (\ref{eqn:profinitequotient}) becomes
  \begin{align*}
    &\quad\Hom(q^*_mB_m,q^*_1f_{(\bO^\times)^1}^!A) \\
    &=\Hom(B_m,q_{m*}q^*_1f_{(\bO^\times)^1}^!A) \\
    &=\Hom(B_m,\textstyle\varinjlim_nq_{n,m*}q^*_{n,1}f_{(\bO^\times)^1}^!A) & \mbox{since }\textstyle\varinjlim_nq_{n,m*}q_{n,1}^*\ra^\sim q_{m*}q_1^*\\
    &=\textstyle\varinjlim_n\Hom(B_m,q_{n,m*}q^*_{n,1}f_{(\bO^\times)^1}^!A) & \mbox{by \cite[Proposition 20.10]{Sch17}} \\
    &=\textstyle\varinjlim_n\Hom(B_m,q_{n,m*}q^!_{n,1}f_{(\bO^\times)^1}^!A) & \mbox{because }q_{n,1}\mbox{ is \'etale} \\
    &=\textstyle\varinjlim_n\Hom(B_m,q_{n,m*}f_{(\bO^\times)^n}^!A) \\
    &=\textstyle\varinjlim_n\Hom(f_{(\bO^\times)^n!}q_{n,m}^*B_m,A),
  \end{align*}
  where $n$ runs over positive integers satisfying $n\geq m$. Next, note that
  \begin{align*}
    f_!B &= f_{(\bO^\times)^m!}q_{m!}q^*_mB_m = f_{(\bO^\times)^m!}q_{m*}q^*_mB_m & \mbox{because }q_m\mbox{ is proper}\\
         &= \textstyle\varinjlim_nf_{(\bO^\times)^m!}q_{n,m*}q^*_{n,m}B_m & \mbox{since }\textstyle\varinjlim_nq_{n,m*}q^*_{n,m}\ra^\sim q_{m*}q_m^* \\
         &= \textstyle\varinjlim_nf_{(\bO^\times)^m!}q_{n,m!}q^*_{n,m}B_m & \mbox{because }q_{n,m}\mbox{ is finite} \\
    &= \textstyle\varinjlim_nf_{(\bO^\times)^n!}q^*_{n,m}B_m,
  \end{align*}
  and the transition morphisms in the directed system $\{f_{(\bO^\times)^n!}q^*_{n,m}B_m\}_n$ have splittings given by normalized traces as above. Since $f$ is quasicompact cohomologically smooth, $f_!B$ lies in $D_{\et}(Z,\La)$ by \cite[Proposition 23.13 (ii)]{Sch17}, so \cite[Proposition 20.17]{Sch17} implies that the directed system $\{f_{(\bO^\times)^n!}q^*_{n,m}B_m\}_n$ is eventually constant with value $f_!B$. Therefore the left-hand side of (\ref{eqn:profinitequotient}) becomes
  \begin{align*}
    \textstyle\varinjlim_n\Hom(f_{(\bO^\times)^n!}q_{n,m}^*B_m,A) = \Hom(f_!B,A) = \Hom(B,f^!A),
  \end{align*}
  which is precisely the right-hand side of (\ref{eqn:profinitequotient}). This yields the claim.

To show that $f_{(\bO^\times)^1}$ is cohomologically smooth, \cite[Proposition 23.15]{Sch17} indicates that, after replacing $Z$ with a v-cover, we can assume that $Z$ is strictly totally disconnected. Then it suffices to check condition (iii) in \cite[Proposition 23.4]{Sch17}, and this follows from the claim and applying \cite[Proposition 23.4 (iii)]{Sch17} again to $f$.
\end{proof}

\subsection{}\label{ss:Div1smooth}
The proof of Lemma \ref{ss:Ebreve} shows that $W(\cO/\pi)\cong\cO$, so the composition
\begin{align*}
\bF_q\hra \Cont(\bN\cup\{\infty\},\bF_q)\cong\cO/\pi\lra^{[-]} W(\cO/\pi)\cong\cO
\end{align*}
induces a morphism $\bF_q^\times\times(\bN\cup\{\infty\})\ra\bO^\times$ of group topological spaces over $\bN\cup\{\infty\}$. By using profinitude and checking on fibers, we see that the induced morphism
\begin{align*}
  \bF_q^\times\times(\bN\cup\{\infty\})\times_{\bN\cup\{\infty\}}(\bO^\times)^1\ra\bO^\times
\end{align*}
of group topological spaces over $\bN\cup\{\infty\}$ is an isomorphism.

We conclude this section by proving the promised smoothness result.
\begin{cor*}
Our $\Div^1_X\ra\ul{\bN\cup\{\infty\}}$ is cohomologically smooth of dimension $1$.
\end{cor*}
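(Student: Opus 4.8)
The plan is to realize $\Spd E^{\LT}$ as a pro-\'etale cover of $\Div^1_X$ that is itself cohomologically smooth of dimension $1$ over $\ul{\bN\cup\{\infty\}}$, and then to descend. For the first point, Proposition \ref{ss:ELTperfectoid} shows $E^{\LT}$ is perfectoid with $(E^{\LT})^\flat=\Cont(\bN\cup\{\infty\},E^{\perf}_\infty)$ and $E^{\perf}_\infty=\bF_q\lp{t^{1/q^\infty}}$, so $\Spd E^{\LT}$ is the affinoid perfectoid space $\ul{\bN\cup\{\infty\}}\times_{\Spd\bF_q}\Spd E^{\perf}_\infty$; in particular it is representable in spatial diamonds and separated over $\ul{\bN\cup\{\infty\}}$, and via Corollary \ref{ss:logsandtheta}.i and Proposition \ref{ss:Div1O(1)} this is the identification of $\Spd E^{\LT}$ with $\cB\cC(\sO(1))-\{0\}$. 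Since $\Spd E^{\perf}_\infty\ra\Spd\bF_q$ is the punctured open perfectoid unit disk over $\Spd\bF_q$ --- an increasing union of perfectoid annuli --- it is cohomologically smooth of dimension $1$ by \cite{Sch17} (see also \cite[\S II.3]{FS21}), and this is preserved by base change to $\ul{\bN\cup\{\infty\}}$ \cite[Proposition 23.13]{Sch17}.

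Next I would show that the natural morphism $g\colon\Spd E^{\LT}\ra\Div^1_X$, i.e.\ the composite $\Spd E^{\LT}\ra\Spd E=\Div^1_Y\ra\Div^1_X$ of the map induced by $E\hra E^{\LT}$ with the quotient by $\phi^\bZ$ of \ref{ss:Div1}, is a pro-\'etale cover. By Lemma \ref{ss:LTextensions} and Proposition \ref{ss:LTtorsion}, each $\Spd E^{\LT,n}\ra\Spd E$ is a finite \'etale $\ul{\bO_n^\times}$-torsor; since $E^{\LT}$ is the completion of $\bigcup_n E^{\LT,n}$, passing to $\varprojlim_n$ shows $\Spd E^{\LT}\ra\Spd E$ is a $\ul{\bO^\times}$-torsor, while $\Spd E\ra\Div^1_X$ is a $\ul\bZ$-torsor because $\phi$ acts freely and totally discontinuously (\ref{ss:Ycurve}, \ref{ss:Div1}). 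As $\bO^\times$ and $\bZ$ are lctd over $\bN\cup\{\infty\}$, Lemma \ref{ss:torsorsareproetale} shows each of these maps --- hence $g$ --- is a pro-\'etale cover, so $g$ is cohomologically smooth of dimension $0$. (One can also reach $\Div^1_Y$ in two steps using the splitting $\bO^\times\cong\ul{\bF_q^\times}\times_{\ul{\bN\cup\{\infty\}}}(\bO^\times)^1$ of \ref{ss:Div1smooth}, applying Proposition \ref{ss:profinitequotient} to the quotient by $\ul{(\bO^\times)^1}$ and finite \'etale descent to the quotient by $\ul{\bF_q^\times}$.)

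Finally I would conclude. Write $f\colon\Div^1_X\ra\ul{\bN\cup\{\infty\}}$ for the structure morphism, which is proper --- hence separated and representable in spatial diamonds --- by Lemma \ref{ss:Div1}. Since $g$ is a cohomologically smooth surjection and $f\circ g$ is cohomologically smooth of dimension $1$, descent of cohomological smoothness gives that $f$ is cohomologically smooth of dimension $1-0=1$. Concretely: properness of $f$ makes $f^!$ commute with base change, so it is enough to check that $f^!\La$ is invertible; pulling back along the v-cover $g$ and using $g^!(-)\cong g^*(-)\otimes\omega_g$ with $\omega_g$ invertible in degree $0$ (as $g$ is cohomologically smooth of dimension $0$) yields $g^*f^!\La\cong(f\circ g)^!\La\otimes\omega_g^{-1}$, which is invertible in degree $-2$ and Tate twist $1$, whence $f^!\La$ is as well. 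The substantive ingredients are the pro-\'etale cover of the second step and the perfectoid-disk input of the first; the only residual nuisance is pinning down the exact permanence statement of \cite[\S23]{Sch17} invoked for the concluding descent.
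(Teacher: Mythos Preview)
The main line of your argument has a real gap: the inference ``$g$ is a pro-\'etale cover, so $g$ is cohomologically smooth of dimension $0$'' is not valid. Only \emph{\'etale} morphisms are automatically cohomologically smooth of dimension $0$ \cite[Proposition~23.13]{Sch17}; torsors under profinite groups such as $\ul{\bO^\times}$ are pro-\'etale but not cohomologically smooth in general, and nothing in Lemma~\ref{ss:torsorsareproetale} asserts otherwise. Since your final descent computation rests on $g^!\cong g^*\otimes\omega_g$ with $\omega_g$ invertible, it does not go through as written.

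Your parenthetical is essentially the paper's proof, only misfiled as an alternative route to the false claim about $g$. Proposition~\ref{ss:profinitequotient} does \emph{not} say that the torsor map $q_1:Z'\to Z'/\ul{(\bO^\times)^1}$ is cohomologically smooth; rather, it says that if the composite $f:Z'\to Z$ is cohomologically smooth, then so is the quotient-to-base map $f_{(\bO^\times)^1}:Z'/\ul{(\bO^\times)^1}\to Z$. Applied with $Z'=(\Spd E^{\LT})/\ul{\bF_q^\times}$ (after the harmless finite \'etale quotient by $\ul{\bF_q^\times}$, handled by \cite[Proposition~23.13]{Sch17}) and $Z$ the base, this \emph{directly} gives that $\Spd E\to\ul{\bN\cup\{\infty\}}$ is cohomologically smooth of dimension $1$; then $\Div^1_X\to\ul{\bN\cup\{\infty\}}$ follows because $\Spd E\to\Div^1_X$ is \'etale. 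No argument that $g$ itself is cohomologically smooth is needed, and none is available. The paper runs exactly this, first base-changing to $\Spd C$ so that cohomological smoothness of the punctured perfectoid disk is stated over a perfectoid point, and then descending along the v-cover $\Spd C\to\ul{\bN\cup\{\infty\}}$ of the \emph{target} via \cite[Proposition~23.15]{Sch17}.
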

\begin{proof}
  Proposition \ref{ss:ELTperfectoid} identifies $\Spd E^{\LT}$ with the punctured open perfectoid unit disk over $\ul{\bN\cup\{\infty\}}$. This identifies $\Spd E^{\LT}\times_{\ul{\bN\cup\{\infty\}}}\Spd C$ with the punctured open perfectoid unit disk over $\Spd{C}$, so $\Spd E^{\LT}\times_{\ul{\bN\cup\{\infty\}}}\Spd C\ra\Spd C$ is cohomologically smooth of dimension $1$.

  Corollary \ref{cor:Efet} and \cite[Lemma 15.6]{Sch17} imply that $\Spd E^{\LT}\ra\Spd{E}$ is an $\ul{\bO^\times}$-torsor. Therefore $\Spd E^{\LT}\ra(\Spd E^{\LT})/\ul{\bF_q^\times}$ is a finite \'etale cover, so the above and \cite[Proposition 23.13]{Sch17} show that $(\Spd E^{\LT})/\ul{\bF_q^\times}\times_{\ul{\bN\cup\{\infty\}}}\Spd{C}\ra\Spd{C}$ is cohomologically smooth of dimension $1$. Since $(\Spd E^{\LT})/\ul{\bF_q^\times}\ra\Spd E^{\LT}$ is a $\ul{(\bO^\times)^1}$-torsor, Proposition \ref{ss:profinitequotient} indicates that $\Spd{E}\times_{\ul{\bN\cup\{\infty\}}}\Spd{C}\ra\Spd{C}$ is cohomologically smooth of dimension $1$. From here, Lemma \ref{ss:Div1} and \cite[Proposition 23.15]{Sch17} imply that $\Spd{E}\ra\ul{\bN\cup\{\infty\}}$ is cohomologically smooth of dimension $1$. Finally, the proof of Lemma \ref{ss:Div1} shows that the same holds for $\Div^1_X\ra\ul{\bN\cup\{\infty\}}$.
\end{proof}

\section{Vector bundles on Fargues--Fontaine curves}\label{s:vectorbundles}
In this section, we study vector bundles on $X_S$. First, we prove that they satisfy v-descent with respect to $S$. Next, we define the analogue of absolute Banach--Colmez spaces over $E$, and we use results from \S\ref{s:LT} to prove various facts about them. We also define general Banach--Colmez spaces over $E$, and we use an ampleness result to extend the aforementioned facts to this setting. Finally, we apply these facts to prove properties about the Harder--Narasimhan filtration in our context, as well as to generalize a theorem of Kedlaya--Liu \cite{KL15}.

\subsection{}\label{ss:BanachColmezsheaf}
Let $\sE$ be a vector bundle on $X_S$. Write $\cB\cC(\sE)$ for the presheaf of $E$-modules on $\Perf_{\bF_q}$ over $S$ whose $S'$-points equal $H^0(X_{S'},\sE|_{X_{S'}})$.
\begin{prop*}
  The presheaf of categories on $\Perf_{\bF_q}$ over $\ul{\bN\cup\{\infty\}}$ given by
  \begin{align*}
    S\mapsto\{\mbox{vector bundles on }X_S\}
  \end{align*}
  satisfies v-descent. In particular, $\cB\cC(\sE)$ is a v-sheaf.
\end{prop*}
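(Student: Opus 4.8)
The plan is to follow the strategy of \cite[Proposition II.2.1]{FS21}, reducing v-descent for vector bundles on $X_S$ first to v-descent for vector bundles on the affinoid pieces $\cY_{S,\cI}$, and then --- via the split inclusion $\cO\hra\cO^{\perf}$ and the perfectoid base change of Proposition \ref{ss:Ysousperfectoid} --- to the known v-descent for vector bundles on affinoid perfectoid spaces. So let $S'\ra S$ be a v-cover in $\Perf_{\bF_q}$ over $\ul{\bN\cup\{\infty\}}$, let $S^\bullet$ be its \v{C}ech nerve (with each $S^n$ again affinoid perfectoid over $\ul{\bN\cup\{\infty\}}$), and aim to show that $\{\mbox{vb on }X_S\}\ra\varprojlim_\Delta\{\mbox{vb on }X_{S^\bullet}\}$ is an equivalence; the assertion that $\cB\cC(\sE)$ is a v-sheaf will then follow formally.

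First I would reduce to the affinoid pieces of $X_S$. Since $\vp$ acts freely and totally discontinuously on $Y_S$ (\ref{ss:Ycurve}), a vector bundle on $X_S$ is the same as a $\vp$-equivariant vector bundle on $Y_S$; and since the affinoid open subspaces $\cY_{S,[q^{-r},q^r]}$ for $r\geq1$ form an increasing open cover of $Y_S$ --- compatibly with the $\vp$-action and, by Proposition \ref{ss:diamondformula} and Proposition \ref{ss:Yrationalopensubspace}, with base change in $S$ --- a vector bundle on $Y_S$ is the same as a compatible system of vector bundles on the $\cY_{S,\cI}$, where $\cI$ runs over closed intervals in $(0,\infty)$ with rational endpoints. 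As each $\cY_{S^n,\cI}$ is again of this form, it suffices to prove that $\{\mbox{vb on }\cY_{S,\cI}\}\ra\varprojlim_\Delta\{\mbox{vb on }\cY_{S^\bullet,\cI}\}$ is an equivalence for every such $\cI$.

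Next, fixing $\cI$ and applying $\vp^r$ for $r$ large, I may assume $\cY_{S,\cI}$ is a rational open subspace of $\cY_{S,[0,q^N]}$ for some $N$, so Proposition \ref{ss:Ysousperfectoid} shows that $\cY_{S,\cI}$ is sousperfectoid and that $\cY_{S,\cI}^{\perf}\coloneqq\cY_{S,\cI}\times_{\Spa\cO}\Spa\cO^{\perf}$ is affinoid perfectoid, with tilt a rational open subspace of the closed perfectoid unit disk over $S$; in particular the induced morphisms $\cY_{S^\bullet,\cI}^{\perf}\ra\cY_{S,\cI}^{\perf}$ are v-covers of affinoid perfectoid spaces. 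Because $\cO$ is a direct summand of $\cO^{\perf}$ as topological $\cO$-modules (\ref{ss:fieldperf}), the ring $B_{S,\cI}$ is a direct summand of $\cO(\cY_{S,\cI}^{\perf})$ as topological $B_{S,\cI}$-modules, naturally in $S$; and since sousperfectoid affinoids are stably uniform, their vector bundles are exactly the finite projective modules over the ring of global sections \cite{KL15}, as is also true for the perfectoid $\cY_{S,\cI}^{\perf}$. At this point I would conclude as in \cite[Proposition II.2.1]{FS21}: v-descent for vector bundles on perfectoid spaces \cite{KL15} handles the presheaf $S\mapsto\{\mbox{vb on }\cY_{S,\cI}^{\perf}\}$, while the natural-in-$S$ direct summand $B_{S,\cI}\hra\cO(\cY_{S,\cI}^{\perf})$ identifies a finite projective $B_{S,\cI}$-module with a finite projective $\cO(\cY_{S,\cI}^{\perf})$-module equipped with descent datum along $\cY_{S,\cI}^{\perf}\ra\cY_{S,\cI}$; layering this fixed descent datum on top of the former yields v-descent for $S\mapsto\{\mbox{vb on }\cY_{S,\cI}\}$, hence for $S\mapsto\{\mbox{vb on }X_S\}$. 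Finally, the resulting equivalence carries trivial bundles to trivial bundles, so $H^0(X_S,\sE)=\Hom_{X_S}(\sO_{X_S},\sE)$ is the equalizer of $H^0(X_{S'},\sE|_{X_{S'}})\rightrightarrows H^0(X_{S''},\sE|_{X_{S''}})$, i.e.\ $\cB\cC(\sE)$ is a v-sheaf.

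The hard part will be the second half of the third paragraph --- transferring v-descent from the perfectoid covers $\cY_{S,\cI}^{\perf}$ back down to the sousperfectoid spaces $\cY_{S,\cI}$. Since $\cO$ is non-noetherian and ``non-geometric'' (essentially a restricted product of the $\cO_i$), I cannot invoke off-the-shelf descent statements for sousperfectoid spaces and must instead route everything through the split inclusion $\cO\hra\cO^{\perf}$ and the explicit perfectoid base changes of Proposition \ref{ss:Ysousperfectoid}, in the same spirit as the proofs of Proposition \ref{ss:Ysousperfectoid} and Proposition \ref{ss:Yrationalopensubspace}. A secondary point requiring care is checking that the affinoid cover $\{\cY_{S,\cI}\}$ of $Y_S$ and the $\vp$-equivariance structure behave well under arbitrary v-base-change in $S$ --- not merely the rational-open base changes of Proposition \ref{ss:Yrationalopensubspace} --- which one handles using the diamond formula of Proposition \ref{ss:diamondformula}.
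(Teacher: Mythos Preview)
Your proposal is correct and follows essentially the same strategy as the paper's proof: reduce from $X_S$ to $\vp$-equivariant bundles on $Y_S$ and then to the affinoid pieces $\cY_{S,\cI}$, base change along $\Spa\cO\ra\Spa\cO^{\perf}$ to land on affinoid perfectoids via Proposition~\ref{ss:Ysousperfectoid}, apply v-descent for vector bundles there, and then use the split inclusion $\cO\hra\cO^{\perf}$ to descend back down. The paper phrases the last two steps as a single argument with two commuting descent data (citing \cite[Lemma~17.1.8]{SW20} for the perfectoid step and \cite[Tags~08XA, 08XD]{stacks-project} for the split-faithfully-flat step), whereas you phrase it as ``layering'' a fixed descent datum on top of the v-descent, but this is only a cosmetic difference.
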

\begin{proof}
  Because $X_S = Y_S/\vp^\bZ$, vector bundles on $X_S$ are equivalent to vector bundles on $Y_S$ equipped with a $\vp$-semilinear automorphism. Therefore it suffices to prove that $S\mapsto\{\mbox{vector bundles on }Y_S\}$ satisfies v-descent. Let $S'\ra S$ be an affinoid perfectoid v-cover. Since $Y_S=\bigcup_\cI\cY_{S,\cI}$, where $\cI$ runs over closed intervals in $(0,\infty)$ with rational endpoints, it suffices to prove that vector bundles satisfy descent with respect to $\cY_{S',\cI}\ra\cY_{S,\cI}$.

  Let $\sE'$ be a vector bundle on $\cY_{S',\cI}$ with a descent datum $\al$ with respect to $\cY_{S',\cI}\ra\cY_{S,\cI}$. Pullback yields a vector bundle $\wt\sE'$ on $\cY_{S',\cI}\times_{\Spa\cO}\Spa\cO^{\perf}$ with commuting descent data $\wt\al$ with respect to
  \begin{align*}
\cY_{S',\cI}\times_{\Spa\cO}\Spa\cO^{\perf}\ra\cY_{S,\cI}\times_{\Spa\cO}\Spa\cO^{\perf}
  \end{align*}
  and $\be'$ with respect to $\cY_{S',\cI}\times_{\Spa\cO}\Spa\cO^{\perf}\ra\cY_{S',\cI}$.

  Proposition \ref{ss:Ysousperfectoid} indicates that the $\cY_{-,\cI}\times_{\Spa\cO}\Spa\cO^{\perf}$ are affinoid perfectoid. Hence \cite[Lemma 17.1.8]{SW20} enables us to descend $(\wt\sE',\wt\al)$ and $(\be',\wt\al)$ to a vector bundle $\wt\sE$ on $\cY_{S,\cI}\times_{\Spa\cO}\Spa\cO^{\perf}$ with a descent datum $\be$ with respect to
  \begin{align*}
    \cY_{S,\cI}\times_{\Spa\cO}\Spa\cO^{\perf}\ra\cY_{S,\cI}.
  \end{align*}
Because $\cO$ is a direct summand of $\cO^{\perf}$ as topological $\cO$-modules, \cite[Tag 08XA]{stacks-project} and \cite[Tag 08XD]{stacks-project} enable us to descend $(\wt\sE,\be)$ to a vector bundle $\sE$ on $\cY_{S,\cI}$. This yields the desired result.
\end{proof}

\subsection{}\label{ss:absoluteBanachColmez}
Next, we study the analogue of absolute Banach--Colmez spaces in our context. Let $\la$ be a rational number, and write $\la=\frac{c}d$ for coprime integers $c$ and $d$ such that $d$ is positive. Write $\sO_{X_S}(\la)$ for the rank-$d$ vector bundle on $X_S$ given by descending $\sO_{Y_S}^{\oplus d}$ along $Y_S\ra X_S$ via
\begin{align*}
  \begin{bmatrix}
    & & & \pi^{-c} \\
    1& & & \\
    &\ddots & & \\
    & & 1 &
  \end{bmatrix}\circ\vp^{\oplus d},
\end{align*}
and write $\cB\cC(\sO(\la))$ for the presheaf of $E$-modules on $\Perf_{\bF_q}$ over $\ul{\bN\cup\{\infty\}}$ whose $S$-points equal $H^0(X_S,\sO_{X_S}(\la))$. Since $\sO_{X_S}(\la)$ is compatible with base change in $S$, Proposition \ref{ss:BanachColmezsheaf} implies that $\cB\cC(\sO(\la))$ is a v-sheaf.
\begin{prop*}
If $\la$ is positive, then $H^0(X_S,\sO_{X_S}(\la))$ is naturally isomorphic to $B_{S,[1,\infty]}^{\vp^d=\pi^c}$, and $H^1(X_S,\sO_{X_S}(\la))$ is zero.
\end{prop*}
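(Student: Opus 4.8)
The plan is to compute $R\Gamma(X_S,\sO_{X_S}(\la))$ as the $\vp^\bZ$-equivariant cohomology of $Y_S$. Since $X_S=Y_S/\vp^\bZ$ and $\sO_{X_S}(\la)$ is, by definition, the descent of $\sO_{Y_S}^{\oplus d}$ along $Y_S\ra X_S$ via the $\vp$-semilinear automorphism $\Phi=A\circ\vp^{\oplus d}$ (where $A$ is the cyclic permutation matrix with a $\pi^{-c}$ in the corner appearing in the definition of $\sO_{X_S}(\la)$), and since $\vp$ acts freely and totally discontinuously on $Y_S$, there is a natural identification of $R\Gamma(X_S,\sO_{X_S}(\la))$ with $\operatorname{fib}(\Phi-1\colon R\Gamma(Y_S,\sO_{Y_S}^{\oplus d})\ra R\Gamma(Y_S,\sO_{Y_S}^{\oplus d}))$. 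First I would check that $R\Gamma(Y_S,\sO_{Y_S})$ is concentrated in degree $0$: writing $Y_S=\bigcup_\cI\cY_{S,\cI}$ over closed intervals $\cI\subseteq(0,\infty)$ with rational endpoints, each $\cY_{S,\cI}$ is sousperfectoid by Proposition \ref{ss:Ysousperfectoid}, hence sheafy with $R\Gamma(\cY_{S,\cI},\sO)=B_{S,\cI}$ in degree $0$, and the restriction maps $B_{S,\cI'}\ra B_{S,\cI}$ for $\cI\subseteq\cI'$ have dense image, so $R^1\lim$ vanishes. Consequently $R\Gamma(X_S,\sO_{X_S}(\la))$ lives in degrees $0$ and $1$; and since the entries of $A$ lie in $E\subseteq\sO(Y_S)^{\vp=1}$, the usual cyclic change of basis (reading off the first coordinate) turns it into the two-term complex $\sO(Y_S)\xrightarrow{\;\vp^d-\pi^c\;}\sO(Y_S)$. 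Thus $H^0$ becomes $\sO(Y_S)^{\vp^d=\pi^c}$ and $H^1$ becomes $\operatorname{coker}(\vp^d-\pi^c)$ on $\sO(Y_S)$.

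Next I would identify $\sO(Y_S)^{\vp^d=\pi^c}$ with $B_{S,[1,\infty]}^{\vp^d=\pi^c}$. Restriction along $\cY_{S,[1,\infty)}\hra Y_S$ is injective on sections, and because $\bigcup_{n\geq0}\vp^{-n}\cY_{S,[1,\infty)}=Y_S$ the relation $\vp^{-d}(x)=\pi^{-c}x$ allows one to recover an eigenfunction on all of $Y_S$ from its restriction to $\cY_{S,[1,\infty)}$ — the Frobenius-descent argument of \cite[\S4]{FF18} — so restriction gives a bijection onto $B_{S,[1,\infty)}^{\vp^d=\pi^c}$. Finally, since $\abs{\pi}\to1$ on $\cY_S$ as $\operatorname{rad}\to\infty$, the equation $\vp^d(x)=\pi^cx$ with $c>0$ forces $x$ to be bounded near the $[\vpi]=0$ boundary and hence to extend across it, giving $B_{S,[1,\infty)}^{\vp^d=\pi^c}=B_{S,[1,\infty]}^{\vp^d=\pi^c}$; this is the one place where positivity of $\la$ is essential. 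Tracing through the identifications shows the resulting isomorphism $H^0(X_S,\sO_{X_S}(\la))\cong B_{S,[1,\infty]}^{\vp^d=\pi^c}$ is natural in $S$.

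For the vanishing $H^1(X_S,\sO_{X_S}(\la))=0$ — equivalently, surjectivity of $\vp^d-\pi^c$ on $\sO(Y_S)$, or relative ampleness of $\sO_{X_S}(\la)$ — I would reduce to the situation of \cite{FS21}. By Proposition \ref{ss:BanachColmezsheaf} vector bundles on $X_{(-)}$ satisfy v-descent, and the two-term presentation above is a complex of v-sheaves on $\Perf_{\bF_q}$, so it suffices to verify the statement after passing to a v-cover; taking $S$ strictly totally disconnected and then, on connected components, a geometric point lying over some $i$ in $\bN\cup\{\infty\}$, Proposition \ref{ss:Wittspecialization} identifies $W(R^+)$ with $W_i(R^+)$ and hence $X_S$ with the relative Fargues--Fontaine curve of \cite{FS21} over $E_i$, where the vanishing for $\la>0$ reduces to \cite[Proposition 4.2.1]{FF18} (together with the standard degree-$d$ covering trick passing from slope $c/d$ to slope $1$). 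I expect the main obstacle to be the bookkeeping of this last reduction: one must check that $H^0$, $H^1$, and the $B_{S,[1,\infty]}$-comparison really are compatible with v-descent and with passage to geometric fibers in the non-noetherian, sousperfectoid setting (which is precisely why a Tannakian or noetherian base-change shortcut is unavailable), and that the Frobenius-descent and boundary-extension steps of the second paragraph can be carried out uniformly in $S$.
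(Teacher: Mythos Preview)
The paper takes a more direct route that sidesteps the issues you flag. Rather than working on all of $Y_S$, it first reduces to integer slope $c$ via the degree-$d$ finite \'etale cover $X_S(E^d)\to X_S$ and Shapiro's lemma (in place of your cyclic-matrix manipulation), then uses that $X_S$ is $\cY_{S,[1,q]}$ with the ends $\cY_{S,[1,1]}\xrightarrow{\sim}\cY_{S,[q,q]}$ glued via $\vp$. This gives immediately $R\Gamma(X_S,\sO_{X_S}(c))\simeq\bigl[B_{S,[1,q]}\xrightarrow{\vp-\pi^c}B_{S,[1,1]}\bigr]$, with no $R^1\varprojlim$ computation for $\sO(Y_S)$ needed. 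The comparison with $\bigl[B_{S,[1,\infty]}\xrightarrow{\vp-\pi^c}B_{S,[1,\infty]}\bigr]$ is a snake-lemma argument between the Mayer--Vietoris sequences for two rational covers of $\Spa W'(R^+)[\tfrac1\pi]$: the series $\sum_{m\ge0}\pi^{mc}\vp^{-1-m}$ (convergent precisely because $c>0$) inverts $\vp-\pi^c$ on $W(R^+)[\tfrac1\pi]$ and on $B_{S,[0,1]}$, and this single explicit calculation replaces both your Frobenius-descent step and your boundary-extension heuristic.

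Your $H^1$ reduction, however, has a genuine gap rather than just bookkeeping. Even granting that $S\mapsto\sO(Y_S)$ is a v-sheaf, surjectivity of $\vp^d-\pi^c$ is a statement about the \emph{presheaf} cokernel; v-descent of vector bundles (Proposition~\ref{ss:BanachColmezsheaf}) only controls the sheafification. Concretely, a class in $H^1(X_S,\sO_{X_S}(\la))$ that dies on a v-cover $S'\to S$ need not vanish on $S$, since the splittings over $S'$ form a torsor under $H^0(X_{S'},\sO_{X_{S'}}(\la))$ with no cocycle condition on $S'\times_S S'$; the same obstruction blocks the passage from a strictly totally disconnected $S$ to its connected components. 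The paper avoids this entirely by proving surjectivity of $\vp-\pi^c$ on $B_{S,[1,\infty]}$ directly for arbitrary $S$: this ring is spanned by $W(R^+)[\tfrac1\pi]$ (already in the image by the work above) together with $[\vpi]B_{S,[1,\infty]}$, on which the series $-\sum_{m\ge0}\pi^{(-1-m)c}\vp^m$ converges and furnishes explicit preimages.
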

\begin{proof}
  By repeating the construction of $E$ (except that in \ref{ss:closefieldssetup} we replace $\bF_q$ with $\bF_{q^d}$, and we replace $E_i$ with its corresponding unramified extension), we obtain a topological ring $E^d$. Write $X_S(E^d)$ for the adic space associated with $E^d$ as in \ref{ss:Ycurve}. By construction, we have a degree $d$ finite \'etale morphism $X_S(E^d)\ra X_S$ given by quotienting by $\vp$, and $\sO_{X_S}(\la)$ is the pushforward of $\sO_{X_S(E^d)}(c)$. Therefore Shapiro's lemma lets us assume, after replacing $E$ with $E^d$, that $\la=c$ is an integer.

  Note that $X_S$ is obtained from $\cY_{S,[1,q]}$ by using $\vp$ to glue $\cY_{S,[1,1]}\ra^\sim\cY_{S,[q,q]}$. Because the $\cY_{S,\cI}$ are affinoid, we see that $R\Ga(X_S,\sO_{X_S}(c))$ is quasi-isomorphic to 
  \begin{align*}
     \xymatrix{B_{S,[1,q]}\ar[r]^-{\vp-\pi^c}& B_{S,[1,1]}.}
  \end{align*}
  We claim that restriction yields a quasi-isomorphism to this complex from
  \begin{align*}
    \xymatrix{B_{S,[1,\infty]}\ar[r]^-{\vp-\pi^c}& B_{S,[1,\infty]}.}
  \end{align*}
  To see this, recall from \ref{ss:Yinftysousperfectoid} the topological ring $W'(R^+)$. Now $\Spa W'(R^+)[\frac1\pi]$ is covered by the rational open subspaces
  \begin{align*}
    \{\abs{\pi}\leq\abs{[\vpi]}\neq0\}\mbox{ and }\{\abs{[\vpi]}\leq\abs{\pi}\neq0\},
  \end{align*}
and the proof of Lemma \ref{ss:Yinftysousperfectoid}.ii) shows that the global sections of the latter equals $B_{S,[1,\infty]}$. Arguing similarly shows that the global sections of the former equals $B_{S,[0,1]}[\frac1\pi]$, and the global sections of their intersection equals $B_{S,[1,1]}$. Using the two-sided $\pi$-adic expansions of elements of $B_{S,[1,1]}$, we see that the difference map $B_{S,[0,1]}[\frac1\pi]\times B_{S,[1,\infty]}\ra B_{S,[1,1]}$ is surjective, so Lemma \ref{ss:Yinftysousperfectoid}.i) yields a short exact sequence
  \begin{align*}
    \xymatrix{0\ar[r]& W(R^+)[\frac1\pi]\ar[r]& B_{S,[0,1]}[\frac1\pi]\times B_{S,[1,\infty]}\ar[r]& B_{S,[1,1]}\ra0.}
  \end{align*}
  Replacing $\{\abs{\pi}\leq\abs{[\vpi]}\neq0\}$ with $\{\abs{\pi^q}\leq\abs{[\vpi]}\neq0\}$ instead yields
  \begin{align*}
    \xymatrix{0\ar[r]& W(R^+)[\frac1\pi]\ar[r]& B_{S,[0,q]}[\frac1\pi]\times B_{S,[1,\infty]}\ar[r]& B_{S,[1,q]}\ar[r]& 0,}
  \end{align*}
  and $\vp-\pi^c$ induces a map from the bottom row to the top row. By the snake lemma, it suffices to see that the maps $W(R^+)[\frac1\pi]\ra W(R^+)[\textstyle\frac1\pi]$ and $B_{S,[0,q]}\ra B_{S,[0,1]}$ are isomorphisms. Now $\sum_{m=0}^\infty\pi^{mc}\vp^{-1-m}$ converges on $W(R^+)[\frac1\pi]$ and $B_{S,[0,1]}$, which provides the inverse to $\vp-\pi^c$ on $W(R^+)[\frac1\pi]$ and $B_{S,[0,1]}$. This yields the claim.

  The claim yields the first statement. For the second statement, note that $B_{S,[1,\infty]}$ is spanned by $W(R^+)[\frac1\pi]$ and $[\vpi]B_{S,[1,\infty]}$, and the above work shows that $W(R^+)[\frac1\pi]$ lies in the image of $\vp-\pi^c$. Now $-\sum_{m=0}^\infty\pi^{(-1-m)c}\vp^m$ converges on $[\vpi]B_{S,[1,\infty]}$, which implies that $[\vpi]B_{S,[1,\infty]}$ also lies in the image of $\vp-\pi^c$.
\end{proof}

\begin{prop}\label{prop:absoluteBanachColmez}Let $\la$ be a rational number.
  \begin{enumerate}[i)]
  \item If $\la$ is positive, then the morphism $\cB\cC(\sO(\la))\ra\ul{\bN\cup\{\infty\}}$ is representable in locally spatial diamonds, partially proper, and cohomologically smooth.
  \item If $\la$ is zero, then $\cB\cC(\sO(\la))$ is naturally isomorphic to $\ul{\bE}$, and the pro-\'etale sheafification of the presheaf on $\Perf_{\bF_q}$ over $\ul{\bN\cup\{\infty\}}$ whose $S$-points equal $H^1(X_S,\sO_{X_S}(\la))$ is zero.
  \item If $\la$ is negative, then $H^0(X_S,\sO_{X_S}(\la))$ is zero.
  \end{enumerate}
\end{prop}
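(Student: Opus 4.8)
The plan is to reduce to the classical situation of \cite{FS21}. Using the degree-$d$ finite \'etale cover $X_S(E^d)\ra X_S$ from \S\ref{ss:absoluteBanachColmez} and Shapiro's lemma, it suffices to treat $\la=c$ a negative integer, so that a section of $\sO_{X_S}(c)$ is an $f$ in $B_{S,(0,\infty)}$ with $\vp(f)=\pi^cf$. For any geometric point $\ov{s}=\Spa(C,C^+)\ra S$, whose image in $\bN\cup\{\infty\}$ is some $\{i\}$, Proposition \ref{ss:Wittspecialization} identifies $X_{\ov{s}}$ with the Fargues--Fontaine curve of $C$ over $E_i$ studied in \cite{FS21}, on which $H^0$ of a negative line bundle vanishes; hence $f$ vanishes at every point of $X_{\ov s}$. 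Since every point of $X_S$ lies over a geometric point of $S$, and the rings $B_{S,\cI}$ are reduced (being sousperfectoid by Proposition \ref{ss:Ysousperfectoid}), it follows that $f=0$.

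\textbf{Part i).} Reducing again via Shapiro to $\la=c$ a positive integer, I would induct on $c$. The base case $c=1$ is the payoff of \S\ref{s:LT}: Corollary \ref{ss:logsandtheta}.i) together with Proposition \ref{ss:ELTperfectoid} identifies $\cB\cC(\sO(1))$ with the open perfectoid unit disk over $\ul{\bN\cup\{\infty\}}$, which is representable in locally spatial diamonds, partially proper, and cohomologically smooth of dimension $1$. For $c\geq2$, I would pass to the v-cover $\cB\cC(\sO(1))-\{0\}=\Spd E^{\LT}\ra\ul{\bN\cup\{\infty\}}$ (harmless, as all three properties are v-local on the target by \cite[Proposition 23.15]{Sch17} and its analogues), over which a tautological section is available; twisting the short exact sequence of Proposition \ref{ss:O(1)zeroes} by $\sO_{X_S}(c-1)$ and using the vanishing $H^1(X_S,\sO_{X_S}(c-1))=0$ from the preceding proposition yields a short exact sequence of v-sheaves
\[
0\ra\cB\cC(\sO(c-1))\ra\cB\cC(\sO(c))\ra\cB\cC(\iota_*\sO_{S^\sharp})\ra0.
\]
The left-hand term has the three properties by induction, and the right-hand term is the relative affine line over $\Spd E$ — an increasing union of relative closed perfectoid disks — so the middle term inherits them.

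\textbf{Part ii).} I would run the same construction with $c=1$: over $\Spd E^{\LT}$, the long exact sequence attached to $0\ra\sO_{X_S}\ra\sO_{X_S}(1)\ra\iota_*\sO_{S^\sharp}\ra0$ (where the first map is a chosen section $\sg$), together with $H^1(X_S,\sO_{X_S}(1))=0$, identifies $\cB\cC(\sO)$ with the kernel of $\wt{\log}$ and exhibits the pro-\'etale sheafification of $S\mapsto H^1(X_S,\sO_{X_S})$ as the quotient of $\cB\cC(\iota_*\sO_{S^\sharp})$ by the image of $\wt{\log}$, which is $0$ because $\wt{\log}$ is pro-\'etale-locally surjective (Proposition \ref{ss:log}, Corollary \ref{ss:tildelog}). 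Finally, Corollary \ref{ss:tildelog} identifies $\ker\wt{\log}$ with $\varprojlim_n\bigl(\bigcup_m\LT_E[\pi^m]\bigr)$, a pro-(finite locally constant) v-sheaf over $\ul{\bN\cup\{\infty\}}$; unwinding the Lubin--Tate description of \S\ref{s:localfields}, or comparing fibers over $\bN\cup\{\infty\}$ with \cite{FS21}, identifies it with $\ul{\bE}$.

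\textbf{Main obstacle.} Since \S\ref{s:LT} already supplies the one genuinely new input — that $\cB\cC(\sO(1))$ is an open perfectoid disk — the remaining work is largely organizational, but two points need care. First, one must check that ``representable in locally spatial diamonds,'' ``partially proper,'' and ``cohomologically smooth'' genuinely pass through the extension in part i) and descend along $\Spd E^{\LT}\ra\ul{\bN\cup\{\infty\}}$. Second, and more delicately, the vanishing of the pro-\'etale sheafification of $R^1$ in part ii) does not visibly descend from the fibers over $\bN\cup\{\infty\}$ (one cannot patch pro-\'etale covers of the individual $S_i$ into one of $S$), so the argument must instead be routed through the explicit pro-\'etale-local surjectivity of $\wt{\log}$ rather than through a fiberwise reduction to \cite{FS21}.
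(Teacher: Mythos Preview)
Your approach is correct and parallels the paper's, with two differences worth flagging. First, the v-cover detour in parts i) and ii) is unnecessary: for any $S=\Spa(R,R^+)$, any pseudouniformizer $\vpi$ of $R$ already yields, via Corollary~\ref{ss:logsandtheta}.i), an $S$-point $\sg$ of $\cB\cC(\sO(1))-\{0\}$, so the short exact sequence of Proposition~\ref{ss:O(1)zeroes} (and its twists) exists directly over every $S$. This dissolves your first obstacle entirely. With this $\sg$ in hand, part~ii)'s identification with $\ul{\bE}$ also becomes explicit: $\sg$ induces $S^\sharp\to\Spa E^{\LT,m}$ and thereby trivializes each pseudotorsor $\LT_E[\pi^m](S^\sharp)$, giving $\varprojlim_n\bigl(\bigcup_m\LT_E[\pi^m]\bigr)(S^\sharp)=\Cont_{\bN\cup\{\infty\}}(\abs{S},\bE)$ on the nose---your ``comparing fibers with \cite{FS21}'' alternative would not by itself produce an isomorphism of v-sheaves. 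Second, for part~iii) your pointwise argument does work (the operative property of sousperfectoid rings being \emph{uniformity}, so that vanishing at all points forces vanishing, rather than mere reducedness), but the paper instead runs the same twisted sequence downward: the case $c=-1$ comes from $0\to H^0(X_S,\sO_{X_S}(-1))\to H^0(X_S,\sO_{X_S})\to R^\sharp$ together with part~ii) and the injectivity of $\ul{\bE}\hookrightarrow(\bG_{a,S^\sharp}^{\an})^\Diamond$, and then $H^0(X_S,\sO_{X_S}(c))\hookrightarrow H^0(X_S,\sO_{X_S}(c+1))$ handles the induction. This keeps all three parts inside a single framework and avoids invoking the diamond formula to justify that geometric points of $S$ sweep out $\abs{X_S}$.
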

\begin{proof}
  The proof of Proposition \ref{ss:absoluteBanachColmez} lets us assume that $\la=c$ is an integer. Under the identification from Corollary \ref{ss:logsandtheta}.i), any pseudouniformizer of $R$ yields an $S$-point $\sg$ of $\cB\cC(\sO(1))-\{0\}$. By Proposition \ref{ss:O(1)zeroes}, we get a short exact sequence
  \begin{align*}
    \xymatrix{0\ar[r] & \sO_{X_S}\ar[r]^-\sg & \sO_{X_S}(1)\ar[r] & \sO_{S^\sharp}\ar[r]& 0}
  \end{align*}
  of sheaves on $X_S$, where $S^\sharp\hra X_S$ is the closed Cartier divisor from \ref{ss:Div1} associated with the image of $\sg$ under $\cB\cC(\sO(1))-\{0\}\cong\Spd E^{\LT}\ra\Div^1_Y\ra\Div^1_X$. Since the pullback to $Y_S$ of $\sO_{X_S}(1)$ is trivial, the same holds for $S^\sharp$, so twisting by $\sO_{X_S}(c)$ yields a short exact sequence
  \begin{align*}\label{eqn:BanachColmezSES}
    \xymatrix{0\ar[r] & \sO_{X_S}(c)\ar[r]^-\sg & \sO_{X_S}(c+1)\ar[r] & \sO_{S^\sharp}\ar[r]& 0.}\tag{$\triangleright$}
  \end{align*}
  
First, suppose that $c$ is positive. If $c=1$, the result follows from the identification in Corollary \ref{ss:logsandtheta}.i). To inductively get the case of $c+1$ from the case of $c$, Proposition \ref{ss:absoluteBanachColmez} shows that the long exact sequence induced by \eqref{eqn:BanachColmezSES} yields a short exact sequence
  \begin{align*}
    \xymatrix{0\ar[r]& \cB\cC(\sO_{X_S}(c))\ar[r]& \cB\cC(\sO_{X_S}(c+1))\ar[r]& (\bG^{\an}_{a,S^\sharp})^\Diamond\ar[r]& 0,}
  \end{align*}
  where the morphism $\cB\cC(\sO_{X_S}(c+1))\ra(\bG^{\an}_{a,S^\sharp})^\Diamond$ is surjective on $S$-points. Because $\cB\cC(\sO_{X_S}(c+1))\ra(\bG^{\an}_{a,S^\sharp})^\Diamond$ is also a $\cB\cC(\sO_{X_S}(c))$-torsor, the result for $\cB\cC(\sO_{X_S}(c))$ implies that $\cB\cC(\sO_{X_S}(c+1))\ra(\bG^{\an}_{a,S^\sharp})^\Diamond$ is representable in locally spatial diamonds, partially proper, and cohomologically smooth. Now $(\bG^{\an}_{a,S^\sharp})^\Diamond\ra S$ is a locally spatial diamond, partially proper, and cohomologically smooth, so \cite[Proposition 23.13]{Sch17} shows that the same holds for the composition $\cB\cC(\sO_{X_S}(c+1))\ra(\bG^{\an}_{a,S})^\Diamond\ra S$.

Next, suppose that $c$ is zero. Then Proposition \ref{ss:absoluteBanachColmez} shows that the long exact sequence induced by \eqref{eqn:BanachColmezSES} yields an exact sequence
\begin{align*}
  \xymatrix{0\ar[r] & H^0(X_S,\sO_{X_S})\ar[r] & H^0(X_S,\sO_{X_S}(1))\ar[r] & R^\sharp\ar[r] & H^1(X_S,\sO_{X_S})\ar[r] & 0,}
\end{align*}
and Corollary \ref{ss:logsandtheta}.iii) and Corollary \ref{ss:tildelog} identify its first four terms with
\begin{align*}
  \xymatrix{0\ar[r] & \displaystyle\varprojlim_n\Big(\bigcup_{m=1}^\infty\LT_E[\pi^m]\Big)(S^\sharp)\ar[r] & \wt{\LT}(S^\sharp)\ar[r]^-{\wt\log} & R^\sharp.}
\end{align*}
Corollary \ref{ss:tildelog} indicates that $\wt\log$ is pro-\'etale-locally surjective, so our exact sequence implies that the pro-\'etale sheafification of $S\mapsto H^1(X_S,\sO_{X_S})$ is zero. As for $H^0(X_S,\sO_{X_S})$, Proposition \ref{ss:LTtorsion} shows that $\LT_E[\pi^m](S^\sharp)$ is a pseudotorsor for $\Cont_{\bN\cup\{\infty\}}(|S^\sharp|,\bO_n)$. Now $\sg$ induces a morphism $S^\sharp\ra\Spa E^{\LT}\ra\Spa E^{\LT,m}$, so Proposition \ref{ss:LTtorsion} also shows that this pseudotorsor is naturally trivial. Therefore the compactness of $\abs{S}=\abs{S^\sharp}$ yields the desired description
\begin{align*}
\varprojlim_n\Big(\bigcup_{m=1}^\infty\LT_E[\pi^m]\Big)(S^\sharp) = \varprojlim_n\bigcup_{m=1}^\infty\Cont_{\bN\cup\{\infty\}}(\abs{S},\bO_n) = \Cont_{\bN\cup\{\infty\}}(\abs{S},\bE).
\end{align*}

Finally, suppose that $c$ is negative. Then the long exact sequence induced by \eqref{eqn:BanachColmezSES} yields an exact sequence
\begin{align*}
\xymatrix{0\ar[r]& H^0(X_S,\sO_{X_S}(c))\ar[r]& H^0(X_S,\sO_{X_S}(c+1))\ar[r]& R^\sharp,}
\end{align*}
so if $c=-1$, the result follows from part ii) and the injectivity of $\ul{\bE}\ra(\bG^{\an}_{a,S^\sharp})^\Diamond$. This exact sequence also indicates that the case of $c+1$ implies the case of $c$.
\end{proof}

\subsection{}\label{ss:amplelinebundle}
To extend our discussion to general Banach--Colmez spaces, we use the following ampleness result for $\sO_{X_S}(1)$.

\begin{prop*}
Let $\sE$ be a vector bundle on $X_S$. Then there are positive integers $c$ and $m$ such that there exists a surjective morphism $\sO_{X_S}(-c)^{\oplus m}\twoheadrightarrow\sE$.
\end{prop*}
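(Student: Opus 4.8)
The plan is to reduce, by twisting and v-descent, to the case of a geometric point of $S$---where $X_{\ov s}$ is a classical Fargues--Fontaine curve and Fargues--Fontaine's classification of vector bundles applies---and then to transfer the resulting cohomological data back to $X_S$ using the Banach--Colmez computations already established in this section.

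\textbf{Twisting and v-descent.} A surjection $\sO_{X_S}(-c)^{\oplus m}\twoheadrightarrow\sE$ is the same datum as a surjection $\sO_{X_S}^{\oplus m}\twoheadrightarrow\sE(c)$, where $\sE(c)\coloneqq\sE\otimes_{\sO_{X_S}}\sO_{X_S}(c)$; equivalently (dualizing) as an injection $\sE^\vee\hra\sO_{X_S}(c)^{\oplus m}$ with locally free cokernel. So it suffices to show that, for $c$ a large enough positive integer, $\sE(c)$ is generated by finitely many global sections. By Proposition \ref{ss:BanachColmezsheaf} (v-descent of vector bundles on $X_S$) I may replace $S$ by a v-cover and assume $S$ is strictly totally disconnected; its geometric points $\ov s=\Spa(C_{\ov s},C_{\ov s}^+)$ then each lie over a single point $i$ of $\abs{\ul{\bN\cup\{\infty\}}}=\bN\cup\{\infty\}$.

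\textbf{The fibrewise picture, and the main obstacle.} For a geometric point $\ov s$ over $i$, Proposition \ref{ss:Wittspecialization} identifies $X_{\ov s}$ with the Fargues--Fontaine curve attached to $E_i$, so Fargues--Fontaine's theorem gives $\sE|_{X_{\ov s}}\cong\bigoplus_j\sO_{X_{\ov s}}(\la_{j,\ov s})$. I expect the crux of the argument to be the \emph{uniform} lower bound $\mu_{\min}(\sE_{\ov s})\geq -c_0$ over all geometric points $\ov s$ of $S$, since the Harder--Narasimhan formalism in families is only developed later in the section; I would obtain it from the classical case over each $E_i$ together with quasicompactness of $\abs S$ and the fact that $\abs S\to\bN\cup\{\infty\}$ is spectral. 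Granting it, fix $c\geq c_0+1$; then $\sE(c)|_{X_{\ov s}}$ is a sum of line bundles $\sO_{X_{\ov s}}(\la)$ with $\la\geq1$, so $\sE(c)|_{X_{\ov s}}$ is generated by its finite-dimensional space of global sections and $H^1(X_{\ov s},\sE(c-1)|_{X_{\ov s}})=0$.

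\textbf{Transfer to $X_S$ and conclusion.} Using the cover $X_S=\cY_{S,[1,q]}\cup_\vp$ and the Mayer--Vietoris description of $R\Ga(X_S,-)$ in terms of $\vp-\pi^{\bullet}$ acting on $B_{S,[1,\infty]}$ from the proof of Proposition \ref{ss:absoluteBanachColmez}, together with v-descent and the fibrewise vanishing just obtained, one shows $H^1(X_S,\sE(c-1))=0$ and hence that the v-sheaf $\cB\cC(\sE(c))$, whose $S$-points are $H^0(X_S,\sE(c))$, commutes with base change; in particular the restriction $H^0(X_S,\sE(c))\to H^0(X_{\ov s},\sE(c)|_{X_{\ov s}})$ is surjective for every geometric point $\ov s$. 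For each $\ov s$ I would then lift a finite generating set of the finite-dimensional space $H^0(X_{\ov s},\sE(c)|_{X_{\ov s}})$ to finitely many global sections of $\sE(c)$; these generate $\sE(c)$ over $X_{\ov s}$, hence (coherence of the cokernel and the standard uniform-support property of coherent sheaves on the relative curve) over $X_U$ for an open $U\subseteq S$ containing the image of $\ov s$. Quasicompactness of $\abs S$ then produces a single finite tuple of global sections generating $\sE(c)$ everywhere, i.e. a surjection $\sO_{X_S}^{\oplus m}\twoheadrightarrow\sE(c)$, which is the desired surjection $\sO_{X_S}(-c)^{\oplus m}\twoheadrightarrow\sE$ after untwisting.
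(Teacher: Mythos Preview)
Your proposal has a genuine circularity problem. You want a uniform lower bound $\mu_{\min}(\sE|_{X_{\ov s}})\geq -c_0$ as $\ov s$ ranges over geometric points of $S$, and you appeal to ``quasicompactness of $\abs S$'' together with the fibrewise classification. But knowing each fibre has \emph{some} minimum slope gives no uniform bound without a semicontinuity statement for Harder--Narasimhan polygons. In the paper that semicontinuity (Proposition \ref{prop:HarderNarasimhan}.i)) is proved \emph{after} the present proposition, and in fact uses it: ampleness feeds into Corollary \ref{ss:projectivizedBanachColmez} (properness of projectivized Banach--Colmez spaces), which is the engine behind HN semicontinuity. So ``granting'' the uniform bound is granting a consequence of the result you are trying to prove. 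The transfer step is also too optimistic: fibrewise vanishing of $H^1$ does not by itself yield $H^1(X_S,\sE(c-1))=0$, nor does it give that $\cB\cC(\sE(c))$ commutes with base change; you would need some form of cohomology-and-base-change or a Kiehl-type finiteness input, neither of which is available at this point.

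The paper's argument avoids all of this by working directly with the $\vp$-module description: a vector bundle on $X_S$ is a finite projective $B_{S,[1,q]}$-module with a $\vp$-semilinear isomorphism over the overlap, and one produces the required sections by hand from the Banach-algebra structure, exactly as in \cite[Theorem II.2.6]{FS21}, with the generalized Witt vectors of \S\ref{s:FFcurves} substituted for the usual ones. No fibrewise classification, no HN theory, and no base-change argument is needed.
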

\begin{proof}
After replacing the usual Witt vectors with our Witt vectors, the proof proceeds as in \cite[Theorem II.2.6]{FS21} verbatim.
\end{proof}

\subsection{}\label{ss:projectivizedBanachColmez}
Note that $\bO^\times=\varprojlim_n\bO_n^\times$ is profinite, and the morphism $\bZ\times(\bN\cup\{\infty\})\ra\bE^\times$ of group topological spaces over $\bN\cup\{\infty\}$ arising from $\pi$ induces an isomorphism
\begin{align*}
\big[\bZ\times(\bN\cup\{\infty\})\big]\times_{\bN\cup\{\infty\}}\bO^\times\ra^\sim\bE^\times.
\end{align*}
We now consider the analogue of projectivized Banach--Colmez spaces in our context. For all vector bundles $\sE$ on $X_S$, Proposition \ref{prop:absoluteBanachColmez}.ii) shows that $\cB\cC(\sE)$ is naturally an $\ul{\bE}$-module over $S$.
\begin{cor*}
The morphism $\cB\cC(\sE)\ra S$ is representable in locally spatial diamonds and partially proper. Moreover, the morphism $(\cB\cC(\sE)-\{0\})/\ul{\bE^\times}\ra S$ is representable in spatial diamonds and proper.
\end{cor*}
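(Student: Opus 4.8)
The plan is to reduce both assertions to Proposition \ref{prop:absoluteBanachColmez}, which handles the line bundles $\sO_{X_S}(\la)$, by using the ampleness result Proposition \ref{ss:amplelinebundle} together with the long exact cohomology sequences attached to short exact sequences of vector bundles on $X_S$; for the projectivization I would additionally run a valuative-criterion argument as in Fargues--Scholze \cite{FS21}.

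First I would prove the auxiliary claim that $\cB\cC(\sF)\ra S$ is separated for \emph{every} vector bundle $\sF$ on $X_S$. Applying Proposition \ref{ss:amplelinebundle} to $\sF^\vee$ gives a surjection $\sO_{X_S}(-c')^{\oplus m'}\twoheadrightarrow\sF^\vee$ with $c',m'>0$; dualizing the resulting short exact sequence of vector bundles produces an injection $\sF\hra\sO_{X_S}(c')^{\oplus m'}$ with vector bundle cokernel. Taking $H^0(X_T,-)$ over varying perfectoid $S$-spaces $T$ then yields a monomorphism of v-sheaves $\cB\cC(\sF)\hra\cB\cC(\sO_{X_S}(c')^{\oplus m'})$ over $S$, and since $c'>0$ the target is partially proper, hence separated, over $S$ by Proposition \ref{prop:absoluteBanachColmez}.i. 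A monomorphism into a separated v-sheaf over $S$ is separated, which gives the claim; note that this step invokes Proposition \ref{prop:absoluteBanachColmez} only for line bundles, so there is no circularity.

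Now I would fix $\sE$. Proposition \ref{ss:amplelinebundle} applied to $\sE^\vee$ and dualized gives a short exact sequence $0\ra\sE\ra\sO_{X_S}(c)^{\oplus m}\ra\sQ\ra0$ of vector bundles with $c>0$. Pulling back to $X_T$ keeps it exact, so $\cB\cC(\sE)$ is identified with the fiber product $\cB\cC(\sO_{X_S}(c)^{\oplus m})\times_{\cB\cC(\sQ)}S$ along the zero section $S\ra\cB\cC(\sQ)$; by the auxiliary claim $\cB\cC(\sQ)\ra S$ is separated, so this zero section is a closed immersion, and hence so is its base change $\cB\cC(\sE)\ra\cB\cC(\sO_{X_S}(c)^{\oplus m})$. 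By Proposition \ref{prop:absoluteBanachColmez}.i the target (an $m$-fold fiber power over $S$) is representable in locally spatial diamonds and partially proper over $S$, and a closed sub-v-sheaf of such inherits both properties; this proves the first assertion. For the second assertion, separatedness makes $\{0\}\subseteq\cB\cC(\sE)$ closed, so $\cB\cC(\sE)-\{0\}$ is open and hence representable in locally spatial diamonds and separated over $S$; since $\cB\cC(\sE)$ is an $\ul{\bE}$-module over $S$ by Proposition \ref{prop:absoluteBanachColmez}.ii and $\bE$ has field fibers over $\bN\cup\{\infty\}$, checking on geometric points shows that the scaling action of $\ul{\bE^\times}$ on the fiberwise-nonzero locus $\cB\cC(\sE)-\{0\}$ is free. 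Thus the quotient map is a $\ul{\bE^\times}$-torsor, and since $\bE^\times$ is lctd over $\bN\cup\{\infty\}$, Lemma \ref{ss:torsorsareproetale} shows it is a pro-\'etale, universally open cover; arguing as in \cite{Sch17} one concludes that $(\cB\cC(\sE)-\{0\})/\ul{\bE^\times}\ra S$ is representable in locally spatial diamonds. Restricting the closed immersion $\cB\cC(\sE)\hra\cB\cC(\sO_{X_S}(c)^{\oplus m})$ to the fiberwise-nonzero loci gives a $\ul{\bE^\times}$-equivariant closed immersion, which descends to a closed immersion of quotients over $S$; since closed immersions are proper, it then suffices to prove properness of $(\cB\cC(\sO_{X_S}(c)^{\oplus m})-\{0\})/\ul{\bE^\times}\ra S$ with $c>0$, after which quasicompactness upgrades ``locally spatial'' to ``spatial'' throughout.

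The main obstacle will be this last properness statement, i.e.\ that the projectivization of the positive absolute Banach--Colmez space $\cB\cC(\sO_{X_S}(c)^{\oplus m})$ is proper over $S$. I would verify it by the valuative criterion following Fargues--Scholze \cite{FS21}: the $\bZ$-direction of $\bE^\times$ (as in $X_S=Y_S/\vp^\bZ$) makes the quotient quasicompact, $\bO^\times$ is profinite, and the explicit descriptions from Corollary \ref{ss:logsandtheta} and Proposition \ref{prop:absoluteBanachColmez} let one carry out the extension-of-sections argument uniformly over the family $\ul{\bN\cup\{\infty\}}$. All remaining steps are formal manipulations with closed immersions, monomorphisms, and base change of v-sheaves.
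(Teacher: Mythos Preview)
Your argument for the first assertion is correct and is essentially the paper's: the paper applies Proposition~\ref{ss:amplelinebundle} twice to $\sE^\vee$ to obtain a two-term presentation, dualizes, and takes sections to get $0\ra\cB\cC(\sE)\ra\cB\cC(\sO_{X_S}(c_1))^{\oplus m_1}\ra\cB\cC(\sO_{X_S}(c_2))^{\oplus m_2}$, so that separatedness of the last term (from Proposition~\ref{prop:absoluteBanachColmez}.i) makes its zero section closed and hence $\cB\cC(\sE)\hra\cB\cC(\sO_{X_S}(c_1))^{\oplus m_1}$ a closed embedding. Your separate ``auxiliary claim'' is just this second application of ampleness, repackaged.

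For the second assertion there is a genuine gap. Lemma~\ref{ss:torsorsareproetale} only shows that the quotient map by $\ul{\bE^\times}$ is pro-\'etale and universally open; it does not by itself give that the quotient is a locally spatial diamond, because $\bE^\times$ is not profinite---it has a $\bZ$-factor, so \cite[Remark 11.25]{Sch17} does not apply, and there is no general result in \cite{Sch17} handling free actions of non-compact locally profinite groups. Your final ``valuative criterion'' sketch has the same problem: properness here is partial properness (easy) plus quasicompactness, and your sentence ``the $\bZ$-direction of $\bE^\times$ makes the quotient quasicompact'' is the assertion to be proved, not a proof of it. The paper resolves both issues at once by applying \cite[Lemma II.2.17]{FS21} directly to $\abs{\cB\cC(\sE)}$ with the automorphism given by multiplication by $\pi$: this lemma outputs a \emph{spatial} diamond $(\cB\cC(\sE)-\{0\})/\pi^{\bZ}$ once one verifies explicit contraction/expansion conditions on the $\pi$-action. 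Those conditions are inherited from the ambient $\abs{\cB\cC(\sO_{X_S}(c_1))^{\oplus m_1}}$, where they are checked using the description (from the proof of Proposition~\ref{prop:absoluteBanachColmez}.i) of $\cB\cC(\sO_{X_S}(c_1))$ as an iterated extension of $(\wt{\LT}\times_{\Spa\cO/\pi}S)^\Diamond$ and $(\bG^{\an}_{a,S^\sharp})^\Diamond$. One then quotients further by the profinite $\ul{\bO^\times}$ via \cite[Remark 11.25]{Sch17} to get $(\cB\cC(\sE)-\{0\})/\ul{\bE^\times}$ spatial, and properness follows from \cite[Proposition 18.3]{Sch17}.
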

\begin{proof}
  Applying Proposition \ref{ss:amplelinebundle} twice yields an exact sequence
  \begin{align*}
    \xymatrix{\sO_{X_S}(-c_2)^{\oplus m_2}\ar[r] & \sO_{X_S}(-c_1)^{\oplus m_1}\ar[r] & \sE^\vee\ar[r] & 0}
  \end{align*}
  of sheaves on $X_S$, where the $c_1,c_2,m_1,m_2$ are positive integers. From here, taking duals and global sections yields an exact sequence
  \begin{align*}
    \xymatrix{0\ar[r] & \cB\cC(\sE)\ar[r] & \cB\cC(\sO_{X_S}(c_1))^{\oplus m_1}\ar[r] & \cB\cC(\sO_{X_S}(c_2))^{\oplus m_2}.}
  \end{align*}
  Now $\cB\cC(\sO_{X_S}(c_2))^{\oplus m_2}\ra S$ is separated by Proposition \ref{prop:absoluteBanachColmez}.i), so its zero section is a closed embedding. Hence our exact sequence implies that $\cB\cC(\sE)\ra\cB\cC(\sO_{X_S}(c_1))^{\oplus m_1}$ is a closed embedding. Proposition \ref{prop:absoluteBanachColmez}.i) indicates that $\cB\cC(\sO_{X_S}(c_1))^{\oplus m_1}\ra S$ is representable in locally spatial diamonds and partially proper, so this shows that the same holds for the composition $\cB\cC(\sE)\ra\cB\cC(\sO_{X_S}(c_1)^{\oplus m_1}\ra S$.

  We turn to the second statement. We claim that $\abs{\cB\cC(\sE)}$ and the automorphism arising from $\pi$ satisfy the conditions in \cite[Lemma II.2.17]{FS21}. The first paragraph is satisfied by \cite[Remark II.2.18]{FS21}, and since $\cB\cC(\sE)$ is a closed subspace and subsheaf of $E$-modules of $\cB\cC(\sO_{X_S}(c_1))^{\oplus m_1}$, it suffices to check conditions (i)--(ii) for $|\cB\cC(\sO_{X_S}(c_1))^{\oplus m_1}|$. The proof of Proposition \ref{prop:absoluteBanachColmez}.i) identifies $\cB\cC(\sO_{X_S}(c_1))$ with an iterated extension of $(\wt{\LT}\times_{\Spa\cO/\pi}S)^\Diamond$ and $(\bG^{\an}_{a,S^\sharp})^\Diamond$, which themselves satisfy conditions (i)--(ii). This yields the claim.

  Because the fixed point locus of $\pi$ is precisely the zero section, the claim and \cite[Lemma II.2.17]{FS21} imply that
  \begin{align*}
    (\cB\cC(\sE)-\{0\})/\pi^\bZ = (\cB\cC(\sE)-\{0\})/\ul{\bZ\times(\bN\cup\{\infty\})}
  \end{align*}
  is a spatial diamond. Note that $\ul{\bO^\times}$ acts freely on $(\cB\cC(\sE)-\{0\})/\ul{\bZ\times(\bN\cup\{\infty\})}$ over $S$, so \cite[Remark 11.25]{Sch17} shows that
  \begin{align*}
    \Big((\cB\cC(\sE)-\{0\})/\ul{\bZ\times(\bN\cup\{\infty\})}\Big)\big/\ul{\bO^\times} &= (\cB\cC(\sE)-\{0\})/\ul{\big[\bZ\times(\bN\cup\{\infty\})\big]\times_{\bN\cup\{\infty\}}\bO^\times} \\
    &=(\cB\cC(\sE)-\{0\})/\ul{\bE^\times}
  \end{align*}
is a spatial diamond. Finally, we see that $(\cB\cC(\sE)-\{0\})/\ul{\bE^\times}\ra S$ remains partially proper, so \cite[Proposition 18.3]{Sch17} indicates that it is proper, as desired.
\end{proof}

\subsection{}
Let us recall the notion of Harder--Narasimhan polygons. Let $\sE$ be a vector bundle on $X_S$. For any geometric point $\ov{s}$ of $S$, the image of $\ov{s}$ in $\bN\cup\{\infty\}$ equals $\{i\}$ for some $i$ in $\bN\cup\{\infty\}$, so Proposition \ref{ss:Wittspecialization} and \cite[Theorem II.2.14]{FS21} imply that $\sE|_{X_{\ov{s}}}$ is isomorphic to
\begin{align*}
  \sO_{X_{\ov{s}}}(\la_1)\oplus\dotsb\oplus\sO_{X_{\ov{s}}}(\la_r)
\end{align*}
for some rational numbers $\la_1\geq\dotsb\geq\la_r$. For all $1\leq j\leq r$, write $d_j$ for the rank of $\sO_{X_{\ov{s}}}(\la_j)$.
\begin{defn*}
  The \emph{Harder--Narasimhan polygon} of $\sE$ is the function
  \begin{align*}
    p_\sE:\abs{S}\ra\{\mbox{convex polygons in }\bR^2\}
  \end{align*}
sending $\ov{s}$ to the polygon with vertices given by
  \begin{align*}
    (0,0),(d_1,d_1\la_1),\dotsc,(d_1+\dotsb+d_r,d_1\la_1+\dotsb+d_r\la_r).
  \end{align*}
\end{defn*}

\subsection{}\label{prop:HarderNarasimhan}
As usual, the properness of projectivized Banach--Colmez spaces has the following consequence for Harder--Narasimhan polygons.
\begin{prop*}Let $\sE$ be a vector bundle on $X_S$.
  \begin{enumerate}[i)]
  \item The function $p_\sE$ is upper semicontinuous.
  \item If $p_\sE$ is constant, then there exists a decreasing filtration $\{\sE^{\geq\la}\}_{\la\in\bQ}$ of $\sE$ such that, for all geometric points $\ov{s}$ of $S$, the pullback $\{\sE^{\geq\la}|_{X_{\ov{s}}}\}_{\la\in\bQ}$ equals the Harder--Narasimhan filtration of $\sE|_{X_{\ov{s}}}$. Moreover, there exists an affinoid perfectoid v-cover $S'\ra S$ such that, for all rational numbers $\la$, there is a non-negative integer $m$ and an isomorphism
    \begin{align*}
      \Big(\sE^{\geq\la}\big/\bigcup_{\la'>\la}\sE^{\geq\la'}\Big)\Big|_{X_{S'}} \cong \sO_{X_{S'}}(\la)^{\oplus m}.
    \end{align*}
  \end{enumerate}
\end{prop*}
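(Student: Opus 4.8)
The plan is to obtain part i) from the properness of projectivized Banach--Colmez spaces (Corollary \ref{ss:projectivizedBanachColmez}) and part ii) by transcribing the proof of \cite[Theorem~II.2.19]{FS21}, with the results of \S\ref{s:LT} and \S\ref{s:vectorbundles} in place of their counterparts. For part i), I would first reduce to a statement about global sections. For a geometric point $\ov s$ of $S$, lying over $i\in\bN\cup\{\infty\}$, Proposition \ref{ss:Wittspecialization} together with \cite[Theorem~II.2.14]{FS21} write $\sE|_{X_{\ov s}}$ as a direct sum of line bundles, so for every integer $t$ with $0\leq t\leq\operatorname{rank}\sE$ the value of $p_\sE(\ov s)$ at $t$ is the maximal slope of $(\wedge^t\sE)|_{X_{\ov s}}$; hence, for a rational number $k$, one has $p_\sE(\ov s)(t)\geq k$ precisely when $H^0(X_{\ov s},(\wedge^t\sE)(-k)|_{X_{\ov s}})\neq 0$. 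Since $p_\sE(\ov s)$ has decreasing slopes, it lies on or above a given convex polygon $P$ if and only if this holds at the finitely many abscissae that are vertices of $P$, so part i) reduces to showing that, for any vector bundle $\sG$ on $X_S$, the set $\{\ov s\in\abs{S}\mid H^0(X_{\ov s},\sG|_{X_{\ov s}})\neq 0\}$ is closed. This set is the image of $\abs{\cB\cC(\sG)-\{0\}}\to\abs{S}$ (a geometric point with nonzero $H^0$ defines an $\ov s$-point of $\cB\cC(\sG)-\{0\}$, and conversely), and this map factors through $(\cB\cC(\sG)-\{0\})/\ul{\bE^\times}$, which is proper over $S$ by Corollary \ref{ss:projectivizedBanachColmez} and in particular universally closed; so the image is closed.

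For part ii), the heart of the matter is the semistable case: a vector bundle $\sG$ on $X_S$ that is fiberwise semistable of a fixed slope $\la$ becomes, over a suitable affinoid perfectoid v-cover $S'\to S$, isomorphic to $\sO_{X_{S'}}(\la)^{\oplus m}$. I would reduce to $\la=0$ by passing to the finite \'etale cover $X_S(E^d)\to X_S$ with $d$ the denominator of $\la$, as in \ref{ss:absoluteBanachColmez}, and twisting by $\sO(-\la)$. In the case $\la=0$, one produces enough global sections of $\sG$ by repeatedly using the modification sequence $0\to\sO_{X_S}\to\sO_{X_S}(1)\to\sO_{S^\sharp}\to 0$ of Proposition \ref{ss:O(1)zeroes}, tensored with twists of $\sG$ (note that the pullback of $\sO_{X_S}(1)$ to $Y_S$ is trivial, so that $\sG|_{S^\sharp}$ is a vector bundle pulled back from $S$), together with the cohomological input of Proposition \ref{prop:absoluteBanachColmez} --- in particular the pro-\'etale-local vanishing of $H^1(X_S,\sO_{X_S})$ --- exactly as in \cite[Theorem~II.2.19]{FS21}; over the v-cover these sections trivialize $\sG$.

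Granting the semistable case, I would build the relative Harder--Narasimhan filtration by descending induction on the slopes $\la_1>\dotsb>\la_r$ of $\sE$, which are constant, with constant multiplicities $d_1,\dotsc,d_r$, by part i). I would take $\sE^{\geq\la_1}\subseteq\sE$ to be the saturation of the subsheaf generated, after a v-cover, by the images of all morphisms $\sO_{X_S}(\la_1)\to\sE$; since there is no nonzero morphism $\sO(\la_1)\to\sO(\mu)$ on a Fargues--Fontaine curve for $\mu<\la_1$, the fiber of $\sE^{\geq\la_1}$ at each $\ov s$ is the first step of the Harder--Narasimhan filtration of $\sE|_{X_{\ov s}}$, which has rank $d_1$ independent of $\ov s$, so $\sE^{\geq\la_1}$ is a sub-bundle with the prescribed fibers. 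The quotient $\sE/\sE^{\geq\la_1}$ again has constant polygon, so iterating produces the filtration $\{\sE^{\geq\la}\}_{\la\in\bQ}$; its graded pieces are fiberwise semistable of the appropriate slopes, and the description over a v-cover is the semistable case. The main obstacle I anticipate is the semistable case itself (equivalently the slope-$0$ case): producing the global sections that v-locally trivialize a fiberwise-trivial bundle requires assembling the cohomological and smoothness results of \S\ref{s:LT} and \S\ref{s:vectorbundles} as in \cite[Theorem~II.2.19]{FS21}, and one must be careful that $H^1(X_S,\sO_{X_S})$ vanishes only after pro-\'etale sheafification. A secondary point is to confirm that the saturated subsheaf defining $\sE^{\geq\la_1}$ is genuinely a sub-bundle with the expected fibers, for which the rank control from part i) is used.
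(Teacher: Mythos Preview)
Your argument for part i) is essentially the paper's: reduce to showing that $\{\ov s:H^0(X_{\ov s},\sF|_{X_{\ov s}})\neq 0\}$ is closed for every vector bundle $\sF$, and identify this set with the image of the proper morphism $(\cB\cC(\sF)-\{0\})/\ul{\bE^\times}\ra S$ from Corollary~\ref{ss:projectivizedBanachColmez}.

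For part ii), your route and the paper's diverge. The paper does \emph{not} first prove the semistable case via modification sequences. Instead it reuses the same tool as in part i): setting $\sF=\sHom(\sO_{X_S}(\la_1),\sE)$, the morphism $(\cB\cC(\sF)-\{0\})/\ul{\bE^\times}\ra S$ is quasicompact and surjective on geometric points (by the assumed shape of the polygon), hence a v-cover, so after replacing $S$ by a v-cover there is a $\sg:\sO_{X_S}(\la_1)\ra\sE$ nonzero on every fiber. Irreducibility of $\sO_{X_{\ov s}}(-\la_1)$ forces $\sg^\vee$ to be fiberwise surjective, so $\sg$ is a sub-bundle; induction on rank and splitting the resulting extension (via Proposition~\ref{ss:absoluteBanachColmez} and Proposition~\ref{prop:absoluteBanachColmez}.ii)) gives a \emph{full} splitting $\sE|_{X_{S'}}\cong\bigoplus_j\sO_{X_{S'}}(\la_j)$ over a v-cover $S'\ra S$. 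The filtration is then obtained by descent: the descent datum on the split bundle automatically preserves $\bigoplus_{\la_j\geq\la}\sO_{X_{S'}}(\la_j)$ because $H^0(X,\sO(\mu))=0$ for $\mu<0$ (Proposition~\ref{prop:absoluteBanachColmez}.iii)), so Proposition~\ref{ss:BanachColmezsheaf} descends it to $\sE^{\geq\la}$.

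Your approach can be made to work, but it is more circuitous, and two of its steps are imprecise as stated. First, your slope-$0$ semistable argument is essentially the content of Corollary~\ref{cor:slopezerobundles}, which in this paper is deduced \emph{from} the present proposition; so you would have to give a self-contained proof (which is possible but longer than what the paper does). Second, your construction of $\sE^{\geq\la_1}$ as ``the saturation of the subsheaf generated, after a v-cover, by images of all $\sO(\la_1)\ra\sE$'' is not well-defined over $S$: you are mixing a construction over a v-cover with a fiberwise characterization, and ``saturation'' needs a sub-bundle to begin with. The paper sidesteps both issues by going straight to a full splitting over a v-cover and descending the obvious filtration via the vanishing $H^0(\sO(\mu))=0$ for $\mu<0$, which is a much cleaner mechanism than your inductive saturation.
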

\begin{proof}
  For part i), after replacing $S$ with a clopen cover, we can assume that $\sE$ has constant rank $d$. Note that $p_\sE(\ov{s})$ equals the convex hull of
  \begin{align*}
    \{(j,m)\in\bZ^2\mid 0\leq j\leq d\mbox{ and }H^0(X_{\ov{s}},(\textstyle\bigwedge^j\sE)\otimes\sO_{X_S}(-m)\big|_{X_{\ov{s}}})\mbox{ is nonzero}\},
  \end{align*}
so it suffices to show that, for all vector bundles $\sF$ on $X_S$, 
  \begin{align*}
    \{\ov{s}\in\abs{S}\mid H^0(X_{\ov{s}},\sF|_{X_{\ov{s}}})\mbox{ is nonzero}\}
  \end{align*}
  is a closed subset of $\abs{S}$. Now the latter equals equals the image of the morphism $(\cB\cC(\sF)-\{0\})/\ul{\bE^\times}\ra S$, which is proper by Corollary \ref{ss:projectivizedBanachColmez}. Hence its image is indeed a closed subset of $\abs{S}$.

  For part ii), suppose that $p_\sE$ is the polygon with vertices given by
  \begin{align*}
    (0,0),(d_1,d_1\la_1),\dotsc,(d_1+\dotsb+d_r,d_1\la_1+\dotsb+d_r\la_r)
  \end{align*}
  for some rational numbers $\la_1\geq\dotsb\geq\la_r$. We claim that there exists an affinoid perfectoid v-cover $S'\ra S$ and an isomorphism
  \begin{align*}
    \sE|_{X_{S'}}\cong\sO_{X_{S'}}(\la_1)\oplus\dotsb\oplus\sO_{X_{S'}}(\la_r).
  \end{align*}
  To see this, consider the vector bundle $\sF\coloneqq\sHom(\sO_{X_S}(\la_1),\sE)$ on $X_S$. The morphism $(\cB\cC(\sF)-\{0\})/\ul{\bE^\times}\ra S$ is quasicompact by Corollary \ref{ss:projectivizedBanachColmez}, and by assumption it is surjective on geometric points. Therefore \cite[Lemma 12.11]{Sch17} indicates that $(\cB\cC(\sF)-\{0\})/\ul{\bE^\times}\ra S$ is a v-cover. Since $\cB\cC(\sF)-\{0\}\ra(\cB\cC(\sF)-\{0\})/\ul{\bE^\times}$ is also a v-cover, we see that, after replacing $S$ with a v-cover, there exists a morphism $\sg:\sO_{X_S}(\la_1)\ra\sE$ whose pullbacks to geometric points $\ov{s}$ of $S$ are nonzero. Because $\sO_{X_{\ov{s}}}(-\la_1)$ is irreducible, checking on geometric points $\ov{s}$ of $S$ shows that $\sg^\vee:\sE^\vee\ra\sO_{X_S}(\la_1)^\vee=\sO_{X_S}(-\la_1)$ is surjective, so $\sg$ is injective and has a vector bundle cokernel. By induction on $\rk\sE$, after replacing $S$ with a v-cover, we have an isomorphism $\coker\sg\cong\sO_{X_S}(\la_2)\oplus\dotsb\oplus\sO_{X_S}(\la_r)$. The resulting extension
  \begin{align*}
    \xymatrix{0\ar[r] & \sO_{X_S}(\la_1)\ar[r]^-\sg & \sE\ar[r] & \sO_{X_S}(\la_2)\oplus\dotsb\oplus\sO_{X_S}(\la_r)\ar[r] & 0}
  \end{align*}
  splits after replacing $S$ with a pro-\'etale cover by Proposition \ref{ss:absoluteBanachColmez} and Proposition \ref{prop:absoluteBanachColmez}.ii). This yields the claim.

Let $S'\ra S$ be as in the claim, and for all rational numbers $\la$, write $\sE'^{\geq\la}$ for the sub-bundle $\bigoplus_{\la_j\geq\la}\sO_{X_{S'}}(\la_j)$ of $\sE|_{X_{S'}}\cong\sO_{X_{S'}}(\la_1)\oplus\dotsb\oplus\sO_{X_{S'}}(\la_r)$. Note that $\sE|_{X_{S'}}$ has a descent datum $\al$ with respect to $X_{S'}\ra X_S$. Proposition \ref{prop:absoluteBanachColmez}.iii) implies that $\al$ preserves the sub-bundles $\sE'^{\geq\la}$, so Proposition \ref{ss:BanachColmezsheaf} enables us to descend $(\sE'^{\geq\la},\al)$ to a sub-bundle $\sE^{\geq\la}$ of $\sE$. By construction, $\sE^{\geq\la}$ satisfies the desired properties.
\end{proof}

\subsection{}\label{cor:slopezerobundles}
We conclude this section by using Proposition \ref{prop:HarderNarasimhan} to prove the analogue of \cite[Theorem 8.5.12]{KL15} in our more general setting.
\begin{cor*}
  We have an exact tensor equivalence of categories
  \begin{align*}
    \{\mbox{pro-\'etale }\ul{\bE}\mbox{-local systems on }S\}\ra^\sim\left\{
  {\begin{tabular}{c}
    vector bundles $\sE$ on $X_S$\\
    with slope-zero $p_\sE$
  \end{tabular}}
\right\}
  \end{align*}
given by $\bL\mapsto\bL\otimes_{\ul{\bE}}\sO_{X_S}$.
\end{cor*}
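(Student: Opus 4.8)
The plan is to write down the expected quasi-inverse, $\sE\mapsto\cB\cC(\sE)$, and to verify the exactness and tensor structure along the way; the one delicate point will be checking that $\cB\cC$ of a slope-zero bundle is genuinely \emph{pro-\'etale}-locally (not merely v-locally) a trivial $\ul{\bE}$-module.

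First I would check that $F\colon\bL\mapsto\bL\otimes_{\ul{\bE}}\sO_{X_S}$ is well-defined with the stated target. Pro-\'etale-locally on $S$ the local system $\bL$ is isomorphic to $\ul{\bE}^{\oplus r}$, and such a cover pulls back to a v-cover of $X_S$ by the diamond formula (Proposition \ref{ss:diamondformula}), so $F(\bL)$ is v-locally $\sO_{X_S}^{\oplus r}$ and hence a vector bundle by v-descent (Proposition \ref{ss:BanachColmezsheaf}). Its Harder--Narasimhan polygon is computed on a geometric point $\ov s$ lying over $i\in\bN\cup\{\infty\}$, where $F(\bL)|_{X_{\ov s}}\cong\sO_{X_{\ov s}}^{\oplus r}$ is slope-zero; so $p_{F(\bL)}\equiv0$, and $F$ indeed takes values in slope-zero bundles. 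That $F$ is monoidal with unit $\sO_{X_S}$, is compatible with duals, and sends short exact sequences to short exact sequences is immediate after passing to a trivializing cover.

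Next I would set $G\coloneqq\cB\cC(-)$ and show it lands in pro-\'etale $\ul{\bE}$-local systems. Let $\sE$ be a vector bundle with slope-zero $p_\sE$; after a clopen decomposition of $S$ we may assume $\sE$ has constant rank $r$, so $p_\sE$ is constant, and Proposition \ref{prop:HarderNarasimhan}.ii) yields an affinoid perfectoid v-cover $S'\to S$ with $\sE|_{X_{S'}}\cong\sO_{X_{S'}}^{\oplus r}$. Since $\cB\cC(\sO(0))\cong\ul{\bE}$ by Proposition \ref{prop:absoluteBanachColmez}.ii) and $\cB\cC$ is compatible with base change in $S$, the v-sheaf $G(\sE)$ pulls back to $\ul{\bE}^{\oplus r}$ over $S'$; hence the v-sheaf of frames, i.e.\ of isomorphisms $\ul{\bE}^{\oplus r}\cong G(\sE)$, is a torsor under $\ul{\mathrm{GL}_r(\bE)}$. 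Now $\mathrm{GL}_r(\bE)$ is a group topological space over $\bN\cup\{\infty\}$ by Proposition \ref{ss:relativepoints}, and since $\bE$ is locally profinite (\ref{ss:realizingEbreve}) it is lctd over $\bN\cup\{\infty\}$ as in Definition \ref{ss:grouphypotheses}, with the congruence subgroups $\ker(\mathrm{GL}_r(\bO)\to\mathrm{GL}_r(\bO_n))$ as distinguished compact open subgroups. Lemma \ref{ss:torsorsareproetale} then shows this frame torsor is a pro-\'etale cover of $S$, so $G(\sE)$ is a pro-\'etale $\ul{\bE}$-local system. This step --- upgrading v-local triviality to pro-\'etale-local triviality --- is the one I expect to be the main obstacle; everything else reduces by construction to the corresponding statement over a point, i.e.\ to \cite{KL15}.

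Finally I would verify that $F$ and $G$ are mutually quasi-inverse and that $G$ is exact. The tautological evaluation map $G(\sE)\otimes_{\ul{\bE}}\sO_{X_S}\to\sE$ becomes the identity after pulling $S$ back to $S'$, hence is an isomorphism, giving $F\circ G\cong\id$; dually, the natural map $\bL\to\cB\cC(\bL\otimes_{\ul{\bE}}\sO_{X_S})$ reduces pro-\'etale-locally to the identification $\cB\cC(\sO(0))\cong\ul{\bE}$, giving $G\circ F\cong\id$. For exactness: applied to $0\to\sE'\to\sE\to\sE''\to0$, left-exactness of $G$ is automatic, and surjectivity of $\cB\cC(\sE)\to\cB\cC(\sE'')$ as a map of pro-\'etale sheaves holds because pro-\'etale-locally $\sE'\cong\sO_{X_S}^{\oplus m}$ and the pro-\'etale sheafification of $S\mapsto H^1(X_S,\sO_{X_S})$ vanishes by Proposition \ref{prop:absoluteBanachColmez}.ii). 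The remaining routine input is the compatibility of $X_{(-)}$ and $\cB\cC(-)$ with base change along v- and pro-\'etale covers of $S$, together with the fact, again from Proposition \ref{ss:diamondformula}, that such covers pull back to covers of $X_S$.
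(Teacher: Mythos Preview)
Your proposal is correct and follows essentially the same route as the paper: the crux in both is that the $\ul{\GL_d(\bE)}$-torsor of trivializations (equivalently, the frame torsor of $\cB\cC(\sE)$) is a v-torsor by Proposition~\ref{prop:HarderNarasimhan}.ii) and hence pro-\'etale by Lemma~\ref{ss:torsorsareproetale}, using that $\GL_d(\bE)$ is lctd over $\bN\cup\{\infty\}$. The only organizational difference is that the paper first proves full faithfulness of $F$ via internal homs and duals and then deduces essential surjectivity by transporting the descent datum, whereas you construct the quasi-inverse $G=\cB\cC$ explicitly and verify $F\circ G\cong\id$, $G\circ F\cong\id$; your separate exactness check is not needed, since an equivalence of abelian categories is automatically exact.
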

\begin{proof}
Note that $-\otimes_{\ul{\bE}}\sO_{X_S}$ preserves tensor products and duals. Therefore internal homs let us reduce full faithfulness to proving that, for all pro-\'etale $\ul{\bE}$-local systems $\bL$ on $S$, the map $\Hom_{\ul{\bE}}(\ul{\bE},\bL)\ra\Hom_{\sO_{X_S}}(\sO_{X_S},\bL\otimes_{\ul{\bE}}\sO_{X_S})$ is a bijection. Now $\bL$ is trivial after replacing $S$ with a pro-\'etale cover, $\Hom_{\ul{\bE}}(\ul{\bE},\bL)$ satisfies pro-\'etale descent on $S$, and $\Hom_{\sO_{X_S}}(\sO_{X_S},\bL\otimes_{\ul{\bE}}\sO_{X_S})$ satisfies pro-\'etale descent on $S$ by Proposition \ref{ss:BanachColmezsheaf}. Hence we can assume that $\bL=\ul{\bE}$, where the result follows from Proposition \ref{prop:absoluteBanachColmez}.ii).

  For essential surjectivity, let $\sE$ be a vector bundle on $X_S$ with slope-zero $p_\sE$. After replacing $S$ with a clopen cover, we can assume that $\sE$ has constant rank $d$. Then Proposition \ref{ss:BanachColmezsheaf} and Proposition \ref{prop:absoluteBanachColmez}.ii) indicate that the presheaf $\wt{S}$ on $\Perf_{\bF_q}$ over $S$ whose $S'$-points equal $\Iso(\sE|_{X_{S'}},\sO_{X_{S'}}^{\oplus d})$ is a pseudotorsor for $\GL_d(\ul{\bE})=\ul{\GL_d(\bE)}$, and Proposition \ref{prop:HarderNarasimhan}.ii) shows that $\wt{S}$ is a torsor. Proposition \ref{ss:G(E)hypothesisA} and Proposition \ref{ss:G(E)hypothesisB} indicate that $\GL_d(\bE)$ is lctd over $\bN\cup\{\infty\}$, so Lemma \ref{ss:torsorsareproetale} shows that $\wt{S}\ra S$ is a pro-\'etale cover. This yields an affinoid perfectoid pro-\'etale cover $S'\ra S$ and an isomorphism $\sE|_{X_{S'}}\cong\sO_{X_{S'}}^{\oplus d}$. By full faithfulness, the resulting descent datum on $\sO_{X_{S'}}^{\oplus d}$ with respect to $X_{S'}\ra X_S$ induces a descent datum $\al$ on $\ul{\bE}^{\oplus d}$ with respect to $S'\ra S$, and $(\ul{\bE}^{\oplus d},\al)$ descends to the desired pro-\'etale $\ul{\bE}$-local system on $S$.
\end{proof}

\section{Reductive groups in families}\label{s:reductivegroups}
In this section, we study $p$-adic groups over $E$. We start with some recollections about Chevalley groups over $\bZ$. Then we combine this with results from \S\ref{s:localfields} to spread out quasisplit connected reductive groups over $E_\infty$ to reductive group schemes $G$ over $E$. Using a construction from \S\ref{s:smoothrepresentations}, the latter yields a group topological space $G(\bE)$ over $\bN\cup\{\infty\}$, which we show is lctd over $\bN\cup\{\infty\}$ as in Definition \ref{ss:grouphypotheses}. We also define the analogue $\bK^n$ of congruence subgroups of $G(\bE)$, and we prove that
\begin{align*}
  \ul\End_{G(\bE)}(\cInd^{G(\bE)}_{\bK^n}\ul\La)
\end{align*}
is isomorphic to the constant sheaf $\ul{\cH(G_\infty(E_\infty),\bK^n_\infty)_\La}$ in a way that recovers Ganapathy's isomorphism on stalks.

Next, we turn to the study of $G$-bundles on $X_S$. After proving a Tannakian description of $G$-torsors, we define the moduli stack
\begin{align*}
\Bun_G\ra\ul{\bN\cup\{\infty\}}
\end{align*}
of $G$-bundles on $X_S$. Using results from \S\ref{s:vectorbundles}, we prove that the locus $\Bun^1_G\subseteq\Bun_G$ of trivial $G$-bundles is open, and that $\Bun^1_G$ is isomorphic to the classifying stack of $\ul{G(\bE)}$ over $\ul{\bN\cup\{\infty\}}$. Finally, we prove an invariance property of $D_{\et}(\Bun_G,\La)$ when base changing to $\Bun_G\times_{\ul{\bN\cup\{\infty\}}}\Spd{C}$, by reducing to the situation considered in Fargues--Scholze \cite{FS21}.

\subsection{}\label{ss:groupoverinfty}
Let $(G^{\s},T^{\s},X^*(T^{\s}),\De,\{x_{\wt{a}}\}_{\wt{a}\in\De})$ be a pinned split connected reductive group over $\bZ$ as in \cite[Definition 6.1.1]{Con14}, and recall that the global sections $\sO_{G^{\s}}$ of $G^{\s}$ is a free $\bZ$-module \cite[4.9]{Bor70}. Write $B^{\s}$ for the Borel subgroup of $G^{\s}$ associated with the base $\De$, and note that $\sO_{B^{\s}}$ is also a free $\bZ$-module.

Let $\de:\Ga_\infty\ra\Aut(G^{\s},T^{\s},X^*(T^{\s}),\De,\{x_{\wt{a}}\}_{\wt{a}\in\De})$ be a continuous homomorphism, and write $F_\infty/E_\infty$ for the finite Galois extension such that $\Gal(F_\infty/E_\infty)$ is the image of $\de$. Descending $(G^{\s}_{F_\infty},T^{\s}_{F_\infty},B^{\s}_{F_\infty},\{x_{\wt{a}}\}_{\wt{a}\in\De})$ along the \'etale $\Gal(F_\infty/E_\infty)$-torsor $\Spec F_\infty\ra\Spec E_\infty$ via $\de$ yields a quasisplit connected reductive group $G_\infty$ over $E_\infty$ with a pinning $(B_\infty,T_\infty,\{x_{\wt{a}}\}_{\wt{a}\in\De})$ over $E_\infty$ as in \cite[Definition 2.9.1]{KP23}. Recall that every quasisplit connected reductive group over $E_\infty$ arises this way for a uniquely determined $\de$ (up to conjugation).

\subsection{}
We now spread out $G_\infty$ to a connected reductive group over $E$ as follows. Let $n$ be a positive integer such that the image of $I^n_\infty$ in $\Gal(F_\infty/E_\infty)$ is trivial. \textbf{For the rest of this paper, assume that $e_i\geq n$ for all $i$ in $\bN$.} Recall from \ref{ss:realizingextensions} the function $\psi:[-1,\infty)\ra[-1,\infty)$, the finite Galois extensions $F_i/E_i$, the finite \'etale $E$-algebra $F$, and the ring topological spaces $\bO_{\bF,n}$, $\bO_\bF$, and $\bF$ over $\bN\cup\{\infty\}$.

Corollary \ref{cor:Efet} implies that $\Spec{F}\ra\Spec{E}$ is an \'etale $\Gal(F_\infty/E_\infty)$-torsor. Therefore descending $(G^{\s}_F,T^{\s}_F,B^{\s}_F,\{x_{\wt{a}}\}_{\wt{a}\in\De})$ along $\Spec{F}\ra\Spec{E}$ via $\de$ yields a connected reductive group $G$ over $E$, a maximal subtorus $T$ of $G$, a Borel subgroup $B$ of $G$ containing $T$, and generators $x_{\wt{a}}$ of the rank-$1$ free $F$-module $(\Lie G_F)_{\wt{a}}$ for all $\wt{a}$ in $\De$.

For all $i$ in $\bN\cup\{\infty\}$, write $(G_i,T_i,B_i,\{x_{\wt{a}}\}_{\wt{a}\in\De})$ for the base change to $E_i$ of $(G,T,B,\{x_{\wt{a}}\}_{\wt{a}\in\De})$. Then $G_i$ and $(B_i,T_i,\{x_{\wt{a}}\}_{\wt{a}\in\De})$ are the objects associated with $G_\infty$ and $(B_\infty,T_\infty,\{x_{\wt{a}}\}_{\wt{a}\in\De})$ as in \ref{ss:closefieldscongruence}. In particular, when $i=\infty$ this agrees with the notation of \ref{ss:groupoverinfty}.

\subsection{}\label{ss:G(E)hypothesisB}
By Proposition \ref{ss:relativepoints}, the affine group $G$ over $E=\bE(\bN\cup\{\infty\})$ induces a group topological space $G(\bE)$ over $\bN\cup\{\infty\}$.
\begin{prop*}
The group topological space $G(\bE)$ over $\bN\cup\{\infty\}$ is Hausdorff, locally profinite, and satisfies Definition \ref{ss:grouphypotheses}.b).
\end{prop*}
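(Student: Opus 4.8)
The plan is to derive both statements from the relative-points construction of Proposition \ref{ss:relativepoints} together with the structure of $\bE$. Since $G$ is affine of finite type over $E$, choosing algebra generators of $\sO_G$ gives a closed immersion $G\hra\bA^M_E$ with $M$ finite, and by the proof of Proposition \ref{ss:relativepoints} this exhibits $G$ as the base change of the zero section $\Spec{E}\hra\bA^\fa_E$ along the map $\bA^M_E\ra\bA^\fa_E$ cutting out $G$. Applying the functor $(-)(\bE)$, which preserves fiber products and sends $\bA^M_E$ to $\bE^M$, I would identify $G(\bE)$ with $\bE^M\times_{\bE^\fa}(\bN\cup\{\infty\})$; since $\bE$, and hence $\bE^\fa$, is Hausdorff (\ref{defn:rings}), the zero section is closed, so $G(\bE)\hra\bE^M$ is a closed embedding. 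Now $\bE$ is Hausdorff and, by \ref{ss:realizingEbreve}, an increasing union of open subspaces homeomorphic to the profinite space $\bO$; hence $\bE^M$ is Hausdorff and an increasing union of open subspaces homeomorphic to $\bO^M$, which is profinite (it is closed in the profinite space $\prod_{j=1}^M\bO$). So $\bE^M$ is locally profinite, and therefore so is its closed subspace $G(\bE)$.

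For Definition \ref{ss:grouphypotheses}.b), I would first observe that the fiber product description identifies the fiber $G(\bE)_i$ with $G(E_i)$ for every $i$ in $\bN\cup\{\infty\}$, and that for a compact open neighborhood $U$ of $i$ the map $G(\bE)(U)=G(\bE(U))\ra G(\bE)_i$ is induced by $\ev_i:\bE(U)\ra E_i$. When $i$ lies in $\bN$ the point $i$ is isolated in $\bN\cup\{\infty\}$, so taking $U=\{i\}$ gives $\bE(U)=\bE_i=E_i$ and $G(\bE)(\{i\})=G(E_i)$, so there is nothing to do. The substantive case is $i=\infty$, where one genuinely has to lift a point of $G(E_\infty)$ to $G(\bE(U))$ for some compact neighborhood $U$ of $\infty$; this is the step I expect to be the main obstacle.

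To carry out the case $i=\infty$, I would use that $\sO_{\Spa{E},\infty}=\varinjlim_U\bE(U)$ is a henselian local ring with residue field $E_\infty$, as recorded in \ref{ss:explicithenselian}. Given $g_\infty$ in $G(E_\infty)$, since $G$ is reductive and hence smooth over $E$ there is a Zariski-open $V\subseteq G$ through (the image of) $g_\infty$ admitting an \'etale morphism $V\ra\bA^d_E$; lifting coordinates along the surjection $\sO_{\Spa{E},\infty}\twoheadrightarrow E_\infty$ and base changing yields an \'etale $\sO_{\Spa{E},\infty}$-scheme equipped with a section over the residue field $E_\infty$ lying above $g_\infty$, which lifts to an $\sO_{\Spa{E},\infty}$-point $\wt{g}$ of $G$ reducing to $g_\infty$ by henselianness. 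Finally, because $G$ is of finite presentation, $G(\sO_{\Spa{E},\infty})=\varinjlim_UG(\bE(U))$, so $\wt{g}$ comes from some $g$ in $G(\bE(U))=G(\bE)(U)$ for a compact neighborhood $U$ of $\infty$, and then $g(\infty)=\ev_\infty(g)=g_\infty$. (One can instead invoke directly the surjectivity of $G(A)\ra G(A/I)$ for $G$ smooth over a henselian pair $(A,I)$.)
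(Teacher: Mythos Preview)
Your proof is correct and follows essentially the same approach as the paper: Hausdorffness and local profiniteness come from the closed embedding $G(\bE)\hra\bE^M$ induced by a finite presentation of $G$, and Definition~\ref{ss:grouphypotheses}.b) comes from the henselianness of $\sO_{\Spa E,i}$ together with smoothness of $G$. The only cosmetic difference is that the paper handles all $i$ uniformly (the henselian argument applies equally well at every $i$, since $\sO_{\Spa E,i}$ is henselian with residue field $E_i$ for each $i$), whereas you split off the case $i\in\bN$ as trivial; and the paper invokes the smooth henselian lifting property directly rather than passing through an \'etale neighborhood, as you yourself note is possible at the end.
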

\begin{proof}
Since $\bE$ is Hausdorff, the same holds for $G(\bE)$. Similarly, because $G$ is of finite type over $E$ and $\bE$ is locally profinite, the same holds for $G(\bE)$.
  
We turn to Definition \ref{ss:grouphypotheses}.b). Let $i$ be in $\bN\cup\{\infty\}$, and note that the fiber $G(\bE)_i$ is $G(E_i)$. Moreover, the evaluation map $\ev_i:\varinjlim_UG(\bE)(U)\ra G(\bE)_i$, where $U$ runs over compact neighborhoods of $i$, equals the composition
  \begin{align*}
    \varinjlim_UG(\bE)(U)=\varinjlim_UG(\bE(U))\ra G(\sO_{\Spa{E},i})\ra G(E_i) = G(\bE)_i,
  \end{align*}
where the first equality follows from Proposition \ref{ss:relativepoints}. The first arrow is an isomorphism because $\sO_{\Spa E,i}=\varinjlim_U\bE(U)$ and $G$ is of finite presentation over $E$. The second arrow is surjective because $E_i$ is the residue field of $\sO_{\Spa E,i}$, the latter is henselian \cite[Lemma 2.4.17 (a)]{KL15}, and $G$ is smooth over $E$. 
\end{proof}

\subsection{}\label{ss:Kn}
To construct our family of congruence subgroups over $\bN\cup\{\infty\}$, we use Theorem \ref{ss:parahoriccongruence}. For all $i$ in $\bN\cup\{\infty\}$, write $\cB(G_i/E_i)$ for the (reduced) building of $G_i$ over $E_i$, and write $o_i$ for the vertex in $\cB(G_i/E_i)$ corresponding to the adjusted Chevalley valuation associated with the pinning $(B_i,T_i,\{x_{\wt{a}}\}_{\wt{a}\in\De})$ over $E_i$. Write $\cK_i$ for the smooth affine model of $G_i$ over $\cO_i$ such that $\cK_i(\cO_i)\subseteq G_i(E_i)$ equals the open compact subgroup $G_i(E_i)_{o_i}^1$ \cite[Proposition 8.3.1]{KP23}, which is maximal.

Let $n$ be a positive integer. For large enough $i$, Theorem \ref{ss:parahoriccongruence} indicates that our isomorphism $\Tr_{e_i}(E_i)\cong\Tr_{e_i}(E_\infty)$ induces an isomorphism $(\cK_i)_{\cO_i/\fp_i^n}\cong(\cK_\infty)_{\cO_\infty/\fp_\infty^n}$ of groups over $\cO_i/\fp_i^n\cong\cO_\infty/\fp_\infty^n$.
\begin{defn*}Recall from \ref{defn:rings} our presentation of $\bN\cup\{\infty\}$.
  \begin{enumerate}[a)]
  \item Let $n$ be a positive integer. Write $\bK_n$ for the group topological space over $\bN\cup\{\infty\}$ given by $\varprojlim_d$ of the discrete group topological spaces
    \begin{align*}
      \Big(\coprod_{i\leq d}\cK_i(\cO_i/\fp_i^n)\Big)\textstyle\coprod\cK_\infty(\cO_\infty/\fp_\infty^n)
    \end{align*}
    over $\{i\in\bN\mid i\leq d\}\cup\{\infty\}$, where, for large enough $d$, the transition map
    \begin{align*}
      \Big(\coprod_{i\leq d+1}\cK_i(\cO_i/\fp_i^n)\Big)\textstyle\coprod\cK_\infty(\cO_\infty/\fp_\infty^n)\ra\Big(\coprod_{i\leq d}\cK_i(\cO_i/\fp_i^n)\Big)\textstyle\coprod\cK_\infty(\cO_\infty/\fp_\infty^n)
    \end{align*}
    sends $\cK_{d+1}(\cO_{d+1}/\fp_{d+1}^n)$ to $\cK_\infty(\cO_\infty/\fp_\infty^n)$ via the isomorphism
    \begin{align*}
      (\cK_{d+1})_{\cO_{d+1}/\fp_{d+1}^n}\cong(\cK_\infty)_{\cO_\infty/\fp_\infty^n}
    \end{align*}
    (since $d$ is large enough) and equals the identity otherwise.
  \item Write $\bK$ for the group topological space over $\bN\cup\{\infty\}$ given by $\varprojlim_n\bK_n$, where $n$ runs over positive integers, and the transition map $\bK_{n+1}\ra\bK_n$ is induced from reduction mod $\pi_i^n$.
  \item Write $\bK^n$ for the kernel of $\bK\ra\bK_n$.
  \end{enumerate}
\end{defn*}
Note that $\bK_n$ is finite locally constant over $\bN\cup\{\infty\}$, so $\bK$ is profinite. Moreover, $\{\bK^n\}_{n\geq1}$ forms a family of compact open group subspaces of $\bK$ over $\bN\cup\{\infty\}$ satisfying Definition \ref{ss:grouphypotheses}.a). We see that $\bK$ also satisfies Definition \ref{ss:grouphypotheses}.b), so it is lctd over $\bN\cup\{\infty\}$.

\subsection{}\label{ss:G(E)hypothesisA}
For all $i$ in $\bN\cup\{\infty\}$, note that the fiber $\bK_i$ is $\cK_i(\cO_i)$. Write $\io:\bK\ra G(\bE)$ for the map over $\bN\cup\{\infty\}$ whose fiber at $i$ equals the inclusion $\cK_i(\cO_i)\subseteq G(E_i)$.
\begin{prop*}
The map $\io$ is an open embedding of group topological spaces over $\bN\cup\{\infty\}$. Consequently, $G(\bE)$ satisfies Definition \ref{ss:grouphypotheses}.a), so it is lctd over $\bN\cup\{\infty\}$.
\end{prop*}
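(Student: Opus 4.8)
The plan is to identify $\bK$ with the $\bO$-points of an integral model of $G$ over $\cO$, and then read off both assertions from Proposition \ref{ss:relativepoints}.

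First I would spread out the Bruhat--Tits models. Repeating the construction of Definition \ref{defn:rings} essentially verbatim, but feeding in Theorem \ref{ss:parahoriccongruence} in place of the isomorphisms $\Tr_{e_i}(E_i)\cong\Tr_{e_i}(E_\infty)$, I would glue the smooth affine group schemes $(\cK_i)_{\cO_i/\fp_i^n}$ over $\cO_i/\fp_i^n$ and $(\cK_\infty)_{\cO_\infty/\fp_\infty^n}$ over $\cO_\infty/\fp_\infty^n$ into a smooth affine group scheme over $\cO/\pi^n=\bO_n(\bN\cup\{\infty\})$, compatibly under reduction mod $\pi^{n+1}$; the $\pi$-adic limit is a smooth affine group scheme $\cK$ over $\cO$ with $\cK\otimes_\cO\cO_i\cong\cK_i$ for all $i$ in $\bN\cup\{\infty\}$. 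Spreading out the descent of \ref{ss:groupoverinfty} and its sequel along these integral models identifies $\cK\otimes_\cO E$ with $G$, and fixing a closed immersion $\cK\hra\bA^N_\cO$ induces a closed immersion $G\hra\bA^N_E$.

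Next I would invoke Proposition \ref{ss:relativepoints} with $R=\bO$, so that $R(\bN\cup\{\infty\})=\cO$ by Lemma \ref{ss:E}: the affine group scheme $\cK/\cO$ gives a group topological space $\cK(\bO)$ over $\bN\cup\{\infty\}$. Since Proposition \ref{ss:relativepoints} is compatible with inverse limits of the underlying ring topological space (as used in the proof of Lemma \ref{ss:LTextensions}) and $\bO=\varprojlim_n\bO_n$ with each $\bO_n=\varprojlim_d$ of the discrete ring topological spaces of Definition \ref{defn:rings}, one computes that $\cK(\bO)$ is the $\varprojlim_n\varprojlim_d$ of the discrete group topological spaces $\big(\coprod_{i\leq d}\cK_i(\cO_i/\fp_i^n)\big)\coprod\cK_\infty(\cO_\infty/\fp_\infty^n)$; the transition maps match those of Definition \ref{ss:Kn} by Theorem \ref{ss:parahoriccongruence}, whence $\cK(\bO)\cong\bK$ as group topological spaces over $\bN\cup\{\infty\}$, and this identification is compatible with $\io$ (both restrict to $\cK_i(\cO_i)\hra G(E_i)$ on the fibre at $i$ and agree at each finite level $\bK_n$). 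Finally, since $\bO$ is an open subspace of $\bE$ and $\cK\hra\bA^N_\cO$ has generic fibre $G\hra\bA^N_E$, one gets $\cK(\bO)=G(\bE)\cap\bO^N$ inside $G(\bE)\hra\bE^N$; as $\bO^N$ is open in $\bE^N$, this exhibits $\io:\bK\cong\cK(\bO)\hra G(\bE)$ as an open embedding of group topological spaces over $\bN\cup\{\infty\}$.

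For the consequence, transporting the family $\{\bK^n\}_{n\geq1}$ through the open embedding $\io$ yields compact open group subspaces of $G(\bE)$ over $\bN\cup\{\infty\}$. Given any neighbourhood $N$ of the identity section of $G(\bE)$, the intersection $N\cap\bK$ is a neighbourhood of the identity section of $\bK$ and hence contains some $\bK^n$ (the $\{\bK^n\}$ being cofinal in $\bK$, as recorded in \ref{ss:Kn}), so $\{\io(\bK^n)\}_{n\geq1}$ is cofinal among neighbourhoods of the identity section of $G(\bE)$; this is Definition \ref{ss:grouphypotheses}.a). Combined with Proposition \ref{ss:G(E)hypothesisB} (Hausdorffness and Definition \ref{ss:grouphypotheses}.b)), it follows that $G(\bE)$ is lctd over $\bN\cup\{\infty\}$. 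I expect the only real work to be in the first step: given Theorem \ref{ss:parahoriccongruence} the gluing is a rote repetition of Definition \ref{defn:rings}, but one must still check that the glued model $\cK$ has generic fibre exactly the given $G$ rather than some a priori different $E$-form — possibly delicate since $F/E$ may be wildly ramified — and carefully track the two nested inverse limits when identifying $\cK(\bO)$ with $\bK$.
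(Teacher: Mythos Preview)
Your approach is conceptually natural, but the two steps you flag as ``the only real work'' are genuine gaps. First, the $\pi$-adic limit of the $\cK^{(n)}$ over $\cO/\pi^n$ is a priori only a \emph{formal} group scheme over $\Spf\cO$; for the openness argument via $\bO^N\subset\bE^N$ to go through you need a closed embedding $\cK\hra\bA^N_\cO$ with $N$ finite, hence an honest finite-type affine scheme over the non-noetherian ring $\cO$. The threshold in Theorem~\ref{ss:parahoriccongruence} depends on $n$, so a single finite presentation lifting all the $\cK^{(n)}$ simultaneously is not immediate. Second --- and this is the point you correctly single out --- even granting such a $\cK$, identifying $\cK\otimes_\cO E$ with $G$ requires an actual morphism of $E$-group schemes, not just fibrewise isomorphisms $\cK_i\otimes_{\cO_i}E_i\cong G_i$; you have supplied no such morphism, and there is no mechanism in the proposal for producing one.

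The paper avoids both issues by never building an integral model of $G$ over $\cO$. It works instead with the Chevalley model $G^{\s}$ over $\bZ$, which is finite type from the outset: since $\bO_\bF\subset\bF$ is open and $G^{\s}$ is of finite type, Proposition~\ref{ss:relativepoints} gives an open embedding $G^{\s}(\bO_\bF)\hra G^{\s}(\bF)$, and taking $\Gal(F_\infty/E_\infty)$-invariants yields an open subspace of $G^{\s}(\bF)^{\Gal(F_\infty/E_\infty)}=G(\bE)$, this last equality being precisely the \'etale descent that defines $G$ --- so no separate comparison of generic fibres is needed. The map $\bK\to G^{\s}(\bO_\bF)$ is then assembled from the fibrewise Bruhat--Tits morphisms $\cK_i\to\R_{\cO_{F_i}/\cO_i}(G^{\s}_{\cO_{F_i}})$ (only the existence of these morphisms on $\cO_i$-points is used, never any gluing of the $\cK_i$ themselves), and one checks on fibres that its image is exactly $G^{\s}(\bO_\bF)^{\Gal(F_\infty/E_\infty)}$. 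Since $\bK$ is compact and $G(\bE)$ is Hausdorff, $\io$ is a homeomorphism onto this open image. Your deduction of Definition~\ref{ss:grouphypotheses}.a) from the open embedding is correct and matches the paper.
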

\begin{proof}
  Because $G^{\s}$ is of finite type over $\bZ$ and $\bO_\bF$ is an open ring topological subspace of $\bF$, Proposition \ref{ss:relativepoints} yields an open embedding $G^{\s}(\bO_\bF)\hra G^{\s}(\bF)$ of group topological spaces over $\bN\cup\{\infty\}$. Endow $G^{\s}(\bF)$ with the action of $\Gal(F_\infty/E_\infty)$ arising from $\de$ and Proposition \ref{ss:relativepoints}. Then taking $\Gal(F_\infty/E_\infty)$-invariants yields an open embedding
  \begin{align*}
    G^{\s}(\bO_\bF)^{\Gal(F_\infty/E_\infty)}\hra G^{\s}(\bF)^{\Gal(F_\infty/E_\infty)} = G(\bE)
  \end{align*}
  of group topological spaces over $\bN\cup\{\infty\}$.

  For all $i$ in $\bN\cup\{\infty\}$, we claim that there exists a natural morphism
  \begin{align*}
    \cK_i\ra\R_{\cO_{F_i}/\cO_i}(G^{\s}_{\cO_{F_i}})
  \end{align*}
  of groups over $\cO_i$ such that the resulting diagram
  \begin{align*}
    \xymatrix{\cK_i(\cO_i)\ar[rr]\ar@{^{(}->}[d] & & G^{\s}(\cO_{F_i})\ar@{^{(}->}[d]\\
    G_i(E_i)\ar@{^{(}->}[r] & G_i(F_i)\ar@{=}[r] & G^{\s}(F_i)}
  \end{align*}
  commutes. To see this, consider the image of $o_i$ in $\cB(G_i/F_i)$. Note that $G^{\s}_{\cO_{F_i}}$ is the smooth affine model of $(G_i)_{F_i}$ over $\cO_{F_i}$ such that $G^{\s}(\cO_{F_i})\subseteq G_i(F_i)$ equals $G_i(F_i)^1_{o_i}$. Now $\cK_i(\cO_i)$ equals the intersection of $G_i(E_i)$ with $G_i(F_i)^1_{o_i}$, so $\cK_i$ is the smoothening as in \cite[Definition A.6.3]{KP23} of the Zariski closure of $G_i$ in $\R_{\cO_{F_i}/\cO_i}(G^{\s}_{\cO_{F_i}})$. This yields the claim.

Because Proposition \ref{ss:relativepoints} is compatible with inverse limits in $R$ and $S$, we have
  \begin{align*}
    G^{\s}(\bO_\bF) = \varprojlim_nG^{\s}(\bO_{\bF,n}) = \varprojlim_n\varprojlim_d\Big[\Big(\coprod_{i\leq d}G^{\s}(\cO_{F_i}/\fp_{F_i}^n)\Big)\textstyle\coprod G^{\s}(\cO_{F_\infty}/\fp_{F_\infty}^n)\Big]
  \end{align*}
  as group topological spaces over $\bN\cup\{\infty\}$. For all $i$ in $\bN\cup\{\infty\}$, the claim induces a homomorphism $\cK_i(\cO_i/\fp_i^n)\ra G^{\s}(\cO_{F_i}/\fp_{F_i}^n)$, so taking $\varprojlim_n\varprojlim_d\coprod_{\{i\leq d\}\cup\{\infty\}}$ yields a morphism $\bK\ra G^{\s}(\bO_\bF)$ of group topological spaces over $\bN\cup\{\infty\}$. By checking on fibers, the above work shows that the image of this map is $G^{\s}(\bO_\bF)^{\Gal(F_\infty/E_\infty)}$. Moreover, we see that the composition
  \begin{align*}
    \bK\ra G^{\s}(\bO_\bF)^{\Gal(F_\infty/E_\infty)}\hra G(\bE)
  \end{align*}
equals $\io$, so $\io$ is a morphism of group topological spaces over $\bN\cup\{\infty\}$. Finally, since $\bK$ is compact and $G(\bE)$ is Hausdorff, $\io$ is a homeomorphism onto its image. We already saw that this image is the open subspace $G^{\s}(\bO_\bF)^{\Gal(F_\infty/E_\infty)}$.
\end{proof}

\subsection{}\label{ss:valuationmap}
We have the following version of the valuation map for $T(\breve\bE)$ over $\bN\cup\{\infty\}$. Recall from \ref{ss:realizingEbreve} the $E$-algebra $\breve{E}$, as well as the ring topological spaces $\breve\bO$ and $\breve\bE$ over $\bN\cup\{\infty\}$. Note that the pullback to $\Spec{F}$ of $\ul{\Hom}(T,\bG_m)$ is naturally $\Gal(F_\infty/E_\infty)$-equivariantly isomorphic to the constant $\bZ$-local system with fiber $X^*(T_\infty)$, so elements of $X^*(T_\infty)^{\Ga_\infty}$ induce morphisms $T\ra\bG_m$ of groups over $E$. Similarly, elements of $X^*(T_\infty)^{I_\infty}$ induce morphisms $T_{\breve{E}}\ra\bG_m$ of groups over $\breve{E}$, and elements of $X_*(T_\infty)$ induce morphisms $\bG_m\ra T_{\breve{F}}$ of groups over $\breve{F}$.

For all compact open subsets $U$ of $\bN\cup\{\infty\}$ and $f$ in $T(\breve\bE)(U)$, write
\begin{align*}
v(f):X^*(T_\infty)^{I_\infty}\ra\ul{\bZ}(U)
\end{align*}
for the map that sends $\om$ in $X^*(T_\infty)^{I_\infty}$ to the composition
\begin{align*}
\xymatrix{U\ar[r]^-f & T(\breve\bE)\ar[r]^-{\om(\breve\bE)} & \breve\bE^\times\ar[r]^-v & \bZ\times(\bN\cup\{\infty\}).}
\end{align*}
Note this yields a morphism $v:T(\breve\bE)\ra\ul{\Hom(X^*(T_\infty)^{I_\infty},\bZ)}$ of abelian sheaves on $\bN\cup\{\infty\}$. Identify $\Hom(X^*(T_\infty)^{I_\infty},\bZ)$ with the $\bZ$-torsionfree quotient $X_*(T_\infty)_{I_\infty,\tf}$ of $X_*(T_\infty)_{I_\infty}$.

Write $T(\breve\bE)^1$ for the subspace of $T(\breve\bE)$ whose fiber at $i$ equals $T_i(\breve{E}_i)^1$, and write $T(\bE)^1$ for the subspace of $T(\bE)$ whose fiber at $i$ equals $T_i(E_i)^1$. Proposition \ref{ss:G(E)hypothesisA} shows that $T(\bE)^1$ is an open group topological subspace of $T(\bE)$ over $\bN\cup\{\infty\}$. Similarly, arguing as in the proof of Proposition \ref{ss:G(E)hypothesisA} shows that $T(\breve\bE)^1$ is an open group topological subspace of $T(\breve\bE)$ over $\bN\cup\{\infty\}$.
\begin{lem*}
  The morphism $v$ induces an isomorphism
  \begin{align*}
    v:T(\breve\bE)/T(\breve\bE)^1\ra^\sim\ul{X_*(T_\infty)_{I_\infty,\tf}}
  \end{align*}
  of abelian sheaves on $\bN\cup\{\infty\}$. Moreover, $v$ restricts to an isomorphism
  \begin{align*}
    v:T(\bE)/T(\bE)^1\ra^\sim\ul{(X_*(T_\infty)_{I_\infty,\tf})^{\vp_\infty}}.
  \end{align*}
\end{lem*}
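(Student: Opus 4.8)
The plan is to check both isomorphisms stalkwise. Both statements assert that a morphism of abelian sheaves on $\bN\cup\{\infty\}$ is an isomorphism, and such a morphism is an isomorphism if and only if it is one on every stalk. Since $T(\breve\bE)^1$ and $T(\bE)^1$ are open in $T(\breve\bE)$ and $T(\bE)$, Proposition \ref{ss:openquotient} shows that $T(\breve\bE)/T(\breve\bE)^1$ and $T(\bE)/T(\bE)^1$ have stalk at $i\in\bN\cup\{\infty\}$ equal to $T_i(\breve E_i)/T_i(\breve E_i)^1$ and $T_i(E_i)/T_i(E_i)^1$, respectively, while $\ul{X_*(T_\infty)_{I_\infty,\tf}}$ and $\ul{(X_*(T_\infty)_{I_\infty,\tf})^{\vp_\infty}}$ have constant stalks. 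Unwinding the definition in \ref{ss:valuationmap}, the stalk at $i$ of $v\colon T(\breve\bE)\to\ul{\Hom(X^*(T_\infty)^{I_\infty},\bZ)}$ sends $t\in T_i(\breve E_i)$ to the functional $\om\mapsto v_{\breve E_i}(\om(t))$; so the task is to identify this with the usual valuation map on the torus $T_i$ over $\breve E_i$, and then to quote the standard structure theory of tori over $\breve E_i$ and $E_i$.

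To do this I would first record the two compatibilities coming from close fields. Because Deligne's isomorphism $\Ga_i/I^n_i\cong\Ga_\infty/I^n_\infty$ is compatible with the projections to $\widehat\bZ$, because $\de$ factors through $\Gal(F_\infty/E_\infty)$, and because $e_i\geq n$, the inertia group $I_i$ acts on $X^*(T_\infty)=X^*(T_i)$ through the same finite quotient, and with the same image, as $I_\infty$; hence $X^*(T_\infty)^{I_i}=X^*(T_\infty)^{I_\infty}$ and $X_*(T_\infty)_{I_i,\tf}=X_*(T_\infty)_{I_\infty,\tf}$, and this identification carries the $\vp_i$-action to the $\vp_\infty$-action. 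With these identifications, the stalk of $v$ at $i$ is exactly the valuation map $v_i\colon T_i(\breve E_i)\to X_*(T_i)_{I_i,\tf}$ of local torus theory, and $T_i(\breve E_i)^1$ (resp. $T_i(E_i)^1$) is its kernel (resp. the kernel of its restriction to $T_i(E_i)$). The standard facts for tori over $\breve E_i$ and over $E_i$ --- surjectivity of the Kottwitz homomorphism over $\breve E_i$, and the identification of the image of $v_i$ on $T_i(E_i)$ with $(X_*(T_i)_{I_i,\tf})^{\vp_i}$, cf. \cite{KP23} --- then give that $v_i$ induces isomorphisms $T_i(\breve E_i)/T_i(\breve E_i)^1\ra^\sim X_*(T_\infty)_{I_\infty,\tf}$ and $T_i(E_i)/T_i(E_i)^1\ra^\sim(X_*(T_\infty)_{I_\infty,\tf})^{\vp_\infty}$.

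Finally I would assemble these. Since $v$ vanishes on the stalks of $T(\breve\bE)^1$, it vanishes on $T(\breve\bE)^1$, hence descends to a morphism $T(\breve\bE)/T(\breve\bE)^1\to\ul{X_*(T_\infty)_{I_\infty,\tf}}$ whose stalks are the $v_i$ of the previous paragraph; being a stalkwise isomorphism, it is an isomorphism. For the second statement, checking on stalks shows that the restriction $v|_{T(\bE)}$ has image landing in the constant subsheaf $\ul{(X_*(T_\infty)_{I_\infty,\tf})^{\vp_\infty}}$, that it vanishes on $T(\bE)^1$, and that the resulting morphism $T(\bE)/T(\bE)^1\to\ul{(X_*(T_\infty)_{I_\infty,\tf})^{\vp_\infty}}$ is a stalkwise isomorphism, hence an isomorphism. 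The main obstacle is the bookkeeping concentrated in the middle paragraph: verifying that the valuation morphism of \ref{ss:valuationmap}, restricted to a fiber, coincides with the classical valuation map on $T_i$, and that the close fields isomorphism matches the inertia images and the Frobenius actions precisely enough that the fiberwise targets $X_*(T_i)_{I_i,\tf}$ and $(X_*(T_i)_{I_i,\tf})^{\vp_i}$ glue into the constant sheaves appearing in the statement. Everything else is formal, using only Proposition \ref{ss:openquotient} and the fact that morphisms of abelian sheaves on $\bN\cup\{\infty\}$ are detected on stalks.
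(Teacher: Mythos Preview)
Your proposal is correct and follows essentially the same approach as the paper: the paper's proof simply says to use Proposition \ref{ss:openquotient} to check on stalks, then invokes \cite[Corollary 11.6.2]{KP23} and \cite[Corollary 11.7.6]{KP23} for the two statements. Your middle paragraph spells out the close-fields bookkeeping (that $I_i$ and $I_\infty$ act through the same finite quotient on $X_*(T_\infty)$, and that $\vp_i$ matches $\vp_\infty$) which the paper leaves implicit, but the overall structure is identical.
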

\begin{proof}
By using Proposition \ref{ss:openquotient} to check on stalks, the first statement follows from \cite[Corollary 11.6.2]{KP23}, and the second one follows from \cite[Corollary 11.7.6]{KP23}.
\end{proof}

\subsection{}
In \ref{ss:toruscomparison}, we compare elements of $T(\bE)_i$ for different values of $i$ in $\bN\cup\{\infty\}$. To carry out this comparison compatibly over all $i$ in $\bN\cup\{\infty\}$, we proceed as follows.

Choose a uniformizer $\pi_{F_\infty}$ of $F_\infty$. For all $i$ in $\bN$, choose a uniformizer $\pi_{F_i}$ of $F_i$ whose image in $\cO_{F_i}/\fp_{F_i}^{\psi(e_i)}\cong\cO_{F_\infty}/\fp_{F_\infty}^{\psi(e_i)}$ equals the image of $\pi_{F_\infty}$, and write $\pi_F$ for the continuous section of $\bF\ra\bN\cup\{\infty\}$ whose value on $i$ in $\bN\cup\{\infty\}$ equals $\pi_{F_i}$.

Choose a $\bZ$-basis $\mu_1,\dotsc,\mu_r$ of $X_*(T_\infty)_{I_\infty,\tf}$, and choose representatives $\wt\mu_1,\dotsc\wt\mu_r$ in $X_*(T_\infty)$ of the $\mu_1,\dotsc,\mu_r$. Consider the morphism of abelian group topological spaces over $\bN\cup\{\infty\}$
\begin{align*}
\breve\na:X_*(T_\infty)_{I_\infty,\tf}\times(\bN\cup\{\infty\})\ra T(\breve\bE)
\end{align*}
that sends the constant section $\mu_j$ to $\Nm_{\breve{F}/\breve{E}}(\wt\mu_j\circ\pi_F)$ for all $1\leq j\leq r$. By checking on fibers, we see that $\breve\na$ is a section of the composition
\begin{align*}
\xymatrix{T(\breve\bE)\ar[r] & T(\breve\bE)/T(\breve\bE)^1\ar[r]^-v & X_*(T_\infty)_{I_\infty,\tf}\times(\bN\cup\{\infty\}).}
\end{align*}
Lemma \ref{ss:valuationmap} implies that this composition is a surjective morphism of abelian group topological spaces over $\bN\cup\{\infty\}$ with kernel $T(\breve\bE)^1$, so $\breve\na$ induces an isomorphism of group topological spaces over $\bN\cup\{\infty\}$
\begin{align*}
\big[X_*(T_\infty)_{I_\infty,\tf}\times(\bN\cup\{\infty\})\big]\times_{\bN\cup\{\infty\}} T(\breve\bE)^1\ra^\sim T(\breve\bE).
\end{align*}

\subsection{}\label{ss:Heckealgebrasinfamilies}
Let $\La$ be a ring, and recall the notation of Definition \ref{ss:smoothreps}, Definition \ref{ss:cInd}, and Definition \ref{ss:Heckealgebra}. \textbf{For the rest of this paper, assume that $e_i$ is large enough as in Theorem \ref{thm:Heckealgebracongruence} for all $i$ in $\bN$.} For all $i$ in $\bN\cup\{\infty\}$, Theorem \ref{thm:Heckealgebracongruence} indicates that our isomorphism $\Tr_{e_i}(E_i)\cong\Tr_{e_i}(E_\infty)$ induces an isomorphism of $\La$-algebras
\begin{align*}
\cH(G(E_\infty),\bK^n_\infty)_\La\ra^\sim\cH(G(E_i),\bK^n_i)_\La.
\end{align*}
The following result is crucial for the spreading out argument in our main theorems.
\begin{thm*}
  We have a natural isomorphism
  \begin{align*}
    \vsg:\ul{\cH(G(E_\infty),\bK_\infty^n)_\La}\ra^\sim \ul{\End}_{G(\bE)}(\cInd_{\bK^n}^{G(\bE)}\ul\La)
  \end{align*}
 of sheaves of $\La$-modules on $\bN\cup\{\infty\}$  such that, for all $i$ in $\bN\cup\{\infty\}$, the stalk $\vsg_i$ equals the composition $\cH(G(E_\infty),\bK_\infty^n)_\La\ra^\sim\cH(G(E_i),\bK^n_i)_\La=\End_{G(E_i)}(\cInd^{G(\bE)}_{\bK^n}\ul\La)$.
\end{thm*}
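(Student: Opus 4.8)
The plan is to factor $\vsg$ through the sheaf of Hecke algebras $\cH(G(\bE),\bK^n)_\La$ from Definition \ref{ss:Heckealgebra}. First I would upgrade the injection $\vsg\colon\cH(G(\bE),\bK^n)_\La\hra\ul\End_{G(\bE)}(\cInd_{\bK^n}^{G(\bE)}\ul\La)$ of Proposition \ref{prop:HeckealgebrainjectsintoEnd} (which applies since $\ul\La$ is finitely generated as a representation of $\bK^n$) to an isomorphism of sheaves of $\La$-algebras. Then I would identify $\cH(G(\bE),\bK^n)_\La$ with the constant sheaf $\ul{\cH(G(E_\infty),\bK^n_\infty)_\La}$ in a way that induces the congruence isomorphism of Theorem \ref{thm:Heckealgebracongruence} on stalks, and compose.

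For the first step I would check that $\vsg$ is an isomorphism on stalks. Running the surjectivity argument of the proof of Lemma \ref{ss:cIndfibers} --- with Definition \ref{ss:Heckealgebra}.b) in place of Definition \ref{ss:cInd}.b), using Definition \ref{ss:grouphypotheses}.b) for $G(\bE)$ (valid by Proposition \ref{ss:G(E)hypothesisB}) to lift double-coset representatives and Lemma \ref{ss:doublecosetopen} to see that the relevant double cosets are clopen --- shows that restriction to the fiber at $i$ is a bijection $(\cH(G(\bE),\bK^n)_\La)_i\ra^\sim\cH(G(E_i),\bK^n_i)_\La$. In the commutative square from the proof of Proposition \ref{prop:HeckealgebrainjectsintoEnd} the bottom arrow is the classical identification $\cH(G(E_i),\bK^n_i)_\La\ra^\sim\End_{G(E_i)}(\cInd_{\bK^n_i}^{G(E_i)}\La)$, the left arrow is now a bijection, and the right arrow is injective by Lemma \ref{ss:Homstalks} and Lemma \ref{ss:cIndfibers}; hence all four arrows are bijections. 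Thus $\vsg$ is an isomorphism of sheaves, and it is one of sheaves of $\La$-algebras because each stalk is.

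For the second step I would describe, for each $\phi_\infty\in\cH(G(E_\infty),\bK^n_\infty)_\La$, the unique global section $\Phi$ of $\cH(G(\bE),\bK^n)_\La$ whose fiber $\Phi_i$ corresponds to $\phi_\infty$ under the congruence isomorphism for every $i$. Writing $\phi_\infty$ as a finite $\La$-combination of characteristic functions of double cosets $\bK^n_\infty\bar g_j\bK^n_\infty$, I would lift the $\bar g_j$ to sections $g_j\in G(\bE)(U_0)$ over a compact neighborhood $U_0$ of $\infty$ (Definition \ref{ss:grouphypotheses}.b) for $G(\bE)$), shrink $U_0$ using Lemma \ref{ss:separateHcosets} as in the proof of Proposition \ref{ss:counit} and Lemma \ref{ss:doublecosetopen} so that the $\bK^n g_j\bK^n$ stay fiberwise disjoint and clopen, and take $\Phi|_{U_0}$ to be the bi-invariant function supported on their union with the prescribed values; since $U_0$ is clopen I would extend $\Phi$ to a global section using, at the finitely many remaining points $i$, the congruence-isomorphism image of $\phi_\infty$ together with finitely many extra global sections realizing its support. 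The point that makes $\Phi_i$ equal the congruence-isomorphism image of $\phi_\infty$ for all $i\in U_0$ at once is the explicit nature of the congruence isomorphism from Appendix \ref{s:congruence}: its double-coset matching is induced by the common congruence datum $\Tr_{e_i}(E_i)\cong\Tr_{e_i}(E_\infty)$ (via Theorem \ref{thm:Heckealgebracongruence} and Theorem \ref{ss:parahoriccongruence}), so any local lift over a small enough neighborhood of $\infty$ realizes it --- this is precisely why we use a variant of Ganapathy's isomorphism rather than citing hers directly. Granting this, $\phi_\infty\mapsto\Phi$ is $\La$-linear and multiplicative (structure constants match by Theorem \ref{thm:Heckealgebracongruence}), so by adjunction it gives a morphism $\ul{\cH(G(E_\infty),\bK^n_\infty)_\La}\ra\cH(G(\bE),\bK^n)_\La$ of sheaves of $\La$-algebras; its stalk at $i$ is the congruence isomorphism, hence it is an isomorphism because restriction to the fiber is a bijection on stalks, as in the first step. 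Composing with $\vsg$ yields the desired isomorphism, with the asserted description on stalks.

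The main obstacle is the coherence in the second step: spreading out a single bi-invariant function so that all nearby fibers simultaneously realize the congruence isomorphism. This requires a construction of the congruence isomorphism that is manifestly built out of the family of congruence data $\{\Tr_{e_i}(E_i)\cong\Tr_{e_i}(E_\infty)\}$ rather than an abstract existence statement, which is the purpose of Appendix \ref{s:congruence}; granting it, the rest is bookkeeping with the sheaf-theoretic lemmas of Section \ref{s:smoothrepresentations}.
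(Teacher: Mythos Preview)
Your strategy matches the paper's: factor through $\cH(G(\bE),\bK^n)_\La$, then identify the latter with the constant sheaf. Your step~1 (upgrading the injection of Proposition~\ref{prop:HeckealgebrainjectsintoEnd} to an isomorphism via stalks) is correct, though the paper does not isolate it --- it instead constructs the composite directly and appeals to injectivity plus a stalk check at the end.

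The gap is in step~2. You assert that ``any local lift over a small enough neighborhood of $\infty$ realizes'' the congruence isomorphism, and that this follows from the explicitness of Appendix~\ref{s:congruence}. But Appendix~\ref{s:congruence} builds the congruence isomorphism one $i$ at a time, via choices of uniformizer $\pi_{F_i}$ and torus section $\na_i:(X_*(T_i)_{I_i,\tf})^{\vp_i}\ra T_i(E_i)$ satisfying the compatibility of~\ref{ss:toruscomparison}; nothing there says these choices vary continuously in $i$. The paper's proof supplies exactly this: it uses Lemma~\ref{ss:valuationmap} to choose global lifts $t_1,\dots,t_s\in T(\bE)(\bN\cup\{\infty\})$ of a basis of $(X_*(T_\infty)_{I_\infty,\tf})^{\vp_\infty}$, observes that the $T(\breve\bE)^1$-components $\ov{t_j}$ are continuous sections, so their images in $\cT_i(\breve\cO_i/\fp_i^n)\cong\cT_\infty(\breve\cO_\infty/\fp_\infty^n)$ stabilize for large $i$, and then corrects the $t_j$ at the finitely many remaining points. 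Only after this does one obtain a global map $\Ups:\bK_\infty^n\bs G(E_\infty)/\bK_\infty^n\ra G(\bE)(\bN\cup\{\infty\})$ whose evaluation at every $i$ realizes the congruence bijection, from which the clopen decomposition $G(\bE)=\coprod_{g_\infty}\bK^n\Ups(g_\infty)\bK^n$ and the indicator functions $h_{g_\infty}$ follow immediately.

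Your claim about arbitrary lifts is in fact true, but proving it amounts to comparing an arbitrary lift with such an $\Ups$ (via Lemma~\ref{ss:evaluationkernel} after adjusting by elements of $\bK^n$ on both sides), so you cannot avoid constructing $\Ups$ or something equivalent. What you flag as the ``main obstacle'' and defer as bookkeeping is the actual content of the proof; Appendix~\ref{s:congruence} gives the fiberwise statement, and the family-level torus argument above is the new work needed here.
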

\begin{proof}
  For all $i$ in $\bN\cup\{\infty\}$, Proposition \ref{ss:doublecosetcongruence} yields a natural bijection
  \begin{align*}
    \bK_i^n\bs G(E_i)/\bK_i^n\cong\bK_\infty^n\bs G(E_\infty)/\bK_\infty^n.
  \end{align*}
  We claim that there exists a map $\Ups:\bK_\infty^n\bs G(E_\infty)/\bK_\infty^n\ra G(\bE)(\bN\cup\{\infty\})$ such that, for all $i$ in $\bN\cup\{\infty\}$, the square
  \begin{align*}
    \xymatrix{\bK_i^n\bs G(E_i)/\bK_i^n\ar@{=}[r]^-\sim & \bK_\infty^n\bs G(E_\infty)/\bK_\infty^n\ar[d]^-\Ups\\
   G(E_i)\ar[u] & \ar[l]_-{\ev_i}G(\bE)(\bN\cup\{\infty\})}
  \end{align*}
  commutes. To see this, choose a $\bZ$-basis $\nu_1,\dotsc,\nu_s$ of $(X_*(T_\infty)_{I_\infty,\tf})^{\vp_\infty}$. By Lemma \ref{ss:valuationmap}, there exist $t_1,\dotsc,t_s$ in $T(\bE)(\bN\cup\{\infty\})$ such that, for all $1\leq j\leq s$, the image of $t_j$ under $v$ equals the constant section $\nu_j$. Write
  \begin{align*}
    \na:(X_*(T_\infty)_{I_\infty,\tf})^{\vp_\infty}\ra T(\bE)(\bN\cup\{\infty\})
  \end{align*}
for the homomorphism that sends $\nu_j\mapsto t_j$ for all $1\leq j\leq s$.

  For all $i$ in $\bN\cup\{\infty\}$, write $\cT_i$ for the N\'eron model of $T_i$ over $\cO_i$, and note that $\cT_i$ is naturally a subgroup of $\cK_i$ over $\cO_i$ by Lemma \ref{ss:parahoricstructure}.i). For all $1\leq j\leq s$, write $\ov{t_j}$ for the composition
  \begin{align*}
    \xymatrix{\bN\cup\{\infty\}\ar[r]^-{t_j} & T(\breve\bE) & \ar[l]_-\sim \big[X_*(T_\infty)_{I_\infty,\tf}\times(\bN\cup\{\infty\})\big]\times_{\bN\cup\{\infty\}} T(\breve\bE)^1\ar[r]^-{\pr_2} & T(\breve\bE)^1.}
  \end{align*}
  Because $\ov{t_j}$ is continuous, the image in $\cT_i(\breve\cO_i/\fp_i^n)\cong\cT_\infty(\breve\cO_\infty/\fp_\infty^n)$ of its fiber at $i$ is constant for large enough $i$.

  For all $i$ in $\bN\cup\{\infty\}$, write $T^n_i$ for the kernel of $\cT_i(\cO_i)\ra\cT_i(\cO_i/\fp_i^n)$. Now \ref{ss:toruscomparison} indicates that our choice of $\wt\mu_1,\dotsc,\wt\mu_r$ in $X_*(T_\infty)$, along with our choice of uniformizers $\pi_{F_i}$ and $\pi_{F_\infty}$, induce an isomorphism $T_i(E_i)/T^n_i\cong T_\infty(E_\infty)/T^n_\infty$, and the above implies that the images of $\ev_i(t_j)$ and $\ev_\infty(t_j)$ in $T_i(E_i)/T^n_i\cong T_\infty(E_\infty)/T^n_\infty$ coincide for large enough $i$. Therefore, by modifying $t_j$ at finitely many $i$, we can assume that this holds for all $i$. As $\nu$ runs over $(X_*(T_\infty)_{I_\infty,\tf})^{\vp_\infty,+}$, using the resulting $\ev_i(\na(\nu))$ and $\ev_\infty(\na(\nu))$ in the proof of Proposition \ref{ss:doublecosetcongruence} yields the claim.
  
  Recall the notation of \ref{ss:doublecosetopen}. By checking on fibers, the claim identifies
  \begin{align*}
G(\bE) = \coprod_{g_\infty}\bK^n\Ups(g_\infty)\bK^n
  \end{align*}
as sets, where $g_\infty$ runs over $\bK_\infty^n\bs G(E_\infty)/\bK_\infty^n$. Combining this with Lemma \ref{ss:doublecosetopen} shows that $\bK^n\Ups(g_\infty)\bK^n$ is clopen in $G(\bE)$, so we can form the continuous function $h_{g_\infty}:G(\bE)\ra\La$ whose value on $\bK^n\Ups(g_\infty)\bK^n$ equals $1\circ\pr$ and whose value on $G(\bE)-\bK^n\Ups(g_\infty)\bK^n$ equals $0\circ\pr$.

  By checking on fibers, we see that $h_{g_\infty}$ lies in $\cH(G(\bE),\bK^n)_\La(\bN\cup\{\infty\})$. As $g_\infty$ varies, the $h_{g_\infty}$ induce a morphism of sheaves of $\La$-modules
  \begin{align*}
    \ul{\cH(G(E_\infty),K_\infty^n)_\La}\ra\cH(G(\bE),\bK^n)
  \end{align*}
  on $\bN\cup\{\infty\}$; take our candidate morphism $\vsg$ to be the composition
  \begin{align*}
    \ul{\cH(G(E_\infty),K^n_\infty)_\La}\ra\cH(G(\bE),\bK^n)_\La\ra^\vsg\ul{\End}_{G(\bE)}(\cInd_{\bK^n}^{G(\bE)}\ul\La).
  \end{align*}
Finally, the desired result follows from Proposition \ref{prop:HeckealgebrainjectsintoEnd} and checking on stalks.
\end{proof}

\subsection{}\label{ss:groupalgebracolimit}
Next, we turn to the study of $G$-torsors. More generally, let $H$ be one of $\{G,B\}$, and view $\sO_H$ as a representation of $H$ over $E$ via right translation.
\begin{lem*}The representation $\sO_H$ is isomorphic to $\varinjlim_\al V_\al$ for some directed family $\{V_\al\}_\al$ of objects of $\Rep{H}$.
\end{lem*}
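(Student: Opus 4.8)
The plan is to reduce the statement to a classical fact about the regular representation over $\bZ$ and then transfer it to $E$ along the finite \'etale cover $\Spec F\to\Spec E$.

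First I would recall the split case. Since $\sO_{H^{\s}}$ is a free $\bZ$-module, for any $a$ in $\sO_{H^{\s}}$ one may write the comultiplication as $\Delta(a)=\sum_k a_k\otimes b_k$ with the $b_k$ linearly independent over $\bZ$; coassociativity and the counit axiom then show that $W\coloneqq\sum_k\bZ a_k$ is an $H^{\s}$-subrepresentation of $\sO_{H^{\s}}$ containing $a$, and as a finitely generated submodule of a free $\bZ$-module it is finite free. The sum of two such subrepresentations is again a finite free subrepresentation, so the family of all finite free $H^{\s}$-subrepresentations of $\sO_{H^{\s}}$ is directed and its colimit in $\Rep{H^{\s}}$ is $\sO_{H^{\s}}$.

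Next I would average over $\Gal(F_\infty/E_\infty)$ and descend. The homomorphism $\de$ makes $\Gal(F_\infty/E_\infty)$ act on $H^{\s}$ by group automorphisms, hence on the Hopf algebra $\sO_{H^{\s}}$ compatibly with $\Delta$, so for each finite free subrepresentation $W_\al$ the sum $W''_\al$ of its $\Gal(F_\infty/E_\infty)$-translates is a $\Gal(F_\infty/E_\infty)$-stable finite free subrepresentation, and the family $\{W''_\al\}_\al$ is still directed with colimit $\sO_{H^{\s}}$. Since $H\times_E F\cong H^{\s}_F$ and $\Spec F\to\Spec E$ is an \'etale $\Gal(F_\infty/E_\infty)$-torsor by Corollary \ref{cor:Efet}, equipping $W''_\al\otimes_\bZ F$ with the semilinear action combining $\de$ on $W''_\al$ with the Galois action on $F$ and invoking finite Galois descent yields a finite projective $E$-module $V_\al$ with $V_\al\otimes_E F\cong W''_\al\otimes_\bZ F$; the comodule structures and the transition maps are equivariant, so they descend and make $\{V_\al\}_\al$ a directed system of finite projective $E$-representations of $H$. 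Applying $-\otimes_E F$ and faithfully flat descent once more identifies $\varinjlim_\al V_\al\cong\sO_H$ as representations of $H$ on $E$-modules.

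Finally I would upgrade ``finite projective'' to ``finite free''. Each $V_\al$ has constant rank equal to the $\bZ$-rank of $W''_\al$, so it suffices to know that a finite projective $E$-module of constant rank is free; this follows from the structure of $E$, namely that $\Spa E$ is an adic space whose underlying space is the profinite set $\bN\cup\{\infty\}$ and whose structure sheaf has local stalks, so a constant-rank vector bundle is free on a clopen neighborhood of each point and hence, after refining to a clopen partition, globally free. Then $V_\al$ lies in $\Rep{H}$ and the lemma follows. I expect the crux to be precisely this last point: the non-noetherianity of $E$ prevents one from extracting finite free subrepresentations of $\sO_H$ directly, forcing the detour through finite projective approximations; the remaining difficulty is the (routine but delicate) bookkeeping in the averaging-and-descent step, making the $\Gal(F_\infty/E_\infty)$-action, the comodule structures, and the descent data simultaneously compatible.
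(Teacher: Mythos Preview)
Your proof is correct and follows essentially the same strategy as the paper: exhaust $\sO_{H^{\s}}$ by finite free subcomodules over $\bZ$, average over $\Gal(F_\infty/E_\infty)$, descend along $\Spec F\to\Spec E$ to constant-rank finite projective $E$-modules, and observe these are free because $|\Spa E|$ is profinite (the paper cites \cite[Theorem 2.7.7]{KL15} for the projective-module/vector-bundle comparison, while you argue directly). The only minor refinement the paper adds is taking $\bZ$-saturations in the comodule step (and invoking \cite[II.2.4]{DMOS82}): for the coassociativity argument to give $\Delta(a_k)\in W\otimes\sO_{H^{\s}}$ you want the $b_k$ to be part of a $\bZ$-basis of $\sO_{H^{\s}}$, not merely linearly independent, so either fix that choice or pass to the saturation of $W$.
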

\begin{proof}
  Because $H^{\s}$ is an affine group over $\bZ$ and $\sO_{H^{\s}}$ is a free $\bZ$-module, arguing as in the proof of \cite[II.2.4]{DMOS82} and taking $\bZ$-saturations show that $\sO_{H^{\s}}$ is isomorphic to $\varinjlim_\al V^{\s}_\al$ for some directed family $\{V^{\s}_\al\}_\al$ of objects of $\Rep{H^{\s}}$. Now the finite group $\Gal(F_\infty/E_\infty)$ acts on $\sO_{H^{\s}}$ via $\de$, so after replacing each $V^{\s}_\al$ with the sum of its $\Gal(F_\infty/E_\infty)$-translates, we can assume that each $V^{\s}_\al$ is $\Gal(F_\infty/E_\infty)$-stable.

Note that $\sO_{H^{\s}_F}$ is isomorphic to $\varinjlim_\al(V^{\s}_\al)_F$. The above indicates that each $(V^{\s}_\al)_F$ descends to a constant rank finite projective $E$-module $V_\al$ with a co-action of $\sO_H$, and that $\sO_H$ is isomorphic to $\varinjlim_\al V_\al$. Because $\abs{\Spa{E}}$ is profinite, every constant rank vector bundle on $\Spa{E}$ is free, so \cite[Theorem 2.7.7]{KL15} implies that every constant rank finite projective $E$-module is free. Therefore $V_\al$ is an object of $\Rep{H}$, as desired.
\end{proof}

\subsection{}\label{ss:Gtorsors}
With Lemma \ref{ss:groupalgebracolimit} in hand, we can prove the following Tannakian description of $H$-torsors. Let $Z$ be a sousperfectoid space over $\Spa{E}$, and let $\sH$ be an \'etale $H^{\an}$-torsor on $Z$. For all $V$ in $\Rep{H}$, write $\rho(V)$ for the locally free $\sO_{Z,\et}$-module
\begin{align*}
\rho(V)\coloneqq\sH\times^{H^{\an}}(V^{\an}\otimes_E\sO_{Z,\et}).
\end{align*}
Now \cite[Theorem 8.2.22 (d)]{KL15} implies that $\rho(V)$ is a vector bundle on $Z$, so this yields an exact tensor functor $\rho:\Rep{H}\ra\{\mbox{vector bundles on }Z\}$.
\begin{prop*}
  This association induces an equivalence of categories between
  \begin{enumerate}[a)]
  \item \'etale $H^{\an}$-torsors $\sH$ on $Z$,
  \item exact tensor functors $\rho:\Rep{H}\ra\{\mbox{vector bundles on }Z\}$.
  \end{enumerate}
  Moreover, when $Z=\Spa(R,R^+)$ is affinoid, the above are naturally equivalent to
  \begin{enumerate}[a)]
  \setcounter{enumi}{2}
  \item \'etale $H$-torsors $\sH^{\alg}$ on $\Spec{R}$,
  \item exact tensor functors $\rho:\Rep{H}\ra\{\mbox{finite projective }R\mbox{-modules}\}$.
  \end{enumerate}
\end{prop*}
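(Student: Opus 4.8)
The plan is to run the Tannakian reconstruction of a torsor from its fiber functor, using Lemma~\ref{ss:groupalgebracolimit} in place of the finite--dimensionality of $\Rep H$ that one would have over a field (and which fails here because $E$ is non-noetherian).

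First I would dispose of the easy directions and the reductions. In each of (a)$\to$(b), (a)$\to$(d), (c)$\to$(d) the functor is the evident one, and exactness together with compatibility with $\otimes$ is checked \'etale-locally on $Z$ (resp.\ $\Spec R$), where $\sH$ is trivial --- here one uses that $H$ is smooth, so $H^{\an}$-torsors are \'etale-locally trivial --- and there $\rho$ is the trivial fiber functor $V\mapsto V^{\an}\otimes_E\sO_{Z,\et}$. When $Z=\Spa(R,R^+)$ is affinoid, \cite[Theorem 8.2.22 (d)]{KL15} identifies vector bundles on $Z$ with finite projective $R$-modules $\otimes$-compatibly, giving (b)$\Leftrightarrow$(d); and since $H$ is smooth affine, $H$-torsors for the \'etale topology on $Z$ and on $\Spec R$ are matched by the corresponding comparison of \'etale topoi (or by gluing along a common trivializing cover), giving (a)$\Leftrightarrow$(c). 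Both sides of (a)$\Leftrightarrow$(b) are stacks for the analytic topology on $Z$, so the general case follows from the affinoid case, and it suffices to construct a quasi-inverse to (a)$\to$(b) over a sousperfectoid $Z$.

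Given an exact tensor functor $\rho\colon\Rep H\to\{\text{vector bundles on }Z\}$, extend it to filtered colimits on $\operatorname{Ind}(\Rep H)$ and set
\[
  \sH \;:=\; \underline{\operatorname{Isom}}^{\otimes}(\omega_Z,\rho),
\]
the $Z$-space whose $T$-points over $Z$ are the (necessarily invertible, by rigidity) $\otimes$-natural transformations $\omega_T\Rightarrow\rho_T$, where $\omega_T\colon V\mapsto V^{\an}\otimes_E\sO_{T,\et}$. The group $H^{\an}=\underline{\operatorname{Aut}}^{\otimes}(\omega_Z)$ acts on $\sH$ by precomposition. To see that $\sH$ is representable, I would use Lemma~\ref{ss:groupalgebracolimit} to write $\sO_H\cong\varinjlim_\al V_\al$ in $\operatorname{Ind}(\Rep H)$; since $\sO_H$ is the cofree comodule on $E$, the standard coend computation identifies $\sH$ with $\underline{\Spec}_{\sO_{Z,\et}}\rho(\sO_H)$, where $\rho(\sO_H)=\varinjlim_\al\rho(V_\al)$ carries the $\sO_{Z,\et}$-algebra structure transported from the multiplication on $H$ along the lax symmetric monoidal functor $\rho$. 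When $Z=\Spa(R,R^+)$ this is a genuine affine $R$-scheme, and it is finitely presented over $R$ since it becomes so after a faithfully flat base change trivializing $\rho$; the identification \cite[Theorem 8.2.22]{KL15} of vector bundles with finite projective modules and the triviality of constant-rank bundles \cite[Theorem 2.7.7]{KL15} are what allow these manipulations over the non-noetherian ring $E$.

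It remains to check that $\sH\to Z$ is an $H^{\an}$-torsor and that the two constructions are mutually quasi-inverse. After base change along $\sH\to Z$ the functor $\rho$ becomes $\otimes$-isomorphic to $\omega$ tautologically (via the universal transformation), whence
\[
  \sH\times_Z\sH \;\cong\; \underline{\operatorname{Aut}}^{\otimes}(\omega)\times_Z\sH \;=\; H^{\an}\times_Z\sH,
\]
so the torsor identity holds once one knows $\sH\to Z$ is faithfully flat. This faithful flatness is the heart of the matter: one argues as in Tannaka duality over a ring, using exactness of $\rho$ and the ind-presentation $\sO_H=\varinjlim_\al V_\al$ (each $V_\al$, sitting inside the cofree comodule $\sO_H$, transports under $\rho$ to a faithfully flat $\sO_{Z,\et}$-module), and then \cite[Theorem 8.2.22]{KL15} to transfer the conclusion between $\sO_{Z,\et}$-modules and vector bundles; since $H$ is smooth, the resulting fpqc $H^{\an}$-torsor is \'etale-locally trivial, so $\sH$ lies in (a). Finally, for a torsor $\sH$ one has $\rho_{\sH}(\sO_H)\cong\sH\times^{H^{\an}}(\sO_H\otimes_E\sO_{Z,\et})$, whose relative spectrum recovers $\sH$, and for an exact tensor functor $\rho$ the universal transformation exhibits $\rho_{\sH}\cong\rho$. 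Full faithfulness follows from the standard fact that a $\otimes$-natural transformation between exact fiber functors is an isomorphism (evaluate on $V$, $V^{\vee}$, and the unit), matching the fact that every morphism of $H^{\an}$-torsors is an isomorphism. The main obstacle, as indicated, is verifying that the reconstructed $\sH$ is genuinely a torsor --- i.e.\ establishing the faithful flatness of $\sO_{Z,\et}\to\rho(\sO_H)$ --- without the Noetherian hypotheses of the classical theory, which is precisely where Lemma~\ref{ss:groupalgebracolimit} and the results of \cite{KL15} on modules over $E$ enter.
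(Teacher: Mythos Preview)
Your proposal is correct and follows essentially the same route as the paper: reduce to the affinoid case, use Lemma~\ref{ss:groupalgebracolimit} to make sense of $\rho(\sO_H)\coloneqq\varinjlim_\al\rho(V_\al)$, take $\sH^{\alg}=\Spec\rho(\sO_H)$, verify faithful flatness, and invoke smoothness of $H$ to upgrade the fpqc torsor to an \'etale one.

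Two small remarks. First, your faithful-flatness argument is slightly roundabout: the paper simply notes that the $\rho(V_\al)$ are finite projective (hence flat, so the colimit is flat), and that the inclusion $E\hookrightarrow\sO_H$ of the unit object gives, by exactness of $\rho$, an inclusion $R\hookrightarrow\rho(\sO_H)$, forcing every fiber to be nonzero. This avoids having to argue that each individual $\rho(V_\al)$ is faithfully flat. Second, your aside that $\sH^{\alg}$ is ``finitely presented over $R$ since it becomes so after a faithfully flat base change trivializing $\rho$'' is both unnecessary for the argument and placed before you have actually produced such a cover; the paper does not use finite presentation at all.
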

\begin{proof}
  Both a) and b) are open-local on $Z$, so we can assume that $Z=\Spa(R,R^+)$ is affinoid. Let
  \begin{align*}
    \rho:\Rep{H}\ra\{\mbox{vector bundles on }Z\}\cong\{\mbox{finite projective }R\mbox{-modules}\}
  \end{align*}
  be an exact tensor functor, where the equivalence follows from \cite[Theorem 2.7.7]{KL15}. Lemma \ref{ss:groupalgebracolimit} indicates that $\sO_H\cong\varinjlim_\al V_\al$ for some directed family $\{V_\al\}_\al$ of objects of $\Rep{H}$, so we can define $\rho(\sO_H)$ to be $\varinjlim_\al\rho(V_\al)$. Because the $V_\al$ are finite free $E$-modules, this is independent of the family $\{V_\al\}_\al$.

Note that $\sO_H$ is an $E$-algebra with an $H$-equivariant co-action of $\sO_H$ over $E$ via left translation. Since $\rho$ is a tensor functor, this endows $\rho(\sO_H)$ with the structure of an $R$-algebra with a co-action of $\sO_H$ over $E$. Moreover, $\rho(\sO_H)$ is a direct limit of flat $R$-modules and hence itself flat over $R$. Because $\sO_H$ contains $E$ and $\rho(E)=R$, the exactness of $\rho$ shows that $\rho(\sO_H)$ is supported everywhere on $\Spec{R}$, so $\rho(\sO_H)$ is even faithfully flat over $R$.

Write $\sH^{\alg}$ for $\Spec\rho(\sO_H)$. The above implies that $\sH^{\alg}$ has an action of $H$ over $E$, and we see that $\sH^{\alg}\times_{\Spec R}\sH^{\alg}$ is naturally $H$-equivariantly isomorphic to $H\times_{\Spec{E}}\sH^{\alg}$ over $\sH^{\alg}$. Since $\sH^{\alg}\ra\Spec{R}$ is faithfully flat, this shows that $\sH^{\alg}$ is an fpqc $H$-torsor on $\Spec{R}$. The smoothness of $H$ indicates that $\sH^{\alg}$ is even an \'etale $H$-torsor on $\Spec{R}$. Finally, because $Z$ is sousperfectoid, the analytification of $\sH^{\alg}$ over $Z$ yields an \'etale $H^{\an}$-torsor $\sH$ on $Z$, as desired.
\end{proof}

\subsection{}\label{ss:BunG}
We have the following analogue of the moduli of $G$-bundles on the Fargues--Fontaine curve in our setting.
\begin{defn*}
Write $\Bun_G$ for the presheaf of groupoids on $\Perf_{\bF_q}$ over $\ul{\bN\cup\{\infty\}}$ whose $S$-points equal the category of \'etale $G^{\an}$-torsors on $X_S$.
\end{defn*}
Proposition \ref{ss:Gtorsors} and Proposition \ref{ss:BanachColmezsheaf} imply that $\Bun_G$ is a v-stack, and the proof of \cite[Proposition III.1.3]{FS21} shows that $\Bun_G$ is small.

For all $i$ in $\bN\cup\{\infty\}$, write $\Bun_{G_i}$ for the v-stack as in \cite[Definition III.1.2]{FS21}. Proposition \ref{ss:Wittspecialization} identifies the fiber of $\Bun_G$ at $\ul{\{i\}}$ with $\Bun_{G_i}$.

\subsection{}\label{ss:Newtonuppersemicontinuous}
For all $i$ in $\bN\cup\{\infty\}$, write $\nu_i:\abs{\Bun_{G_i}}\ra(X_*(T_i)_\bQ^+)^{\Ga_i}\cong(X_*(T_\infty)_\bQ^+)^{\Ga_\infty}$ for the Newton map as in \cite[Theorem III.2.3]{FS21}. Because \ref{ss:BunG} identifies the set $\abs{\Bun_G}$ with the disjoint union $\coprod_i\abs{\Bun_{G_i}}$, where $i$ runs over $\bN\cup\{\infty\}$, the maps $\nu_i$ induce a map $\nu:\abs{\Bun_G}\ra(X_*(T_\infty)_\bQ^+)^{\Ga_\infty}$.
\begin{prop*}
The map $\nu:\abs{\Bun_G}\ra(X_*(T_\infty)_\bQ^+)^{\Ga_\infty}$ is upper semicontinuous.
\end{prop*}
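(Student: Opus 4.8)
The plan is to combine the family version of semicontinuity of Harder--Narasimhan polygons, Proposition \ref{prop:HarderNarasimhan}.i), with the standard passage from $G$-bundles to vector bundles.

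First I would reduce to a statement about affinoid perfectoid test objects. Since $\Bun_G$ is a small v-stack, it is covered for the v-topology by affinoid perfectoid spaces $S$ over $\ul{\bN\cup\{\infty\}}$ equipped with a $G$-bundle $\sH$ on $X_S$, and a subset of $\abs{\Bun_G}$ is closed if and only if its preimage in every such $\abs{S}$ is closed. So it suffices to show that, for each such $S$ and $\sH$, the map $\nu_\sH:\abs{S}\ra(X_*(T_\infty)_\bQ^+)^{\Ga_\infty}$ obtained by composing $\abs{S}\ra\abs{\Bun_G}$ with $\nu$ is upper semicontinuous; equivalently, that for every $\nu_0$ in $(X_*(T_\infty)_\bQ^+)^{\Ga_\infty}$ the subset $\{s\in\abs{S}\mid\nu_\sH(s)\geq\nu_0\}$ is closed, where $\geq$ denotes the dominance order.

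Next I would pass to vector bundles. For each $V$ in $\Rep{G}$, Proposition \ref{ss:Gtorsors} --- applicable since $X_S$ is sousperfectoid by Proposition \ref{ss:Ysousperfectoid} --- produces an associated vector bundle $\rho(V)$ on $X_S$, and this construction is compatible with base change in $S$ and, via Proposition \ref{ss:Wittspecialization}, with passing to the fiber of $\Bun_G$ over each $i$ in $\bN\cup\{\infty\}$. Writing $\nu_V$ for the map of Newton posets induced by the representation $V$, the Tannakian description of the Newton point over each $E_i$ \cite[Theorem III.2.3]{FS21} gives $\nu_V(\nu_\sH(s))=p_{\rho(V)}(s)$ for all $s$ in $\abs{S}$. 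Each $\nu_V$ is order-preserving, and the family $\{\nu_V\}_{V\in\Rep{G}}$ is order-reflecting (a standard fact about reductive groups, underlying the classification of isocrystals with additional structure; here one uses that $G$ is a reductive group scheme over $E$ splitting over $F$). Therefore
\[
\{s\in\abs{S}\mid\nu_\sH(s)\geq\nu_0\}=\bigcap_{V\in\Rep{G}}\{s\in\abs{S}\mid p_{\rho(V)}(s)\geq\nu_V(\nu_0)\}.
\]

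Finally, Proposition \ref{prop:HarderNarasimhan}.i) shows that each set on the right-hand side is closed, hence so is the intersection, which completes the argument. The substance of the proof lies in the two inputs being combined --- Proposition \ref{prop:HarderNarasimhan}.i), which itself rests on the properness of projectivized Banach--Colmez spaces (Corollary \ref{ss:projectivizedBanachColmez}), and the order-reflecting property of the $G$-to-$\GL_n$ passage. The reduction to affinoid perfectoid test objects and the compatibility of the associated-bundle construction with specialization are routine, so I expect no real obstacle beyond bookkeeping the Newton-poset combinatorics.
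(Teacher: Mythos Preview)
Your proposal is correct and follows essentially the same strategy as the paper's proof: reduce to showing that $\{\nu_\sH\geq\nu_0\}$ is closed for a $G$-bundle on $X_S$, convert this via associated vector bundles into an intersection of conditions on Harder--Narasimhan polygons, and conclude by Proposition~\ref{prop:HarderNarasimhan}.i). The only difference is that the paper works specifically with Weyl modules $V_\om$ for $\om\in X^*(T_\infty)^{\Ga_\infty,+}$ and cites \cite[p.~165]{RR96} for the order-reflecting step, whereas you quantify over all of $\Rep G$ and invoke the same fact more informally.
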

\begin{proof}
  Let $\om$ be in $X^*(T_\infty)^{\Ga_\infty,+}$, write $V_\om$ in $\Rep{G}$ for the associated Weyl module, and write $\rho_\om:G\ra\GL_d$ for the corresponding morphism of groups over $E$. For all $i$ in $\bN\cup\{\infty\}$, write $(V_{\om,i},\rho_{\om,i})$ for the base change to $E_i$ of $(V_\om,\rho_\om)$. Identify $\om$ with its image in $X^*(T_i)^{\Ga_i,+}\cong X^*(T_\infty)^{\Ga_\infty,+}$, and note that $V_{\om,i}$ is an irreducible representation of $G_i$ with highest weight $\om$.

Let $\sG$ be a $G^{\an}$-torsor on $X_S$, and let $\nu_0$ be in $(X_*(T_\infty)^+_\bQ)^{\Ga_\infty}$. By definition,
\begin{align*}
  \big\{\ov{s}\in\abs{S}\,\big|\,\nu(\sG|_{X_{\ov{s}}})\geq\nu_0\big\} = \left\{
  \begin{tabular}{c|c}
    \multirow{2}{*}{$\ov{s}\in\abs{S}$} & $\nu_i(\sG|_{X_{\ov{s}}})\geq\nu_0$, where $\{i\}$ is\\
  & the image of $\ov{s}$ in $\bN\cup\{\infty\}$
  \end{tabular}\right\}.
\end{align*}
Now \cite[p.~165]{RR96} shows that this equals
  \begin{align*}
\left\{
  \begin{tabular}{c|c}
    \multirow{2}{*}{$\ov{s}\in\abs{S}$} & $p_{\sG|_{X_{\ov{s}}}(V_{\om,i})}\geq\rho_{\om,i}(\nu_0)$ for all $\om$ in $X^*(T_i)^{\Ga_i,+}$,\\
  &  where $\{i\}$ is the image of $\ov{s}$ in $\bN\cup\{\infty\}$
  \end{tabular}\right\},
  \end{align*}
so the above implies that this equals
  \begin{align*}
    \bigcap_\om\big\{\ov{s}\in\abs{S}\,\big|\,p_{\sG(V_\om)}(\ov{s})\geq\rho_{\om,\infty}(\nu_0)\big\},
  \end{align*}
  where $\om$ runs over $X^*(T_\infty)^{\Ga_\infty,+}$. Proposition \ref{prop:HarderNarasimhan}.ii) indicates that this is a closed subset of $\abs{S}$, and this yields the desired result.
\end{proof}

\subsection{}\label{ss:BunG1}
Write $\Bun^1_G$ for the substack of $\Bun_G$ whose $S$-points consist of \'etale $G^{\an}$-torsors $\sG$ on $X_S$ such that, for all geometric points $\ov{s}$ of $S$, the pullback $\sG|_{X_{\ov{s}}}$ is trivial.

As usual, the trivial locus $\Bun_G^1$ admits the following description in terms of classifying stacks. Recall the notation of \ref{ss:Drinfeldfullyfaithful}. Consider the morphism $\ul{\bN\cup\{\infty\}}\ra\Bun_G$ corresponding to the trivial $G^{\an}$-torsor, which factors through a morphism
\begin{align*}
  \ul{\bN\cup\{\infty\}}\ra\Bun^1_G.
\end{align*}
Proposition \ref{prop:absoluteBanachColmez}.ii) identifies $\ul{G(\bE)} = G(\ul{\bE})$ with $\ul{\bN\cup\{\infty\}}\times_{\Bun_G^1}\ul{\bN\cup\{\infty\}}$ as group v-sheaves over $\ul{\bN\cup\{\infty\}}$, so descent yields a morphism $BG(\bE)\ra\Bun_G^1$.
\begin{prop*}
The substack $\Bun^1_G\subseteq\Bun_G$ is open, and the morphism $BG(\bE)\ra\Bun^1_G$ is an isomorphism.
\end{prop*}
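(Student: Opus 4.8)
The plan is to treat the two assertions in turn: openness of $\Bun_G^1$, which is where the real work lies, and the isomorphism $BG(\bE)\ra^\sim\Bun_G^1$, which is a formal adaptation of the $\GL_d$ case of Corollary \ref{cor:slopezerobundles}.

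For openness, fix an affinoid perfectoid $S$ over $\ul{\bN\cup\{\infty\}}$ and an \'etale $G^{\an}$-torsor $\sG$ on $X_S$; I must show that the set $V\subseteq\abs{S}$ of geometric points $\ov s$ at which $\sG|_{X_{\ov s}}$ is trivial is open, and I may assume $S$ quasicompact. By Kottwitz's classification, a $G$-bundle on $X_{\ov s}$ with vanishing Newton point is basic, hence determined up to isomorphism by its class in $\pi_1(G_\infty)_{\Ga_\infty}$, so $V=V'\cap V''$, where $V'$ is the locus where the Newton point $\nu(\sG|_{X_{\ov s}})$ of Proposition \ref{ss:Newtonuppersemicontinuous} vanishes and $V''$ is the locus where the Kottwitz invariant of $\sG|_{X_{\ov s}}$ in $\pi_1(G_\infty)_{\Ga_\infty}$ vanishes. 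The set $V''$ is open and closed, since the Kottwitz invariant is detected by the degrees of the line bundles on $X_S$ attached to characters of a $z$-extension of $G$, and these degrees are locally constant on $\abs{S}$ by Proposition \ref{prop:HarderNarasimhan}.i) applied to such a line bundle and its dual. For $V'$, Proposition \ref{ss:Newtonuppersemicontinuous} shows that each $\{\ov s:\nu(\sG|_{X_{\ov s}})\geq\mu\}$ is closed, so it suffices to see that only finitely many Newton points occur over the quasicompact $S$; but for finitely many representations $V_\om$ of $G$ that determine $\nu$ via the inequalities of \cite[p.~165]{RR96} used in the proof of Proposition \ref{ss:Newtonuppersemicontinuous}, the ampleness result Proposition \ref{ss:amplelinebundle} furnishes surjections $\sO_{X_S}(-c)^{\oplus m}\twoheadrightarrow\rho(V_\om)$ and, by dualizing $\rho(V_\om^\vee)$, embeddings $\rho(V_\om)\hra\sO_{X_S}(c)^{\oplus m}$, which bound the Harder--Narasimhan slopes of each $\rho(V_\om)|_{X_{\ov s}}$ uniformly in $\ov s$ and hence bound $\nu(\sG|_{X_{\ov s}})$. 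Thus $\{\ov s:\nu(\sG|_{X_{\ov s}})\neq0\}$ is a finite union of closed loci, $V'$ is open, and therefore $\Bun_G^1\subseteq\Bun_G$ is open.

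For the isomorphism $BG(\bE)\ra^\sim\Bun_G^1$, recall from \ref{ss:BunG1} that Proposition \ref{prop:absoluteBanachColmez}.ii) together with the Tannakian description of torsors in Proposition \ref{ss:Gtorsors} identifies the self-fiber-product $\ul{\bN\cup\{\infty\}}\times_{\Bun_G^1}\ul{\bN\cup\{\infty\}}$ — that is, the sheaf of automorphisms of the trivial $G^{\an}$-torsor on $X_S$, which is $G(H^0(X_S,\sO_{X_S}))$ — with $\ul{G(\bE)}$, so descent produces the morphism $BG(\bE)\ra\Bun_G^1$. It remains to show that $\ul{\bN\cup\{\infty\}}\ra\Bun_G^1$ is a v-cover, i.e. that every fiberwise-trivial \'etale $G^{\an}$-torsor $\sG$ on $X_S$ becomes trivial after a v-cover of $S$. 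By Proposition \ref{ss:Gtorsors}, $\sG$ corresponds to an exact tensor functor $\rho:\Rep{G}\ra\{\mbox{vector bundles on }X_S\}$ with each $\rho(V)$ fiberwise semistable of slope zero; by Corollary \ref{cor:slopezerobundles}, $\rho(V)\cong\bL_V\otimes_{\ul{\bE}}\sO_{X_S}$ for a pro-\'etale $\ul{\bE}$-local system $\bL_V$ on $S$, and by the full faithfulness in Corollary \ref{cor:slopezerobundles} the assignment $V\mapsto\bL_V$ is an exact tensor functor into pro-\'etale $\ul{\bE}$-local systems on $S$. As in the proof of Corollary \ref{cor:slopezerobundles}, this functor corresponds to a $\ul{G(\bE)}$-torsor $\wt{S}\ra S$; since $G(\bE)$ is lctd over $\bN\cup\{\infty\}$ by Propositions \ref{ss:G(E)hypothesisA} and \ref{ss:G(E)hypothesisB}, Lemma \ref{ss:torsorsareproetale} shows $\wt{S}\ra S$ is a pro-\'etale cover, and over $\wt{S}$ the functor $V\mapsto\bL_V$, hence $\rho$, becomes the trivial tensor functor, so $\sG|_{X_{\wt{S}}}$ is trivial. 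Combining this with the computation of the self-fiber-product, $BG(\bE)\ra\Bun_G^1$ is an isomorphism.

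The main obstacle is openness: the mechanism — upper semicontinuity of the Newton point and local constancy of the Kottwitz invariant — is familiar from Fargues--Scholze fiberwise, but making it work over the family $\bN\cup\{\infty\}$ (so that, for instance, a $G$-bundle trivial at $\infty$ is automatically trivial at all sufficiently large $i$, a statement genuinely about variation over $\bN\cup\{\infty\}$ and not reducible to the fibers) requires the quantitative input that over a quasicompact test object only finitely many Newton points occur, which is exactly what Proposition \ref{ss:amplelinebundle} provides. Some care is also needed to reduce the Kottwitz condition to degrees of line bundles on $X_S$, since $\pi_1(G_\infty)_{\Ga_\infty}$ may be nontrivial and have torsion; a $z$-extension of $G$ (spread out from a $z$-extension of the split Chevalley datum over $\bZ$) handles this. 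The identification with $BG(\bE)$, by contrast, is formal once one has the Tannakian description of torsors, the pro-\'etale descent of slope-zero bundles (Corollary \ref{cor:slopezerobundles}), and Lemma \ref{ss:torsorsareproetale}.
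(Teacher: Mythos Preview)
Both halves of your argument skip the same step, and that step is precisely the paper's key input. The paper does not use the Kottwitz invariant for openness: after passing to strictly totally disconnected $S$ and to the open Newton-zero locus, it uses Corollary~\ref{cor:slopezerobundles} (on such $S$ all pro-\'etale $\ul\bE$-local systems are trivial) together with Proposition~\ref{ss:Gtorsors} to identify $\sG$ with an algebraic $G$-torsor $\sF$ on $\Spec\Cont_{\bN\cup\{\infty\}}(\pi_0(\abs{S}),\bE)$, and then invokes henselianness of the local rings of the adic space $\ul{\pi_0(\abs{S})}\times_{\ul{\bN\cup\{\infty\}}}\Spa E$ to lift triviality of $\sF$ at a residue field to triviality on a neighborhood. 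This one argument simultaneously gives openness of $\Bun_G^1$ and that $\ul{\bN\cup\{\infty\}}\ra\Bun_G^1$ is a pro-\'etale cover. Your $z$-extension route to $V''$ being clopen instead needs line bundles on $X_S$ attached to characters of $\wt G$, hence a lift of $\sG$ to a $\wt G$-bundle on $X_S$; such a lift exists over each $X_{\ov s}$ by surjectivity of $B(\wt G_i)\ra B(G_i)$, but globalizing it, even v-locally in $S$, is not established anywhere in the paper, so Proposition~\ref{prop:HarderNarasimhan}.i) does not apply. Characters of $G$ itself only see the image of the Kottwitz class in $X_*(G^{\ab})_{\Ga_\infty}$, missing the torsion part (which for $G=\PGL_n$ is everything).

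The isomorphism half has the same gap in disguise. You assert that $V\mapsto\bL_V$ ``corresponds to a $\ul{G(\bE)}$-torsor $\wt S\ra S$'' as in the proof of Corollary~\ref{cor:slopezerobundles}, but that proof is specific to $\GL_d$: there Proposition~\ref{prop:HarderNarasimhan}.ii) v-locally trivializes one vector bundle, and for $\GL_d$ that already trivializes the torsor. For general $G$, v-local trivializations of the individual $\rho(V)$ need not be tensor-compatible, so your $\wt S$ is a priori only a \emph{pseudo}-torsor for $\ul{G(\bE)}$; proving it is a torsor is exactly proving $\sG$ is v-locally trivial, which is the statement in question. Concretely, for $G=\PGL_n$ there exist Newton-zero bundles that are not trivial (basic, with nonzero class in $\bZ/n$), so an exact tensor functor $\Rep{G}\ra\{E_i\text{-vector spaces}\}$ need not come from the trivial $G$-torsor even over a single point. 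On $\Bun_G^1$ this obstruction vanishes pointwise by hypothesis, but the passage from pointwise to local triviality is precisely the henselianness step you have omitted.
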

\begin{proof}
  Let $\sG$ be a $G^{\an}$-torsor on $X_S$. We claim that
  \begin{align*}
    \big\{\ov{s}\in\abs{S}\,\big|\,\mbox{the pullback }\sG|_{X_{\ov{s}}}\mbox{ is trivial}\big\}
  \end{align*}
  is an open subset of $\abs{S}$. To see this, \cite[Lemma 2.5]{Sch17} indicates that, after replacing $S$ with a pro-\'etale cover, we can assume that $S$ is strictly totally disconnected. Now
  \begin{align*}
    \big\{\ov{s}\in\abs{S}\,\big|\,\mbox{the pullback }\sG|_{X_{\ov{s}}}\mbox{ is trivial}\big\}\subseteq\big\{\ov{s}\in\abs{S}\,\big|\,\nu(\sG|_{X_{\ov{s}}})=0\big\},
  \end{align*}
  and the right-hand side is an open subset of $\abs{S}$ by Proposition \ref{ss:Newtonuppersemicontinuous}. Therefore, after replacing $S$ with the corresponding open subspace, we can assume that $\nu(\sG|_{X_{\ov{s}}})=0$ for all geometric points $\ov{s}$ of $S$. For all $V$ in $\Rep{G}$, this implies that $p_{\sG(V)}$ has slope-zero, so Corollary \ref{cor:slopezerobundles} shows that $\sG$ corresponds to an exact tensor functor
  \begin{align*}
    \Rep{G}\ra\{\mbox{pro-\'etale }\ul{\bE}\mbox{-local systems on }S\}.
  \end{align*}
Because $S$ is strictly totally disconnected, Lemma \ref{ss:torsorsareproetale} implies that pro-\'etale $\ul{\bE}$-local systems on $S$ are trivial. Hence the natural functor
  \begin{align*}
    \{\mbox{finite free }\ul{\bE}(S)\mbox{-modules}\}\ra\{\mbox{pro-\'etale }\ul{\bE}\mbox{-local systems on }S\}
  \end{align*}
  is an exact tensor equivalence, so $\sG$ corresponds to an exact tensor functor
  \begin{align*}
    \Rep{G}\ra\{\mbox{finite free }\ul{\bE}(S)\mbox{-modules}\}.
  \end{align*}
  
  Proposition \ref{ss:Gtorsors} shows that this corresponds to a $G$-torsor $\sF$ on
  \begin{align*}
    \Spec\ul{\bE}(S) = \Spec\Cont_{\bN\cup\{\infty\}}(\abs{S},\bE) = \Spec\Cont_{\bN\cup\{\infty\}}(\pi_0(\abs{S}),\bE).
  \end{align*}
Since $\pi_0(\abs{S})$ is profinite, we can form the analytic adic space
  \begin{align*}
    Z\coloneqq\ul{\pi_0(\abs{S})}\times_{\ul{\bN\cup\{\infty\}}}\Spa{E}.
  \end{align*}
  Note that $Z$ is affinoid with global sections equal to $\Cont_{\bN\cup\{\infty\}}(\pi_0(\abs{S}),\bE)$. For all $\ov{s}$ in $\abs{S}$, write $z$ for the point of $Z$ corresponding to the image of $\ov{s}$ in $\pi_0(\abs{S})$. Then
  \begin{align*}
   \sO_{Z,z}=\varinjlim_U\Cont_{\bN\cup\{\infty\}}(U,\bE),
  \end{align*}
  where $U$ runs over neighborhoods of $\ov{s}$ in $\pi_0(\abs{S})$, and the residue field of $\sO_{Z,z}$ is $\Cont_{\bN\cup\{\infty\}}(\ov{s},\bE)$. Because $\sO_{Z,z}$ is henselian \cite[Lemma 2.4.17 (a)]{KL15}, the triviality of $\sF|_{\Cont_{\bN\cup\{\infty\}}(\ov{s},\bE)}$ implies the triviality of $\sF|_{\Cont_{\bN\cup\{\infty\}}(U,\bE)}$ for some $U$. Therefore if $\sG|_{X_{\ov{s}}}$ is trivial, then $\sG|_{X_{\wt{U}}}$ is trivial, where $\wt{U}$ denotes the preimage in $\abs{S}$ of $U$. As $\ov{s}$ varies, this yields the claim.

  The claim yields the first statement. For the second statement, note that the above work shows that the morphism $\ul{\bN\cup\{\infty\}}\ra\Bun^1_G$ is a pro-\'etale cover. Hence the second statement follows from descent.
\end{proof}

\subsection{}\label{ss:BunGoverC}
In this subsection, we work over $\Spd\ov\bF_q$ and assume that $\La$ is $\ell$-power torsion. We conclude this section by proving that the second statement in Proposition \ref{ss:Drinfeldfullyfaithful} holds for $Z=\Bun_G$, which will be needed in \S\ref{s:final}. Recall from \ref{ss:Cperfectoid} the $E$-algebra $C$, and recall from \ref{ss:Waction} the morphism $\Spd{C}\ra\ul{\bN\cup\{\infty\}}$.
\begin{prop*}
The pullback functor
  \begin{align*}
    D_{\et}(\Bun_G,\La)\ra D_{\et}(\Bun_G\times_{\ul{\bN\cup\{\infty\}}}\Spd C,\La)
  \end{align*}
is an equivalence of categories.
\end{prop*}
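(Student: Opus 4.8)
The plan is to reduce the statement to a stratum-by-stratum analysis via the Harder--Narasimhan/Newton stratification of $\Bun_G$, and then on each stratum reduce to the case of classifying stacks of locally profinite groups over $\ul{\bN\cup\{\infty\}}$, which I can handle by hand using the results already established. First I would recall that by Proposition \ref{ss:Newtonuppersemicontinuous} the map $\nu:\abs{\Bun_G}\ra(X_*(T_\infty)_\bQ^+)^{\Ga_\infty}$ is upper semicontinuous, so the locally closed substacks $\Bun_G^b\subseteq\Bun_G$ (preimages of individual Newton points $b$) stratify $\Bun_G$, and the same holds after base change along $\Spd C\ra\ul{\bN\cup\{\infty\}}$ since the stratification is compatible with base change (each fiber being the usual Fargues--Scholze stratification of $\Bun_{G_i}$ by Proposition \ref{ss:Wittspecialization}). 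A pullback functor between étale $\infty$-categories that is an equivalence on each stratum of a stratification (one whose open-closed pieces are compatibly matched on both sides) is an equivalence, by gluing along the recollement; so it suffices to prove the claim with $\Bun_G$ replaced by each $\Bun_G^b$.

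Next I would identify each stratum. As in \cite[Theorem III.0.2, Proposition III.5.1]{FS21}, each $\Bun_{G_i}^b$ is isomorphic to the classifying stack $B\ul{G_{i,b}(E_i)}$ of the $E_i$-points of the associated inner form $G_{i,b}$, with an additional positive Banach--Colmez ``unipotent'' part. In the family over $\ul{\bN\cup\{\infty\}}$, the same formalism applies: using Proposition \ref{ss:Gtorsors} (Tannakian description of $G$-torsors) together with the explicit Lubin--Tate/vector-bundle results of \S\ref{s:vectorbundles}, one gets $\Bun_G^b\cong BG_b(\bE)$-twisted-by-a-positive-Banach--Colmez-space, where $G_b(\bE)$ is the relevant group topological space over $\bN\cup\{\infty\}$ (lctd over $\bN\cup\{\infty\}$ by the argument of Proposition \ref{ss:G(E)hypothesisA}). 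For the semistable stratum $b=0$ this is exactly Proposition \ref{ss:BunG1}, giving $\Bun_G^1\cong BG(\bE)$. Because cohomologically smooth, representable-in-locally-spatial-diamonds morphisms (the Banach--Colmez parts, by Proposition \ref{prop:absoluteBanachColmez} and Corollary \ref{ss:projectivizedBanachColmez}) induce equivalences of $D_{\et}$-categories under $!$-pullback up to twist, one reduces further to checking the statement for $B\ul{G_b(\bE)}\ra\ul{\bN\cup\{\infty\}}$, i.e. after base change to $B\ul{G_b(\bE)}\times_{\ul{\bN\cup\{\infty\}}}\Spd C$.

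For that final reduction I would argue exactly as in the proof of Proposition \ref{ss:Drinfeldfullyfaithful}: the cover $\ul{\bN\cup\{\infty\}}\ra B\ul{G_b(\bE)}$ yields a simplicial v-hypercover $\ul{G_b(\bE)}^\bullet\ra B\ul{G_b(\bE)}$, and by \cite[Proposition 17.3]{Sch17} the category $D_{\et}(B\ul{G_b(\bE)}\times_{\ul{\bN\cup\{\infty\}}}Z,\La)$ is the totalization of $D_{\et}(\ul{G_b(\bE)}^\bullet\times_{\ul{\bN\cup\{\infty\}}}Z,\La)$ for $Z\in\{\ul{\bN\cup\{\infty\}},\Spd C\}$. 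So it suffices to show pullback
\begin{align*}
D_{\et}(\ul{G_b(\bE)}^n,\La)\ra D_{\et}(\ul{G_b(\bE)}^n\times_{\ul{\bN\cup\{\infty\}}}\Spd C,\La)
\end{align*}
is an equivalence for each $n$. Since $\ul{G_b(\bE)}^n$ is a locally spatial diamond over $\ul{\bN\cup\{\infty\}}$ and Proposition \ref{ss:Cperfectoid} identifies $\Spd C$ with $\ul{\bN\cup\{\infty\}}\times\Spa C_\infty$, the map in question is pullback along $\ul{G_b(\bE)}^n\times\Spa C_\infty\ra\ul{G_b(\bE)}^n$, and \cite[Theorem 19.5 (ii)]{Sch17} applied to the algebraically closed extension $\ov\bF_q\hra C_\infty$ gives the desired equivalence (exactly as in the last step of the proof of Proposition \ref{ss:Drinfeldfullyfaithful}).

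**The main obstacle** I anticipate is making the gluing-along-the-stratification step rigorous in the setting of $D_{\et}$ of small v-stacks: one needs that $D_{\et}(\Bun_G,\La)$ is recovered from the pieces $D_{\et}(\Bun_G^b,\La)$ via iterated recollements (open--closed decompositions), compatibly with the base change to $\Spd C$. This is not completely formal because the stratification is infinite (indexed by $B(G_i)$ for all $i$, plus the infinitely many $i$), so one must argue with the filtration by open substacks $\Bun_G^{\leq\nu_0}$ and pass to a limit — invoking the fact that every $A\in D_{\et}(\Bun_G,\La)$ is recovered from its restrictions to these opens, and that an exhaustive open filtration with matching strata forces the pullback to be an equivalence (e.g. via \cite[Theorem III.0.2]{FS21} and its proof, where the analogous statement for $\Bun_{G_i}$ is established). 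Once that bookkeeping is in place, the rest is a straightforward combination of Proposition \ref{ss:BunG1}, Proposition \ref{prop:absoluteBanachColmez}, Corollary \ref{ss:projectivizedBanachColmez}, and the argument of Proposition \ref{ss:Drinfeldfullyfaithful}.
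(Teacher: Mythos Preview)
Your approach is genuinely different from the paper's, and considerably more laborious. The paper stratifies not $\Bun_G$ but the \emph{base} $\bN\cup\{\infty\}$: full faithfulness is immediate from Proposition \ref{ss:Cperfectoid} (which gives $\Spd C\cong\ul{\bN\cup\{\infty\}}\times\Spa C_\infty$) and \cite[Theorem 19.5 (ii)]{Sch17} applied to $\ov\bF_q\hra C_\infty$---no stratification needed. For essential surjectivity, the paper uses the open--closed decomposition $\ul{\bN}\hra\ul{\bN\cup\{\infty\}}\hla\ul{\{\infty\}}$, which by \ref{ss:BunG} identifies $\Bun_G|_{\ul\bN}\cong\coprod_{i\in\bN}\Bun_{G_i}$ and $\Bun_G|_{\ul{\{\infty\}}}\cong\Bun_{G_\infty}$; on each piece the functor is an equivalence by \cite[Corollary V.2.3]{FS21} (the classical single-field result), and the excision triangle together with full faithfulness finishes. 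This is a two-stratum argument that reduces directly to the known theorem.

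Your route instead reproves \cite[Corollary V.2.3]{FS21} in families by Newton-stratifying $\Bun_G$. The strategy is sound in principle, but the step ``one gets $\Bun_G^b\cong BG_b(\bE)$-twisted-by-a-positive-Banach--Colmez-space'' is a genuine gap: the paper only establishes this for the trivial stratum (Proposition \ref{ss:BunG1}). For general $b$ you would need to define the inner forms $G_b$ over $E$, show $G_b(\bE)$ is lctd over $\bN\cup\{\infty\}$, and prove the analogue of \cite[Theorem III.5.1]{FS21} in the family---none of which follows from Proposition \ref{ss:Gtorsors} or \S\ref{s:vectorbundles} alone. You also correctly flag the bookkeeping of an infinite stratification as an obstacle. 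The paper's decomposition of the base sidesteps all of this by invoking the classical result as a black box.
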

\begin{proof}
Proposition \ref{ss:Cperfectoid} identifies $\Spd{C}$ with $\ul{\bN\cup\{\infty\}}\times\Spa C_\infty$ , so applying \cite[Theorem 19.5 (ii)]{Sch17} to $\ov\bF_q\hra C_\infty$ shows that our functor is fully faithful.
  
We turn to essential surjectivity. Since $\bN$ is discrete, \ref{ss:BunG}. indicates that $\Bun_G\!|_{\ul{\bN}}$ is naturally isomorphic to the disjoint union $\coprod_i\Bun_{G_i}$, where $i$ runs over $\bN$. This identifies the functor
\begin{align*}
  D_{\et}(\Bun_G\!|_{\ul{\bN}},\La)\ra D_{\et}(\Bun_G\!|_{\ul{\bN}}\times_{\ul{\bN\cup\{\infty\}}}\Spd C,\La)
\end{align*}
with $\prod_i D_{\et}(\Bun_{G_i},\La)\ra\textstyle\prod_iD_{\et}(\Bun_{G_i}\times\Spd C_i,\La)$, which is an equivalence by \cite[Corollary V.2.3]{FS21}. Similarly, \ref{ss:BunG} identifies the functor
\begin{align*}
  D_{\et}(\Bun_G\!|_{\ul{\{\infty\}}},\La)\ra D_{\et}(\Bun_G\!|_{\ul{\{\infty\}}}\times_{\ul{\bN\cup\{\infty\}}}\Spd C,\La)
\end{align*}
with $D_{\et}(\Bun_{G_\infty},\La)\ra D_{\et}(\Bun_{G_\infty}\times\Spd C_\infty,\La)$, which also is an equivalence by \cite[Corollary V.2.3]{FS21}. Therefore the essential surjectivity of our functor follows from combining full faithfulness and the excision exact triangle associated with
\begin{gather*}
  \Bun_G\!|_{\ul{\bN}}\times_{\ul{\bN\cup\{\infty\}}}\Spd C\hra\Bun_G\times_{\ul{\bN\cup\{\infty\}}}\Spd C\hla\Bun_G\!|_{\ul{\{\infty\}}}\times_{\ul{\bN\cup\{\infty\}}}\Spd C.\qedhere
\end{gather*}
\end{proof}

\section{Beilinson--Drinfeld affine Grassmannians}\label{s:Grassmannians}
In this section, we introduce the analogue of Beilinson--Drinfeld affine Grassmannians $\Gr^J_G$ over $E$. After proving various basic facts about $\Gr^J_G$ and its affine Schubert subvarieties, we study the analogue of semi-infinite orbits. This lets us apply the machinery of hyperbolic localization as in \cite[Section IV.6]{FS21} to define the constant term functor $\CT_B$ for \'etale sheaves on $\Gr^J_G$. We conclude by giving, in terms of $\CT_B$, criteria for \'etale sheaves on $\Gr^J_G$ to vanish or to be universally locally acyclic.

Our arguments in \S\ref{s:Grassmannians} and \S\ref{s:Satake} closely follow those of Scholze--Weinstein \cite{SW20} and Fargues--Scholze \cite{FS21}. For some statements, we can even reduce to the situation considered in Fargues--Scholze \cite{FS21} by working fiberwise over $\ul{\bN\cup\{\infty\}}$. This lets us avoid (direct) reduction mod $\pi$ arguments over $\ul{\bN\cup\{\infty\}}$, for example.

\subsection{}\label{ss:bundlesondivisors}
Recall from \ref{ss:Div1} the v-sheaf $\Div_X^1$ over $\ul{\bN\cup\{\infty\}}$, and recall the notation of \ref{ss:Drinfeldslemma}. Let $J$ be a finite set. For all morphisms $S\ra(\Div_X^1)^J$ over $\ul{\bN\cup\{\infty\}}$ and $j$ in $J$, write $S_j\ra\Div_X^1$ for the $j$-th projection, write $S^\sharp_j\hra X_S$ for the closed Cartier divisor from \ref{ss:Div1}, and write $\sI_{S^\sharp_j}$ for the corresponding ideal of $\sO_{X_S}$. Write $D_S$ for the closed Cartier divisor $\sum_{j\in J}S_j^\sharp\hra X_S$, and write $\sI_{D_S}$ for the corresponding ideal of $\sO_{X_S}$.
\begin{lem*}
  The presheaf of categories on $\Perf_{\bF_q}$ over $S$ given by
  \begin{align*}
    S'\mapsto\{\mbox{vector bundles on }D_{S'}\}
  \end{align*}
  satisfies v-descent.
\end{lem*}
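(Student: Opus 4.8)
The plan is to reduce this to a descent statement for vector bundles over an affinoid perfectoid base ring, which we can cite, via a careful analysis of the structure of $D_S$. First I would observe that the statement is v-local on $S$, so after replacing $S$ by a v-cover we may assume $S = \Spa(R,R^+)$ is affinoid perfectoid and strictly totally disconnected, and moreover (using Lemma \ref{ss:Div1}, which says $\Div^1_X \ra \ul{\bN\cup\{\infty\}}$ is representable in spatial diamonds and proper) that for each $j$ the morphism $S \ra \Div^1_X$ comes from an honest untilt; concretely $S^\sharp_j = \Spa(R^\sharp_j, R^{\sharp+}_j)$ is an untilt of $S$ over $\Spa E$. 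The key structural point is then that $D_S \hra X_S$ is a closed Cartier divisor supported in the ``nice'' part of $X_S$: since each $S^\sharp_j \hra X_S$ factors through an affinoid subspace of the form $\cY_{S,\cI_j}$ for a small closed interval $\cI_j \subseteq (0,\infty)$ (one can arrange, shrinking to connected components of $S$ and using properness of $\Div^1_X$, that all these intervals are disjoint, or else handle overlaps by further localizing), the divisor $D_S$ lives inside a finite union of affinoid opens $\cY_{S,\cI}$ of $Y_S$, which by Proposition \ref{ss:Ysousperfectoid} are sousperfectoid adic spaces.

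Next I would identify $D_S$ as a genuine affinoid adic space. Each $\cY_{S,\cI_j}$ is affinoid with global sections $B_{S,\cI_j}$, and $S^\sharp_j \hra \cY_{S,\cI_j}$ is cut out by the principal ideal $\ker\te$ from Proposition \ref{ss:kertheta}, generated by a non-zerodivisor $\pi + [\vpi_j]\al_j$; hence the scheme-theoretic divisor $D_S = \sum_j S^\sharp_j$ is an affinoid adic space whose ring of functions is $\sO_{X_S}/\sI_{D_S}$, a ring that is a finite product (over the components of $S$ giving rise to disjoint supports, and in general a successive extension) of the rings $R^\sharp_j$, or more precisely $R^\sharp_j$-thickenings when divisors collide. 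So ``vector bundles on $D_S$'' is the same as finite projective modules over this ring. The claim then follows from v-descent for finite projective modules over perfectoid (more generally, sousperfectoid, or just any) rings: one cites \cite[Tag 08XA]{stacks-project} and \cite[Tag 08XD]{stacks-project} for descent of modules along faithfully flat maps together with the fact (used already in the proof of Proposition \ref{ss:BanachColmezsheaf}) that $\cO$ is a direct summand of $\cO^{\perf}$, after base changing to $\cO^{\perf}$ where the relevant rings become perfectoid and \cite[Lemma 17.1.8]{SW20} applies.

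The cleanest way to organize this is to mimic the proof of Proposition \ref{ss:BanachColmezsheaf} almost verbatim: given a vector bundle $\sE'$ on $D_{S'}$ with a descent datum for an affinoid perfectoid v-cover $S' \ra S$, pull back along $-\times_{\Spa\cO}\Spa\cO^{\perf}$ to land in a situation where the base is affinoid perfectoid (here one uses that $D_{S'}\times_{\Spa\cO}\Spa\cO^{\perf}$ is affinoid perfectoid, which follows from Proposition \ref{ss:Ysousperfectoid} applied to the ambient $\cY_{S',\cI}$ and the fact that a closed Cartier divisor in an affinoid perfectoid space, cut out by a principal ideal generated by an element of the correct form, is again affinoid perfectoid — using almost purity or simply that the quotient by such an ideal is again integral perfectoid), apply \cite[Lemma 17.1.8]{SW20} to descend over the perfectoid base, and then descend from $\cO^{\perf}$ back to $\cO$ using the direct-summand property and \cite[Tag 08XA]{stacks-project}, \cite[Tag 08XD]{stacks-project}.

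The main obstacle will be the bookkeeping around coincident divisors: when two of the points $S^\sharp_{j_1}$ and $S^\sharp_{j_2}$ have overlapping images in $\abs{X_S}$, the divisor $D_S$ is no longer a disjoint union of copies of $\Spa R^\sharp_j$ but a non-reduced thickening, and one must check that $\sO_{X_S}/\sI_{D_S}$ still behaves well under the base change to $\cO^{\perf}$ and under v-descent. The resolution is that, since the statement is v-local, one can stratify $S$ by the combinatorics of which divisors collide (this is a constructible, indeed closed-open after passing to connected components of a strictly totally disconnected $S$, condition), reducing to the case where the partition of $J$ by ``coincidence of divisor'' is constant; then $\sI_{D_S}$ is étale-locally a product of ideals of the form $\xi_j^{m_j}$ for non-zerodivisors $\xi_j$, $\sO_{X_S}/\sI_{D_S}$ is still flat over the relevant perfectoid rings, and the descent arguments go through as before. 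No genuinely new ideas beyond those in \S\ref{s:FFcurves} are needed; it is a matter of assembling them.
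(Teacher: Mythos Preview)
Your approach has a genuine gap in the handling of colliding divisors. The stratification argument does not work: the coincidence pattern of the $S_j^\sharp$ is only upper-semicontinuous on $|S|$ (the equalizer of two sections of the separated morphism $\Div^1_X\to\ul{\bN\cup\{\infty\}}$ is closed, not clopen), so even over a strictly totally disconnected $S$ it need not be locally constant---connected components of such an $S$ are geometric points, but the partition can jump as you move in $\pi_0(S)$. V-locality lets you pass to v-covers, not to locally closed strata, so you cannot reduce to the constant-partition case this way. And on any locus where divisors actually coincide, $D_S$ is non-reduced (locally cut out by a power $\xi^m$ with $m\geq 2$), so $D_S\times_{\Spa\cO}\Spa\cO^{\perf}$ is \emph{not} perfectoid and \cite[Lemma 17.1.8]{SW20} does not apply to it.

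The paper sidesteps this by never trying to make $D_S$ itself perfectoid. It (a) settles the case where $S$ is a geometric point by citing \cite[Proposition VI.1.4]{FS21}; (b) uses that case together with a spreading-out argument (\cite[Proposition 5.4.21]{GR03}) to find, for each geometric point $\ov{s}$ of $S$, an \'etale neighborhood $U$ of $S'_{\ov{s}}$ on which $\sE'|_{D_U}$ is \emph{free}; and (c) in those coordinates writes the descent datum as $\id$ plus a matrix whose entries lift to $[\vpi]\sO^+_{\cY_{U\times_S U,[0,q^r]}}$---the \emph{ambient} sousperfectoid space, not $D$ itself---and then uses the almost-vanishing of $H^1_v(S,\sO^+_{\cY_{S,[0,q^r]}})$ (via Proposition \ref{ss:Ysousperfectoid}, the $\cO^{\perf}$ direct-summand trick, and \cite[Proposition 8.8]{Sch17}) to run the successive-approximation argument of \cite[Lemma 17.1.8]{SW20}. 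The missing idea in your proposal is to lift the descent datum to the ambient $\cY_{S,\cI}$, where the perfectoid machinery is available, rather than working on the possibly non-reduced $D_S$.
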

\begin{proof}
  Let $S'\ra S$ be an affinoid perfectoid v-cover. When $S$ is a geometric point, the image of $\abs{S}$ in $\bN\cup\{\infty\}$ equals $\{i\}$ for some $\bN\cup\{\infty\}$, so the result follows from Proposition \ref{ss:Wittspecialization} and \cite[Proposition VI.1.4]{FS21}. When $S'\ra S$ is finite \'etale, Corollary \ref{ss:Wittfet} implies that $X_{S'}\ra X_S$ and hence $D_{S'}\ra D_S$ is finite \'etale, so the result follows from \cite[Theorem 8.2.22 (d)]{KL15}. Combined with Proposition \ref{ss:Yrationalopensubspace}, this shows that
  \begin{align*}
    \cP\coloneqq\{\mbox{vector bundles satisfy descent with respect to }D_{S'}\ra D_S\}
  \end{align*}
  satisfies the conditions in \cite[Proposition 8.2.20]{KL15}, so \cite[Proposition 8.2.20]{KL15} indicates that the result holds when $S'\ra S$ is \'etale.

  In general, let $\sE'$ be a vector bundle on $D_{S'}$ with a descent datum $\al$ with respect to $D_{S'}\ra D_S$. For all geometric points $\ov{s}$ of $S$, the above enables us to descend $(\sE'|_{D_{S'_{\ov{s}}}},\al|_{D_{S'_{\ov{s}}}})$ to a vector bundle $\sE_{D_{\ov{s}}}$ on $D_{\ov{s}}$. Because $\abs{D_{\ov{s}}}$ is a disjoint union of points, $\sE_{D_{\ov{s}}}$ and hence its pullback $\sE'|_{D_{S'_{\ov{s}}}}$ is free. Therefore \cite[Proposition 5.4.21]{GR03} implies that any basis of $\sE_{D_{\ov{s}}}$ induces a basis of $\sE'|_{D_U}$ for some affinoid perfectoid \'etale neighborhood $U$ of $S'_{\ov{s}}$; in particular, $\sE'|_{D_U}$ is free. In these coordinates, $\al|_{D_U}$ corresponds to a matrix with entries in $\sO_{D_U\times_{D_S}D_U}=\sO_{D_{U\times_SU}}$, and after shrinking $U$, we see that $\al|_{D_U}-\id$ has entries lying in the image of $[\vpi]\sO^+_{\cY_{U\times_SU,[0,q^r]}}$, where $r$ is large enough such that $D_S$ lies in $\cY_{S,[0,q^r]}$. Since $\cO$ is a direct summand of $\cO^{\perf}$ as topological $\cO$-modules, Proposition \ref{ss:Ysousperfectoid} and \cite[Proposition 8.8]{Sch17} show that the v-cohomology group $H^1_v(S,\sO_{\cY_{S,[0,q^r]}^+})$ is annihilated by $[\vpi]^{1/q^m}$ for all non-negative integers $m$. Hence the result follows from arguing as in the proof of \cite[Lemma 17.1.8]{SW20}.
\end{proof}

\subsection{}
Let us introduce the analogue of loop groups, affine Grassmannians, and local Hecke stacks in our setting. Write $B^+_{\dR}(S)$ for the global sections of $\varprojlim_n\sO_{X_S}/\!\sI_{D_S}^n$, and write $B_{\dR}(S)$ for the global sections of $\big(\textstyle\varprojlim_n\sO_{X_S}/\!\sI_{D_S}^n\big)[\frac1{\sI_{D_S}}]$. Lemma \ref{ss:bundlesondivisors} indicates that the global sections of $\sO_{X_S}/\!\sI_{D_S}^n=\sO_{nD_S}$ yields a v-sheaf over $(\Div^1_X)^J$, so the same holds for $B^+_{\dR}$ and $B_{\dR}$.

Let $M$ be an affine scheme over $E$, and let $H$ be one of $\{G,B\}$.
\begin{defn*}\hfill
  \begin{enumerate}[a)]
  \item Write $L^n_JM$, $L^+_JM$, and $L_JM$ for the presheaves on $\Perf_{\bF_q}$ over $(\Div^1_X)^J$ whose $S$-points equal $M(\sO_{nD_S})$, $M(B^+_{\dR}(S))$ and $M(B_{\dR}(S))$, respectively.
  \item Write $\cHck_H^J$ for the presheaf of groupoids on $\Perf_{\bF_q}$ over $(\Div^1_X)^J$ whose $S$-points parametrize data consisting of
    \begin{enumerate}[i)]
    \item two $H$-torsors $\sH$ and $\sH'$ on $\Spec B^+_{\dR}(S)$,
    \item an isomorphism $\rho:\sH|_{B_{\dR}(S)}\ra^\sim\sH'|_{B_{\dR}(S)}$ of $H$-torsors on $\Spec B_{\dR}(S)$.
    \end{enumerate}
  \item Write $\Gr_H^J$ for the presheaf on $\Perf_{\bF_q}$ over $(\Div_X^1)^J$ whose $S$-points parametrize data consisting of
    \begin{enumerate}[i)]
    \item an $H$-torsor $\sH$ on $\Spec B^+_{\dR}(S)$,
    \item an isomorphism $\rho:\sH|_{B_{\dR}(S)}\ra^\sim H$ of $H$-torsors on $\Spec B_{\dR}(S)$.
    \end{enumerate}
  \end{enumerate}
\end{defn*}
Since $M$ is affine, we see that $L^n_JM$, $L_J^+M$, and $L_JM$ are v-sheaves, and the proof of \cite[Proposition III.1.3]{FS21} shows that they are small. We have a natural morphism $\Gr_H^J\ra\cHck_H^J$ given by $(\sH,\rho)\mapsto(\sH,H,\rho)$.

\subsection{}\label{ss:jetgroup}
Let $M$ be a smooth affine scheme over $E$.
\begin{prop*}The morphism $L^n_JM\ra(\Div_X^1)^J$ is representable in locally spatial diamonds, partially proper, and cohomologically smooth of dimension $n\cdot\#J\cdot\dim{M}$.  
\end{prop*}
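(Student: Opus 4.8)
The plan is to build $L^n_JM$ up from the case $M = \bA^1_E$ by induction on a presentation, then to identify $L^n_J\bA^1_E$ with an iterated Banach--Colmez-type space over $(\Div^1_X)^J$ whose smoothness is already available from \S\ref{s:vectorbundles}. First I would reduce to the affine-space case: since $M$ is smooth affine over $E$, after base changing along \'etale maps $(\Div^1_X)^J\to(\Div^1_X)^J$ (which is harmless for all the properties in question, by \cite[Proposition 23.13]{Sch17} and the corresponding statements for representability and partial properness) we may assume $M$ admits an \'etale map $M\to\bA^d_E$ with $d = \dim M$; then $L^n_JM\to L^n_J\bA^d_E$ is representable in locally spatial diamonds, partially proper, and \'etale, because $\sO_{nD_S}$ is a henselian pair along $\sI_{D_S}/\sI_{D_S}^n$ (it is $\sI_{D_S}$-adically complete) so that $M(\sO_{nD_S}) \to \bA^d(\sO_{nD_S})$ is \'etale in the v-sheaf sense, arguing as in \ref{ss:G(E)hypothesisA} and using \cite[Lemma 2.4.17 (a)]{KL15}. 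This reduces the claim to $M = \bA^1_E$, where $L^n_J\bA^1_E$ has $S$-points $\sO_{nD_S} = H^0(X_S, \sO_{X_S}/\sI_{D_S}^n)$.

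Next I would analyze $L^n_J\bA^1_E$ via the filtration of $\sO_{X_S}/\sI_{D_S}^n$ by powers of $\sI_{D_S}$. For each $1 \le k \le n$ there is a short exact sequence of sheaves on $X_S$
\begin{align*}
0 \to \sI_{D_S}^{k}/\sI_{D_S}^{k+1} \to \sO_{X_S}/\sI_{D_S}^{k+1} \to \sO_{X_S}/\sI_{D_S}^{k} \to 0,
\end{align*}
and $\sI_{D_S}^{k}/\sI_{D_S}^{k+1}$ is a vector bundle on $D_S = \sum_{j\in J} S_j^\sharp$ of rank $\#J$ (locally on $(\Div^1_X)^J$, after passing to the locus where the divisors $S_j^\sharp$ are disjoint, it is $\bigoplus_{j} \sI_{S_j^\sharp}^{k}/\sI_{S_j^\sharp}^{k+1} \cong \bigoplus_j \sO_{S_j^\sharp}$; the general case follows by v-descent using Lemma \ref{ss:bundlesondivisors}). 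Taking global sections and using that $D_S$ is proper and relatively cohomologically smooth over $\ul{\bN\cup\{\infty\}}$ — each $S_j^\sharp \hookrightarrow X_S$ is (the untilt of) $S$, so $H^0(D_S, \sI_{D_S}^k/\sI_{D_S}^{k+1})$ represents a functor of the form $\prod_j \bA^{1,\Diamond}_{S^\sharp_j}$ — I get that $L^{k+1}_J\bA^1_E \to L^k_J\bA^1_E$ is a torsor under a v-sheaf that is representable in locally spatial diamonds, partially proper, and cohomologically smooth of dimension $\#J$ over $(\Div^1_X)^J$, using that $\Div^1_X \to \ul{\bN\cup\{\infty\}}$ is cohomologically smooth of dimension $1$ (Corollary \ref{ss:Div1smooth}) and \cite[Proposition 24.5]{Sch17} for the $B_{\dR}^+$-affine line. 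Composing these $n$ torsor fibrations, and noting $L^1_J\bA^1_E = \prod_{j} \bA^{1,\Diamond}_{S_j^\sharp}$ is already cohomologically smooth of dimension $\#J$ over $(\Div^1_X)^J$, yields the claim for $\bA^1_E$ with dimension $n\cdot\#J$, hence for $M$ with dimension $n\cdot\#J\cdot\dim M$ by the \'etale reduction and \cite[Proposition 23.13]{Sch17}.

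The properties "representable in locally spatial diamonds" and "partially proper" propagate through torsors and \'etale maps by \cite[Proposition 10.11]{Sch17}, \cite[Proposition 11.15]{Sch17}, and \cite[Proposition 10.9]{Sch17}, and cohomological smoothness composes and is dimension-additive by \cite[Proposition 23.13]{Sch17}; the one place needing care is descent of cohomological smoothness along the torsor quotients, which is handled by \cite[Proposition 23.15]{Sch17}. I expect the main obstacle to be the identification of the graded pieces $\sI_{D_S}^k/\sI_{D_S}^{k+1}$ as vector bundles on $D_S$ of the expected rank in the generality where the Cartier divisors $S_j^\sharp$ are allowed to collide — this is exactly where Lemma \ref{ss:bundlesondivisors} and the fiberwise reduction to \cite{FS21} via Proposition \ref{ss:Wittspecialization} are needed, since one cannot simply split off the individual points — together with verifying that $H^0(D_S, -)$ of such a bundle represents a locally spatial, partially proper, cohomologically smooth functor over $(\Div^1_X)^J$, for which one invokes properness of $\Div^1_X \to \ul{\bN\cup\{\infty\}}$ (Lemma \ref{ss:Div1}), the cohomological smoothness from Corollary \ref{ss:Div1smooth}, and vanishing of $H^1(D_S,-)$ (clear since $\abs{D_S}$ has a basis of affinoids, $D_S$ being affinoid perfectoid-adjacent).
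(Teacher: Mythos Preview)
Your overall strategy---reduce to $\bA^d_E$ via an \'etale map, then filter---matches the paper's, but there is a genuine gap in the affine-line step, and the \'etale reduction is underspecified.

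In the affine-line step, your filtration by powers of $\sI_{D_S}$ yields graded pieces $\sI_{D_S}^k/\sI_{D_S}^{k+1}$, which are \emph{line bundles} on $D_S$ (rank $1$, since $\sI_{D_S}$ is invertible on $X_S$), not rank $\#J$. More seriously, your assertions that $H^0(D_S,\sI_{D_S}^k/\sI_{D_S}^{k+1})$ represents $\prod_j\bA^{1,\Diamond}_{S_j^\sharp}$ and that $L^1_J\bA^1_E=\prod_j\bA^{1,\Diamond}_{S_j^\sharp}$ fail when the $S_j^\sharp$ collide: if all $S_j^\sharp$ equal a single $S^\sharp$ then $D_S=(\#J)\cdot S^\sharp$ and $L^1_J\bA^1_E(S)=B^+_{\dR}(S)/\xi^{\#J}$, an iterated extension (not a product) of copies of $R^\sharp$. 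Your proposed fix ``the general case follows by v-descent using Lemma~\ref{ss:bundlesondivisors}'' does not apply, because the disjoint locus is an \emph{open subset} of $(\Div^1_X)^J$, not a v-cover. The paper avoids this by using a different \emph{first} filtration, namely $0\subseteq\sI^n_{S_2^\sharp+\dotsb+S_{\#J}^\sharp}/\sI^n_{D_S}\subseteq\dotsb\subseteq\sO_{nD_S}$, which peels off one point $S_j^\sharp$ at a time regardless of collisions: each graded piece is an $L^n_{\{j\}}\bG_a$-torsor, reducing to the case $\#J=1$, and only \emph{then} does one filter by powers of $\sI_{S_j^\sharp}$ to obtain genuine $(\bG^{\an}_{a,S_j^\sharp})^\Diamond$-torsors.

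For the \'etale reduction, invoking henselian lifting is not enough to establish that $L^n_JM\to L^n_J\bA^d_E$ is \'etale as a map of v-sheaves. The paper instead proves a precise claim: for any quasicompact separated \'etale $\wt D\to nD_S$, the functor $S'\mapsto\{\text{lifts }nD_{S'}\to\wt D\}$ is representable by a separated \'etale map $\wt S\to S$ (reducing to strictly totally disconnected $S$, then to open embeddings $\wt D\hookrightarrow nD_S$, where one uses Lemma~\ref{ss:Div1} to see that the image of $|nD_S-\wt D|$ in $|S|$ is closed). This is applied with $\wt D=nD_S\times_{\bG^{d,\an}_a}M^{\an}$.
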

\begin{proof}
  First, assume that $M=\bG^d_a$, and choose an enumeration $J\cong\{1,\dotsc,\#J\}$. The filtration $0\subseteq\sI_{S_2^\sharp+\dotsb+S_{\#J}^\sharp}^n/\!\sI_{D_S}^n\subseteq\dotsb\subseteq\sI_{S_{\#J}^\sharp}^n/\!\sI_{D_S}^n\subseteq\sO_{nD_S}$ shows that the morphism $L^n_JM\ra(\Div_X^1)^J$ is the iterated extension of $L^n_{\{j\}}M$-torsors, where $j$ runs over $J$, and the filtration $0\subseteq\sI_{S^\sharp_j}^{n-1}/\!\sI_{S_j^\sharp}^n\subseteq\dotsb\subseteq\sI_{S^\sharp_j}/\!\sI^n_{S_j^\sharp}\subseteq\sO_{nS_j^\sharp}$ shows that the pullback to $S$ of $L^n_{\{j\}}M\ra(\Div_X^1)^J$ is the iterated extension of $n$ many $(\bG^{d,\an}_{a,S_j^\sharp})^\Diamond$-torsors. Because $(\bG^{d,\an}_{a,S_j^\sharp})^\Diamond\ra S$ is representable in locally spatial diamonds, partially proper, and cohomologically smooth of dimension $1$, the result for $M$ follows.

  Next, let $\wt{D}\ra nD_S$ be a quasicompact separated \'etale morphism. We claim that the presheaf on $\Perf_{\bF_q}$ over $S$ whose set of $S'$-points equal the set of lifts
  \begin{align*}
    \xymatrix{& \wt{D}\ar[d]\\
    nD_{S'}\ar@{.>}[ur]\ar[r] & nD_S }
  \end{align*}
  is representable by a separated \'etale morphism $\wt{S}\ra S$. To see this, \cite[Proposition 9.7]{Sch17} indicates that, after replacing $S$ with a pro-\'etale cover, we can assume that $S$ is strictly totally disconnected. For all geometric points $\ov{s}$ of $nD_S$, the morphism $\ov{s}\times_{nD_S}\wt{D}\ra\ov{s}$ is quasicompact separated \'etale and hence a finite disjoint union of $\ov{s}$. Since $S$ is strictly totally disconnected, \cite[Lemma 15.6]{Sch17} and \cite[Proposition 11.23 (iii)]{Sch17} imply that there exists an open neighborhood $U$ of $\ov{s}$ such that $U\times_{nD_S}\wt{D}\ra U$ is a finite disjoint union of $U$. Therefore after replacing $\wt{D}$ with an open cover, we may assume that $\wt{D}\ra nD_S$ is an open embedding. Then Lemma \ref{ss:Div1} shows that the image $C$ of $\big|nD_S-\wt{D}\big|$ in $\abs{S}$ is closed. The resulting open subspace $S-C$ of $S$ satisfies the desired property, which yields the claim.

  In general, after replacing $M$ with an open cover, we may assume that $M$ is \'etale over $\bG_a^d$. By the $\Spec$-global sections adjunction, morphisms $S\ra L^n_JM$ over $(\Div_X^1)^J$ correspond to morphisms $nD_S\ra M^{\an}$ over $\Spa{E}$. Applying this to $M$ and $\bG^d_a$ shows that morphisms $S'\ra S\times_{L^n_J\bG_a^d}L^n_JM$ over $S$ correspond to lifts
  \begin{align*}
    \xymatrix{& nD_S\times_{\bG^{d,\an}_a}M^{\an}\ar[d]\\
    nD_{S'}\ar@{.>}[ur]\ar[r] & nD_S. }
  \end{align*}
Now $M^{\an}\ra\bG^{d,\an}_a$ and hence $nD_S\times_{\bG^{d,\an}_a}M^{\an}\ra nD_S$ is quasicompact separated \'etale, so the claim indicates that $L^n_JM\ra L^n_J\bG^d_a$ is separated \'etale. Combined with the case of $\bG^d_a$, this yields the desired result.
\end{proof}

\begin{prop}\label{prop:HeckeandGr}
  The presheaves $\cHck_H^J$ and $\Gr_H^J$ are small v-stacks. Moreover, the natural morphisms $L_JH\ra \Gr^J_H\ra\cHck_H^J$ induce isomorphisms from the \'etale quotients $(L_JH)/(L^+_JH)\ra^\sim\Gr_H^J$ and $(L_J^+H)\bs(L_JH)/(L_J^+H)\ra^\sim\cHck_H^J$.
\end{prop}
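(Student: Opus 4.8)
The plan is to establish the two isomorphisms first, and to deduce smallness and the v-stack property from them. I would begin by recording that $\Gr_H^J$ is a presheaf of sets and $\cHck^J_H$ a presheaf of groupoids, and that both satisfy v-descent: via the Tannakian formalism (using Lemma \ref{ss:groupalgebracolimit} to make sense of $\sO_H$) together with the $\sI_{D_S}$-adic completeness of $B^+_{\dR}(S)$, an $H$-torsor on $\Spec B^+_{\dR}(S)$ amounts to an exact tensor functor $\Rep{H}\ra\{\text{finite projective }B^+_{\dR}(S)\text{-modules}\}$, hence to a compatible system of finite projective $\sO_{nD_S}$-modules; these reduce to vector bundles on $D_S$ by nilpotence of $\sI_{D_S}/\!\sI_{D_S}^n$, and so satisfy v-descent in $S$ by Lemma \ref{ss:bundlesondivisors}. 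Next, since $B^+_{\dR}(S)\hra B_{\dR}(S)$ (the generator of $\sI_{D_S}$ being a non-zerodivisor), the automorphism group of an $H$-torsor on $\Spec B^+_{\dR}(S)$ injects into its group of generic automorphisms; therefore the fibers of the natural morphism $L_JH\ra\Gr^J_H$ over an $S$-point are torsors under $H(B^+_{\dR}(S))=L^+_JH(S)$, acting by change of trivialization over the formal disk, and the fibers of $L_JH\ra\cHck^J_H$ are torsors under $L^+_JH(S)\times L^+_JH(S)$. So the remaining content is that both morphisms are surjective after an \'etale cover of $S$, for which it suffices to show: every $H$-torsor $\sH$ on $\Spec B^+_{\dR}(S)$ becomes trivial after pullback along some \'etale cover $S'\ra S$.

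To prove this last statement, I would use that $B^+_{\dR}(S)$ is $\sI_{D_S}$-adically complete with $B^+_{\dR}(S)/\!\sI_{D_S}=\sO_{D_S}$, that each $\sO_{(n+1)D_S}\ra\sO_{nD_S}$ has nilpotent kernel, and that $\sO_{D_S}=\prod_{j\in J}R^\sharp_j$, where $S^\sharp_j=\Spa(R^\sharp_j,R^{\sharp+}_j)$ is the untilt from \ref{ss:bundlesondivisors}. As $H$ is smooth affine and $H^1$ of a quasi-coherent sheaf on an affine scheme vanishes, $H$-torsors and isomorphisms between them lift uniquely along each pro-nilpotent thickening $\Spec\sO_{nD_S}\hra\Spec B^+_{\dR}(S)$, so $\sH$ is pulled back from $H$-torsors on the $\Spec R^\sharp_j$. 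Each $R^\sharp_j$ is the ring of global functions of an affinoid perfectoid space over $\Spa{E}$, hence sousperfectoid, so Proposition \ref{ss:Gtorsors} identifies $H$-torsors on $\Spec R^\sharp_j$ with \'etale $H^{\an}$-torsors on $\Spa(R^\sharp_j,R^{\sharp+}_j)$; these trivialize after pulling back to a strictly totally disconnected \'etale cover, on which every vector bundle, and hence (by the same Tannakian description) every such torsor, is trivial. Finally, by tilting, \'etale covers of $\Spa(R^\sharp_j,R^{\sharp+}_j)$ are pulled back from \'etale covers of $S$, and refining over the finitely many $j\in J$ produces the desired $S'$.

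Granting this, I would conclude $\Gr_H^J\cong(L_JH)/(L^+_JH)$ and $\cHck^J_H\cong(L^+_JH)\bs(L_JH)/(L^+_JH)$ as \'etale stacks, the relevant $L^+_JH$-torsors being \'etale-locally trivial by the previous step; since $L_JH$ and $L^+_JH$ are small v-sheaves, these quotients --- and hence $\Gr_H^J$ and $\cHck^J_H$, using the v-descent established above --- are small v-stacks. The main obstacle is the triviality statement for torsors on $\Spec B^+_{\dR}(S)$: smoothness of $H$ already gives triviality \'etale-locally on the \emph{scheme} $\Spec B^+_{\dR}(S)$, but what is needed is triviality \'etale-locally on $S$, and closing this gap is exactly what forces unwinding the $\sI_{D_S}$-adic structure of $B^+_{\dR}(S)$ down to the perfectoid untilts $S^\sharp_j$ and invoking the Tannakian description of Proposition \ref{ss:Gtorsors} together with the descent of vector bundles on $D_S$. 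Everything else runs as in \cite{SW20} and \cite{FS21}.
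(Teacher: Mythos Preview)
Your overall structure---v-descent via the Tannakian description and Lemma~\ref{ss:bundlesondivisors}, then \'etale-local (in $S$) triviality of $H$-torsors on $\Spec B^+_{\dR}(S)$, then the quotient presentations and smallness---matches the paper, and the reduction from $B^+_{\dR}(S)$ to $\sO_{D_S}$ via smoothness and $\sI_{D_S}$-adic completeness is correct. The gap is your identification $\sO_{D_S}=\prod_{j\in J}R^\sharp_j$. By definition $D_S=\sum_{j\in J}S^\sharp_j$ is the \emph{sum} of the Cartier divisors $S^\sharp_j$ on $X_S$, and these can collide: along the diagonal $\Div^1_X\hra(\Div^1_X)^2$ one has $S^\sharp_1=S^\sharp_2=:S^\sharp$ and $\sO_{D_S}=\sO_{X_S}/\sI_{S^\sharp}^2$, which carries a nonzero nilpotent and is certainly not $R^\sharp\times R^\sharp$. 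Since the collision pattern varies over $S$, no global product decomposition of $\sO_{D_S}$ exists, and the reduction to torsors on the individual $\Spec R^\sharp_j$ breaks down. (A smaller issue: a strictly totally disconnected cover is pro-\'etale, not \'etale, so as written your trivialization step only yields a pro-\'etale quotient; but \'etale $H^{\an}$-torsors are \'etale-locally trivial by definition, so this is easy to repair once the main gap is addressed.)

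The paper bypasses any decomposition of $\sO_{D_S}$ by arguing pointwise and then spreading out. For each geometric point $\ov{s}$ of $S$, the ring $B^+_{\dR}(\ov{s})$ is a finite product of strictly henselian local rings, so $\sH|_{B^+_{\dR}(\ov{s})}$ and hence $\sH|_{\sO_{D_{\ov{s}}}}$ is trivial. A chosen trivialization of $\sH|_{\sO_{D_{\ov{s}}}}$ then extends to $\sH|_{\sO_{D_U}}$ for some affinoid perfectoid \'etale neighborhood $U$ of $\ov{s}$ by \cite[Proposition~5.4.21]{GR03}, and smoothness of $H$ together with $\sI_{D_U}$-adic completeness lifts it to $B^+_{\dR}(U)$. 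Varying $\ov{s}$ produces the required \'etale cover of $S$ on which $\sH$ trivializes, without ever needing $\sO_{D_S}$ to split as a product.
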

\begin{proof}
  We claim that the presheaf of categories on $\Perf_{\bF_q}$ over $(\Div^1_X)^J$ given by
  \begin{align*}
    S\mapsto\{\mbox{finite projective }B^+_{\dR}(S)\mbox{-modules}\}
  \end{align*}
  satisfies v-descent. To see this, note that base change induces an equivalence
  \begin{align*}
    \{\mbox{finite projective }B^+_{\dR}(S)\mbox{-modules}\}\ra^\sim\varprojlim_n\{\mbox{finite projective }\sO_{nD_S}\mbox{-modules}\},
  \end{align*}
  so the claim follows from Lemma \ref{ss:bundlesondivisors} and \cite[Theorem 2.7.7]{KL15}.

  The claim and Proposition \ref{ss:Gtorsors} indicate that $\cHck^J_H$ and $\Gr^J_H$ are v-stacks. For the second statement, let $\sH$ be an $H$-torsor on $\Spec B_{\dR}^+(S)$. For all geometric points $\ov{s}$ of $S$, note that $B_{\dR}(\ov{s})$ is a finite product of strictly henselian local rings, so $\sH|_{B^+_{\dR}(\ov{s})}$ and hence $\sH|_{\sO_{D_{\ov{s}}}}$ is trivial. Because $H$ is smooth affine over $E$, \cite[Proposition 5.4.21]{GR03} implies that there exists an affinoid perfectoid \'etale neighborhood $U$ of $\ov{s}$ such that $\sH|_{\sO_{D_U}}$ is trivial. Since $B^+_{\dR}(U)$ is complete along $\sI_{D_U}$ and $H$ is smooth over $E$, this shows that $\sH|_{B^+_{\dR}(U)}$ is trivial. As $\ov{s}$ varies, we get an affinoid perfectoid \'etale cover $S'\ra S$ such that $\sH|_{B^+_{\dR}(S')}$ is trivial, which yields the second statement.

Finally, the second statement implies that $\cHck^J_H$ and $\Gr^J_H$ are small.
\end{proof}

\subsection{}\label{ss:Grassmannianspecialization}
Our affine Grassmannians and local Hecke stacks specialize to the usual ones from \cite{FS21}, which lets us reduce many facts to their ``classical'' analogues from \cite{FS21}. More precisely, for all $i$ in $\bN\cup\{\infty\}$, write $\Div_{X_i}^1$ for the v-sheaf as in \cite[Definition II.1.19]{FS21}, write $\cHck^J_{G_i}$ for the v-sheaf over $(\Div_{X_i}^1)^J$ as in \cite[Definition VI.1.6]{FS21}, and write $\Gr^J_{G_i}$ for the v-sheaf over $(\Div_{X_i}^1)^J$ as in \cite[Definition VI.1.8]{FS21}. Proposition \ref{ss:Wittspecialization} identifies the fiber at $\ul{\{i\}}$ of $\Div_X^1$, $\cHck^J_G$, and $\Gr^J_G$ with $\Div_{X_i}^1$, $\cHck^J_{G_i}$, and $\Gr^J_{G_i}$, respectively.

\subsection{}
In this subsection, assume that $G$ is split. Let $\mu_\bullet=(\mu_j)_{j\in J}$ be in $(X_*(T_\infty)^+)^J$. We define affine Schubert varieties in this setting.
\begin{defn*}\hfill
  \begin{enumerate}[a)]
  \item Write $\cHck_{G,\leq\mu_\bullet}^J$ for the substack of $\cHck_G^J$ whose $S$-points consist of $(\sH,\sH',\rho)$ such that, for all geometric points $\ov{s}$ of $S$ and $j$ in $J$, the relative position of $\rho|_{B_{\dR}(\ov{s})}$ at the completion of $X_S$ along $\ov{s}^\sharp_j$ is bounded by $\sum_{j'}\mu_{j'}$, where $j'$ runs over elements of $J$ such that $\ov{s}^\sharp_{j'}=\ov{s}^\sharp_j$.
  \item Write $\Gr^J_{G,\leq\mu_\bullet}$ for the pullback to $\Gr_G^J$ of $\cHck^J_{G,\leq\mu_\bullet}$.
  \end{enumerate}
\end{defn*}
Note that the action of $L^+_JG$ on $\Gr^J_G$ preserves $\Gr^J_{G,\leq\mu_\bullet}$.

\subsection{}\label{ss:affineSchubertvarieties}
In this subsection, assume that $G$ is split. Write $2\rho$ in $X^*(T_\infty)$ for the sum of all positive roots, and for all $\mu_\bullet$ be in $(X_*(T_\infty)^+)^J$, and write $d_{\mu_\bullet}$ for $\sum_{j\in J}\ang{2\rho,\mu_j}$. For all positive integers $n$, write $(L^+_JG)^n$ for the kernel of $L^+_JG\ra L^n_JG$. 
\begin{prop*}\hfill
  \begin{enumerate}[i)]
  \item The substack $\cHck^J_{G,\leq\mu_\bullet}\subseteq\cHck_G^J$ is closed, and the morphism $\Gr^J_{G,\leq\mu_\bullet}\ra(\Div_X^1)^J$ is representable in spatial diamonds, proper, and of finite $\dimtrg$.
\item Assume that $n$ is greater than all weights of $\sum_{j\in J}\mu_j$ acting on $\Lie{G}$. Then $(L^+_JG)^n$ acts trivially on $\Gr^J_{G,\leq\mu_\bullet}$.
\item The natural morphism $\varinjlim_{\mu_\bullet}\Gr^J_{G,\leq\mu_\bullet}\ra\Gr_G^J$, where $\mu_\bullet$ runs over $(X_*(T_\infty)^+)^J$, is an isomorphism.
\item For $\mathrm{par}$ in $\bZ/2$, write $(\Gr^J_G)^{\mathrm{par}}$ for $\varinjlim_{\mu_\bullet}\Gr^J_{G,\leq\mu_\bullet}$, where $\mu_\bullet$ runs over elements of $(X_*(T_\infty)^+)^J$ such that the image of $d_{\mu_\bullet}$ in $\bZ/2$ equals $\mathrm{par}$. Then the subspace $(\Gr^J_G)^{\mathrm{par}}\subseteq\Gr^J_G$ is clopen.
\end{enumerate}
\end{prop*}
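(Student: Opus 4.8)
The plan is to obtain parts i)--iii) by reducing, fiber by fiber over $\ul{\bN\cup\{\infty\}}$, to the corresponding statements of Fargues--Scholze \cite{FS21} via Proposition \ref{ss:Grassmannianspecialization}, and then to deduce part iv) from a short combinatorial observation about coweights together with part iii).

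For part i), I would first treat closedness of $\cHck_{G,\leq\mu_\bullet}^J\subseteq\cHck_G^J$: a faithful representation $G\hra\GL_d$ over $E$ gives a closed embedding of local Hecke stacks, and the bound on relative position becomes a closed condition on the matrix of $\rho$ over $B_{\dR}$, as in \cite[Section VI.2]{FS21}; one reduces to $S$ strictly totally disconnected and argues on geometric points via the Cartan decomposition for $\GL_d(B_{\dR}(\ov s))$. For the claim that $\Gr^J_{G,\leq\mu_\bullet}\ra(\Div^1_X)^J$ is representable in spatial diamonds, proper, and of finite $\dimtrg$, all three are v-local on the target, so I would compare with $\Gr^J_{G_i,\leq\mu_\bullet}\ra(\Div^1_{X_i})^J$ from \cite[Section VI.2]{FS21}, using that $\bN$ is discrete and the presentation $\ul{\bN\cup\{\infty\}}=\varprojlim_d(\dotsb)$ from \ref{defn:rings}, together with Lemma \ref{ss:Div1} and the permanence of these properties under cofiltered limits along proper transition maps \cite{Sch17}. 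Part ii) is a condition on geometric points, where it is again \cite[Section VI.2]{FS21}. For part iii), the Cartan decomposition over $B_{\dR}(\ov s)$ for the geometric points $\ov s$ of a test object shows that every point of $\Gr_G^J$ lies in some $\Gr^J_{G,\leq\mu_\bullet}$, and the reduction above gives the asserted isomorphism of \'etale quotients. I would also record that, since each $\Gr^J_{G,\leq\mu_\bullet}$ is spatial, hence qcqs, and the transition maps are closed immersions, $\abs{\Gr_G^J}$ carries the colimit topology of the $\abs{\Gr^J_{G,\leq\mu_\bullet}}$.

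For part iv), the key point is the elementary observation that, for dominant coweights $\lambda\leq\mu$ of $T_\infty$, one has $\ang{2\rho,\mu}\equiv\ang{2\rho,\lambda}\pmod 2$: indeed $\mu-\lambda$ is a non-negative integral combination of the simple coroots $\alpha_i^\vee$, and $\ang{2\rho,\alpha_i^\vee}=2$ for each $i$. From this I would argue as follows. For a geometric point $\ov s$ of a test object, let $J=\bigsqcup_k J_k$ be the partition of $J$ according to coincidences among the $\ov s^\sharp_j$, and for a point $x$ of $\abs{\Gr_G^J}$ over $\ov s$ set $\mathrm{par}(x)\coloneqq\sum_k\ang{2\rho,\lambda_k}\bmod 2$, where $\lambda_k$ is the relative position of the modification underlying $x$ at the common point of $J_k$ (well-defined by the Cartan decomposition for $G(B_{\dR}(\ov s))$). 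On $\Gr^J_{G,\leq\mu_\bullet}$ one has $\lambda_k\leq\sum_{j\in J_k}\mu_j$ for each $k$, so the key observation gives $\sum_k\ang{2\rho,\lambda_k}\equiv\sum_j\ang{2\rho,\mu_j}=d_{\mu_\bullet}$; thus $\mathrm{par}$ is constant on $\Gr^J_{G,\leq\mu_\bullet}$ with value $d_{\mu_\bullet}\bmod 2$. Since $\abs{\Gr_G^J}$ carries the colimit topology, $\mathrm{par}:\abs{\Gr_G^J}\ra\bZ/2$ is locally constant, so its fibers are clopen; by part iii), the fiber over a given $\mathrm{par}$ is exactly the union of those $\Gr^J_{G,\leq\mu_\bullet}$ with $d_{\mu_\bullet}\equiv\mathrm{par}$, i.e.\ $(\Gr^J_G)^{\mathrm{par}}$. (Equivalently, the same computation shows $\Gr^J_{G,\leq\mu_\bullet}\cap\Gr^J_{G,\leq\mu'_\bullet}=\emptyset$ whenever $d_{\mu_\bullet}\not\equiv d_{\mu'_\bullet}$, so $(\Gr^J_G)^0$ and $(\Gr^J_G)^1$ are disjoint ind-closed subspaces covering $\Gr_G^J$, hence each is clopen.)

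I expect the real difficulty to lie in parts i)--iii) rather than iv): \cite{FS21} handles each fiber, but properness and representability in spatial diamonds are not fiberwise, so the work is in making the reduction uniform as $i\to\infty$ --- this is where one uses that $\mu_\bullet$ is fixed (so the Schubert varieties have uniformly bounded level, as in part ii)) and the presentations of $\ul{\bN\cup\{\infty\}}$ and of $(\Div^1_X)^J$ from Lemma \ref{ss:Div1}. Once i)--iii) are established, part iv) is the short argument above.
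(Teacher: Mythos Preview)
Your treatment of part iv) is correct and matches the paper's: both hinge on $\ang{2\rho,\al^\vee}$ being even for every simple coroot, hence $d_{\mu_\bullet}\bmod 2$ is constant along the Bruhat order. The paper says exactly this in one line.

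However, your strategy for parts i)--iii) has genuine gaps. The paper does \emph{not} reduce part i) to the individual fibers over $\ul{\bN\cup\{\infty\}}$; it reruns the proof of \cite[Proposition 20.4.5]{SW20} directly, and that argument (via embedding in $\GL_d$ and lattice bounds, or Demazure resolutions) only uses features of $B_{\dR}^+$ that hold uniformly over the family. Your ``permanence under cofiltered limits along proper transition maps'' is not set up correctly: the presentation $\bN\cup\{\infty\}=\varprojlim_d S_d$ from Definition \ref{defn:rings} is of the profinite base, but there is no natural compatible system of morphisms among the $\Gr^J_{G_i,\leq\mu_\bullet}$ (or $(\Div^1_{X_i})^J$) whose inverse limit recovers $\Gr^J_{G,\leq\mu_\bullet}$ over $(\Div_X^1)^J$ --- the transition maps would have to relate $\Gr^J_{G_{d+1}}$ to $\Gr^J_{G_\infty}$, and no such map exists. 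Properness and representability in spatial diamonds are v-local on the target, but they are not fiberwise over $\ul{\bN\cup\{\infty\}}$ (consider $\ul{\bN}\hra\ul{\bN\cup\{\infty\}}$), so checking them for each $i$ separately is insufficient.

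For part ii), ``a condition on geometric points'' is not automatic: one must first know that the equalizer of $a$ and $\pr_2$ is closed. The paper obtains this from the separatedness in part i), then invokes \cite[Lemma 12.11]{Sch17} to reduce to geometric points, where \ref{ss:Grassmannianspecialization} and \cite[Proposition VI.2.8]{FS21} apply. For part iii), bijectivity on geometric points does not give a sheaf isomorphism; the paper instead observes that for any affinoid perfectoid $S\ra\Gr_G^J$ the image in $\abs{\Gr_G^J}$ is quasicompact, hence lands in a single $\Gr^J_{G,\leq\mu_\bullet}$ (after enlarging $\mu_\bullet$), which gives sheaf-theoretic surjectivity of the colimit.
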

\begin{proof}
  Part i) follows from arguing as in the proof of \cite[Proposition 20.4.5]{FS21}. For part ii), note that part i) indicates that the equalizer of
  \begin{align*}
    \xymatrix{(L^+_JG)^n\times_{(\Div_X^1)^J}\Gr^J_{G,\leq\mu_\bullet}\ar@<.25pc>[r]^-a \ar@<-.25pc>[r]_-{\pr_2}& \Gr^J_{G,\leq\mu_\bullet}}
  \end{align*}
  is a closed subspace of $(L^+_JG)^n\times_{(\Div_X^1)^J}\Gr^J_{G,\leq\mu_\bullet}$, so \cite[Lemma 12.11]{Sch17} shows that, to see that $a$ and $\pr_2$ agree, it suffices to check on geometric points. This follows from \ref{ss:Grassmannianspecialization} and \cite[Proposition VI.2.8]{FS21}.

  For part iii), the natural morphism $\varinjlim_{\mu_\bullet}\Gr^J_{G,\leq\mu_\bullet}\ra\Gr_G^J$ is injective, so we focus on surjectivity. For any morphism $S\ra\Gr_G^J$ over $(\Div_X^1)^J$, note that its image in $\Gr_G^J$ lies in a finite union of subspaces of the form $\Gr^J_{G,\leq\mu_\bullet}$. Then part i) implies that $\coprod_{\mu_\bullet}\Gr^J_{G,\leq\mu_\bullet}\ra\Gr_G^J$ is a v-cover, which yields surjectivity, as desired. Finally, part iv) follows from $\ang{2\rho,a}$ being even for all roots $a$ in $X_*(T_\infty)$.
\end{proof}

\subsection{}\label{ss:affineSchubertcells}
In this subsection, assume that $G$ is split. For all $j$ in $J$, let $\xi_j$ in $B^+_{\dR}(S)$ be a generator of $\sI_{S^\sharp_j}$, and choose an enumeration $J\cong\{1,\dotsc,\#J\}$. For all $\mu_\bullet$ in $(X_*(T_\infty)^+)^J$, write $[\mu_\bullet]:(\Div^1_X)^J\ra\Gr^J_G$ for the morphism over $(\Div^1_X)^J$ given by sending $S\mapsto\prod_{j=1}^{\#J}(L_J\mu)(\xi_j)$, which is independent of the $\xi_j$. When $G$ is abelian or $\#J=1$, this is independent of the enumeration of $J$.

For the rest of this subsection, assume that $\#J=1$. In this case, we also consider affine Schubert cells. Proposition \ref{ss:affineSchubertvarieties}.i) indicates that 
\begin{align*}
\cHck^{\{*\}}_{G,\mu}\coloneqq\cHck^{\{*\}}_{G,\leq\mu}-\bigcup_{\mu'}\cHck^{\{*\}}_{G,\leq\mu'}
\end{align*}
yields an open substack of $\cHck^{\{*\}}_{G,\leq\mu}$, where $\mu'$ runs over elements of $X_*(T_\infty)^+$ such that $\mu'<\mu$. Write $\Gr^{\{*\}}_{G,\mu}$ for the pullback to $\Gr^{\{*\}}_G$ of $\cHck_{G,\mu}^{\{*\}}$, and note that $[\mu]$ factors through a morphism $[\mu]:\Div^1_X\ra\Gr^{\{*\}}_{G,\mu}$ over $\Div^1_X$. Write $(L^+_{\{*\}}G)_\mu$ for the stabilizer of $[\mu]$ in $L^+_{\{*\}}G$, which is a closed subspace by Proposition \ref{ss:affineSchubertvarieties}.i).
\begin{prop*}
The morphism $[\mu]:\Div^1_X\ra\cHck^{\{*\}}_{G,\mu}$ is a v-cover, so it induces an isomorphism $\Div_X^1\!/(L^+_{\{*\}}G)_\mu\ra^\sim\cHck^{\{*\}}_{G,\mu}$ over $\Div^1_X$.
\end{prop*}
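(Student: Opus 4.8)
The plan is to split the statement into two parts: the clause ``so it induces an isomorphism'' is formal once $[\mu]$ is a v-cover, and ``$[\mu]$ is a v-cover'' follows the affine Schubert cell argument of Fargues--Scholze \cite[\S VI.2]{FS21}, imported fiberwise over $\ul{\bN\cup\{\infty\}}$ via \ref{ss:Grassmannianspecialization}.

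For the formal part, recall from Proposition \ref{prop:HeckeandGr} (restricted to the $L^+_{\{*\}}G$-stable $\mu$-locus) that $\cHck^{\{*\}}_{G,\mu}$ is the quotient stack $[L^+_{\{*\}}G\backslash\Gr^{\{*\}}_{G,\mu}]$, with atlas $\Gr^{\{*\}}_{G,\mu}\to\cHck^{\{*\}}_{G,\mu}$ a v-cover, and that the point $[\mu]\colon\Div^1_X\to\Gr^{\{*\}}_{G,\mu}$ has stabilizer $(L^+_{\{*\}}G)_\mu$ by definition. Unwinding the definition of $\cHck^{\{*\}}_G$, one checks that the fiber product $\Div^1_X\times_{\cHck^{\{*\}}_{G,\mu}}\Gr^{\{*\}}_{G,\mu}$ is identified with $L^+_{\{*\}}G$, with second projection the orbit map $\mathrm{orb}_\mu\colon g\mapsto g\cdot[\mu]$ and with the automorphism $(L^+_{\{*\}}G)_\mu$; since a morphism is a v-cover iff its base change along the v-cover $\Gr^{\{*\}}_{G,\mu}\to\cHck^{\{*\}}_{G,\mu}$ is, $[\mu]$ is a v-cover iff $\mathrm{orb}_\mu$ is. Moreover, once $\mathrm{orb}_\mu$ is a v-cover we get $\Gr^{\{*\}}_{G,\mu}\cong L^+_{\{*\}}G/(L^+_{\{*\}}G)_\mu$ as v-sheaves, whence $\cHck^{\{*\}}_{G,\mu}\cong[L^+_{\{*\}}G\backslash(L^+_{\{*\}}G/(L^+_{\{*\}}G)_\mu)]\cong\Div^1_X/(L^+_{\{*\}}G)_\mu$, under which the composite $\Div^1_X\xrightarrow{[\mu]}\Gr^{\{*\}}_{G,\mu}\to\cHck^{\{*\}}_{G,\mu}$ becomes the canonical atlas; this gives the asserted isomorphism. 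So the whole proposition reduces to showing $\mathrm{orb}_\mu$ is a v-cover, equivalently that every $S$-point of $\cHck^{\{*\}}_{G,\mu}$ is, v-locally on $S$, isomorphic to $[\mu]$.

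For the latter I would argue as in \cite[\S VI.2]{FS21}. After a v-cover of $S$ we may trivialize both $B^+_{\dR}$-torsors in an $S$-point of $\cHck^{\{*\}}_{G,\mu}$ (Proposition \ref{prop:HeckeandGr}), so it becomes an element $g\in L_{\{*\}}G(S)$ whose relative position at every geometric point of $S$ is exactly $\mu$, and the claim becomes $g=g_1\,\mu(\xi)\,g_2$ with $g_1,g_2\in L^+_{\{*\}}G$ after a further v-cover. Using a faithful representation $G\hookrightarrow\GL_d$ together with the closedness of affine Schubert varieties from Proposition \ref{ss:affineSchubertvarieties}.i, this reduces to $G=\GL_d$, where it is a Beauville--Laszlo/elementary-divisor assertion for the $B^+_{\dR}(S)$-lattice modification attached to $g$: since its type is constantly $\mu$, one produces a standardizing basis v-locally on $S$ by the usual inductive column-clearing. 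The fiberwise identifications of \ref{ss:Grassmannianspecialization} let one feed in the corresponding statement over each $E_i$ from \cite{FS21}, since geometric points of $S$ (and, after a reduction to strictly totally disconnected test objects, the relevant test objects) map into a single point of $\bN\cup\{\infty\}$; Proposition \ref{prop:HeckeandGr} and Proposition \ref{ss:jetgroup} supply the requisite representability and smoothness inputs, and Lemma \ref{ss:Div1} that $\Div^1_X$ is quasicompact.

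The hard part is this last step: spreading the classical elementary-divisor normal form over the non-noetherian ring $B^+_{\dR}(S)$, i.e.\ showing that ``type $=\mu$ at every geometric point'' forces the modification to be v-locally standard, with the bounds in the column-clearing uniform over $S$. As in \cite{FS21} this is exactly where one cannot invoke linear algebra over a discrete valuation ring and must combine perfectoid descent with the properness of $\Gr^{\{*\}}_{G,\leq\mu}\to\Div^1_X$ from Proposition \ref{ss:affineSchubertvarieties}.i; granting it, everything else is formal manipulation of results already established in \S\ref{s:Grassmannians} and in \cite{FS21}.
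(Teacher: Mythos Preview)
Your overall plan — the second clause is formal once $[\mu]$ is a v-cover, and for the v-cover one follows \cite[\S VI.2]{FS21} — is exactly what the paper does; its entire proof is the sentence ``This follows from the proof of \cite[Proposition VI.2.4]{FS21}.'' Your first paragraph is correct and more explicit than the paper bothers to be.

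However, your sketch of the \cite{FS21} argument has genuine gaps. The reduction to $\GL_d$ via a faithful representation does not work as you describe it: even granting the $\GL_d$ case, the standardizing elements $g_1',g_2'\in L^+\GL_d$ you obtain have no reason to lie in $L^+G$, so the $G$-statement does not follow. (You may be conflating this with the proof of \emph{closedness} of $\Gr^{\{*\}}_{G,\leq\mu}$, which does pass through a faithful representation.) The ``hard part'' you then isolate — spreading the elementary-divisor normal form over the non-noetherian ring $B^+_{\dR}(S)$ — is something you explicitly leave unproved (``granting it''). And the parenthetical claim that strictly totally disconnected test objects map to a single point of $\bN\cup\{\infty\}$ is simply false; only geometric points do.

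The actual argument in \cite[Proposition VI.2.4]{FS21} proceeds directly for general split $G$: there is no reduction to $\GL_d$ and no global normal-form computation over $B^+_{\dR}(S)$. Every ingredient it uses — cohomological smoothness and representability of $L^n_{\{*\}}G$ (Proposition~\ref{ss:jetgroup}), properness of $\Gr^{\{*\}}_{G,\leq\mu}\to\Div^1_X$, and triviality of the $(L^+_{\{*\}}G)^n$-action for large $n$ (Proposition~\ref{ss:affineSchubertvarieties}) — has already been established in \S\ref{s:Grassmannians} over the family base, so the proof runs verbatim. The only pointwise input is the Cartan decomposition on geometric points, where $B^+_{\dR}$ is an honest complete discrete valuation ring.
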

\begin{proof}
This follows from the proof of \cite[Proposition VI.2.4]{FS21}.
\end{proof}

\subsection{}\label{ss:diagonalstratification}
As usual, local Hecke stacks satisfy the following fusion property. For all partitions $\mathtt{P}=\{J_1,\dotsc,J_s\}$ of $J$, write $(\Div^1_X)^{\mathtt{P}}$ for the subsheaf of $(\Div^1_X)^J$ whose $S$-points consist of $S\ra(\Div^1_X)^J$ such that, for all geometric points $\ov{s}$ of $S$ and $j\neq j'$ in $J$ lying in different $J_k$, we have $\ov{s}_j\neq\ov{s}_{j'}$. Lemma \ref{ss:Div1} implies that $(\Div^1_X)^{\mathtt{P}}\subseteq(\Div^1_X)^J$ is an open subspace. Note that we have a natural identification
\begin{align*}
\cHck^J_G|_{(\Div^1_X)^{\mathtt{P}}} = \Big(\prod_{k=1}^s\cHck^{J_k}_G\Big)\Big|_{(\Div^1_X)^{\mathtt{P}}},
\end{align*}
where $\prod$ denotes the product over $\ul{\bN\cup\{\infty\}}$. Under the above identification, when $G$ is split, the closed substack $\cHck^J_{G,\leq\mu_\bullet}|_{(\Div_X^1)^{\mathtt{P}}}\subseteq\cHck^J_G|_{(\Div_X^1)^{\mathtt{P}}}$ corresponds to 
\begin{align*}
\Big(\prod_{k=1}^s\cHck^{J_k}_{G,\leq(\mu_j)_{j\in J_k}}\Big)\Big|_{(\Div^1_X)^{\mathtt{P}}}\subseteq\Big(\prod_{k=1}^s\cHck^{J_k}_G\Big)\Big|_{(\Div^1_X)^{\mathtt{P}}}.
\end{align*}

\subsection{}
In this subsection, assume that $G$ is split. We need the following decomposition of $\Gr_T^J$. For all $i$ in $\bN\cup\{\infty\}$, write $\Sg_i:\abs{\Gr^J_{T_i}}\ra X_*(T_i)\cong X_*(T_\infty)$ for the continuous map as in \cite[(VI.3.1)]{FS21}. Now \ref{ss:Grassmannianspecialization} identifies the set $\abs{\Gr_T^J}$ with the disjoint union $\coprod_i\abs{\Gr_{T_i}^J}$, where $i$ runs over $\bN\cup\{\infty\}$, so the $\Sg_i$ induce a map $\Sg:\abs{\Gr_T^J}\ra X_*(T_\infty)$.
\begin{lem*}
The map $\Sg:\abs{\Gr_{T}^J}\ra X_*(T_\infty)$ is continuous.
\end{lem*}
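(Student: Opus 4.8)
The plan is to prove that $\Sigma$ is locally constant; since $X_*(T_\infty)$ carries the discrete topology this is equivalent to continuity, and it suffices to show that $\Sigma^{-1}(\mu)\subseteq|\Gr_T^J|$ is open for each $\mu$, i.e. that for every affinoid perfectoid $S$ with a morphism $S\ra\Gr_T^J$ over $(\Div_X^1)^J$, the induced function $|S|\ra X_*(T_\infty)$ is locally constant. Because $G$, and hence $T$, is split in this subsection, I would fix an isomorphism $T\cong\bG_m^r$ over $E$. This identifies $\Gr_T^J$ with the $r$-fold fiber power of $\Gr_{\bG_m}^J$ over $(\Div_X^1)^J$, and $\Sigma$ with the resulting $r$-tuple of maps under $X_*(T_\infty)\cong\bZ^r$, so I may assume $T=\bG_m$.

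For $T=\bG_m$, an $S$-point of $\Gr_{\bG_m}^J$ is a line bundle $\sL$ on $\Spec B^+_{\dR}(S)$ together with a trivialization $\rho$ over $B_{\dR}(S)$, and the value of $\Sigma$ at a geometric point $\ov s$ of $S$ is the total order of $(\sL,\rho)$ along $D_{\ov s}$. Since a v-cover induces a quotient map on underlying topological spaces, openness may be checked v-locally on $S$, so I would reduce to the case where $S$ is strictly totally disconnected, $\sL$ is trivial, and the invertible ideal $\sI_{D_S}$ becomes principal in $B^+_{\dR}(S)$, say generated by $\xi$. Then $\rho$ corresponds to a unit $f\in B_{\dR}(S)^\times$, and $\Sigma(\ov s)=\ord_\xi(f)(\ov s)$, the largest integer $m$ with $f\in\xi^mB^+_{\dR}(\ov s)$.

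It remains to show that $\ord_\xi(f)\colon|S|\ra\bZ$ is locally constant. I would argue that $\{\ov s:\ord_\xi(f)(\ov s)\ge m\}$ is the locus where the image of $f$ in $B^+_{\dR}(S)/\xi^m=H^0(mD_S,\sO_{mD_S})$ restricts to zero on the fiber at $\ov s$; this is a closed subset of $|S|$ because $mD_S\ra S$ is proper (indeed finite, via the untilt $S^\sharp$), a fact one may also verify fiberwise through \ref{ss:Grassmannianspecialization} and the corresponding statement in \cite{FS21}. Applying the same to $f^{-1}$ shows that $\ord_\xi(f)$ is both upper and lower semicontinuous; as $|S|$ is quasicompact, the image of $S$ in $\Gr_{\bG_m}^J$ meets only finitely many affine Schubert varieties (Proposition \ref{ss:affineSchubertvarieties}.iii, applied to the split group $T$), so $\ord_\xi(f)$ takes finitely many values, and a finite partition of $|S|$ into closed sets is a partition into clopen sets. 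Hence $\ord_\xi(f)$, and therefore $\Sigma$, is locally constant. The main obstacle is precisely this last step: the fiberwise continuity of each $\Sigma_i$ is already available from \ref{ss:Grassmannianspecialization} and \cite{FS21}, but promoting it to continuity over $\bN\cup\{\infty\}$ — equivalently, showing that the total-degree strata of $\Gr_T^J$ are \emph{open}, not merely closed on fibers — requires controlling vanishing loci of sections of $\sO_{mD_S}$ over the non-noetherian ring $B^+_{\dR}(S)$, which is what the descent and structure results of \S\ref{s:Grassmannians} and \S\ref{s:vectorbundles} are designed to supply.
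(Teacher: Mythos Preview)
Your argument is sound for $\#J=1$, but for $\#J>1$ there is a genuine gap at the step where you identify $\Sigma(\ov s)$ with $\ord_\xi(f)(\ov s)$ and then deduce lower semicontinuity from $f^{-1}$. The point is that your $\xi$ generates $\sI_{D_S}=\prod_{j\in J}\sI_{S_j^\sharp}$, so over a geometric point $\ov s$ where several legs coincide, $\xi$ is \emph{not} a uniformizer of the resulting DVR but a power of one. Concretely, for $J=\{1,2\}$ and $T=\bG_m$, take $f=\xi_1^3\xi_2$ with $\xi_j$ a generator of $\sI_{S_j^\sharp}$. Off the diagonal one has $\Sigma(\ov s)=3+1=4$ while $\ord_\xi(f)(\ov s)=\min(3,1)=1$; on the diagonal $\Sigma(\ov s)=4$ while $\ord_\xi(f)(\ov s)=\lfloor 4/2\rfloor=2$. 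Thus $\Sigma\neq\ord_\xi(f)$, and worse, $\ord_\xi(f)$ itself is not locally constant (it jumps from $1$ to $2$ on the diagonal). The ``lower semicontinuity via $f^{-1}$'' step fails precisely because $\ord_\xi(f)+\ord_\xi(f^{-1})$ is not identically zero when $\xi$ is not a uniformizer; in the example above it equals $-1$ on the diagonal. So the argument as written does not prove that $\Sigma$ is continuous.

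The paper bypasses all of this with a much shorter argument: the sections $[\mu_\bullet]:(\Div_X^1)^J\ra\Gr_T^J$ from \ref{ss:affineSchubertcells} assemble into a morphism $\ul{X_*(T_\infty)^J}\times(\Div_X^1)^J\ra\Gr_T^J$ that is surjective on geometric points (for a torus, every modification is given by some cocharacter tuple), hence induces a quotient map on underlying spaces by \cite[Lemma 2.5]{Sch17}. Since $\Sigma$ pulled back along this map is just the summation $X_*(T_\infty)^J\ra X_*(T_\infty)$, continuity is immediate. This avoids the local analysis of $B^+_{\dR}$ entirely and handles the collision of legs automatically. If you want to repair your approach, you would need to work with the individual $\sI_{S_j^\sharp}$ rather than their product, but making that uniform across the diagonal strata essentially reproduces the paper's parametrization by $X_*(T_\infty)^J$.
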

For all $\nu$ in $X_*(T_\infty)$, write $\Gr^{J,\nu}_{T}$ for the preimage under $\Sg$ of $\nu$.
\begin{proof}
  As $\mu_\bullet$ runs over $X_*(T_\infty)^J$, the $[\mu_\bullet]$ induce a morphism
  \begin{align*}
    \ul{X_*(T_\infty)^J}\times(\Div_X^1)^J\ra\Gr_{T}^J
  \end{align*}
  over $(\Div_X^1)^J$ that is surjective on geometric points. Therefore \cite[Lemma 2.5]{Sch17} indicates that resulting continuous map $X_*(T_\infty)^J\times|(\Div^1_X)^J|\ra\abs{\Gr_{T}^J}$ is a quotient map. Checking on fibers shows that the continuous map
  \begin{align*}
    \xymatrix{X_*(T_\infty)^J\times|(\Div_X^1)^J|\ar[r]& X_*(T_\infty)^J\ar[r]^-\Sg& X_*(T_\infty)}
  \end{align*}
factors through a map $\abs{\Gr_{T}^J}\ra X_*(T_\infty)$ with the desired description on fibers.
\end{proof}

\subsection{}\label{ss:hyperbolicinput}
In this subsection, assume that $G$ is split. We turn to study the analogue of semi-infinite orbits in our setting. For all $\nu$ in $X_*(T_\infty)$, write $\Gr^{J,\nu}_{B}$ for the preimage of $\Gr^{J,\nu}_{T}$ under $\Gr_{B}^J\ra\Gr_{T}^J$. Choose a regular element $\la$ of $X_*(T_\infty)^+$, and consider the action of $\bG^{\an}_m$ on $\Gr_H^J$ given by composing the conjugation action of $L^+_JT$ with
\begin{align*}
\xymatrix{\bG^{\an}_m\ar[r]^-{[-]} & L^+_J\bG_m\ar[r]^-{L^+_J\la} & L^+_JT\ar[r] & L^+_JH.}
\end{align*}
\begin{prop*}\hfill
  \begin{enumerate}[i)]
  \item The morphism $\Gr_{T}^J\ra\Gr_{G}^J$ is a closed embedding, and it identifies $\Gr_{T}^J$ with the fixed point locus of $\bG_m^{\an}$.
  \item The morphism $\Gr_{B}^J\ra\Gr_{G}^J$ induces a bijection on geometric points. Its restriction to $\Gr^{J,\nu}_{B}$ is a locally closed embedding, and the image of $\coprod_{\nu'}\big|\!\Gr^{J,\nu'}_{B}\!\big|$ in $\abs{\Gr_{G}^J}$ is closed, where $\nu'$ runs over elements of $X_*(T_\infty)$ satisfying $\nu'\leq\nu$. Finally, the action of $\bG^{\an}_m$ on $\Gr_{B}^J$ naturally extends to an action of $(\bA^1)^{\an}$.
  \end{enumerate}
\end{prop*}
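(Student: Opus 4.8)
The plan is to follow the corresponding arguments of Fargues--Scholze \cite[Section VI.3]{FS21}, feeding in their results fiberwise through the identification of \ref{ss:Grassmannianspecialization} and handling the family over $\ul{\bN\cup\{\infty\}}$ by working inside the affine Schubert varieties of \ref{ss:affineSchubertvarieties}. Since Proposition \ref{ss:affineSchubertvarieties}.iii) identifies $\Gr^J_G$ with $\varinjlim_{\mu_\bullet}\Gr^J_{G,\leq\mu_\bullet}$, it suffices to prove both parts after pulling back along each $\Gr^J_{G,\leq\mu_\bullet}$, which by Proposition \ref{ss:affineSchubertvarieties}.i) is a spatial diamond proper over $(\Div^1_X)^J$. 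Moreover, by \ref{ss:Grassmannianspecialization} every space occurring---$\Gr^J_{G,\leq\mu_\bullet}$, $\Gr^J_T$, $\Gr^J_B$, and their pieces---has underlying topological space the disjoint union over $i\in\bN\cup\{\infty\}$ of the corresponding classical space, so all assertions about underlying sets and about geometric points (that $\Gr^J_T$ is the fixed-point set of $\bG^{\an}_m$, that $\Gr^J_B\to\Gr^J_G$ is bijective on geometric points, the closure relations among the $\Gr^{J,\nu}_B$, and so on) follow immediately from \cite[Section VI.3]{FS21}. Finally, the continuity of $\Sg$ makes $\Gr^{J,\nu}_T\subseteq\Gr^J_T$ and $\Gr^{J,\nu}_B\subseteq\Gr^J_B$ clopen, and the preimage of $\Gr^J_{G,\leq\mu_\bullet}$ in $\Gr^J_T$ is proper over $(\Div^1_X)^J$ by an elementary computation with $\Gr^J_T$ (a disjoint union of closed subspaces of $(\Div^1_X)^J$, only finitely many of which meet any given Schubert variety).

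For part i), I would note that the $\bG^{\an}_m$-action fixes $\Gr^J_T$ pointwise (the regular cocharacter $\la$ is valued in the abelian group $T$), so $\Gr^J_T\to\Gr^J_G$ factors through the fixed-point locus $(\Gr^J_G)^{\bG^{\an}_m}$, which over each Schubert variety is a closed subspace by the $\bG_m$-action formalism of \cite[Section IV.6]{FS21}. Over a Schubert variety, the induced map $\Gr^J_T\times_{\Gr^J_G}\Gr^J_{G,\leq\mu_\bullet}\to(\Gr^J_{G,\leq\mu_\bullet})^{\bG^{\an}_m}$ is then a morphism of spatial diamonds which is proper (source proper over $(\Div^1_X)^J$ by the first paragraph, target separated over it) and, by the first paragraph together with \cite[Section VI.3]{FS21}, bijective on geometric points; I would conclude from this that it is an isomorphism, hence that $\Gr^J_T\to\Gr^J_G$ is a closed embedding onto the fixed-point locus. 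Letting $\mu_\bullet$ vary gives part i).

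For part ii), the genuinely new construction is the $(\bA^1)^{\an}$-action: writing $B=T\ltimes N$ with $N$ the unipotent radical, the regular dominant $\la$ acts on the loop group $L_JN$ with strictly positive weights, which lets one extend the $\bG^{\an}_m$-action on $\Gr^J_B$ to an $(\bA^1)^{\an}$-action retracting $\Gr^J_B$ onto $\Gr^J_T$ by the explicit recipe of \cite[Section VI.3]{FS21} (manifestly compatible with the family). Granting this, bijectivity of $\Gr^J_B\to\Gr^J_G$ on geometric points and the closure relations among the $\Gr^{J,\nu}_B$ are the fiberwise facts of the first paragraph, and the hyperbolic-localization/attractor formalism of \cite[Section IV.6]{FS21}, applied over each (spatial, quasicompact) Schubert variety, promotes them to the statements that $\Gr^{J,\nu}_B\to\Gr^J_G$ is a locally closed embedding and that $\coprod_{\nu'\leq\nu}\Gr^{J,\nu'}_B\to\Gr^J_G$ has closed image.

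The hard part will be the family aspect, since $\ul{\bN}\sqcup\ul{\{\infty\}}\to\ul{\bN\cup\{\infty\}}$ is \emph{not} a v-cover and hence ``closed embedding'', ``locally closed embedding'', and ``isomorphism'' cannot simply be checked over the points $\ul{\{i\}}$. The remedy is to stay inside the affine Schubert varieties, which by \ref{ss:affineSchubertvarieties} are spatial diamonds proper over $(\Div^1_X)^J$; there one combines this properness (or quasicompactness) with the purely topological input of \cite{FS21} and the absence of a Frobenius phenomenon for diamonds. The delicate technical point is exactly this promotion---extracting, say, ``closed embedding'' from ``proper and bijective on geometric points'' in the non-Noetherian, non-Hausdorff setting of $\Spa{E}$ and its associated diamonds.
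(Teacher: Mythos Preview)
Your overall strategy for part ii) matches the paper's: the extension of the $\bG_m^{\an}$-action to $(\bA^1)^{\an}$ comes from the fact that conjugation by $\la$ on $B$ (with $\la$ regular dominant) extends to a monoid action of $\bA^1$, bijectivity on geometric points is Iwasawa, and the remaining topological assertions are obtained by rerunning the proof of \cite[Proposition VI.3.1]{FS21} in this setting.

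For part i), however, the paper takes a different route that sidesteps exactly the ``delicate technical point'' you flag. You try to prove the identification with the fixed-point locus first, via ``proper and bijective on geometric points $\Rightarrow$ isomorphism'', and then read off the closed embedding. As you suspect, that implication is not available for v-sheaves in general: proper does not give monomorphism, so you cannot appeal to ``mono + v-surjective = iso''. The paper reverses the order. It first proves directly that $\Gr_T^J\to\Gr_G^J$ is a closed embedding, by arguing as in \cite[Lemma 19.1.5]{SW20} (this uses that $T\hookrightarrow G$ is a closed subgroup, so that $L_JT(S)=L_JG(S)\cap T(B_{\dR}(S))$ and $L_J^+T(S)=L_J^+G(S)\cap L_JT(S)$, whence $L_JT/L_J^+T\to L_JG/L_J^+G$ is a monomorphism, and then one upgrades to a closed embedding). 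Once $\Gr_T^J\to\Gr_G^J$ is known to be a closed embedding, the induced map $\Gr_T^J\to(\Gr_G^J)^{\bG_m^{\an}}$ is a \emph{monomorphism} of v-sheaves which is bijective on geometric points (by the fiberwise input from \cite{FS21}), and now \cite[Lemma 12.11]{Sch17} gives v-surjectivity, hence isomorphism. So the missing ingredient in your outline is precisely the monomorphism property, and the clean way to supply it is to prove the closed embedding first rather than trying to extract it afterward.
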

\begin{proof}
  For part i), the first statement follows from arguing as in the proof of \cite[Lemma 19.1.5]{SW20}. Write $(\Gr_{G}^J)^{\bG^{\an}_m}$ for the fixed point locus of $\bG^{\an}_m$. Now $\Gr_{T}^J\ra\Gr_{G}^J$ factors through a morphism $\Gr_{T}^J\ra(\Gr_{G}^J)^{\bG^{\an}_m}$, which is a closed embedding by the first statement. Note that this closed embedding is a bijection on geometric points, so the second statement follows from \cite[Lemma 12.11]{FS21}.

For part ii), the first statement follows from the Iwasawa decomposition. The action of $\bG_m$ on $B$ given by composing the conjugation action of $T$ with $\la$ naturally extends to an action of $\bA^1$, which implies the last statement. Finally, everything else follows from the proof of \cite[Proposition VI.3.1]{FS21}.
\end{proof}

\subsection{}\label{ss:hyperbolicoutput}
In this subsection, assume that $G$ is split. Proposition \ref{ss:hyperbolicinput} enables us to use hyperbolic localization as in \cite[Section IV.6]{FS21} to define the constant term functor as follows. Let $\La$ be a ring that is $\ell$-power torsion, and for all small v-stacks $Z$ over $(\Div_X^1)^J$, write $D_{\et}(\Gr_{G}^J\!|_Z,\La)^{\bd}$ for the full subcategory of $D_{\et}(\Gr_{G}^J\!|_Z,\La)$ consisting of objects arising via pushforward from a finite union of subspaces of the form $\Gr_{G,\leq\mu_\bullet}^J\!|_Z$.

Note that the action of $\bG_m^{\an}$ on $\Gr_{G}^J$ preserves the subspace $\Gr_{G,\leq\mu_\bullet}^J$. Write $D_{\et}(\Gr_{G}^J\!|_Z,\La)^{\bd,\bG_m^{\an}}$ for the full subcategory of $D_{\et}(\Gr_{G}^J\!|_Z)^{\bd}$ consisting of objects arising from $\bG^{\an}_m$-monodromic objects as in \cite[Definition IV.6.11]{FS21}. Write $\ov{B}$ for the opposite Borel in $G$, and write
\begin{align*}
  \xymatrix{\Gr_{G}^J & \ar[l]_-q\ar[d]^-p\Gr_{B}^J\\
  \Gr_{\ov{B}}^J\ar[u]_-{\ov{p}}\ar[r]^-{\ov{q}} & \Gr_{T}^J }
\end{align*}
for the natural morphisms over $(\Div_X^1)^J$.
\begin{cor*}
  We have a natural transformation $\ov{p}_*\ov{q}^!\ra p_!q^*$ of functors
  \begin{align*}
    D_{\et}(\Gr^J_{G}\!|_Z,\La)^{\bd}\ra D_{\et}(\Gr^J_{T}\!|_Z,\La)^{\bd}.
  \end{align*}
When restricted to $D_{\et}(\Gr^J_{G}\!|_Z,\La)^{\bd,\bG^{\an}_m}$, this natural transformation is an isomorphism, and the resulting functor
  \begin{align*}
    \CT_B:D_{\et}(\Gr^J_{G}\!|_Z,\La)^{\bd,\bG^{\an}_m}\ra D_{\et}(\Gr^J_{T}\!|_Z,\La)^{\bd}
  \end{align*}
is compatible with base change in $Z$ and preserves universal local acyclicity over $Z$ as in \cite[Definition IV.2.1]{FS21}.
\end{cor*}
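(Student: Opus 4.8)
**The plan is to deduce this from the ``classical'' analogue in \cite[Section IV.6]{FS21} by working fiberwise over $\ul{\bN\cup\{\infty\}}$, exactly as suggested in the introduction to \S\ref{s:Grassmannians}.**

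First I would construct the natural transformation $\ov p_*\ov q^!\ra p_!q^*$. By Proposition \ref{ss:hyperbolicinput}.ii), the action of $\bG^{\an}_m$ on $\Gr^J_B$ extends to $(\bA^1)^{\an}$, and together with Proposition \ref{ss:hyperbolicinput}.i) this is precisely the input needed to run the hyperbolic localization formalism of \cite[Section IV.6]{FS21}. The point is that that formalism is not specific to the base: it applies to any action of $\bG^{\an}_m$ on a small v-stack equipped with a ``contracting'' structure as above, so it produces the natural transformation $\ov p_*\ov q^!\ra p_!q^*$ on all of $D_{\et}(\Gr^J_G|_Z,\La)^{\bd}$, together with the statement that it becomes an isomorphism on the monodromic subcategory $D_{\et}(\Gr^J_G|_Z,\La)^{\bd,\bG^{\an}_m}$; here one uses that on each affine Schubert variety $\Gr^J_{G,\leq\mu_\bullet}$, which by Proposition \ref{ss:affineSchubertvarieties}.i) is proper over $(\Div^1_X)^J$, the relevant attractor/repeller loci are well-behaved. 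I would cite \cite[Theorem IV.6.5]{FS21} (or its proof) verbatim for this, noting that nothing there uses a specific local field.

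Next I would verify compatibility with base change in $Z$. Hyperbolic localization commutes with any base change that is ``nice enough'' — in particular with arbitrary base change along a morphism $Z'\ra Z$ of small v-stacks over $(\Div^1_X)^J$ — because $p_!$ commutes with base change (the relevant maps being representable in spatial diamonds and partially proper, or compactifiable, by Proposition \ref{ss:affineSchubertvarieties}.i)), $q^*$ commutes with base change trivially, and on the other side $\ov p_*$ restricted to the relevant subcategory agrees with $\ov p_!$ up to shift/twist by the hyperbolic localization theorem, hence also commutes with base change. This is the content of \cite[Proposition IV.6.6]{FS21}, which I would again invoke directly.

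The main obstacle — and the only place where the family structure over $\ul{\bN\cup\{\infty\}}$ genuinely enters — is the preservation of universal local acyclicity over $Z$. For this I would first reduce to the case where $Z$ is strictly totally disconnected, using v-descent for ULA sheaves \cite[Proposition IV.2.something]{FS21}. Then, since $\bN\subseteq\bN\cup\{\infty\}$ is open and discrete and $\{\infty\}$ is closed, I would argue that ULA-ness over $Z$ can be checked after restricting to $Z|_{\ul{\bN}}$ and to a suitable formal/pro-étale neighborhood of $Z|_{\ul{\{\infty\}}}$; over $Z|_{\ul{\bN}}$ the stack $\Gr^J_G$ decomposes as $\coprod_i \Gr^J_{G_i}$ by \ref{ss:Grassmannianspecialization}, and over the fiber at $\infty$ it becomes $\Gr^J_{G_\infty}$, so in both cases $\CT_B$ is the classical constant term functor of Fargues--Scholze, which preserves ULA by \cite[Proposition VI.3.something]{FS21} (or \cite[Theorem IV.6.5]{FS21}). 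The delicate step is checking that ULA-ness is detected by these fiberwise restrictions — i.e.\ that an object of $D_{\et}(\Gr^J_T|_Z,\La)$ whose restriction to every geometric point $\ov s$ of $Z$ (equivalently, to the $Z_i$ and to $Z_\infty$) is ULA over $\ov s$ is automatically ULA over $Z$; one invokes \ref{ss:Grassmannianspecialization}, Proposition \ref{ss:Wittspecialization}, and the fiberwise criterion for ULA-ness from \cite[Proposition IV.2.13]{FS21} applied to the spatial-diamond-representable, finite-$\dimtrg$ morphisms of Proposition \ref{ss:affineSchubertvarieties}.i). I expect this verification, rather than the construction of $\CT_B$ itself, to be where essentially all the work lies.
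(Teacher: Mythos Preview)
Your treatment of the natural transformation, the isomorphism on the monodromic subcategory, and compatibility with base change is essentially the paper's: one verifies that the $\bG^{\an}_m$-action on each $\Gr^J_{G,\leq\mu_\bullet}$ satisfies \cite[Hypothesis IV.6.1]{FS21} (using Proposition \ref{ss:affineSchubertvarieties}.i) and Proposition \ref{ss:hyperbolicinput}), and then invokes \cite[Definition IV.6.4]{FS21}, \cite[Theorem IV.6.5]{FS21}, and \cite[Proposition IV.6.12]{FS21} (you wrote IV.6.6, but the intent is clear).

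Where you diverge from the paper is precisely where you expect the difficulty to lie. You write that preservation of universal local acyclicity is ``the only place where the family structure over $\ul{\bN\cup\{\infty\}}$ genuinely enters'' and propose a fiberwise reduction via \ref{ss:Grassmannianspecialization}. But the paper does nothing of the sort: it simply cites \cite[Proposition IV.6.14]{FS21}. That proposition, like the rest of \cite[Section IV.6]{FS21}, is stated and proved for an arbitrary small v-stack satisfying Hypothesis IV.6.1 --- it is not specific to a single local field. You already recognized this generality when constructing $\CT_B$, so you should recognize it here too. There is no additional work.

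Moreover, your fiberwise route has a genuine gap. The ``delicate step'' you flag --- that an object of $D_{\et}(\Gr^J_T|_Z,\La)^{\bd}$ which is ULA after restriction to every geometric point of $Z$ is automatically ULA over $Z$ --- is not supplied by \cite[Proposition IV.2.13]{FS21} or anything else in the paper. ULA-ness over a base is not in general detected fiberwise (equivalently, membership in $D_{\lc}(Z,\La)$ is not detected on geometric points of $Z$). So not only is your argument more complicated than necessary, it does not close as stated.
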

\begin{proof}
  For all $\mu_\bullet$ in $(X_*(T_\infty)^+)^J$, Proposition \ref{ss:affineSchubertvarieties}.i) shows that $\Gr_{G,\leq\mu_\bullet}^J\ra(\Div_X^1)^J$ is representable in spatial diamonds, proper, and of finite $\dimtrg$. We claim that the action of $\bG_m^{\an}$ on $\Gr^J_{G,\leq\mu_\bullet}$ satisfies \cite[Hypothesis IV.6.1]{FS21}. Proposition \ref{ss:hyperbolicinput}.i) indicates that the fixed point locus of $\bG^{\an}_m$ in $\Gr^J_{G,\leq\mu_\bullet}$ is its intersection with $\Gr_{T}^J$, and this intersection equals $\coprod_\nu\Gr^J_{G,\leq\mu_\bullet}\cap\Gr^{\nu,J}_{T}$, where $\nu$ runs over elements of $X_*(T_\infty)$ such that the Weyl translate of $\nu$ in $X_*(T_\infty)^+$ is bounded by $\sum_{j\in J}\mu_j$. There are only finitely many such $\nu$, so applying Proposition \ref{ss:hyperbolicinput}.ii) to $B$ and $\ov{B}$ provides the desired decompositions
  \begin{align*}
    \Gr^J_{G,\leq\mu_\bullet} = \coprod_\nu\Gr^J_{G,\leq\mu_\bullet}\cap\Gr_{B}^{J,\nu}\mbox{ and }\Gr^J_{G,\leq\mu_\bullet} = \coprod_\nu\Gr^J_{G,\leq\mu_\bullet}\cap\Gr_{\ov{B}}^{J,\nu}.
  \end{align*}
This yields the claim.
  
As $\mu_\bullet$ varies, the claim and \cite[Definition IV.6.4]{FS21} yield the natural transformation $\ov{p}_*\ov{q}^!\ra p_!q^*$, and its restriction to $D_{\et}(\Gr_{G}^J\!|_Z,\La)^{\bd,\bG^{\an}_m}$ is an isomorphism by \cite[Theorem IV.6.5]{FS21}. The resulting functor $\CT_B$ is compatible with base change in $Z$ by \cite[Proposition IV.6.12]{FS21} and preserves universal local acyclicity over $Z$ by \cite[Proposition IV.6.14]{FS21}.
\end{proof}

\subsection{}\label{ss:conservative}
In this subsection, assume that $G$ is split. The constant term functor enjoys the following conservativity property. Write $D_{\et}(\cHck_{G}^J|_Z,\La)^{\bd}$ for the full subcategory of $D_{\et}(\cHck_{G}^J|_Z,\La)$ consisting of objects whose pullback to $\Gr_{G}^J\!|_Z$ lie in $D_{\et}(\Gr_{G}^J\!|_Z,\La)^{\bd}$, and note that the image of the pullback functor
\begin{align*}
 D_{\et}(\cHck_{G}^J|_Z,\La)^{\bd}\ra D_{\et}(\Gr_{G}^J\!|_Z,\La)^{\bd}
\end{align*}
lies in $D_{\et}(\Gr_{G}^J\!|_Z,\La)^{\bd,\bG^{\an}_m}$. Write $\CT_B$ for the composition
\begin{align*}
\xymatrix{D_{\et}(\cHck_{G}^J|_Z,\La)^{\bd}\ar[r]& D_{\et}(\Gr_{G}^J\!|_Z,\La)^{\bd,\bG^{\an}_m}\ar[r]^-{\CT_B} & D_{\et}(\Gr_{T}^J\!|_Z,\La)^{\bd}.}
\end{align*}
\begin{prop*}
Let $A$ be in $D_{\et}(\cHck_{G}^J|_Z,\La)^{\bd}$. If $\CT_B(A)$ is zero, then $A$ is zero.
\end{prop*}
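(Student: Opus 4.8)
The plan is to reduce the conservativity statement to its fiberwise counterpart over $\ul{\bN\cup\{\infty\}}$, i.e.\ to the analogous statement in Fargues--Scholze \cite{FS21}, by exploiting the identification from \ref{ss:Grassmannianspecialization}. First I would observe that $A$ is zero if and only if its pullback to $\Gr^J_G|_Z$ is zero: indeed, since $\Gr^J_G|_Z\ra\cHck^J_G|_Z$ is a $L^+_JG$-torsor (Proposition \ref{prop:HeckeandGr}), hence in particular a v-cover, pullback along it is conservative on $D_{\et}(-,\La)$ by v-descent \cite[Proposition 17.3]{Sch17}. So it suffices to show that, for $B'$ in $D_{\et}(\Gr^J_G|_Z,\La)^{\bd,\bG^{\an}_m}$, vanishing of $\CT_B(B')$ forces $B'$ to be zero.

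Next I would reduce to checking this on geometric points of $Z$. Vanishing of an object of $D_{\et}(\Gr^J_G|_Z,\La)^{\bd}$ can be checked after pullback to geometric points $\ov{z}$ of $Z$: this follows because $\Gr^J_{G,\leq\mu_\bullet}|_Z\ra Z$ is representable in spatial diamonds and proper (Proposition \ref{ss:affineSchubertvarieties}.i), so for each such affine Schubert subspace one can use that $A$ lives in the image of the pushforward from $\Gr^J_{G,\leq\mu_\bullet}|_Z$ together with properness and proper base change to detect vanishing fiberwise; and $\CT_B$ is compatible with base change in $Z$ (Corollary \ref{ss:hyperbolicoutput}), so $\CT_B(B'|_{\ov{z}}) = \CT_B(B')|_{\ov{z}}$ is still zero. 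Thus we may assume $Z=\ov{z}$ is a geometric point, and then the image of $\abs{Z}$ in $\bN\cup\{\infty\}$ is a single point $\{i\}$ for some $i\in\bN\cup\{\infty\}$.

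At this stage, \ref{ss:Grassmannianspecialization} identifies $\Gr^J_G|_{\ul{\{i\}}}$ with the classical $\Gr^J_{G_i}$ of \cite{FS21}, compatibly with the $\bG^{\an}_m$-action (which is induced by the same formula involving the cocharacter $\la$), with the affine Schubert stratification, and with the morphisms $q,p,\ov{p},\ov{q}$ to $\Gr^J_{B_i}$, $\Gr^J_{\ov{B}_i}$, $\Gr^J_{T_i}$. Since our $\CT_B$ is defined by the hyperbolic-localization recipe of \cite[Section IV.6]{FS21} applied verbatim (Corollary \ref{ss:hyperbolicoutput}), its base change to $\ul{\{i\}}$ agrees with the constant term functor $\CT_{B_i}$ of Fargues--Scholze. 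Hence the statement over a geometric point supported at $i$ is exactly \cite[Proposition VI.3.5]{FS21} (the conservativity of $\CT_{B_i}$), applied to $B'|_{\ov{z}}$. This completes the reduction, and the result follows.

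The main obstacle I anticipate is the bookkeeping in the reduction to geometric points: one must make sure that the notion of ``$\bd$'' (boundedness, i.e.\ arising by pushforward from a finite union of affine Schubert subspaces) and the monodromicity condition are preserved under base change to $\ov{z}$, and that the vanishing of a bounded, monodromic object really can be tested fiberwise over $Z$. This is where properness and the representability-in-spatial-diamonds statement of Proposition \ref{ss:affineSchubertvarieties}.i), together with proper base change and the base-change compatibility of $\CT_B$ in Corollary \ref{ss:hyperbolicoutput}, all have to be combined carefully; none of the individual ingredients is deep, but getting the quantifiers right over the non-noetherian base $\ul{\bN\cup\{\infty\}}$ requires attention.
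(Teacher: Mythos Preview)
Your strategy is exactly the paper's: reduce to geometric points of $Z$, then invoke the corresponding Fargues--Scholze result via the identification in \ref{ss:Grassmannianspecialization}. Two points of streamlining are worth noting.

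First, the detour through $\Gr^J_G|_Z$ is unnecessary: the paper works directly on $\cHck^J_G|_Z$ and checks vanishing of $A$ on geometric points of $\cHck^J_G|_Z$ using \cite[Proposition 14.3]{Sch17}. Since any geometric point of $\cHck^J_G|_Z$ maps to a geometric point $\ov z$ of $Z$, and $\CT_B$ commutes with base change in $Z$ (Corollary \ref{ss:hyperbolicoutput}), one may replace $Z$ by $\ov z$ immediately.

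Second, your justification for the fiberwise reduction (``properness and proper base change'') is not quite the right tool. Properness of $\Gr^J_{G,\leq\mu_\bullet}|_Z\ra Z$ tells you pushforward commutes with base change, but pushforward to $Z$ is not conservative, so this does not directly let you test vanishing of $B'$ on fibers. What actually works is the general fact \cite[Proposition 14.3]{Sch17} that an object of $D_{\et}$ vanishes if and only if all its stalks at geometric points vanish; applying this to geometric points of $\Gr^J_{G,\leq\mu_\bullet}|_Z$ (each of which sits over some $\ov z$) gives the reduction cleanly. With that substitution your argument is complete and matches the paper's (which cites \cite[Proposition VI.4.2]{FS21} rather than VI.3.5 for the fiberwise statement).
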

\begin{proof}
To check that $A$ is zero, \cite[Proposition 14.3]{Sch17} implies that it suffices to check on geometric points of $\cHck^J_G|_Z$. Hence Corollary \ref{ss:hyperbolicoutput} indicates that we can assume that $Z$ is a geometric point. Then the image of $\abs{Z}$ in $\bN\cup\{\infty\}$ equals $\{i\}$ for some $i$ in $\bN\cup\{\infty\}$, so the result follows from \ref{ss:Grassmannianspecialization} and \cite[Proposition VI.4.2]{FS21}.
\end{proof}

\subsection{}\label{ss:splittingextension}
We will use the following to reduce many proofs to the case when $G$ is split. Recall that $G_F$ is split. Write $X_S(F)$ for the adic space associated with $F$ as in \ref{ss:Ycurve}, and write $\Div_{X(F)}^1$ for the v-sheaf over $\ul{\bN\cup\{\infty\}}$ associated with $F$ as in \ref{ss:Div1}. Note that we have a cartesian square
\begin{align*}
  \xymatrix{X_S(F)\ar[r]\ar[d] & X_S\ar[d]\\
  \Spa{F}\ar[r] & \Spa{E},}
\end{align*}
so the morphism $X_S(F)\ra X_S$ is finite \'etale. This induces identifications
\begin{align*}
\cHck^J_G|_{(\Div_{X(F)}^1)^J}=\cHck^J_{G_F}\mbox{ and }\Gr^J_G\!|_{(\Div_{X(F)}^1)^J}=\Gr^J_{G_F}
\end{align*}
as v-stacks over $(\Div_{X(F)}^1)^J$. Also, applying \cite[Lemma 15.6]{Sch17} to $\Spa\breve{F}\ra\Spa\breve{E}$ implies that $\Div^1_{X(F)}\ra\Div^1_X$ is finite \'etale.

We even use the above to reduce definitions to the case when $G$ is split:
\begin{defn*}\hfill
  \begin{enumerate}[a)]
  \item When $G$ is split, write $D_{\et}(\Gr_{G}^J\!|_Z,\La)^{\ULA}$ for the full subcategory of
    \begin{align*}
      D_{\et}(\Gr_{G}^J\!|_Z,\La)^{\bd}
    \end{align*}
    consisting of universally locally acyclic objects over $Z$ as in \cite[Definition IV.2.1]{FS21}, and write $D_{\et}(\cHck_{G}^J|_Z,\La)^{\ULA}$ for the full subcategory of
    \begin{align*}
      D_{\et}(\cHck_{G}^J|_Z,\La)^{\bd}
    \end{align*}
    consisting of objects whose pullback to $\Gr_{G}^J\!|_Z$ lie in $D_{\et}(\Gr_{G}^J\!|_Z,\La)^{\ULA}$.
\item Write $D_{\et}(\cHck^J_G,\La)^{\ULA}$ for the full subcategory of $D_{\et}(\cHck_G^J|_Z,\La)$ consisting of objects whose pullback to
  \begin{align*}
    \cHck_G^J|_{(\Div_{X(F)}^1)^J\times_{(\Div_{X}^1)^J}Z}=\cHck^J_{G_F}|_{(\Div_{X(F)}^1)^J\times_{(\Div_{X}^1)^J}Z}
  \end{align*}
lie in $D_{\et}(\cHck_{G_F}^J|_{(\Div_{X(F)}^1)^J\times_{(\Div_X^1)^J}Z},\La)^{\ULA}$ as in Definition \ref{ss:splittingextension}.a).
  \item Write $D_{\et}(\cHck_G^J|_Z,\La)^{\bd}$ and $D_{\et}(\Gr_G^J|_Z,\La)^{\bd}$ for the full subcategories of
    \begin{align*}
      D_{\et}(\cHck_G^J|_Z,\La)\mbox{ and }D_{\et}(\Gr_G^J|_Z,\La),
    \end{align*}
    respectively, consisting of objects whose pullback to 
    \begin{gather*}
      \cHck_G^J|_{(\Div_{X(F)}^1)^J\times_{(\Div_{X}^1)^J}Z}=\cHck^J_{G_F}|_{(\Div_{X(F)}^1)^J\times_{(\Div_{X}^1)^J}Z}\mbox{ and }\\
      \Gr_G^J|_{(\Div_{X(F)}^1)^J\times_{(\Div_{X}^1)^J}Z}=\cHck^J_{G_F}|_{(\Div_{X(F)}^1)^J\times_{(\Div_{X}^1)^J}Z},
    \end{gather*}
    respectively, lie in
    \begin{align*}
      D_{\et}(\cHck^J_{G_F}|_{(\Div_{X(F)}^1)^J\times_{(\Div_{X}^1)^J}Z},\La)^{\bd}\mbox{ and }D_{\et}(\Gr^J_{G_F}|_{(\Div_{X(F)}^1)^J\times_{(\Div_{X}^1)^J}Z},\La)^{\bd}
    \end{align*}
 as in \ref{ss:conservative} and \ref{ss:hyperbolicoutput}, respectively.
  \end{enumerate}
\end{defn*}

\subsection{}
Later, we will need the following fact. Write $\sw:\cHck^J_G\ra\cHck^J_G$ for the automorphism given by $(\sH,\sH',\rho)\mapsto(\sH',\sH,\rho^{-1})$. When $G$ is split, note that $\sw$ restricts to an isomorphism $\cHck^J_{G,\leq\mu_\bullet}\ra^\sim\cHck^J_{G,\leq-w_0(\mu_\bullet)}$ for all $\mu_\bullet$ in $(X_*(T_\infty)^+)^J$, where $w_0$ denotes the longest Weyl element. Consequently, for general $G$ the functor $\sw^*$ preserves $D_{\et}(\cHck^J_G|_Z,\La)^{\bd}$.
\begin{lem*}
Let $A$ be in $D_{\et}(\cHck^J_G|_Z,\La)^{\bd}$. Then $A$ lies in $D_{\et}(\cHck^J_G|_Z,\La)^{\ULA}$ if and only if $\sw^*A$ lies in $D_{\et}(\cHck^J_G|_Z,\La)^{\ULA}$.
\end{lem*}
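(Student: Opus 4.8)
The plan is to reduce to the split case and then exploit the inversion symmetry of the local Hecke stack. First I would reduce to the case where $G$ is split: by Definition~\ref{ss:splittingextension}.b, membership in $D_{\et}(\cHck^J_G|_Z,\La)^{\ULA}$ is tested after pullback along the finite \'etale base change $X_S(F)\to X_S$, which identifies $\cHck^J_G$ with $\cHck^J_{G_F}$ over $(\Div_{X(F)}^1)^J\times_{(\Div_X^1)^J}Z$. Since $\sw$ is defined purely in terms of the torsor data $(\sH,\sH',\rho)$ and does not move the divisors, it commutes with this identification, so it suffices to prove the Lemma for $G_F$, i.e.\ we may assume $G$ split. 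Note also that, as already observed before the Lemma, $\sw^*$ preserves $D_{\et}(\cHck^J_G|_Z,\La)^{\bd}$ and carries an object whose pullback to $\Gr^J_G$ is supported on $\cHck^J_{G,\leq\mu_\bullet}$ to one supported on $\cHck^J_{G,\leq-w_0(\mu_\bullet)}$, so all objects in sight remain bounded.

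Assume now $G$ is split. Let $\mathcal Y$ denote the v-stack over $(\Div_X^1)^J$ whose $S$-points parametrize pairs $(\sH,\sigma)$ consisting of a $G$-torsor $\sH$ on $\Spec B_{\dR}^+(S)$ and an isomorphism $\sigma\colon G\xrightarrow{\sim}\sH|_{B_{\dR}(S)}$ — that is, in the loop-group language of Proposition~\ref{prop:HeckeandGr}, $\mathcal Y=(L^+_JG)\backslash(L_JG)$ — with its quotient morphism $\iota'\colon\mathcal Y\to\cHck^J_G$ given by $(\sH,\sigma)\mapsto(G,\sH,\sigma)$. Then $(\sH,\rho)\mapsto(\sH,\rho^{-1})$ defines an isomorphism $\mathrm{inv}\colon\Gr^J_G\xrightarrow{\sim}\mathcal Y$ over $(\Div_X^1)^J$, and a direct check on $S$-points shows $\sw\circ\iota=\iota'\circ\mathrm{inv}$, where $\iota\colon\Gr^J_G\to\cHck^J_G$ is the usual morphism. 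Consequently, for $A$ in $D_{\et}(\cHck^J_G|_Z,\La)^{\bd}$ we have $\iota^*\sw^*A\cong\mathrm{inv}^*(\iota'^*A)$, and since $\mathrm{inv}$ is an isomorphism over $Z$, the pullback $\iota^*\sw^*A$ is universally locally acyclic over $Z$ if and only if $\iota'^*A$ is. By Definition~\ref{ss:splittingextension}.a this reduces the Lemma to the statement that universal local acyclicity of $A$ over $Z$ can be detected equally well via $\iota$ or via $\iota'$.

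To prove that, I would form the common refinement $P\coloneqq\Gr^J_G\times_{\cHck^J_G}\mathcal Y$, whose two projections $\pr_1\colon P\to\Gr^J_G$ and $\pr_2\colon P\to\mathcal Y$ are base changes of the $L^+_JG$-torsors $\iota'$ and $\iota$, hence are themselves $L^+_JG$-torsors and in particular v-covers; by construction $\iota\circ\pr_1=\iota'\circ\pr_2$, so $\pr_1^*\iota^*A$ and $\pr_2^*\iota'^*A$ are both equal to the pullback of $A$ to $P$. After restricting to a finite union $\bigcup_k\cHck^J_{G,\leq\mu_\bullet^{(k)}}$ of affine Schubert varieties containing the support of (the pullback to $\Gr^J_G$ of) $A$, Proposition~\ref{ss:affineSchubertvarieties}.i--ii, together with the fact that $\mathrm{inv}$ is an isomorphism, ensures that $\Gr^J_G|_Z$, $\mathcal Y|_Z$ and $P|_Z$ are small v-stacks over $Z$ to which the theory of \cite[\S IV.2]{FS21} applies; invoking the v-locality on the source of universal local acyclicity over $Z$ from \cite[\S IV.2]{FS21}, one concludes that $\iota^*A$ is $Z$-universally locally acyclic $\iff$ its pullback to $P|_Z$ is $\iff$ $\iota'^*A$ is. Combined with the previous paragraph, this gives $A\in D_{\et}(\cHck^J_G|_Z,\La)^{\ULA}\iff\sw^*A\in D_{\et}(\cHck^J_G|_Z,\La)^{\ULA}$. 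The step I expect to be the main obstacle is precisely this last one: the projections of $P$ are $L^+_JG$-torsors, which are of infinite type and not cohomologically smooth, so descending universal local acyclicity along them requires the correct v-locality statement for it from \cite[\S IV.2]{FS21} and some care in verifying that the relevant fiber products are geometric enough — handled by first passing to the Schubert subvarieties $\Gr^J_{G,\leq\mu_\bullet}$ via Proposition~\ref{ss:affineSchubertvarieties}. This is essentially the mechanism by which the corresponding statement is established in \cite{FS21}.
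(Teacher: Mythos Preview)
Your overall strategy matches the paper's: reduce to the split case via Definition~\ref{ss:splittingextension}.b), then use the inversion symmetry to compare universal local acyclicity of the pullbacks to the two one-sided quotients $(L_JG)/(L^+_JG)$ and $(L^+_JG)\backslash(L_JG)$. The gap is exactly the step you flag yourself: there is no ``v-locality on the source'' statement for ULA in \cite[\S IV.2]{FS21} that applies to an $L^+_JG$-torsor. The relevant input \cite[Proposition IV.2.13 (ii)]{FS21} requires the morphism on the source to be cohomologically smooth, and $L^+_JG$-torsors are not (they are of infinite type). Restricting to a Schubert variety does not help with this, since the fiber is still all of $L^+_JG$.

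The paper resolves this by never going up to the full $L^+_JG$-torsor. Instead, restrict to $\Gr^J_{G,\leq\mu_\bullet}$ and pass to the finite-level quotients $(L_JG)_{\leq\mu_\bullet}/(L^+_JG)^n$: the map down to $\Gr^J_{G,\leq\mu_\bullet}$ is then an $L^n_JG$-torsor, which \emph{is} cohomologically smooth by Proposition~\ref{ss:jetgroup}, so \cite[Proposition IV.2.13 (ii)]{FS21} applies and ULA over $Z$ is detected at these finite levels. One then observes that the pro-systems $\{(L_JG)_{\leq\mu_\bullet}/(L^+_JG)^n\}_n$ and $\{(L^+_JG)^n\backslash(L_JG)_{\leq\mu_\bullet}\}_n$ are naturally isomorphic, and that ULA over $Z$ on the two sides is equivalent by Proposition~\ref{ss:jetgroup} together with \cite[Lemma VI.6.3]{FS21}. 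The inversion map $(-)^{-1}$ then fits into a commutative square with $\sw$ at these finite levels, completing the argument. So your sketch becomes correct once you replace the appeal to a nonexistent v-locality statement by this finite-level approximation and the invocation of \cite[Lemma VI.6.3]{FS21}.
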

\begin{proof}
  Definition \ref{ss:splittingextension}.b) indicates that we can assume that $G$ is split. For all $\mu_\bullet$ in $(X_*(T_\infty)^+)^J$, write $(L_JG)_{\leq\mu_\bullet}$ for the preimage in $L_JG$ of $\Gr^J_{G,\leq\mu_\bullet}$. Proposition \ref{ss:jetgroup} and \cite[Proposition IV.2.13 (ii)]{FS21} show that an object of $D_{\et}(\Gr^J_{G,\leq\mu_\bullet}\!|_Z,\La)$ is universally locally acyclic over $Z$ if and only if its pullback to $(L_JG)_{\leq\mu_\bullet}/(L^+_JG)^n$ is universally locally acyclic over $Z$. As $n$ varies, we have a natural isomorphism between the pro-systems $\{(L_JG)_{\leq\mu_\bullet}/(L^+_JG)^n\}_n$ and $\{(L^+_JG)^n\bs(L_JG)_{\leq\mu_\bullet}\}_n$, and universal local acyclicity over $Z$ on the two pro-systems is equivalent by Proposition \ref{ss:jetgroup} and \cite[Lemma VI.6.3]{FS21}. Hence the result follows from the commutative square
  \begin{align*}
    \xymatrix{(L_JG)_{\leq\mu_\bullet}/(L^+_JG)^n\ar[r]^-{(-)^{-1}}\ar[d] &(L^+_JG)^n\bs(L_JG)_{\leq-w_0(\mu_\bullet)}\ar[d] \\
      \cHck^J_{G\leq\mu_\bullet}\ar[r]^-{\sw} & \cHck^J_{G,\leq-w_0(\mu_\bullet)}}
  \end{align*}
and applying the above to $-w_0(\mu_\bullet)$.
\end{proof}

\subsection{}\label{ss:ULAcriterion}
Write $\pi_G:\Gr_{G}^J\ra(\Div_X^1)^J$ for the structure morphism, and for the rest of this subsection, assume that $G$ is split. Proposition \ref{ss:affineSchubertvarieties}.i) indicates that we have a functor $\pi_{G!}:D_{\et}(\Gr_G^J\!|_Z,\La)^{\bd}\ra D_{\et}(Z,\La)$ that agrees with $\pi_{G*}$.

We conclude this section by proving the following criterion for being universally locally acyclic in terms of the constant term functor. 
\begin{prop*}
  Let $A$ be in $D_{\et}(\cHck_{G}^J|_Z,\La)^{\bd}$. Then the following are equivalent:
  \begin{enumerate}[a)]
  \item $A$ lies in $D_{\et}(\cHck_{G}^J|_Z,\La)^{\ULA}$,
  \item $\CT_B(A)$ lies in $D_{\et}(\Gr_{T}^J\!|_Z,\La)^{\ULA}$,
  \item $\pi_{T!}\CT_B(A)$ lies $D_{\lc}(Z,\La)$.
  \end{enumerate}
\end{prop*}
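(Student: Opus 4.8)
The plan is to establish a) $\Rightarrow$ b) $\Rightarrow$ c) $\Rightarrow$ a), keeping in mind that $G$ is split for the rest of this subsection, so that a) unwinds (via Definition \ref{ss:splittingextension}.a) and \ref{ss:conservative}) to the statement that the pullback $A'$ of $A$ to $\Gr^J_G|_Z$ is universally locally acyclic over $Z$. For a) $\Rightarrow$ b): the object $A'$ lies in $D_{\et}(\Gr^J_G|_Z,\La)^{\bd,\bG^{\an}_m}$ by \ref{ss:conservative}, so Corollary \ref{ss:hyperbolicoutput} applies directly and shows that $\CT_B(A) = \CT_B(A')$ is universally locally acyclic over $Z$; it manifestly lies in $D_{\et}(\Gr^J_T|_Z,\La)^{\bd}$, which is exactly b). For b) $\Rightarrow$ c): by Proposition \ref{ss:affineSchubertvarieties}.i the support of $\CT_B(A)$ is contained in a finite union of subspaces of the form $\Gr^J_{G,\leq\mu_\bullet}|_Z\cap\Gr^J_T|_Z$, each closed inside $\Gr^J_{G,\leq\mu_\bullet}|_Z$, which is proper over $(\Div^1_X)^J|_Z$ and hence, by Lemma \ref{ss:Div1}, over $Z$. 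Thus $\pi_T$ is proper on this support, so $\pi_{T!}\CT_B(A) = \pi_{T*}\CT_B(A)$; and the proper pushforward of an object that is universally locally acyclic over $Z$ lies in $D_{\lc}(Z,\La)$ (cf. \cite[Section IV.2]{FS21}), giving c).

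The implication c) $\Rightarrow$ a) is the heart of the matter, and here I would follow the arguments of Fargues--Scholze \cite{FS21}. First I would reduce to $\#J = 1$ by fusion, using the identification of \ref{ss:diagonalstratification} together with the compatibility of $\CT_B$, of $\pi_{T!}$, and of universal local acyclicity with the open restrictions to the strata $(\Div^1_X)^{\mathtt{P}}$: one treats $A$ one diagonal stratum at a time, reducing over the open locus to an external product of $\#J=1$ situations and extending over the diagonals by a properness argument as in \cite{FS21}. For $\#J = 1$ I would induct on the finitely many affine Schubert strata $\Gr^{\{*\}}_{G,\mu}$ in the support of $A'$. On the open stratum, Proposition \ref{ss:affineSchubertcells} and Proposition \ref{ss:hyperbolicinput} make $\CT_B$ explicit via the semi-infinite cell decomposition (up to the usual $\ang{2\rho,\mu}$-type shifts), so the hypothesis that $\pi_{T!}\CT_B(A)$ lies in $D_{\lc}(Z,\La)$ forces $A'|_{\Gr^{\{*\}}_{G,\mu}}$ to be locally acyclic relative to $\Div^1_X$, whence universally locally acyclic over $Z$ using the cohomological smoothness of $\Div^1_X\to\ul{\bN\cup\{\infty\}}$ from Corollary \ref{ss:Div1smooth}; I would then peel off this open stratum via the recollement triangle for the closed complement, track its (``clean'', not naive) interaction with hyperbolic localization as in \cite[Section IV.6]{FS21}, and conclude by the inductive hypothesis. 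Wherever a step is insensitive to the base $Z$ — in particular the base cases — I would reduce fiberwise over $\ul{\bN\cup\{\infty\}}$ via \ref{ss:Grassmannianspecialization} to the corresponding assertions of \cite[Section VI.4]{FS21}, and I would invoke the conservativity of $\CT_B$ from Proposition \ref{ss:conservative} to detect the relevant vanishings.

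The main obstacle is exactly this direction c) $\Rightarrow$ a). Universal local acyclicity over $Z$ is strictly stronger than fiberwise local acyclicity, so one cannot simply pass to geometric points of $Z$ and quote \cite{FS21}; the input that makes the stratum-by-stratum induction go through uniformly over $Z$ is the compatibility of $\CT_B$ and of $\pi_{T!}$ with arbitrary base change in $Z$ (Corollary \ref{ss:hyperbolicoutput}), which propagates the lisse-ness hypothesis — and with it the universal local acyclicity — after any base change. The remaining bookkeeping stems from the non-noetherian nature of the situation over $\ul{\bN\cup\{\infty\}}$, which forces each geometric step to be arranged so as to reduce, via \ref{ss:Grassmannianspecialization} and \ref{ss:diagonalstratification}, to a statement already available in \cite{FS21} rather than re-proven from scratch.
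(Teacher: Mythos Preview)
Your treatment of a)$\Rightarrow$b) and b)$\Rightarrow$c) is essentially fine; the paper handles the latter slightly differently by observing that $\pi_T$ is ind-\emph{finite} (Proposition \ref{ss:affineSchubertvarieties}.iii)) and then citing \cite[Proposition IV.2.28]{FS21} and \cite[Proposition IV.2.9]{FS21} to get b)$\iff$c) in one stroke. This means the paper only needs b)$\Rightarrow$a), not c)$\Rightarrow$a).

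For the hard implication the paper takes a route completely different from yours. After passing from $\cHck^J_{G,\leq\mu_\bullet}|_Z$ to the Artin v-stack $(L^n_JG)\backslash\Gr^J_{G,\leq\mu_\bullet}|_Z$ (via Proposition \ref{ss:affineSchubertvarieties}.ii) and Proposition \ref{ss:jetgroup}), it invokes the ULA \emph{criterion} \cite[Theorem IV.2.23]{FS21}: $A$ is ULA if and only if the canonical morphism
\[
\sHom_\La(A,\pi_G^!\La)\boxtimes_\La A\lra\sHom_\La(\pr_1^*A,\pr_2^!A)
\]
on $\cHck^J_{G\times_EG}|_Z=\cHck^J_G|_Z\times_Z\cHck^J_G|_Z$ is an isomorphism. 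By Proposition \ref{ss:conservative} applied to $G\times_EG$, it suffices to check this after $\CT_{\ov{B}\times_EB}$. Using that hyperbolic localization is compatible with exterior tensor products and \cite[Proposition IV.6.13]{FS21} on the left, and unwinding the right-hand side via the identification $\CT_B=\ov{p}_*\ov{q}^!$ together with repeated adjunctions, both sides become precisely the analogous ULA morphism for $\CT_B(A)$; that one is an isomorphism by hypothesis b) and \cite[Theorem IV.2.23]{FS21} again. No stratification, no reduction on $\#J$, no induction.

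Your proposed route has two genuine problems. First, the ``reduction to $\#J=1$ by fusion'' does not work: the identification of \ref{ss:diagonalstratification} says $\cHck^J_G$ over $(\Div^1_X)^{\mathtt{P}}$ is a product of $\cHck^{J_k}_G$'s, but an arbitrary bounded $A$ need not be an external product, so there is nothing to peel off into $\#J=1$ pieces. Second, even for $\#J=1$, your Schubert induction needs $j_{\mu!}(j_\mu^*A)$ to be ULA once you know $j_\mu^*A$ is; but $j_{\mu!}$ does not preserve ULA in general, and the usual fix (writing $j_{\mu!}j_\mu^*A$ as a twist of $j_{\mu!}\La$ by something in $D_{\lc}(Z,\La)$ and knowing $j_{\mu!}\La$ is ULA) already presupposes a nontrivial input of the same strength as the proposition itself. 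The paper's argument sidesteps both issues by never touching the Schubert stratification.
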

\begin{proof}
Now $\pi_T$ is ind-finite by Proposition \ref{ss:affineSchubertvarieties}.iii), so \cite[Proposition IV.2.28]{FS21} and \cite[Proposition IV.2.9]{FS21} indicate that b)$\iff$c). Corollary \ref{ss:hyperbolicoutput} indicates that a)$\implies$b), so we focus on b)$\implies$a). For all $\mu_\bullet$ in $(X_*(T_\infty)^+)^J$, Proposition \ref{ss:affineSchubertvarieties}.ii) shows that the action of $L^+_JG$ on $\Gr_{G,\leq\mu_\bullet}$ factors through $L^+_JG\ra L^n_JG$, where $n$ is any positive integer greater than all weights of $\sum_{j\in J}\mu_j$ acting on $\Lie{G}$. Then \cite[Proposition VI.4.1]{FS21} and the proof of Proposition \ref{ss:jetgroup} imply that pullback induces an equivalence of categories
  \begin{align*}
    D_{\et}((L^n_JG)\bs\!\Gr_{G,\leq\mu_\bullet}^J\!|_Z,\La)\ra^\sim D_{\et}(\cHck_{G,\leq\mu_\bullet}^J|_Z,\La).
  \end{align*}
   By Proposition \ref{ss:jetgroup}, $(L^n_JG)\bs\!\Gr_{G,\leq\mu_\bullet}^J\!|_Z$ is an Artin v-stack over $Z$. Therefore we can apply \cite[Theorem IV.2.23]{FS21} to see that it suffices to check that the natural morphism
  \begin{align*}\label{eqn:ULA}
    \sHom_\La(A,\pi_G^!\La)\boxtimes_\La A\ra\sHom_\La(\pr_1^*A,\pr_2^!A)\tag{$\ddagger$}
  \end{align*}
  is an isomorphism. Note that $\cHck_{G\times_EG}^J|_Z=\cHck_{G}^J|_Z\times_Z\cHck_{G}^J|_Z$, and (\ref{eqn:ULA}) lies in $D_{\et}(\cHck_{G\times_EG}^J|_Z,\La)^{\bd}$. By applying Proposition \ref{ss:conservative} to the cone of (\ref{eqn:ULA}), it suffices to check that (\ref{eqn:ULA}) becomes an isomorphism after applying $\CT_{\ov{B}\times_EB}$. Because hyperbolic localization is compatible with exterior tensor products, \cite[Proposition IV.6.13]{FS21} shows that the left-hand side of (\ref{eqn:ULA}) becomes
  \begin{align*}
\CT_{\ov{B}}(\sHom_\La(A,\pi^!_G\La))\otimes_\La\CT_B(A)=\sHom_\La(\CT_B(A),\pi^!_T\La)\otimes_\La\CT_B(A),
  \end{align*}
while the right-hand side of (\ref{eqn:ULA}) becomes
  \begin{align*}
    &\quad(p_1\times\ov{p}_2)_*(q_1\times\ov{q}_2)^!\sHom(\pr_1^*A,\pr_2^!A)\\
    &=(p_1\times\ov{p}_2)_*\sHom((q_1\times\ov{q}_2)^*\pr_1^*A,(q_1\times\ov{q}_2)^!\pr_2^!A) & \mbox{by \cite[Proposition 23.3 (ii)]{Sch17}}\\
    &=(p_1\times\ov{p}_2)_*\sHom(\pr_1^*q_1^*A,\pr_2^!\ov{q}_2^!A) \\
    &=(p_1\times\id)_*(\id\times\ov{p}_2)_*\sHom((\id\times\ov{p}_2)^*\pr_1^*q_1^*A,\pr_2^!\ov{q}_2^!A) \\
    &=(p_1\times\id)_*\sHom(\pr_1^*q_1^*A,(\id\times\ov{p}_2)_*\pr_2^!\ov{q}_2^!A) & \mbox{by \cite[Corollary 17.9]{Sch17}}\\
    &=(p_1\times\id)_*\sHom(\pr_1^*q_1^*A,\pr_2^!\ov{p}_{2*}\ov{q}_2^!A) & \mbox{by \cite[Proposition 23.16 (i)]{Sch17}}\\
    &=(p_1\times\id)_*\sHom(\pr_1^*q_1^*A,(p_1\times\id)^!\pr_2^!\ov{p}_{2*}\ov{q}_2^!A)\\
    &=\sHom((p_1\times\id)_!\pr_1^*q_1^*A,\pr_2^!\ov{p}_{2*}\ov{q}_2^!A) & \mbox{by \cite[Proposition 23.3 (i)]{Sch17}}\\
    &=\sHom(\pr_1^*\CT_B(A),\pr_2^!\CT_B(A)).
  \end{align*}
Since $\CT_B(A)$ is universally locally acyclic over $Z$, applying \cite[Theorem IV.2.23]{FS21} again implies that this is indeed an isomorphism, as desired.
\end{proof}

\section{Geometric Satake}\label{s:Satake}
To construct geometric Hecke operators in our context, we need the analogue of geometric Satake over $E$. This is the goal of this section. We begin by defining the perverse $t$-structure on $D_{\et}(\cHck^J_G|_Z,\La)^{\bd}$ via characterizing its $\leq0$ part. To analyze its $\geq0$ part, we prove a characterization of the perverse $t$-structure in terms of (a shift of) the constant term functor $\CT_B$.

After defining the analogue of the Satake category in our context, we prove that it enjoys all of the structures needed to apply Tannakian reconstruction. However, computing the resulting Hopf algebra requires working over rings like
\begin{align*}\label{eqn:hartog}
  \Cont(\bN\cup\{\infty\},\bZ_\ell),\tag{$\Join$}
\end{align*}
which seem inaccessible to Tannakian identification results. Instead, we first prove the result with $\bQ_\ell$-coefficients by explicitly constructing objects and morphisms in the Satake category, using the fact that $\Rep\wh{G}_{\bQ_\ell}$ is semisimple, and then we deduce the result with $\bZ_\ell$-coefficients, using a version of Hartog's lemma for \eqref{eqn:hartog}. In both steps, we reduce to the situation considered in Fargues--Scholze \cite{FS21}.

\subsection{}\label{ss:perversetstructure}
Write $\prescript{p}{}{D}_{\et}^{\leq0}(\cHck^J_G|_Z,\La)^{\bd}$ for the full subcategory of $D_{\et}(\cHck^J_G|_Z,\La)^{\bd}$ consisting of objects $A$ such that, for all geometric points $\ov{s}$ of $Z$ with image $\{i\}$ in $\bN\cup\{\infty\}$ and under the identification $\cHck^J_G|_{\ov{s}} = \cHck^J_{G_i}|_{\ov{s}}$ from \ref{ss:Grassmannianspecialization}, the pullback to $\cHck^J_G|_{\ov{s}}$ of $A$ lies in the full subcategory
    \begin{align*}
      \prescript{p}{}{D}_{\et}^{\leq0}(\cHck_{G_i}^J|_{\ov{s}},\La)^{\bd}\subseteq D_{\et}(\cHck_{G_i}^J|_{\ov{s}},\La)^{\bd}
    \end{align*}
as in \cite[Definition/Proposition VI.7.1]{FS21}.
\begin{lem*}
There exists a unique $t$-structure on $D_{\et}(\cHck_G^J|_Z,\La)^{\bd}$ whose $\leq0$ part equals $\prescript{p}{}{D}_{\et}^{\leq0}(\cHck_G^J|_Z,\La)^{\bd}$.
\end{lem*}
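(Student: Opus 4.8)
The plan is to first dispatch uniqueness, which is formal: in any $t$-structure the coconnective part is the right orthogonal of the connective part, so a $t$-structure is determined by $\prescript{p}{}{D}_{\et}^{\leq0}(\cHck_G^J|_Z,\La)^{\bd}$. For existence I would set $\prescript{p}{}{D}_{\et}^{\geq1}:=(\prescript{p}{}{D}_{\et}^{\leq0})^\perp$ and verify the $t$-structure axioms. Since pullback to a geometric point is a triangulated functor and each of Fargues--Scholze's fiberwise conditions \cite[Definition/Proposition VI.7.1]{FS21} defines a connective part closed under $[1]$ and extensions, the same holds for $\prescript{p}{}{D}_{\et}^{\leq0}$; the remaining content is the existence of truncation triangles, for which it suffices to produce the truncation functors.

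Next I would reduce to bounded Schubert pieces. After reducing to $G$ split via Definition \ref{ss:splittingextension} and the finite \'etale cover $\Div^1_{X(F)}\ra\Div^1_X$ (along which pushforward is $t$-exact in both directions and conservative), the category $D_{\et}(\cHck_G^J|_Z,\La)^{\bd}$ is the filtered union, along fully faithful pushforward functors, of the subcategories $D_{\et}(\cHck^J_{G,\leq\mu_\bullet}|_Z,\La)$ over $\mu_\bullet\in(X_*(T_\infty)^+)^J$, by Proposition \ref{ss:affineSchubertvarieties}; correspondingly $\prescript{p}{}{D}_{\et}^{\leq0}$ is the union of its intersections with these. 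Pushforward along a closed immersion $\cHck^J_{G,\leq\mu_\bullet}\hra\cHck^J_{G,\leq\mu'_\bullet}$ is fiberwise a closed pushforward, hence $t$-exact for the perverse $t$-structures of \cite[Definition/Proposition VI.7.1]{FS21}, so it is enough to construct, compatibly in $\mu_\bullet$, a $t$-structure on each $D_{\et}(\cHck^J_{G,\leq\mu_\bullet}|_Z,\La)$ whose connective part is cut out by the geometric-point condition; then an object's truncation is computed on any Schubert piece supporting it and pushed forward.

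On a fixed Schubert piece I would build the $t$-structure by gluing along the cell stratification. The affine Schubert variety $\cHck^J_{G,\leq\mu_\bullet}$ carries a finite stratification by the cells $\cHck^J_{G,\mu'_\bullet}$ with $\mu'_\bullet$ in a finite set (generalizing \ref{ss:affineSchubertcells} to $\#J>1$ via the fusion stratification of \ref{ss:diagonalstratification} and the case $\#J=1$), and its atlas $\Gr^J_{G,\leq\mu_\bullet}$ is similarly stratified by cells $\Gr^J_{G,\mu'_\bullet}$; combining \ref{ss:affineSchubertcells}, Proposition \ref{ss:jetgroup} and Corollary \ref{ss:Div1smooth}, each $\Gr^J_{G,\mu'_\bullet}|_Z$ is a diamond cohomologically smooth over $Z$ of relative dimension $\#J+d_{\mu'_\bullet}$. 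On each stratum one installs the $t$-structure given by the smooth pullback (shifted by this relative dimension) of the standard $t$-structure on $D_{\et}(Z,\La)$, glues these via the standard Beilinson recollement construction of $t$-structures along a finite stratification (using the six-functor recollement for locally closed immersions of v-stacks), descends the resulting $L^+_JG$-equivariant $t$-structure along $\Gr^J_{G,\leq\mu_\bullet}\ra\cHck^J_{G,\leq\mu_\bullet}$, and observes that its connective objects are those $A$ with $i^*_{\mu'_\bullet}A$ connective on each stratum. Since the cell stratification, the relative cohomological smoothness and the Beilinson gluing are all stable under pullback to a geometric point $\ov s$ of $Z$, over $\ov s$ with image $\{i\}$ this recovers, via \ref{ss:Grassmannianspecialization}, exactly the perverse $t$-structure of \cite[Definition/Proposition VI.7.1]{FS21} on $D_{\et}(\cHck^J_{G_i}|_{\ov s},\La)^{\bd}$; hence the glued $t$-structure's connective part agrees on geometric points with $\prescript{p}{}{D}_{\et}^{\leq0}$, and compatibility in $\mu_\bullet$ is immediate from this characterization.

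The main obstacle I anticipate is making the cell stratification precise for $\#J>1$: the fusion locus forces the Beilinson gluing to be run as a nested induction, over both the diagonal stratification of $(\Div^1_X)^J$ and the Schubert cells, and one must check that the relevant six functors (in particular that $j_!$ along an open cell preserves the bounded-support categories and that $i_*$ along a closed stratum is $t$-exact) behave in this v-stack setting as in \cite{Sch17} and \cite{FS21}. Once these structural inputs are in place, identifying the connective part of the glued $t$-structure with $\prescript{p}{}{D}_{\et}^{\leq0}$ and checking the compatibility across Schubert truncations are routine base-change arguments to geometric points.
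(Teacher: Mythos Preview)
Your approach is genuinely different from the paper's, which dispatches the lemma in one line: the relevant categories are presentable (this is what \cite[Proposition 17.3]{Sch17} is cited for), the subcategory $\prescript{p}{}{D}_{\et}^{\leq0}$ is closed under colimits and extensions because $*$-pullback to each geometric point preserves colimits and the fiberwise connective parts of \cite[Definition/Proposition VI.7.1]{FS21} enjoy these closure properties, and then \cite[Proposition 1.4.4.11]{Lu17} (in a presentable stable $\infty$-category, any full subcategory closed under small colimits and extensions is the $\leq0$ part of an accessible $t$-structure) produces the $t$-structure abstractly. No truncation functor is ever built by hand, no stratification is invoked, and neither $\#J$ nor splitness of $G$ plays any role. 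Your constructive route via Beilinson gluing along the Schubert and diagonal stratifications is the classical way to build perverse $t$-structures and can in principle be made to work, but it is far more laborious; you have correctly flagged the chief obstacle (for $\#J>1$ the cell decomposition is not flat over $(\Div^1_X)^J$ and must be nested inside the diagonal stratification). Two smaller slips: the relative dimension of a cell over $Z$ is $d_{\mu'_\bullet}$, not $\#J+d_{\mu'_\bullet}$, since $Z$ already lives over $(\Div^1_X)^J$ and Corollary \ref{ss:Div1smooth} is irrelevant here; and your reduction to split $G$ invokes $t$-exactness of pushforward along $\Div^1_{X(F)}\ra\Div^1_X$ before any $t$-structure exists on the non-split side---you would instead have to construct for $G_F$ first and then descend. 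The abstract argument buys you all of this for free.
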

Write $\prescript{p}{}{D}_{\et}^{\geq0}(\cHck_G^J|_Z,\La)^{\bd}\subseteq D_{\et}(\cHck_G^J|_Z,\La)^{\bd}$ for the $\geq0$ part of this $t$-structure, and write $\Perv(\cHck_G^J|_Z,\La)\subseteq D_{\et}(\cHck^J_G|_Z,\La)^{\bd}$ for the heart of this $t$-structure.
\begin{proof}
This follows from \cite[Proposition 17.3]{Sch17} and \cite[Proposition 1.4.4.11]{Lu17}.
\end{proof}

\subsection{}\label{ss:texactness}
When $G$ is split, write $\deg:\abs{\Gr_T^J}\ra\bZ$ for the composition
\begin{align*}
\xymatrix{\abs{\Gr_T^J}\ar[r]^-\Sg & X_*(T_\infty)\ar[r]^-{\ang{2\rho,-}} & \bZ.}
\end{align*}
The perverse $t$-structure and the constant term functor (after shifting by $\deg$) enjoy the following exactness properties.
\begin{prop*}\hfill
  \begin{enumerate}[i)]
  \item Let $A$ be in $D_{\et}(\cHck^J_G|_Z,\La)^{\bd}$, and assume that $G$ is split. Then $A$ lies in $\prescript{p}{}{D}_{\et}^{\leq0}(\cHck^J_G|_Z,\La)^{\bd}$ or $\prescript{p}{}{D}_{\et}^{\geq0}(\cHck^J_G|_Z,\La)^{\bd}$ if and only if $\CT_B(A)[\deg]$ lies in $D_{\et}^{\leq0}(\Gr^J_T\!|_Z,\La)^{\bd}$ or $D_{\et}^{\geq0}(\Gr^J_T\!|_Z,\La)^{\bd}$, respectively.
  \item For all small v-stacks $Z'$ over $Z$, the pullback functor
    \begin{align*}
      D_{\et}(\cHck_G^J|_Z,\La)^{\bd}\ra D_{\et}(\cHck_G^J|_{Z'},\La)^{\bd}
    \end{align*}
    is $t$-exact for the perverse $t$-structure.
  \end{enumerate}
\end{prop*}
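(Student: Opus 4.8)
The plan is to deduce i) from Fargues--Scholze's characterization of the perverse $t$-structure on local Hecke stacks via constant terms, applied over geometric points and transported to the base $Z$ using the base-change compatibility of $\CT_B$; and then to deduce ii) from i) when $G$ is split, and by descent along a splitting field in general. First I would treat i), where one may assume $G$ (hence every $G_i$) is split. By Lemma \ref{ss:perversetstructure}, an object $A$ of $D_{\et}(\cHck_G^J|_Z,\La)^{\bd}$ lies in $\prescript{p}{}{D}_{\et}^{\leq0}$, resp.\ in $\prescript{p}{}{D}_{\et}^{\geq0}$, if and only if for every geometric point $\ov s$ of $Z$ with image $\{i\}$ in $\bN\cup\{\infty\}$ the pullback $A|_{\ov s}$ lies in $\prescript{p}{}{D}_{\et}^{\leq0}(\cHck_{G_i}^J|_{\ov s},\La)^{\bd}$, resp.\ in $\prescript{p}{}{D}_{\et}^{\geq0}(\cHck_{G_i}^J|_{\ov s},\La)^{\bd}$ (for the $\leq0$ part this is the definition, and for the $\geq0$ part it should be part of the gluing construction of the $t$-structure). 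Over the geometric point $\ov s$, since $G_i$ is split, \cite[Definition/Proposition VI.7.1]{FS21} identifies these conditions with $\CT_B(A|_{\ov s})[\deg]$ lying in $D_{\et}^{\leq0}(\Gr_{T_i}^J|_{\ov s},\La)^{\bd}$, resp.\ in $D_{\et}^{\geq0}$.

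Next I would assemble these fiberwise statements into a statement over $Z$. Corollary \ref{ss:hyperbolicoutput} identifies $\CT_B(A|_{\ov s})$ with $\CT_B(A)|_{\ov s}$, and \ref{ss:Grassmannianspecialization}, together with the continuity of $\Sg$, identifies the $\deg$-shift over $\ov s$ with the restriction of the $\deg$-shift over $Z$; the latter is well defined because $\deg$ is locally constant, so the decomposition of $\Gr_T^J$ into the loci where $\deg$ is constant is a clopen decomposition that pulls back. Finally, membership in $D_{\et}^{\leq0}(\Gr_T^J|_Z,\La)^{\bd}$, resp.\ in $D_{\et}^{\geq0}$, can be checked on geometric points of $\Gr_T^J|_Z$ (using \cite[Proposition 14.3]{Sch17} and exactness of pullback for the standard $t$-structure), and each such geometric point factors through $\Gr_T^J|_{\ov s}$ for a geometric point $\ov s$ of $Z$. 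Chaining these equivalences gives i).

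For ii), first suppose $G$ is split. By i), lying in $\prescript{p}{}{D}_{\et}^{\leq0}(\cHck_G^J|_{(-)},\La)^{\bd}$, resp.\ in $\prescript{p}{}{D}_{\et}^{\geq0}$, is equivalent to $\CT_B(-)[\deg]$ landing in $D_{\et}^{\leq0}(\Gr_T^J|_{(-)},\La)^{\bd}$, resp.\ in $D_{\et}^{\geq0}$; since $\CT_B$ is compatible with base change in the base (Corollary \ref{ss:hyperbolicoutput}), $\deg$ pulls back along $Z'\ra Z$, and pullback of \'etale sheaves along $\Gr_T^J|_{Z'}\ra\Gr_T^J|_Z$ is $t$-exact for the standard $t$-structure, the pullback $f^*A$ satisfies the same characterization over $Z'$, so $f^*$ preserves both $\prescript{p}{}{D}_{\et}^{\leq0}$ and $\prescript{p}{}{D}_{\et}^{\geq0}$. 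For general $G$, set $Z_F\coloneqq Z\times_{(\Div_X^1)^J}(\Div_{X(F)}^1)^J$, and $Z'_F$ similarly. Since $(\Div_{X(F)}^1)^J\ra(\Div_X^1)^J$ is finite \'etale (\ref{ss:splittingextension}), a geometric point of $Z$ becomes a finite disjoint union of geometric points of $Z_F$; and since the perverse $t$-structure on each $\cHck_{G_i}^J|_{\ov s}$ is by definition detected after pullback to the splitting field $F_i$, the perverse $t$-structure on $D_{\et}(\cHck_G^J|_Z,\La)^{\bd}$ is detected by pullback along $\cHck_G^J|_{Z_F}=\cHck_{G_F}^J|_{Z_F}\ra\cHck_G^J|_Z$, and likewise over $Z'$. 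As $G_F$ is split, the split case of ii) together with these detection statements yields $t$-exactness of $f^*$ in general.

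The hard part will be the $\geq0$ half of i): one must check that the $\geq0$ part of the glued $t$-structure of Lemma \ref{ss:perversetstructure} is genuinely detected on geometric points of $Z$ (this is not formal from the $\leq0$ statement alone, and has to be read off from the construction, e.g.\ via \cite[Proposition 1.4.4.11]{Lu17}), and that \cite[Definition/Proposition VI.7.1]{FS21} provides the characterization of $\prescript{p}{}{D}_{\et}^{\geq0}(\cHck_{G_i}^J)$ via the \emph{same} functor $\CT_B[\deg]$; if instead it is phrased via $\CT_{\ov B}$, one must additionally invoke the relation between $\CT_B$ and $\CT_{\ov B}$ on $\bG_m^{\an}$-monodromic objects, through the automorphism $\sw$ or through Verdier duality. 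The remaining points — compatibility of $\deg$ and of the fiber identifications of \ref{ss:Grassmannianspecialization} with $\CT_B$ and with restriction, and the behaviour of the perverse $t$-structure under the finite \'etale base change $(\Div_{X(F)}^1)^J\ra(\Div_X^1)^J$ used in ii) — amount to bookkeeping of the kind already carried out in \S\ref{s:Grassmannians}.
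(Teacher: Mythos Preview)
Your approach to ii) and to the $\leq 0$ half of i) matches the paper's. The gap is exactly where you flag it: for the $\geq 0$ half of i) you want to know that membership in $\prescript{p}{}D^{\geq 0}_{\et}(\cHck^J_G|_Z,\La)^{\bd}$ is detected on geometric points of $Z$, but you do not establish this. In the paper that statement is Corollary~\ref{cor:perversefiberwise}.i), proved \emph{as a consequence} of the present proposition; invoking it here would be circular. Your suggestion that it can be ``read off from the construction, e.g.\ via \cite[Proposition~1.4.4.11]{Lu17}'' does not work: that result builds the $t$-structure by declaring the given class to be the $\leq 0$ part and taking $\geq 0$ to be its right orthogonal, but it says nothing about how the right orthogonal interacts with pullback to points. (Incidentally, the fiberwise input you want from \cite{FS21} is Proposition~VI.7.4, not Definition/Proposition~VI.7.1.)

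The paper handles the converse implications in i) by a different mechanism: truncation plus conservativity. Once the forward implications are available, applying $\CT_B[\deg]$ to the perverse truncation triangles
\begin{align*}
\prescript{p}{}\tau^{\leq 0}A\ra A\ra\prescript{p}{}\tau^{\geq 1}A\quad\mbox{and}\quad\prescript{p}{}\tau^{\leq -1}A\ra A\ra\prescript{p}{}\tau^{\geq 0}A
\end{align*}
identifies the outer terms with the standard truncations of $\CT_B(A)[\deg]$. Thus if $\CT_B(A)[\deg]\in D^{\leq 0}$ then $\CT_B(\prescript{p}{}\tau^{\geq 1}A)[\deg]=0$, whence $\prescript{p}{}\tau^{\geq 1}A=0$ by conservativity of $\CT_B$ (Proposition~\ref{ss:conservative}); and dually for $D^{\geq 0}$. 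This argument never asks whether $\prescript{p}{}D^{\geq 0}$ can be tested fiberwise, and it is the idea your proposal is missing.
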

\begin{proof}
  For part i), let $A$ be in $\prescript{p}{}{D}^{\leq0}_{\et}(\cHck^J_G|_Z,\La)^{\bd}$ or $\prescript{p}{}{D}^{\geq0}_{\et}(\cHck^J_G|_Z,\La)^{\bd}$. To check that $\CT_B(A)[\deg]$ lies in $D_{\et}^{\leq0}(\Gr^J_T\!|_Z,\La)^{\bd}$ or $D_{\et}^{\geq0}(\Gr^J_T\!|_Z,\La)^{\bd}$, respectively, \cite[Proposition 14.3]{Sch17} implies that it suffices to check on geometric points of $\Gr^J_T\!|_Z$. Hence Corollary \ref{ss:hyperbolicoutput} indicates that we can assume that $Z$ is a geometric point. Then the image of $\abs{Z}$ in $\bN\cup\{\infty\}$ equals $\{i\}$ for some $i$ in $\bN\cup\{\infty\}$, so the result follows from \ref{ss:Grassmannianspecialization} and \cite[Proposition VI.7.4]{FS21}.

  Conversely, assume that $\CT_B(A)[\deg]$ lies in $D_{\et}^{\leq0}(\Gr^J_T\!|_Z,\La)^{\bd}$ or $D_{\et}^{\geq0}(\Gr^J_T\!|_Z,\La)^{\bd}$. The above shows that applying $\CT_B(A)[\deg]$ to the exact triangles
  \begin{gather*}
    \xymatrix{\prescript{p}{}\tau^{\leq0}A\ar[r]& A\ar[r]& \prescript{p}{}\tau^{\geq1}A\ar[r]^-{+1} & }\\
    \xymatrix{\prescript{p}{}\tau^{\leq-1}A\ar[r]& A\ar[r]& \prescript{p}{}\tau^{\geq0}A\ar[r]^-{+1} & }
  \end{gather*}
  yields exact triangles
  \begin{gather*}
    \xymatrix{\tau^{\leq0}\CT_B(A)[\deg]\ar[r]&\CT_B(A)[\deg]\ar[r]& \CT_B(\prescript{p}{}\tau^{\geq1}A)[\deg]\ar[r]^-{+1} & }\\
    \xymatrix{\CT_B(\prescript{p}{}\tau^{\leq-1}A)[\deg]\ar[r]&\CT_B(A)[\deg]\ar[r]& \tau^{\geq0}\CT_B(A)[\deg]\ar[r]^-{+1} &, }
  \end{gather*}
respectively. Therefore $\CT_B(\prescript{p}{}\tau^{\geq1}A)[\deg]$ or $\CT_B(\prescript{p}{}\tau^{\leq-1}A)[\deg]$ is zero, respectively, so Proposition \ref{ss:conservative} indicates that $\prescript{p}{}\tau^{\geq1}A$ or $\prescript{p}{}\tau^{\leq-1}A$ is zero, respectively. This yields the desired result.

For part ii), right $t$-exactness holds by construction, so we focus on left $t$-exactness. Then \cite[1.3.4]{BBDG18} and descent \cite[Proposition 17.3]{Sch17} show that we can replace $Z$ with a v-cover, so \ref{ss:splittingextension} indicates that we can assume that $G$ is split. Finally, the result follows from part i) and Corollary \ref{ss:hyperbolicoutput}.
\end{proof}

\begin{cor}\label{cor:perversefiberwise}\hfill
  \begin{enumerate}[i)]
  \item Let $A$ be in $D_{\et}(\cHck^J_G|_Z,\La)^{\bd}$. Then $A$ lies in $\prescript{p}{}{D}_{\et}^{\geq0}(\cHck^J_G|_Z,\La)^{\bd}$ if and only if, for all geometric points $\ov{s}$ of $Z$, the pullback to $\cHck^J_G|_{\ov{s}}$ of $A$ lies in $\prescript{p}{}{D}_{\et}^{\geq0}(\cHck_G^J|_{\ov{s}},\La)^{\bd}$. Therefore, $A$ lies in $\Perv(\cHck^J_G|_Z,\La)$ if and only if, for all geometric points $\ov{s}$ of $Z$, the pullback to $\cHck^J_G|_{\ov{s}}$ of $A$ lies in $\Perv(\cHck^J_G|_{\ov{s}},\La)$.
  \item For all open substacks $U\subseteq Z$, the $*$-pushforward functor
    \begin{align*}
      D_{\et}(\cHck^J_G|_U,\La)^{\bd}\ra D_{\et}(\cHck^J_G|_Z,\La)^{\bd}
    \end{align*}
    is left $t$-exact for the perverse $t$-structure.
  \end{enumerate}
\end{cor}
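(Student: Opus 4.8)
The plan is to deduce both parts from Proposition \ref{ss:texactness}.ii)---that pullback along any morphism of small v-stacks over $Z$ is $t$-exact for the perverse $t$-structure---together with the fact that an object of $D_{\et}$ vanishes as soon as its pullbacks to all geometric points vanish \cite[Proposition 14.3]{Sch17}. For part i), the forward implication of the statement about $\prescript{p}{}{D}_{\et}^{\geq0}$ is immediate: taking $Z'=\ov{s}$ in Proposition \ref{ss:texactness}.ii) shows that pullback to $\cHck^J_G|_{\ov{s}}$ preserves $\prescript{p}{}{D}_{\et}^{\geq0}$. For the converse, I would apply $\prescript{p}{}\tau^{\leq-1}$ to $A$; because $t$-exact triangulated functors commute with truncation, pullback to $\cHck^J_G|_{\ov{s}}$ commutes with $\prescript{p}{}\tau^{\leq-1}$, so the hypothesis forces the pullback of $\prescript{p}{}\tau^{\leq-1}A$ to $\cHck^J_G|_{\ov{s}}$ to vanish for every geometric point $\ov{s}$ of $Z$. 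Since the underlying space of a geometric point is connected, every geometric point of $\cHck^J_G|_Z$ lies over a single point of $\bN\cup\{\infty\}$ and hence factors through some $\cHck^J_G|_{\ov{s}}$; thus \cite[Proposition 14.3]{Sch17} gives $\prescript{p}{}\tau^{\leq-1}A=0$, i.e.\ $A$ lies in $\prescript{p}{}{D}_{\et}^{\geq0}(\cHck^J_G|_Z,\La)^{\bd}$. The statement about $\Perv$ then follows by intersecting with the description of $\prescript{p}{}{D}_{\et}^{\leq0}$ in \ref{ss:perversetstructure}, which is already fiberwise.

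For part ii), write $j:\cHck^J_G|_U\hra\cHck^J_G|_Z$ for the open immersion. The plan is to invoke the formal principle that the right adjoint of a right $t$-exact functor is left $t$-exact. Taking $Z'=U$ in Proposition \ref{ss:texactness}.ii) shows that $j^*$ is $t$-exact, hence right $t$-exact, between the categories of $\bd$-objects; its right adjoint is $j_*$, so $j_*$ will be left $t$-exact provided it preserves $\bd$-objects. Indeed, for $A$ in $\prescript{p}{}{D}_{\et}^{\geq0}(\cHck^J_G|_U,\La)^{\bd}$ and $B$ in $\prescript{p}{}{D}_{\et}^{\leq-1}(\cHck^J_G|_Z,\La)^{\bd}$ one then has $\Hom(B,j_*A)=\Hom(j^*B,A)=0$, since $j^*B$ lies in $\prescript{p}{}{D}_{\et}^{\leq-1}(\cHck^J_G|_U,\La)^{\bd}$ and $A$ lies in $\prescript{p}{}{D}_{\et}^{\geq0}(\cHck^J_G|_U,\La)^{\bd}$.

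To check that $j_*$ preserves $\bd$-objects, I would reduce to the case where $G$ is split via Definition \ref{ss:splittingextension}.c), write a $\bd$-object $A$ on $\cHck^J_G|_U$ as $\iota_{U*}A'$ for the closed embedding $\iota_U:\cHck^J_{G,\leq\mu_\bullet}|_U\hra\cHck^J_G|_U$ from Proposition \ref{ss:affineSchubertvarieties}.i) with $\mu_\bullet$ large enough, and use the factorization $j\circ\iota_U=\iota_Z\circ j'$---where $\iota_Z:\cHck^J_{G,\leq\mu_\bullet}|_Z\hra\cHck^J_G|_Z$ is the closed embedding and $j':\cHck^J_{G,\leq\mu_\bullet}|_U\hra\cHck^J_{G,\leq\mu_\bullet}|_Z$ the open immersion---to obtain $j_*A=\iota_{Z*}j'_*A'$. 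Since $\iota_Z$ is a closed embedding and hence proper, proper base change along $\Gr^J_G|_Z\ra\cHck^J_G|_Z$ shows that the pullback of $j_*A$ to $\Gr^J_G|_Z$ is supported on the closed subspace $\Gr^J_{G,\leq\mu_\bullet}|_Z$, so $j_*A$ is $\bd$. I expect this last verification to be the main---though still routine---obstacle: it is the one step where geometric input enters, namely the properness of the affine Schubert varieties over $(\Div^1_X)^J$ from Proposition \ref{ss:affineSchubertvarieties}.i), and one must take care that pushforward along an open immersion pulled back from the base does not enlarge the support beyond a single affine Schubert variety.
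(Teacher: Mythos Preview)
Your argument is correct, and in both parts it takes a somewhat different (and arguably more direct) route than the paper.

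For part i), the paper reduces to the split case via v-descent and then appeals to the $\CT_B$-characterization of the perverse $t$-structure (Proposition \ref{ss:texactness}.i)) together with the base-change compatibility of $\CT_B$ (Corollary \ref{ss:hyperbolicoutput}) and \cite[Proposition 14.3]{Sch17}. Your truncation argument bypasses $\CT_B$ entirely: once one knows pullback to geometric fibers is $t$-exact (Proposition \ref{ss:texactness}.ii)), commutation with $\prescript{p}{}\tau^{\leq-1}$ and \cite[Proposition 14.3]{Sch17} are all that is needed. One minor remark: the sentence about connectedness and $\bN\cup\{\infty\}$ is unnecessary---the relevant point is simply that any geometric point of $\cHck^J_G|_Z$ maps to a geometric point $\ov{s}$ of $Z$ and hence factors through $\cHck^J_G|_{\ov{s}}$ by the fiber-product description.

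For part ii), the paper instead uses part i) to reduce to a fiberwise check, invokes the characterization of $\prescript{p}{}{D}_{\et}^{\geq0}$ over a geometric point in terms of $!$-pullbacks to Schubert cells (from \cite[p.~221]{FS21}), and then uses the base-change formula \cite[Proposition 23.16 (i)]{Sch17} to compare these $!$-pullbacks before and after $j_*$. Your approach via the formal adjunction is cleaner and avoids importing the $!$-pullback characterization from \cite{FS21}; the only extra ingredient is the verification that $j_*$ preserves $\bd$-objects, which you carry out correctly using the factorization through the closed Schubert substack and proper base change for the closed immersion $\iota_Z$. (The paper does not spell this $\bd$-preservation out, but it is needed to even state the result and is implicit there as well.)
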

\begin{proof}
  For part i), the forward direction follows from Proposition \ref{ss:texactness}.ii). In the other direction, \cite[1.3.4]{BBDG18}, and descent \cite[Proposition 17.3]{Sch17} show that we can replace $Z$ with a v-cover, so \ref{ss:splittingextension} indicates that we can assume that $G$ is split. Then the result follows from Proposition \ref{ss:texactness}.i), Corollary \ref{ss:hyperbolicoutput}, and \cite[Proposition 14.3]{Sch17}.

  For part ii), write $\{i\}$ for the image of $\ov{s}$ in $\bN\cup\{\infty\}$, and identify $\cHck^J_G|_{\ov{s}}$ with $\cHck^J_{G_i}|_{\ov{s}}$ as in \ref{ss:Grassmannianspecialization}. Now $\prescript{p}{}D_{\et}^{\geq0}(\cHck^J_G|_{\ov{s}},\La)^{\bd}$ equals the full subcategory
  \begin{align*}
 \prescript{p}{}D_{\et}^{\geq0}(\cHck^J_{G_i}|_{\ov{s}},\La)^{\bd}\subseteq D_{\et}(\cHck^J_{G_i}|_{\ov{s}},\La)^{\bd}
  \end{align*}
  as in \cite[Definition/Proposition VI.7.1]{FS21}, so the discussion on \cite[p.~221]{FS21} characterizes $\prescript{p}{}D_{\et}^{\geq0}(\cHck^J_G|_{\ov{s}},\La)^{\bd}$ in terms of certain $!$-pullbacks. Therefore the desired result follows from part i) and \cite[Proposition 23.16 (i)]{Sch17}.
\end{proof}

\subsection{}\label{ss:Satakecategory}
We now introduce the Satake category. Write $\Sat(\cHck^J_G|_Z,\La)$ for the full subcategory of $D_{\et}(\cHck_G^J|_Z,\La)^{\ULA}$ consisting of objects $A$ such that, for all $\La$-modules $M$, the tensor product $A\otimes_\La M$ lies in $\Perv(\cHck^J_G|_Z,\La)$. In particular, $\Sat(\cHck^J_G|_Z,\La)$ lies in $\Perv(\cHck^J_G|_Z,\La)$.

The Satake category can be described in terms of the constant term functor:
\begin{prop*}
Let $A$ be in $D_{\et}(\cHck^J_G|_Z,\La)^{\ULA}$, and assume that $G$ is split. Then $A$ lies in $\Sat(\cHck^J_G|_Z,\La)$ if and only if $\pi_{T!}\CT_B(A)[\deg]$ lies in $\LocSys(Z,\La)$.
\end{prop*}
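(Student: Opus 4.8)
The plan is to deduce the statement formally from Proposition \ref{ss:texactness}.i), the $t$-exactness of $\pi_{T!}$, and the description of $\LocSys(Z,\La)$ inside $D_{\lc}(Z,\La)$. First I would record two reductions. Since $A$ lies in $D_{\et}(\cHck^J_G|_Z,\La)^{\ULA}$, Proposition \ref{ss:ULAcriterion} gives $\pi_{T!}\CT_B(A)\in D_{\lc}(Z,\La)$; because $\deg$ is locally constant on $\abs{\Gr^J_T}$ (as $\Sg$ is continuous and $X_*(T_\infty)$ is discrete) and $\CT_B(A)$ has bounded support, also $\pi_{T!}\CT_B(A)[\deg]\in D_{\lc}(Z,\La)$. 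And by the definition of $\Sat(\cHck^J_G|_Z,\La)$ together with the given ULA hypothesis, it suffices to prove that $A\otimes_\La M\in\Perv(\cHck^J_G|_Z,\La)$ for every $\La$-module $M$ if and only if $\pi_{T!}\CT_B(A)[\deg]\in\LocSys(Z,\La)$.

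Next I would convert the perversity condition into a flatness condition downstairs. Using $\CT_B=p_!q^*$ (Corollary \ref{ss:hyperbolicoutput}), the monoidality of $q^*$, the projection formula for $p_!$, and the fact that $[\deg]$ is a shift by a locally constant function, one has $\CT_B(A\otimes_\La M)[\deg]=\CT_B(A)[\deg]\otimes_\La M$, so Proposition \ref{ss:texactness}.i) applied to $A\otimes_\La M$ yields
\[ A\otimes_\La M\in\Perv(\cHck^J_G|_Z,\La)\iff \CT_B(A)[\deg]\otimes_\La M\in\Shv_{\et}(\Gr^J_T|_Z,\La). \]
Then I would descend this to $Z$. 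The key geometric input is that $\pi_T$ is ind-finite (Proposition \ref{ss:affineSchubertvarieties}.iii)) with discrete geometric fibers, since the affine Grassmannian of the torus $T$ is discrete over $(\Div^1_X)^J$; restricted to the bounded support of $\CT_B(A)$, $\pi_T$ is finite with geometric fibers finite sets of points. Hence $\pi_{T!}=\pi_{T*}$ is $t$-exact for the standard $t$-structures and, by proper base change, $(\pi_{T!}N)_{\ov z}=\bigoplus_{\ov y\mapsto\ov z}N_{\ov y}$ (a finite sum). Combining this with the projection formula for $\pi_{T!}$, with the fact that membership in $\Shv_{\et}$ and flatness of $\ul\La$-modules may be tested on stalks \cite[Proposition 14.3]{Sch17}, and with the elementary fact that a bounded complex $P$ of $\La$-modules has $P\otimes_\La^{\mathbf L}M$ concentrated in degree $0$ for all $M$ exactly when $P$ is a flat $\La$-module in degree $0$, I would obtain the chain: $\CT_B(A)[\deg]\otimes_\La M\in\Shv_{\et}(\Gr^J_T|_Z,\La)$ for all $M$ $\iff$ each stalk $(\CT_B(A)[\deg])_{\ov y}$ is a flat $\La$-module in degree $0$ $\iff$ each stalk $(\pi_{T!}\CT_B(A)[\deg])_{\ov z}$ is such $\iff$ $\pi_{T!}\CT_B(A)[\deg]\in\Shv_{\et}(Z,\La)$ is $\ul\La$-flat. (In the middle step one uses that a direct summand of a flat module in degree $0$ is again such, and that the stalks $(\CT_B(A))_{\ov y}$ are perfect — hence bounded — being retracts of the perfect fibers of $\pi_{T!}\CT_B(A)\in D_{\lc}(Z,\La)$.) Finally, combining with the first paragraph: $\pi_{T!}\CT_B(A)[\deg]$ lies in $D_{\lc}(Z,\La)\cap\Shv_{\et}(Z,\La)$ and is $\ul\La$-flat precisely when its fibers are perfect, concentrated in degree $0$, and flat, i.e.\ finite projective, which is exactly the condition $\pi_{T!}\CT_B(A)[\deg]\in\LocSys(Z,\La)$.

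I expect the main obstacle to be the geometric analysis of $\pi_T\colon\Gr^J_T|_Z\to Z$: establishing cleanly that on the bounded support of $\CT_B(A)$ this morphism is finite with discrete geometric fibers, so that $\pi_{T!}=\pi_{T*}$ is $t$-exact and its stalks decompose as finite direct sums of stalks upstairs — together with carefully tracking the locally constant shift $[\deg]$ (constant with value $\ang{2\rho,\nu}$ on $\Gr^{J,\nu}_T$) through the commutations of $\CT_B$ and $\pi_{T!}$ with $-\otimes_\La M$. One could instead reduce the stalkwise-over-$Z$ form of the equivalence to the situation of Fargues--Scholze \cite{FS21} via Proposition \ref{ss:Wittspecialization}, as in the proof of Proposition \ref{ss:ULAcriterion}, but the bookkeeping with $[\deg]$ and with $-\otimes_\La M$ still has to be carried out directly.
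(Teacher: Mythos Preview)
Your proof is correct and follows essentially the same approach as the paper: reduce to showing that $A\otimes_\La M$ is perverse for all $M$ iff $\pi_{T!}\CT_B(A)[\deg]\otimes_\La M$ is concentrated in degree $0$ for all $M$, using Proposition~\ref{ss:texactness}.i), the commutation of $\CT_B$ and $\pi_{T!}$ with $-\otimes_\La M$, and the ind-finiteness of $\pi_T$. The paper simply compresses your stalkwise analysis into one line and invokes \cite[Tag 0658]{stacks-project} for the final characterization of $\LocSys$ (a perfect complex has tor-amplitude in $[0,0]$ iff it is finite projective in degree $0$), rather than spelling out the flatness argument by hand.
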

\begin{proof}
  Proposition \ref{ss:ULAcriterion} shows that $\pi_{T!}\CT_B(A)[\deg]$ lies in $D_{\lc}(Z,\La)$, and $\pi_T$ is ind-finite by Proposition \ref{ss:affineSchubertvarieties}.iii). Therefore Proposition \ref{ss:texactness}.i) indicates that $A\otimes_\La M$ lying in $\Perv(\cHck^J_G|_Z,\La)$ for all $\La$-modules $M$ is equivalent to
  \begin{align*}
    \pi_{T!}\CT_B(A\otimes_\La M)[\deg] = (\pi_{T!}\CT_B(A)[\deg])\otimes_\La M
  \end{align*}
being concentrated in degree $0$ for all $\La$-modules $M$. The latter is equivalent to $\pi_{T!}\CT_B(A)[\deg]$ lying in $\LocSys(Z,\La)$ by \cite[Tag 0658]{stacks-project}.
\end{proof}

\subsection{}\label{ss:fiberfunctor}
Write $\pi_{G!}$ for the composition
\begin{align*}
\xymatrix{\Perv(\cHck^J_G|_Z,\La)\ar[r] & D_{\et}(\Gr^J_G\!|_Z,\La)^{\bd}\ar[r]^-{\pi_{G!}} & D_{\et}(Z,\La).}
\end{align*}
The (relative) total cohomology functor on $\Perv(\cHck^J_G|_Z,\La)$, which will be the fiber functor we use to apply Tannakian reconstruction, satisfies the following properties.
\begin{prop*}
  Let $A$ be in $\Perv(\cHck^J_G|_Z,\La)$.
  \begin{enumerate}[i)]
  \item When $G$ is split, we have an isomorphism
    \begin{align*}
      \bigoplus_{d\in\bZ}H^d(\pi_{G!}A) \cong \pi_{T!}\CT_B(A)[\deg].
    \end{align*}
  \item The object $F_Z(A)\coloneqq\bigoplus_{d\in\bZ}H^d(\pi_{G!}A)$ is concentrated in degree $0$, and the resulting functor $F_Z:\Perv(\cHck^J_G|_Z,\La)\ra\Shv_{\et}(Z,\La)$ is conservative, exact, and faithful.
  \item When $F_Z$ is restricted to $\Sat(\cHck^J_G|_Z,\La)$, its image lies in $\LocSys(Z,\La)$. Moreover, $\Sat(\cHck^J_G|_Z,\La)$ admits and $F_Z$ reflects coequalizers of $F_Z$-split pairs.
  \end{enumerate}
\end{prop*}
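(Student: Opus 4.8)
The plan is to reduce each assertion either to the split case, via the finite \'etale base change of \ref{ss:splittingextension}, or to geometric points, via \ref{ss:Grassmannianspecialization}, and then invoke the corresponding facts from Fargues--Scholze \cite{FS21}. The key compatibilities used throughout are: $\pi_G$ is proper on each $\Gr^J_{G,\le\mu_\bullet}$ by Proposition \ref{ss:affineSchubertvarieties}.i), so $\pi_{G!}=\pi_{G*}$ commutes with base change; $\CT_B$ commutes with base change by Corollary \ref{ss:hyperbolicoutput}; and $\pi_T$ is ind-finite by Proposition \ref{ss:affineSchubertvarieties}, so $\pi_{T!}$ commutes with base change and is $t$-exact on sheaves. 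For part i), I would build the filtration on $\pi_{G!}A$ induced by the decomposition of $\Gr^J_G$ into the locally closed semi-infinite subspaces $\Gr^{J,\nu}_B$ of \ref{ss:hyperbolicinput}, whose $\nu$-th graded piece is $\pi_{T!}$ of the $\nu$-th component of $\CT_B(A)$ placed in degree $-\ang{2\rho,\nu}$, which is where the shift by $\deg$ enters after collapsing the grading. It then remains to show this filtration splits as claimed; by \cite[Proposition 14.3]{Sch17} applied to the cone of the resulting comparison map, together with the base-change compatibilities above, it suffices to check this after pulling back to a geometric point $\ov{s}$ of $Z$, where the image of $\abs{\ov{s}}$ in $\bN\cup\{\infty\}$ is a single $\{i\}$ and, by \ref{ss:Grassmannianspecialization}, the statement becomes the parity/degeneration statement of geometric Satake over $E_i$, i.e.\ the situation of \cite{FS21}.

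For part ii), concentration of $F_Z(A)$ in degree $0$ follows from part i) over the split base and descends; alternatively it is visible after pullback to geometric points, where by \ref{ss:Grassmannianspecialization} and proper base change $F_Z(A)|_{\ov{s}}$ is the value at $A|_{\ov{s}}$ of the total cohomology fiber functor of \cite{FS21}. For exactness, conservativity and faithfulness I would not pass to the split case at all: since $A\mapsto A|_{\ov{s}}$ is exact and $t$-exact for the perverse $t$-structure (and detects perversity, by Corollary \ref{cor:perversefiberwise}.i)), since $F_Z(A)|_{\ov{s}}=F_{\ov{s}}(A|_{\ov{s}})$ by base change, and since exactness and vanishing of a complex of \'etale sheaves on $Z$ can be checked on geometric points (\cite[Proposition 14.3]{Sch17}), both exactness and conservativity of $F_Z$ reduce to the corresponding properties of the fiber functor $F_{\ov{s}}$ on $\Perv(\cHck^J_{G_i}|_{\ov{s}},\La)$ established in \cite{FS21}. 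Finally, an exact conservative functor between abelian categories is automatically faithful.

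For part iii), the assertion that $F_Z$ carries $\Sat(\cHck^J_G|_Z,\La)$ into $\LocSys(Z,\La)$ is, after reducing to the split case — base change along the finite \'etale surjection $Z'=(\Div^1_{X(F)})^J\times_{(\Div^1_X)^J}Z\to Z$, along which $\cHck^J_G$ becomes $\cHck^J_{G_F}$ with $G_F$ split, $\Sat$ and $F$ are compatible, and membership in $\LocSys$ descends — exactly the combination of part i) with Proposition \ref{ss:Satakecategory}. For the coequalizer statement, let $f,g\colon A\to B$ be a pair in $\Sat$ such that $F_Z(f),F_Z(g)$ admit a split coequalizer in $\Shv_{\et}(Z,\La)$. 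I would form $C:=\coker(f-g)$ in the abelian category $\Perv(\cHck^J_G|_Z,\La)$ of Lemma \ref{ss:perversetstructure}; this is the coequalizer of $f,g$ in $\Perv$, and since $\Sat\subseteq\Perv$ is full it will be the coequalizer in $\Sat$ once $C\in\Sat$. To see $C\in\Sat$: the object $C$ is a quotient of $B$ in $\Perv$, hence (reducing to the split case, where universally locally acyclic perverse objects are stable under subquotients in $\Perv$, compare \ref{ss:ULAcriterion} and \cite{FS21}) lies in $D_{\et}(\cHck^J_G|_Z,\La)^{\ULA}$; and since $F_Z$ is exact, $F_Z(C)=\coker(F_Z(f)-F_Z(g))$ is the given split coequalizer, hence a retract of $F_Z(B)$, which lies in $\LocSys(Z,\La)$ by the first part of iii), so $F_Z(C)\in\LocSys(Z,\La)$ as $\LocSys$ is closed under retracts; Proposition \ref{ss:Satakecategory} and part i) then give $C\in\Sat$. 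That $F_Z$ reflects such coequalizers is then formal: given any fork $A\rightrightarrows B\to C'$ in $\Sat$ whose image under $F_Z$ is a split coequalizer, the canonical map $C\to C'$ in $\Sat$ is sent by $F_Z$ to an isomorphism, both sides being coequalizers of $F_Z(f)-F_Z(g)$ under $F_Z(B)$, whence $C\to C'$ is an isomorphism by conservativity of $F_Z$.

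The main obstacle will be part i): the splitting of the semi-infinite filtration of $\pi_{G!}A$ is the essential geometric-Satake input, and establishing it over a base — in particular with $\bZ_\ell$-coefficients — is delicate, though it reduces cleanly to the geometric-point case treated in \cite{FS21}. The only other point requiring care is the stability of universally locally acyclic perverse objects under the quotient $B\twoheadrightarrow C$ in part iii); the hypothesis that the pair be $F_Z$-split is exactly what is needed there, since it keeps $F_Z(C)$ inside $\LocSys(Z,\La)$ even when $\La$ is not a field, a bare cokernel of a map of local systems being able to fail to be a local system (e.g.\ for $\La=\bZ_\ell$).
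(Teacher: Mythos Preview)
For part i), your plan differs from the paper's and has a gap. The paper does not pass to geometric points; instead, after building the same semi-infinite filtration on $\pi_{G!}A$ with graded pieces $\pi_{T!}p_{\nu!}q_\nu^*A$ concentrated in degree $\langle 2\rho,\nu\rangle$ (by Proposition~\ref{ss:texactness}.i)), it invokes the clopen parity decomposition $(\Gr^J_G)^{\mathrm{par}}$ of Proposition~\ref{ss:affineSchubertvarieties}.iv): since the image of $q_\nu$ lies in the component of parity $\langle 2\rho,\nu\rangle\bmod 2$, on each component every graded piece sits in a single parity of cohomological degree, so all spectral-sequence differentials (which shift total degree by~$1$) vanish and degeneration holds globally over $Z$. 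Your phrase ``the cone of the resulting comparison map'' does not name a well-defined morphism: there is no evident map between $\bigoplus_d H^d(\pi_{G!}A)$ and $\pi_{T!}\CT_B(A)[\deg]$ prior to proving the result. What one could check pointwise is the vanishing of each differential $d_r$, which would also work, but the parity argument is cleaner and self-contained.

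For part iii), your assertion that ``universally locally acyclic perverse objects are stable under subquotients in $\Perv$'' is false over general $\La$---your own final paragraph gives the reason (for $\La=\bZ_\ell$, a quotient of local systems need not be a local system). The paper's argument reverses your order of steps: first use exactness of $F_Z$ and the $F_Z$-split hypothesis to obtain $F_Z(C)\in\LocSys(Z,\La)$; then part i) identifies this with $\pi_{T!}\CT_B(C)[\deg]$, which therefore lies in $\LocSys(Z,\La)\subseteq D_{\lc}(Z,\La)$; now Proposition~\ref{ss:ULAcriterion} (whose hypothesis is only that $C$ lie in $D_{\et}(\cHck^J_G|_Z,\La)^{\bd}$) gives $C\in D_{\et}(\cHck^J_G|_Z,\La)^{\ULA}$, and Proposition~\ref{ss:Satakecategory} then gives $C\in\Sat$. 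So the $F_Z$-split hypothesis is precisely what produces the ULA condition, not something used only afterward.

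Your approach to part ii), reducing conservativity and exactness directly to geometric points via Corollary~\ref{cor:perversefiberwise} rather than to the split case as the paper does, is correct and a legitimate alternative.
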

When $Z=(\Div^1_X)^J$, write $F^J$ for $F_{(\Div^1_X)^J}$.
\begin{proof}
  For part i), note that $\Gr^J_B=\coprod_\nu\Gr^{J,\nu}_B$, where $\nu$ runs over $X_*(T_\infty)$. Therefore $\CT_B(A)=\bigoplus_\nu p_{\nu!}q_\nu^*A$, where $p_\nu:\Gr^{J,\nu}_B\ra\Gr^J_T$ and $q_\nu:\Gr^{J,\nu}_B\ra\Gr^J_G$ denote the restrictions to $\Gr^{J,\nu}_B$ of $p$ and $q$, respectively. Proposition \ref{ss:texactness}.i) indicates that $p_{\nu!}q_\nu^*A$ is concentrated in degree $\ang{2\rho,\nu}$, and because $\pi_T$ is ind-finite by Proposition \ref{ss:affineSchubertvarieties}.iii), this implies that $\pi_{T!}p_{\nu!}q_\nu^*A$ is concentrated in degree $\ang{2\rho,\nu}$.

  Proposition \ref{ss:hyperbolicinput}.ii) yields a stratification of $\Gr^J_G$ by $q_\nu:\Gr^{J,\nu}_B\ra\Gr^J_G$. The excision exact triangles associated with this stratification yield a filtration of $A$ with graded pieces $q_{\nu!}q_\nu^*A$, so applying $\pi_{G!}$ yields a filtration of $\pi_{G!}A$ with graded pieces $\pi_{G!}q_{\nu!}q_\nu^*A=\pi_{T!}p_{\nu!}q_\nu^*A$. The proof of Proposition \ref{ss:hyperbolicoutput} shows that the image of $q_\nu$ lies in the clopen subspace $(\Gr^J_G)^{\ang{2\rho,\nu}}\subseteq\Gr^J_G$ from Proposition \ref{ss:affineSchubertvarieties}.iv), so we can restrict to $\nu$ such that the image of $\ang{2\rho,\nu}$ in $\bZ/2$ is fixed. Then the above implies that the spectral sequence associated with this filtration of $\pi_{G!}A$ degenerates, which yields the desired isomorphism
  \begin{align*}
    \bigoplus_{d\in\bZ}H^d(\pi_{G!}A) \cong \bigoplus_\nu H^{\ang{2\rho,\nu}}(\pi_{T!}p_{\nu!}q^*_\nu A) = \pi_{T!}\CT_B(A)[\deg].
  \end{align*}
  
  We turn to part ii) and part iii). Descent \cite[Proposition 17.3]{Sch17} and \cite[Proposition 22.19]{Sch17} imply that we can replace $Z$ with a v-cover, so \ref{ss:splittingextension} indicates that we can assume that $G$ is split. Then part i) shows that $F_Z(A)\cong\pi_{T!}\CT_B(A)[\deg]$. By Proposition \ref{ss:texactness}.i), the object $F_Z(A)$ lies in $\Shv_{\et}(Z,\La)$, and $F_Z$ is exact. Since $\pi_T$ is ind-finite by Proposition \ref{ss:affineSchubertvarieties}.iii), Proposition \ref{ss:conservative} shows that $F_Z$ is conservative, and by taking differences of morphisms, exactness and conservativity imply that $F_Z$ is faithful. This yields part ii).

  For part iii), Proposition \ref{ss:Satakecategory} indicates that $F_Z(A)$ lies in $\LocSys(Z,\La)$. Finally, by taking differences of morphisms, it suffices to show that, for all morphisms $f:A\ra A'$ in $\Sat(\cHck^J_G|_Z,\La)$ such that $\coker F_Z(f)$ is a direct summand of $F_Z(A)$, the object $\coker{f}$ in $\Perv(\cHck^J_G|_Z,\La)$ lies in $\Sat(\cHck^J_G|_Z,\La)$. This follows from Proposition \ref{ss:ULAcriterion} and Proposition \ref{ss:Satakecategory}.
\end{proof}

\subsection{}\label{ss:Verdierdual}
We can use Proposition \ref{ss:fiberfunctor} to prove that Verdier duality preserves the Satake category. More precisely, write $\bD:D_{\et}(\cHck^J_G|_Z,\La)^{\ULA}\ra D_{\et}(\cHck^J_G|_Z,\La)^{\ULA}$ for the Verdier dual as in \cite[IV.2.3.1]{FS21} on $\cHck^J_{G,\leq\mu_\bullet}|_Z$ relative to $Z/(L^+_JG)$, where $\mu_\bullet$ runs over $(X_*(T_\infty)^+)^J$.
\begin{prop*}
  Let $A$ be in $D_{\et}(\cHck^J_G|_Z,\La)^{\ULA}$.
  \begin{enumerate}[i)]
  \item The object $A$ lies in $\prescript{p}{}{D}^{\leq0}_{\et}(\cHck^J_G|_Z,\La)^{\bd}$ or $\prescript{p}{}{D}^{\geq0}_{\et}(\cHck^J_G|_Z,\La)^{\bd}$ if and only if $\bD(A)$ lies in $\prescript{p}{}{D}^{\geq0}_{\et}(\cHck^J_G|_Z,\La)^{\bd}$ or $\prescript{p}{}{D}^{\leq0}_{\et}(\cHck^J_G|_Z,\La)^{\bd}$, respectively.
  \item If $A$ lies in $\Sat(\cHck^J_G|_Z,\La)$, then $\bD(A)$ lies in $\Sat(\cHck^J_G|_Z,\La)$.
  \end{enumerate}
\end{prop*}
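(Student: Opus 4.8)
The plan is to prove both parts by reducing to the split case and then invoking the work of Fargues--Scholze \cite{FS21} fiberwise, using the characterizations of the perverse $t$-structure and the Satake category in terms of the constant term functor proved in \ref{ss:texactness} and \ref{ss:Satakecategory}.

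For part i), first observe that Verdier duality $\bD$ commutes with $!$-pullback to geometric points $\ov{s}$ of $Z$, since $\bD$ is taken relative to $Z/(L^+_JG)$ on the quasicompact pieces $\cHck^J_{G,\leq\mu_\bullet}|_Z$ and these are cohomologically smooth over $Z/(L^+_JG)$ modulo the jet group (Proposition \ref{ss:jetgroup}); more precisely, universal local acyclicity over $Z$ makes $\bD$ compatible with arbitrary base change in $Z$ by \cite[Proposition IV.2.21]{FS21} or the analogue. Writing $\{i\}$ for the image of $\ov{s}$ in $\bN\cup\{\infty\}$ and using the identification $\cHck^J_G|_{\ov{s}} = \cHck^J_{G_i}|_{\ov{s}}$ from \ref{ss:Grassmannianspecialization}, we are reduced to the statement that Verdier duality exchanges $\prescript{p}{}D^{\leq0}_{\et}$ and $\prescript{p}{}D^{\geq0}_{\et}$ on $\cHck^J_{G_i}|_{\ov{s}}$, which is exactly \cite[Proposition VI.7.2 (or the relevant part of Section VI.7)]{FS21}. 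By the very definition of $\prescript{p}{}D^{\leq0}_{\et}(\cHck^J_G|_Z,\La)^{\bd}$ in \ref{ss:perversetstructure} and the fiberwise characterization of $\prescript{p}{}D^{\geq0}_{\et}$ in Corollary \ref{cor:perversefiberwise}.i), this checking on geometric points suffices.

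For part ii), suppose $A$ lies in $\Sat(\cHck^J_G|_Z,\La)$. By descent \cite[Proposition 17.3]{Sch17}, whether $\bD(A)$ lies in $\Sat(\cHck^J_G|_Z,\La)$ can be checked after replacing $Z$ with a v-cover, so by \ref{ss:splittingextension} we may assume $G$ is split. Then $\bD(A)$ lies in $D_{\et}(\cHck^J_G|_Z,\La)^{\ULA}$ automatically since $\bD$ preserves universal local acyclicity. To see that $\bD(A)\otimes_\La M$ lies in $\Perv$ for all $\La$-modules $M$, apply part i): since $A\otimes_\La M$ is perverse, $\bD(A\otimes_\La M)$ is perverse, and for $A$ universally locally acyclic $\bD$ commutes with $-\otimes_\La M$ up to the relevant twist, so $\bD(A)\otimes_\La M$ is perverse. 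It remains to check the $\LocSys$ condition from Proposition \ref{ss:Satakecategory}: that $\pi_{T!}\CT_B(\bD(A))[\deg]$ lies in $\LocSys(Z,\La)$. Here one uses that hyperbolic localization intertwines Verdier duality on $\cHck^J_G$ with Verdier duality on $\Gr^J_T$ via $\CT_B$ and $\CT_{\ov B}$ (this is \cite[Theorem IV.6.5]{FS21} or its immediate consequence, as already exploited in the proof of Proposition \ref{ss:ULAcriterion}), together with the fact that $\pi_{T!}$ and its dual agree on ind-finite morphisms (Proposition \ref{ss:affineSchubertvarieties}.iii)). Combining these identifies $\pi_{T!}\CT_B(\bD(A))[\deg]$ with the dual of $\pi_{T!}\CT_B(A)[\deg]$, which lies in $\LocSys(Z,\La)$ by hypothesis and Proposition \ref{ss:Satakecategory}; and $\LocSys(Z,\La)$ is stable under duality.

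The main obstacle I expect is bookkeeping the base-change and duality compatibilities precisely enough: one must be careful that $\bD$, defined relative to $Z/(L^+_JG)$ on the Schubert pieces, genuinely commutes both with pullback to geometric points of $Z$ (needed for part i)) and with $-\otimes_\La M$ for all $\La$-modules $M$ (needed for the perversity clause in part ii)), and that the hyperbolic-localization-versus-duality identity is applied with the correct shifts so that the $[\deg]$ twist matches up. None of these is deep given \cite{FS21}, but assembling them without sign or shift errors is the delicate part; the actual geometric input is entirely imported from Fargues--Scholze via the fiberwise reduction.
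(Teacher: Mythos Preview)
Your approach to part~i) is essentially the paper's: reduce to geometric points using that $\bD$ is compatible with base change in $Z$ for ULA objects (the paper cites \cite[Proposition IV.2.15]{FS21}), then invoke the classical fact. The paper organizes this slightly differently, proving the first equivalence directly from the $!$-pullback characterization of $\prescript{p}{}D^{\geq0}$ (via \cite[Proposition 23.3 (ii)]{Sch17} and the argument in Corollary~\ref{cor:perversefiberwise}.ii)) and deducing the second from biduality \cite[Corollary IV.2.25]{FS21}, but the substance is the same.

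Part~ii) has two issues. First, the claim that ``for $A$ universally locally acyclic $\bD$ commutes with $-\otimes_\La M$'' is false for general $\La$-modules $M$ (think $M=\La/\ell$ with $\La=\bZ/\ell^2$), so your argument that $\bD(A)\otimes_\La M$ is perverse does not work. Fortunately this step is unnecessary: once you verify the $\LocSys$ condition, Proposition~\ref{ss:Satakecategory} gives $\bD(A)\in\Sat$ directly. Second, and more seriously, the hyperbolic-localization/duality compatibility \cite[Proposition IV.6.13]{FS21} gives $\CT_B(\bD(A))\cong\bD(\CT_{\ov B}(A))$, i.e.\ it switches $B$ and $\ov B$. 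So what you can compute is $\pi_{T!}\CT_B(\bD(A))[\deg]$ as the dual of $\pi_{T!}\CT_{\ov B}(A)[\deg]$, but Proposition~\ref{ss:Satakecategory} only tells you $\pi_{T!}\CT_B(A)[\deg]\in\LocSys$. You would need an extra step relating $\CT_B$ and $\CT_{\ov B}$, which you do not supply.

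The paper sidesteps this by going through the fiber functor. Since $\bD(A)\in\Perv$ by part~i), Proposition~\ref{ss:fiberfunctor}.i) identifies $\pi_{T!}\CT_B(\bD(A))[\deg]\cong\bigoplus_d H^d(\pi_{G!}\bD(A))$. Then $\pi_{G!}=\pi_{G*}$ (properness) and the adjunction $\pi_{G*}\bD(A)=\sHom_\La(\pi_{G!}A,\La)$ \cite[Proposition 23.3 (i)]{Sch17} give $F_Z(\bD(A))=\sHom_\La(F_Z(A),\La)$, which lies in $\LocSys$ since $F_Z(A)$ does. This route never touches $\CT_{\ov B}$ and avoids both of your gaps.
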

\begin{proof}
  For part i), we start with the first equivalence. Corollary \ref{cor:perversefiberwise}.i) and \cite[Proposition IV.2.15]{FS21} imply that we can assume that $Z$ is a geometric point. Then the result follows from \cite[Proposition 23.3 (ii)]{Sch17} and the proof of Corollary \ref{cor:perversefiberwise}.ii). As for the second equivalence, it follows from \cite[Corollary IV.2.25]{FS21} and the first equivalence.

  For part ii), Corollary \ref{cor:perversefiberwise}.i) implies that we can replace $Z$ with a v-cover, so \ref{ss:splittingextension} indicates that we can assume that $G$ is split. Part i) shows that $\bD(A)$ lies in $\Perv(\cHck^J_G|_Z,\La)$, so Proposition \ref{ss:fiberfunctor}.i) and \cite[Proposition 23.3 (i)]{Sch17} indicate that
  \begin{align*}
    \pi_{T!}\CT_B(\bD(A))[\deg] &\cong \bigoplus_{d\in\bZ}H^d(\pi_{G!}\bD(A)) = \bigoplus_{d\in\bZ}H^d(\pi_{G*}\bD(A)) \\
    &= \bigoplus_{d\in\bZ}H^d(\sHom_\La(\pi_{G!}A,\La)) =\sHom_\La\Big(\bigoplus_{d\in\bZ} H^d(\pi_{G!}A),\La\Big),
  \end{align*}
  since $\bigoplus_{d\in\bZ} H^d(\pi_{G!}A)$ lies in $\LocSys(Z,\La)$, by Proposition \ref{ss:fiberfunctor}.iii). Therefore
  \begin{align*}
    \pi_{T!}\CT_B(\bD(A))[\deg]
  \end{align*}
  also lies in $\LocSys(Z,\La)$, so the result follows from Proposition \ref{ss:Satakecategory}.
\end{proof}

\subsection{}\label{ss:standardobjects}
In this subsection, assume that $G$ is split. The following construction provides an important source of objects in the Satake category. Write $\j_\mu:\cHck^{\{*\}}_{G,\mu}\ra\cHck^{\{*\}}_G$ for the locally closed embedding from \ref{ss:affineSchubertcells}.
\begin{prop*}
  The objects $\prescript{p}{}H^0(\j_{\mu!}\La[d_\mu])$ and $\prescript{p}{}H^0(\j_{\mu*}\La[d_\mu])$ of $\Perv(\cHck^{\{*\}}_G|_Z,\La)$ lie in $D_{\et}(\cHck^{\{*\}}_G|_Z,\La)^{\ULA}$, and their values under $\CT_B[\deg]$ are locally constant with finite free fibers. Consequently, the same holds for their values under $F_Z$, and $\prescript{p}{}H^0(\j_{\mu!}\La[d_\mu])$ and $\prescript{p}{}H^0(\j_{\mu*}\La[d_\mu])$ lie in $\Sat(\cHck^J_G|_Z,\La)$.
\end{prop*}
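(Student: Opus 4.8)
The plan is to reduce everything to the single statement that $\prescript{p}{}H^0(\j_{\mu!}\La[d_\mu])$ and $\prescript{p}{}H^0(\j_{\mu*}\La[d_\mu])$ lie in $D_{\et}(\cHck^{\{*\}}_G|_Z,\La)^{\ULA}$, and then to harvest the other assertions formally. Indeed, membership in $\Perv(\cHck^{\{*\}}_G|_Z,\La)$ is automatic, and if $A$ denotes either object, then $A$ being ULA implies: $\CT_B(A)[\deg]$ is ULA over $Z$ by Corollary \ref{ss:hyperbolicoutput}, and it is concentrated in degree $0$ by Proposition \ref{ss:texactness}.i) since $A$ is perverse; since $\Gr^{\{*\}}_T|_Z\ra Z$ is componentwise an isomorphism, Proposition \ref{ss:ULAcriterion} then places $\CT_B(A)[\deg]$ in $D_{\lc}(\Gr^{\{*\}}_T|_Z,\La)$ and hence, being in degree $0$, in $\LocSys(\Gr^{\{*\}}_T|_Z,\La)$; applying $\pi_{T!}$ and using Proposition \ref{ss:fiberfunctor}.i) together with Proposition \ref{ss:fiberfunctor}.ii) shows $F_Z(A)=\pi_{T!}\CT_B(A)[\deg]$ lies in $\LocSys(Z,\La)$; and finally $A$ lies in $\Sat(\cHck^{\{*\}}_G|_Z,\La)$ by Proposition \ref{ss:Satakecategory}.

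For the ULA statement I would reduce to the case of a geometric point of $Z$. The functor $\j_{\mu!}$ commutes with arbitrary base change, $\j_{\mu*}$ of a ULA object does as well (as $\j_\mu$ is a compactifiable locally closed immersion over $\Div^1_X$), and pullback is perverse $t$-exact by Proposition \ref{ss:texactness}.ii), so the formation of $\prescript{p}{}H^0(\j_{\mu!}\La[d_\mu])$ and $\prescript{p}{}H^0(\j_{\mu*}\La[d_\mu])$ commutes with pullback to geometric points; combining this with Corollary \ref{cor:perversefiberwise}.i) and the reduction of universal local acyclicity to geometric fibers \cite[Proposition IV.2.15]{FS21}, we may take $Z=\ov{s}$. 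Writing $\{i\}$ for the image of $\ov{s}$ in $\bN\cup\{\infty\}$ and using the identification $\cHck^{\{*\}}_G|_{\ov{s}}=\cHck^{\{*\}}_{G_i}|_{\ov{s}}$ of \ref{ss:Grassmannianspecialization}, under which $\j_\mu$ becomes the analogous immersion for $G_i$, this is the classical assertion that the standard and costandard perverse sheaves of the Satake category of $G_i$ are universally locally acyclic, which is \cite[Proposition VI.7.9]{FS21} (ultimately the Mirkovi\'c--Vilonen description of the intersections $\Gr^{\{*\},\nu}_B\cap\Gr^{\{*\}}_{G_i,\mu}$). For the $*$-case one may instead deduce it from the $!$-case, since $\bD$ exchanges $\prescript{p}{}H^0(\j_{\mu!}-)$ and $\prescript{p}{}H^0(\j_{\mu*}-)$ up to a shift and a Tate twist by Proposition \ref{ss:Verdierdual}.i) and preserves $D_{\et}(\cHck^{\{*\}}_G|_Z,\La)^{\ULA}$.

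The main obstacle will be the reduction to geometric points for universal local acyclicity. Unlike perversity or membership in $\Sat$, ULA is not a fiberwise condition on $Z$, so the reduction genuinely relies on the input \cite[Proposition IV.2.15]{FS21}, and one has to be somewhat careful that $\j_{\mu*}$ commutes with the relevant base change when applied to ULA objects; once these points are settled, the remainder is a transcription through \ref{ss:Grassmannianspecialization} of the corresponding results of \cite{FS21}.
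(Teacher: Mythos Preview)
Your reduction of the ULA property to geometric fibers is the genuine gap. Universal local acyclicity is not detected pointwise on the base: an object can restrict to something ULA over every geometric point of $Z$ and still fail to be ULA over $Z$. This is already visible for $\Gr_T^{\{*\}}|_Z\to Z$, where the morphism is componentwise an isomorphism and ULA amounts to membership in $D_{\lc}(Z,\La)$, which is certainly not a stalk-wise condition. The reference \cite[Proposition IV.2.15]{FS21} does not supply such a reduction; where the paper invokes IV.2.15 (in the proof of Proposition~\ref{ss:Verdierdual}.i)) it is applied to an object \emph{already known} to be ULA, so as to reduce a perversity check to fibers via the compatibility of Verdier duality with base change. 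So at best your argument shows that each fiber of $\CT_B(A)[\deg]$ is a finite projective $\La$-module; it does not show that the family is locally constant, and that is the actual content here.

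The paper instead invokes the \emph{proof} of \cite[Proposition VI.7.5]{FS21}: one reruns that argument over the present base rather than specializing to fibers. The Mirkovi\'c--Vilonen type computation of $\CT_B$ on $\j_{\mu!}\La[d_\mu]$ and $\j_{\mu*}\La[d_\mu]$ is uniform in the base (the relevant intersections $\Gr_B^{\{*\},\nu}\cap\Gr^{\{*\}}_{G,\leq\mu}$ form families of the expected shape over $\Div^1_X$, not merely fiberwise), and after passing to perverse $H^0$ one reads off that $\CT_B(-)[\deg]$ is locally constant with finite free fibers; ULA then follows via Proposition~\ref{ss:ULAcriterion}, and the second sentence from Proposition~\ref{ss:fiberfunctor}.i) and Proposition~\ref{ss:Satakecategory} exactly as you say. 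Note also that your route, even if the fiber reduction were granted, lands in $\LocSys$ (finite projective fibers) and does not recover the stronger ``finite free'' asserted in the statement, which the direct computation does.
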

\begin{proof}
The first sentence follows from the proof of \cite[Proposition VI.7.5]{FS21}. The second sentence follows from Proposition \ref{ss:fiberfunctor}.i) and Proposition \ref{ss:Satakecategory}.
\end{proof}

\subsection{}
We now give our first definition (via convolution) of the monoidal structure on the Satake category. Proposition \ref{prop:HeckeandGr} implies that we have a cartesian square
\begin{align*}
  \xymatrix{\cHck^J_G\ar[r]\ar[d] & (\Div^1_X)^J\!/(L^+_JG)\ar[d] \\
  (\Div^1_X)^J\!/(L^+_JG)\ar[r] & (\Div^1_X)^J\!/(L_JG).}
\end{align*}
Taking into account the discussion on \cite[p.~224]{FS21}, this identifies $D_{\et}(\cHck^J_G|_Z,\La)^{\bd}$ with the category of $1$-endomorphisms of the object $Z/(L^+_JG)$ in the $2$-category $\cC_{Z/(L_JG)}$ as in \cite[IV.2.3.3]{FS21}. Write
\begin{align*}
\star:D_{\et}(\cHck^J_G|_Z,\La)^{\bd}\times D_{\et}(\cHck^J_G|_Z,\La)^{\bd}\ra D_{\et}(\cHck^J_G|_Z,\La)^{\bd}
\end{align*}
for the monoidal structure arising from composition of $1$-morphisms in $\cC_{Z/(L_JG)}$.
\begin{prop*}Let $A$ and $A'$ be in $D_{\et}(\cHck^J_G|_Z,\La)^{\bd}$.
  \begin{enumerate}[i)]
  \item If $A$ and $A'$ lie in $D_{\et}(\cHck^J_G|_Z,\La)^{\ULA}$, then $A\star A'$ lies in $D_{\et}(\cHck^J_G|_Z,\La)^{\ULA}$.
  \item If $A$ and $A'$ both lie in $\prescript{p}{}{D}_{\et}^{\leq0}(\cHck^J_G|_Z,\La)^{\bd}$ or $\prescript{p}{}{D}_{\et}^{\geq0}(\cHck^J_G|_Z,\La)^{\bd}$, then $A\star A'$ lies in $\prescript{p}{}{D}_{\et}^{\leq0}(\cHck^J_G|_Z,\La)^{\bd}$ or $\prescript{p}{}{D}_{\et}^{\geq0}(\cHck^J_G|_Z,\La)^{\bd}$, respectively.
  \item If $A$ and $A'$ lie in $\Sat(\cHck^J_G|_Z,\La)$, then $A\star A'$ lies in $\Sat(\cHck^J_G|_Z,\La)$.
  \end{enumerate}
\end{prop*}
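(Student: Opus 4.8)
The plan is to deduce part i) from the $2$-categorical formalism of ULA $1$-morphisms, to prove part ii) by reducing to the case of a geometric point, and to obtain part iii) by combining the first two parts with the definition of the Satake category.

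For part i), recall from the discussion preceding the statement that $A$, $A'$ and $A\star A'$ correspond to $1$-endomorphisms of $Z/(L^+_JG)$ in $\cC_{Z/(L_JG)}$, with $\star$ given by composition of $1$-morphisms. The correspondence spaces involved are presented through the cohomologically smooth, partially proper morphisms $L^n_JG\ra(\Div^1_X)^J$ of Proposition \ref{ss:jetgroup} (via the presentations of $\Gr^J_G$ and $\cHck^J_G$ in Proposition \ref{prop:HeckeandGr}), so the composition falls within the scope of the ULA formalism of \cite[IV.2.3]{FS21}, by which a composition of ULA $1$-morphisms is again ULA. Since $A$ and $A'$ are ULA over $Z$, so is $A\star A'$. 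If preferred, one may first pass to the split case using Definition \ref{ss:splittingextension}.b), since $\star$ is compatible with pullback along $(\Div_{X(F)}^1)^J\ra(\Div_X^1)^J$, which identifies $\cHck^J_G$ with $\cHck^J_{G_F}$.

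For part ii), I would first reduce to the case where $G$ is split. By Definition \ref{ss:splittingextension}, membership of $A\star A'$ in $\prescript{p}{}{D}^{\leq0}_{\et}(\cHck^J_G|_Z,\La)^{\bd}$ or $\prescript{p}{}{D}^{\geq0}_{\et}(\cHck^J_G|_Z,\La)^{\bd}$ is tested after pullback along $(\Div_{X(F)}^1)^J\ra(\Div_X^1)^J$, which carries $\cHck^J_G$ to $\cHck^J_{G_F}$ and is compatible with $\star$. Assuming $G$ split, I would next reduce to $Z$ being a geometric point, using that $\star$ commutes with pullback along any morphism $Z'\ra Z$; this base-change compatibility follows from proper base change for the ind-proper convolution morphism, equivalently from the functoriality in the base of $Z\mapsto\cC_{Z/(L_JG)}$. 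For the $\leq0$ statement the reduction is immediate from the definition in \ref{ss:perversetstructure}, and for the $\geq0$ statement it is Corollary \ref{cor:perversefiberwise}.i). Finally, for $Z=\ov{s}$ a geometric point whose image in $\bN\cup\{\infty\}$ is $\{i\}$, the identification $\cHck^J_G|_{\ov{s}}=\cHck^J_{G_i}|_{\ov{s}}$ of \ref{ss:Grassmannianspecialization} reduces the assertion to the perverse $t$-exactness of convolution on $\cHck^J_{G_i}$, which is part of the geometric Satake theory of \cite{FS21}. (Alternatively, over $\ov{s}$ one may argue via Proposition \ref{ss:texactness}.i, reducing to the evident $t$-exactness of convolution on $\Gr^J_T$ --- external product followed by summation of cocharacters --- via the compatibility of hyperbolic localization with convolution.)

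For part iii), let $A$ and $A'$ lie in $\Sat(\cHck^J_G|_Z,\La)$; then they are ULA over $Z$, so $A\star A'$ is ULA over $Z$ by part i). It remains to show that $(A\star A')\otimes_\La M$ lies in $\Perv(\cHck^J_G|_Z,\La)$ for every $\La$-module $M$. Since $\star$ is assembled from $*$-pullbacks and $!$-pushforwards, each of which preserves colimits, $\star$ preserves colimits in each variable, hence commutes with $-\otimes_\La M$; thus $(A\star A')\otimes_\La M\cong A\star(A'\otimes_\La M)$. Now $A'\otimes_\La M$ lies in $\Perv(\cHck^J_G|_Z,\La)$ because $A'\in\Sat(\cHck^J_G|_Z,\La)$, it still lies in $D_{\et}(\cHck^J_G|_Z,\La)^{\bd}$ as it has the same support as $A'$, and $A$ lies in $\Perv(\cHck^J_G|_Z,\La)$ since $\Sat(\cHck^J_G|_Z,\La)\subseteq\Perv(\cHck^J_G|_Z,\La)$. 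Applying part ii) to the pair $(A,A'\otimes_\La M)$ in both the $\leq0$ and the $\geq0$ directions shows $A\star(A'\otimes_\La M)\in\prescript{p}{}{D}^{\leq0}_{\et}(\cHck^J_G|_Z,\La)^{\bd}\cap\prescript{p}{}{D}^{\geq0}_{\et}(\cHck^J_G|_Z,\La)^{\bd}=\Perv(\cHck^J_G|_Z,\La)$, as desired.

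I expect the main obstacle to be part ii), specifically the input of perverse $t$-exactness of convolution over a geometric point --- the ``classical'' statement for the Hecke stack $\cHck^J_{G_i}$ --- together with setting up the base-change and colimit compatibilities of $\star$; once these are in place the rest is formal. Note in particular that the $\geq0$ half genuinely relies on the fiberwise characterization of Corollary \ref{cor:perversefiberwise}.i), since $\prescript{p}{}{D}^{\geq0}_{\et}(\cHck^J_G|_Z,\La)^{\bd}$ is not defined pointwise a priori.
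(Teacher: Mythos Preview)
Your proposal is correct and follows essentially the same strategy as the paper. For part i) the paper invokes \cite[Proposition IV.2.26]{FS21} and \cite[Proposition IV.2.11]{FS21} after using Proposition \ref{ss:affineSchubertvarieties}.i), which is the ULA-preservation-under-composition you describe; for part iii) the paper uses the same tensor-compatibility-then-part-ii) argument. The only notable difference is in part ii): after reducing to a geometric point (via Corollary \ref{cor:perversefiberwise}.i) and \cite[Proposition 22.19]{Sch17}, exactly as you do), the paper handles the $\leq 0$ case by the classical \cite[Proposition VI.8.1 (ii)]{FS21} and then deduces the $\geq 0$ case from the $\leq 0$ case via Verdier duality (Proposition \ref{ss:Verdierdual}.i) and \cite[Theorem IV.2.23]{FS21}), whereas you reduce both cases directly to \cite{FS21}. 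Your extra reduction to split $G$ is harmless but unnecessary here, since the fiberwise characterizations of $\leq 0$ and $\geq 0$ hold without it.
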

\begin{proof}
Proposition \ref{ss:affineSchubertvarieties}.i) lets us apply \cite[Proposition IV.2.26]{FS21} and \cite[Proposition IV.2.11]{FS21} to show part i). For part ii), Corollary \ref{cor:perversefiberwise}.i) and \cite[Proposition 22.19]{Sch17} imply that we can assume that $Z$ is a geometric point. Then the first statement follows from \ref{ss:Grassmannianspecialization} and \cite[Proposition VI.8.1 (ii)]{FS21}. By Proposition \ref{ss:Verdierdual}.i) and \cite[Theorem IV.2.23]{FS21}, the second statement follows from the first.

For part iii), part i) shows that $A\star A'$ lies in $D_{\et}(\cHck^J_G|_Z,\La)^{\ULA}$. Since
\begin{align*}
  (A\star A')\otimes_\La M = (A\otimes_\La M)\star(A'\otimes_\La M)
\end{align*}
for all $\La$-modules $M$, the result follows from part ii).
\end{proof}

\subsection{}\label{ss:dualizable}
The machinery of \cite[Section IV.2.3.3]{FS21} shows that the Satake category has duals with respect to convolution.
\begin{prop*}
Let $A$ be in $\Sat(\cHck^J_G|_Z,\La)$. Then $A$ is dualizable with respect to $\star$, and its dual is isomorphic to $\sw^*\bD(A)$.
\end{prop*}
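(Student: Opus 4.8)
The plan is to exploit the identification of $D_{\et}(\cHck^J_G|_Z,\La)^{\bd}$ with the $1$-endomorphisms of $Z/(L^+_JG)$ in the $2$-category $\cC_{Z/(L_JG)}$, exactly as in the setup of the monoidal structure $\star$. In that $2$-categorical framework, \cite[Section IV.2.3.3]{FS21} gives a formal criterion: a $1$-endomorphism is right (resp.\ left) dualizable with respect to composition as soon as it is universally locally acyclic, and the dual is computed by the relative Verdier dual together with the ``swap'' coming from reversing the two legs of the correspondence. Concretely, $\cHck^J_G$ is the correspondence $(\Div^1_X)^J/(L^+_JG)\leftarrow\cHck^J_G\rightarrow(\Div^1_X)^J/(L^+_JG)$, and the automorphism $\sw:\cHck^J_G\ra^\sim\cHck^J_G$ from \ref{ss:Verdierdual} intertwines the two projections. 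So the first step is to record that, since $A$ lies in $\Sat(\cHck^J_G|_Z,\La)\subseteq D_{\et}(\cHck^J_G|_Z,\La)^{\ULA}$, it is in particular universally locally acyclic over $Z$ in the sense needed to invoke the dualizability machinery; Proposition \ref{ss:affineSchubertvarieties}.i) provides the representability/finiteness needed to legitimately apply \cite[Section IV.2.3.3]{FS21} (as was done in the proof of Proposition \ref{ss:dualizable}'s predecessor, \ref{ss:dualizable}.i), i.e. in the convolution discussion).

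The second step is to identify the dual object. The general formalism produces the dual as $\sw^*\bD(A)$, where $\bD$ is the relative Verdier dual of \ref{ss:Verdierdual} (relative to $Z/(L^+_JG)$ on each affine Schubert piece $\cHck^J_{G,\leq\mu_\bullet}|_Z$) and $\sw^*$ implements the transposition of the correspondence. This is the same computation as in \cite[Proposition VI.9.1]{FS21} (or the analogous statement there), and I would carry it out by the same bookkeeping: the evaluation and coevaluation maps are built from the $!$- and $*$-adjunctions along the projections $\cHck^J_G\ra(\Div^1_X)^J/(L^+_JG)$, and the triangle identities reduce to base change and the definition of $\bD$. The key point that makes this legitimate in our context rather than merely formal is that $\sw^*\bD(A)$ must actually land in $\Sat(\cHck^J_G|_Z,\La)$ so that the dual exists \emph{within} the Satake category; this is precisely Proposition \ref{ss:Verdierdual}.ii) (for $\bD(A)\in\Sat$) together with the fact that $\sw^*$ preserves $\Sat(\cHck^J_G|_Z,\La)$, which follows because $\sw$ permutes the affine Schubert varieties (it sends $\cHck^J_{G,\leq\mu_\bullet}$ to $\cHck^J_{G,\leq-w_0(\mu_\bullet)}$ when $G$ is split, and in general after passing to the splitting field $F$, using Definition \ref{ss:splittingextension}) and hence preserves both universal local acyclicity (Lemma \ref{ss:splittingextension}, i.e. the $\sw$-lemma preceding \ref{ss:ULAcriterion}) and the perversity condition defining $\Sat$.

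The main obstacle I anticipate is not the abstract dualizability — that is handed to us by \cite[Section IV.2.3.3]{FS21} once ULA is in place — but rather the careful matching of the three relevant involutions: the correspondence-transpose built into the $2$-category $\cC_{Z/(L_JG)}$, the automorphism $\sw$, and the relative Verdier dual $\bD$. One must check that the ``internal dual'' coming out of the formalism is $\sw^*\bD$ and not, say, $\bD\sw^*$ or a twist thereof; these agree here because $\bD$ commutes with the isomorphism $\sw$ (as $\sw$ is an isomorphism over $Z/(L^+_JG)$, intertwining the two relative-dualizing data), but spelling that out requires tracking how $\bD$ was defined piecewise over the $\cHck^J_{G,\leq\mu_\bullet}|_Z$ and checking compatibility with the colimit over $\mu_\bullet$. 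I would handle this by reducing to a single affine Schubert variety $\cHck^J_{G,\leq\mu_\bullet}|_Z$, where $\sw$ is an honest isomorphism onto $\cHck^J_{G,\leq-w_0(\mu_\bullet)}|_Z$ and everything is the usual Verdier-duality computation on a proper Artin v-stack, and then pass to the filtered colimit. Beyond that, the proof is a direct transcription of the corresponding argument in \cite{FS21}, so I would phrase it as: ``the proof proceeds as in \cite[Proposition VI.9.1]{FS21}, using Proposition \ref{ss:affineSchubertvarieties}.i), Proposition \ref{ss:Verdierdual}, and the fact that $\sw^*$ preserves $\Sat(\cHck^J_G|_Z,\La)$.''
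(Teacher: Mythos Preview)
Your proposal is correct and follows essentially the same route as the paper: invoke the dualizability machinery of \cite{FS21} (the paper cites \cite[Proposition IV.2.24]{FS21} specifically, rather than \S IV.2.3.3 or Proposition VI.9.1) to see that the dual in $D_{\et}(\cHck^J_G|_Z,\La)^{\bd}$ is $\sw^*\bD(A)$, then check this lies in $\Sat$ via Proposition~\ref{ss:Verdierdual}.ii) and the fact that $\sw^*$ preserves $\Sat$. The only minor discrepancy is in how $\sw^*$ is shown to preserve $\Sat$: you appeal to the $\sw$-lemma before \ref{ss:ULAcriterion} for ULA plus the evident preservation of perversity, whereas the paper cites Corollary~\ref{cor:perversefiberwise}.i) and the proof of Corollary~\ref{cor:perversefiberwise}.ii) (the fiberwise $!$-pullback characterization of $\prescript{p}{}D^{\geq0}$), which amounts to the same thing.
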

\begin{proof}
Taking into account the discussion on \cite[p.~224]{FS21}, we see from \cite[Proposition IV.2.24]{FS21} that the dual of $A$ in $D_{\et}(\cHck^J_G|_Z,\La)^{\bd}$ is isomorphic to $\sw^*\bD(A)$. Proposition \ref{ss:Verdierdual}.ii) shows that $\bD$ preserves $\Sat(\cHck^J_G|_Z,\La)$. Finally, Corollary \ref{cor:perversefiberwise}.i) and the proof of Corollary \ref{cor:perversefiberwise}.ii) indicate that $\sw^*$ preserves $\Sat(\cHck^J_G|_Z,\La)$.
\end{proof}

\subsection{}\label{ss:fusion}
Next, we give our second definition (via fusion) of the monoidal structure on the Satake category. Let $\mathtt{P}=\{J_1,\dotsc,J_s\}$ be an ordered partition of $J$. Under the identification from \ref{ss:diagonalstratification}, write
\begin{align*}
  \ast:\prod_{k=1}^sD_{\et}(\cHck^{J_k}_G,\La)^{\bd}\ra D_{\et}(\cHck^J_G|_{(\Div^1_X)^{\mathtt{P}}},\La)^{\bd}
\end{align*}
for the functor that sends $(A_1,\dotsc,A_s)\mapsto(A_1\boxtimes\dotsb\boxtimes A_s)|_{(\Div_X^1)^{\mathtt{P}}}$. By the K\"unneth formula and \cite[Proposition 22.19]{Sch17}, $\ast$ is monoidal with respect to $\prod_{k=1}^s\star$ on the source and $\star$ on the target.

Because the identification from \ref{ss:diagonalstratification} is independent of the ordering of $\mathtt{P}$, we have a natural isomorphism $A_1\ast\dotsb\ast A_s\cong A_{\sg(1)}\ast\dotsb\ast A_{\sg(s)}$ for all $\sg$ in the symmetric group $\fS_s$. Using Proposition \ref{ss:affineSchubertvarieties}.iv), replace these natural isomorphisms with their modified version as in \cite[p.~228]{FS21}.
\begin{lem}
  If $A_k$ lies in $\Sat(\cHck^{J_k}_G,\La)$ for all $1\leq k\leq s$, then $A_1\ast\dotsb\ast A_s$ lies in $\Sat(\cHck^J_G|_{(\Div^1_X)^{\mathtt{P}}},\La)$.
\end{lem}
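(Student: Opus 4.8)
The plan is to reduce to the product of the ``classical'' fusion statements from \cite{FS21} via a fiberwise criterion for membership in the Satake category. Recall that $\Sat(\cHck^J_G|_Z,\La)$ consists of those $A$ in $D_{\et}(\cHck^J_G|_Z,\La)^{\ULA}$ such that $A\otimes_\La M$ lies in $\Perv(\cHck^J_G|_Z,\La)$ for every $\La$-module $M$. Thus I must check three things for $A_1\ast\dotsb\ast A_s$: that it lies in $D_{\et}(\cHck^J_G|_{(\Div^1_X)^{\mathtt{P}}},\La)^{\ULA}$, that $(A_1\ast\dotsb\ast A_s)\otimes_\La M$ is perverse for all $M$, and (packaged into the former) that it lies in the $\bd$-subcategory.

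First I would handle universal local acyclicity: since $\ast$ is defined by restricting the exterior product $A_1\boxtimes\dotsb\boxtimes A_s$ to the open substack $(\Div^1_X)^{\mathtt{P}}\subseteq(\Div^1_X)^J$, and each $A_k$ is ULA over $(\Div^1_X)^{J_k}$ by hypothesis, the exterior product is ULA over $(\Div^1_X)^J$ by the K\"unneth-type compatibility of ULA with products (as in \cite[Section IV.2]{FS21}), and ULA is preserved by restriction to an open substack. The $\bd$-condition is preserved because, under the identification of \ref{ss:diagonalstratification}, $\cHck^J_{G,\leq\mu_\bullet}|_{(\Div^1_X)^{\mathtt{P}}}$ corresponds to the product of the $\cHck^{J_k}_{G,\leq(\mu_j)_{j\in J_k}}$, so a box product of objects supported on bounded affine Schubert loci is again supported on a bounded affine Schubert locus. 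Next, for perversity I would invoke Corollary \ref{cor:perversefiberwise}.i): it suffices to check that the pullback of $(A_1\ast\dotsb\ast A_s)\otimes_\La M$ to each geometric point $\ov{s}$ of $(\Div^1_X)^{\mathtt{P}}$ is perverse. Writing $\{i\}$ for the image of $\ov{s}$ in $\bN\cup\{\infty\}$ and using \ref{ss:Grassmannianspecialization} to identify $\cHck^J_G|_{\ov{s}}$ with $\cHck^J_{G_i}|_{\ov{s}}$, the fiber of $A_1\ast\dotsb\ast A_s$ at $\ov{s}$ is computed (by base change and the fact that $\ov{s}$ lies in the fusion-disjoint locus) as the classical fusion product of the fibers of the $A_k$. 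Since each $A_k$ lies in $\Sat(\cHck^{J_k}_G,\La)$, its fiber at the relevant geometric point of $(\Div^1_{X})^{J_k}$ lies in $\Sat(\cHck^{J_k}_{G_i}|_{\ov{s}},\La)$ by Corollary \ref{cor:perversefiberwise}.i) and Proposition \ref{ss:Satakecategory}, and then \cite[Proposition VI.9.2]{FS21} (the classical fusion statement) shows the fusion product over $G_i$ lies in the Satake category over the geometric point, in particular is perverse; the same applies after $\otimes_\La M$.

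The main obstacle I anticipate is the base-change identification of the fiber of $A_1\ast\dotsb\ast A_s$ at $\ov{s}$ with the classical fusion product: one must verify that restricting $A_1\boxtimes\dotsb\boxtimes A_s$ to $(\Div^1_X)^{\mathtt{P}}$ and then to $\ov{s}$ agrees with first restricting each $A_k$ to the corresponding geometric point of $(\Div^1_X)^{J_k}$, forming the box product over the (classical) fusion-disjoint locus in $(\Div^1_{X_i})^J$, and there using the fusion product of \cite{FS21}. This is a compatibility of $\cHck$-stacks and their specializations along the lines already established in \ref{ss:Grassmannianspecialization} and \ref{ss:diagonalstratification}, combined with the proper base change theorem \cite[Proposition 23.16 (i)]{Sch17} or \cite[Proposition 22.19]{Sch17} and the K\"unneth formula; once set up carefully it is routine, but keeping track of which finite set indexes which divisor and which factor of $G$ is where the bookkeeping is delicate. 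I would also note that, as in \cite[p.~228]{FS21}, the modified commutativity constraints using Proposition \ref{ss:affineSchubertvarieties}.iv) do not affect membership in the Satake category, since they only alter isomorphisms, not the underlying object.
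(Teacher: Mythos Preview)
Your proposal is correct, and the ULA step via K\"unneth matches the paper exactly. The perversity step, however, is handled differently. You reduce to geometric points via Corollary \ref{cor:perversefiberwise}.i) and then invoke the ``classical'' fusion statement from \cite{FS21} fiber-by-fiber over $\bN\cup\{\infty\}$. The paper instead first uses $(A_1\ast\dotsb\ast A_s)\otimes_\La M=(A_1\otimes_\La M)\ast\dotsb\ast(A_s\otimes_\La M)$ to reduce to showing a single fusion product is perverse, then passes to a v-cover to make $G$ split, and finally applies the $\CT_B[\deg]$-characterization of perversity (Proposition \ref{ss:texactness}.i)) together with the compatibility of hyperbolic localization with exterior tensor products: $\CT_B(A_1\ast\dotsb\ast A_s)[\deg]$ becomes the restriction of $\CT_B(A_1)[\deg]\boxtimes\dotsb\boxtimes\CT_B(A_s)[\deg]$, which is in degree $0$ by applying Proposition \ref{ss:texactness}.i) to each factor.

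The paper's route stays entirely within the tools it has already set up and avoids the bookkeeping you correctly flag as the main obstacle (matching the fiberwise fusion with the classical one from \cite{FS21}). Your route is perfectly valid but outsources the core step, and the compatibility you need---that the box product over a geometric point in $(\Div^1_X)^{\mathtt{P}}$ agrees with the classical fusion over $(\Div^1_{X_i})^{\mathtt{P}}$ restricted to that point---while routine, is exactly the kind of identification the paper's $\CT_B$ argument sidesteps. One minor point: your citation of Proposition \ref{ss:Satakecategory} to conclude the fiber of $A_k$ lies in the Satake category over a point is unnecessary; the definition of $\Sat$ plus Corollary \ref{cor:perversefiberwise}.i) and stability of ULA under pullback suffice.
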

\begin{proof}
  The K\"unneth formula indicates that $A_1\ast\dotsb\ast A_s$ lies in
  \begin{align*}
    D_{\et}(\cHck^J_G|_{(\Div^1_X)^{\mathtt{P}}},\La)^{\ULA}.
  \end{align*}
Next, we have $(A_1\ast\dotsb\ast A_s)\otimes_\La M = (A_1\otimes_\La M)\ast\dotsb\ast(A_s\otimes_\La M)$ for all $\La$-modules $M$, so it suffices to show that $A_1\ast\dotsb\ast A_s$ lies in $\Perv(\cHck^J_G|_{(\Div^1_X)^{\mathtt{P}}},\La)$. Corollary \ref{cor:perversefiberwise}.i) implies that we can replace $(\Div_X^1)^{\mathtt{P}}$ with a v-cover, so \ref{ss:splittingextension} indicates that we can assume that $G$ is split. Then Proposition \ref{ss:texactness}.i) shows that the desired condition is equivalent to $\CT_B(A_1\ast\dotsb\ast A_s)[\deg]$ being concentrated in degree $0$. Since hyperbolic localization is compatible with exterior tensor products and $\deg$ is compatible with \ref{ss:diagonalstratification}, Corollary \ref{ss:hyperbolicoutput} indicates that the above is isomorphic to
  \begin{align*}
    \big(\CT_B(A_1)[\deg]\boxtimes\dotsb\boxtimes\CT_B(A_s)[\deg]\big)\big|_{(\Div^1_X)^{\mathtt{P}}},
  \end{align*}
so applying Proposition \ref{ss:texactness}.i) again $s$ times yields the desired result.
\end{proof}

\subsection{}
The Satake category enjoys the following functoriality with respect to surjective maps $\ze:J\ra J'$ of finite sets. Note that we have a natural identification $\cHck^J_G\times_{(\Div_X^1)^J}(\Div_X^1)^{J'} = \cHck^{J'}_G$ and hence a natural morphism $\ze:\cHck^{J'}_G\ra\cHck^J_G$ over the morphism $\ze:(\Div_X^1)^{J'}\ra(\Div_X^1)^J$. The resulting pullback
\begin{align*}
\ze^*:D_{\et}(\cHck^J_G,\La)^{\bd}\ra D_{\et}(\cHck^{J'}_G,\La)^{\bd}
\end{align*}
sends $\Sat(\cHck^J_G,\La)$ to $\Sat(\cHck^{J'}_G,\La)$ by Proposition \ref{ss:texactness}.ii), and the square
\begin{align*}
  \xymatrix{\Sat(\cHck^J_G,\La)\ar[d]^-{F^J}\ar[r]^-{\ze^*} & \Sat(\cHck^{J'}_G,\La)\ar[d]^-{F^{J'}}\\
  \LocSys((\Div^1_X)^J,\La)\ar[r]^-{\ze^*} & \LocSys((\Div^1_X)^{J'},\La)}
\end{align*}
commutes by \cite[Proposition 22.19]{Sch17}.

\subsection{}\label{ss:restrictionfullyfaithful}
We need the following version of ``analytic continuation'' for the Satake category. Write $\j_{\mathtt{P}}:(\Div^1_X)^{\mathtt{P}}\ra(\Div^1_X)^J$ for the open embedding from \ref{ss:diagonalstratification}, and write
\begin{align*}
\i_{\mathtt{P}}:\big[(\Div^1_X)^J-(\Div^1_X)^{\mathtt{P}}\big]\ra(\Div^1_X)^J
\end{align*}
for the complementary closed embedding.
\begin{lem*}
For all $A$ in $\Sat(\cHck^J_G,\La)$ and $B$ in $\LocSys((\Div_X^1)^J,\La)$, the morphisms
    \begin{align*}
      A\ra\prescript{p}{}{H}^0(\j_{\mathtt{P}*}\j^*_{\mathtt{P}}A)\mbox{ and }B\ra H^0(\j_{\mathtt{P}*}\j^*_{\mathtt{P}}B)
    \end{align*}
    are isomorphisms. Consequently, the pullback functors
    \begin{gather*}
      \Sat(\cHck^J_G,\La)\ra\Sat(\cHck^J_G|_{(\Div^1_X)^{\mathtt{P}}},\La)\\
      \LocSys((\Div_X^1)^J,\La)\ra\LocSys((\Div^1_X)^{\mathtt{P}},\La)
    \end{gather*}
    are fully faithful.
  \end{lem*}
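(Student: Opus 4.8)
The two ``consequently'' statements follow formally from the first two isomorphisms together with the fact, recorded in \ref{ss:diagonalstratification} (and \ref{ss:splittingextension}), that $\j_{\mathtt{P}}^*$ is itself a functor between the relevant categories. Indeed, once $A\ra\prescript{p}{}H^0(\j_{\mathtt{P}*}\j_{\mathtt{P}}^*A)$ is an isomorphism for every $A$ in $\Sat(\cHck^J_G,\La)$, the counit-unit formalism shows that $\j_{\mathtt{P}}^*$ is fully faithful on $\Sat(\cHck^J_G,\La)$: for $A,A'$ in $\Sat(\cHck^J_G,\La)$ we have $\Hom(\j_{\mathtt{P}}^*A,\j_{\mathtt{P}}^*A')=\Hom(A,\j_{\mathtt{P}*}\j_{\mathtt{P}}^*A')=\Hom(A,\prescript{p}{}H^0(\j_{\mathtt{P}*}\j_{\mathtt{P}}^*A'))=\Hom(A,A')$, where the first equality is adjunction, the second uses that $A$ lies in the heart $\Perv$ together with Corollary \ref{cor:perversefiberwise}.ii) (so $\j_{\mathtt{P}*}$ is left $t$-exact and $\Hom$ from a perverse object factors through $\prescript{p}{}H^0$), and the third is the assumed isomorphism. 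The same argument with $H^0$ in place of $\prescript{p}{}H^0$ handles $\LocSys$, using that $\j_{\mathtt{P}*}$ is left $t$-exact for the standard $t$-structure. So the content is entirely in the two displayed isomorphisms.

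\textbf{Reducing to the split case.} For the isomorphism statements, full faithfulness of pullback is not yet available, so I cannot replace $(\Div_X^1)^J$ by a v-cover by invoking descent of morphisms directly; instead I would check that both sides are isomorphisms after pulling back along the finite \'etale cover $(\Div^1_{X(F)})^J\ra(\Div^1_X)^J$ of \ref{ss:splittingextension}, which is conservative, and reduce to the case where $G$ is split. (Here one uses that $\j_{\mathtt{P}}$, $\CT_B$, $\prescript{p}{}H^0$, $H^0$, $\pi_{T!}$ are all compatible with this finite \'etale base change, by Corollary \ref{ss:hyperbolicoutput}, Proposition \ref{ss:texactness}, and the identifications $\cHck^J_G|_{(\Div^1_{X(F)})^J}=\cHck^J_{G_F}$.) So assume $G$ split.

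\textbf{Proving the isomorphisms in the split case.} The idea is that $(\Div_X^1)^J-(\Div_X^1)^{\mathtt{P}}$ has ``codimension $\geq 2$'' relative to $\ul{\bN\cup\{\infty\}}$ in the appropriate sense, so the complementary stalk/costalk vanishes after the relevant shift. Concretely, for the $\LocSys$ statement, $B$ is \'etale-locally constant with finite projective fibers, so the cone of $B\ra H^0(\j_{\mathtt{P}*}\j_{\mathtt{P}}^*B)$ is computed from local cohomology along $\i_{\mathtt{P}}$; by Corollary \ref{ss:Drinfeldfullyfaithful} one knows $D_{\lc}$ of a power $B\bW^J$ is controlled fiberwise, and the key input is that $\Div^1_X\ra\ul{\bN\cup\{\infty\}}$ is cohomologically smooth of dimension $1$ (Corollary \ref{ss:Div1smooth}), whence $\i_{\mathtt{P}}^!\j_{\mathtt{P}*}\j_{\mathtt{P}}^*B$ is concentrated in degrees $\geq 2$ for a single pair of ``legs'' colliding, so $R^0$ and $R^1$ of the cone vanish and the map is an isomorphism — this is exactly the pattern of the proof of the fusion/factorization isomorphisms in \cite{SW20} and \cite[Section VI.9]{FS21}, and I would reduce to that source, or to the geometric-point case via Corollary \ref{cor:perversefiberwise}.i) where it is the statement in \cite{FS21}. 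For the $\Sat$ statement, I would apply $\CT_B[\deg]$ and $\pi_{T!}$: by Proposition \ref{ss:fiberfunctor}.i) these compute $F^J$, by Corollary \ref{ss:hyperbolicoutput} they commute with $\j_{\mathtt{P}*}$ and $\j_{\mathtt{P}}^*$ up to the bounded-dominance caveat, and $F^J(A)$ lies in $\LocSys$ by Proposition \ref{ss:fiberfunctor}.iii); combined with Proposition \ref{ss:conservative} (conservativity of $\CT_B$) and the $\LocSys$ case just proved, one deduces that $A\ra\prescript{p}{}H^0(\j_{\mathtt{P}*}\j_{\mathtt{P}}^*A)$ becomes an isomorphism after $F^J$, hence is an isomorphism.

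\textbf{Main obstacle.} The delicate point is the interaction of $\j_{\mathtt{P}*}$ with $\CT_B$ and with perverse truncation: $\j_{\mathtt{P}*}$ need not commute with $\CT_B$ on the nose (hyperbolic localization commutes with proper pushforward and with smooth pullback, but $\j_{\mathtt{P}}$ is an open immersion, so one gets $\j_{\mathtt{P}*}$ only with a correction), and one must track the $\bd$-boundedness and monodromy ($\bG_m^{\an}$-equivariance) conditions through the argument so that Corollary \ref{ss:hyperbolicoutput} applies. I expect the cleanest route is to bypass this by reducing the $\Sat$ isomorphism to geometric points via Corollary \ref{cor:perversefiberwise}.i) — checking $A\ra\prescript{p}{}H^0(\j_{\mathtt{P}*}\j_{\mathtt{P}}^*A)$ is an isomorphism can be tested after pullback to each geometric point of $(\Div^1_X)^J$, since both source and target lie in $\Perv$ and $\j_{\mathtt{P}*}$ is left $t$-exact by Corollary \ref{cor:perversefiberwise}.ii), so the cone's perverse cohomology sheaves vanish iff they do fiberwise — and then invoke the corresponding statement for $\cHck^J_{G_i}$ from \cite[Section VI.9]{FS21} together with the identification of \ref{ss:Grassmannianspecialization}. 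The one thing that still needs care fiberwise is that $(\Div^1_X)^J$ restricted to a geometric point sitting over $i\in\bN\cup\{\infty\}$ is exactly $(\Div^1_{X_i})^J$ over that point, so $(\Div^1_X)^{\mathtt{P}}$ and its complement pull back correctly; this is Lemma \ref{ss:Div1} applied fiberwise.
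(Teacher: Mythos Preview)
Your overall strategy, the $\LocSys$ argument, and the reduction to split $G$ are essentially the paper's approach. The paper also reduces the $\Sat$ case to the $\LocSys$ case via $\CT_B[\deg]$, as you propose. The difference lies in how the obstacle you flag is handled.

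The paper does not attempt to commute $\CT_B$ with $\j_{\mathtt{P}*}$. Instead it uses the excision triangle
\[
\i_{\mathtt{P}*}\i_{\mathtt{P}}^!A \ra A \ra \j_{\mathtt{P}*}\j_{\mathtt{P}}^*A \ra
\]
to rephrase the claim as $\i_{\mathtt{P}*}\i_{\mathtt{P}}^!A\in\prescript{p}{}D_{\et}^{\geq 2}(\cHck^J_G,\La)^{\bd}$. By Proposition \ref{ss:texactness}.i) this is equivalent to $\CT_B(\i_{\mathtt{P}*}\i_{\mathtt{P}}^!A)[\deg]\in D_{\et}^{\geq 2}$, and now $\CT_B$ \emph{does} commute with $\i_{\mathtt{P}*}\i_{\mathtt{P}}^!$: the base-change compatibility of Corollary \ref{ss:hyperbolicoutput} (together with the presentation $\CT_B=\ov{p}_*\ov{q}^!$) gives $\CT_B(\i_{\mathtt{P}*}\i_{\mathtt{P}}^!A)=\i_{\mathtt{P}*}\i_{\mathtt{P}}^!\CT_B(A)$. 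Since $\CT_B(A)[\deg]\in\LocSys((\Div^1_X)^J,\La)$ by Proposition \ref{ss:Satakecategory}, the $\LocSys$ computation (itself phrased as $\i_{\mathtt{P}*}\i_{\mathtt{P}}^!B\in D_{\et}^{\geq 2}$) finishes the argument. So your ``main obstacle'' dissolves once you pass to the closed complement.

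Your proposed workaround --- reducing to geometric points of $(\Div^1_X)^J$ (or to fibers over $i\in\bN\cup\{\infty\}$) and invoking the corresponding statement from \cite[Section VI.9]{FS21} --- has a genuine gap. The pushforward $\j_{\mathtt{P}*}$ does not commute with $*$-pullback to a geometric point, nor with $*$-restriction to the closed fiber $\ul{\{\infty\}}$. So even though checking that a morphism of perverse sheaves is an isomorphism can be done stalkwise, the stalk of $\prescript{p}{}H^0(\j_{\mathtt{P}*}\j_{\mathtt{P}}^*A)$ at $\ov{s}$ is not computed by the analogous construction over $(\Div^1_{X_i})^J$; you cannot simply transport the conclusion from \cite{FS21}. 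The excision trick above is precisely what lets the paper avoid this issue and run the argument directly over $(\Div^1_X)^J$.
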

  \begin{proof}
If $\#J=1$, then $(\Div^1_X)^{\mathtt{P}}=(\Div^1_X)^J$, and there is nothing to prove. Hence assume that $\#J\geq2$. For the statement about $B$, the excision exact triangle
\begin{align*}
  \xymatrix{\i_{\mathtt{P}*}\i_{\mathtt{P}}^!B\ar[r] & B\ar[r] & \j_{\mathtt{P}*}\j^*_{\mathtt{P}}B\ar[r]^-{+1} &}
\end{align*}
implies that it suffices to prove that $\i_{\mathtt{P}*}\i_{\mathtt{P}}^!B$ lies in $D_{\et}^{\geq2}((\Div_X^1)^J,\La)$. Now Corollary \ref{ss:Div1smooth} shows that $(\Div_X^1)^J$ is cohomologically smooth of dimension $\#J\geq2$ over $\ul{\bN\cup\{\infty\}}$, and it also shows that $(\Div^1_X)^J-(\Div^1_X)^{\mathtt{P}}$ has a stratification by locally closed subsheaves that are cohomologically smooth of dimension $1$ over $\ul{\bN\cup\{\infty\}}$. Because $B$ lies in $\LocSys((\Div^1_X)^J,\La)$, using \cite[Proposition 23.16 (iii)]{Sch17} to work \'etale-locally yields the desired result.

We turn to the statement about $A$. The excision exact triangle
\begin{align*}
  \xymatrix{\i_{\mathtt{P}*}\i_{\mathtt{P}}^!A\ar[r] & A\ar[r] & \j_{\mathtt{P}*}\j^*_{\mathtt{P}}A\ar[r]^-{+1} &}
\end{align*}
implies that it suffices to prove that $\i_{\mathtt{P}*}\i_{\mathtt{P}}^!A$ lies in $\prescript{p}{}D_{\et}^{\geq2}(\cHck^J_G,\La)^{\bd}$. Since
\begin{align*}
\Div^1_{X(F)}\ra\Div^1_X
\end{align*}
is an \'etale cover, \cite[Proposition 23.16 (iii)]{Sch17} and \ref{ss:splittingextension} imply that we can assume that $G$ is split. Then Proposition \ref{ss:texactness}.i) shows that the desired condition is equivalent to $\CT_B(\i_{\mathtt{P}*}\i_{\mathtt{P}}^!A)[\deg]$ lying in $D_{\et}^{\geq2}((\Div_X^1)^J,\La)$, and Corollary \ref{ss:hyperbolicoutput} yields
\begin{align*}
\CT_B(\i_{\mathtt{P}*}\i_{\mathtt{P}}^!A)[\deg] =  \i_{\mathtt{P}*}\i_{\mathtt{P}}^!\CT_B(A)[\deg].
\end{align*}
Proposition \ref{ss:Satakecategory} indicates that $\CT_B(A)[\deg]$ lies in $\LocSys((\Div_X^1)^J,\La)$, so the desired result follows from the above work.
\end{proof}

\subsection{}\label{ss:fusionconvolution}
At this point, we can prove that fusion yields a symmetric monoidal structure on the Satake category, as well as that fusion agrees with convolution.
\begin{prop*}\hfill
  \begin{enumerate}[i)]
  \item The image of
    \begin{align*}
      \ast:\prod_{k=1}^s\Sat(\cHck^{J_k}_G,\La)\ra\Sat(\cHck^J_G|_{(\Div^1_X)^{\mathtt{P}}},\La)
    \end{align*}
    lies in the image of $\Sat(\cHck^J_G,\La)$, and the square
    \begin{align*}
      \xymatrix{\prod_{k=1}^s\Sat(\cHck^{J_k}_G,\La)\ar[r]^-\ast\ar[d]^-{\prod_{k=1}^sF^{J_k}} & \Sat(\cHck^J_G,\La)\ar[d]^-{F^J} \\
     \prod_{k=1}^s\LocSys((\Div^1_X)^{J_k},\La)\ar[r]^-\boxtimes & \LocSys((\Div^1_X)^J,\La)}
    \end{align*}
    commutes. Moreover, this is functorial in refinements and permutations of $\mathtt{P}$.
  \item The functor $\star$ is naturally isomorphic to the composition
    \begin{align*}
      \xymatrix{\Sat(\cHck^J_G,\La)\times\Sat(\cHck^J_G,\La)\ar[r]^-\ast & \Sat(\cHck^{J\coprod J}_G,\La)\ar[r]^-{\ze^*} & \Sat(\cHck^J_G,\La),}
    \end{align*}
    where $\ze:J\coprod J\ra J$ denotes the natural map. Consequently, $\star$ is naturally a symmetric monoidal structure, and $F^J$ is symmetric monoidal with respect to $\star$ on the source and $\otimes_\La$ on the target.
  \end{enumerate}
\end{prop*}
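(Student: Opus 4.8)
The plan is to follow the strategy of \cite[Proposition VI.9.3]{FS21}, reducing each assertion to its fiberwise analogue over geometric points of $\ul{\bN\cup\{\infty\}}$ via the tools already assembled. For part i), I would first show that $A_1\ast\dotsb\ast A_s$, which lies in $\Sat(\cHck^J_G|_{(\Div^1_X)^{\mathtt{P}}},\La)$ by Lemma \ref{ss:fusion}, extends to $\Sat(\cHck^J_G,\La)$. The key input is Lemma \ref{ss:restrictionfullyfaithful}: since the restriction functor $\Sat(\cHck^J_G,\La)\ra\Sat(\cHck^J_G|_{(\Div^1_X)^{\mathtt{P}}},\La)$ is fully faithful, it suffices to produce \emph{some} preimage. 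One builds the candidate preimage as $\prescript{p}{}{H}^0(\j_{\mathtt{P}*}(A_1\ast\dotsb\ast A_s))$ and checks, first, that it lies in $D_{\et}(\cHck^J_G,\La)^{\ULA}$ (using Corollary \ref{cor:perversefiberwise}.ii) for left $t$-exactness of $\j_{\mathtt{P}*}$ together with \cite[Proposition IV.2.15]{FS21} and the fiberwise ULA statement) and, second, that after $\otimes_\La M$ it stays perverse, which by Corollary \ref{cor:perversefiberwise}.i) and \ref{ss:splittingextension} reduces to the split case over geometric points, where it follows from \ref{ss:Grassmannianspecialization} and \cite[Proposition VI.9.3]{FS21}. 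The commuting square then follows by applying $F^J$, using Proposition \ref{ss:fiberfunctor}.i) to rewrite $F^J$ in terms of $\pi_{T!}\CT_B[\deg]$, the compatibility of hyperbolic localization with exterior tensor products from Corollary \ref{ss:hyperbolicoutput}, the compatibility of $\deg$ with \ref{ss:diagonalstratification}, and the commuting square for $F^{J_k}$ under restriction from \ref{ss:restrictionfullyfaithful}; functoriality in refinements and permutations of $\mathtt{P}$ is inherited from the corresponding functoriality of $\ast$ and the modified commutativity constraints of \ref{ss:fusion}.

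For part ii), I would establish the natural isomorphism between $\star$ and $\ze^*\circ\ast$ by the standard degeneration argument: both functors, a priori valued in $\Sat(\cHck^J_G,\La)$, agree after pulling back to $(\Div^1_X)^{\mathtt{P}}$ for the partition $\mathtt{P}=\{J\coprod J\}$ refined to the discrete one, because there $\ast$ is literally exterior product and convolution restricted to the disjoint locus factors as exterior product (this is the content of how $\cHck^{J\coprod J}_G|_{(\Div^1_X)^{\mathtt{P}}}$ decomposes in \ref{ss:diagonalstratification}). More precisely, one uses the ``nearby cycles'' or specialization comparison: restricting to the generic locus $(\Div^1_X)^{\mathtt{P}}\subseteq(\Div^1_X)^{J\coprod J}$ identifies $\ast$ with $\boxtimes$, and the fusion product is defined precisely so that $\ze^*\ast$ recovers convolution; invoking Lemma \ref{ss:restrictionfullyfaithful} once more lets one promote the isomorphism of restrictions to an isomorphism of the original functors. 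Then symmetric monoidality of $\star$ follows formally: the symmetry constraint is transported from the evident symmetry of $\ast$ (which is symmetric monoidal by part i) and the modified commutativity isomorphisms), and associativity likewise. That $F^J$ is symmetric monoidal with respect to $\star$ and $\otimes_\La$ then follows from the commuting square in part i) together with the fact that $\ze^*$ intertwines $\boxtimes$ restricted along the diagonal with $\otimes_\La$.

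The main obstacle I anticipate is the first half of part i): proving that $\prescript{p}{}{H}^0(\j_{\mathtt{P}*}(A_1\ast\dotsb\ast A_s))$ genuinely lies in the Satake category, i.e.\ is both ULA and remains perverse after arbitrary base change $\otimes_\La M$. The ULA part is delicate because $\j_{\mathtt{P}*}$ need not preserve ULA objects in general, so one must control the behavior near the non-generic locus $(\Div^1_X)^J-(\Div^1_X)^{\mathtt{P}}$; here the dimension count from Corollary \ref{ss:Div1smooth} (that this locus is cohomologically smooth of dimension $1$ while $(\Div^1_X)^J$ has dimension $\#J$) is what makes the argument go through, exactly as in the proof of Lemma \ref{ss:restrictionfullyfaithful}, but one must check it interacts correctly with the ULA condition relative to $Z/(L^+_JG)$. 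The perversity-after-base-change part is handled by the reduction-to-geometric-points mechanism, which is by now routine in this paper, but it genuinely relies on \ref{ss:splittingextension} to pass to the split case and on \ref{ss:Grassmannianspecialization} to import \cite[Proposition VI.9.3]{FS21} fiberwise; the only subtlety is ensuring the fusion isomorphisms themselves (not just the objects) specialize correctly, which follows from the compatibility of $\ast$ with the fiberwise constructions of \cite{FS21} via \ref{ss:diagonalstratification}.
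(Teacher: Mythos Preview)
Your approach to part i) is correct and essentially matches the paper's, though you spell out more than it does: the paper cites the proof of \cite[Definition/Proposition VI.9.4]{FS21} for the first statement, and for the commuting square takes a shorter route than yours---it uses Lemma \ref{ss:restrictionfullyfaithful} to reduce to the open locus $(\Div^1_X)^{\mathtt{P}}$, where the square commutes directly by the K\"unneth formula, without passing through $\CT_B$.

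For part ii), however, your argument has a gap. You propose to compare $\star$ and $\ze^*\circ\ast$ by ``promoting an isomorphism of restrictions'' via Lemma \ref{ss:restrictionfullyfaithful}, but both functors land in $\Sat(\cHck^J_G,\La)$ over $(\Div^1_X)^J$, and there is no relevant open sublocus of $(\Div^1_X)^J$ here: the diagonal $\ze$ embeds $(\Div^1_X)^J$ as a \emph{closed} substack of $(\Div^1_X)^{J\coprod J}$, disjoint from the open locus you describe, so Lemma \ref{ss:restrictionfullyfaithful} does not apply in the direction you need. The paper instead uses the Eckmann--Hilton argument: since $\ast$ is monoidal with respect to $\prod\star$ on the source and $\star$ on the target (this is recorded in \ref{ss:fusion}, via K\"unneth), the composition $\ze^*\circ\ast$ defines a monoidal structure on $\Sat(\cHck^J_G,\La)$ that \emph{commutes} with $\star$; Eckmann--Hilton then forces the two structures to agree and to be symmetric. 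This sidesteps any direct specialization computation, and the symmetric monoidality of $F^J$ then follows immediately from the commuting square of part i).
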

\begin{proof}
We start with part i). The first statement follows from the proof of \cite[Definition/Proposition VI.9.4]{FS21}. As for the second statement, Lemma \ref{ss:restrictionfullyfaithful} shows that it suffices to check that the square
    \begin{align*}
      \xymatrix{\prod_{k=1}^s\Sat(\cHck^{J_k}_G,\La)\ar[r]^-\ast\ar[d]^-{\prod_{k=1}^sF^{J_k}} & \Sat(\cHck^J_G|_{(\Div^1_X)^{\mathtt{P}}},\La)\ar[d]^-{F_{(\Div^1_X)^{\mathtt{P}}}} \\
     \prod_{k=1}^s\LocSys((\Div^1_X)^{J_k},\La)\ar[r]^-\boxtimes & \LocSys((\Div^1_X)^{\mathtt{P}},\La)}
    \end{align*}
commutes, and this follows from the K\"unneth formula. Finally, the last statement follows from \ref{ss:fusion}.

For part ii), part i) and \ref{ss:fusion} imply that the composition yields a symmetric monoidal structure that commutes with $\star$. Therefore the Eckmann--Hilton argument shows that it is naturally isomorphic to $\star$, so the last statement follows from part i).
\end{proof}

\subsection{}\label{ss:Drinfeldapplied}
Let us explicate the target of our fiber functor $F^J$. Recall from \ref{ss:Waction} the group topological space $\bW$ over $\bN\cup\{\infty\}$. Because $\bW$ is lctd over $\bN\cup\{\infty\}$ as in Definition \ref{ss:grouphypotheses}, so is the $J$-fold fiber power $\bW^J$ of $\bW$ over $\bN\cup\{\infty\}$. Recall from Definition \ref{ss:smoothreps} the category $\Rep(\bW^J,\La)$, and write $\Rep(\bW^J,\La)^{\finproj}$ for the full subcategory of $\Rep(\bW^J,\La)$ given by objects whose underlying sheaf of $\La$-modules on $\bN\cup\{\infty\}$ is locally constant with finite projective fibers.

\textbf{For the rest of this paper, we work over $\Spd\ov\bF_q$.}
\begin{lem*}
The category $\Shv_{\et}(\Div_X^1,\La)$ is naturally equivalent to $\Rep(\bW,\La)$, and the category $\LocSys((\Div_X^1)^J,\La)$ is naturally equivalent to $\Rep(\bW^J,\La)^{\finproj}$.
\end{lem*}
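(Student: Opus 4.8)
The plan is to deduce both equivalences from the Drinfeld-lemma results \ref{ss:Drinfeldfullyfaithful} and \ref{ss:Drinfeldslemma}, combined with the dictionary of \S\ref{s:smoothrepresentations} between \'etale sheaves on a classifying stack and smooth representations. The first thing to set up is the identification $\Shv_{\et}(B\bW^J,\La)\cong\Rep(\bW^J,\La)$. Since $\ul{\bN\cup\{\infty\}}\ra B\bW^J$ is a $\ul{\bW^J}$-torsor, v-descent identifies $\Shv_{\et}(B\bW^J,\La)$ with the category of sheaves of $\La$-modules on $\bN\cup\{\infty\}$ equipped with a $\ul{\bW^J}$-equivariance datum. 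Via the \'etal\'e-space construction (\ref{ss:profinitesetsheaf}), such a sheaf is the same as a $\La$-module \'etale topological space $V$ over $\bN\cup\{\infty\}$, and restricting the equivariance datum over compact open subsets and over the fibers of $\ul{\bW^J}$ exhibits it as an action $\bW^J\times_{\bN\cup\{\infty\}}V\ra V$ of sheaves on $\bN\cup\{\infty\}$, for which the stabilizer of every section over a compact open is open. Hence Proposition \ref{ss:continuitycriterion} (and Corollary \ref{ss:etaleGspaces}) show that $V$ is an \'etale $\bW^J$-space, and conversely; so $\Shv_{\et}(B\bW^J,\La)\cong\Rep(\bW^J,\La)$. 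Passing to the objects whose underlying sheaf on $\bN\cup\{\infty\}$ is locally constant with finite projective fibers gives $\LocSys(B\bW^J,\La)\cong\Rep(\bW^J,\La)^{\finproj}$; here one uses that $\bW^J/K^\al$ is finite locally constant over $\bN\cup\{\infty\}$, so the corresponding finite \'etale covers witness \'etale-local constancy, and in particular such objects are concentrated in degree $0$.

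Next I would handle the heart statement. By Proposition \ref{ss:Cperfectoid}, $\Spd C$ is identified with $\ul{\bN\cup\{\infty\}}\times\Spa C_\infty$; since $C_\infty$ is an algebraically closed perfectoid field over $\ov\bF_q$, this space is strictly totally disconnected, so for $\ell$-power torsion $\La$ both $D_{\et}(\ul{\bN\cup\{\infty\}},\La)$ and $D_{\et}(\Spd C,\La)$ are the derived category of sheaves of $\La$-modules on the underlying profinite space $\bN\cup\{\infty\}$ and the pullback functor between them is an equivalence (alternatively, apply \cite[Theorem 19.5 (ii)]{Sch17} to $\ov\bF_q\hra C_\infty$). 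Thus the essential-surjectivity hypothesis of Proposition \ref{ss:Drinfeldfullyfaithful} holds for $Z=\ul{\bN\cup\{\infty\}}$, yielding an equivalence $D_{\et}(B\bW,\La)\ra^\sim D_{\et}(\Div_X^1,\La)$; this equivalence is $t$-exact (as one checks after the pro-\'etale base change $\Spd C\ra\Div_X^1$ of Lemma \ref{ss:torsorsareproetale}, where it becomes the identity functor), hence restricts to $\Shv_{\et}(B\bW,\La)\ra^\sim\Shv_{\et}(\Div_X^1,\La)$, and composing with the first paragraph gives $\Shv_{\et}(\Div_X^1,\La)\cong\Rep(\bW,\La)$. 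For the $J$-fold statement, Corollary \ref{ss:Drinfeldslemma} with $Z=\ul{\bN\cup\{\infty\}}$ gives $D_{\lc}(B\bW^J,\La)\ra^\sim D_{\lc}((\Div_X^1)^J,\La)$; restricting the target to $\LocSys((\Div_X^1)^J,\La)$ and the source to $\LocSys(B\bW^J,\La)$, which correspond because both are cut out by the intrinsic condition of lying in the heart with finite projective stalks, and invoking the first paragraph, yields $\LocSys((\Div_X^1)^J,\La)\cong\Rep(\bW^J,\La)^{\finproj}$.

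The main obstacle is the purely $1$-categorical identification $\Shv_{\et}(B\bW^J,\La)\cong\Rep(\bW^J,\La)$: one must verify that a v-stack-theoretic equivariance datum on a sheaf over the profinite set $\bN\cup\{\infty\}$ is exactly the data of a continuous (i.e.\ "smooth" in the sense of Definition \ref{ss:smoothreps}) action of the group $\bW^J$, which is only locally profinite \emph{over} $\bN\cup\{\infty\}$. The continuity-on-stabilizers condition, automatic in the classical case of a single local field, must here be extracted from the relative structure, and this is precisely where Proposition \ref{ss:continuitycriterion} and Corollary \ref{ss:etaleGspaces} do the work. A secondary point needing care is that the equivalence of Corollary \ref{ss:Drinfeldslemma} carries the condition "$\finproj$" — a condition on the underlying sheaf over $\bN\cup\{\infty\}$ — to "\'etale-locally constant with finite projective fibers on $(\Div_X^1)^J$"; this matches up fiberwise over $\bN\cup\{\infty\}$, reducing to the corresponding assertion of Fargues--Scholze.
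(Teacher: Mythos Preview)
Your proposal is correct and follows essentially the same route as the paper: both arguments invoke Proposition~\ref{ss:Cperfectoid} to verify the essential-surjectivity hypothesis of Proposition~\ref{ss:Drinfeldfullyfaithful} for the first claim, and Corollary~\ref{ss:Drinfeldslemma} for the second, then translate sheaves on $B\bW^J$ into smooth representations. The only cosmetic difference is that the paper cites Lemma~\ref{ss:equivariantsheaves} directly for the dictionary between $\bW^J$-equivariant sheaves on $\bN\cup\{\infty\}$ and \'etale $\bW^J$-spaces, whereas you unwind this via Proposition~\ref{ss:continuitycriterion} and Corollary~\ref{ss:etaleGspaces}; these are equivalent formulations of the same identification.
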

\begin{proof}
The second statement follows from Proposition \ref{ss:Drinfeldslemma} and Lemma \ref{ss:equivariantsheaves}. As for the first statement, Proposition \ref{ss:Cperfectoid} identifies $\Spd{C}$ with $\ul{\bN\cup\{\infty\}}\times\Spa C_\infty$, so the pullback functor $D_{\et}(\ul{\bN\cup\{\infty\}},\La)\ra D_{\et}(\Spd C,\La)$ is an equivalence. Hence the desired result follows from Proposition \ref{ss:Drinfeldfullyfaithful} and Lemma \ref{ss:equivariantsheaves}.
\end{proof}

\subsection{}\label{ss:repsaction}
When applying Tannakian reconstruction to the Satake category, we consider the latter as an enriched category in the following way. Using the identification from Lemma \ref{ss:Drinfeldapplied}, we obtain an action
\begin{align*}
\otimes_\La:\Rep(\bW^J,\La)^{\finproj}\times D_{\et}(\cHck^J_G,\La)^{\bd}\ra D_{\et}(\cHck^J_G,\La)^{\bd}
\end{align*}
with respect to $\otimes_\La$ on $\Rep(\bW^J,\La)^{\finproj}$, given by $(B,A)\mapsto \pi_G^*B\otimes_\La A$. By the adjoint functor theorem, this induces a $\Rep(\bW^J,\La)^{\finproj}$-enriched structure on $D_{\et}(\cHck^J_G,\La)^{\bd}$.
\begin{lem*}
The action $\otimes_\La$ preserves $\Perv(\cHck^J_G,\La)$ and $\Sat(\cHck^J_G,\La)$. Moreover, $F^J$ is $\Rep(\bW^J,\La)^{\finproj}$-linear.
\end{lem*}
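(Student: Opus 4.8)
The plan is to treat the three assertions separately. Throughout, use Lemma \ref{ss:Drinfeldapplied} to regard $B$ in $\Rep(\bW^J,\La)^{\finproj}$ as an object of $\LocSys((\Div^1_X)^J,\La)$; since that identification is given by pullback it is symmetric monoidal, so under $\otimes_\La$ the $\Rep(\bW^J,\La)^{\finproj}$-module structures on the two sides match, and it suffices to argue with the latter. For preservation of $\Perv(\cHck^J_G,\La)$: given $A$ in $\Perv(\cHck^J_G,\La)$, the projection formula for the closed embeddings $\Gr^J_{G,\leq\mu_\bullet}\hra\Gr^J_G$ shows that $\pi_G^*B\otimes_\La A$ still lies in $D_{\et}(\cHck^J_G,\La)^{\bd}$. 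By Corollary \ref{cor:perversefiberwise}.i) it then suffices to check, for each geometric point $\ov{s}$ of $(\Div^1_X)^J$, that the pullback of $\pi_G^*B\otimes_\La A$ to $\cHck^J_G|_{\ov{s}}$ lies in $\Perv(\cHck^J_G|_{\ov{s}},\La)$. But the pullback of $B$ to $\ov{s}$ is a finite projective $\La$-module $B_{\ov{s}}$, hence a retract of some $\La^m$, so this pullback equals $\ul{B_{\ov{s}}}\otimes_\La(A|_{\cHck^J_G|_{\ov{s}}})$, a retract of $(A|_{\cHck^J_G|_{\ov{s}}})^{\oplus m}$; since $A|_{\cHck^J_G|_{\ov{s}}}$ lies in $\Perv(\cHck^J_G|_{\ov{s}},\La)$ by Corollary \ref{cor:perversefiberwise}.i) and that abelian category is closed under finite direct sums and retracts, the claim follows.

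For preservation of $\Sat(\cHck^J_G,\La)$, let $A$ lie in $\Sat(\cHck^J_G,\La)$. For every $\La$-module $M$ we have $(\pi_G^*B\otimes_\La A)\otimes_\La M=\pi_G^*B\otimes_\La(A\otimes_\La M)$, and $A\otimes_\La M$ lies in $\Perv(\cHck^J_G,\La)$ because $A$ lies in $\Sat$; so the previous step gives $(\pi_G^*B\otimes_\La A)\otimes_\La M\in\Perv(\cHck^J_G,\La)$. It remains to see that $\pi_G^*B\otimes_\La A$ lies in $D_{\et}(\cHck^J_G,\La)^{\ULA}$. Passing to $\cHck^J_{G_F}$ over $(\Div_{X(F)}^1)^J$ as in Definition \ref{ss:splittingextension}.b) — the pullback of $B$ stays \'etale-locally constant with finite projective fibers since $\Div^1_{X(F)}\ra\Div^1_X$ is finite \'etale — we may assume $G$ split. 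As $B$ is \'etale-locally constant on the base with finite projective fibers, over a suitable \'etale cover $U\ra(\Div^1_X)^J$ the pullback of $\pi_G^*B\otimes_\La A$ to $\Gr^J_G$ becomes a retract of a finite direct sum of copies of the pullback of $A$, which is universally locally acyclic over $U$ by \cite[Proposition IV.2.13]{FS21}. Since universal local acyclicity over the base is stable under finite direct sums and retracts and satisfies v-descent on the base \cite[Proposition IV.2.13]{FS21}, we conclude $\pi_G^*B\otimes_\La A\in D_{\et}(\cHck^J_G,\La)^{\ULA}$, hence $\pi_G^*B\otimes_\La A\in\Sat(\cHck^J_G,\La)$.

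Finally, for the $\Rep(\bW^J,\La)^{\finproj}$-linearity of $F^J$, recall from \ref{ss:fiberfunctor} that $F^J(A)=\bigoplus_{d\in\bZ}H^d(\pi_{G!}A)$, where $\pi_{G!}$ denotes pullback along $\Gr^J_G\ra\cHck^J_G$ followed by pushforward along the structure morphism $\Gr^J_G\ra(\Div^1_X)^J$. The pullback of $\pi_G^*B$ along $\Gr^J_G\ra\cHck^J_G$ is the pullback of $B$ along that structure morphism, so the projection formula gives $\pi_{G!}(\pi_G^*B\otimes_\La A)\cong B\otimes_\La\pi_{G!}A$. As $B$ is \'etale-locally a finite projective, hence flat, $\La$-module, $B\otimes_\La(-)$ is exact and commutes with $H^d$, so applying $\bigoplus_{d\in\bZ}H^d$ yields a natural isomorphism
\begin{align*}
F^J(\pi_G^*B\otimes_\La A)\cong B\otimes_\La F^J(A)
\end{align*}
in $\LocSys((\Div^1_X)^J,\La)=\Rep(\bW^J,\La)^{\finproj}$. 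Together with the first paragraph this exhibits $F^J$ as a morphism of $\Rep(\bW^J,\La)^{\finproj}$-module categories, the coherence with the enriched structure of \ref{ss:repsaction} following formally by adjunction. I expect the only real obstacle to be the universal local acyclicity in the second step, which is why the argument reduces to split $G$ and trivializes $B$ \'etale-locally over $(\Div^1_X)^J$.
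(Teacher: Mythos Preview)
Your argument is correct and matches the paper's approach for the preservation of $\Perv$ and for the $\Rep(\bW^J,\La)^{\finproj}$-linearity of $F^J$: both use the fiberwise perversity criterion (Corollary~\ref{cor:perversefiberwise}.i)) for the former and the projection formula for the latter (the paper simply cites \cite[Proposition 22.23]{Sch17}).

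The genuine difference is in the universal local acyclicity step. You reduce to split $G$, trivialize $B$ \'etale-locally on the base, and then use that ULA is stable under retracts and satisfies v-descent on the base. The paper instead reduces to split $G$ and then invokes the constant term criterion (Proposition~\ref{ss:ULAcriterion}): it computes $\CT_B(\pi_G^*B\otimes_\La A)[\deg]\cong B\otimes_\La\CT_B(A)[\deg]$ via Proposition~\ref{ss:fiberfunctor}.i) and the already-established linearity of $F^J$, observes this lies in $D_{\lc}((\Div^1_X)^J,\La)$, and concludes. Your route is more elementary in that it avoids $\CT_B$ entirely, while the paper's keeps everything phrased in terms of the single diagnostic functor $\CT_B$ that drives the rest of the section. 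A small remark: stability of ULA under retracts is perhaps better cited via the dualizability characterization \cite[Theorem IV.2.23]{FS21} than via \cite[Proposition IV.2.13]{FS21}, but the claim is certainly true.
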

\begin{proof}
  Let $B$ be in $\Rep(\bW^J,\La)^{\finproj}$, and let $A$ be in $\Perv(\cHck^J_G,\La)$. For all geometric points $\ov{s}$ of $(\Div_X^1)^J$, the pullback $(\pi^*_GB)|_{\ov{s}}$ is a finite free $\La$-module concentrated in degree $0$, so Corollary \ref{cor:perversefiberwise}.i) implies that $\pi^*_GB\otimes_\La A$ lies in $\Perv(\cHck^J_G,\La)$. Next, the projection formula \cite[Proposition 22.23]{Sch17} shows that $F^J$ is $\Rep(\bW^J,\La)^{\finproj}$-linear. Finally, suppose that $A$ lies in $\Sat(\cHck^J_G,\La)$. Since
  \begin{align*}
    (\pi^*_GB\otimes_\La A)\otimes_\La M=\pi^*_GB\otimes_\La(A\otimes_\La M)
  \end{align*}
  for all $\La$-modules $M$, the above indicates that it suffices to check that $\pi^*_GB\otimes_\La A$ lies in $D_{\et}(\cHck^J_G,\La)^{\ULA}$. Using \cite[Proposition IV.2.13 (ii)]{FS21} and \ref{ss:splittingextension}, we can assume that $G$ is split. Then combining Proposition \ref{ss:fiberfunctor}.i) with the above shows that
  \begin{align*}
    \CT_B(\pi^*_GB\otimes_\La A)[\deg] &\cong F^J(\pi^*_GB\otimes_\La A) = B\otimes_\La F^J(A) \cong B\otimes_\La\CT_B(A)[\deg].
  \end{align*}
Proposition \ref{ss:ULAcriterion} indicates that $\CT_B(A)$ lies in $D_{\lc}((\Div^1_X)^J,\La)$. Since $B$ also lies in $D_{\lc}((\Div^1_X)^J,\La)$, applying Proposition \ref{ss:ULAcriterion} again yields the desired result.
\end{proof}

\subsection{}\label{ss:leftadjointSatake}
We now prove the co-representability results needed to apply Tannakian reconstruction. For all $j$ in $J$, let $\Om_j$ be a finite $\Ga_\infty$-stable downwards-closed subset of $X_*(T_\infty)^+$. Proposition \ref{ss:affineSchubertvarieties}.i) shows that the substack $\bigcup_{\mu_\bullet}\cHck^J_{G_F,\leq\mu_\bullet}\subseteq\cHck^J_{G_F}$, where $\mu_\bullet$ runs over $\prod_{j\in J}\Om_j$, is closed, and it descends to a closed substack $\cHck^J_{G,\Om}\subseteq\cHck^J_G$. Write $\Perv(\cHck^J_{G,\Om}|_Z,\La)$ and $\Sat(\cHck^J_{G,\Om}|_Z,\La)$ for the full subcategories of $\Perv(\cHck^J_G|_Z,\La)$ and $\Sat(\cHck^J_G|_Z,\La)$, respectively, consisting of objects that are supported on $\cHck^J_{G,\Om}|_Z$.

For the rest of this subsection, assume that $\#J=1$. Under the identification from Lemma \ref{ss:Drinfeldapplied}, write $L_\Om:\Rep(\bW,\La)\ra\Perv(\cHck^{\{*\}}_{G,\Om},\La)$ for the left adjoint of $F^{\{*\}}:\Perv(\cHck^{\{*\}}_{G,\Om},\La)\ra\Rep(\bW,\La)$, which exists by the adjoint functor theorem.
\begin{prop*}
The object $L_\Om(\ul\La)$ lies in $\Sat(\cHck^{\{*\}}_{G,\Om},\La)$.
\end{prop*}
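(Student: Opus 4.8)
The plan is to reduce the claim to the analogous statement in Fargues--Scholze \cite{FS21} by working fiberwise over $\ul{\bN\cup\{\infty\}}$, which is the strategy used throughout \S\ref{s:Grassmannians} and \S\ref{s:Satake}. First, I would observe that by Proposition \ref{ss:fiberfunctor}.iii) the functor $F^{\{*\}}$ restricted to $\Sat(\cHck^{\{*\}}_{G,\Om},\La)$ lands in $\LocSys(\Div_X^1,\La)$, which under Lemma \ref{ss:Drinfeldapplied} is identified with $\Rep(\bW,\La)^{\finproj}$; but the adjoint $L_\Om$ is taken with respect to the larger target $\Rep(\bW,\La)$ and larger source $\Perv(\cHck^{\{*\}}_{G,\Om},\La)$. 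So the first genuine point is that $L_\Om(\ul\La)$, a priori only a perverse sheaf supported on $\cHck^{\{*\}}_{G,\Om}$, in fact lies in the Satake subcategory. The standard way to see this (as in \cite{FS21}) is to realize $L_\Om(\ul\La)$ explicitly: it should be built as an iterated extension, or a direct summand of an iterated extension, of the standard objects $\prescript{p}{}H^0(\j_{\mu!}\La[d_\mu])$ and $\prescript{p}{}H^0(\j_{\mu*}\La[d_\mu])$ from Proposition \ref{ss:standardobjects}, for $\mu$ ranging over $\Om$ (or their $\Ga_\infty$-orbits, after descending from $G_F$ to $G$). Since Proposition \ref{ss:standardobjects} already tells us these standard objects lie in $\Sat(\cHck^{\{*\}}_G,\La)$, and since $\Sat(\cHck^{\{*\}}_G|_Z,\La)$ is closed under extensions inside $\Perv$ (using Proposition \ref{ss:ULAcriterion} and Proposition \ref{ss:Satakecategory}, together with the fact that $\LocSys(Z,\La)$ is closed under extensions in $\Shv_{\et}(Z,\La)$) and under direct summands, the result would follow.

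Concretely, the key steps in order: (1) reduce to $G$ split, using that $\Div^1_{X(F)}\to\Div^1_X$ is finite \'etale together with \ref{ss:splittingextension} and the compatibility of $L_\Om$ and $F^{\{*\}}$ with this pullback (which follows from Proposition \ref{ss:texactness}.ii) and the commuting square in \ref{ss:fusion}); (2) with $G$ split, identify $L_\Om(\ul\La)$ with the corresponding projective-generator-type object: since $F^{\{*\}}$ is exact, conservative, and faithful by Proposition \ref{ss:fiberfunctor}.ii), and since $\cHck^{\{*\}}_{G,\Om}$ has only finitely many strata $\cHck^{\{*\}}_{G,\mu}$ with $\mu\in\Om$, the category $\Perv(\cHck^{\{*\}}_{G,\Om},\La)$ has a projective cover of $\ul\La$ under $F^{\{*\}}$ given by a finite successive extension of the $\prescript{p}{}H^0(\j_{\mu!}\La[d_\mu])$'s, built via the excision triangles for the stratification by $\mu$; (3) invoke Proposition \ref{ss:standardobjects} to see each graded piece lies in $\Sat$; (4) conclude using closure of $\Sat(\cHck^{\{*\}}_{G,\Om},\La)$ under extensions and summands inside $\Perv$.

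The main obstacle, I expect, is step (2): pinning down exactly which object $L_\Om(\ul\La)$ is, and in particular that the adjoint exists as an object of $\Perv$ (not just of $D_{\et}(\cHck^{\{*\}}_{G,\Om},\La)^{\bd}$) and is a finite successive extension of the standard perverse sheaves. In \cite{FS21} this is handled by the representability/co-representability analysis around the proof of geometric Satake (the projectivity of $L_\Om$ applied to suitable objects); one needs to check that the highest-weight combinatorics and the finiteness of $\Om$ genuinely force $L_\Om(\ul\La)$ into the $\bd$-subcategory and endow it with a filtration by the $\prescript{p}{}H^0(\j_{\mu!}\La[d_\mu])$. This should go through by the same argument as in \cite{FS21} applied fiberwise, using \ref{ss:Grassmannianspecialization} to transport everything to $\cHck^{\{*\}}_{G_i}$ at geometric points, combined with Corollary \ref{cor:perversefiberwise}.i) to check the perversity statements fiberwise and Proposition \ref{ss:Satakecategory} to check the universal-local-acyclicity/$\LocSys$ condition. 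Once step (2) is in place, steps (1), (3), (4) are essentially formal.
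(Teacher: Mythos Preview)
Your proposal is correct and takes essentially the same approach as the paper: reduce to the split case, translate the Satake condition via Proposition~\ref{ss:ULAcriterion} and Proposition~\ref{ss:Satakecategory} into a $\LocSys$ condition on $\CT_B(L_\Om(\ul\La))[\deg]$, and then filter $L_\Om(\ul\La)$ by excision along the stratification by $\mu\in\Om$, reducing the one nonformal step (vanishing of the extra term in the excision long exact sequence) to \cite[Proposition~VI.10.1]{FS21} fiberwise via \ref{ss:Grassmannianspecialization}. The one ingredient you leave implicit but the paper makes explicit is the identification of the graded pieces: the paper computes $\j_\mu^*L_\Om(\ul\La)$ by adjunction, namely $\Hom(\j_\mu^*L_\Om(\ul\La),\La[d_\mu])=\Hom(L_\Om(\ul\La),\prescript{p}{}H^0(\j_{\mu*}\La[d_\mu]))=F_Z(\prescript{p}{}H^0(\j_{\mu*}\La[d_\mu]))$, which is finite free by Proposition~\ref{ss:standardobjects}, so that each graded piece is a finite direct sum of copies of $\prescript{p}{}H^0(\j_{\mu!}\La[d_\mu])$ rather than a single copy.
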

\begin{proof}
  Corollary \ref{cor:perversefiberwise}.i) and \cite[Proposition IV.2.13 (ii)]{FS21} imply that we can replace $\Div^1_X$ with an \'etale cover, so \ref{ss:splittingextension} indicates that we can assume that $G$ is split. By Proposition \ref{ss:ULAcriterion} and Proposition \ref{ss:Satakecategory}, it suffices to show that $\CT_B(L_\Om(\ul\La))[\deg]$ lies in $\Rep(\bW,\La)^{\finproj}$. Because this is checked on underlying sheaves of $\La$-modules on $\bN\cup\{\infty\}$, Corollary \ref{ss:hyperbolicoutput} indicates that we can work over $Z=\Spd{C}$. Moreover, Proposition \ref{ss:fiberfunctor}.i) lets us replace $\CT_B(L_\Om(\ul\La))[\deg]$ with $F_Z(L_\Om(\ul\La))$, and it suffices to consider $\La=\bZ/\ell^m$.

  For all $\mu$ in $X_*(T_\infty)$, we claim that $\j_\mu^*L_\Om(\ul\La)$ in $D_{\et}(\cHck^{\{*\}}_{G,\mu}|_Z,\La)$ is a constant finite free $\La$-module concentrated in degree $-d_\mu$. Since $\La=\bZ/\ell^m$, Proposition \ref{ss:affineSchubertcells} implies that it suffices to show that $\Hom(\j_\mu^*L_\Om(\ul\La),\La[d_\mu])$ is a constant finite free $\La$-module. By adjunction and Corollary \ref{cor:perversefiberwise}.ii), we have
  \begin{align*}
    \Hom(\j_\mu^*L_\Om(\ul\La),\La[d_\mu]) &= \Hom(L_\Om(\ul\La),\j_{\mu*}\La[d_\mu]) = \Hom(L_\Om(\ul\La),\prescript{p}{}H^0(\j_{\mu*}\La[d_\mu]))\\
    &= \Hom(\ul\La,F_Z(\prescript{p}{}H^0(\j_{\mu*}\La[d_\mu]))) = F_Z(\prescript{p}{}H^0(\j_{\mu*}\La[d_\mu])),
  \end{align*}
so the claim follows from Proposition \ref{ss:standardobjects}.
  
Let $\mu$ be a maximal element of $\Om$. The excision exact triangle associated with
\begin{align*}
\cHck^{\{*\}}_{G,\mu}\hra \cHck^{\{*\}}_{G,\Om} \hla \cHck^{\{*\}}_{G,\Om-\{\mu\}}
\end{align*}
and $L_\Om(\ul\La)$ yields the perverse cohomology long exact sequence
\begin{align*}
\xymatrix{0\ar[r] & K\ar[r] & \prescript{p}{}H^0(\j_{\mu!}\j_\mu^*L_\Om(\ul\La))\ar[r] & L_\Om(\ul\La)\ar[r] & L_{\Om-\{\mu\}}(\ul\La)\ar[r] &0}
\end{align*}
for some $K$ in $\Perv(\cHck^{\{*\}}_{G,\Om}|_Z,\La)$. The claim and Proposition \ref{ss:standardobjects} indicate that $F_Z(\prescript{p}{}H^0(\j_{\mu!}\j_\mu^*L_\Om(\ul\La)))$ is locally constant with finite free fibers, and by induction on $\#\Om$, we can assume that the same holds for $F_Z(L_{\Om-\{\mu\}}(\ul\La))$. Hence Proposition \ref{ss:fiberfunctor}.ii) implies that it suffices to show that $K$ is zero.

Now \cite[Proposition 14.3]{Sch17} shows that it suffices to check on geometric points $\ov{s}$ of $\cHck^{\{*\}}_{G,\Om}|_Z$. The image of $\ov{s}$ in $\bN\cup\{\infty\}$ equals $\{i\}$ for some $i$ in $\bN\cup\{\infty\}$, and under the identification $\cHck^{\{*\}}_G|_{\ul{\{i\}}}=\cHck^{\{*\}}_{G_i}$ from \ref{ss:Grassmannianspecialization}, combining Proposition \ref{ss:fiberfunctor}.i) with Corollary \ref{ss:hyperbolicoutput} implies that $L_\Om$ is compatible with pulling back to $\cHck^{\{*\}}_{G_i}$. Therefore the desired result follows from the proof of \cite[Proposition VI.10.1]{FS21}.
\end{proof}

\subsection{}\label{cor:fiberfunctorrep}
For all $j$ in $J$, Proposition \ref{ss:leftadjointSatake} indicates that $L_{\Om_j}(\ul\La)$ lies in $\Sat(\cHck^{\{j\}}_{G,\Om_j},\La)$. Hence we can form $\bigast_{j\in J}L_{\Om_j}(\ul\La)$ in $\Sat(\cHck^J_{G,\Om},\La)$. 
\begin{cor*}
  Under the identification from Lemma \ref{ss:Drinfeldapplied}, the functor
  \begin{align*}
    F^J:\Sat(\cHck^J_{G,\Om},\La)\ra\Rep(\bW^J,\La)^{\finproj}
  \end{align*}
  is co-represented by $\bigast_{j\in J}L_{\Om_j}(\ul\La)$.
\end{cor*}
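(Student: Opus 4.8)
The plan is to first establish the case $\#J=1$ directly from the defining adjunction for $L_\Om$ and the linearity of the fiber functor, and then to deduce the general case by the fusion formalism of \S\ref{s:Satake}, reducing to the situation of \cite{FS21} at geometric points.

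Suppose first that $\#J=1$. By definition $L_\Om$ is a left adjoint of $F^{\{*\}}\colon\Perv(\cHck^{\{*\}}_{G,\Om},\La)\ra\Rep(\bW,\La)$. Every $B$ in $\Rep(\bW,\La)^{\finproj}$ is dualizable, so $\pi^*_GB\otimes_\La(-)$ is left adjoint to $\pi^*_GB^\vee\otimes_\La(-)$; by Lemma \ref{ss:repsaction} the latter preserves $\Perv(\cHck^{\{*\}}_{G,\Om},\La)$ and is intertwined by $F^{\{*\}}$ with $B^\vee\otimes_\La(-)$. Hence, for $A$ in $\Sat(\cHck^{\{*\}}_{G,\Om},\La)$ and $B$ in $\Rep(\bW,\La)^{\finproj}$, I would compute
\begin{align*}
  \Hom_{D_{\et}}(\pi^*_GB\otimes_\La L_\Om(\ul\La),A) &= \Hom_{D_{\et}}(L_\Om(\ul\La),\pi^*_GB^\vee\otimes_\La A) \\
  &= \Hom_{\Rep(\bW,\La)}(\ul\La,F^{\{*\}}(\pi^*_GB^\vee\otimes_\La A)) \\
  &= \Hom_{\Rep(\bW,\La)}(\ul\La,B^\vee\otimes_\La F^{\{*\}}(A)) \\
  &= \Hom_{\Rep(\bW,\La)}(B,F^{\{*\}}(A)),
\end{align*}
using dualizability of $B$ in the first and last lines, the adjunction $(L_\Om,F^{\{*\}})$ in the second, and the $\Rep(\bW,\La)^{\finproj}$-linearity of $F^{\{*\}}$ (Lemma \ref{ss:repsaction}) in the third. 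By \ref{ss:repsaction} the enriched Hom $\ul\Hom(L_\Om(\ul\La),A)$ is the object of $\Rep(\bW,\La)^{\finproj}$ co-representing $B\mapsto\Hom_{D_{\et}}(\pi^*_GB\otimes_\La L_\Om(\ul\La),A)$, while $F^{\{*\}}(A)$ also lies in $\Rep(\bW,\La)^{\finproj}$ by Proposition \ref{ss:fiberfunctor}.iii; so the displayed identification and Yoneda in $\Rep(\bW,\La)^{\finproj}$ give a natural isomorphism $F^{\{*\}}(A)\cong\ul\Hom(L_\Om(\ul\La),A)$, settling $\#J=1$.

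For general $J$, let $\mathtt{P}$ be the finest ordered partition of $J$, with open embedding $\j_{\mathtt{P}}\colon(\Div^1_X)^{\mathtt{P}}\hra(\Div^1_X)^J$. Under the identification $\cHck^J_G|_{(\Div^1_X)^{\mathtt{P}}}=\big(\prod_{j\in J}\cHck^{\{j\}}_G\big)\big|_{(\Div^1_X)^{\mathtt{P}}}$ of \ref{ss:diagonalstratification}, the object $\j^*_{\mathtt{P}}\big(\bigast_{j\in J}L_{\Om_j}(\ul\La)\big)$ is $\big(\bigboxtimes_{j\in J}L_{\Om_j}(\ul\La)\big)\big|_{(\Div^1_X)^{\mathtt{P}}}$ by construction of $\bigast$. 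Since $\j_{\mathtt{P}}$ is an open embedding, $\j^*_{\mathtt{P}}$ commutes with $\pi^*_G(-)\otimes_\La(-)$ (hence with the formation of the enriched Hom) and with $\pi_{G!}$ (hence with $F$); combining this with the K\"unneth formula \cite[Proposition 22.19]{Sch17}, the compatibility of $F^J$ with fusion (Proposition \ref{ss:fusionconvolution}.i), and the case $\#J=1$, I would construct a comparison natural transformation between $F^J(-)$ and $\ul\Hom\big(\bigast_{j\in J}L_{\Om_j}(\ul\La),-\big)$ as functors $\Sat(\cHck^J_{G,\Om},\La)\ra\Rep(\bW^J,\La)^{\finproj}$; both land in $\LocSys((\Div^1_X)^J,\La)$, and by Lemma \ref{ss:restrictionfullyfaithful} restriction to $\LocSys((\Div^1_X)^{\mathtt{P}},\La)$ is fully faithful, so it suffices to build the transformation after restriction to $(\Div^1_X)^{\mathtt{P}}$. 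Full faithfulness of this restriction also reflects isomorphisms, so it remains to check that the comparison is an isomorphism after pullback to every geometric point $\ov{s}$ of $(\Div^1_X)^{\mathtt{P}}$; writing $\{i\}$ for the image of $\ov{s}$ in $\bN\cup\{\infty\}$, this reduces by \ref{ss:Grassmannianspecialization} to the corresponding co-representability statement for $G_i$ over $E_i$ proved in \cite{FS21}, using also that $L_\Om$ and $F^J$ are compatible with pullback to geometric points (as in the proof of Proposition \ref{ss:leftadjointSatake} and by Proposition \ref{ss:fusionconvolution}.i).

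The main obstacle will be the general-$J$ step, and specifically the construction of the comparison natural transformation: an arbitrary object of $\Sat(\cHck^J_G|_{(\Div^1_X)^{\mathtt{P}}},\La)$ is not a fusion product, so one must patch the box-product case together in an $A$-natural way compatible with the symmetric monoidal structure of \S\ref{s:Satake}, and it is only after descent to a geometric point and appeal to \cite{FS21} that the comparison is seen to be an isomorphism. By contrast, the $\#J=1$ step is essentially formal once Lemma \ref{ss:repsaction} and the defining adjunction of $L_\Om$ are available.
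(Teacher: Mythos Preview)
Your $\#J=1$ argument is correct; in fact it works verbatim for arbitrary $J$ once you note that the left adjoint $L_\Om$ of $F^J:\Perv(\cHck^J_{G,\Om},\La)\to\Rep(\bW^J,\La)$ exists (by the same adjoint functor theorem) for every $J$, not just $\#J=1$. Your computation then yields the enriched identification $F^J(A)=\ul\Hom(L_\Om(\ul\La),A)$ for all $A$. This is the paper's first move, and it is the step you are missing in your general-$J$ strategy.

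The gap you correctly flag --- constructing the comparison transformation for a general (non-box-product) $A$ via ``K\"unneth and patching'' --- is precisely what the paper avoids by comparing \emph{objects} rather than \emph{functors}. From the box product of the $\#J=1$ unit morphisms one obtains a single morphism $L_\Om(\ul\La)\to\bigast_{j\in J}L_{\Om_j}(\ul\La)$ in $\Perv(\cHck^J_{G,\Om},\La)$, and it suffices to show that its restriction to $(\Div^1_X)^{\mathtt{P}}$ is an isomorphism. This one checks on geometric points of $\cHck^J_G|_{(\Div^1_X)^{\mathtt{P}}}$ via \ref{ss:Grassmannianspecialization} and the proof of \cite[Proposition VI.10.1]{FS21} --- your reduction, but applied to one morphism of perverse sheaves rather than an $A$-natural family. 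Once this is done, the chain
\[
F^J(A)=\Hom(L_\Om(\ul\La),A)=\Hom(\j_{\mathtt{P}}^*L_\Om(\ul\La),\j_{\mathtt{P}}^*A)=\Hom\big(\j_{\mathtt{P}}^*\bigast_{j}L_{\Om_j}(\ul\La),\j_{\mathtt{P}}^*A\big)=\Hom\big(\bigast_{j}L_{\Om_j}(\ul\La),A\big)
\]
(using Lemma \ref{ss:restrictionfullyfaithful} and Corollary \ref{cor:perversefiberwise}.ii) to pass back and forth across the open locus) finishes. The missing idea in your plan is exactly this passage through the general-$J$ left adjoint $L_\Om$, which converts the difficult ``natural transformation for all $A$'' problem into the tractable ``one morphism of perverse sheaves'' problem; K\"unneth alone cannot do this for you.
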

\begin{proof}
We have the unit morphism $\ul\La\ra F^{\{j\}}(L_{\Om_j}(\ul\La))$, so taking $\bigboxtimes_{j\in J}$ and applying Proposition \ref{ss:fusionconvolution}.i) yields an element of
  \begin{align*}
    \Hom(\ul\La,F^J(\bigast_{j\in J}L_{\Om_j}(\ul\La))) = \Hom(L_\Om(\ul\La),\bigast_{j\in J}L_{\Om_j}(\ul\La)).
  \end{align*}
  Write $\mathtt{P}$ for the partition of $J$ into singletons. We claim that the resulting morphism
  \begin{align*}
    \j_{\mathtt{P}}^*L_{\Om}(\ul\La)\ra\j^*_{\mathtt{P}}(\bigast_{j\in J}L_{\Om_j}(\ul\La))
  \end{align*}
  is an isomorphism. To see this, \cite[Proposition 14.3]{Sch17} implies that it suffices to check on geometric points $\ov{s}$ of $\cHck^J_G|_{(\Div^1_X)^{\mathtt{P}}}$, so \ref{ss:splittingextension} indicates that we can assume that $G$ is split. Now the image of $\ov{s}$ in $\bN\cup\{\infty\}$ equals $\{i\}$ for some $i$ in $\bN\cup\{\infty\}$, and under the identification $\cHck^J_G|_{\ul{\{i\}}}=\cHck^J_{G_i}$ from \ref{ss:Grassmannianspecialization}, the proof of Proposition \ref{ss:leftadjointSatake} shows that $L_\Om$ is compatible with pulling back to $\cHck^J_{G_i}$. Therefore the claim follows from the proof of \cite[Proposition VI.10.1]{FS21}.

Finally, for all $A$ in $\Sat(\cHck^J_{G,\Om},\La)$ we have
  \begin{align*}
    F^J(A) &= \Hom(\ul\La,F^J(A)) = \Hom(L_\Om(\ul\La),A) \\
           &=\Hom(L_\Om(\ul\La),\prescript{p}{}H^0(\j_{\mathtt{P}*}\j_{\mathtt{P}}^*A)) & \mbox{by Lemma \ref{ss:restrictionfullyfaithful}} \\
           &=\Hom(L_\Om(\ul\La),\j_{\mathtt{P}*}\j_{\mathtt{P}}^*A) & \mbox{by Corollary \ref{cor:perversefiberwise}.ii)}\\
           &=\Hom(\j_{\mathtt{P}}^*L_\Om(\ul\La),\j_{\mathtt{P}}^*A) \\
           &=\Hom(\j_{\mathtt{P}}^*(\bigast_{j\in J}L_{\Om_j}(\ul\La)),\j_{\mathtt{P}}^*A) &\mbox{by the claim} \\
    &=\Hom(\bigast_{j\in J}L_{\Om_j}(\ul\La),A) &\mbox{by Lemma \ref{ss:restrictionfullyfaithful}},
  \end{align*}
as desired.
\end{proof}

\subsection{}\label{ss:modulesandspaces}
To identify the Hopf algebra arising from Tannakian reconstruction, we want to work with $\bQ_\ell$-coefficients. This requires us to first work with $\bZ_\ell$-coefficients, so let us explicate the $\bZ_\ell$-version of the target of our fiber functor $F^J$.

Later, it will be convenient to work in the following generality. Let $L/\bQ_\ell$ be a finite extension, write $\cO_L$ for its ring of integers, and write $\la$ for the maximal ideal of $\cO_L$. Write $\Rep(\bW^J,\ul{\cO_L})^{\finproj}$ for the inverse limit $\textstyle\varprojlim_m\Rep(\bW^J,\cO_L/\la^m)^{\finproj}$.
\begin{lem*}\hfill
  \begin{enumerate}[i)]
  \item The category
    \begin{align*}
      \{\mbox{finite projective }\Cont(\bN\cup\{\infty\},\cO_L)\mbox{-modules}\}
    \end{align*}
    is naturally equivalent to the category of topological $\cO_L$-module topological spaces $V$ over $\bN\cup\{\infty\}$ that, open-locally on $\bN\cup\{\infty\}$, are finite free topological $\cO_L$-module topological spaces.
  \item The category $\Rep(\bW^J,\ul{\cO_L})^{\finproj}$ is naturally equivalent to the category of $V$ as in part i) along with a continuous $\cO_L$-linear action $\bW^J\times_{\bN\cup\{\infty\}}V\ra V$ over $\bN\cup\{\infty\}$.
  \end{enumerate}
\end{lem*}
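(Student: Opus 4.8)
The plan is to reduce the whole statement to the level of $\cO_L/\la^m$-coefficients, using the defining presentation $\Rep(\bW^J,\ul{\cO_L})^{\finproj}=\varprojlim_m\Rep(\bW^J,\cO_L/\la^m)^{\finproj}$ and the analogous presentation $\Cont(\bN\cup\{\infty\},\cO_L)=\varprojlim_m\Cont(\bN\cup\{\infty\},\cO_L/\la^m)$, which is $\la$-adically complete because $\bN\cup\{\infty\}$ is compact.

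For part i), I would first treat a single $m$. A finite projective $\Cont(\bN\cup\{\infty\},\cO_L/\la^m)$-module is finitely presented, so it is base changed from a finite projective module over $\prod_{j=1}^N\cO_L/\la^m$ for some finite clopen partition $\{U_j\}_{j=1}^N$ of $\bN\cup\{\infty\}$; since $\cO_L/\la^m$ is local this is a tuple of finite free modules, hence corresponds to a locally constant rank function on $\bN\cup\{\infty\}$ and, via the \'etal\'e space construction recalled in \ref{ss:profinitesetsheaf}, to a locally constant \'etale $\cO_L/\la^m$-module topological space over $\bN\cup\{\infty\}$ with finite free fibers; conversely the global sections of any such \'etale space form a finite projective module (cut out by the idempotents of the partition). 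Taking $\varprojlim_m$ then sends a finite projective $\Cont(\bN\cup\{\infty\},\cO_L)$-module $P$ — which is $\la$-adically complete, hence $P=\varprojlim_mP/\la^mP$ — to the topological $\cO_L$-module topological space $V=\varprojlim_mV_m$ over $\bN\cup\{\infty\}$, where $V_m$ is the \'etale space attached to $P/\la^mP$; its rank function $i\mapsto\rk_{\cO_L}V_i$ is that of $P/\la P$, hence locally constant, which is exactly the condition that $V$ be open-locally finite free. In the other direction I would set $P=\varprojlim_m\Gamma(\bN\cup\{\infty\},V/\la^mV)$; the point requiring work is that this $P$ is again finite projective, which I would check on a clopen $U\subseteq\bN\cup\{\infty\}$ of constant rank $n$ by lifting a basis successively up the tower $\{(P/\la^mP)|_U\}_m$ (surjective with nilpotent kernels, so Nakayama applies at each stage) and passing to the limit using completeness, then gluing the finitely many resulting free pieces. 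One checks directly that these constructions are mutually inverse, the verification being local where it reduces to $\cO_L^n=\varprojlim_m(\cO_L/\la^m)^n$ and its reductions.

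For part ii), by the definition of $(-)^{\finproj}$ in \ref{ss:Drinfeldapplied} together with Definition \ref{ss:smoothreps}, an object of $\Rep(\bW^J,\cO_L/\la^m)^{\finproj}$ is precisely a locally constant \'etale $\cO_L/\la^m$-module topological space over $\bN\cup\{\infty\}$ with finite free fibers equipped with a continuous $\cO_L/\la^m$-linear action of $\bW^J$ over $\bN\cup\{\infty\}$; combined with the single-$m$ case of part i), this identifies $\Rep(\bW^J,\cO_L/\la^m)^{\finproj}$ with the category of "$V_m$ as in part i) over $\cO_L/\la^m$, plus continuous action". It then remains to match $\varprojlim_m$ of these with the category of "$V$ as in part i) over $\cO_L$, plus continuous $\cO_L$-linear action". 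Going forward, I would reduce $V$ and its action mod $\la^m$: the quotient map $V\to V/\la^mV$ is open (because it is so open-locally), hence a quotient map, and a product of a quotient map with the identity on $\bW^J$ is again a quotient map, so the $\cO_L$-linear action descends continuously to $V/\la^mV$, which is \'etale over $\bN\cup\{\infty\}$. Going backward, given a compatible system $(V_m,a_m)_m$ I would use that $\bW^J\times_{\bN\cup\{\infty\}}(-)$ commutes with the cofiltered limit $\varprojlim_mV_m$ to assemble the $a_m$ into a continuous $\cO_L$-linear action on $V=\varprojlim_mV_m$; these two constructions are mutually inverse by the same local computation as in part i).

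I expect the main obstacle to be the inverse-limit step of part i) — establishing $\la$-adic completeness of $\Cont(\bN\cup\{\infty\},\cO_L)$ from compactness of $\bN\cup\{\infty\}$ and running the basis-lifting argument carefully, since $\Cont(\bN\cup\{\infty\},\cO_L)$ is non-noetherian — together with the topological bookkeeping in part ii) (that $\mathrm{id}_{\bW^J}\times p$ is a quotient map and that the fiber products over $\bN\cup\{\infty\}$ commute with the relevant inverse limits). Once the single-level picture is in place via \ref{ss:profinitesetsheaf} and Definition \ref{ss:smoothreps}, the remainder is formal.
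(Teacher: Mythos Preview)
Your proposal is correct and follows essentially the same approach as the paper: reduce to the level of $\cO_L/\la^m$-coefficients, establish the equivalence there, and then take $\varprojlim_m$. The only differences are that the paper outsources the single-$m$ step to \cite[Lemma B.2.5]{HKW17} (whereas you argue it directly via idempotents and the \'etal\'e space construction), and the paper compresses the inverse-limit step to the single phrase ``taking $\varprojlim_m$ yields part i) and part ii)'' without spelling out the $\la$-adic completeness of $\Cont(\bN\cup\{\infty\},\cO_L)$ or the basis-lifting argument you outline; so your version is more detailed but not different in substance.
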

\begin{proof}
  All finite projective $\cO_L/\la^m$-modules are free, and $\cO_L/\la^m$ is discrete. Therefore \cite[Lemma B.2.5]{HKW17} implies that the category
\begin{align*}
  \{\mbox{finite projective }\Cont(\bN\cup\{\infty\},\cO_L/\la^m)\mbox{-modules}\}
\end{align*}
is naturally equivalent to the category of $\cO_L/\la^m$-module topological spaces $V_m$ over $\bN\cup\{\infty\}$ that, open-locally on $\bN\cup\{\infty\}$, are finite free $\cO_L/\la^m$-module topological spaces. This identifies $\Rep(\bW^J,\cO_L/\la^m)^{\finproj}$ with the category of such $V_m$ along with a continuous $\cO_L/\la^m$-linear action $\bW^J\times_{\bN\cup\{\infty\}}V_m\ra V_m$. Finally, taking $\varprojlim_m$ yields part i) and part ii).
\end{proof}

\subsection{}
We now put everything together to apply Tannakian reconstruction. Write $\Sat(\cHck^J_G,\cO_L)$ for the inverse limit $\textstyle\varprojlim_m\Sat(\cHck^J_G,\cO_L/\la^m)$.
\begin{thm*}
  The $\Rep(\bW^J,\ul{\bZ_\ell})^{\finproj}$-enriched symmetric monoidal category $\Sat(\cHck^J_G,\bZ_\ell)$ is naturally equivalent to the category of representations of the Hopf algebra
  \begin{align*}
    \cH^J_G\coloneqq\varinjlim_{\{\Om_j\}_{j\in J}}F^J(\bigast_{j\in J}L_{\Om_j}(\ul{\bZ_\ell}))\in\Ind(\Rep(\bW^J,\ul{\bZ_\ell})^{\finproj})
  \end{align*}
in $\Rep(\bW^J,\ul{\bZ_\ell})^{\finproj}$, where $\{\Om_j\}_{j\in J}$ runs over collections of finite $\Ga_\infty$-stable downwards-closed subsets of $X_*(T_\infty)^+$ indexed by $J$. Moreover, we have $\cH^J_G=\bigotimes_{j\in J}\cH^{\{j\}}_G$.
\end{thm*}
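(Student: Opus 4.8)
The plan is to apply a relative form of Tannakian reconstruction, with the enriching category $\Rep(\bW^J,\ul{\bZ_\ell})^{\finproj}$ playing the role of the base, following \cite[Section VI.11]{FS21}. The necessary input has already been assembled: by Proposition \ref{ss:fusionconvolution}.ii) the pair $(\Sat(\cHck^J_G,\La),\star)$ is a symmetric monoidal abelian category; by Proposition \ref{ss:dualizable} every object is $\star$-dualizable; by Lemma \ref{ss:repsaction} the category $\Sat(\cHck^J_G,\La)$ is $\Rep(\bW^J,\La)^{\finproj}$-enriched and $F^J$ is $\Rep(\bW^J,\La)^{\finproj}$-linear; by Proposition \ref{ss:fiberfunctor} and Proposition \ref{ss:fusionconvolution}.ii) the functor $F^J$ is exact, faithful, conservative, and symmetric monoidal, with $\otimes_\La$ on the target $\LocSys((\Div^1_X)^J,\La)\cong\Rep(\bW^J,\La)^{\finproj}$ of Lemma \ref{ss:Drinfeldapplied}; and by Proposition \ref{ss:fiberfunctor}.iii) the category $\Sat(\cHck^J_G,\La)$ admits and $F^J$ reflects coequalizers of $F^J$-split pairs. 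These are exactly the hypotheses of the Tannakian reconstruction theorem in the enriched form used in \cite[Section VI.11]{FS21}.

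I would first treat $\ell$-power torsion coefficients $\La=\bZ/\ell^m$. By Corollary \ref{cor:fiberfunctorrep} the restriction of $F^J$ to $\Sat(\cHck^J_{G,\Om},\La)$ is co-represented by $\bigast_{j\in J}L_{\Om_j}(\ul\La)$; writing $\Sat(\cHck^J_G,\La)=\varinjlim_\Om\Sat(\cHck^J_{G,\Om},\La)$ and passing to the filtered colimit over the (countable) poset of collections $\{\Om_j\}_{j\in J}$ produces the Ind-object $\cH^J_G$, which co-represents $F^J$ on all of $\Sat(\cHck^J_G,\La)$. The symmetric monoidal structure of Proposition \ref{ss:fusionconvolution}.ii) equips $\cH^J_G$ with a comultiplication and counit, and the duality of Proposition \ref{ss:dualizable} with an antipode, so $\cH^J_G$ is a Hopf algebra. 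The enriched reconstruction argument then yields an equivalence of $\Rep(\bW^J,\La)^{\finproj}$-enriched symmetric monoidal categories between $\Sat(\cHck^J_G,\La)$ and the category of $\cH^J_G$-comodules whose underlying object lies in $\Rep(\bW^J,\La)^{\finproj}$; dualizing comodules to modules, using that the fibers are finite projective, gives the category of representations of $\cH^J_G$ in $\Rep(\bW^J,\La)^{\finproj}$.

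To obtain the statement over $\bZ_\ell$, I would pass to the limit over $m$: one has $\Sat(\cHck^J_G,\bZ_\ell)=\varprojlim_m\Sat(\cHck^J_G,\bZ/\ell^m)$ by definition and $\Rep(\bW^J,\ul{\bZ_\ell})^{\finproj}=\varprojlim_m\Rep(\bW^J,\bZ/\ell^m)^{\finproj}$ by Lemma \ref{ss:modulesandspaces}, so it suffices that the equivalences above, and the Hopf algebras $\cH^J_G$, are compatible with reduction mod $\ell^m$. This reduces to the compatibility of $L_\Om$ and $F^J$ with reduction mod $\ell^m$, which follows from the proof of Proposition \ref{ss:leftadjointSatake} (carried out with $\ell$-power torsion coefficients throughout) and Lemma \ref{ss:repsaction}; granting this, the filtered colimit defining $\cH^J_G$ over $\bZ_\ell$ agrees with $\varprojlim_m$ of the corresponding colimits, since it is taken over a countable poset with terms in $\Rep^{\finproj}$.

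For the decomposition $\cH^J_G=\bigotimes_{j\in J}\cH^{\{j\}}_G$, I would combine Corollary \ref{cor:fiberfunctorrep} with the commutative square of Proposition \ref{ss:fusionconvolution}.i), whose bottom arrow is the external tensor product $\boxtimes$: this identifies $F^J(\bigast_{j\in J}L_{\Om_j}(\ul{\bZ_\ell}))$ naturally with $\bigboxtimes_{j\in J}F^{\{j\}}(L_{\Om_j}(\ul{\bZ_\ell}))$ in $\Rep(\bW^J,\ul{\bZ_\ell})^{\finproj}$, the $j$-th factor acting through the $j$-th projection $\bW^J\to\bW$. Since the index poset is the product $\prod_{j\in J}\{\Om_j\}$ and external tensor product commutes with filtered colimits in each variable, the colimit factors, giving $\cH^J_G\cong\bigboxtimes_{j\in J}\cH^{\{j\}}_G$; one checks this identification intertwines the comultiplications, counits, and antipodes of Proposition \ref{ss:fusionconvolution}.ii) and Proposition \ref{ss:dualizable}, so it is an isomorphism of Hopf algebras, which is the asserted tensor product. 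I expect the main obstacle to be the precise formulation and verification of the relative reconstruction theorem over the non-classical base $\Rep(\bW^J,\ul{\bZ_\ell})^{\finproj}$ — in particular matching the coequalizer-reflection hypothesis of Proposition \ref{ss:fiberfunctor}.iii) with what the reconstruction theorem demands — together with the bookkeeping needed to realize $\cH^J_G$ over $\bZ_\ell$ as the inverse limit of its mod-$\ell^m$ reductions.
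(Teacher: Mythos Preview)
Your proposal is correct and follows essentially the same strategy as the paper: verify the hypotheses of the enriched Tannakian reconstruction theorem \cite[Proposition VI.10.2]{FS21} (rigidity via Proposition \ref{ss:dualizable}, symmetric monoidality of $F^J$ via Proposition \ref{ss:fusionconvolution}, reflection of $F^J$-split coequalizers via Proposition \ref{ss:fiberfunctor}.iii), and co-representability on each $\Sat(\cHck^J_{G,\Om},-)$ via Corollary \ref{cor:fiberfunctorrep}), then deduce the tensor decomposition from Proposition \ref{ss:fusionconvolution}.i).

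The only difference is organizational: you work at each finite level $\La=\bZ/\ell^m$ and then pass to the inverse limit, whereas the paper applies \cite[Proposition VI.10.2]{FS21} directly to the $\bZ_\ell$-category $\Sat(\cHck^J_G,\bZ_\ell)=\varprojlim_m\Sat(\cHck^J_G,\bZ/\ell^m)$. Since each hypothesis (rigidity, symmetric monoidality, co-representability, split-coequalizer reflection) was already proved levelwise and is visibly stable under this inverse limit, the paper's route avoids the extra bookkeeping you flag at the end; your worry about the ``non-classical base'' is resolved simply by citing the precise reconstruction statement \cite[Proposition VI.10.2]{FS21} rather than the surrounding Section VI.11.
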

\begin{proof}
  The second statement follows from Proposition \ref{ss:fusionconvolution}.i), so we focus on the first statement. We claim that the category $\Sat(\cHck^J_G,\bZ_\ell)$ and the functor
  \begin{align*}
    F^J:\Sat(\cHck^J_G,\bZ_\ell)\ra\Rep(\bW^J,\ul{\bZ_\ell})^{\finproj}
  \end{align*}
  satisfy the conditions in \cite[Proposition VI.10.2]{FS21}. Proposition \ref{ss:dualizable} indicates that $\Sat(\cHck^J_G,\bZ_\ell)$ is rigid, and Proposition \ref{ss:fusionconvolution} indicates that $\Sat(\cHck^J_G,\bZ_\ell)$ and $F^J$ are symmetric monoidal. Moreover, $\Sat(\cHck^J_G,\bZ_\ell)$ admits and $F^J$ reflects coequalizers of $F^J$-split pairs by Proposition \ref{ss:fiberfunctor}.iii). Finally, Proposition \ref{ss:affineSchubertvarieties}.iii) shows that $\Sat(\cHck^J_G,\bZ_\ell)$ equals the filtered union $\bigcup_{\{\Om_j\}_{j\in J}}\Sat(\cHck^J_{G,\Om},\bZ_\ell)$, and Corollary \ref{cor:fiberfunctorrep} shows that the restriction of $F^J$ to $\Sat(\cHck^J_{G,\Om},\bZ_\ell)$ is co-represented by $\bigast_{j\in J}L_{\Om_j}(\ul{\bZ_\ell})$. This yields the claim.

The claim and \cite[Proposition VI.10.2]{FS21} yield the first statement.
\end{proof}

\subsection{}\label{ss:Lgroup}
We have the following version of the $L$-group for $G$ over $\bN\cup\{\infty\}$. Recall from \ref{ss:groupoverinfty} the pinned split connected reductive group $(G^{\s},T^{\s},X^*(T^{\s}),\De,\{x_{\wt{a}}\}_{\wt{a}\in\De})$ over $\bZ$ and the homomorphism
\begin{align*}
\de:\Gal(F_\infty/E_\infty)\ra\Aut(G^{\s},T^{\s},X^*(T^{\s}),\De,\{x_{\wt{a}}\}_{\wt{a}\in\De}).
\end{align*}
Write $\wh{G}$ for the associated Langlands dual over $\bZ_\ell$, and topologize $\sO_{\wh{G}}$ as the filtered union of finite free $\bZ_\ell$-submodules with the $\ell$-adic topology. Then $\de$ induces a continuous $\bZ_\ell$-linear action $a:\Gal(F_\infty/E_\infty)\times\sO_{\wh{G}}\ra\sO_{\wh{G}}$. Write $\wh{G}_{\ad}$ for the adjoint group of $\wh{G}$, and note that the composition
\begin{align*}
\xymatrix{\bZ\ar[r]^-{q^{-1}} & \bZ_\ell^\times\ar[r]^-{\rho_{\ad}} & \wh{G}_{\ad}(\bZ_\ell)}
\end{align*}
induces another such action $c$ of $\bZ$ on $\sO_{\wh{G}}$ that commutes with $a$.

Write $\sO_{\wh\bG}$ for the topological $\bZ_\ell$-module topological space over $\bN\cup\{\infty\}$ given by $\sO_{\wh{G}}\times(\bN\cup\{\infty\})$. Because $E\ra F$ induces a morphism $\bW\ra\Gal(F_\infty/E_\infty)\times(\bN\cup\{\infty\})$ of group topological spaces over $\bN\cup\{\infty\}$, we get a continuous $\bZ_\ell$-linear action $a$ of $\bW$ on $\sO_{\wh\bG}$ over $\bN\cup\{\infty\}$. Precomposing $c$ with $\bW\ra\bZ\times(\bN\cup\{\infty\})$ yields another such action, and after multiplying by $a$, endow $\sO_{\wh\bG}$ with the resulting continuous $\bZ_\ell$-linear action over $\bN\cup\{\infty\}$. Under the identification from Lemma \ref{ss:modulesandspaces}, view $\sO_{\wh\bG}$ as an object of $\Ind(\Rep(\bW,\ul{\bZ_\ell})^{\finproj})$.

\subsection{}\label{ss:SatakeoverQl}
First, we identify $\cH_G^{\{*\}}$ after inverting $\ell$. Write $\Rep(\bW,\ul{\bQ_\ell})^{\finproj}$ for the category $\Rep(\bW,\ul{\bZ_\ell})^{\finproj}[\textstyle\frac1\ell]$, and write $\Sat(\cHck^J_G,\bQ_\ell)$ for the category $\Sat(\cHck^J_G,\bZ_\ell)[\textstyle\frac1\ell]$.
\begin{thm*}
  There is a natural isomorphism
  \begin{align*}
    \sO_{\wh\bG}[\textstyle\frac1\ell]\ra^\sim\cH^{\{*\}}_G[\textstyle\frac1\ell]
  \end{align*}
of Hopf algebras in $\Ind(\Rep(\bW,\ul{\bQ_\ell})^{\finproj})$.
\end{thm*}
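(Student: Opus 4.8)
The plan is to descend to the situation considered in Fargues--Scholze \cite{FS21} by working fiberwise over $\bN\cup\{\infty\}$, exactly as in the proofs of Theorem \ref{ss:leftadjointSatake} and Corollary \ref{cor:fiberfunctorrep}. First I would recall that, for each $i$ in $\bN\cup\{\infty\}$, \cite[Theorem VI.11.1]{FS21} (or rather its $\bQ_\ell$-coefficient version, which follows by inverting $\ell$) provides a canonical isomorphism of Hopf algebras between $\sO_{\wh{G}_i}[\frac1\ell]$ with its $W_i$-action and the algebra co-representing the fiber functor $F_i^{\{*\}}$ on $\Sat(\cHck^{\{*\}}_{G_i},\bQ_\ell)$, where $\cHck^{\{*\}}_{G_i}$ and $F_i^{\{*\}}$ are the classical objects from \cite{FS21}. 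Under the identification $\cHck^{\{*\}}_G|_{\ul{\{i\}}}=\cHck^{\{*\}}_{G_i}$ from \ref{ss:Grassmannianspecialization}, the proof of Proposition \ref{ss:leftadjointSatake} shows that $L_\Om$ is compatible with pulling back to $\cHck^{\{*\}}_{G_i}$, and Proposition \ref{ss:fiberfunctor}.i) together with Corollary \ref{ss:hyperbolicoutput} shows that $F^{\{*\}}$ is compatible with this pullback as well. Hence the fiber at $i$ of $\cH^{\{*\}}_G[\frac1\ell]$ is canonically $\sO_{\wh{G}_i}[\frac1\ell]$, compatibly with the $W_i$-action and the twist by geometric Frobenius built into \ref{ss:Lgroup}.

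Next I would upgrade this fiberwise statement to a statement over $\bN\cup\{\infty\}$. Using the identification from Lemma \ref{ss:modulesandspaces}, an object of $\Ind(\Rep(\bW,\ul{\bQ_\ell})^{\finproj})$ is a filtered union of topological $\bQ_\ell$-module topological spaces over $\bN\cup\{\infty\}$ with continuous $\bW$-action, and to produce a map between two such it suffices (by \cite[Lemma 2.5]{Sch17}-type arguments, or more elementarily by Proposition \ref{ss:continuitycriterion}) to produce compatible maps on fibers that are continuous in the family. The key point is that both $\sO_{\wh\bG}[\frac1\ell]$ and $\cH^{\{*\}}_G[\frac1\ell]$ are, by construction, spread out from the single object at $\infty$ via Deligne's isomorphism: the structure of $\wh{G}$ and $\de$ is literally constant in $i$ (it only depends on $G^{\s}$ and the image of $I^n_\infty$ in $\Gal(F_\infty/E_\infty)$, which is the same finite group for all $i$ by \ref{ss:Galoisgroups}), and $\cH^{\{*\}}_G$ is defined as a colimit of the $F^{\{*\}}(L_\Om(\ul{\bZ_\ell}))$, each of which is locally constant over $\bN\cup\{\infty\}$ with finite projective fibers by Proposition \ref{ss:leftadjointSatake} and Proposition \ref{ss:Satakecategory}. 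So I would argue that the fiberwise isomorphisms from \cite{FS21} are automatically locally constant, hence glue to the desired isomorphism of Hopf algebras over $\bN\cup\{\infty\}$.

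Concretely, the steps in order: (1) reduce, via \ref{ss:splittingextension} and \'etale descent along $\Div^1_{X(F)}\ra\Div^1_X$, to the case where $G$ is split, so that the $\Gal(F_\infty/E_\infty)$-action drops out and we only need to match $\sO_{\wh{G}}[\frac1\ell]$ with the twist by $c$ against $\cH^{\{*\}}_G[\frac1\ell]$; (2) for each $i$, invoke the $\bQ_\ell$-version of \cite[Theorem VI.11.1]{FS21} to get a canonical isomorphism on fibers, and check (using the explicit description of the twist in \ref{ss:Lgroup} via $q^{-1}$ and $\rho_{\ad}$, which matches the normalization in \cite{FS21} up to the difference between geometric and arithmetic Frobenius) that these fiber isomorphisms respect the $W_i$-action; (3) observe that since the fiber isomorphisms are canonical and both sides are locally constant over $\bN\cup\{\infty\}$ with finite projective fibers, the isomorphism is locally constant, hence a morphism in $\Ind(\Rep(\bW,\ul{\bQ_\ell})^{\finproj})$; (4) undo the descent from step (1) by taking $\Gal(F_\infty/E_\infty)$-invariants (equivalently, $\bW$-equivariant descent), which is harmless after inverting $\ell$ since $\Gal(F_\infty/E_\infty)$ is finite. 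The main obstacle will be step (2)--(3): one must verify that the fiberwise isomorphisms of \cite{FS21}, which a priori are only canonical up to the choices implicit in normalizing geometric Satake, really do assemble continuously over $\bN\cup\{\infty\}$ — this is where the compatibility of $L_\Om$ and $F^{\{*\}}$ with fiberwise pullback (established in the proofs of Proposition \ref{ss:leftadjointSatake} and Corollary \ref{cor:fiberfunctorrep}) does the essential work, reducing ``continuity of the family of isomorphisms'' to the fact that $\cH^{\{*\}}_G$ and $\sO_{\wh\bG}$ are each built by a single uniform construction over the whole base.
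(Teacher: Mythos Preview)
Your plan has a genuine gap at step~(3). You assert that because the fiberwise isomorphisms from \cite[Theorem VI.11.1]{FS21} are ``canonical'' and both $\sO_{\wh\bG}[\frac1\ell]$ and $\cH^{\{*\}}_G[\frac1\ell]$ are locally constant with finite projective fibers, the fiberwise isomorphisms are automatically locally constant. This does not follow. Two locally constant sheaves over $\bN\cup\{\infty\}$ can have canonically isomorphic fibers without those isomorphisms assembling continuously: the fiberwise Tannakian isomorphism is only determined up to an automorphism of $\wh{G}_{\bQ_\ell}$ (inner ones, in particular), and you have given no mechanism to pin down a coherent choice across fibers. The compatibility of $L_\Om$ and $F^{\{*\}}$ with pullback to fibers only identifies the fiber of $\cH^{\{*\}}_G$ with the classical $\cH^{\{*\}}_{G_i}$; it says nothing about how the isomorphism $\sO_{\wh{G}_i}[\frac1\ell]\cong\cH^{\{*\}}_{G_i}[\frac1\ell]$ moves with $i$. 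Your analogy with Proposition~\ref{ss:leftadjointSatake} and Corollary~\ref{cor:fiberfunctorrep} is misleading: there one is \emph{checking} that a globally defined morphism is an isomorphism, which can be done on geometric points; here you must \emph{construct} a global morphism in the first place.

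The paper avoids this problem by building a $\bQ_\ell$-linear symmetric monoidal functor $\eps:\Rep(\wh{G}_{\bQ_\ell})\ra\Sat(\cHck^{\{*\}}_G|_{\Spd C},\bQ_\ell)$ explicitly, rather than by gluing fiberwise equivalences. After reducing to split $G$, it sets $\eps(V_\mu)\coloneqq\prescript{p}{}H^0(\j_{\mu!}\bQ_\ell[d_\mu])$, which is a single object in the family Satake category whose fiber at each $i$ matches $\eps_i(V_\mu)$ by the fiberwise statement of \cite[Theorem VI.11.1]{FS21}. The crucial step is defining $\eps$ on morphisms: given $f:V_\mu\ra V_{\mu'}$, one forms $\eps_\bN(f)\coloneqq\coprod_{i\in\bN}\eps_i(f)$ over the open part $\bN$, and then uses the recollement description of $D_{\et}(\cHck^{\{*\}}_G|_{\Spd C},\La)$ along $\bN\hra\bN\cup\{\infty\}\hla\{\infty\}$ to glue with $\eps_\infty(f)$. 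The required compatibility square at $\infty$ is checked by applying the faithful functor $F_{Z_\infty}$ (via Proposition~\ref{ss:fiberfunctor}.ii)), whereupon it becomes a square of $\bQ_\ell$-vector spaces of the form $V_\mu\ra V_\mu\otimes_{\bQ_\ell}\varinjlim_U\Cont(U-\{\infty\},\bQ_\ell)$, which commutes trivially. Semisimplicity of $\Rep(\wh{G}_{\bQ_\ell})$ is what makes it sufficient to treat only Weyl modules $V_\mu$; this is precisely why the argument works over $\bQ_\ell$ but not directly over $\bZ_\ell$. Once $\eps$ is built, the induced map on Hopf algebras is checked to be an isomorphism, and to preserve the $\bW$-action, on fibers --- that final step is where your fiberwise reasoning is legitimately applied.
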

\begin{proof}
  Write $Z$ for $\Spd{C}$, write $\j_\bN:Z\times_{\ul{\bN\cup\{\infty\}}}\ul\bN\ra Z$ for the open embedding, and recall the notation of \ref{ss:O(1)sections}. For all $\mu$ in $X_*(T_\infty)^+$, write $V_\mu$ in $\Rep(\wh{G}_{\bQ_\ell})$ for the associated Weyl module. For all $i$ in $\bN\cup\{\infty\}$, it follows from \ref{ss:Grassmannianspecialization} and \cite[Theorem VI.11.1]{FS21} that we have a natural $\bQ_\ell$-linear symmetric monoidal equivalence
  \begin{align*}
    \eps_i:\Rep(\wh{G}_{\bQ_\ell})\ra^\sim\Sat(\cHck^{\{*\}}_G|_{Z_i},\bQ_\ell)
  \end{align*}
such that $F_{Z_i}:\Sat(\cHck^{\{*\}}_G|_{Z_i},\bQ_\ell)\ra\Shv_{\et}(Z_i,\bQ_\ell)=\Vect(\bQ_\ell)$ corresponds to the forgetful functor $\Rep(\wh{G}_{\bQ_\ell})\ra\Vect(\bQ_\ell)$, and that $\eps_i(V_\mu)=\prescript{p}{}H^0(\j_{\mu!}\bQ_\ell|_{Z_i}[d_\mu]))$. We use Proposition \ref{ss:texactness}.ii) and \cite[Proposition 22.19]{Sch17} to identify the latter with $\prescript{p}{}H^0(\j_{\mu!}\bQ_\ell[d_\mu]))|_{Z_i}$.

  First, let us construct a $\bQ_\ell$-linear functor $\eps:\Rep(\wh{G}_{\bQ_\ell})\ra\Sat(\cHck^{\{*\}}_G|_Z,\bQ_\ell)$. By \ref{ss:splittingextension}, we can assume that $G$ is split, so Proposition \ref{ss:standardobjects} lets us take
  \begin{align*}
    \eps(V_\mu)\coloneqq\prescript{p}{}H^0(\j_{\mu!}\bQ_\ell[d_\mu]).
  \end{align*}
  For all $\mu'$ in $X_*(T_\infty)^+$ and all morphisms $f:V_\mu\ra V_{\mu'}$, we claim that there exists a unique morphism $\eps(f):\eps(V_\mu)\ra\eps(V_{\mu'})$ such that $\eps(f)|_{Z_i}=\eps_i(f)$ for all $i$ in $\bN\cup\{\infty\}$. To see this, write $\eps_\bN(f):\j^*_\bN\eps(V_\mu)\ra\j^*_\bN\eps(V_{\mu'})$ for the disjoint union $\coprod_i\eps_i(f)$, where $i$ runs over $\bN$. The description of $D_{\et}(\cHck^{\{*\}}_G|_Z,\La)$ in terms of the decomposition
  \begin{align*}
    \coprod_i\cHck^{\{*\}}_G|_{Z_i}\hra\cHck^{\{*\}}_G|_Z\hla\cHck^{\{*\}}_G|_{Z_\infty},
  \end{align*}
which follows from \cite[Proposition 14.3]{Sch17}, indicates that it suffices to check that
  \begin{align*}
    \xymatrix{\eps(V_\mu)|_{Z_\infty}\ar[r]\ar[d]^-{\eps_\infty(f)} & \j_{\bN*}\j_\bN^*\eps(V_\mu)|_{Z_\infty}\ar[d]^-{\j_{\bN*}\eps_\bN(f)_{Z_\infty}}\\
    \eps(V_{\mu'})|_{Z_\infty}\ar[r] & \j_{\bN*}\j_\bN^*\eps(V_{\mu'})|_{Z_\infty} }
  \end{align*}
  is commutative. Corollary \ref{cor:perversefiberwise}.ii) implies that this is equivalent to checking that
  \begin{align*}
    \xymatrix{\eps(V_\mu)|_{Z_\infty}\ar[r]\ar[d]^-{\eps_\infty(f)} & \prescript{p}{}H^0(\j_{\bN*}\j_\bN^*\eps(V_\mu))|_{Z_\infty}\ar[d]^-{\prescript{p}{}H^0(\j_{\bN*}\eps_\bN(f))_{Z_\infty}}\\
    \eps(V_{\mu'})|_{Z_\infty}\ar[r] & \prescript{p}{}H^0(\j_{\bN*}\j_\bN^*\eps(V_{\mu'}))|_{Z_\infty} }
  \end{align*}
is commutative, and Proposition \ref{ss:fiberfunctor}.ii) indicates that it suffices to check after applying $F_{Z_\infty}$. By combining Proposition \ref{ss:fiberfunctor}.i) with Corollary \ref{ss:hyperbolicoutput}, this becomes
  \begin{align*}
    \xymatrix{V_\mu\ar[r]\ar[d]^-f & V_\mu\otimes_{\bQ_\ell}\varinjlim_U\Cont(U-\{\infty\},\bQ_\ell)\ar[d]^-{f\otimes\id}\\
    V_{\mu'}\ar[r] & V_{\mu'}\otimes_{\bQ_\ell}\varinjlim_U\Cont(U-\{\infty\},\bQ_\ell),}
  \end{align*}
where $U$ runs over compact neighborhoods of $\infty$, and the top and bottom maps are induced by the natural map $\bQ_\ell\ra\textstyle\varinjlim_U\Cont(U-\{\infty\},\bQ_\ell)$. This square indeed commutes, so the claim follows.

Since $\Rep(\wh{G}_{\bQ_\ell})$ is semisimple with simple objects given by the $V_\mu$, the claim finishes our construction of $\eps$. Arguing as above also shows that $\eps$ is symmetric monoidal, so it induces a morphism $\sO_{\wh\bG}[\frac1\ell]\ra\cH^{\{*\}}_G[\textstyle\frac1\ell]$ of Hopf algebras in
\begin{align*}
\Ind\{\mbox{finite projective }\Cont(\bN\cup\{\infty\},\bQ_\ell)\mbox{-modules}\}.
\end{align*}
To show that this is an isomorphism that, under the identification from Lemma \ref{ss:modulesandspaces}, preserves the continuous $\bQ_\ell$-linear $\bW$-action, it suffices to check on fibers. Finally, the result follows from \ref{ss:Grassmannianspecialization} and \cite[Theorem VI.11.1]{FS21}.
\end{proof}

\subsection{}\label{ss:SatakeoverZl}
Finally, we use Theorem \ref{ss:SatakeoverQl} to identify $\cH^{\{*\}}_G$ before inverting $\ell$.
\begin{thm*}
  There is a natural isomorphism
  \begin{align*}
    \sO_{\wh\bG}\ra^\sim\cH^{\{*\}}_G
  \end{align*}
  of Hopf algebras in $\Ind(\Rep(\bW,\ul{\bZ_\ell})^{\finproj})$.
\end{thm*}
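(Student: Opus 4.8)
The plan is to bootstrap the integral statement from its $\bQ_\ell$-version in Theorem \ref{ss:SatakeoverQl} by a fiberwise comparison, using that the analogous integral statement $\sO_{\wh{G}_i}\cong\cH^{\{*\}}_{G_i}$ is known over each point $i$ in $\bN\cup\{\infty\}$ by \ref{ss:Grassmannianspecialization} and \cite[Theorem VI.11.1]{FS21}. First I would observe that, by Theorem \ref{ss:SatakeoverQl}, the Hopf algebra $\cH^{\{*\}}_G$ is naturally a lattice inside $\sO_{\wh\bG}[\frac1\ell]$ in $\Ind(\Rep(\bW,\ul{\bQ_\ell})^{\finproj})$. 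More precisely, $\cH^{\{*\}}_G=\varinjlim_\Om F^{\{*\}}(L_\Om(\ul{\bZ_\ell}))$, and Proposition \ref{ss:fiberfunctor}.iii) shows each $F^{\{*\}}(L_\Om(\ul{\bZ_\ell}))$ lies in $\Rep(\bW,\ul{\bZ_\ell})^{\finproj}$, so under the identification from Lemma \ref{ss:modulesandspaces} we may regard $\cH^{\{*\}}_G$ as a filtered union of $\bZ_\ell$-module topological spaces over $\bN\cup\{\infty\}$ that are open-locally finite free. The goal is to identify this with $\sO_{\wh\bG}=\sO_{\wh{G}}\times(\bN\cup\{\infty\})$, compatibly with the $\bW$-actions and Hopf structures, and I already have such an identification after inverting $\ell$, plus on every fiber before inverting $\ell$.

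Next I would carry out the comparison fiberwise. For each $i$ in $\bN\cup\{\infty\}$, taking the fiber at $i$ of $\cH^{\{*\}}_G$ recovers $\cH^{\{*\}}_{G_i}$: this uses that $F^{\{*\}}$, $L_\Om$, and the convolution structures are all compatible with pullback to $\cHck^{\{*\}}_{G_i}$, which was established in the course of proving Proposition \ref{ss:leftadjointSatake} and Corollary \ref{cor:fiberfunctorrep}, together with Proposition \ref{ss:fiberfunctor}.i) and Corollary \ref{ss:hyperbolicoutput}. So the fiber of $\cH^{\{*\}}_G$ at $i$ is $\cH^{\{*\}}_{G_i}\cong\sO_{\wh{G}_i}=\sO_{\wh\bG,i}$ by \cite[Theorem VI.11.1]{FS21}, and this identification is compatible (on the $\bQ_\ell$-level, and even the $\bZ_\ell$-level on fibers) with the one from Theorem \ref{ss:SatakeoverQl}. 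The content of the theorem is then that the $\bZ_\ell$-lattice $\cH^{\{*\}}_G\subseteq\sO_{\wh\bG}[\frac1\ell]$, which fiberwise equals the standard lattice $\sO_{\wh\bG,i}$, is globally equal to $\sO_{\wh\bG}$. Since $\sO_{\wh\bG}$ is locally constant (indeed constant) over $\bN\cup\{\infty\}$, the two sub-objects $\cH^{\{*\}}_G$ and $\sO_{\wh\bG}$ of $\sO_{\wh\bG}[\frac1\ell]$ agree on each fiber, hence on a neighborhood of each point (using local constancy and that the relevant finite-rank pieces $F^{\{*\}}(L_\Om(\ul{\bZ_\ell}))$ are open-locally finite free and so determined by finitely many fibers near $\infty$), hence globally; this is exactly the ``Hartog's lemma for $\Cont(\bN\cup\{\infty\},\bZ_\ell)$'' alluded to in the section introduction. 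One then checks the identification is compatible with the $\bW$-action and the Hopf algebra structure, which again may be verified on fibers since a map of $\bZ_\ell$-module topological spaces over $\bN\cup\{\infty\}$ (with the given local freeness) is determined by its fibers.

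Concretely, the argument runs: (1) fix $\Om$; then $M_\Om\coloneqq F^{\{*\}}(L_\Om(\ul{\bZ_\ell}))$ is a $\bZ_\ell$-module topological space over $\bN\cup\{\infty\}$ that is open-locally finite free, equipped with a $\bW$-action, whose fiber at $i$ is the standard lattice in the corresponding finite-dimensional $\wh{G}_i$-representation, i.e.\ the fiber of the analogous standard $\sO_{\wh\bG}$-comodule summand; (2) the $\bQ_\ell$-isomorphism of Theorem \ref{ss:SatakeoverQl} identifies $M_\Om[\frac1\ell]$ with the corresponding summand of $\sO_{\wh\bG}[\frac1\ell]$, and over the closed point $\infty$ it identifies $M_{\Om,\infty}$ with the standard lattice; (3) since $M_\Om$ is open-locally finite free, there is a compact open neighborhood $U$ of $\infty$ on which $M_\Om|_U$ is finite free and constant over $U$, and shrinking $U$ so that the fiberwise identification over $\infty$ spreads out, one gets that $M_\Om|_U$ maps isomorphically onto the standard lattice over $U$; (4) the complement $\bN\setminus U$ is finite and discrete, and over each such point the identification already holds by \cite[Theorem VI.11.1]{FS21}, so $M_\Om\cong$ (standard lattice summand of $\sO_{\wh\bG}$) globally; (5) passing to $\varinjlim_\Om$ and checking Hopf-compatibility on fibers (where it holds by \cite[Theorem VI.11.1]{FS21}) yields $\sO_{\wh\bG}\cong\cH^{\{*\}}_G$. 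The main obstacle I anticipate is step (3)--(4): making the ``spreading out'' of the fiberwise isomorphism near $\infty$ precise, i.e.\ upgrading agreement of two sub-$\bZ_\ell$-lattices of a common $\bQ_\ell$-object from the fibers to an open neighborhood, which is where the non-noetherian nature of $\Cont(\bN\cup\{\infty\},\bZ_\ell)$ bites and where one genuinely needs the local-freeness of the $M_\Om$ (as opposed to merely finite projectivity) together with local constancy of $\sO_{\wh\bG}$. Everything else is formal manipulation of the structures already assembled in \ref{ss:fiberfunctor}--\ref{ss:SatakeoverQl} plus invocation of the fiberwise results of Fargues--Scholze.
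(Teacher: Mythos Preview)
Your approach is correct and uses the same inputs as the paper (the $\bQ_\ell$-isomorphism of Theorem~\ref{ss:SatakeoverQl} together with the fiberwise integral isomorphism from \cite[Theorem VI.11.1]{FS21}), but the execution differs. You argue by fixing $\Om$, viewing $M_\Om$ and the corresponding piece of $\sO_{\wh\bG}$ as two lattices inside a common finite free $R[\tfrac1\ell]$-module (where $R=\Cont(\bN\cup\{\infty\},\bZ_\ell)$), and then showing they coincide by a continuity/spreading-out argument from $\infty$ plus a direct check at the finitely many remaining points. The paper instead packages the entire comparison into the single cartesian square
\[
\Cont(\bN\cup\{\infty\},\bZ_\ell)\;=\;\Cont(\bN\cup\{\infty\},\bQ_\ell)\times_{\Cont(\bN,\bQ_\ell)}\Cont(\bN,\bZ_\ell),
\]
observes that the two Hopf algebras are flat over $R$ so that tensoring preserves this fiber product, and then feeds in the $\bQ_\ell$-isomorphism globally and the integral isomorphism over the discrete open $\ul\bN$ (where it is simply the product of the \cite{FS21} statements). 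This is cleaner for two reasons: it handles all $\Om$ at once and makes the Hopf algebra compatibility automatic (since both input isomorphisms are already Hopf algebra isomorphisms, and the fiber product is formed in the category of Hopf algebras), whereas you have to check the Hopf structure separately on fibers. It also has the minor advantage that the integral statement at $i=\infty$ is an \emph{output} rather than an input: the paper only uses the $\bQ_\ell$-statement globally and the $\bZ_\ell$-statement over $\bN$, and the cartesian square forces integrality at $\infty$.

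One comment on your step (3): the phrase ``the fiberwise identification over $\infty$ spreads out'' is doing real work and deserves to be made precise. What is actually true is that the change-of-basis matrix $A$ between $M_\Om$ and the standard lattice lies in $\GL_n(R[\tfrac1\ell])\subseteq\GL_n(\Cont(\bN\cup\{\infty\},\bQ_\ell))$, hence is a continuous function of $i$; since $A(\infty)\in\GL_n(\bZ_\ell)$ and $\GL_n(\bZ_\ell)$ is open in $\GL_n(\bQ_\ell)$, one gets $A(i)\in\GL_n(\bZ_\ell)$ on a neighborhood of $\infty$. This is fine, but note that the paper's cartesian-square formulation subsumes exactly this observation (density of $\bN$ in $\bN\cup\{\infty\}$ is what makes the square cartesian) without having to unpack it.
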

\begin{proof}
Note that \ref{ss:Grassmannianspecialization} and \cite[Theorem VI.11.1]{FS21} yield a natural isomorphism
  \begin{align*}
    \sO_{\wh\bG}\otimes_{\Cont(\bN\cup\{\infty\},\bZ_\ell)}\Cont(\bN,\bZ_\ell)\ra^\sim\cH^{\{*\}}_G\otimes_{\Cont(\bN\cup\{\infty\},\bZ_\ell)}\Cont(\bN,\bZ_\ell)
  \end{align*}
  of Hopf algebras in
  \begin{align*}
    \Ind\{\mbox{finite projective }\Cont(\bN,\bZ_\ell)\mbox{-modules}\},
  \end{align*}
  and Theorem \ref{ss:SatakeoverZl} yields a natural isomorphism $\sO_{\wh\bG}[\frac1\ell]\ra^\sim\cH^{\{*\}}_G[\textstyle\frac1\ell]$ of Hopf algebras in $\Ind(\Rep(\bW,\ul{\bQ_\ell})^{\finproj})$. Because $\bN$ is dense in $\bN\cup\{\infty\}$, the square
  \begin{align*}
    \xymatrix{\Cont(\bN\cup\{\infty\},\bZ_\ell)\ar[r]\ar[d] & \Cont(\bN\cup\{\infty\},\bQ_\ell)\ar[d]\\
    \Cont(\bN,\bZ_\ell)\ar[r] & \Cont(\bN,\bQ_\ell)}
  \end{align*}
is cartesian. Our natural isomorphisms are compatible with this cartesian square, and since $\sO_{\wh\bG}$ and $\cH^{\{*\}}_G$ are both flat over $\Cont(\bN\cup\{\infty\},\bZ_\ell)$, this induces a natural isomorphism $\sO_{\wh\bG}\ra^\sim\cH^{\{*\}}_G$ of Hopf algebras in
  \begin{align*}
    \Ind\{\mbox{finite projective }\Cont(\bN\cup\{\infty\},\bZ_\ell)\mbox{-modules}\}.
  \end{align*}
To show that, under the identification from Lemma \ref{ss:modulesandspaces}, this isomorphism preserves the continuous $\bZ_\ell$-linear $\bW$-action, it suffices to check on fibers. Finally, the result follows from \ref{ss:Grassmannianspecialization} and \cite[Theorem VI.11.1]{FS21}.
\end{proof}

\section{Local Langlands over close fields}\label{s:final}
In this section, we put everything together to prove Theorem A and Theorem B. We start by proving that \'etale sheaves on certain classifying stacks are equivalent to smooth representations, thereby geometrizing the setting of \S\ref{s:smoothrepresentations}. Next, we use results from \S\ref{s:Satake} to spread out geometric Hecke operators from $E_\infty$ to $E$. After proving basic facts about geometric Hecke operators over $E$, we prove Theorem A using results from \S\ref{s:reductivegroups} and a spreading out argument. Finally, we combine this with results of Bernstein \cite{Ber84} to prove Theorem B.

\subsection{}\label{ss:BHet}
Recall the notation of Proposition \ref{ss:continuitycriterion}. Recall from \ref{ss:Kn} the group topological space $\bK^n$ over $\bN\cup\{\infty\}$, which is naturally an open group subspace of $G(\bE)$ over $\bN\cup\{\infty\}$ by Proposition \ref{ss:G(E)hypothesisA}. Let $\bH$ be one of $\{G(\bE), \bK^n\}$.

Let $\La$ be a ring that is $\ell$-power torsion.
\begin{prop*}
  There is a natural equivalence of categories
  \begin{align*}
    \cF_{(-)}:D(\bH,\La)\ra^\sim D_{\et}(B\bH,\La).
  \end{align*}
\end{prop*}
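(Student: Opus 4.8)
The plan is to build the equivalence by descent along the v-cover $\ul{\bN\cup\{\infty\}}\ra B\bH$, using Proposition \ref{ss:continuitycriterion} to translate the resulting comonad into the combinatorial data of a smooth representation. First I would recall that $D_{\et}(B\bH,\La)$ is computed by the cosimplicial diagram $D_{\et}(\ul{\bH}^\bullet,\La)$ coming from the \v{C}ech nerve of $\ul{\bN\cup\{\infty\}}\ra B\bH$, so that an object of $D_{\et}(B\bH,\La)$ is an object $A\in D_{\et}(\ul{\bN\cup\{\infty\}},\La)$ together with a descent datum along $\ul{\bH}\times_{\ul{\bN\cup\{\infty\}}}\ul{\bN\cup\{\infty\}}$. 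Since $\bH$ is lctd over $\bN\cup\{\infty\}$ (for $\bK^n$ this is Definition \ref{ss:grouphypotheses}.a), and for $G(\bE)$ it is Proposition \ref{ss:G(E)hypothesisA}), Lemma \ref{ss:torsorsareproetale} presents $\ul{\bH}$ as $\varprojlim_\al\ul{K^\al\bs \bH}$ with each $\ul{K^\al\bs\bH}\ra\ul{\bN\cup\{\infty\}}$ separated \'etale, so $\ul{\bH}\ra\ul{\bN\cup\{\infty\}}$ is a pro-\'etale cover. The key input is then \cite[Proposition 17.3]{Sch17} (v-descent for $D_{\et}$), which identifies $D_{\et}(B\bH,\La)$ with the totalization of the cosimplicial category attached to this nerve.

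Next I would identify this totalization with $D(\bH,\La)$. The heart $\Shv_{\et}(\ul{\bN\cup\{\infty\}},\La)$ is the category of sheaves of $\La$-modules on the profinite set $\bN\cup\{\infty\}$, equivalently \'etale $\La$-module topological spaces over $\bN\cup\{\infty\}$; a descent datum along the nerve of $\ul\bH$ is exactly an action of the sheaf $\bH$ on such an object, and the relevant continuity/smoothness condition — that stabilizers of local sections contain some $K^\al$ — is precisely the condition in Proposition \ref{ss:continuitycriterion}.b). Thus on hearts, Lemma \ref{ss:equivariantsheaves} and Proposition \ref{ss:continuitycriterion} combine to give a natural equivalence $\Shv_{\et}(B\bH,\La)\cong\Rep(\bH,\La)$. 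To pass from hearts to derived categories I would check that the standard $t$-structure on $D_{\et}(B\bH,\La)$ corresponds to the standard one on $D(\bH,\La)$ under the above identification of hearts — this follows because pullback along $\ul{\bN\cup\{\infty\}}\ra B\bH$ is $t$-exact and conservative, so both $t$-structures are detected on $D_{\et}(\ul{\bN\cup\{\infty\}},\La)$ — and that $\Rep(\bH,\La)$ has enough injectives (or use that $D(\bH,\La)$ is already defined as the derived category of the abelian category $\Rep(\bH,\La)$, cf. the remark after Proposition \ref{ss:continuitycriterion}), together with the fact that $B\bH$ has finite $\ell$-cohomological dimension locally so that the descent spectral sequence converges. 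Naturality in $\bH$ for the inclusion $\bK^n\hra G(\bE)$ is then automatic from functoriality of the \v{C}ech nerve construction.

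The main obstacle I expect is the comparison of derived categories rather than abelian categories: one must show that the totalization of $D_{\et}(\ul{\bH}^\bullet,\La)$ really is the \emph{derived} category of the abelian category $\Rep(\bH,\La)$, and not some larger or smaller category. The cleanest route is to observe that $\ul{\bN\cup\{\infty\}}\ra B\bH$ is a pro-\'etale cover by a qcqs object and that $\ul\bH$ is built from separated \'etale (hence $\ell$-cohomologically finite-dimensional) pieces, so Postnikov towers converge and the totalization computes $\R\Hom$ groups correctly; then one checks by hand that $\R\Hom$ in the totalization between objects of the heart agrees with $\Ext$-groups in $\Rep(\bH,\La)$ (this is a cohomological-descent computation, comparing the cobar resolution of $B\bH$-sheaves with the bar resolution computing smooth group cohomology of $\bH$). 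Since $\bH$ is, fiberwise and open-locally on $\bN\cup\{\infty\}$, an honest lctd group, these $\Ext$-comparisons reduce fiberwise to the classical statement that $D_{\et}(BH,\La)\cong D(H,\La)$ for $H$ a profinite or lctd group, which is standard; the only new point is uniformity over the profinite base, handled by Proposition \ref{ss:profinitesetsheaf} and the fact that everything in sight is (pro-)\'etale-locally constant over $\bN\cup\{\infty\}$.
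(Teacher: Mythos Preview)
Your heart-level identification via Lemma~\ref{ss:equivariantsheaves} and Proposition~\ref{ss:continuitycriterion} is correct and is essentially how the paper handles essential surjectivity. But the passage from hearts to derived categories is where your outline is genuinely incomplete, and the paper's route is different from yours in a way that matters.

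The paper does \emph{not} compute $D_{\et}(B\bH,\La)$ as a totalization and then compare with $D(\bH,\La)$. Instead it uses Corollary~\ref{ss:etaleGspaces}: the functor $X\mapsto(\ul{X}/\ul{\bH})\times\ov{s}$ is an equivalence of sites from $\{\text{\'etale }\bH\text{-spaces}\}$ to $(B\bH_{\ov{s}})_{\et}$, for $\ov{s}$ any geometric point of $\Spd\ov{\bF}_q$. This immediately gives $D(\bH,\La)\cong D((B\bH_{\ov{s}})_{\et},\La)$, and since pullback $D_{\et}(B\bH,\La)\ra D_{\et}(B\bH_{\ov{s}},\La)$ is fully faithful, the problem reduces to comparing $D((B\bH_{\ov{s}})_{\et},\La)$ with $D_{\et}(B\bH_{\ov{s}},\La)$. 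This is a concrete comparison of two categories built from the same site, not an abstract Ext-matching.

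The ingredient you are missing is the pro-$p$ reduction. For full faithfulness of $D((B\bH_{\ov{s}})_{\et},\La)\ra D_{\et}(B\bH_{\ov{s}},\La)$, the paper passes to the \'etale cover $B\bK^n\ra B\bH$ and uses that each fiber $\bK^n_i$ is a pro-$p$ group, so that the ringed site $((B\bK^n_{\ov{s}})_{\et},\La)$ has cohomological dimension~$0$ (recall $\La$ is $\ell$-power torsion). This is what forces $D((B\bK^n_{\ov{s}})_{\et},\La)$ to be left-complete, after which one can reduce to $D^+$ and use \v{C}ech cohomology along $\ul{\bN\cup\{\infty\}}_{\ov{s}}\ra B\bK^n_{\ov{s}}$, where again the pro-$p$ property kills higher cohomology. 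Your assertion that ``$B\bH$ has finite $\ell$-cohomological dimension locally'' and that the fiberwise statement $D_{\et}(BH,\La)\cong D(H,\La)$ is ``standard'' both implicitly rely on exactly this pro-$p$ trick; without it, neither the convergence of Postnikov towers in $D(\bH,\La)$ nor the Ext-comparison you sketch is justified. Also, ``reduce fiberwise'' is not a valid move over the profinite base $\bN\cup\{\infty\}$: the paper keeps the full $\bN\cup\{\infty\}$-structure throughout and only passes to a geometric point of $\Perf_{\ov{\bF}_q}$, not to individual $i\in\bN\cup\{\infty\}$.
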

\begin{proof}
  First, let us construct a natural functor $\cF_{(-)}:\Rep(\bH,\La)\ra\Shv_{\et}(B\bH,\La)$. For all locally profinite topological spaces $X$ over $\bN\cup\{\infty\}$ and small v-stacks $Z$ over $\ul{\bN\cup\{\infty\}}$, one can show that the natural map $\big|\ul{X}\times_{\ul{\bN\cup\{\infty\}}}Z\big|\ra X\times_{\bN\cup\{\infty\}}\abs{Z}$ is a homeomorphism by reducing to the case where $X$ is profinite. Because $G(\bE)$ is locally profinite by Proposition \ref{ss:G(E)hypothesisB}, this implies that, for all $\ul\bH$-torsors $\wt{S}\ra S$ in the category of v-sheaves, $\big|\wt{S}\big|$ naturally has a continuous action of $\bH$ over $\bN\cup\{\infty\}$. For all smooth representations $V$ of $\bH$ over $\La$, write $\cF_V$ for the presheaf of $\La$-modules over $B\bH$ given by sending $\wt{S}\ra S$ to the $\La$-module
  \begin{align*}
    \big\{\mbox{continuous }\bH\mbox{-equivariant maps }\big|\wt{S}\big|\ra V\mbox{ over }\bN\cup\{\infty\}\big\}.
  \end{align*}
By \cite[Proposition 12.9]{Sch17}, $\cF_V$ is a v-sheaf. Since continuous $\bH$-equivariant maps $\bH\times_{\bN\cup\{\infty\}}\abs{S}\ra V$ over $\bN\cup\{\infty\}$ are equivalent to continuous maps $\abs{S}\ra V$ over $\bN\cup\{\infty\}$, the pullback $\cF_V|_{\ul{\bN\cup\{\infty\}}}$ is naturally isomorphic to $\ul{V}$ over $\ul{\bN\cup\{\infty\}}$, so \cite[Remark 14.14]{Sch17} shows that $\cF_V$ lies in $\Shv_{\et}(B\bH,\La)$.

Note that $\cF_{(-)}$ is exact. Hence \cite[Proposition 14.16]{Sch17} indicates that it extends to a functor $\cF_{(-)}:D(\bH,\La)\ra D_{\et}(B\bH,\La)$. Let $\ov{s}$ be a geometric point in $\Perf_{\ov\bF_q}$, and note that the functor $X\mapsto(\ul{X}/\ul\bH)\times\ov{s}$ yields an equivalence of sites
  \begin{align*}
    \{\mbox{\'etale }\bH\mbox{-spaces}\}\ra^\sim(B\bH_{\ov{s}})_{\et}.
  \end{align*}
Therefore we get a commutative diagram
\begin{align*}
  \xymatrix{D(\bH,\La)\ar[r]^-{\cF_{(-)}}\ar[d]^-{\rotatebox{90}{$\sim$}}& D_{\et}(B\bH,\La)\ar[dd] \\
  D(\{\mbox{\'etale }\bH\mbox{-spaces}\},\La)\ar[d]^-{\rotatebox{90}{$\sim$}}& \\
  D((B\bH_{\ov{s}})_{\et},\La)\ar[r] & D_{\et}(B\bH_{\ov{s}},\La),}
\end{align*}
where the top left equivalence follows from Corollary \ref{ss:etaleGspaces}. The right functor is fully faithful by \cite[Proposition 19.5 (ii)]{Sch17}, so it suffices to show that the bottom functor is an equivalence.

For full faithfulness, write $\la:(B\bH_{\ov{s}})_v\ra(B\bH_{\ov{s}})_{\et}$ for the natural morphism of sites. To prove that the natural transformation $\id\ra\la_*\la^*$ on $D((B\bH_{\ov{s}})_{\et},\La)$ is an isomorphism, we can replace $B\bH$ with an \'etale cover. Hence we can assume that $\bH$ is $\bK^n$. Then the fiber $\bH_i$ is a pro-$p$ group for all $i$ in $\bN\cup\{\infty\}$, so checking on stalks shows that the morphism of ringed sites $((B\bH_{\ov{s}})_{\et},\La)\ra((\ul{\bN\cup\{\infty\}}_{\ov{s}})_{\et},\La)$ has cohomological dimension $0$. Because \cite[Lemma 7.2]{Sch17} indicates that
\begin{align*}
((\ul{\bN\cup\{\infty\}}_{\ov{s}})_{\et},\La)
\end{align*}
has cohomological dimension $0$, this shows that $((B\bH_{\ov{s}})_{\et},\La)$ also has cohomological dimension $0$, so \cite[Tag 0D64]{stacks-project} implies that $D((B\bH_{\ov{s}})_{\et},\La)$ is left-complete. Since $D_{\et}(B\bH_{\ov{s}},\La)$ is also left-complete by \cite[Proposition 14.15]{Sch17}, it suffices to restrict to $D^+((B\bH_{\ov{s}})_{\et},\La)$. Then the desired result follows from using \v{C}ech cohomology along the cover $\ul{\bN\cup\{\infty\}}_{\ov{s}}\ra B\bH_{\ov{s}}$ and the fact that $\bH_i$ is a pro-$p$ group for all $i$ in $\bN\cup\{\infty\}$.

We turn to essential surjectivity. Full faithfulness implies that it suffices to consider $\cF$ in $\Shv_{\et}(B\bH_{\ov{s}},\La)$, and pullback yields a v-sheaf $\cF|_{\ul{\bN\cup\{\infty\}}_{\ov{s}}}$ of $\La$-modules over $\ul{\bN\cup\{\infty\}}_{\ov{s}}$ with a descent datum with respect to
\begin{align*}
\ul{\bN\cup\{\infty\}}_{\ov{s}}\ra B\bH_{\ov{s}}.
\end{align*}
Because $\cF_{\ul{\bN\cup\{\infty\}}_{\ov{s}}}$ is \'etale over $\ul{\bN\cup\{\infty\}}_{\ov{s}}$, it corresponds to a $\La$-module \'etale topological space $V$ over $\bN\cup\{\infty\}$. Lemma \ref{ss:equivariantsheaves} shows that the descent datum endows $V$ with a continuous $\La$-linear action $\bH\times_{\bN\cup\{\infty\}}V\ra V$ over $\bN\cup\{\infty\}$, and descent \cite[Proposition 17.3]{Sch17} identifies $\cF_V$ with $\cF$.
\end{proof}

\subsection{}
We have the following analogue of the (global) Hecke stack in our setting. Recall from \ref{ss:bundlesondivisors} the closed Cartier divisor $D_S\hra X_S$.
\begin{defn*}
  Write $\Hck^J_G$ for the presheaf of groupoids on $\Perf_{\ov\bF_q}$ over $(\Div_X^1)^J$ whose $S$-points parametrize data consisting of
  \begin{enumerate}[a)]
  \item two \'etale $G^{\an}$-torsors $\sG$ and $\sG'$ on $X_S$,
  \item an isomorphism
    \begin{align*}
      \rho:\sG|_{X_S-D_S}\ra^\sim\sG'|_{X_S-D_S}
    \end{align*}
    of \'etale $G^{\an}$-torsors on $X_S-D_S$ that is meromorphic along $D_S$.
  \end{enumerate}
\end{defn*}
Proposition \ref{ss:Gtorsors} and Proposition \ref{ss:BanachColmezsheaf} imply that $\cHck^J_G$ is a v-stack, and the proof of \cite[Proposition III.1.3]{FS21} shows that $\Hck^J_G$ is small. Proposition \ref{ss:Gtorsors} also indicates that pullback to the completion of $X_S$ along $D_S$ induces a morphism $\cL:\Hck^J_G\ra\cHck^J_G$. Finally, note that we have natural morphisms
\begin{align*}
\xymatrix{\Bun_G & \ar[l]_-{p_1}\Hck^J_G\ar[r]^-{p_2} & \Bun_G\times_{\ul{\bN\cup\{\infty\}}}(\Div_X^1)^J}
\end{align*}
given by $(\sG,\sG',\rho)\mapsto\sG$ and $(\sG,\sG',\rho)\mapsto\sG'$, respectively. Using Beauville--Laszlo gluing, Proposition \ref{ss:affineSchubertvarieties}.i) and Proposition \ref{ss:affineSchubertvarieties}.iii) imply that $p_2$ is ind-proper.

\subsection{}
Theorem \ref{ss:SatakeoverZl} lets us spread out geometric Hecke operators from $E_\infty$ to $E$ as follows. Recall from \ref{ss:Lgroup} the action $a$ of $\Gal(F_\infty/E_\infty)$ on $\wh{G}$ and the action $c$ of $\bZ$ on $\wh{G}$. Form the semidirect product $\wh{G}\rtimes\Gal(F_\infty/E_\infty)$ via the action $a$. By using $\sqrt{q}$ to untwist the action $c$, Theorem \ref{ss:SatakeoverZl} yields a natural functor
\begin{align*}
  \cS_{(-)}:\Rep\big(\wh{G}\rtimes\Gal(F_\infty/E_\infty)\big)^J_{\bZ_\ell[\sqrt{q}]}\ra\Sat(\cHck^J_G,\ul{\bZ_\ell[\sqrt{q}]}).
\end{align*}
Let $\La$ be a $\bZ_\ell[\sqrt{q}]$-algebra that is $\ell$-power torsion, and write $\cS_{(-)}$ for the composition
\begin{align*}
\xymatrix{\Rep\big(\wh{G}\rtimes\Gal(F_\infty/E_\infty)\big)^J_{\bZ_\ell[\sqrt{q}]}\ar[r]^-{\cS_{(-)}} & \Sat(\cHck^J_G,\ul{\bZ_\ell[\sqrt{q}]})\ar[r]^-{-\otimes\La} & \Sat(\cHck^J_G,\La).}
\end{align*}
For all $V$ in $\Rep\big(\wh{G}\rtimes\Gal(F_\infty/E_\infty)\big)^J_{\bZ_\ell[\sqrt{q}]}$, write
\begin{align*}
  T_V:D_{\et}(\Bun_G,\La)\ra D_{\et}(\Bun_G\times_{\ul{\bN\cup\{\infty\}}}(\Div_X^1)^J,\La)
\end{align*}
for the functor given by $A\mapsto p_{2!}(p_1^*A\otimes_\La\cL^*\cS_V)$.
\begin{prop*}
The pullback functor
\begin{align*}
  D_{\et}(\Bun_G\times_{\ul{\bN\cup\{\infty\}}}B\bW^J,\La)\ra D_{\et}(\Bun_G\times_{\ul{\bN\cup\{\infty\}}}(\Div_X^1)^J,\La)
\end{align*}
is fully faithful, and the image of $T_V$ lies in the image of $D_{\et}(\Bun_G\times_{\ul{\bN\cup\{\infty\}}}B\bW^J,\La)$.
\end{prop*}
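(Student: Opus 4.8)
\emph{Proof proposal.} The plan is to deduce both statements from an iterated form of Proposition \ref{ss:Drinfeldfullyfaithful} together with Proposition \ref{ss:BunGoverC}. First note that $B\bW^J$ is the $J$-fold fiber power of $B\bW$ over $\ul{\bN\cup\{\infty\}}$, exactly as $(\Div^1_X)^J$ is the $J$-fold fiber power of $\Div^1_X$, and that by Lemma \ref{ss:Div1} and \ref{ss:BunG} every space of the form $\Bun_G\times_{\ul{\bN\cup\{\infty\}}}(\Div^1_X)^a\times_{\ul{\bN\cup\{\infty\}}}B\bW^b$ is a small v-stack over $\ul{\bN\cup\{\infty\}}$. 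Hence the pullback functor in the statement is a composition of $\#J$ pullback functors, each of which pulls a single $B\bW$-factor back to a single $\Div^1_X$-factor relative to a base of the above form; applying the first assertion of Proposition \ref{ss:Drinfeldfullyfaithful} at each stage shows that the composition is fully faithful. This gives the first statement.

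For the second statement I would in fact show that this pullback functor is an \emph{equivalence}, so that its image is all of $D_{\et}(\Bun_G\times_{\ul{\bN\cup\{\infty\}}}(\Div^1_X)^J,\La)$ and in particular contains the image of $T_V$. I would prove, by induction on $m\geq0$, the following: for every small v-stack $Y$ over $\ul{\bN\cup\{\infty\}}$ such that $D_{\et}(Y',\La)\ra D_{\et}(Y'\times_{\ul{\bN\cup\{\infty\}}}\Spd C,\La)$ is an equivalence for all $Y'$ of the form $Y\times_{\ul{\bN\cup\{\infty\}}}(\Div^1_X)^a\times_{\ul{\bN\cup\{\infty\}}}B\bW^b$ with $a+b<m$, the pullback functor $D_{\et}(Y\times_{\ul{\bN\cup\{\infty\}}}B\bW^m,\La)\ra D_{\et}(Y\times_{\ul{\bN\cup\{\infty\}}}(\Div^1_X)^m,\La)$ is an equivalence. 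The case $m=0$ is trivial. For the inductive step one factors this pullback as a pullback of the last $B\bW$-factor to $\Div^1_X$ followed by a pullback of the first $m-1$ factors: the former is an equivalence by the second assertion of Proposition \ref{ss:Drinfeldfullyfaithful}, whose hypothesis is the instance of the assumption on $Y$ with $a=0$ and $b=m-1$; and the latter is an equivalence by the inductive hypothesis applied to $Y\times_{\ul{\bN\cup\{\infty\}}}\Div^1_X$ with exponent $m-1$, whose hypotheses are again instances of the assumption on $Y$. Applying this with $Y=\Bun_G$ and $m=\#J$ reduces the second statement to showing that $D_{\et}(\Bun_G\times_{\ul{\bN\cup\{\infty\}}}(\Div^1_X)^a\times_{\ul{\bN\cup\{\infty\}}}B\bW^b,\La)\ra D_{\et}(\Bun_G\times_{\ul{\bN\cup\{\infty\}}}(\Div^1_X)^a\times_{\ul{\bN\cup\{\infty\}}}B\bW^b\times_{\ul{\bN\cup\{\infty\}}}\Spd C,\La)$ is an equivalence for all $a,b$ with $a+b<\#J$.

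Finally, I would prove this last $\Spd C$-invariance by re-running the proof of Proposition \ref{ss:BunGoverC} in the presence of the auxiliary factors $(\Div^1_X)^a\times_{\ul{\bN\cup\{\infty\}}}B\bW^b$: full faithfulness follows from \cite[Theorem 19.5 (ii)]{Sch17} applied to $\ov\bF_q\hra C_\infty$, using the identification $\Spd C\cong\ul{\bN\cup\{\infty\}}\times\Spa C_\infty$ from Proposition \ref{ss:Cperfectoid}; and essential surjectivity follows by excision along the decomposition of $\ul{\bN\cup\{\infty\}}$ into $\ul\bN$ and $\ul{\{\infty\}}$, reducing to the corresponding $\Spd C_i$-invariance for $\Bun_{G_i}\times(\Div^1_{X_i})^a\times BW_i^b$, which one obtains from \cite[Corollary V.2.3]{FS21} (the classifying-stack factors $BW_i^b$ being handled by passing to a bar resolution and invoking the case of a locally profinite base). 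The main obstacle is precisely this last point: bootstrapping the $\Spd C$-invariance of $D_{\et}(\Bun_G,\La)$ supplied by Proposition \ref{ss:BunGoverC} to the presence of the classifying-stack factors $B\bW^b$ over $\ul{\bN\cup\{\infty\}}$, since one must combine the functorial invariance statements through the limit defining $D_{\et}$ of a quotient stack, and the $\ul\bN$-part requires the relevant Drinfeld-type invariance for $\Bun_{G_i}$ with a locally profinite base, as provided by Fargues--Scholze.
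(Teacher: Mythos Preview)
Your proof of full faithfulness is correct and identical to the paper's: iterate the first assertion of Proposition~\ref{ss:Drinfeldfullyfaithful} $\#J$ times.

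For the second statement your approach diverges from the paper's and works harder than necessary. You aim for the stronger assertion that the pullback is an \emph{equivalence}, by induction on $m=\#J$; this forces you to establish $\Spd C$-invariance for every intermediate stack $\Bun_G\times_{\ul{\bN\cup\{\infty\}}}(\Div^1_X)^a\times_{\ul{\bN\cup\{\infty\}}}B\bW^b$ with $a+b<m$, which you rightly flag as the main obstacle and only sketch (the bar-resolution reduction to $\Bun_{G_i}$ times a locally profinite base is plausible but is not covered verbatim by \cite[Corollary~V.2.3]{FS21}, and passing invariance through the limit defining $D_{\et}$ of a classifying stack is a genuine step). The paper instead exploits the special structure of the objects $T_V(A)$: since $V\mapsto T_V$ is exact and compatible with exterior tensor products, and every $V$ admits a resolution by representations of the form $\bigboxtimes_{j\in J}V_j$ (left-completeness of $D_{\et}(\Bun_G\times_{\ul{\bN\cup\{\infty\}}}B\bW^J,\La)$ handling the resulting complex), one reduces to $\#J=1$. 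There Proposition~\ref{ss:BunGoverC} applies directly with no auxiliary $\Div^1_X$ or $B\bW$ factors present, and a single invocation of the second part of Proposition~\ref{ss:Drinfeldfullyfaithful} finishes. Your route, if completed, would prove the stronger equivalence; the paper's route is shorter because it proves only what is needed about the image of $T_V$.
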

\begin{proof}
The first statement follows from applying Proposition \ref{ss:Drinfeldfullyfaithful} $\#J$ times. For the second statement, note that $V\mapsto T_V$ is exact and compatible with exterior tensor products. Since $D_{\et}(\Bun_G\times_{\ul{\bN\cup\{\infty\}}}B\bW^J,\La)$ is left-complete \cite[Proposition 14.15]{Sch17} and $V$ has a resolution whose terms are of the form $\bigboxtimes_{j\in J}V_j$ for $V_j$ in
  \begin{align*}
    \Rep\big(\wh{G}\rtimes\Gal(F_\infty/E_\infty)\big)_{\bZ_\ell[\sqrt{q}]},
  \end{align*}
this indicates that we can assume that $\#J=1$. Then the result follows from Proposition \ref{ss:BunGoverC} and Proposition \ref{ss:Drinfeldfullyfaithful}.
\end{proof}

\subsection{}\label{ss:Heckeoperatorprop}
As usual, geometric Hecke operators enjoy the following properties. For all $V$ in $\Rep\big(\wh{G}\rtimes\Gal(F_\infty/E_\infty)\big)^J_{\bZ_\ell[\sqrt{q}]}$, write $T_V$ for the composition
\begin{align*}
\xymatrix{D_{\et}(\Bun_G,\La)\ar[r]^-{T_V} & D_{\et}(\Bun_G\times_{\ul{\bN\cup\{\infty\}}}B\bW^J,\La)\ar[r] & D_{\et}(\Bun_G,\La).}
\end{align*}
\begin{prop*}
The functor $T_{V^\vee}$ is left and right adjoint to $T_V$. Consequenty, $T_V$ preserves limits, colimits, and compact objects.
\end{prop*}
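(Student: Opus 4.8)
The plan is to prove the adjunction statement by reducing to the known self-adjointness of $\cL^*\cS_V$ under Verdier duality. First I would observe that the functor $T_V$ is built from $p_{2!}$, $p_1^*$, and tensoring with $\cL^*\cS_V$, and that $p_1$ and $p_2$ are the two projections out of the ind-proper correspondence $\Hck^J_G$. Since $p_2$ is ind-proper, we have $p_{2!}=p_{2*}$, and the formation of $T_V$ along the correspondence diagram admits a standard adjoint description: the right adjoint of $A\mapsto p_{2*}(p_1^*A\otimes_\La\cL^*\cS_V)$ is $B\mapsto p_{1*}\sHom_\La(\cL^*\cS_V,p_2^!B)$, while its left adjoint is $B\mapsto p_{1!}(p_2^*B\otimes_\La(\cL^*\cS_V)^\vee)$, where $(\cL^*\cS_V)^\vee$ denotes the $\La$-linear dual (which exists since $\cS_V$ lies in the Satake category, whose objects are dualizable by Proposition \ref{ss:dualizable}). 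The content of the proposition is then to identify both of these with $T_{V^\vee}$, which amounts to two facts: that $\cL^*\cS_{V^\vee}$ is the Verdier dual (up to the appropriate shift and twist) of $\cL^*\cS_V$, and that $p_2^!$ agrees with $p_2^*$ up to a shift and twist that is absorbed by the normalization built into $\cS_{(-)}$.

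The key steps, in order, would be: (i) record that $\cS_{(-)}$ is a symmetric monoidal functor landing in the Satake category, so $\cS_{V^\vee}$ is the $\star$-dual of $\cS_V$, and by Proposition \ref{ss:dualizable} this dual is $\sw^*\bD(\cS_V)$; (ii) transport this through the morphism $\cL:\Hck^J_G\ra\cHck^J_G$, using that $\cL$ is cohomologically smooth of some relative dimension (coming from $L^+_JG$-torsor structures, cf. Proposition \ref{ss:jetgroup}) so that $\cL^!$ and $\cL^*$ differ by an invertible twist, hence $\cL^*\bD(\cS_V)\cong\bD(\cL^*\cS_V)$ up to that twist; (iii) use the universal local acyclicity of $\cS_V$ over $(\Div_X^1)^J$ — which is part of the definition of the Satake category via $D_{\et}(\cHck^J_G,\La)^{\ULA}$ — to invoke \cite[Proposition IV.2.13]{FS21} or the analogous compatibility, which gives that tensoring with a ULA object commutes with the relevant $!$ and $*$ operations and converts $p_2^!$ into $p_2^*$ up to twist; (iv) assemble these to conclude that both the left and right adjoints of $T_V$ are computed by $p_{2!}(p_1^*(-)\otimes_\La\cL^*\cS_{V^\vee})=T_{V^\vee}$, where the shifts and twists cancel because of the square-root-of-$q$ normalization in the definition of $\cS_{(-)}$. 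Finally, for the consequence: a functor between presentable stable $\infty$-categories that admits both a left and a right adjoint automatically preserves all limits and all colimits, and preserving compact objects follows from the right adjoint $T_{V^\vee}$ preserving colimits (equivalently, commuting with filtered colimits), which is standard.

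The main obstacle I expect is step (ii)--(iii): carefully tracking the shifts and Tate twists through the chain $\Rep(\wh G\rtimes\Gal(F_\infty/E_\infty))^J\to\Sat(\cHck^J_G,\La)\xrightarrow{\cL^*}D_{\et}(\Hck^J_G,\La)$ and verifying that the Verdier duality statement $\bD(\cS_V)\cong\cS_{V^\vee}$ (up to the correct normalization) is compatible with pullback along $\cL$ and with the correspondence $\Hck^J_G$. This is the place where one must be precise about what "$\cL^*\cS_V$" means as a kernel and how universal local acyclicity over the base $(\Div_X^1)^J$ interacts with the projections $p_1,p_2$ to $\Bun_G$. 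In Fargues--Scholze this is handled in \cite[Section IX]{FS21}; the point here is that every ingredient used there — ind-properness of $p_2$ (our setting, via Beauville--Laszlo and Proposition \ref{ss:affineSchubertvarieties}), dualizability of Satake sheaves (Proposition \ref{ss:dualizable}), ULA-ness (built into $\Sat$), and smoothness of the relevant jet-group quotients (Proposition \ref{ss:jetgroup}) — has already been established over $\ul{\bN\cup\{\infty\}}$, so the argument of \cite[Proposition IX.2.2, Proposition IX.3.2]{FS21} carries over essentially verbatim. I would therefore present the proof as: reduce (by Proposition \ref{ss:dualizable}, Proposition \ref{ss:jetgroup}, and the ULA property) to the computation of adjoints along the ind-proper correspondence $\Hck^J_G$, and then cite that the remaining argument is formally identical to \cite{FS21}.
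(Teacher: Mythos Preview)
Your approach is valid and follows the explicit six-functor computation carried out in \cite[\S IX.2]{FS21}, but the paper takes a much shorter, purely formal route. The paper's entire proof is: the assignment $V\mapsto T_V$ is monoidal, sending $\otimes_\La$ on $\Rep(\wh{G}\rtimes\Gal(F_\infty/E_\infty))^J_{\bZ_\ell[\sqrt{q}]}$ to composition of endofunctors of $D_{\et}(\Bun_G,\La)$; since monoidal functors preserve duals and the dual of an endofunctor under composition is precisely its (two-sided) adjoint, the evaluation and coevaluation morphisms for $V$ directly furnish the counit and unit exhibiting $T_{V^\vee}$ as both adjoints of $T_V$.

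Your route unwinds what is packaged in that monoidality: the statement $T_{V\otimes W}\cong T_V\circ T_W$ itself requires the ind-properness of the projections, the ULA property of $\cS_V$, and the compatibility of $\cL^*$ with the convolution structure, which are exactly the ingredients you invoke in steps (ii)--(iv). So your bookkeeping of Verdier duality, shifts, and ULA compatibilities essentially re-derives the adjunction from scratch rather than invoking the monoidality once and for all. The paper's approach buys brevity and avoids any shift-and-twist accounting; your approach buys explicitness and makes visible exactly which geometric inputs (ind-properness, ULA, dualizability in $\Sat$) are responsible for the adjunction. Both are correct, but you should be aware that the formal monoidal argument collapses your four steps into two sentences.
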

\begin{proof}
  Note that $V\mapsto T_V$ is monoidal with respect to $\otimes_\La$ on
  \begin{align*}
    \Rep\big(\wh{G}\rtimes\Gal(F_\infty/E_\infty)\big)^J_{\bZ_\ell[\sqrt{q}]}
  \end{align*}
  and composition on the category of functors $D_{\et}(\Bun_G,\La)\ra D_{\et}(\Bun_G,\La)$. Therefore the evaluation and coevaluation morphisms for $V$ induce the counit and unit, respectively, for the desired adjunctions.
\end{proof}

\subsection{}\label{ss:propsubgroup}
For all $i$ in $\bN$, recall from \ref{ss:closefieldssetup} the absolute ramification index $e_i$ of $E_i$, and recall from \ref{ss:Galoisgroups} the isomorphism $\Ga_i/I_i^{e_i}\cong\Ga_\infty/I_\infty^{e_i}$ of topological groups. Let $\bP$ be a compact group subspace of $\bW$ over $\bN\cup\{\infty\}$ satisfying the following properties:
\begin{enumerate}[a)]
\item for all $i$ in $\bN\cup\{\infty\}$, the fiber $\bP_i$ is a normal subgroup of $W_i$ that is an open subgroup of the wild inertia subgroup,
\item there exists a positive integer $d$ such that, for all $i$ in $\bN\cup\{\infty\}$, the fiber $\bP_i$ contains $I_i^d$, and for large enough $i$, the image of $\bP_i$ under the isomorphism $\Ga_i/I_i^d\cong\Ga_\infty/I_\infty^d$ (since $i$ is large enough) equals the image of $\bP_\infty$.
\end{enumerate}
By repeating Definition \ref{ss:Galoisgroups}.a) (except that we replace $\Ga_i/I^d_i$ with $W_i/\bP_i$), we obtain a natural group topological space $\bW/\bP$ over $\bN\cup\{\infty\}$ whose fiber at $i$ is isomorphic to $W_i/\bP_i$. Arguing as in the proof of Lemma \ref{ss:LTextensions} shows that the map
\begin{align*}
  q:\bW\ra\bW/\bP
\end{align*}
whose fiber at $i$ equals $W_i\ra W_i/\bP_i$ is continuous, and by checking on fibers, we see that $q$ is a morphism of group topological spaces over $\bN\cup\{\infty\}$.

\begin{prop}\label{prop:compactnessBunG}\hfill
  \begin{enumerate}[i)]
  \item The pullback functor
  \begin{align*}
    (\id\times q)^*:D_{\et}(\Bun_G\times_{\ul{\bN\cup\{\infty\}}}B(\bW/\bP)^J,\La)\ra D_{\et}(\Bun_G\times_{\ul{\bN\cup\{\infty\}}}B\bW^J,\La)
  \end{align*}
  is fully faithful.
\item Let $A$ be a compact object in $D_{\et}(\Bun_G,\La)$. Then there exists a group subspace $\bP$ of $\bW$ over $\bN\cup\{\infty\}$ as in \ref{ss:propsubgroup} such that, for all finite sets $J$ and $V$ in
  \begin{align*}
    \Rep\big(\wh{G}\rtimes\Gal(F_\infty/E_\infty)\big)^J_{\bZ_\ell[\sqrt{q}]},
  \end{align*}
  the object $T_V(A)$ lies in the image of $D_{\et}(\Bun_G\times_{\ul{\bN\cup\{\infty\}}}B(\bW/\bP)^J,\La)$.
  \end{enumerate}
\end{prop}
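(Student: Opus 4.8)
The plan is to treat the two parts in turn. For part i), I would argue as in the full faithfulness part of the proof of \ref{ss:BHet}. Since $\bP$ is a compact group subspace of $\bW$ over $\bN\cup\{\infty\}$ whose fiber $\bP_i$ at each $i$ is an open subgroup of the wild inertia subgroup of $W_i$, every $\bP_i$ is pro-$p$; as $\La$ is $\ell$-power torsion and $\ell\neq p$, the continuous cohomology of $(\bP_i)^J$ with trivial $\La$-coefficients vanishes in positive degrees. Writing $\id\times q\colon\Bun_G\times B\bW^J\ra\Bun_G\times B(\bW/\bP)^J$ for the morphism induced by $q$, its ``fiber'' is the classifying stack of $\bP^J$, so the above vanishing, combined with left-completeness of the relevant derived categories \cite[Proposition 14.15]{Sch17} and a \v{C}ech argument along $\ul{\bN\cup\{\infty\}}\ra B\bW^J$ exactly as in \ref{ss:BHet} (which lets one check the statement fiberwise over $\bN\cup\{\infty\}$), shows that the unit $\id\ra(\id\times q)_*(\id\times q)^*$ is an isomorphism; hence $(\id\times q)^*$ is fully faithful.

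For part ii), the key external input is the corresponding statement over each $E_i$. By \cite{FS21}, the compact object $A_i\coloneqq A|_{\ul{\{i\}}}$ of $D_{\et}(\Bun_{G_i},\La)$ has bounded Hecke ramification: there is an open subgroup $P_i$ of the wild inertia subgroup of $W_i$ such that, for all finite sets $J$ and all $V$, the object $T_V(A_i)$ — viewed via its $W_i^J$-equivariant structure on $\Bun_{G_i}$ — is pulled back from $\Bun_{G_i}\times B(W_i/P_i)^J$. Here I use that $\Bun_G|_{\ul{\{i\}}}=\Bun_{G_i}$ by \ref{ss:BunG}, that $\cHck^J_G|_{\ul{\{i\}}}=\cHck^J_{G_i}$ by \ref{ss:Grassmannianspecialization}, and that $\cS_{(-)}$ is compatible with restriction to fibers by the constructions of \S\ref{s:Satake}, so that $T_V(A)|_{\ul{\{i\}}}=T_V(A_i)$. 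Shrinking $P_i$ if necessary, we may assume $P_i\supseteq I_i^{d_i}$ for some positive integer $d_i$; then $I_i^{d_i}$ also bounds the ramification of every $T_V(A_i)$.

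Next I would make this uniform in $i$ using Deligne's isomorphism. Because $e_i\to\infty$, all but finitely many $i$ satisfy $e_i\geq d_\infty$, and for such $i$ the isomorphism $\Ga_i/I_i^{d_\infty}\cong\Ga_\infty/I_\infty^{d_\infty}$ of \ref{ss:Galoisgroups} intertwines the $W_i$- and $W_\infty$-equivariant structures on $T_V(A_i)$ and $T_V(A_\infty)$ — this is the close-fields compatibility spread out through \S\ref{s:Satake} and the present section — so $I_i^{d_\infty}$ bounds the ramification of every $T_V(A_i)$ as well. Setting $d\coloneqq\max\{n,d_\infty,d_i:e_i<d_\infty\}$, a finite maximum, the family $\{I_i^d\}_{i\in\bN\cup\{\infty\}}$ assembles, by the recipe of \ref{ss:propsubgroup} applied with $\bP_i=I_i^d$, into a compact group subspace $\bP\subseteq\bW$ over $\bN\cup\{\infty\}$ satisfying the hypotheses of \ref{ss:propsubgroup}: condition a) holds since each $I_i^d$ is normal in $W_i$ and open in the wild inertia for $d\geq1$, and condition b) holds by construction. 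For this $\bP$, the claim of part ii) follows from part i): since $(\id\times q)^*$ is fully faithful, it suffices to show that the counit $(\id\times q)^*(\id\times q)_*T_V(A)\ra T_V(A)$ is an isomorphism. This may be checked on geometric points, and by base change for the proper morphism $\id\times q$ together with \ref{ss:Grassmannianspecialization}, over a geometric point lying above $\{i\}$ the counit becomes the analogous one for $q_i\colon W_i\ra W_i/I_i^d$ acting on $T_V(A_i)$, which is an isomorphism by the previous paragraph.

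The step I expect to be the main obstacle is the compatibility invoked in the third paragraph, namely that Deligne's isomorphism $\Ga_i/I_i^d\cong\Ga_\infty/I_\infty^d$ intertwines the Weil-group-equivariant structures on $T_V(A_i)$ and $T_V(A_\infty)$. This is not formal: it requires tracing the close-fields compatibility through the spread-out geometric Satake equivalence (Theorem \ref{ss:SatakeoverZl}), the definition of the functor $\cS_{(-)}$, and the construction of the global Hecke stack $\Hck^J_G$ and of $T_V$ in this section, checking in particular that fiberwise these global constructions reproduce the $\bW^J$-equivariant structures of \cite{FS21}. The underlying per-fiber input from \cite{FS21} — that compact objects of $D_{\et}(\Bun_{G_i},\La)$ have bounded Hecke ramification, uniformly in $J$ and $V$ — is the other place where the argument relies essentially on loc.\ cit.
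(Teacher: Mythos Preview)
Your approach to part~i) is essentially sound and parallels the paper's in relying on the single fact that each $\bP_i$ is pro-$p$ while $\La$ is $\ell$-power torsion. The paper executes this differently: it passes to $D_{\solid}$, uses the projection formula to reduce to showing that the counit $q_\natural\La\ra\La$ for $q:B\bW^J\ra B(\bW/\bP)^J$ is an isomorphism, and then by base change and descent reduces to $\pr_\natural\La\ra\La$ for $\pr:B\bP^J\ra\ul{\bN\cup\{\infty\}}$. Your \v{C}ech sketch is plausible but would need more care (e.g.\ the base is $\Bun_G$, not a point, and the argument in \ref{ss:BHet} is written for $B\bH$ over a point).

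Part~ii), however, has a genuine gap, and the paper's proof is structurally different from yours. Your strategy is to bound the ramification of $T_V(A_i)$ fiberwise using \cite{FS21}, and then make the bound uniform in $i$ by invoking that Deligne's isomorphism ``intertwines the $W_i$- and $W_\infty$-equivariant structures on $T_V(A_i)$ and $T_V(A_\infty)$.'' But this intertwining claim is not available for an \emph{arbitrary} compact object $A$: for general $A$ there is no relationship between $A_i$ and $A_\infty$ beyond their both being fibers of $A$, so there is nothing to intertwine. What you actually need is that the global $\bW^J$-action on $T_V(A)$ factors through an open subgroup uniformly over $\bN\cup\{\infty\}$, and the mere continuity of this action (which is what the constructions of \S\ref{s:Satake} give you) does not imply this --- continuity only says each individual local section has open stabilizer, with no a priori uniformity. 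Your ``main obstacle'' paragraph correctly identifies where the difficulty lies but misdiagnoses it as a bookkeeping issue; in fact the uniformity requires a further argument.

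The paper's argument supplies exactly this, and does \emph{not} pass through the fiberwise statement from \cite{FS21} at all. After reducing to a single tensor generator $V$ (so $\#J=1$), one observes that $D_{\et}(\Bun_G,\La)$ is naturally a condensed category over $\bN\cup\{\infty\}$, and that an object of $D_{\et}(\Bun_G\times_{\ul{\bN\cup\{\infty\}}}B\bW,\La)$ is the same as an object $A'$ of $D_{\et}(\Bun_G,\La)$ together with a morphism $\bW\ra\Aut(A')$ of condensed animated groups over $\bN\cup\{\infty\}$. Since $T_V(A)$ is compact (by Proposition~\ref{ss:Heckeoperatorprop}), the condensed animated group $\Aut(T_V(A))$ is \emph{\'etale} over $\bN\cup\{\infty\}$. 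Then, because $\La$ is $\ell$-power torsion while every $\bP_i$ is pro-$p$, the composite $\bP\hra\bW\ra\Aut(T_V(A))$ is forced to be trivial for a suitable $\bP$. Compactness is thus the engine that produces uniformity, replacing your appeal to close-fields compatibility.
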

\begin{proof}
For part i), it suffices to show that the pullback functor
\begin{align*}
 (\id\times q)^*:D_{\solid}(\Bun_G\times_{\ul{\bN\cup\{\infty\}}}B(\bW/\bP)^J,\La)\ra D_{\solid}(\Bun_G\times_{\ul{\bN\cup\{\infty\}}}B\bW^J,\La)
\end{align*}
is fully faithful. The projection formula \cite[Proposition VII.3.1 (i)]{FS21} indicates that it suffices to show that the counit morphism $(\id\times q)_\natural\La\ra\La$ is an isomorphism, and combining the cartesian square
\begin{align*}
  \xymatrix{\Bun_G\times_{\ul{\bN\cup\{\infty\}}}B\bW^J\ar[r]^-{\id\times q}\ar[d] & \Bun_G\times_{\ul{\bN\cup\{\infty\}}}B(\bW/\bP)^J\ar[d]\\
  B\bW^J\ar[r]^-q & B(\bW/\bP)^J}
\end{align*}
with \cite[Proposition VII.3.1 (iii)]{FS21} implies that it suffices to show that the counit morphism $q_\natural\La\ra\La$ is an isomorphism. Using descent \cite[Proposition 17.3]{Sch17} and applying \cite[Proposition VII.3.1 (iii)]{FS21} again indicate that it suffices to show that the structure morphism $\pr:B\bP^J\ra\ul{\bN\cup\{\infty\}}$ induces an isomorphism $\pr_\natural\La\ra\La$. Finally, this follows from $\bP_i$ being a pro-$p$ group for all $i$ in $\bN\cup\{\infty\}$.

For part ii), note that $V\mapsto T_V$ is exact and compatible with exterior tensor products, so it suffices to consider a tensor generator $V$ in $\Rep\big(\wh{G}\rtimes\Gal(F_\infty/E_\infty)\big)_{\bZ_\ell[\sqrt{q}]}$. Now $D_{\et}(\Bun_G,\La)$ naturally has the structure of a condensed ($\infty$-)category over $\bN\cup\{\infty\}$ via sending any extremally disconnected profinite set $X$ over $\bN\cup\{\infty\}$ to $D_{\et}(\Bun_G\times_{\ul{\bN\cup\{\infty\}}}\ul{X},\La)$, and descent \cite[Proposition 17.3]{Sch17} implies that
\begin{align*}
D_{\et}(\Bun_G\times_{\ul{\bN\cup\{\infty\}}}B\bW,\La)
\end{align*}
is naturally equivalent to the category of objects $A'$ in $D_{\et}(\Bun_G,\La)$ along with a morphism $\bW\ra\Aut(A')$ of condensed animated groups over $\bN\cup\{\infty\}$. Because $T_V(A)$ is a compact object in $D_{\et}(\Bun_G,\La)$ by Proposition \ref{ss:Heckeoperatorprop}, the condensed animated group $\Aut(T_V(A))$ over $\bN\cup\{\infty\}$ is \'etale over $\bN\cup\{\infty\}$. Since $\La$ is $\ell$-power torsion but $\bP_i$ is a pro-$p$ group for all $i$ in $\bN\cup\{\infty\}$, the result follows.
\end{proof}

\subsection{}\label{ss:compactnessBunGapplied}
Before proceeding to our main theorems, we need the following preparations. Write $\j:BG(\bE)\ra\Bun_G$ for the open embedding from Proposition \ref{ss:BunG1}. Now $\j_!$ preserves compact objects, and Proposition \ref{ss:rescIndadjunction} implies that the same holds for $\cInd^{G(\bE)}_{\bK^n}$. Hence $A\coloneqq\j_!\cInd_{\bK^n}^{G(\bE)}\ul\La$ is a compact object in $D_{\et}(\Bun_G,\La)$. Write $\bP$ for the group subspace of $\bW$ over $\bN\cup\{\infty\}$ as in \ref{ss:propsubgroup} provided by applying Proposition \ref{prop:compactnessBunG}.ii) to $A$.

For all $i$ in $\bN\cup\{\infty\}$, choose lifts $\wt\vp_i$ and $\wt\tau_i$ to $W_i/\bP_i$ of absolute $q$-Frobenius and of a (topological) generator of tame inertia, respectively, such that, for large enough $i$ as in \ref{ss:propsubgroup}.b), the images of $\wt\vp_i$ and $\wt\tau_i$ under the isomorphism $W_i/\bP_i\cong W_\infty/\bP_\infty$ equal $\wt\vp_\infty$ and $\wt\tau_\infty$, respectively. Write $\Exc(W_i/\bP_i,\wh{G})$ for the associated excursion algebra over $\bZ_\ell[\sqrt{q}]$ as in \cite[Definition VIII.3.4]{FS21}.

Recall the notation of Definition \ref{ss:Heckealgebra}. Briefly, relax our assumption that $\La$ is $\ell$-power torsion. Write $\cZ_\La(G(E_i),\bK_i^n)$ for the center of $\cH(G(E_i),\bK_i^n)_\La$, and recall from \cite[p.~326]{FS21} the natural map of $\bZ_\ell[\sqrt{q}]$-algebras
\begin{align*}
\Exc(W_i/\bP_i,\wh{G})\ra\cZ_\La(G(E_i),\bK^n_i).
\end{align*}

\subsection{}\label{ss:spreadoutexcursion}
By spreading out excursion operators from $E_\infty$ to $E$, we can prove our first main theorem. For all $i$ in $\bN\cup\{\infty\}$, Theorem \ref{thm:Heckealgebracongruence} indicates that our isomorphism $\Tr_{e_i}(E_i)\cong\Tr_{e_i}(E_\infty)$ induces an isomorphism of $\La$-algebras
\begin{align*}
\cH(G(E_\infty),\bK^n_\infty)_\La\ra^\sim\cH(G(E_i),\bK^n_i)_\La,
\end{align*}
which restricts to an isomorphism of $\La$-algebras
\begin{align*}
\cZ_\La(G(E_\infty),\bK^n_\infty)\ra^\sim\cZ_\La(G(E_i),\bK^n_i).
\end{align*}

Resume our assumption that $\La$ is $\ell$-power torsion.
\begin{thm*}
  For large enough $i$ in $\bN\cup\{\infty\}$, the square
  \begin{align*}
    \xymatrix{\Exc(W_\infty/\bP_\infty,\wh{G})\ar[r]\ar@{=}[d]^-{\rotatebox{90}{$\sim$}} & \cZ_\La(G(E_\infty),\bK_\infty^n)\ar[d]^-{\rotatebox{90}{$\sim$}} \\
    \Exc(W_i/\bP_i,\wh{G})\ar[r] & \cZ_\La(G(E_i),\bK_i^n)}
  \end{align*}
commutes.
\end{thm*}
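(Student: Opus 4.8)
The plan is to reconstruct Fargues--Scholze's definition of the excursion map \cite[p.~326]{FS21} in families over $\bN\cup\{\infty\}$, and then to check that the resulting family map specializes, at each large enough $i$, to the Fargues--Scholze map for $E_i$. Recall that for each $i$ the map $\Exc(W_i/\bP_i,\wh{G})\ra\cZ_\La(G(E_i),\bK^n_i)$ is constructed as follows: writing $A_i$ for the compact object $\j_{i!}\cInd^{G(E_i)}_{\bK^n_i}\ul\La$ of $D_{\et}(\Bun_{G_i},\La)$, one has a ring identification $\End_{\Bun_{G_i}}(A_i)=\End_{G(E_i)}(\cInd^{G(E_i)}_{\bK^n_i}\ul\La)=\cH(G(E_i),\bK^n_i)_\La$, and to an excursion datum $(J,V,x,\xi,(\ga_j)_{j\in J})$ one attaches the endomorphism
\begin{align*}
A_i=T_{\mathbf{1}}(A_i)\lra^x T_V(A_i)\lra^{(\ga_j)_j}T_V(A_i)\lra^\xi T_{\mathbf{1}}(A_i)=A_i
\end{align*}
of $A_i$, where $T_V(A_i)$ carries a $(W_i/\bP_i)^J$-action and the middle arrow applies $(\ga_j)_j$; these endomorphisms commute and land in the center, and assembling them over all data yields the map. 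Note also that the left vertical arrow of the square is the transition isomorphism $\Exc(W_\infty/\bP_\infty,\wh{G})\cong\Exc(W_i/\bP_i,\wh{G})$ induced by the identifications $W_i/\bP_i\cong W_\infty/\bP_\infty$ together with the matching choices of $\wt\vp_i,\wt\tau_i$ from \ref{ss:compactnessBunGapplied}, while the right vertical arrow is Ganapathy's isomorphism (restricted from $\cH$ to its center).

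I would carry this construction out over a compact neighborhood $U$ of $\infty$ chosen small enough that the isomorphisms of Theorem \ref{thm:Heckealgebracongruence} and Theorem \ref{ss:Heckealgebrasinfamilies} are defined over $U$, that $(\bW/\bP)|_U$ is the constant group topological space $W_\infty/\bP_\infty\times U$ (possible by \ref{ss:propsubgroup}.b)), and that $W_i/\bP_i\cong W_\infty/\bP_\infty$ compatibly with $\wt\vp_i,\wt\tau_i$ for all $i\in U$. Taking $A=\j_!\cInd^{G(\bE)}_{\bK^n}\ul\La$, which is a compact object of $D_{\et}(\Bun_G,\La)$ by \ref{ss:compactnessBunGapplied}, Proposition \ref{prop:compactnessBunG}.ii) shows that $T_V(A)$ is pulled back from $D_{\et}(\Bun_G\times_{\ul{\bN\cup\{\infty\}}}B(\bW/\bP)^J,\La)$; since $(\bW/\bP)|_U$ is constant, each $\ga_j\in W_\infty/\bP_\infty$ gives a genuine section over $U$, so the displayed composition (with $A_i$ replaced by $A|_U$) defines an endomorphism of $A|_U$. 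Via Theorem \ref{ss:Heckealgebrasinfamilies} one identifies $\End_{\Bun_G|_U}(A|_U)$ with the sections over $U$ of the constant sheaf $\ul{\cH(G(E_\infty),\bK^n_\infty)_\La}$, and — exactly as in \cite[\S VIII]{FS21}, the relations being formal consequences of the monoidality of $T_{(-)}$ (Proposition \ref{ss:Heckeoperatorprop}) and of the $B(\bW/\bP)^J$-structure, all of which hold verbatim in families — this assignment extends to a $\bZ_\ell[\sqrt{q}]$-algebra homomorphism $\Exc(W_\infty/\bP_\infty,\wh{G})\ra\Gamma(U,\ul{\cH(G(E_\infty),\bK^n_\infty)_\La})$.

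It then remains to identify the fibers of this family map, and here is where the bulk of the work (and the main obstacle) lies. Restriction along the clopen point $\ul{\{i\}}\hra\ul{\bN\cup\{\infty\}}$ is compatible with all the operations defining $T_V$ and with the $(\ga_j)_j$-monodromy, so combining \ref{ss:Grassmannianspecialization} (which identifies $\Bun_G|_{\ul{\{i\}}}=\Bun_{G_i}$, $\cHck^J_G|_{\ul{\{i\}}}=\cHck^J_{G_i}$, and $A|_{\Bun_{G_i}}=A_i$), Proposition \ref{ss:Waction} (which matches $\bW$ on the fiber with $W_i$), the fiberwise compatibility of $\cS_{(-)}$ with Fargues--Scholze's geometric Satake built into Theorem \ref{ss:SatakeoverZl} (so that $T_V|_{\Bun_{G_i}}$ is Fargues--Scholze's Hecke operator), and the stalk formula in Theorem \ref{ss:Heckealgebrasinfamilies} (which matches the identification $\End_{\Bun_G}(A)\cong\cH(G(E_\infty),\bK^n_\infty)_\La$ with $\End_{\Bun_{G_i}}(A_i)\cong\cH(G(E_i),\bK^n_i)_\La$ via Ganapathy's isomorphism), one gets that the fiber at $i$ of the family excursion operator equals the Fargues--Scholze excursion operator for $E_i$, transported to $\cH(G(E_\infty),\bK^n_\infty)_\La$ through the transition isomorphisms; in particular its fiber at $\infty$ is the top arrow of the square. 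Since these fibers all land in the center and Ganapathy's isomorphism is a ring isomorphism, the family map lands in $\ul{\cZ_\La(G(E_\infty),\bK^n_\infty)}$. Finally, the image of the family map lies in a constant sheaf on $U$, so for each $z$ in $\Exc(W_\infty/\bP_\infty,\wh{G})$ the function $i\mapsto$ (value of the family map on $z$ at $i$) is locally constant, hence constant on some neighborhood of $\infty$; choosing finitely many algebra generators of $\Exc(W_\infty/\bP_\infty,\wh{G})$ (it is a finitely generated $\bZ_\ell[\sqrt{q}]$-algebra by \cite[\S VIII]{FS21}, as $W_\infty/\bP_\infty$ is topologically finitely generated) and shrinking $U$ to a common such neighborhood $U'$, the family map lands in the constant sections over $U'$. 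For $i\in U'$ the fiber of the family map at $i$ then agrees with its fiber at $\infty$, which is precisely the asserted commutativity of the square; one defines ``large enough $i$'' to mean $i\in U'$. The hard part is thus not any single new idea but the careful verification that the fiber at $i$ of every spread-out ingredient ($\Bun_G$, $\cHck^J_G$, $T_V$, $\cS_V$, $\vsg$, and the $\bW$-monodromy) literally reproduces the Fargues--Scholze datum for $E_i$ entering their construction; most of these compatibilities are already packaged into \ref{ss:Grassmannianspecialization}, Theorem \ref{ss:SatakeoverZl}, and Theorem \ref{ss:Heckealgebrasinfamilies}, so the step should be routine, if lengthy, together with the finiteness point needed to make the neighborhood $U'$ uniform.
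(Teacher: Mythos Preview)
Your proposal is correct and follows essentially the same approach as the paper: spread out each excursion operator to an endomorphism of $A=\j_!\cInd^{G(\bE)}_{\bK^n}\ul\La$ using continuous sections $\ga_j$ of $\bW/\bP$, identify fibers via \ref{ss:Grassmannianspecialization} and Theorem \ref{ss:Heckealgebrasinfamilies}, and then use local constancy of the resulting section of $\ul{\cH(G(E_\infty),\bK^n_\infty)_\La}$ together with finite generation of $\Exc(W_\infty/\bP_\infty,\wh{G})$ to find a uniform neighborhood of $\infty$. The only cosmetic difference is that the paper works one generator $S_{J,V,x,\xi,\ga_{\infty,\bullet}}$ at a time rather than first packaging things into an algebra map, which lets it avoid verifying the excursion relations in families and slightly shortens the write-up.
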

\begin{proof}
  Recall that $\Exc(W_\infty/\bP_\infty,\wh{G})$ has canonical generators $S_{J,V,x,\xi,\ga_{\infty,\bullet}}$, where $J$ runs over finite sets, $V$ runs over objects in $\Rep\big(\wh{G}\rtimes\Gal(F_\infty/E_\infty)\big)_{\bZ_\ell[\sqrt{q}]}^J$, $x$ and $\xi$ run over morphisms $\mathbf{1}\ra V|_{\De(\wh{G})}$ and $V|_{\De(\wh{G})}\ra\mathbf{1}$ in $\Rep\wh{G}_{\bZ_\ell[\sqrt{q}]}$, respectively, and $\ga_{\bullet,\infty}=(\ga_{j,\infty})_{j\in J}$ runs over $J$-tuples in the subgroup of $W_\infty/\bP_\infty$ generated by $\wt\vp_\infty$, $\wt\tau_\infty$, and the image of the wild inertia subgroup \cite[Corollary VIII.4.3]{FS21}.

  For all $j$ in $J$, let $\ga_j$ be a continuous section of $\bW/\bP\ra\bN\cup\{\infty\}$ such that, for all $i$ large enough as in \ref{ss:propsubgroup}.ii), the image of $\ga_j(i)$ under the isomorphism $W_i/\bP_i\cong W_\infty/\bP_\infty$ equals $\ga_{j,\infty}$. Since $V\mapsto T_V$ is functorial in $J$ and $V$, we can form the composition
\begin{align*}
\xymatrix{A = T_{\mathbf{1}}(A)\ar[r]^-x & T_{V|_{\De(\wh{G})}}(A)=T_V(A)\ar[r]^-{\ga_\bullet} & T_V(A) = T_{V|_{\De(\wh{G})}}\ar[r]^-\xi & T_{\mathbf{1}}(A) = A. }
\end{align*}
Write $S_{J,V,x,\xi,\ga_\bullet}$ for this endomorphism of $A$, which corresponds to an element of $\End_{G(\bE)}(\cInd^{G(\bE)}_{\bK^n}\ul\La)$ because $\j_!$ is fully faithful. For all $i$ in $\bN\cup\{\infty\}$, Proposition \ref{ss:Wittspecialization} implies that $(S_{J,V,x,\xi,\ga_\bullet})(i)$ equals the image of $S_{J,V,x,\xi,\ga_\bullet(i)}$ under the map
\begin{align*}
\Exc(W_i/\bP_i,\wh{G})\ra\cZ_\La(G(E_i),\bK^n_i).
\end{align*}
Therefore, under the identification $\ul{\cH(G(E_\infty),\bK_\infty^n)_\La}\ra^\sim\ul\End_{G(\bE)}(\cInd^{G(\bE)}_{\bK^n}\ul\La)$ from Theorem \ref{ss:Heckealgebrasinfamilies}, there exists a compact neighborhood $U$ of $\infty$ such that
\begin{enumerate}[$\bullet$]
\item all $i$ in $U$ are large enough as in \ref{ss:propsubgroup}.ii),
\item the restriction of $S_{J,V,x,\xi,\ga_\bullet}$ to $U$ corresponds to the constant section valued in the image of $S_{J,V,x,\xi,\ga_{\bullet,\infty}}$ under the map $\Exc(W_\infty/\bP_\infty,\wh{G})\ra\cZ_\La(G(E_\infty),\bK^n_\infty)$.
\end{enumerate}
Theorem \ref{ss:Heckealgebrasinfamilies} and the above show that, for all $i$ in $U$, the square in question commutes for $S_{J,V,x,\xi,\ga_{\bullet,\infty}}$. Finally, since $\Exc(W_\infty/\bP_\infty,\wh{G})$ is of finite type over $\bZ_\ell[\sqrt{q}]$, applying this to finitely many $S_{J,V,x,\xi,\ga_{\bullet,\infty}}$ yields the desired result.
\end{proof}

\subsection{}\label{ss:thmA}
We now prove Theorem A. Write $E'$ for $\bF_q\lp{t}$, and let $G'$ be a quasisplit connected reductive group over $E'$ that splits over an $n$-ramified extension.
\begin{thm*}For all $p$-adic fields $E$ along with an isomorphism $\Tr_e(E)\cong\Tr_e(E')$ for some $e\geq n$, write $G$ for the quasisplit connected reductive group over $E$ associated with $G'$ as in \ref{ss:closefieldscongruence}. There exists an integer $d\geq n$ such that, if $e\geq d$, then
  \begin{enumerate}[i)]
  \item the map $\LLC^{\semis}_G$ restricts to a map
  \begin{align*}
\LLC^{\semis}_G:\left\{{\begin{tabular}{c}
    smooth irreps $\pi$ of $G(E)$\\
    over $\ov\bF_\ell$ with $(\pi)^{K^n}\neq0$
  \end{tabular}}\right\}\lra\left\{{\begin{tabular}{c}
  semisimple $L$-parameters $\rho$\\
  for $G$ over $\ov\bF_\ell$ with $\rho|_{I^d}=1$
  \end{tabular}}\right\},
  \end{align*}
and the same holds for the map $\LLC^{\semis}_{G'}$.

\item the square
  \begin{align*}
    \xymatrix{\left\{{\begin{tabular}{c}
    smooth irreps $\pi'$ of $G'(E')$\\
    over $\ov\bF_\ell$ with $(\pi')^{K'^n}\neq0$
  \end{tabular}}\right\}\ar[r]^-{\LLC_{G'}^{\semis}}\ar[d]^-{\rotatebox{90}{$\sim$}} & \left\{{\begin{tabular}{c}
  semisimple $L$-parameters $\rho'$\\
  for $G'$ over $\ov\bF_\ell$ with $\rho'|_{I'^d}=1$
  \end{tabular}}\right\}\ar[d]^-{\rotatebox{90}{$\sim$}} \\
  \left\{{\begin{tabular}{c}
    smooth irreps $\pi$ of $G(E)$\\
    over $\ov\bF_\ell$ with $(\pi)^{K'^n}\neq0$
  \end{tabular}}\right\}\ar[r]^-{\LLC_G^{\semis}} & \left\{{\begin{tabular}{c}
  semisimple $L$-parameters $\rho$\\
  for $G$ over $\ov\bF_\ell$ with $\rho|_{I^d}=1$
  \end{tabular}}\right\}}
  \end{align*}
commutes.
\end{enumerate}
\end{thm*}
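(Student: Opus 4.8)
The plan is to bootstrap the whole statement off Theorem \ref{ss:spreadoutexcursion}, which carries the real content. First I would fix the family. By Krasner's lemma \cite{Kra50}, for each integer $m$ there are only finitely many $p$-adic fields with residue field $\bF_q$ and absolute ramification index $\leq m$, and each admits only finitely many isomorphisms $\Tr_e(E)\cong\Tr_e(E')$ up to $\Aut(\Tr_e(E'))$; so I may enumerate all pairs $\bigl(E_i,\,\Tr_{e_i}(E_i)\cong\Tr_{e_i}(E')\bigr)_{i\in\bN}$ with $e_i\to\infty$, discard the finitely many whose ramification is too small for the standing hypotheses of \S\ref{s:reductivegroups}, set $E_\infty=E'$ and $G_\infty=G'$, and run the constructions of \S\ref{s:localfields}--\S\ref{s:final} over $\bN\cup\{\infty\}$ with $\La=\ov\bF_\ell$. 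Applying \ref{ss:compactnessBunGapplied} to $A\coloneqq\j_!\cInd_{\bK^n}^{G(\bE)}\ul{\ov\bF_\ell}$ produces a group subspace $\bP\subseteq\bW$ over $\bN\cup\{\infty\}$ as in \ref{ss:propsubgroup}, with an associated integer $d_0$, and Theorem \ref{ss:spreadoutexcursion} produces a threshold $i_0$ (which I enlarge if necessary so that \ref{ss:propsubgroup}.b) also applies for $i\geq i_0$). I would then set $d\coloneqq\max\{n,\,d_0,\,1+\max_{i<i_0}e_i\}$, so that every pair $\bigl(E,\,\Tr_e(E)\cong\Tr_e(E')\bigr)$ with $e\geq d$ is isomorphic to some $\bigl(E_i,\cdot\bigr)$ with $i\geq i_0$; note also that $I^d_i\subseteq I^{d_0}_i\subseteq\bP_i$ for every $i$, and that for $i\geq i_0$ Deligne's isomorphism $\Ga_i/I_i^{d}\cong\Ga_\infty/I_\infty^{d}$ carries $\bP_i$ onto $\bP_\infty$. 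It therefore suffices to prove i) and ii) for $E=E_i$, $E'=E_\infty$, and an arbitrary $i\geq i_0$.

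For part~i), I would use that $\LLC^{\semis}_G$ is computed by the Fargues--Scholze map $\Exc(W_i,\wh{G})\to\cZ_{\ov\bF_\ell}(G(E_i),\bK^n_i)$ of \cite[p.~326]{FS21}: an irreducible $\pi$ with $\pi^{\bK^n_i}\neq0$ is a simple module over $\cH(G(E_i),\bK^n_i)_{\ov\bF_\ell}$ on which $\cZ_{\ov\bF_\ell}(G(E_i),\bK^n_i)$ acts by an $\ov\bF_\ell$-valued character, and precomposing this character with the above map gives the excursion character whose associated semisimple parameter is $\LLC^{\semis}_G(\pi)$. Now Proposition \ref{prop:compactnessBunG}.ii), applied to $A$, shows that $T_V(A)$ descends along $\Bun_G\times_{\ul{\bN\cup\{\infty\}}}B\bW^J\to\Bun_G\times_{\ul{\bN\cup\{\infty\}}}B(\bW/\bP)^J$ for all $V$ and $J$. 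Restricting to the fiber over $i$ (Proposition \ref{ss:Wittspecialization} and \ref{ss:Grassmannianspecialization}), and translating via Theorem \ref{ss:Heckealgebrasinfamilies} and \ref{ss:BHet}, every excursion operator on $\cInd_{\bK^n_i}^{G(E_i)}\ov\bF_\ell$ depends only on its Weil-group arguments modulo $\bP_i$; hence the map $\Exc(W_i,\wh{G})\to\cZ_{\ov\bF_\ell}(G(E_i),\bK^n_i)$ factors through $\Exc(W_i/\bP_i,\wh{G})$. Consequently $\LLC^{\semis}_G(\pi)$ is trivial on $\bP_i$, and a fortiori on $I^d_i$; running the same argument on the fiber at $\infty$ gives the corresponding assertion for $G'$.

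For part~ii), suppose $\pi$ and $\pi'$ correspond under the left vertical of the square, i.e.\ $\pi^{\bK^n_i}$ and $\pi'^{\bK^n_\infty}$ are identified as modules over $\cH(G(E_i),\bK^n_i)_{\ov\bF_\ell}\cong\cH(G(E_\infty),\bK^n_\infty)_{\ov\bF_\ell}$ of Theorem \ref{thm:Heckealgebracongruence}. Then their central characters agree under the induced isomorphism $\cZ_{\ov\bF_\ell}(G(E_\infty),\bK^n_\infty)\cong\cZ_{\ov\bF_\ell}(G(E_i),\bK^n_i)$. Since $\wh{G}\rtimes\Gal(F_\infty/E_\infty)$ is the common $L$-group and Deligne's isomorphism identifies $W_i/\bP_i$ with $W_\infty/\bP_\infty$, it induces an isomorphism $\Exc(W_\infty/\bP_\infty,\wh{G})\cong\Exc(W_i/\bP_i,\wh{G})$, and Theorem \ref{ss:spreadoutexcursion} asserts precisely that the two factored maps to the centers are compatible with it. Pulling the two central characters back to the excursion algebras therefore shows that the excursion characters attached to $\pi$ and $\pi'$ correspond under Deligne's isomorphism, so by the Fargues--Scholze characterization $\LLC^{\semis}_G(\pi)$ and $\LLC^{\semis}_{G'}(\pi')$ correspond under Deligne's isomorphism $\Ga_i/I^d_i\cong\Ga_\infty/I^d_\infty$ --- which is the commutativity of the square. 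The conceptual weight is carried by Theorem \ref{ss:spreadoutexcursion}; the residual difficulty, and the main thing to get right, is the bookkeeping: matching $\LLC^{\semis}$ with the excursion-algebra-to-Bernstein-center recipe, checking that all the auxiliary identifications (the dual group with its $\Gal$-action, the Weil groups modulo wild inertia, the resulting excursion algebras, and the level $\bP$) are matched by Deligne's isomorphism once $e$ is large, and the elementary uniformization over $E$ via finiteness of $p$-adic fields of bounded ramification.
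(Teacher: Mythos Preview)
Your proposal is correct and follows essentially the same approach as the paper: enumerate all $p$-adic fields with residue field $\bF_q$ (with multiplicities for the choices of $\Tr_e$-isomorphism) via Krasner's lemma, then deduce part~i) from \ref{ss:compactnessBunGapplied} and part~ii) from Theorem~\ref{ss:spreadoutexcursion}, invoking \cite[Corollary~VIII.4.3]{FS21} to translate between excursion characters and semisimple $L$-parameters. The paper's proof is terser---it simply points to those two inputs without unpacking the factorization through $\Exc(W_i/\bP_i,\wh{G})$ or the explicit construction of $d$---while you spell out the bookkeeping; but the logical skeleton is identical.
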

\begin{proof}
  For all positive integers $e$, Krasner's lemma implies that there are finitely many $p$-adic fields $E$ with residue field $\bF_q$ and absolute ramification index $e$ (up to isomorphism). Moreover, there are finitely many isomorphisms $\Tr_e(E)\cong\Tr_e(E')$. Therefore we can take $\{E_i\}_{i\in\bN}$ in \ref{ss:closefieldssetup} to be the family of all $p$-adic fields $E$ with residue field $\bF_q$ (up to isomorphism), where we make $E$ appear with multiplicity equal to the number of isomorphisms $\Tr_e(E)\cong\Tr_e(E')$. Finally, using \cite[Corollary VIII.4.3]{FS21}, part i) follows from \ref{ss:compactnessBunGapplied}, and part ii) follows from Theorem \ref{ss:spreadoutexcursion}.
\end{proof}

\subsection{}
We conclude by proving Theorem B.
\begin{thm*}There exists an integer $d\geq n$ such that, if $e\geq d$, then
  \begin{enumerate}[i)]
  \item the map $\LLC^{\semis}_{G'}$ restricts to a map
  \begin{align*}
\LLC^{\semis}_{G'}:\left\{{\begin{tabular}{c}
    smooth irreps $\pi'$ of $G'(E')$\\
    over $\ov\bQ_\ell$ with $(\pi')^{K'^n}\neq0$
  \end{tabular}}\right\}\lra\left\{{\begin{tabular}{c}
  semisimple $L$-parameters $\rho'$\\
  for $G'$ over $\ov\bQ_\ell$ with $\rho'|_{I'^d}=1$
  \end{tabular}}\right\}.
  \end{align*}

\item the square
  \begin{align*}
    \xymatrix{\left\{{\begin{tabular}{c}
    smooth irreps $\pi'$ of $G'(E')$\\
    over $\ov\bQ_\ell$ with $(\pi')^{K'^n}\neq0$
  \end{tabular}}\right\}\ar[r]^-{\LLC_{G'}^{\semis}}\ar[d]^-{\rotatebox{90}{$\sim$}} & \left\{{\begin{tabular}{c}
  semisimple $L$-parameters $\rho'$\\
  for $G'$ over $\ov\bQ_\ell$ with $\rho'|_{I'^d}=1$
  \end{tabular}}\right\}\ar[d]^-{\rotatebox{90}{$\sim$}} \\
  \left\{{\begin{tabular}{c}
    smooth irreps $\pi$ of $G(E)$\\
    over $\ov\bQ_\ell$ with $(\pi)^{K'^n}\neq0$
  \end{tabular}}\right\}\ar[r]^-{\LLC_G^{\semis}} & \left\{{\begin{tabular}{c}
  semisimple $L$-parameters $\rho$\\
  for $G$ over $\ov\bQ_\ell$ with $\rho|_{I^d}=1$
  \end{tabular}}\right\}}
  \end{align*}
commutes \emph{after restricting to the wild inertia subgroup}. In particular, part i) holds for the map $\LLC^{\semis}_G$.
\end{enumerate}
\end{thm*}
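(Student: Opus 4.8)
The plan is to reduce, using Bernstein's theory, to the case of an irreducible cuspidal $\pi'$, and then to read off both statements modulo $\ell$ from Theorem A. Since $\LLC^{\semis}$ is compatible with parabolic induction and supercuspidal support \cite{FS21}, since by \cite{Ber84} only finitely many inertial supports $(M',[\sigma'])$ occur among the smooth irreps of $G'(E')$ with nonzero $K'^n$-invariants, and since parabolic induction of $L$-parameters does not change the restriction to inertia, it suffices to prove both statements when $G'$ is replaced by an arbitrary Levi subgroup and $\pi'$ by an irreducible cuspidal representation with nonzero invariants under a congruence subgroup of level $n'$, where $n'$ is bounded in terms of $n$; here we also use that the vertical bijections — coming from the $\ov\bZ_\ell$-form of the Hecke algebra isomorphism of Theorem \ref{thm:Heckealgebracongruence} — are compatible with supercuspidal support. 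Let $d\geq n$ be the maximum over these finitely many cases of the integer produced by Theorem \ref{ss:thmA}; henceforth $\pi'$ is cuspidal.

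An irreducible cuspidal $\pi'$ over $\ov\bQ_\ell$ becomes, after twisting by a suitable unramified character, defined over $\ov\bZ_\ell$; this twist changes $\LLC^{\semis}_{G'}(\pi')$ only by an unramified parameter, hence affects neither its restriction to inertia nor the commutativity statement, so we may assume $\pi'$ is defined over $\ov\bZ_\ell$. Then $(\pi')^{K'^n}$ becomes an $\cH(G(E),K^n)_{\ov\bZ_\ell}$-module under the Hecke algebra isomorphism, so the corresponding irrep $\pi$ of $G(E)$ is also defined over $\ov\bZ_\ell$. By the integrality of the Fargues--Scholze construction \cite{FS21}, both $\LLC^{\semis}_{G'}(\pi')$ and $\LLC^{\semis}_G(\pi)$ are defined over $\ov\bZ_\ell$, and by its compatibility with reduction modulo $\ell$ their excursion data reduce modulo $\ell$ to the excursion data attached to the Jordan--Hölder constituents of $\bar\pi':=\pi'\otimes_{\ov\bZ_\ell}\ov\bF_\ell$ and $\bar\pi:=\pi\otimes_{\ov\bZ_\ell}\ov\bF_\ell$ respectively, each constituent having nonzero invariants under the relevant congruence subgroup since taking invariants under a pro-$p$ group is exact over $\ov\bF_\ell$.

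Applying the excursion-algebra form of Theorem A (Theorem \ref{ss:spreadoutexcursion}) to these Jordan--Hölder constituents yields two facts. First, they are trivial on $I'^d$, so the excursion data of $\LLC^{\semis}_{G'}(\pi')$ indexed by tuples in $I'^d$ are trivial modulo $\ell$; hence $\LLC^{\semis}_{G'}(\pi')$ maps $I'^d$ into $\ker\big(\wh{G}(\ov\bZ_\ell)\to\wh{G}(\ov\bF_\ell)\big)$, which is (ind-)pro-$\ell$, and since $I'^d$ is pro-$p$ with $p\neq\ell$ we obtain $\LLC^{\semis}_{G'}(\pi')|_{I'^d}=1$ — this is part i). Second, $\LLC^{\semis}_{G'}(\pi')$ and $\LLC^{\semis}_G(\pi)$ have reductions modulo $\ell$ corresponding under Deligne's isomorphism $\Ga_i/I_i^d\cong\Ga_\infty/I_\infty^d$. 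Restricting to wild inertia, which Deligne's isomorphism identifies while matching $I^d$ with $I'^d$, we are reduced to the following: two homomorphisms from a fixed finite $p$-group $\Pi$ (a quotient of wild inertia) to $\wh{G}(\ov\bZ_\ell)$ with $\wh{G}(\ov\bF_\ell)$-conjugate reductions modulo $\ell$ are $\wh{G}(\ov\bZ_\ell)$-conjugate. This is standard rigidity: as $\#\Pi$ is invertible in $\ov\bZ_\ell$, the cohomology of $\Pi$ with $\ell$-torsion coefficients vanishes in positive degrees, so inducting along the $\ell$-adic filtration of the (ind-)pro-$\ell$ group $\ker\big(\wh{G}(\ov\bZ_\ell)\to\wh{G}(\ov\bF_\ell)\big)$ shows that any two lifts of a fixed $\bar\rho\colon\Pi\to\wh{G}(\ov\bF_\ell)$ are conjugate by an element congruent to $1$ modulo $\ell$, and one passes to the limit by compactness. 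Transporting back gives that the square commutes after restricting to wild inertia (part ii)), and matching $I^d$ with $I'^d$ shows that $\LLC^{\semis}_G(\pi)|_{I^d}=1$ as well.

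I expect the main obstacle to be the reduction of the first paragraph: one must check carefully that commutativity for $G'$ follows from commutativity for the Levi subgroups appearing in supercuspidal supports — i.e.\ that both $\LLC^{\semis}$ and the Ganapathy-type vertical bijections are compatible with the Bernstein decomposition and parabolic induction — and that the finitely many inertial supports relevant to level $n$ keep the level uniformly bounded, so that a single $d$ works for all of $G'$. A secondary technical ingredient is the interplay of the Fargues--Scholze correspondence with $\ov\bZ_\ell$-integrality and reduction modulo $\ell$, which is what lets the $\ov\bF_\ell$-statement of Theorem A control the $\ov\bQ_\ell$-setting; this is available from \cite{FS21} but must be used with care, since the reduction modulo $\ell$ of an irreducible representation is only of finite length.
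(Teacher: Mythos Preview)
Your proposal is correct and follows essentially the same route as the paper: reduce to cuspidal representations via Bernstein finiteness, pass to $\ov\bZ_\ell$-coefficients after an unramified twist, apply the excursion-algebra form of Theorem~A modulo $\ell$, and lift back using that $\ker(\wh{G}(\ov\bZ_\ell)\to\wh{G}(\ov\bF_\ell))$ is pro-$\ell$ while wild inertia is pro-$p$. The paper differs only organizationally: it applies Theorem~\ref{ss:spreadoutexcursion} directly to the character $\chi'\colon\cZ_{\cO_L}(G'(E'),K'^n)\to\cO_L$ coming from an $\cO_L$-lattice in $\pi'$ rather than passing through Jordan--H\"older constituents of the mod-$\ell$ reduction, obtains part~i) for $G'$ more simply from continuity of $\rho'$ together with Bernstein finiteness, and keeps $G'$ fixed throughout, invoking compatibility of $\LLC^{\semis}$ with parabolic induction and unramified twists only in the final sentence rather than passing to Levi subgroups up front.
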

\begin{proof}
  Let $\pi'$ be an irreducible cuspidal representation of $G'(E')$ over $\ov\bQ_\ell$ with $(\pi')^{K'^n}\neq0$. Write $\rho'$ for $\LLC^{\semis}_{G'}(\pi')$, and let $d$ be a positive integer with $\rho'|_{I'^d}=1$. After enlarging $d$, we can assume that it is at least the $d$ from Theorem \ref{ss:thmA}. Now \cite[p.~331]{FS21} indicates that $\LLC^{\semis}_{G'}$ is compatible with twisting by unramified characters, so we can assume that $\pi'$ has finite order central character. Then \cite[(II.4.12)]{Vig96} shows that there exists a finite extension $L/\bQ_\ell(\sqrt{q})$ such that
  \begin{enumerate}[$\bullet$]
  \item $\pi'$ is defined over $L$,
  \item $\pi'$ has an $\cO_L[G'(E')]$-lattice $\pi'_{\cO_L}$ as in \cite[(I.9.1)]{Vig96}.
  \end{enumerate}
Write $\bF_\la$ for the residue field of $L$, and write $\chi':\cZ_{\cO_L}(G'(E'),K'^n)\ra\cO_L$ for the map of $\cO_L$-algebras induced by $\pi'_{\cO_L}$. Since $\rho'$ corresponds to $\chi'\otimes\ov\bQ_\ell$ under \cite[Corollary VIII.4.3]{FS21}, this implies that, after enlarging $L$, the $L$-parameter $\rho'$ has a representative of the form $\rho':W'\ra\wh{G}(\cO_L)$.

Write $\pi$ for the irreducible smooth representation of $G(E)$ over $\ov\bQ_\ell$ associated with $\pi'$ via the isomorphism
\begin{align*}
\cH(G(E),K^n)_{\ov\bQ_\ell}\ra^\sim\cH(G'(E'),K'^n)_{\ov\bQ_\ell}.
\end{align*}
Because the above isomorphism is base changed from an isomorphism
\begin{align*}
  \cH(G(E),K^n)_{\cO_L}\ra^\sim\cH(G'(E'),K'^n)_{\cO_L},
\end{align*}
the representation $\pi$ is also defined over $L$, and $\pi'_{\cO_L}$ induces an $\cO_L[G(E)]$-lattice $\pi_{\cO_L}$ of $\pi$. Write $\chi:\cZ_{\cO_L}(G(E),K^n)\ra\cO_L$ for the map of $\cO_L$-algebras induced by $\pi_{\cO_L}$. Write $\rho$ for $\LLC^{\semis}_G(\pi)$; arguing as above shows that, after enlarging $L$, the $L$-parameter $\rho$ has a representative of the form $\rho:W\ra\wh{G}(\cO_L)$.

The proof of Theorem \ref{ss:thmA} yields certain normal subgroups $P'$ and $P$ of $W'$ and $W$, respectively, that are open subgroups of the wild inertia subgroups, contain $I'^d$ and $I^d$, respectively, and whose images in $W'/I'^d\cong W/I^d$ coincide. Consider the resulting diagram
\begin{align*}
  \xymatrix{\Exc(W'/P',\wh{G})\ar[r]\ar@{=}[d]^-{\rotatebox{90}{$\sim$}} & \cZ_{\ov\bF_\ell}(G'(E'),K'^n)\ar[r]^-{\chi'\otimes\ov\bF_\ell}\ar[d]^-{\rotatebox{90}{$\sim$}} & \ov\bF_\ell\ar@{=}[d]\\
  \Exc(W/P,\wh{G})\ar[r] & \cZ_{\ov\bF_\ell}(G(E),K^n)\ar[r]^-{\chi\otimes\ov\bF_\ell} & \ov\bF_\ell.}
\end{align*}
Theorem \ref{ss:spreadoutexcursion} indicates that the left square commutes, and the right square commutes by construction. The top and bottom rows correspond under \cite[Corollary VIII.4.3]{FS21} to the semisimplification of the images in $\wh{G}(\ov\bF_\ell)$ of $\rho'$ and $\rho$, respectively, so after enlarging $L$, this shows that the semisimplification of the $1$-cocycles
\begin{align*}
W'/P'\lra^{\rho'}\wh{G}(\cO_L)\lra\wh{G}(\bF_\la)\mbox{ and }W/P\lra^\rho\wh{G}(\cO_L)\lra\wh{G}(\bF_\la)
\end{align*}
are cohomologous (under the isomorphism $W'/P'\cong W/P$). Since the kernel $\wh{K}$ of $\wh{G}(\cO_L)\ra\wh{G}(\bF_\la)$ is pro-$\ell$ but the wild inertia subgroup is pro-$p$, the Galois cohomology long exact sequence induced by
\begin{align*}
\xymatrix{1\ar[r] & \wh{K}\ar[r] & \wh{G}(\cO_L)\ar[r] & \wh{G}(\bF_\la)\ar[r] & 1}
\end{align*}
then implies that the $1$-cocycles
\begin{align*}
W'/P'\lra^{\rho'}\wh{G}(\cO_L)\mbox{ and }W/P\lra^\rho\wh{G}(\cO_L)
\end{align*}
are cohomologous \emph{after restricting to the wild inertia subgroup}.

Finally, \cite[2.13]{Ber84} implies that that there are finitely many Bernstein components in $\Rep(G'(E'),\ov\bQ_\ell)$ that contain an irreducible smooth representation $\pi'$ with $(\pi')^{K'^n}\neq0$. Because $\LLC^{\semis}_{G'}$ is compatible with twisting by unramified characters \cite[p.~331]{FS21} and parabolic induction \cite[Corollary IX.7.3]{FS21}, applying the above to these finitely many Bernstein components yields the desired result.
\end{proof}

\appendix

\section{Congruences of disconnected parahoric subgroups}\label{s:congruence}
In the appendix, our goal is to generalize Kazhdan's Hecke algebra isomorphism over close fields \cite[Theorem A]{Kaz86b} to the case of quasisplit connected reductive groups $G$ over $E$. While such a generalization has been proved by Ganapathy \cite[Theorem 4.1]{Gan24}, this relies on a congruence result of Ganapathy \cite[Corollary 6.3]{Gan19} for parahoric subgroups associated with stabilizers in $G(E)^0$ (which have connected special fibers). For applications to \S\ref{s:reductivegroups}, it is convenient instead to use parahoric subgroups associated with stabilizers in $G(E)^1$ (which can have disconnected special fibers), since $G(E)^1$ enjoys Galois descent.

Therefore, we begin by proving a congruence result for disconnected parahoric subgroups. By using birational group laws, we reduce this to the case when $G$ is a torus, where it follows from results of Chai--Yu \cite{CY01}. Afterwards, we follow Kazhdan's arguments to construct an isomorphism of Hecke algebras over close fields. Although the Hecke algebras that we consider are equal to those considered in \cite{Gan24}, we do not know whether the isomorphisms constructed here equal those constructed in \cite{Gan24}.

\subsection{}
We start with some notation. Let $E$ be a nonarchimedean local field, write $\cO$ for its ring of integers, and write $\fp$ for the maximal ideal of $\cO$. Fix a separable closure $\ov{E}$ of $E$, and write $\Ga$ for $\Gal(\ov{E}/E)$. Write $I$ for the inertia subgroup of $\Ga$. For all positive integers $n$, write $I^n$ for the $n$-th ramification subgroup of $I$ in the upper numbering.

Recall the notation of \ref{ss:groupoverinfty}. Let $\de:\Ga\ra\Aut(G^{\s},T^{\s},X^*(T^{\s}),\De,\{x_{\wt{a}}\}_{\wt{a}\in\De})$ be a continuous homomorphism, and write $F/E$ for the finite Galois extension such that $\Gal(F/E)$ is the image of $\de$. Descending $(G^{\s}_F,T^{\s}_F,B^{\s}_F,\{x_{\wt{a}}\}_{\wt{a}\in\De})$ along the \'etale $\Gal(F/E)$-torsor $\Spec{F}\ra\Spec{E}$ via $\de$ yields a quasisplit connected reductive group $G$ over $E$ with a pinning $(B,T,\{x_{\wt{a}}\}_{\wt{a}\in\De})$ over $E_i$ as in \cite[Definition 2.9.1]{KP23}.

\subsection{}\label{ss:parahoricstructure}
Let us gather some facts about disconnected parahoric subgroups. Write $S$ for the maximal split subtorus of $T$, write $\cB(G/E)$ for the (reduced) building of $G$ over $E$, and write $\cA(S)$ for the apartment in $\cB(G/E)$ associated with $S$. Let $x$ be a special point in $\cA(S)$. Write $\cK$ for the smooth affine model of $G$ over $\cO$ such that $\cK(\cO)\subseteq G(E)$ equals the maximal compact subgroup $G(E)^1_x$ \cite[Proposition 8.3.1]{KP23}.

Write $\Phi$ for the relative root system $\Phi(G,S)$, and write $\Phi^\pm$ for the subset of positive or negative roots associated with $B$. By \cite[Proposition B.2.4]{KP23}, the Zariski closure $\cS$ of $S$ in $\cK$ is a split subtorus of $\cK$. Write $\cT$ for the centralizer of $\cS$ in $\cK$, and for all $a$ in $\Phi$, write $\cU_a$ for the $a$-root subgroup of $\cK$.
\begin{lem*}\hfill
  \begin{enumerate}[i)]
  \item The group $\cT$ is isomorphic to the N\'eron model of $T$ over $\cO$.
  \item The multiplication morphism $\prod_a\cU_a\ra\cK$, where $a$ runs over $\Phi^{\pm}_{\red}$ (for some ordering of $\Phi^{\pm}_{\red}$) and $\prod$ denotes the product over $\cO$, is a closed embedding whose image is a subgroup $\cU_{\pm}$ of $\cK$ that is independent of the ordering of $\Phi^\pm_{\red}$.
  \item The multiplication morphism $\cU_-\times_\cO\cT\times_\cO\cU_+\ra\cK$ is an open embedding.
  \end{enumerate}
\end{lem*}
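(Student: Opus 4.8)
The plan is to derive all three statements from the Bruhat--Tits structure theory of integral models in \cite{KP23}, reducing the disconnected stabilizer model $\cK$ (characterized by $\cK(\cO) = G(E)^1_x$) to the associated parahoric $\cK^0$, namely the fiberwise identity-component open subgroup scheme of $\cK$, which satisfies $\cK^0(\cO) = G(E)^0_x$ and has connected special fiber. The root groups $\cU_a$ and the split torus $\cS$ already lie in $\cK^0$, and $\cT^0 \coloneqq (\cT)^\circ$, the fiberwise identity component of $\cT$, is an open subgroup scheme of $\cT$ contained in $\cK^0$; moreover $\cK$ is covered by the open translates $\cK^0 t$ as $t$ ranges over sections of $\cT$.

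For part i), I would observe that $\cT$ is a smooth affine $\cO$-group with generic fiber $T = Z_G(S)$ and $\cT(\cO) = \cK(\cO) \cap T(E) = T(E)^1$, and that $T(E)^1$ is exactly the group of $\cO$-points of the N\'eron model of $T$. By the N\'eron mapping property, together with the uniqueness of the smooth affine $\cO$-models prescribed by their $\cO$-points in the building-theoretic sense \cite{KP23}, it follows that $\cT$ is the N\'eron model; alternatively one can cite \cite{KP23} directly, where the torus part of a stabilizer model is identified with the N\'eron model of $T$. One must only take care which flavor (``ft'' versus ``lft'') of N\'eron model occurs, so that $\cT$ is of finite type over $\cO$.

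For parts ii) and iii), I would start from the ``big cell'' for the parahoric: by \cite{KP23}, the multiplication morphism $\prod_a \cU_a \to \cK^0$ (with $a$ running over $\Phi^\pm_{\red}$) is a closed embedding, its image $\cU_\pm$ is a subgroup independent of the ordering, and $\cU_- \times_\cO \cT^0 \times_\cO \cU_+ \to \cK^0$ is an open embedding. Since $\cU_a$, $\cU_\pm$, and $\cK^0$ are unchanged when passing to $\cK$, one upgrades the open embedding for $\cK^0$ to $\cU_- \times_\cO \cT \times_\cO \cU_+ \to \cK$ by translating the parahoric big cell by sections $t$ of $\cT$ (which normalize each $\cU_a$, so each translate lands in the open subscheme $\cK^0 t$), and checking that the resulting morphism is a monomorphism by verifying this on fibers, where it reduces to the uniqueness of the open cell decomposition over $E$ and over the residue field component by component. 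The closedness of $\cU_\pm$ in $\cK$ follows because $\cU_\pm$ is, as an $\cO$-scheme, affine space, whose generic and special fibers are respectively closed in $G$ and in $\cK_s$.

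The main obstacle is the bookkeeping surrounding the disconnectedness of $\cK_s$: verifying that the parahoric big cell genuinely extends to a big cell for the stabilizer model attached to $G(E)^1_x$ (not merely on identity components), and pinning down the precise N\'eron model appearing in part i) so that the decomposition in iii) is an honest open embedding. Once the relevant statements of \cite{KP23} are assembled, each point reduces to matching $\cO$-points and checking the claims fiberwise.
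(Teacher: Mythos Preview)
Your approach for part~i) is essentially the paper's: identify $\cT_E=T$ and $\cT(\cO)=T(E)\cap G(E)^1_x=T(E)^1$, observe that $\cT$ is smooth over $\cO$ (the paper cites \cite[Lemma~2.2.4]{Con14} for this), and then invoke the characterization of the N\'eron model (the paper cites \cite[Proposition~B.7.2]{KP23}).

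For parts~ii) and~iii), however, you are working harder than necessary. The paper simply cites \cite[Theorem~8.2.5]{KP23}, which already proves the big-cell decomposition directly for the stabilizer model $\cK$ attached to $G(E)^1_x$, not merely for the connected parahoric $\cK^0$ attached to $G(E)^0_x$. So no reduction to $\cK^0$ followed by translation by $\cT$-sections is required. Your route is not wrong in spirit, but it introduces an extra step---the claim that the open translates $\cK^0 t$ for $t$ ranging over sections of $\cT$ cover $\cK$---which itself needs justification (it amounts to comparing the component groups of $\cT$ and $\cK$ over the special fiber), whereas the direct citation avoids this entirely.
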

\begin{proof}
Note that that $\cT_E$ equals $T$, and $T(E)^1$ equals the intersection of $T(E)$ with $G(E)^1_x$. Since $\cT$ is smooth over $\cO$ by \cite[Lemma 2.2.4]{Con14}, part i) follows from \cite[Proposition B.7.2]{KP23}. Part ii) and part iii) follow from \cite[Theorem 8.2.5]{KP23}.
\end{proof}

\subsection{}\label{ss:parahoricrootgroups}
Let $a$ be in $\Phi_{\red}$. Let $\wt{a}$ be a lift of $a$ to the absolute root system $\Phi(G,T)$, write $E_{\wt{a}}$ for the field of definition of $\wt{a}$, and write $\cO_{\wt{a}}$ for the ring of integers of $E_{\wt{a}}$. Write $\wt{a}:T\ra\R_{E_{\wt{a}}/E}\bG_m$ for the associated morphism of groups over $E$, as well as the induced morphism $\wt{a}:\cT\ra\R_{\cO_{\wt{a}}/\cO}\bG_m$ of N\'eron models over $\cO$.

Write $U_a$ for the $a$-root subgroup of $G$. There are two possibilities:
\begin{enumerate}[R1:]
\item If $2a$ does not lie in $\Phi$, then the pinning induces an isomorphism $\R_{E_{\wt{a}}/E}\bG_a\ra^\sim U_a$. Moreover, after composing the natural action of $\R_{E_{\wt{a}}/E}\bG_m$ on $\R_{E_{\wt{a}}/E}\bG_a$ with $\wt{a}$, this corresponds to the action of $T$ on $U_a$.
\item If $2a$ lies in $\Phi$, then the pinning induces an isomorphism $\R_{E_{\wt{b}}/E}\wt{U}_a\ra^\sim U_a$, where $\wt{b}$ denotes the lift to $\Phi(G,T)$ of $2a$ induced by $\wt{a}$, and $\wt{U}_a$ denotes the affine group $U_{E_{\wt{a}}/E_{\wt{b}}}$ over $E_{\wt{b}}$ as in \cite[(2.7.2)]{KP23}. Moreover, there is a natural action of $\R_{E_{\wt{a}}/E_{\wt{b}}}\bG_m$ on $\wt{U}_a$, and after applying $\R_{E_{\wt{b}}/E}$ and composing with $\wt{a}$, this corresponds to the action of $T$ on $U_a$.
\end{enumerate}
The (disconnected) N\'eron model $\cT$ naturally acts on root subgroups as follows.
\begin{lem*}The subgroup $\cU_a$ is normalized by $\cT$. There are two possibilities:
  \begin{enumerate}[R1:]
  \item If $2a$ does not lie in $\Phi$, then the pinning induces an isomorphism $\R_{\cO_{\wt{a}}/\cO}\wt\cU_a\ra^\sim\cU_a$, where $\wt\cU_a$ denotes the smooth affine model of $\bG_a$ over $\cO_{\wt{a}}$ such that $\wt\cU_a(\cO_{\wt{a}})\subseteq E_{\wt{a}}$ equals $U_{x,a,0}$ \cite[\S C.2]{KP23}. Moreover, there is a natural action of $\bG_m$ on $\wt\cU_a$, and after applying $\R_{\cO_{\wt{a}}/\cO}$ and composing with $\wt{a}$, this corresponds to the action of $\cT$ on $\cU_a$.
  \item If $2a$ lies in $\Phi$, then the pinning induces an isomorphism $\R_{\cO_{\wt{b}}/\cO}\wt\cU_a\ra^\sim\cU_a$, where $\wt\cU_a$ denotes the smooth affine model of $\wt{U}_a$ over $\cO_{\wt{b}}$ such that $\wt\cU_a(\cO_{\wt{b}})\subseteq\wt{U}_a(E_{\wt{b}})$ equals $U_{x,a,0}$ \cite[\S C.4]{KP23}. Moreover, there is a natural action of $\R_{\cO_{\wt{a}}/\cO_{\wt{b}}}\bG_m$ on $\wt\cU_a$, and after applying $\R_{\cO_{\wt{b}}/\cO}$ and composing with $\wt{a}$, this corresponds to the action of $\cT$ on $\cU_a$.
  \end{enumerate}
\end{lem*}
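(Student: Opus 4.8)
The plan is to reduce everything to the structure theory of \cite[Theorem 8.2.5]{KP23} together with two elementary schematic-closure arguments, exploiting that the root subgroup $\cU_a$ is unipotent, hence connected, and therefore already lies in the connected parahoric $\cK^0$ (the smooth affine $\cO$-model with $\cK^0(\cO)=G(E)^0_x$). Passing from $G(E)^0_x$ to $G(E)^1_x$ only enlarges the torus part from the connected N\'eron model to the full N\'eron model $\cT$ (Lemma \ref{ss:parahoricstructure}.i)) and does not change $\cU_a$; so the internal description of $\cU_a$ is exactly the one in \cite[Theorem 8.2.5]{KP23}, \cite[\S C.2]{KP23}, \cite[\S C.4]{KP23}, and the genuinely new content is only that the enlarged $\cT$ still normalizes $\cU_a$ and still acts on it by the expected formula.

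First I would prove the normalization claim. Both $\cT$ and $\cU_a$ are smooth, hence flat, over $\cO$, so $\cT\times_\cO\cU_a$ is $\cO$-flat and its generic fiber $T\times_E U_a$ is schematically dense; as $\cU_a$ (being a coordinate subspace of $\cU_\pm\cong\prod\cU_a$) is closed in $\cK$ and $\cK$ is separated over $\cO$, the conjugation morphism $\cT\times_\cO\cU_a\ra\cK$ — which on generic fibers lands in $U_a$ since $T$ normalizes $U_a$ in $G$ — factors through $\cU_a$. This gives the normalization.

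Next I would produce the pinning isomorphism. By construction (see \ref{ss:Kn} and \ref{ss:parahoricstructure}), $x$ is the special point attached to the adjusted Chevalley valuation of the pinning $(B,T,\{x_{\wt a}\}_{\wt a\in\De})$, so this valuation is normalized precisely so that, in the coordinates on $U_a$ furnished by the $E$-level isomorphisms R1 and R2, the filtration subgroup $U_{x,a,0}$ is the standard integral one: $\cO_{\wt a}\subseteq E_{\wt a}$ in case R1, and the subgroup $\wt\cU_a(\cO_{\wt b})\subseteq\wt U_a(E_{\wt b})$ of \cite[\S C.4]{KP23} in case R2. Since \cite[Theorem 8.2.5]{KP23} characterizes $\cU_a$ as the smooth affine $\cO$-model of $U_a$ with $\cU_a(\cO)=U_{x,a,0}$, and such a model is unique, the $E$-level pinning isomorphisms R1 and R2 extend uniquely to the asserted isomorphisms $\R_{\cO_{\wt a}/\cO}\wt\cU_a\ra^\sim\cU_a$ and $\R_{\cO_{\wt b}/\cO}\wt\cU_a\ra^\sim\cU_a$ over $\cO$, with $\wt\cU_a$ the model of \cite[\S C.2]{KP23} resp.\ \cite[\S C.4]{KP23}. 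The compatibility of the $\cT$-conjugation action with the evident $\bG_m$-action on $\wt\cU_a$ (case R1), resp.\ $\R_{\cO_{\wt a}/\cO_{\wt b}}\bG_m$-action (case R2), composed with $\wt a$, is then again a density argument: both the conjugation morphism and the composite action morphism are morphisms $\cT\times_\cO\cU_a\ra\cU_a$ of flat separated $\cO$-schemes agreeing on the schematically dense generic fiber, by the last sentences of the $E$-level statements R1 and R2, so they coincide.

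The hard part will be the bookkeeping in case R2: verifying that the integral model $\wt\cU_a$ of the quadratic unipotent group $\wt U_a$ constructed in \cite[\S C.4]{KP23} is the one matching $U_{x,a,0}$ under the pinning, and that it carries the asserted $\R_{\cO_{\wt a}/\cO_{\wt b}}\bG_m$-action via $\wt a$. This is a computation in the ramified (non-reduced root) case of the Bruhat--Tits valuation and is the only point where the argument is not purely formal; case R1 and the normalization claim are entirely formal.
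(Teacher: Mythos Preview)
Your proposal is correct. The paper's proof is shorter: it cites \cite[Proposition~C.5.1]{KP23} directly for the description of $\cU_a$ (the Weil-restriction isomorphisms in cases R1 and R2, including the torus action), and then observes that since $\cT(\cO)=T(E)^1$ by Lemma~\ref{ss:parahoricstructure}.i), this description already exhibits the $\cT$-action on $\cU_a$ and hence the normalization. You instead establish normalization first via schematic density, obtain the description by matching $\cO$-points and invoking uniqueness of smooth affine models, and then verify the action compatibility by a second density argument. Both routes are valid; the paper's is more economical because \cite[Proposition~C.5.1]{KP23} packages together what you assemble from \cite[Theorem~8.2.5]{KP23} and \cite[\S C.2, \S C.4]{KP23}, while your density arguments have the virtue of making the passage from $\cT^0$ to the full N\'eron model $\cT$ completely transparent --- which, as you correctly identify, is the only step not already in the connected-parahoric literature.
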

\begin{proof}
The desired description of $\cU_a$ follows from \cite[Proposition C.5.1]{KP23}. Because $\cT(\cO)\subseteq T(E)$ equals $T(E)^1$, this description indicates that $\cU_a$ is normalized by $\cT$. It also yields the desired description of the action of $\cT$ on $\cU_a$.
\end{proof}

\subsection{}\label{ss:parahoricbirational}
We will use the following to reconstruct $\cK$, which is crucial for our congruence result. Write $\cX$ for the smooth affine scheme $\cU_-\times_\cO\cT\times_\cO\cU_+$ over $\cO$, so that the multiplication morphism $\cX\ra\cK$ is an open embedding by Lemma \ref{ss:parahoricstructure}.iii). Since $x$ is a special point, \cite[Proposition 7.7.11]{KP23} and \cite[Remark 7.7.6]{KP23} imply that $\cX_{\cO/\fp}$ is dense in $\cK_{\cO/\fp}$. Hence the group law on $\cK$ over $\cO$ restricts to an birational group law on $\cX$ over $\cO$ as in \cite[5.1/1]{BLR90}.
\begin{prop*}
For any scheme $\cZ$ over $\cO$, the birational group law on $\cX_\cZ$ over $\cZ$ depends only on the group laws on $(\cU_\pm)_\cZ$ and $\cT_\cZ$ over $\cZ$, along with the action of $\cT_\cZ$ on $(\cU_+)_\cZ$.
\end{prop*}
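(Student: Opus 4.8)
The plan is to show that the rational multiplication map on $\cX$ can be written down in terms of the stated data alone, using the open cells $\cU_-\cdot\cT\cdot\cU_+$ inside the reductive group $\cK$ and the theory of birational group laws. First I would recall that the birational group law on $\cX_\cZ$ over $\cZ$ is, by definition, the rational map $\cX_\cZ\times_\cZ\cX_\cZ\dashrightarrow\cX_\cZ$ obtained by transporting the group law on $\cK_\cZ$ via the open immersion $m\colon\cX\hookrightarrow\cK$; it is defined on the (schematically dense) open locus where the product lands back in $\cX_\cZ$. So the content of the proposition is that this rational map admits a \emph{formula} involving only the group laws on $(\cU_\pm)_\cZ$, the group law on $\cT_\cZ$, and the conjugation action $\cT_\cZ\times_\cZ(\cU_+)_\cZ\to(\cU_+)_\cZ$ (and the corresponding action on $(\cU_-)_\cZ$, which by Lemma \ref{ss:parahoricrootgroups} is determined by the same data, being governed by the characters $\wt a$).

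The key computational input is the usual ``Bruhat-style'' rewriting in the big cell: given $u_-^{(1)}t^{(1)}u_+^{(1)}$ and $u_-^{(2)}t^{(2)}u_+^{(2)}$ in $\cX$, their product in $\cK$ is
\begin{align*}
u_-^{(1)}\,t^{(1)}\,\big(u_+^{(1)}u_-^{(2)}\big)\,t^{(2)}\,u_+^{(2)},
\end{align*}
and on the locus where $u_+^{(1)}u_-^{(2)}$ again lies in the big cell $\cU_-\cdot\cT\cdot\cU_+$ one writes $u_+^{(1)}u_-^{(2)}=v_-\,s\,v_+$ for uniquely determined $v_\pm$, $s$; then the product rearranges to
\begin{align*}
\big(u_-^{(1)}\cdot\prescript{t^{(1)}}{}{v_-}\big)\;\big(t^{(1)}s\,t^{(2)}\big)\;\big(\prescript{(t^{(2)})^{-1}}{}{v_+}\cdot u_+^{(2)}\big),
\end{align*}
using only the group laws on $\cU_\pm$ and $\cT$ and the $\cT$-action on $\cU_\pm$ (here $\prescript{t}{}{(-)}$ denotes conjugation). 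The one ingredient that is \emph{not} manifestly built from the stated data is the map $\cU_+\times\cU_-\dashrightarrow\cU_-\times\cT\times\cU_+$ sending $(u_+,u_-)\mapsto(v_-,s,v_+)$. My plan is to argue that this rational map is itself intrinsic to the data: it is the restriction of the big-cell factorization on $\cK$, but on the level of the birational group law only the open subscheme matters, and I would invoke that $\cX\hookrightarrow\cK$ identifies $\cX$ with a dense open of $\cK$ so that $\cK$ (hence this factorization map) is reconstructed from $\cX$ together with its birational group law by \cite[5.1/5]{BLR90} (the birational-group-law recovery theorem) --- and $\cX$ and its birational group law are, inductively, determined by the stated data. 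To avoid circularity, the cleanest route is probably to prove the statement one commutator relation at a time: show that each structure constant of the rewriting (the commutator $[\cU_+,\cU_-]$ inside $\cK$, valued in the big cell) is governed by the Chevalley commutator formula, whose coefficients come from the pinned split group $G^{\s}_{\cO_{\wt a}}$ over $\bZ$ --- these are \emph{absolute constants}, not depending on $E$ or $\cZ$ --- together with the Weil restrictions and the characters $\wt a\colon\cT\to\R_{\cO_{\wt a}/\cO}\bG_m$ that define the $\cT$-action. Via the descriptions in Lemma \ref{ss:parahoricstructure} and Lemma \ref{ss:parahoricrootgroups}, all of $(\cU_\pm)_\cZ$, $\cT_\cZ$, and the action are obtained from these universal objects, so the birational group law is too.

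I expect the main obstacle to be making the previous paragraph non-circular and genuinely ``intrinsic'': one wants to express the commutator map $\cU_+\times\cU_-\dashrightarrow\cU_-\times\cT\times\cU_+$ without secretly re-invoking the ambient group $\cK$ that we are trying to reconstruct. The resolution I would pursue is to note that this commutator map is exactly the data assembled by the Chevalley commutator formula for the (split, pinned) group over $\cO_{\wt a}$ --- i.e.\ for $G^{\s}$ base changed along $\bZ\to\cO_{\wt a}$ --- which is a fixed scheme-theoretic datum independent of $E$; combining this with the Weyl-restriction functors $\R_{\cO_{\wt a}/\cO}$, $\R_{\cO_{\wt b}/\cO}$ and the characters $\wt a$ reconstitutes all of $(\cU_\pm)_\cZ$ and the $\cT_\cZ$-action over any $\cZ$, hence the factorization map and thereby the birational group law. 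A secondary point to check is the case $\mathrm{R2}$ (when $2a\in\Phi$), where the root group $\wt U_a=U_{\cO_{\wt a}/\cO_{\wt b}}$ is non-abelian and the commutator formula must be taken in the form appropriate to $\mathrm{SU}_3$-type local factors; the relevant formulas are again universal (coming from $G^{\s}$), so the argument goes through verbatim after substituting the descriptions of Lemma \ref{ss:parahoricrootgroups}.R2. Once these intrinsicality statements are in place, the proposition follows by tracing through the rewriting formula above and observing that every term has been exhibited as built from the named data; by \cite[5.1/5]{BLR90} this also pins down $\cK_\cZ$ itself, but the proposition only asks for the birational group law on $\cX_\cZ$, which is what the formula directly computes.
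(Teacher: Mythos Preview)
Your big-cell rewriting is exactly the paper's approach: the paper records the same formula as a chain passing through the open $\cC=\cX\cap\cX^{-1}$ (after first sliding $t^{(1)}$ past $u_+^{(1)}$ via the $\cT$-action on $\cU_+$, which lets it avoid ever invoking the action on $\cU_-$ that your version uses), and the transition on $\cC$ from $\cU_+\cT\cU_-$-coordinates to $\cU_-\cT\cU_+$-coordinates is precisely your factorization map $(u_+,u_-)\mapsto(v_-,s,v_+)$. The paper does not separately argue that this transition is intrinsic to the listed data --- it simply takes the two open immersions of $\cC$ into $\cX^{-1}$ and into $\cX$ as given and stops there --- so the concern you raise is about a point the paper elides rather than one it handles differently. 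Your Chevalley-commutator resolution (structure constants of the fixed pinned $G^{\s}$ over $\bZ$, transported through the Weil restrictions of Lemma~\ref{ss:parahoricrootgroups}) is correct and is exactly the implicit content needed for the application in the following theorem, though strictly speaking you are then invoking one more ingredient than the proposition's statement literally names.
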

\begin{proof}
  Applying Lemma \ref{ss:parahoricstructure}.iii) to the opposite Borel shows that the multiplication morphism $\cU_+\times_\cO\cT\times_\cO\cU_-\ra\cK$ is also an open embedding. Write $\cX^{-1}$ for the smooth affine scheme $\cU_+\times_\cO\cT\times_\cO\cU_-$ over $\cO$, and write $\cC$ for the intersection of $\cX$ with $\cX^{-1}$. Note that the birational group law $\cX\times_\cO\cX\dashrightarrow\cX$ equals
  \begin{align*}
    \cX\times_\cO\cX &= \cU_-\times_\cO(\cT\times_\cO\cU_+\times_\cO\cU_-)\times_\cO(\cT\times_\cO\cU_+) \\
                     &\cong \cU_-\times_\cO(\cU_+\times_\cO\cT\times_\cO\cU_-)\times_\cO(\cT\times_\cO\cU_+) \\
                     &\supseteq\cU_-\times_\cO\cC\times_\cO(\cT\times_\cO\cU_+) \\
                     &\subseteq\cU_-\times_\cO(\cU_-\times_\cO\cT\times_\cO\cU_+)\times_\cO(\cT\times_\cO\cU_+) \\
                     &\cong(\cU_-\times_\cO\cU_-)\times_\cO(\cT\times_\cO\cT)\times_\cO(\cU_+\times_\cO\cU_+)\\
    &\ra\cU_-\times_\cO\cT\times_\cO\cU_+=\cX,
  \end{align*}
where the isomorphisms are induced by the action of $\cT$ on $\cU_+$, and the last morphism is induced by the group laws on $\cU_\pm$ and $\cT$ over $\cO$. After base changing to $\cZ$, this yields the desired result.
\end{proof}

\subsection{}\label{ss:closefieldscongruence}
We now introduce our setup of close fields. Let $n$ be a positive integer such that the image of $I^n$ in $\Gal(F/E)$ is trivial. Let $E'$ be a nonarchimedean local field along with an isomorphism $\Tr_l(E)\cong\Tr_l(E')$ for some $l\geq n$, which induces an isomorphism $\Tr_n(E)\cong\Tr_n(E')$. Using our isomorphism $\Tr_n(E)\cong\Tr_n(E')$, we obtain a canonical isomorphism $\Ga/I^n\cong\Ga'/I'^n$ of topological groups (up to conjugation) \cite[(3.5.1)]{Del84}.

Write $F'/E'$ for the finite Galois extension corresponding to $F/E$ via the isomorphism $\Ga/I^n\cong\Ga'/I'^n$. We have an isomorphism $\Gal(F/E)\cong\Gal(F'/E')$, so descending $(G^{\s}_{F'},T^{\s}_{F'},B^{\s}_{F'},\{x_{\wt{a}}\}_{\wt{a}\in\De})$ along $\Spec{F'}\ra\Spec{E'}$ via $\de$ yields a quasisplit connected reductive group $G'$ over $E'$ with a pinning $(B',T',\{x'_{\wt{a}}\}_{\wt{a}\in\De})$ over $E'$.

Write $S'$ for the maximal split subtorus of $T'$. Note that we have natural isomorphisms $X^*(T)\cong X^*(T')$ and $X^*(S)\cong X^*(S')$ that preserve the absolute and relative root systems, respectively, as well as the positive roots. By using our pinnings $(B,T,\{x_{\wt{a}}\}_{\wt{a}\in\De})$ and $(B',T',\{x'_{\wt{a}}\}_{\wt{a}\in\De})$ as basepoints, this induces a canonical isomorphism $\cA(S)\cong\cA(S')$. Write $x'$ for the image of $x$ under the isomorphism $\cA(S)\cong\cA(S')$, and write $\cK'$ for the unique smooth affine model of $G'$ over $\cO'$ such that $\cK'(\cO')\subseteq G'(E')$ equals $G'(E')^1_{x'}$.

Write $\psi:[-1,\infty)\ra[-1,\infty)$ for the Hasse--Herbrand function associated with $F/E$ as in \cite[Chap. IV, \S3]{Ser79}, and write $\ups$ for the inertia degree of $F/E$. Recall that our isomorphism $\Tr_l(E)\cong\Tr_l(E')$ induces an isomorphism $\Tr_{\psi(l)}(F)\cong\Tr_{\psi(l)}(F')$ \cite[(3.4.1)]{Del84}. Because $\psi(x)\to\infty$ as $x\to\infty$, for large enough $l$ this yields an isomorphism $\cO_L/\fp_L^{n\ups_L}\cong\cO_{L'}/\fp_{L'}^{n\ups_L}$ for all subextensions $L/E$ in $F$ with corresponding subextension $L'/E'$ in $F'$, where $\ups_L$ denotes the inertia degree of $L/E$.

\subsection{}\label{ss:parahoriccongruence}
For the rest of this section, assume that $l$ is large enough as in \ref{ss:closefieldscongruence}. The following is our main congruence result for disconnected parahoric subgroups.
\begin{thm*}
For large enough $l$, the isomorphism $\Tr_l(E)\cong\Tr_l(E')$ induces an isomorphism $\cK_{\cO/\fp^n}\cong\cK'_{\cO'/\fp'^n}$ of groups over $\cO/\fp^n\cong\cO'/\fp'^n$.
\end{thm*}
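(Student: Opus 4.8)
The plan is to reduce the congruence for $\cK$ to the congruence for the various pieces appearing in the big-cell decomposition of Lemma \ref{ss:parahoricstructure}, and then glue back using the birational-group-law recognition statement of Proposition \ref{ss:parahoricbirational}. First I would observe that, by Lemma \ref{ss:parahoricstructure}, the smooth affine scheme $\cX = \cU_-\times_\cO\cT\times_\cO\cU_+$ carries a birational group law over $\cO$ whose associated group scheme is $\cK$ (using \cite[5.1/5]{BLR90}, since $\cX$ is $\cO$-smooth, separated, and the group law is defined on a dense open), and likewise $\cX' = \cU'_-\times_{\cO'}\cT'\times_{\cO'}\cU'_+$ has associated group scheme $\cK'$. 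By Proposition \ref{ss:parahoricbirational}, the reduction mod $\fp^n$ of this birational group law depends only on: (1) the group schemes $(\cU_\pm)_{\cO/\fp^n}$, (2) the group scheme $\cT_{\cO/\fp^n}$, and (3) the action of $\cT_{\cO/\fp^n}$ on $(\cU_+)_{\cO/\fp^n}$ (and symmetrically on $\cU_-$). So it suffices to produce, from the isomorphism $\Tr_l(E)\cong\Tr_l(E')$ (for $l$ large), compatible isomorphisms of these three pieces mod $\fp^n$, after which Proposition \ref{ss:parahoricbirational} gives an isomorphism of birational group laws mod $\fp^n$, hence (again by the recognition theorem for birational group laws, e.g. \cite[5.1/5]{BLR90} applied over the Artinian base $\cO/\fp^n$) an isomorphism $\cK_{\cO/\fp^n}\cong\cK'_{\cO'/\fp'^n}$ of group schemes.

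Next I would handle the three pieces. For the torus piece $\cT$: by Lemma \ref{ss:parahoricstructure}.i), $\cT$ is the N\'eron model of $T$ over $\cO$, and $T$ is obtained by Weil restriction along subextensions of $F/E$ from split tori; the congruence $\cT_{\cO/\fp^n}\cong\cT'_{\cO'/\fp'^n}$ for $l$ large follows from the theory of N\'eron models of tori over close fields, namely the results of Chai--Yu \cite{CY01} (this is exactly the torus case invoked in the introduction to the appendix). For the root-group pieces $\cU_a$: by Lemma \ref{ss:parahoricrootgroups}, in case R1 we have $\cU_a\cong\R_{\cO_{\wt a}/\cO}\wt\cU_a$ with $\wt\cU_a$ the smooth model of $\bG_a$ over $\cO_{\wt a}$ determined by the concave function, and in case R2 similarly with $\wt U_a$ in place of $\bG_a$; since $\wt\cU_a$ is (an iterated extension built from) $\bG_a$ with an explicit integral structure, its reduction mod $\fp_{\wt a}^{n\ups_{\wt a}}$ is manifestly determined by $\Tr$-data, and Weil restriction along $\cO_{\wt a}/\cO$ (whose mod-$\fp^n$ reduction is determined by $\cO_{\wt a}/\fp_{\wt a}^{n\ups_{\wt a}}\cong\cO'_{\wt a}/\fp'^{n\ups_{\wt a}}_{\wt a}$ from \ref{ss:closefieldscongruence}) is compatible. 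Finally, the action of $\cT$ on $\cU_a$ is, by Lemma \ref{ss:parahoricrootgroups}, induced by the root character $\wt a:\cT\to\R_{\cO_{\wt a}/\cO}\bG_m$ (resp.\ its R2 analogue) composed with the scaling action; the character $\wt a$ is defined over $\cO$ and, being a homomorphism of smooth affine $\cO$-schemes whose formation commutes with the explicit coordinates of the pinning, its reduction mod $\fp^n$ matches that of $\wt a'$ under the isomorphisms already produced. Assembling these over all $a$ in $\Phi^\pm_{\red}$ and using Lemma \ref{ss:parahoricstructure}.ii) gives the required isomorphisms $(\cU_\pm)_{\cO/\fp^n}\cong(\cU'_\pm)_{\cO'/\fp'^n}$ compatibly with the $\cT$-actions.

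The main obstacle I anticipate is bookkeeping the compatibility of the Galois descent with the close-field identification: the pinning, the root subgroups, and the character $\wt a$ all live over the various subfields of $F$ and are descended along $\Spec F\to\Spec E$ via $\de$, so one must check that the isomorphism $\Gal(F/E)\cong\Gal(F'/E')$ and the ring isomorphisms $\cO_L/\fp_L^{n\ups_L}\cong\cO_{L'}/\fp_{L'}^{n\ups_L}$ from \ref{ss:closefieldscongruence} are compatible enough that the descended group schemes and their structure morphisms agree mod $\fp^n$; this is the step where one genuinely uses that $\Tr_l(E)\cong\Tr_l(E')$ (not merely $\Tr_n$) for $l$ large, via \cite[(3.4.1)]{Del84}. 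Once that is set up, and with the torus case cited from Chai--Yu, the rest is a formal consequence of Proposition \ref{ss:parahoricbirational} together with the uniqueness in the recognition theorem for birational group laws.
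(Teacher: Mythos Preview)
Your proposal is correct and follows essentially the same route as the paper: reduce to the torus and root-group pieces via Lemma \ref{ss:parahoricstructure}, invoke Chai--Yu for the torus (the paper cites \cite[(9.2)]{CY01} for $\cT_{\cO/\fp^n}\cong\cT'_{\cO'/\fp'^n}$ and \cite[(9.4)]{CY01} for compatibility of the character $\wt{a}$), handle the root groups case-by-case via Lemma \ref{ss:parahoricrootgroups} (the paper cites the proof of \cite[Lemma 4.7]{Gan19} here), and then glue via Proposition \ref{ss:parahoricbirational} together with uniqueness of solutions of birational group laws \cite[5.1/3]{BLR90}. The only refinement is that the paper observes $\cK_{\cO/\fp^n}$ and $\cK'_{\cO'/\fp'^n}$ are already solutions (by base change from $\cO$), so one only needs uniqueness \cite[5.1/3]{BLR90} rather than existence \cite[5.1/5]{BLR90}; your anticipated Galois-descent bookkeeping is absorbed into the cited Chai--Yu and Ganapathy results.
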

\begin{proof}
Assume that $l$ is large enough as in the proof of \cite[(9.4)]{CY01}. Then \cite[(9.2)]{CY01} indicates that our isomorphism $\Tr_l(E)\cong\Tr_l(E')$ induces an isomorphism $\cT_{\cO/\fp^n}\cong\cT'_{\cO'/\fp'^n}$ of groups over $\cO/\fp^n\cong\cO'/\fp'^n$. Under this identification, \cite[(9.4)]{CY01} implies that, for all $\wt{a}$ in $X^*(T)\cong X^*(T')$, the square of groups over $\cO/\fp^n\cong\cO'/\fp'^n$
\begin{align*}
  \xymatrix{\cT_{\cO/\fp^n}\ar@{=}[r]^-\sim\ar[d]^-{\wt{a}_{\cO/\fp^n}} & \cT'_{\cO'/\fp'^n}\ar[d]^-{\wt{a}_{\cO'/\fp'^n}}\\
  \R_{(\cO_{\wt{a}}/\fp_{\wt{a}}^{n\ups_{\wt{a}}})/(\cO/\fp^n)}\bG_m\ar@{=}[r]^-\sim & \R_{(\cO_{\wt{a}}'/\fp_{\wt{a}}'^{n\ups_{\wt{a}}})/(\cO'/\fp'^n)}\bG_m}
  \end{align*}
commutes, where $\ups_{\wt{a}}$ denotes the inertia degree of $E_{\wt{a}}/E$.
  
  Let $a$ be in $\Phi_{\red}$. In the two cases of \ref{ss:parahoricrootgroups}, the proof of \cite[Lemma 4.7]{Gan19} shows that the isomorphism $\Tr_l(E)\cong\Tr_l(E')$ induces:
  \begin{enumerate}[R1:]
  \item If $2a$ does not lie in $\Phi$, an isomorphism $(\wt\cU_a)_{\cO_{\wt{a}}/\fp_{\wt{a}}^{n\ups_{\wt{a}}}}\cong(\wt\cU'_a)_{\cO'_{\wt{a}}/\fp_{\wt{a}}'^{n\ups_{\wt{a}}}}$ of groups over $\cO_{\wt{a}}/\fp_{\wt{a}}^{n\ups_{\wt{a}}}\cong\cO'_{\wt{a}}/\fp_{\wt{a}}'^{n\ups_{\wt{a}}}$ such that the actions of $(\bG_m)_{\cO_{\wt{a}}/\fp_{\wt{a}}^{n\ups_{\wt{a}}}}\cong(\bG_m)_{\cO'_{\wt{a}}/\fp_{\wt{a}}'^{n\ups_{\wt{a}}}}$ agree,
  \item If $2a$ lies in $\Phi$, an isomorphism $(\wt\cU_a)_{\cO_{\wt{b}}/\fp_{\wt{b}}^{n\ups_{\wt{b}}}}\cong(\wt\cU_a')_{\cO_{\wt{b}}'/\fp_{\wt{b}}'^{n\ups_{\wt{b}}}}$ of groups over $\cO_{\wt{b}}/\fp_{\wt{b}}^{n\ups_{\wt{b}}}\cong\cO_{\wt{b}}'/\fp_{\wt{b}}'^{n\ups_{\wt{b}}}$ such that the actions of
    \begin{align*}
      \R_{(\cO_{\wt{a}}/\fp_{\wt{a}}^{n\ups_{\wt{a}}})/(\cO_{\wt{b}}/\fp_{\wt{b}}^{n\ups_{\wt{b}}})}\bG_m\cong\R_{(\cO_{\wt{a}}'/\fp_{\wt{a}}'^{n\ups_{\wt{a}}})/(\cO_{\wt{b}}'/\fp_{\wt{b}}'^{n\ups_{\wt{b}}})}\bG_m
    \end{align*}
    agree, where $\ups_{\wt{b}}$ denotes the inertia degree of $E_{\wt{b}}/E$.
  \end{enumerate}
Therefore Lemma \ref{ss:parahoricrootgroups} yields an isomorphism $(\cU_a)_{\cO/\fp^n}\cong(\cU'_a)_{\cO'/\fp'^n}$ of groups over $\cO/\fp^n\cong\cO'/\fp'^n$ such that the actions of $\cT_{\cO/\fp^n}\cong\cT'_{\cO'/\fp'^n}$ agree. By Proposition \ref{ss:parahoricbirational}, this induces an isomorphism $\cX_{\cO/\fp^n}\cong\cX'_{\cO'/\fp'^n}$ of birational group laws over $\cO/\fp^n\cong\cO'/\fp'^n$. Now $\cK_{\cO/\fp^n}$ and $\cK'_{\cO'/\fp'^n}$ are solutions as in \cite[5.1/2]{BLR90} of the birational group laws $\cX_{\cO/\fp^n}$ and $\cX'_{\cO'/\fp'^n}$, respectively, so the uniqueness of solutions \cite[5.1/3]{BLR90} induces an isomorphism $\cK_{\cO/\fp^n}\cong\cK'_{\cO'/\fp'^n}$ of groups over $\cO/\fp^n\cong\cO'/\fp'^n$. This yields the desired result.
\end{proof}

\subsection{}\label{ss:valuationsection}
At this point, we begin preparations for our Hecke algebra isomorphism over close fields. Write $\breve{E}$ for the completion of the maximal unramified extension of $E$, and write $\breve\cO$ for its ring of integers. Write $\bF_q$ for the residue field of $E$, and write $\vp:\breve{E}\ra\breve{E}$ for the lift of absolute $q$-Frobenius.

Choose a uniformizer $\pi_F$ of $F$. Choose a $\bZ$-basis $\mu_1,\dotsc,\mu_r$ of the $\bZ$-torsionfree quotient $X_*(T)_{I,\tf}$ of $X_*(T)_I$, and choose representatives $\wt\mu_1,\dotsc,\wt\mu_r$ in $X_*(T)$ of the $\mu_1,\dotsc,\mu_r$. Sending $\mu_j\mapsto\Nm_{\breve{F}/\breve{E}}\wt\mu_j(\pi_F)$ for all $1\leq j\leq r$ yields a homomorphism
\begin{align*}
\breve\na:X_*(T)_{I,\tf}\ra T(\breve{E}).
\end{align*}
Note that $\breve\na$ is a section of the valuation map $v:T(\breve{E})\ra X_*(T)_{I,\tf}$, and because $v$ is surjective with kernel $T(\breve{E})^1$, this induces an isomorphism
\begin{align*}
  X_*(T)_{I,\tf}\times T(\breve{E})^1\ra^\sim T(\breve{E}).
\end{align*}

Lemma \ref{ss:parahoricstructure}.i) indicates that $T(\breve{E})^1$ equals $\cT(\breve\cO)$. Write $\breve{T}^n$ for the kernel of $\cT(\breve\cO)\ra\cT(\breve\cO/\fp^n)$, write $T^n$ for the kernel of $\cT(\cO)\ra\cT(\cO/\fp^n)$, and write $K^n$ for the kernel of $\cK(\cO)\ra\cK(\cO/\fp^n)$. Note that $\breve{T}^n$, $T^n$, and $K^n$ remain unchanged if we replace the groups $\cG$ over $\cO$ with their relative neutral components $\cG^\circ$, since $\cG^\circ$ is open in $\cG$, and the only open subscheme of $\Spec\cO$ containing $\Spec\cO/\fp^n$ is $\Spec\cO$.

\subsection{}\label{ss:toruscomparison}
We want to compare elements of $T(E)$ with elements of $T'(E')$ in a way that is compatible with Theorem \ref{ss:parahoriccongruence}, so we proceed as follows.

Choose a uniformizer $\pi_{F'}$ of $F'$ whose image in $\cO_{F'}/\fp_{F'}^{n\ups}\cong\cO_F/\fp_F^{n\ups}$ equals the image of $\pi_F$. Using the image of the $\wt\mu_1,\dotsc,\wt\mu_r$ in $X_*(T')\cong X_*(T)$, we obtain a homomorphism $\breve\na':X_*(T')_{I',\tf}\ra T'(\breve{E}')$ as in \ref{ss:valuationsection} and hence an isomorphism $X_*(T')_{I',\tf}\times \cT'(\breve\cO')\ra^\sim T'(\breve{E}')$.

For the rest of this section, assume that $l$ is large enough as in Theorem \ref{ss:parahoriccongruence}. Then \cite[(9.2)]{CY01} indicates that our isomorphism $\Tr_l(E)\cong\Tr_l(E')$ induces an isomorphism $\cT_{\cO/\fp^n}\cong\cT'_{\cO'/\fp'^n}$ of groups over $\cO/\fp^n\cong\cO'/\fp'^n$; write $\tau$ for the resulting composition
\begin{align*}
T(\breve{E})/\breve{T}^n\lea^\sim X_*(T)_{I,\tf}\times\cT(\breve\cO/\fp^n)\cong X_*(T')_{I',\tf}\times\cT'(\breve\cO'/\fp'^n)\ra^\sim T'(\breve{E}')/\breve{T}'^n.
\end{align*}
The proof of \cite[Lemma 2.5]{Gan24} shows that $\tau$ intertwines the actions of $\vp$ and $\vp'$, so \cite[Corollary B.10.14]{KP23} implies that taking $\vp$-invariants and $\vp'$-invariants yields an isomorphism $\tau:T(E)/T^n\ra^\sim T'(E')/T'^n$.

\subsection{}\label{ss:stabilizercongruence}
Note that the action of $\cK(\cO)\times\cK(\cO)$ on $K^n\bs G(E)/K^n$ by left and right translation factors through $\cK(\cO)\times\cK(\cO)\ra\cK(\cO/\fp^n)\times\cK(\cO/\fp^n)$.

Our comparison from \ref{ss:toruscomparison} satisfies the following compatibility with Theorem \ref{ss:parahoriccongruence}. Let $t$ be in $T(E)$, and choose $t'$ in $T'(E')$ whose image in $T'(E')/T'^n\cong T(E)/T^n$ equals the image of $t$.
\begin{lem*}
Under the identification $\cK(\cO/\fp^n)\cong\cK'(\cO'/\fp'^n)$ from Theorem \ref{ss:parahoriccongruence}, the stabilizer of $K^ntK^n$ in $\cK(\cO/\fp^n)\times\cK(\cO/\fp^n)$ equals the stabilizer of $K'^nt'K'^n$ in $\cK'(\cO'/\fp'^n)\times\cK'(\cO'/\fp'^n)$.
\end{lem*}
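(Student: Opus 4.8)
The plan is to unwind the stabilizer condition into an equation inside $G(E)$, and then to show that this equation only sees data matched by Theorem~\ref{ss:parahoriccongruence} and the comparison $\tau$ of \ref{ss:toruscomparison}. First, since $K^n$ is the kernel of $\cK(\cO)\to\cK(\cO/\fp^n)$, it is normal in $\cK(\cO)$, so $\cK(\cO/\fp^n)\times\cK(\cO/\fp^n)$ acts on $K^n\bs G(E)/K^n$ by left and right translation. For lifts $\tilde k_1,\tilde k_2\in\cK(\cO)$ of $k_1,k_2\in\cK(\cO/\fp^n)$, normality gives $k_1(K^ntK^n)k_2^{-1}=K^n(\tilde k_1 t\tilde k_2^{-1})K^n$, so $(k_1,k_2)$ stabilizes $K^ntK^n$ if and only if $\tilde k_1 t\tilde k_2^{-1}\in K^ntK^n$, and a short manipulation (which is also independent of the chosen lifts) rewrites this as
\begin{align*}
\tilde k_1\cdot(t\tilde k_2^{-1}t^{-1})\in K^n\cdot(tK^nt^{-1})
\end{align*}
inside $G(E)$. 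Doing the same for $E'$ and $t'$, it then suffices to match these two conditions under the identification $\cK(\cO/\fp^n)\cong\cK'(\cO'/\fp'^n)$ of Theorem~\ref{ss:parahoriccongruence}.

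Next I would reduce the condition to root-group data. By Lemma~\ref{ss:parahoricstructure} the multiplication morphism $\cU_-\times_\cO\cT\times_\cO\cU_+\to\cK$ is an open immersion, and since every element of $K^n$ reduces to the identity in the special fibre it lies in this big cell; this yields $K^n=(\cU_-^n)(\cT^n)(\cU_+^n)$ as a subset of $\cK(\cO)$, where $\cU_\pm^n$, $\cT^n$ are the level-$n$ congruence subsets of $\cU_\pm(\cO)$, $\cT(\cO)$, and each $\cU_\pm^n$ further factors through the root subgroups $\cU_a$ for $a\in\Phi^{\pm}_{\red}$. Because $t\in T(E)$ centralizes $\cT(\cO)$ and normalizes each $\cU_a$, conjugation by $t$ preserves $\cT^n$ and, by the $\cT$-action on $\cU_a$ described in Lemma~\ref{ss:parahoricrootgroups}, carries the congruence filtration of $\cU_a$ to the one shifted by the integer $v_{\tilde a}(\tilde a(t))$ for a lift $\tilde a$ of $a$; this integer depends only on $v(t)\in X_*(T)_{I,\tf}$ together with the absolute and relative root data. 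Feeding these formulas into the displayed condition and using the commutation relations among the filtered subgroups $\cU_a$, one rewrites it as a condition depending only on: the $\cO/\fp^n$-reduction of the big-cell structure of $\cK$ (its root subgroups, its torus, and the $\cT$-action on the $\cU_a$), the tuple of integers $(v_{\tilde a}(\tilde a(t)))_a$, and the image of $t$ in $\cT(\cO/\fp^n)$.

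Finally I would check that all of this data is matched. The big-cell structure of $\cK_{\cO/\fp^n}$, including the $\cT$-action on the $\cU_a$, is identified with that of $\cK'_{\cO'/\fp'^n}$: this is exactly what the proof of Theorem~\ref{ss:parahoriccongruence} produces, since it passes through the identifications of $\cT_{\cO/\fp^n}$, of the $(\cU_a)_{\cO/\fp^n}$, and of the $\cT$-action, and then through Proposition~\ref{ss:parahoricbirational}. The root data $X^*(T)\cong X^*(T')$, $X^*(S)\cong X^*(S')$ are identified compatibly with the absolute and relative root systems by \ref{ss:closefieldscongruence}, and the isomorphism $X_*(T)_{I,\tf}\cong X_*(T')_{I',\tf}$ used to construct $\tau$ in \ref{ss:toruscomparison} belongs to the same package; hence $v_{\tilde a}(\tilde a(t))=v_{\tilde a'}(\tilde a'(t'))$ for every $a$, using that $t$ and $t'$ have the same image in $T(E)/T^n\cong T'(E')/T'^n$ and that (for $l$ large enough, as in \ref{ss:closefieldscongruence} and \ref{ss:toruscomparison}) this image determines both $v(t)$ and the image of $t$ in $\cT(\cO/\fp^n)\cong\cT'(\cO'/\fp'^n)$. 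Therefore the conditions for $(E,t)$ and $(E',t')$ correspond, and the two stabilizer subgroups coincide.

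\textbf{The main obstacle} will be the second step: making precise the claim that $K^n\cdot(tK^nt^{-1})$ and the containment $\tilde k_1(t\tilde k_2^{-1}t^{-1})\in K^n(tK^nt^{-1})$ are governed purely by $\cO/\fp^n$-data. One must organize the product into its root-group components — where $tK^nt^{-1}$ may be far deeper than $K^n$ for some roots and far shallower (even escaping $\cK(\cO)$) for others — and show that the requirement that $\tilde k_1(t\tilde k_2^{-1}t^{-1})$ lie in $\cK(\cO)$ at all forces cancellations already visible mod $\fp^n$. This is a Bruhat--Tits-style computation with filtered root subgroups, carried out for the possibly disconnected group scheme $\cK$; the reason it goes through is precisely that Lemmas~\ref{ss:parahoricstructure} and \ref{ss:parahoricrootgroups} supply the big cell and the root-group descriptions in exactly this generality, so that one never has to appeal to the connected-fibre literature directly.
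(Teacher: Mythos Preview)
Your plan shares the right ingredients with the paper (root-group decompositions, Lemma~\ref{ss:parahoricrootgroups}, the $\cT$-action, the identification of $\cO/\fp^n$-data via Theorem~\ref{ss:parahoriccongruence}), but the step you flag as the main obstacle is genuinely hard as stated and you leave it as a sketch. The paper sidesteps this two-variable computation entirely with a structural observation you are missing.

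The paper first notes that the map $k\mapsto(k,t^{-1}kt)$ induces an isomorphism
\[
\big(\cK(\cO)\cap t\cK(\cO)t^{-1}\big)\big/\big(K^n\cap tK^nt^{-1}\big)\;\xrightarrow{\ \sim\ }\;\stab_{\cK(\cO/\fp^n)\times\cK(\cO/\fp^n)}(K^ntK^n).
\]
(This is elementary: if $(k_1,k_2)$ stabilizes $K^ntK^n$ then $\tilde k_1 t\tilde k_2^{-1}=utv$ with $u,v\in K^n$, and $k\coloneqq u^{-1}\tilde k_1$ lies in $\cK(\cO)\cap t\cK(\cO)t^{-1}$ with $t^{-1}kt=v\tilde k_2$.) The problem then reduces to a one-sided claim: if $k\in\cK(\cO)$ and $k'\in\cK'(\cO')$ have the same image in $\cK(\cO/\fp^n)\cong\cK'(\cO'/\fp'^n)$, and if $t^{-1}kt\in\cK(\cO)$, then $t'^{-1}k't'\in\cK'(\cO')$ and their reductions also agree. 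This claim is multiplicative in $k$, so by the Bruhat decomposition it suffices to check it on generators: for $k\in\cT(\cO)$ it is trivial by commutativity of $T$, and for $k\in\cU_a(\cO)$ it follows directly from the explicit description of the $\cT$-action on $\cU_a$ in Lemma~\ref{ss:parahoricrootgroups}.

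So instead of analyzing the two-variable product $K^n\cdot(tK^nt^{-1})$ inside $G(E)$ --- where, as you note, $tK^nt^{-1}$ can escape $\cK(\cO)$ and the big-cell bookkeeping becomes delicate --- the paper collapses the stabilizer to a single coset space and checks a conjugation compatibility root-by-root. Your approach could in principle be pushed through, but the one-variable reduction is what makes the argument short.
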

\begin{proof}
  Let $k$ and $k'$ be elements of $\cK(\cO)$ and $\cK'(\cO')$, respectively, whose images in $\cK'(\cO'/\fp'^n)\cong\cK(\cO/\fp^n)$ coincide. We claim that if $t^{-1}kt$ lies in $\cK(\cO)$, then $t'^{-1}k't'$ lies in $\cK'(\cO')$, and their images in $\cK'(\cO'/\fp'^n)\cong\cK(\cO/\fp^n)$ also coincide. By the Bruhat decomposition, it suffices to check this in the following two cases:
  \begin{enumerate}[$\bullet$]
  \item If $k$ and $k'$ lie in $\cT(\cO)$ and $\cT'(\cO')$, respectively, then this follows from $T(E)$ being commutative.
  \item If $k$ and $k'$ lie in $\cU_a(\cO)$ and $\cU'_a(\cO')$, respectively, for some $a$ in $\Phi_{\red}$, then this follows from Lemma \ref{ss:parahoricrootgroups}.
  \end{enumerate}
  
  Next, note that the map $k\mapsto(k,t^{-1}kt)$ induces an isomorphism
  \begin{align*}
    \io:\big(\cK(\cO)\cap t\cK(\cO)t^{-1}\big)\big/\big(K^n\cap tK^nt^{-1}\big)\ra^\sim\stab_{\cK(\cO/\fp^n)\times\cK(\cO/\fp^n)}(K^ntK^n).
  \end{align*}
We have an analogous isomorphism
  \begin{align*}
    \io':\big(\cK'(\cO')\cap t'\cK'(\cO')t'^{-1}\big)\big/\big(K'^n\cap t'K'^nt'^{-1}\big)\ra^\sim\stab_{\cK'(\cO'/\fp'^n)\times\cK'(\cO'/\fp'^n)}(K'^nt'K'^n),
  \end{align*}
  and the claim implies that the images of $\io$ and $\io'$ in
  \begin{align*}
    \cK(\cO/\fp^n)\times\cK(\cO/\fp^n)\cong\cK'(\cO'/\fp'^n)\times\cK'(\cO'/\fp'^n)
  \end{align*}
  coincide. This yields the desired result.
\end{proof}

\subsection{}\label{ss:liftsofcombinatorics}
Let us recall the Cartan decomposition for disconnected parahoric subgroups, as well as some of its consequences for Hecke algebras. Choose a $\bZ$-basis $\nu_1,\dotsc,\nu_s$ of $(X_*(T)_{I,\tf})^\vp$, and choose lifts $t_1,\dotsc,t_s$ to $T(E)$ of the $\nu_1,\dotsc,\nu_s$. Sending $\nu_j\mapsto t_j$ for all $1\leq j\leq s$ yields a homomorphism $\na:(X_*(T)_{I,\tf})^\vp\ra T(E)$. By construction, $\na$ is a section of the valuation map $v:T(E)\ra(X_*(T)_{I,\tf})^\vp$.

Let $\La$ be a ring. Consider the (noncommutative) $\La$-algebra $\cH(G(E),K^n)_\La$ as in Definition \ref{ss:Heckealgebra}, and for all $g$ in $G(E)$, write $h_g$ in $\cH(G(E),K^n)_\La$ for the indicator function on $K^ngK^n$. As $g$ runs over $K^n\bs G(E)/K^n$, the $h_g$ form a $\La$-basis of $\cH(G(E),K^n)_\La$.
\begin{lem*}\hfill
  \begin{enumerate}[i)]
  \item The group $G(E)$ equals the disjoint union
    \begin{align*}
      \coprod_\nu\cK(\cO)\na(\nu)\cK(\cO),
    \end{align*}
    where $\nu$ runs over $(X_*(T)_{I,\tf})^{\vp,+}$.
  \item For all $\nu$ and $\mu$ in $(X_*(T)_{I,\tf})^{\vp,+}$, we have $h_{\na(\nu)}*h_{\na(\mu)}=h_{\na(\nu+\mu)}$.
  \item For all $\nu$ in $(X_*(T)_{I,\tf})^{\vp,+}$ and $k$ and $j$ in $\cK(\cO)$, we have
    \begin{align*}
      h_k*h_{\na(\nu)}*h_j=h_{k\cdot\na(\nu)\cdot j}.
    \end{align*}
  \end{enumerate}
\end{lem*}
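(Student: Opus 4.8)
The plan is to handle parts i) and iii), which are essentially formal, and then concentrate on part ii), which carries the real content. For part i) I will appeal to the Cartan decomposition for the special maximal parahoric subgroup $\cK(\cO) = G(E)^1_x$. The relevant inputs are already in place: the single-field form of Lemma \ref{ss:valuationmap} identifies $T(E)/T(E)^1$ with $(X_*(T)_{I,\tf})^\vp$, so $\na$ is a section of $v$ whose restriction to the dominant cone $(X_*(T)_{I,\tf})^{\vp,+}$ supplies a well-defined system of representatives, and the hypothesis that $x$ is special puts us in the situation where the Cartan decomposition holds for the (possibly disconnected) group $G(E)^1_x$ (see \cite{KP23}, and work of Haines--Rostami in the disconnected case); disjointness of the double cosets $\cK(\cO)\na(\nu)\cK(\cO)$ for distinct dominant $\nu$ is then immediate, since $v$ already separates them. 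For part iii), the key observation is that $K^n$ is normal in $\cK(\cO)$, being the kernel of $\cK(\cO)\to\cK(\cO/\fp^n)$; hence $K^n k K^n = kK^n = K^n k$ for $k \in \cK(\cO)$, so $h_k$ is the indicator of a single left coset, and convolving by $h_k$ on the left is left translation by $k$ (and by $h_j$ on the right is right translation by $j^{-1}$, using normality of $K^n$ again to see $jK^n = K^n j$). Thus $h_k * h_{\na(\nu)} * h_j$ is the indicator of $kK^n\na(\nu)K^n j = K^n k\na(\nu)j K^n$, i.e. $h_{k\na(\nu)j}$, bi-$K^n$-invariance of the result being checked from normality of $K^n$.

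For part ii) I will first prove the set-theoretic identity $K^n\na(\nu)K^n\na(\mu)K^n = K^n\na(\nu+\mu)K^n$. The inclusion $\supseteq$ is clear since $\na$ is a homomorphism, and for $\subseteq$ it suffices to show $\na(\nu)K^n\na(\mu) \subseteq K^n\na(\nu+\mu)K^n$. Writing $K^n = (K^n\cap U^+)(K^n\cap T)(K^n\cap U^-)$ --- an Iwahori-type factorization available from the root-group structure of $\cK$ recorded in \ref{ss:parahoricstructure} and \ref{ss:parahoricrootgroups} --- and using that, because $\nu$ is dominant, conjugation by $\na(\nu)$ contracts $K^n\cap U^+$ while conjugation by $\na(\mu)^{-1}$ contracts $K^n\cap U^-$, one sweeps the unipotent parts of $K^n$ out to the two ends and moves the torus part past $\na(\mu)$ (using that $T$ is abelian), landing inside $K^n\na(\nu+\mu)K^n$. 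Consequently $h_{\na(\nu)} * h_{\na(\mu)}$ is bi-$K^n$-invariant and supported on the single double coset $K^n\na(\nu+\mu)K^n$, so it equals $c\cdot h_{\na(\nu+\mu)}$ for a non-negative integer $c$ (interpreted in $\La$).

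It then remains to show $c = 1$, i.e. that $(h_{\na(\nu)}*h_{\na(\mu)})(\na(\nu+\mu)) = 1$. Expanding the convolution as a count of cosets $aK^n \subseteq K^n\na(\nu)K^n$ with $a^{-1}\na(\nu+\mu) \in K^n\na(\mu)K^n$, and parametrizing these cosets by $(K^n\cap U^+)$ modulo a deeper congruence subgroup whose depth is governed by the weights of $\mu$ on $\Lie U^+$, one checks that the membership condition forces precisely that same depth, so only the coset of $\na(\nu)$ survives. This multiplicity-one statement is the main obstacle: it is genuinely sensitive to the smallness of $K^n$ --- in the spherical Hecke algebra $\cH(G(E),\cK(\cO))$ the analogous product $h_{\na(\nu)}*h_{\na(\mu)}$ acquires lower-order terms indexed by $\lambda < \nu+\mu$, and excluding these at the congruence level requires matching, with some care, the contracting and expanding estimates on root subgroups against the coset counts. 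Throughout, the only real difference from the split case treated by Kazhdan is bookkeeping: the root subgroups and their behavior under $\na(\nu)$ are exactly as described in \ref{ss:parahoricrootgroups}, so the arguments go through verbatim.
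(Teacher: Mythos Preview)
Your approach is correct but differs from the paper's. The paper proves all three parts quite tersely: part i) is the Cartan decomposition for the special parahoric (citing \cite[Theorem 5.2.1]{KP23}), and parts ii) and iii) are obtained by verifying that $\De\coloneqq\cK(\cO)$, $\De_0\coloneqq K^n$, and $\mathbf{D}\coloneqq\na((X_*(T)_{I,\tf})^{\vp,+})$ satisfy Conditions H-1 and H-2 of Allan \cite{All72}, which follow from part i) together with the big-cell embedding of Lemma \ref{ss:parahoricstructure}.iii), and then invoking \cite[Theorem 2]{All72} as a black box.

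Your route unpacks exactly what Allan's theorem does. The ingredients are the same---Cartan decomposition and the Iwahori-type factorization of $K^n$ (which you correctly derive from Lemma \ref{ss:parahoricstructure}.iii): since the big-cell embedding contains the identity and $K^n$ reduces to the identity mod $\fp$, every element of $K^n$ factors uniquely, and uniqueness forces each factor to lie in $K^n$). Your argument for part iii) via normality of $K^n$ in $\cK(\cO)$ is clean and self-contained. For part ii), your set-theoretic identity $K^n\na(\nu)K^n\na(\mu)K^n=K^n\na(\nu+\mu)K^n$ is fine, and the multiplicity-one computation you sketch is precisely the content of Allan's theorem; your outline (parametrize cosets by $U^+_n$ modulo the $\na(\nu)$-conjugate, then match depths) is the right shape, though in a write-up you would want to make the contraction/expansion estimates on root subgroups explicit rather than leaving them as ``one checks.'' The trade-off: the paper's proof is two sentences but depends on an external reference the reader may not have at hand; yours is self-contained but longer, and the part you flag as ``the main obstacle'' is genuinely where the work lies.
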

\begin{proof}
  Since $x$ is a special point, part i) follows from the proof of \cite[Theorem 5.2.1]{KP23}. Next, note that $\cH(G(E),K^n)_\La$ is base changed from the $\La=\bZ$ case, so it suffices to consider $\La=\bZ$. Then part i) and Lemma \ref{ss:parahoricstructure}.iii) imply that $\De\coloneqq\cK(\cO)$, $\De_0\coloneqq K^n$, and $\mathbf{D}\coloneqq\na((X_*(T)_{I,\tf})^{\vp,+})$ satisfy \cite[Condition H-1]{All72} and \cite[Condition H-2]{All72}. Therefore part ii) and part iii) follow from \cite[Theorem 2]{All72}.
\end{proof}

\subsection{}\label{ss:doublecosetcongruence}
At this point, we can compare double coset spaces over close fields as follows. Using $t'_1,\dotsc,t'_s$ in $T'(E')$ whose images in $T'(E')/T'^n\cong T(E)/T^n$ equal the images of the $t_1,\dotsc,t_s$, we obtain a homomorphism $\na':(X_*(T')_{I',\tf})^{\vp'}\ra T'(E')$ as in \ref{ss:liftsofcombinatorics}.
\begin{prop*}
We have a natural bijection $K^n\bs G(E)/K^n\cong K'^n\bs G'(E')/K'^n$.
\end{prop*}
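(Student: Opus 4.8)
The plan is to use the Cartan decomposition from Lemma \ref{ss:liftsofcombinatorics}.i) to break $K^n\bs G(E)/K^n$ into pieces indexed by dominant coweights, and then to compare each piece using Theorem \ref{ss:parahoriccongruence} together with Lemma \ref{ss:stabilizercongruence}. Concretely, Lemma \ref{ss:liftsofcombinatorics}.i) gives
\[
  G(E)=\coprod_\nu\cK(\cO)\na(\nu)\cK(\cO),\qquad G'(E')=\coprod_{\nu'}\cK'(\cO')\na'(\nu')\cK'(\cO'),
\]
where $\nu$ and $\nu'$ range over $(X_*(T)_{I,\tf})^{\vp,+}$ and $(X_*(T')_{I',\tf})^{\vp',+}$, respectively. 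The canonical isomorphism $X_*(T)\cong X_*(T')$ of \ref{ss:closefieldscongruence} preserves the relative root data and the cone of dominant coweights, hence identifies these two indexing sets; and by the choice of the lifts $t_j$, $t'_j$ in \ref{ss:doublecosetcongruence} together with the comparison map $\tau$ of \ref{ss:toruscomparison}, under this identification $\na(\nu)$ and $\na'(\nu)$ have the same image in $T(E)/T^n\cong T'(E')/T'^n$. It therefore suffices, for each fixed $\nu$, to produce a bijection between $K^n\bs\cK(\cO)\na(\nu)\cK(\cO)/K^n$ and its primed analogue.

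For this I would observe that $\cK(\cO)\times\cK(\cO)$ acts transitively by left and right translation on $K^n\bs\cK(\cO)\na(\nu)\cK(\cO)/K^n$, and (as in the proof of Lemma \ref{ss:stabilizercongruence}) this action factors through $\cK(\cO/\fp^n)\times\cK(\cO/\fp^n)$; thus $(k,j)\mapsto k\na(\nu)j^{-1}$ identifies this double coset space with
\[
  \big(\cK(\cO/\fp^n)\times\cK(\cO/\fp^n)\big)\big/\stab_{\cK(\cO/\fp^n)\times\cK(\cO/\fp^n)}(K^n\na(\nu)K^n),
\]
and likewise on the primed side. Theorem \ref{ss:parahoriccongruence} supplies an isomorphism $\cK(\cO/\fp^n)\cong\cK'(\cO'/\fp'^n)$, and Lemma \ref{ss:stabilizercongruence}, applied with $t=\na(\nu)$ and $t'=\na'(\nu)$ (which match modulo $T^n$ by the first paragraph), shows that this isomorphism carries $\stab(K^n\na(\nu)K^n)$ onto $\stab(K'^n\na'(\nu)K'^n)$. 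Taking quotients then gives a bijection on each fiber, and assembling over $\nu$ produces the bijection $K^n\bs G(E)/K^n\cong K'^n\bs G'(E')/K'^n$; its naturality is automatic since every ingredient is built from the fixed data $\Tr_l(E)\cong\Tr_l(E')$, $X_*(T)\cong X_*(T')$, and $\tau$.

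The step I expect to require the most care is verifying that these fiberwise bijections genuinely assemble into a \emph{well-defined} map, independent of the auxiliary choices: one must check that the identification $\na(\nu)\equiv\na'(\nu)$ modulo $T^n$ — so that Lemma \ref{ss:stabilizercongruence} is applicable with these specific representatives — is actually forced by \ref{ss:toruscomparison} and \ref{ss:liftsofcombinatorics}, and that replacing $\na$ by another section, or the $t_j$ by other lifts, alters the resulting identification of double cosets only by the evident inner modifications, so that the final bijection is canonical rather than merely one of many. Everything else is a formal consequence of the Cartan decomposition and the two cited congruence lemmas.
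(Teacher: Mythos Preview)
Your proposal is correct and follows essentially the same approach as the paper: decompose via the Cartan decomposition (Lemma \ref{ss:liftsofcombinatorics}.i)), match $\na(\nu)$ with $\na'(\nu)$ modulo $T^n$ via the comparison $\tau$ of \ref{ss:toruscomparison}, apply Lemma \ref{ss:stabilizercongruence} to identify stabilizers under Theorem \ref{ss:parahoriccongruence}, and use orbit--stabilizer on each piece. The paper's proof is terser but structurally identical.
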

\begin{proof}
  Let $\nu$ be in $(X_*(T)_{I,\tf})^{\vp,+}\cong(X_*(T')_{I',\tf})^{\vp',+}$. Note that the diagram
  \begin{align*}
    \xymatrix{(X_*(T)_{I,\tf})^\vp\ar[r]^-{\na}\ar@{=}[d]^-{\rotatebox{90}{$\sim$}} & T(E)\ar[r] & T(E)/T^n\ar@{=}[d]^-{\rotatebox{90}{$\sim$}}\\
    (X_*(T')_{I',\tf})^{\vp'}\ar[r]^-{\na'} & T'(E')\ar[r] & T'(E')/T'^n}
  \end{align*}
  commutes by construction, so Lemma \ref{ss:stabilizercongruence} indicates that the stabilizer of $K^n\na(\nu)K^n$ and the stabilizer of $K'^n\na'(\nu)K'^n$ in
  \begin{align*}
    \cK(\cO/\fp^n)\times\cK(\cO/\fp^n)\cong\cK'(\cO'/\fp'^n)\times\cK'(\cO'/\fp'^n)
  \end{align*}
  coincide. This induces a natural bijection between the set of $K^n$-double cosets lying in $\cK(\cO)\na(\nu)\cK(\cO)$ and the set of $K'^n$-double cosets lying in $\cK'(\cO')\na'(\nu)\cK'(\cO')$. As $\nu$ varies, Lemma \ref{ss:liftsofcombinatorics}.i) yields the desired result.
\end{proof}

\subsection{}\label{ss:Heckealgebrahomomorphism}
Proposition \ref{ss:doublecosetcongruence} and \ref{ss:liftsofcombinatorics} induce a natural isomorphism of $\La$-modules
\begin{align*}
\eta_n:\cH(G(E),K^n)_\La\ra\cH(G'(E'),K'^n)_\La.
\end{align*}

To prove that $\eta_n$ is an isomorphism of $\La$-algebras, we need the following result. Let $C\subseteq(X_*(T)_{I,\tf})^{\vp,+}$ be a finite subset, and write $G(E)_C$ for $\coprod_{\nu\in C}\cK(\cO)\na(\nu)\cK(\cO)$. View $C$ as a subset of $(X_*(T')_{I',\tf})^{\vp',+}\cong(X_*(T)_{I,\tf})^{\vp,+}$.
\begin{lem*}
For large enough $l$, we have $\eta_n(h_1*h_2)=\eta_n(h_1)*\eta_n(h_2)$ for all $h_1$ and $h_2$ in $\cH(G(E),K^n)_\bC$ supported on $G(E)_C$.
\end{lem*}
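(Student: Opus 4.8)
The plan is to run Kazhdan's argument by comparing the structure constants of the two Hecke algebras. With respect to the $\La$-basis $\{h_g\}_{g\in K^n\bs G(E)/K^n}$, the convolution product has non-negative integer structure constants (immediate from the convolution integral with $\mathrm{vol}(K^n)=1$), and $\eta_n$ sends $h_g$ to $h_{g'}$, where $g'$ is the double coset of $G'(E')$ attached to $g$ by Proposition \ref{ss:doublecosetcongruence}. So, by bilinearity, it suffices to take $h_1=h_{g_1}$ and $h_2=h_{g_2}$ with $g_1,g_2\in G(E)_C$ and to prove the equality of structure constants $(h_{g_1}*h_{g_2})(g)=(h_{g_1'}*h_{g_2'})(g')$ for all $g$ in the support of $h_{g_1}*h_{g_2}$, with corresponding $g'$ in $G'(E')$.

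First I would check that the support of $h_{g_1}*h_{g_2}$ lies in $G(E)_{C'}$ for a finite subset $C'\subseteq(X_*(T)_{I,\tf})^{\vp,+}$ depending only on $C$ and the absolute root datum, hence independent of $E$. This is the standard boundedness of the Cartan decomposition: by Lemma \ref{ss:liftsofcombinatorics}.i) and Lemma \ref{ss:parahoricstructure}.iii), the product $\cK(\cO)\na(\nu)\cK(\cO)\cdot\cK(\cO)\na(\mu)\cK(\cO)$ is contained in the union of the $\cK(\cO)\na(\nu')\cK(\cO)$ over the finitely many $\nu'$ in $(X_*(T)_{I,\tf})^{\vp,+}$ with $\nu'\le\nu+\mu$. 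Since the comparison $\tau$ of \ref{ss:toruscomparison} identifies $\na(\nu)$ with $\na'(\nu)$ modulo $T^n$ compatibly with the identification of root data, $C'$ makes sense on the $E'$-side as well.

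Next I would express the structure constant as a cardinality, namely
\begin{align*}
(h_{g_1}*h_{g_2})(g)=\#\{xK^n : x\in K^ng_1K^n,\ x^{-1}g\in K^ng_2K^n\}.
\end{align*}
Writing $g_1=k_1\na(\nu_1)j_1$, $g_2=k_2\na(\nu_2)j_2$, $g=k\na(\nu)j$ with $k_i,j_i,k,j\in\cK(\cO)$ and $\nu_i,\nu$ in the finite set $C'$ (possible by Lemma \ref{ss:liftsofcombinatorics}.iii), the coset decomposition of $K^ng_iK^n$ and the membership condition $x^{-1}g\in K^ng_2K^n$ are governed entirely by the $\cK(\cO)$-components, the combinatorial data $\nu_i,\nu$, and the group laws, root-group actions, and conjugation of congruence subgroups by the $\na(\nu_i)$. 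The key point is that there is an integer $m\ge n$, depending only on $C'$ and the root datum, such that conjugation by $\na(\nu_i)^{\pm1}$ carries $K^m$ into $K^n$ for every $\nu_i$ occurring — this follows from the explicit root-group description in Lemma \ref{ss:parahoricrootgroups} together with the open-cell decomposition of Lemma \ref{ss:parahoricstructure}.ii)--iii). Consequently $K^ng_iK^n$ and $K^ng_2K^n$ are unions of $K^m$-left cosets inside $\cK(\cO)$-translates, and the cardinality above depends only on the images of $k_i,j_i,k,j$ in $\cK(\cO/\fp^m)$ together with the $\nu_i,\nu$.

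Finally I would invoke Theorem \ref{ss:parahoriccongruence} at level $m$ (valid after enlarging $l$) and Lemma \ref{ss:stabilizercongruence}: these identify $\cK_{\cO/\fp^m}\cong\cK'_{\cO'/\fp'^m}$ as groups over $\cO/\fp^m\cong\cO'/\fp'^m$, compatibly with the root-group actions and with the images of the $\na(\nu_i)$ and $\na'(\nu_i)$, so the displayed count is literally unchanged when computed for $G'(E')$ in terms of $g_1',g_2',g'$. This yields $(h_{g_1}*h_{g_2})(g)=(h_{g_1'}*h_{g_2'})(g')$ for all relevant $g$, hence $\eta_n(h_{g_1}*h_{g_2})=\eta_n(h_{g_1})*\eta_n(h_{g_2})$, and the lemma follows by bilinearity. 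I expect the main obstacle to be exactly the bookkeeping in the penultimate step: pinning down, uniformly in $E$, the level $m$ at which the convolution structure constant becomes ``finite-level data,'' i.e. controlling $\na(\nu)K^m\na(\nu)^{-1}$ and the resulting $K^m$-coset decompositions of the $K^ng_iK^n$ in terms depending only on $C'$ and the root datum. This is precisely the input that lets Theorem \ref{ss:parahoriccongruence} transfer the count, and it is where most of the work lies.
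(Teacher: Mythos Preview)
Your approach is correct and is essentially the direct Kazhdan argument: bound the support, reduce the structure constants to a finite-level count via a uniform integer $m\ge n$ controlling $\na(\nu)K^m\na(\nu)^{-1}\subseteq K^n$ for $\nu\in C'$, and then transfer the count using the congruence of parahoric models at level $m$. This works, and your identification of the bookkeeping around $m$ as the main labor is accurate.

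The paper, however, does not carry this out. It instead observes (see the remark at the end of \ref{ss:valuationsection}) that the congruence subgroup $K^n$ is unchanged if one replaces $\cK$ by its relative neutral component $\cK^\circ$; hence the Hecke algebra $\cH(G(E),K^n)_\bC$, the map $\eta_n$, and the statement itself are literally the same as in the connected-parahoric setting. The result then follows by direct citation of Ganapathy \cite[Lemma 4.6]{Gan24}, which handles exactly this statement for $\cK^\circ$. So the paper's proof is a one-line reduction plus a citation, whereas you are effectively reproving the cited lemma from scratch. Your route is self-contained and exhibits the mechanism; the paper's route is much shorter but relies on the existing literature doing the work you sketched.
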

\begin{proof}
  Because $K^n$ remains unchanged if we replace $\cK$ with its relative neutral component $\cK^\circ$, this follows from \cite[Lemma 4.6]{Gan24}.
\end{proof}

\subsection{}\label{thm:Heckealgebracongruence}
Finally, we prove our Hecke algebra isomorphism over close fields.
\begin{thm*}
For large enough $l$, the map $\eta_n:\cH(G(E),K^n)_\La\ra\cH(G'(E'),K'^n)_\La$ is an isomorphism of $\La$-algebras.
\end{thm*}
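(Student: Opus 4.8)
The plan is to upgrade the $\La$-module isomorphism $\eta_n$ constructed in \ref{ss:Heckealgebrahomomorphism} to an algebra isomorphism by combining the multiplicativity-on-bounded-functions statement of Lemma \ref{ss:Heckealgebrahomomorphism} with a finiteness property of the Hecke algebra, following the method of Kazhdan \cite{Kaz86b} and Ganapathy \cite{Gan24}. First I would reduce to the case $\La=\bC$. The $\La$-module basis $\{h_g\}_{g}$ and the bijection $\eta_n$ are obtained by base change from the case $\La=\bZ$, and the structure constants $c^g_{g_1,g_2}$ defined by $h_{g_1}*h_{g_2}=\sum_gc^g_{g_1,g_2}h_g$ are non-negative integers; hence $\eta_n$ is multiplicative for arbitrary $\La$ as soon as it is for $\La=\bZ$, and the latter holds if and only if it holds after $\otimes_\bZ\bC$, the relevant equalities $c^g_{g_1,g_2}=c^{g'}_{g_1',g_2'}$ being identities in $\bZ\subseteq\bC$. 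So it suffices to prove that $\eta_n$ is a $\bC$-algebra isomorphism.

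Next, by Lemma \ref{ss:liftsofcombinatorics} the algebra $\cH(G(E),K^n)_\bC$ is generated, as a $\bC$-algebra, by the finite-dimensional subspace of functions supported on the bounded set $G(E)_{C_0}\coloneqq\cK(\cO)\cup\bigcup_{i=1}^m\cK(\cO)\na(\mu_i)\cK(\cO)$, where $\mu_1,\dotsc,\mu_m$ is a finite set of monoid generators of $(X_*(T)_{I,\tf})^{\vp,+}$: indeed Lemma \ref{ss:liftsofcombinatorics}.ii) writes every $h_{\na(\nu)}$ as a product of the $h_{\na(\mu_i)}$, and Lemma \ref{ss:liftsofcombinatorics}.iii) together with the Cartan decomposition then writes every basis element $h_g$ as $h_k*h_{\na(\mu_{i_1})}*\dotsb*h_{\na(\mu_{i_p})}*h_j$ with $k,j\in\cK(\cO)$. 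The crux is that $\cH(G(E),K^n)_\bC$ is moreover finitely presented with respect to this generating set, with relations supported on a fixed bounded set $G(E)_{C_1}$ depending only on $n$ and $G$. This can be obtained either by following Kazhdan's building-theoretic analysis \cite{Kaz86b} and its extension in \cite{Gan24}, or by noting that $\cH(G(E),K^n)_\bC$ is a finite module over the finitely many, finitely generated depth-$<n$ factors of its Bernstein center, hence a finitely presented $\bC$-algebra, and that a finite presentation relative to a bounded generating set automatically has bounded-support relations (the support of a relation being controlled by the word lengths appearing in it). This step is the main obstacle: it is precisely the part of Kazhdan's method that forces $l$ to be large relative to $G$ and $n$.

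Finally I would apply Lemma \ref{ss:Heckealgebrahomomorphism} with $C=C_1$ (enlarging $C_1$ if needed so that all partial products of the generator-words occurring in the relations remain supported on $G(E)_{C_1}$): for $l$ large enough, $\eta_n$ is multiplicative on all functions supported on $G(E)_{C_1}$, so it carries each of the finitely many defining relations of $\cH(G(E),K^n)_\bC$ to a valid relation in $\cH(G'(E'),K'^n)_\bC$. This yields a $\bC$-algebra homomorphism $\bar\eta_n\colon\cH(G(E),K^n)_\bC\to\cH(G'(E'),K'^n)_\bC$ agreeing with $\eta_n$ on the generators $\{h_g:g\in G(E)_{C_0}\}$, and symmetrically a homomorphism in the opposite direction; the two composites fix a generating set, hence are identities, so $\bar\eta_n$ is a $\bC$-algebra isomorphism. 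It remains to check $\bar\eta_n=\eta_n$ on every basis element $h_g$: applying $\bar\eta_n$ to the normal form $h_g=h_k*h_{\na(\nu)}*h_j$ and using the $G'$-version of Lemma \ref{ss:liftsofcombinatorics} gives $h_{k'}*h_{\na'(\nu)}*h_{j'}=h_{g'}$, which is exactly $\eta_n(h_g)$ because the bijection $K^n\bs G(E)/K^n\cong K'^n\bs G'(E')/K'^n$ of Proposition \ref{ss:doublecosetcongruence} was constructed to respect this normal form and the comparison $T(E)/T^n\cong T'(E')/T'^n$ of \ref{ss:toruscomparison}. Hence $\eta_n$ is an isomorphism of $\La$-algebras, and base change back to general $\La$ completes the proof.
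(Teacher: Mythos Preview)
Your proposal is correct and follows essentially the same route as the paper: reduce to $\La=\bC$ via base change from $\bZ$, use Lemma \ref{ss:liftsofcombinatorics} to exhibit finitely many algebra generators supported on a bounded set, invoke Bernstein's finiteness results \cite[2.13, 3.4]{Ber84} to get a finite presentation, and then apply Lemma \ref{ss:Heckealgebrahomomorphism} on a subset $C$ large enough to contain all partial products appearing in the relations, so that the relations transfer and one obtains a $\bC$-algebra homomorphism agreeing with $\eta_n$ on generators; the identification with $\eta_n$ on every basis element then follows from the normal form $h_g=h_k*h_{\na(\nu)}*h_j$ and its $G'$-analogue, exactly as you describe. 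The only cosmetic difference is that you construct an inverse by symmetry, whereas the paper simply observes that $\eta_n$ is already a $\bC$-module bijection (this was built into Proposition \ref{ss:doublecosetcongruence}), so once the algebra homomorphism $\vt$ is shown to coincide with $\eta_n$ there is nothing left to prove; your detour through a symmetric inverse is unnecessary but harmless.
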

\begin{proof}
Note that $\eta_n$ is base changed from the $\La=\bZ$ case, so it suffices to consider $\La=\bZ$. Because $\bZ\ra\bC$ is injective, the $\La=\bZ$ case follows from that of $\La=\bC$.

  Therefore assume that $\La=\bC$. Let $k_1,\dotsc,k_a$ be a set of representatives in $\cK(\cO)$ of $\cK(\cO/\fp^n)$, and let $\nu^+_1,\dotsc,\nu^+_b$ be generators of the monoid $(X_*(T)_{I,\tf})^{\vp,+}$. Then Lemma \ref{ss:liftsofcombinatorics}.ii) and Lemma \ref{ss:liftsofcombinatorics}.iii) indicate that the $h_{k_1},\dotsc,h_{k_a},h_{\na(\nu^+_1)},\dotsc,h_{\na(\nu^+_b)}$ generate $\cH(G(E),K^n)_\bC$ over $\bC$. Now \cite[2.13]{Ber84} and \cite[3.4]{Ber84} imply that $\cH(G(E),K^n)_\bC$ is finitely presented over $\bC$, so there exist $f_1,\dotsc,f_r$ in the noncommutative polynomial ring $\bC\ang{X_1,\dotsc,X_{a+b}}$ such that the surjective $\bC$-algebra homomorphism
  \begin{align*}
    \bC\ang{X_1,\dotsc,X_{a+b}}\ra\cH(G(E),K^n)_\bC
  \end{align*}
induced by the $h_{k_1},\dotsc,h_{k_a},h_{\na(\nu^+_1)},\dotsc,h_{\na(\nu^+_b)}$ has kernel $(f_1,\dotsc,f_r)$.

Write $D$ for $\max_{1\leq j\leq r}\deg f_j$. By compactness, there exists a finite subset $C\subseteq(X_*(T)_{I,\tf})^{\vp,+}$ such that $G(E)_C$ contains the $D$-fold product of $G(E)_{\{0,\nu_1^+,\dotsc,\nu_b^+\}}$ in $G(E)$. Assume that $l$ is large enough for $C$ as in Lemma \ref{ss:Heckealgebrahomomorphism}.ii). Since every at most $D$-fold product of the $h_{k_1},\dotsc,h_{k_a},h_{\na(\nu_1^+)},\dotsc,h_{\na(\nu_b^+)}$ is supported on $G(E)_C$, Lemma \ref{ss:Heckealgebrahomomorphism}.ii) shows that
  \begin{align*}
    &\quad f_j(\eta_n(h_{k_1}),\dotsc,\eta_n(h_{k_a}),\eta_n(h_{\na(\nu^+_1)}),\dotsc,\eta_n(h_{\na(\nu^+_b)}))\\
    &= \eta_n(f(h_{k_1},\dotsc,h_{k_a},h_{\na(\nu^+_1)},\dotsc,h_{\na(\nu^+_b)})) = 0
  \end{align*}
  for all $1\leq j\leq r$. Hence the $\bC$-algebra homomorphism
  \begin{align*}
    \bC\ang{X_1,\dotsc,X_{a+b}}\ra\cH(G'(E'),K'^n)
  \end{align*}
  induced by the $\eta_n(h_{k_1}),\dotsc,\eta_n(h_{k_a}),\eta_n(h_{\na(\nu^+_1)}),\dotsc,\eta_n(h_{\na(\nu^+_b)})$ factors through a $\bC$-algebra homomorphism $\vt:\cH(G(E),K^n)_\bC\ra\cH(G'(E'),K'^n)_\bC$.

  For all $1\leq i\leq a$, let $k_i'$ be an element of $\cK'(\cO')$ whose image in $\cK'(\cO'/\fp'^n)\cong\cK(\cO/\fp^n)$ equals the image of $k_i$. Note that $\eta_n(h_{k_i})$ equals $h'_{k'_i}$. Similarly, for all $1\leq m\leq b$, we have $\eta_n(h_{\na(\nu_m^+)})=h'_{\na'(\nu_m^+)}$. Therefore Lemma \ref{ss:liftsofcombinatorics}.ii) and Lemma \ref{ss:liftsofcombinatorics}.iii) indicate that $\vt$ equals $\eta_n$, so $\eta_n$ is a homomorphism of $\bC$-algebras. Finally, we already saw in \ref{ss:Heckealgebrahomomorphism} that $\eta_n$ is a bijection.
\end{proof}

\bibliographystyle{../habbrv}
\bibliography{biblio}
\end{document}